\definecolor{cite}{rgb}{0.30,0.60,1.00}
\definecolor{url}{rgb}{0.00,0.00,0.80}
\definecolor{link}{rgb}{0.40,0.10,0.20}
\DeclareSymbolFont{cyrletters}{OT2}{wncyr}{m}{n}
\DeclareMathSymbol{\Sha}{\mathalpha}{cyrletters}{"58}
\numberwithin{equation}{section}
\theoremstyle{plain}
\newtheorem{proposition}{Proposition}[section]
\newtheorem{corollary}[proposition]{Corollary}
\newtheorem{lem}[proposition]{Lemma}
\newtheorem{theorem}[proposition]{Theorem}
\theoremstyle{definition}
\newtheorem{definition}[proposition]{Definition}
\newtheorem{construction}[proposition]{Construction}
\newtheorem{notation}[proposition]{Notation}
\newtheorem{assumption}[proposition]{Assumption}
\theoremstyle{remark}
\newtheorem{remark}[proposition]{Remark}
\newtheorem{example}[proposition]{Example}
\renewcommand{\b}[1]{\mathbf{#1}}
\renewcommand{\c}[1]{\mathcal{#1}}
\renewcommand{\d}[1]{\mathbb{#1}}
\newcommand{\f}[1]{\mathfrak{#1}}
\renewcommand{\r}[1]{\mathrm{#1}}
\newcommand{\s}[1]{\mathscr{#1}}
\renewcommand{\sf}[1]{\mathsf{#1}}
\renewcommand{\(}{\left(}
\renewcommand{\)}{\right)}
\newcommand{\res}{\mathbin{|}}
\newcommand{\Sec}{\S}
\newcommand{\cA}{\c A}
\newcommand{\cB}{\c B}
\newcommand{\cC}{\c C}
\newcommand{\cE}{\c E}
\newcommand{\cF}{\c F}
\newcommand{\cJ}{\c J}
\newcommand{\cM}{\c M}
\newcommand{\cO}{\c O}
\newcommand{\cP}{\c P}
\newcommand{\cR}{\c R}
\newcommand{\cS}{\c S}
\newcommand{\cV}{\c V}
\newcommand{\cX}{\c X}
\newcommand{\cY}{\c Y}
\newcommand{\cZ}{\c Z}
\newcommand{\dA}{\d A}
\newcommand{\dC}{\d C}
\newcommand{\dE}{\d E}
\newcommand{\dF}{\d F}
\newcommand{\dL}{\d L}
\newcommand{\dP}{\d P}
\newcommand{\dQ}{\d Q}
\newcommand{\dR}{\d R}
\newcommand{\dT}{\d T}
\newcommand{\dZ}{\d Z}
\newcommand{\fD}{\f D}
\newcommand{\fH}{\f H}
\newcommand{\fM}{\f M}
\newcommand{\fN}{\f N}
\newcommand{\fa}{\f a}
\newcommand{\fd}{\f d}
\newcommand{\fl}{\f l}
\newcommand{\fm}{\f m}
\newcommand{\fp}{\f p}
\newcommand{\fq}{\f q}
\newcommand{\fr}{\f r}
\newcommand{\fs}{\f s}
\newcommand{\ft}{\f t}
\newcommand{\rA}{\r A}
\newcommand{\rB}{\r B}
\newcommand{\rD}{\r D}
\newcommand{\rE}{\r E}
\newcommand{\rF}{\r F}
\newcommand{\rG}{\r G}
\newcommand{\rH}{\r H}
\newcommand{\rI}{\r I}
\newcommand{\rM}{\r M}
\newcommand{\rN}{\r N}
\newcommand{\rP}{\r P}
\newcommand{\rR}{\r R}
\newcommand{\rS}{\r S}
\newcommand{\rT}{\r T}
\newcommand{\rZ}{\r Z}
\newcommand{\rd}{\r d}
\newcommand{\rt}{\r t}
\newcommand{\sF}{\s F}
\newcommand{\sO}{\s O}
\newcommand{\sT}{\s T}
\newcommand{\sfC}{\sf C}
\newcommand{\sfD}{\sf D}
\newcommand{\sfE}{\sf E}
\newcommand{\sfF}{\sf F}
\newcommand{\sfH}{\sf H}
\newcommand{\sfM}{\sf M}
\newcommand{\sfh}{\sf h}
\newcommand{\tp}[1]{\prescript{\rt}{}{#1}}
\newcommand{\pres}[2]{\prescript{#1}{}{#2}}
\newcommand{\ab}{\r{ab}}
\newcommand{\abs}{\r{abs}}
\newcommand{\cl}{\r{cl}}
\newcommand{\Cl}{\r{Cl}}
\newcommand{\cris}{\r{cris}}
\newcommand{\dr}{\r{dR}}
\newcommand{\EC}{\acute{\r{E}}\r{tCor}}
\newcommand{\loc}{\r{loc}}
\newcommand{\sing}{\r{sing}}
\newcommand{\TF}{\widetilde{F}}
\newcommand{\unr}{\r{unr}}
\newcommand{\ur}{\r{ur}}
\DeclareMathOperator{\AJ}{AJ}
\DeclareMathOperator{\CH}{CH}
\DeclareMathOperator{\coker}{coker}
\DeclareMathOperator{\disc}{disc}
\DeclareMathOperator{\End}{End}
\DeclareMathOperator{\Frob}{Frob}
\DeclareMathOperator{\Gal}{Gal}
\DeclareMathOperator{\GL}{GL}
\DeclareMathOperator{\Hom}{Hom}
\DeclareMathOperator{\IM}{im}
\DeclareMathOperator{\Ind}{Ind}
\DeclareMathOperator{\Ker}{ker}
\DeclareMathOperator{\Lie}{Lie}
\DeclareMathOperator{\Mat}{Mat}
\DeclareMathOperator{\Nm}{N}
\DeclareMathOperator{\ord}{ord}
\DeclareMathOperator{\Res}{Res}
\DeclareMathOperator{\SL}{SL}
\DeclareMathOperator{\Sp}{Sp}
\DeclareMathOperator{\Spec}{Spec}
\DeclareMathOperator{\Sym}{Sym}
\DeclareMathOperator{\Tr}{Tr}
\DeclareMathOperator{\tr}{tr}
\begin{document}

\title{Bounding cubic-triple product Selmer groups of elliptic curves}

\author{Yifeng Liu}
\address{Department of Mathematics, Northwestern University, Evanston, IL 60208}
\email{liuyf@math.northwestern.edu}

\date{\today}
\subjclass[2010]{11G05, 11R34, 14G35}

\begin{abstract}
Let $E$ be a modular elliptic curve over a totally real cubic field. We have a cubic-triple product motive over $\mathbb{Q}$ constructed from $E$ through multiplicative induction; it is of rank $8$. We show that, under certain assumptions on $E$, the non-vanishing of the central critical value of the $L$-function attached to the motive implies that the dimension of the associated Bloch--Kato Selmer group is $0$.
\end{abstract}

\maketitle

\tableofcontents

\section{Introduction}
\label{ss:1}

\subsection{Main result}
\label{ss:main}

Let $F$ be a cubic \'{e}tale algebra over $\dQ$. Consider the reductive group $G\coloneqq\Res_{F/\dQ}\GL_2$ over $\dQ$. Denote by $\tau\colon\pres{L}{G}\to\GL_8(\dC)$ the triple product $L$-homomorphism \cite{PSR87}*{\Sec 0}. Thus, for every irreducible cuspidal automorphic
representation $\Pi$ of $G(\dA)$ where $\dA$ denotes the ring of ad\`{e}les of $\dQ$ as usual, we have the attached triple product $L$-function
$L(s,\Pi,\tau)$. By the work of Garrett \cite{Gar87} and Piatetski-Shapiro--Rallis \cite{PSR87}, it is a meromorphic function on the complex plane. Suppose that the central character $\omega_\Pi$ of $\Pi$ is trivial after restriction to $\dA^\times$. Then we have a functional
equation
\[L(s,\Pi,\tau)=\epsilon(\Pi,\tau)C(\Pi,\tau)^{1/2-s}L(1-s,\Pi,\tau)\]
for some $\epsilon(\Pi,\tau)\in\{\pm 1\}$ and positive integer $C(\Pi,\tau)$. The global root number $\epsilon(\Pi,\tau)$ is the product of local ones:
\[\epsilon(\Pi,\tau)=\prod_v\epsilon(\Pi_v,\tau),\]
where $v$ runs over all places of $\dQ$. Here, we have $\epsilon(\Pi_v,\tau)\in\{\pm 1\}$ and that it equals $1$ for all but finitely
many $v$. Put
\[\Sigma(\Pi,\tau)=\{v\res \epsilon(\Pi_v,\tau)=-1\}.\]

In our previous work \cite{Liu}, we consider the case where $F$ is the product of $\dQ$ and a real quadratic field. In this article, we consider the case where $F$ is a totally real cubic field. Thus, from now on, we fix such a field $F$ (except in the two appendices). Denote by $O_F$, $\dA_F$, $\Phi_F$, and $\TF\subset\dC$ the ring of integers, the ring of ad\`{e}les, the set of archimedean places, and the normal closure in $\dC$, of $F$ respectively. In particular, elements of $\Phi_F$ are different embeddings from $F$ to $\TF$. Let $F_0\subset\dC$ be the unique subfield contained in $\TF$ of degree $[\TF:F]$.

Let $E$ be an elliptic curve over $F$. We have the $\dQ$-motive $\otimes\Ind^F_\dQ\sfh^1(E)$
(with coefficient $\dQ$) of rank $8$, which is the multiplicative induction of the $F$-motive $\sfh^1(E)$ to $\dQ$. The \emph{cubic-triple product
motive} of $E$ is defined to be
\begin{align}\label{eq:cubic_motive}
\sfM(E)\coloneqq(\otimes\Ind^F_\dQ\sfh^1(E))(2).
\end{align}
It is canonically polarized. For every prime $p$, the $p$-adic realization of $\sfM(E)$, denoted by $\sfM(E)_p$, is a Galois representation of $\dQ$ of dimension $8$ with $\dQ_p$-coefficients. In fact, up to a twist, it is the multiplicative induction from $F$ to $\dQ$ of the rational $p$-adic Tate module of $E$.

Now we assume that $E$ is modular. Then it gives rise to an irreducible cuspidal automorphic representation $\Pi_E$ of $G(\dA)$ with trivial central character. In particular, the set $\Sigma(\Pi_E,\tau)$ contains $\infty$. We have $L(s,\sfM(E))=L(s+1/2,\Pi_E,\tau)$.

\begin{assumption}\label{as:elliptic_curve}
We propose the following three assumptions.
\begin{description}
  \item[(E1)] For every finite place $w$ of $F$ over some prime in $\Sigma(\Pi_E,\tau)$, the elliptic curve $E$ has either good or multiplicative reduction at $w$.

  \item[(E2)] For distinct elements $\tau_1,\tau_2\in\Phi_F$, the $\TF$-elliptic curve $E\otimes_{F,\tau_1}\TF$ is not isogenous to
      any (possibly trivial) quadratic twist of $E\otimes_{F,\tau_2}\TF$.

  \item[(E3)] The geometric fiber $E\otimes_F\overline{F}$ does not admit complex multiplication.
\end{description}
\end{assumption}

The following theorem is a new case of the Bloch--Kato conjecture, for the motive $\sfM(E)$. We refer the reader to the Introduction of our previous article \cite{Liu} for the background of this conjecture. In particular, one may compare the following theorem with \cite{Liu}*{Theorems 1.3, 1.4}.

\begin{theorem}\label{th:main}
Let $E$ be a modular elliptic curve over $F$ satisfying Assumption \ref{as:elliptic_curve}. If the central critical value $L(0,\sfM(E))$ is nonzero, then there exists a finite set $\cP_E$ of primes such that for every prime $p\not\in\cP_E$, we have
\[\dim_{\dQ_p}\rH^1_f(\dQ,\sfM(E)_p)=0.\]
\end{theorem}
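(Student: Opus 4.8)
The plan is to follow the Euler system strategy that governs this type of Bloch--Kato result, adapting the framework of \cite{Liu} from the $\dQ \times (\text{real quadratic})$ case to the totally real cubic case. First I would realize the relevant Galois representation $\sfM(E)_p$ inside the cohomology of an appropriate Shimura variety: since $E$ is modular over $F$ and the root number forces $\#\Sigma(\Pi_E,\tau)$ to be odd (it contains $\infty$), one can choose a quaternion algebra $B$ over $F$, totally definite or with a prescribed set of ramified places matching $\Sigma(\Pi_E,\tau)\setminus\{\infty\}$, so that the Jacquet--Langlands transfer $\Pi_E^B$ exists and contributes to the cohomology of a Shimura variety $X$ attached to $\Res_{F/\dQ}B^\times$ — concretely a quaternionic Shimura threefold or, after the multiplicative-induction bookkeeping, the relevant component of the associated unitary/orthogonal Shimura variety whose middle-degree cohomology carries $\sfM(E)_p$ as a subquotient. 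The cycles feeding the Euler system will be diagonal (Gross--Kudla--Schoen type) cycles on triple products of modular/Shimura curves, or their Hirzebruch--Zagier-type analogues adapted to the cubic field.

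Next I would assemble the cohomology classes: construct a system of algebraic cycles on $X$ (and on its base change over anticyclotomic or ring-class extensions) whose $p$-adic Abel--Jacobi images give classes in $\rH^1_f(\dQ, \sfM(E)_p)$ and satisfy the norm-compatibility relations of a (bipartite) Euler system. The key analytic input is an \emph{explicit reciprocity law} or Gross--Zagier/Gross--Kudla-type formula relating the first class in this system to the central value $L(0,\sfM(E)) = L(1/2,\Pi_E,\tau)$; the non-vanishing hypothesis then guarantees this class is non-zero, which launches the Euler-system machine. Here Assumption (E1) is what makes the local conditions at ramified places behave (good or multiplicative reduction keeps the local Galois representations in the range where the local Tamagawa/Bloch--Kato conditions can be matched against the cycle classes), while (E2) and (E3) are the ``big image'' inputs: (E3) (no CM) rules out the abelian/dihedral degeneracies of the mod-$p$ representation of $\Gal(\ol F/F)$, and (E2) (the three Galois conjugates of $E$ are pairwise non-isogenous up to quadratic twist) ensures the three tensor factors are ``independent,'' so that the image of $\Gal(\ol\dQ/\dQ)$ in the automorphisms of $\sfM(E)_p$ is as large as possible — large enough to run Kolyvagin's argument. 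The finite set $\cP_E$ collects the primes where $E$ has additive reduction somewhere, where the mod-$p$ image is not large (a finite set by (E2)--(E3), via an open-image / Goursat-type argument applied to the product of the three residual representations), where $p$ divides a relevant level, conductor, congruence number, or a local Tamagawa factor, or where $p$ ramifies in $\TF$.

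Then the Kolyvagin-style descent: having one non-trivial class, one uses the norm-compatible family indexed by products of admissible (Kolyvagin) primes $\ell$ — primes split in the relevant field with Frobenius in a suitable conjugacy class — to produce, for each such $\ell$, a bound on the localization of $\rH^1_f(\dQ,\sfM(E)_p)$ at $\ell$; the Chebotarev density theorem together with the big-image statement lets these local bounds propagate to a global bound, forcing $\dim_{\dQ_p}\rH^1_f(\dQ,\sfM(E)_p)=0$. The main obstacle I anticipate is the \emph{explicit reciprocity law} linking the cycle classes to the $L$-value and, hand in hand with it, the local analysis at the bad places of $F$ (places dividing primes in $\Sigma(\Pi_E,\tau)$ and places of additive reduction excluded into $\cP_E$): one must show the diagonal cycle, transferred across Jacquet--Langlands to the correct Shimura variety for the cubic field, has $p$-adic Abel--Jacobi image lying in the Bloch--Kato $\rH^1_f$ and computes the central $L$-value up to an explicit non-zero fudge factor. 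This is where the cubic (as opposed to $\dQ\times$ quadratic) case genuinely differs, since the Shimura variety is now a threefold with no factor of $\mathrm{GL}_2/\dQ$ splitting off, so the level-raising and first/second-explicit-reciprocity arguments of \cite{Liu} must be redone intrinsically over $F$; controlling the ramification of the constructed classes at the places in $\Sigma(\Pi_E,\tau)$, using (E1), is the technical heart.
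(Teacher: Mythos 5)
Your overall frame (realizing the representation via modularity and Jacquet--Langlands on a quaternionic threefold, using Hirzebruch--Zagier-type cycles rather than triple-diagonal cycles, reading (E2)--(E3) as big-image inputs, and flagging the reciprocity law as the crux) is in the right family, but the central mechanism you describe is the wrong one and the step would fail as stated. You propose that the cycle classes give elements of $\rH^1_f(\dQ,\sfM(E)_p)$ and that non-vanishing of $L(0,\sfM(E))$ forces the ``first'' such class to be non-zero, ``launching the Euler-system machine.'' In the rank-zero situation this is backwards: when $L(0,\sfM(E))\neq 0$ the Selmer group is expected to vanish, so one must not manufacture Selmer classes. What the argument actually needs (and what the paper does) is classes that are \emph{ramified} at an auxiliary prime: for a cubic-level raising prime $\ell$ chosen \emph{after} fixing a hypothetical non-zero Selmer class $s$ (by Chebotarev and the big-image hypothesis, so that $\loc_\ell(s)$ is non-zero and unramified), the Abel--Jacobi image of the Hirzebruch--Zagier cycle lives in $\rH^1(\dQ,\rM)$ for the torsion level-raised module mod $p^\nu$, and the new reciprocity law computes its image $\partial\loc_\ell$ in $\rH^1_\sing(\dQ_\ell,\cdot)$ as a period sum over a definite quaternionic set; this is non-zero precisely because $L(0,\sfM(E))\neq 0$, via Ichino's formula and a test-vector analysis where (E1) enters. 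The contradiction then comes from global duality (the sum of local Tate pairings vanishes), pairing the unramified class $\loc_\ell(s)$ against the singular part of the cycle class at $\ell$. No anticyclotomic or ring-class tower, no norm-compatible family, and no products of Kolyvagin primes occur; one admissible prime at a time, with a congruence mod $p^\nu$, suffices.

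Second, the step you rightly call the main obstacle is left without any route through it, and the routes you implicitly lean on are unavailable: since the threefold is not a product of a curve and a surface, one cannot reduce the computation of the singular quotient at $\ell$ to divisors on curves via the Bertolini--Darmon first reciprocity law, nor split off a $\GL_2/\dQ$ factor and pull back. The paper's solution is a new general tool: from the weight spectral sequence of a strictly semistable model of the Hilbert threefold at $\ell$ one extracts a ``potential map'' whose cokernel (after localization at the relevant Hecke maximal ideal) computes $\rH^1_\sing(\dQ_\ell,\cdot)$ of the level-raised cohomology; proving the level-raising theorem and the reciprocity law then requires an explicit computation of a large matrix of intersection numbers of featuring cycles on the special fiber and a compatibility between the Abel--Jacobi map and this potential-map description. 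Without an ingredient playing this role, your proposal has no way to relate the cycle class at $\ell$ to the period sum and hence to the $L$-value, so the argument cannot close.
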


\begin{remark}
Once the elliptic curve $E$ is given, (an upper bound for) the finite set $\cP_E$ can be computed effectively. See Lemma \ref{le:prime_bad}.
\end{remark}

\begin{remark}
The technical heart of the proof of Theorem \ref{th:main} consists of a new level raising result (Theorem \ref{th:cubic_level_raising}) for Hilbert threefolds, and a new reciprocity law (Theorem \ref{th:reciprocity}) for their special cycles of codimension two. The version for CM points on Shimura curves is known as the \emph{first explicit reciprocity law} in \cite{BD05}. In \cite{Liu}, the corresponding results are Theorems 4.10 (3) and 4.11, which are called \emph{congruence formulae} there. However, unlike the situation in \cite{Liu}, our threefold here is not a product of a curve and a surface. Thus, the pullback technique will not work anymore; and in particular, we cannot make use of the reciprocity law in \cite{BD05}. In other words, we cannot reduce the computation to divisors on curves. The main innovation of the current article is to develop a new tool, for higher (co)dimensional cycles, to compute the local Galois cohomology; and our new reciprocity law will be the first practice.

In \cite{Liu}, we also have a result when $\ord_{s=0}L(s,\sfM(E))=1$ (\cite{Liu}*{Theorem 1.5}). To obtain a similar result here, we need another reciprocity law, which plays the role as the \emph{second explicit reciprocity law} in \cite{BD05} for CM points on Shimura curves generalizing \cite{Rib90}. This reciprocity (and its application) are currently work in progress.
\end{remark}

\begin{remark}
When $F$ is normal, Assumption \ref{as:elliptic_curve} (E2) is equivalent to that $E$ does not admit a nontrivial homomorphism to any twist of
$A\otimes_\dQ F$ for any $\GL(2)$-type abelian variety $A$ over $\dQ$. In general, (E2) always excludes the case where $E=\cE\otimes_\dQ F$ for some rational elliptic curve $\cE$ (hence $E$ is modular by \cite{MR00} and the well-known modularity of $\cE$ \cites{Wil95,TW95,BCDT01}). However, this case has potential application to the symmetric cube Selmer group of $\cE$. Precisely, if we want to prove a version of Theorem \ref{th:main} for the motive $(\Sym^3\sfh^1(\cE))(2)$ (of rank $4$) by embedding it into some triple product motive (of rank $8$) through base change to a totally real cubic \'{e}tale $\dQ$-algebra $F$, then we should necessarily take $F$ to be a \emph{field}; otherwise, the motive $\sfh^1(\cE)(1)$ will appear
simultaneously. This is not good since we may have $L(0,(\Sym^3\sfh^1(\cE))(2))\neq 0$ but $L(0,\sfh^1(\cE)(1))(=L(1,\cE))=0$. In fact, (E2) is not a serious assumption to our method. We think we are able to remove it by some extra combinatorial arguments which are quite involved. However, we choose not to pursue this generality here because otherwise the article would be rather technical and the logical uniformity would be broken.
\end{remark}

\subsection{Outline of the article}

The main part of the article has three chapters.

In \Sec\ref{ss:2}, we develop a general tool for computing the Galois cohomology of \'{e}tale cohomology of varieties over local fields having (strictly) semistable reduction, using the weight spectral sequence. The core is the construction of the so-called \emph{potential map}. We also study its relation with the Abel--Jacobi map. This chapter does not involve Shimura varieties, automorphic representations, or global Galois representations.

In \Sec\ref{ss:3}, we prove a level raising result (Theorem \ref{th:cubic_level_raising}) for certain Galois representations of $F$ of rank $2$ with torsion coefficients. We realize the level raising in the middle degree cohomology of some Hilbert modular variety of dimension $3$. The proof uses the general tool developed in \Sec\ref{ss:2}, by looking at the local Galois cohomology at a bad prime. We start from a natural semistable model of the Hilbert threefold at such a bad prime. The most technical part is to compute the potential map. For this, we have to calculate a bunch of intersection numbers for well-selected cycles, and to show that all other cycles are irrelevant.

In \Sec\ref{ss:4}, we first introduce the notation of Hirzebruch--Zagier cycle, which is of dimension $1$, on the Hilbert threefold, and prove an explicit reciprocity law for its image in the local Galois cohomology in terms of the level raising proved in \Sec\ref{ss:3}. Then we use these cycles to produce sufficiently many annihilators for Selmer elements, which confirms the triviality of the Selmer group in Theorem \ref{th:main}.

There are two appendices. In \Sec\ref{ss:a}, we study oriented Eichler orders over totally real number fields, and use them to canonically parameterize certain abelian varieties in positive characteristic. This generalizes the work of Ribet \cite{Rib89}. Such parameterization is crucial for the level raising in \Sec\ref{ss:3}. In \Sec\ref{ss:b}, we improve the work of Zink \cite{Zin82} on the global description of some pluri-nodal integral model of Hilbert modular varieties at certain bad primes. We also prove some extra results which are used in \Sec\ref{ss:3}.

\subsection{Notation and conventions}
\label{ss:notation}

\begin{itemize}
  \item We simply write $\otimes$ for $\otimes_\dZ$.

  \item Let $\Lambda$ be an abelian group. We put $\widehat\Lambda\coloneqq\Lambda\otimes_\dZ\varprojlim_N\dZ/N$ and $\Lambda_\dQ\coloneqq\Lambda\otimes_\dZ\dQ$. For a finite set $\cS$, we denote by $\Gamma(\cS,\Lambda)$ the abelian group of $\Lambda$-valued functions on $\cS$.

  \item For every rational prime $\ell$, we fix an algebraic closure $\dF_{\ell^\infty}$ of $\dF_\ell$ and let $\dF_{\ell^f}$ be the unique subfield of $\dF_{\ell^\infty}$ of cardinality $\ell^f$. For $1\leq f\leq \infty$, let $\dZ_{\ell^f}$ be the ring of Witt vectors in $\dF_{\ell^f}$ and put $\dQ_{\ell^f}=\dZ_{\ell^f}\otimes\dQ$.

  \item For a number field $K$, we denote by $O_K$ its ring of integers. For an ideal $\fa$ of $O_K$, we denote by $\Nm\fa=[O_K:\fa]$ the numerical norm of $\fa$.

  \item For a field $K$, we denote by $\rG_K$ the absolutely Galois group of $K$. Denote by $\overline\dQ$ the algebraic closure of $\dQ$ in $\dC$. When $K$ is a subfield of $\overline\dQ$, we take $\rG_K$ to be $\Gal(\overline\dQ/K)$ hence a subgroup of $\rG_\dQ$.

  \item If $K$ is a local field, then we denote by $O_K$ its ring of integers, $\rI_K\subset\rG_K$ the inertia subgroup. If $v$ is a rational prime, then we simply write $\rG_v$ for $\rG_{\dQ_v}$ and $\rI_v$ for $\rI_{\dQ_v}$.

  \item Let $K$ be a local field, $\Lambda$ a ring, and $N$ a $\Lambda[\rG_K]$-module. We have an exact sequence of $\Lambda$-modules
      \begin{align*}
      0\to\rH^1_\unr(K,N)\to\rH^1(K,N)\xrightarrow{\partial}\rH^1_\sing(K,N)\to0,
      \end{align*}
      where $\rH^1_\unr(K,N)$ is the submodule of unramified classes.

  \item Let $\Lambda$ be a ring and $N$ a $\Lambda[\rG_\dQ]$-module. For each prime power $v$, we have the localization map $\loc_v\colon\rH^1(\dQ,N)\to\rH^1(\dQ_v,N)$ of $\Lambda$-modules.

  \item All cohomology for schemes are computed in the \'{e}tale site, unless otherwise specified.
\end{itemize}

\subsubsection*{Acknowledgements}

The author would like to thank Benedict~Gross, Victor~Rotger, Sug~Woo~Shin, Yunqing~Tang, Yichao~Tian, Liang~Xiao, Shouwu~Zhang, Wei~Zhang, and
Xinwen~Zhu for their interest, helpful comments and discussions. He especially thanks Yichao~Tian for informing him the result of Zink. He also thanks the anonymous referee for the extremely careful reading and many valuable suggestions and corrections to improve the quality of the article. The author is partially supported by NSF grant DMS--1302000.

\section{Weight spectral sequence and Galois cohomology}
\label{ss:2}

Let $K$ be a henselian discrete valuation field with residue field $\kappa$ and a separable closure $\overline{K}$. We fix a prime $p$ that is different from the characteristic of $\kappa$. Throughout this section, the coefficient ring $\Lambda$ will be either $\dQ_p$ or $\dZ/p^\nu$ for some $\nu\geq 1$, and we put $\Lambda_0=\dZ_p$. In what follows, the letter $p$ will serve as an index (mainly) for spectral sequences; this will not cause confusion as we will always use $\Lambda$ or $\Lambda_0$ for coefficient rings.

\subsection{Semistable schemes}
\label{ss:semistable_schemes}

We first recall the following definition.

\begin{definition}[Strictly semistable scheme]\label{de:semistable_schemes}
Let $X$ be a scheme locally of finite presentation over $\Spec O_K$. We say that $X$ is \emph{strictly semistable} if it is Zariski locally \'{e}tale over
\[\Spec O_K[t_1,\dots,t_n]/(t_1\cdots t_s-\varpi)\]
for some integers $0\leq s\leq n$ (which may vary) and a uniformizer $\varpi$ of $K$.
\end{definition}

Let $X$ be a proper strictly semistable scheme over $O_K$. The special fiber $X_\kappa\coloneqq X\otimes_{O_K}\kappa$ is a normal crossing divisor of $X$. Suppose that $\{X_1,\dots,X_m\}$ is the set of irreducible components of $X_\kappa$. For $p\geq 0$, put
\[X^{(p)}_\kappa=\coprod_{I\subset\{1,\dots,m\},|I|=p+1}\bigcap_{i\in I}X_i.\] Then $X^{(p)}_\kappa$ is a finite disjoint union of smooth proper $\kappa$-schemes of codimension $p$. From \cite{Sai03}*{page 610}, we have the pullback map
\[\delta_p^*\colon\rH^q(X^{(p)}_{\overline{\kappa}},\Lambda(j))\to\rH^q(X^{(p+1)}_{\overline{\kappa}},\Lambda(j))\]
and the pushforward (Gysin) map
\[\delta_{p*}\colon\rH^q(X^{(p)}_{\overline{\kappa}},\Lambda(j))\to\rH^{q+2}(X^{(p-1)}_{\overline{\kappa}},\Lambda(j+1))\]
for every integer $j$. These maps satisfy the formula
\begin{align}\label{eq:delta}
\delta_{p-1}^*\circ\delta_{p*}+\delta_{p+1*}\circ\delta_p^*=0
\end{align}
For reader's convenience, we recall the definition here. For subsets $J\subset I\subset\{1,\dots,m\}$ such that $|I|=|J|+1$, let $i_{JI}\colon\bigcap_{i\in I}X_i\to\bigcap_{i\in J}X_i$ denote the closed immersion. If $I=\{i_0<\cdots<i_p\}$ and $J=I\setminus\{i_j\}$, then we put $\epsilon(J,I)=(-1)^j$. We define $\delta_p^*$ to be the alternating sum $\sum_{I\subset J,|I|=|J|-1=p+1}\epsilon(I,J)i_{IJ}^*$ of the pullback maps, and $\delta_{p*}$ to be the alternating sum $\sum_{I\supset J,|I|=|J|+1=p+1}\epsilon(J,I)i_{JI*}$ of the Gysin maps.

\begin{remark}\label{re:sign}
In general, the maps $\delta_p^*$ and $\delta_{p*}$ depend on the ordering of the irreducible components of $X_\kappa$. However, it is easy to see that the composite map $\delta_{1*}\circ\delta^*_0$ does not depend on such ordering.
\end{remark}

Let us recall the weight spectral sequence attached to $X$. Denote by $K^\ur\subset\overline{K}$ the maximal unramified extension, with the residue field $\overline{\kappa}$ which is a separable closure of $\kappa$. Then we have $\rG_K/\rI_K\simeq\rG_\kappa$. Denote by $t_0\colon\rI_K\to\Lambda_0(1)$ the ($p$-adic) tame quotient homomorphism, that is, the one sending $\sigma\in\rI_K$ to $(\sigma(\varpi^{1/p^n})/\varpi^{1/p^n})_n$ for a uniformizer $\varpi$ of $K$. We fix an element $T\in\rI_K$ such that $t_0(T)$ is a topological generator of $\Lambda_0(1)$.

We have the weight spectral sequence $\rE_X$ attached to the (proper strictly semistable) scheme $X$, where
\[(\rE_X)^{p,q}_1=\bigoplus_{i\geq\max(0,-p)}\rH^{q-2i}(X_{\overline{\kappa}}^{(p+2i)},\Lambda(-i))
\Rightarrow\rH^{p+q}(X_{\overline{K}},\Lambda).\]
This is also known as the Rapoport--Zink spectral sequence, first studied in \cite{RZ82}; here we will follow the convention and discussion in \cite{Sai03}. For $r\in\dZ$, put $\pres{r}\rE_X=\rE_X(r)$ and we will suppress the subscript $X$ in the notation of the spectral sequence if it causes no confusion.

By \cite{Sai03}*{Corollary 2.8 (2)}, we have a map $\mu\colon\rE^{\bullet-1,\bullet+1}_\bullet\to\rE^{\bullet+1,\bullet-1}_\bullet$ of spectral sequences (depending on $T$) and its version for $\pres{r}\rE$. For later use, we write down part of the map in the first page of $\pres{r}\rE$ \footnote{There is a typo in \cite{Sai03}*{Corollary 2.8 (2)} where the limit in the second direct summand should be $i-1\geq\max(0,-p-2)$.}.
\begin{align}\label{eq:spectral_sequence}
\resizebox{16cm}{!}{\xymatrix{
\pres{r}\rE^{-3,2r}_1=\bigoplus\limits_{i=3}^r\rH^{2r-2i}(X_{\overline{\kappa}}^{(2i-3)},\Lambda(r-i))
\ar[d]_-{\rd^{-3,2r}_1}\ar[rr]^-{\mu^{-2,2r-1}}&&
\bigoplus\limits_{i=2}^r\rH^{2r-2i}(X_{\overline{\kappa}}^{(2i-3)},\Lambda(r-i+1))=\pres{r}\rE^{-1,2r-2}_1 \ar[d]^-{\rd^{-1,2r-2}_1}\\
\pres{r}\rE^{-2,2r}_1=\bigoplus\limits_{i=2}^r\rH^{2r-2i}(X_{\overline{\kappa}}^{(2i-2)},\Lambda(r-i))
\ar[d]_-{\rd^{-2,2r}_1}\ar[rr]^-{\mu^{-1,2r-1}}&&
\bigoplus\limits_{i=1}^r\rH^{2r-2i}(X_{\overline{\kappa}}^{(2i-2)},\Lambda(r-i+1))=\pres{r}\rE^{0,2r-2}_1 \ar[d]^-{\rd^{0,2r-2}_1}\\
\pres{r}\rE^{-1,2r}_1=\bigoplus\limits_{i=1}^r\rH^{2r-2i}(X_{\overline{\kappa}}^{(2i-1)},\Lambda(r-i))
\ar[d]_-{\rd^{-1,2r}_1}\ar[rr]^-{\mu^{0,2r-1}}&&
\bigoplus\limits_{i=1}^r\rH^{2r-2i}(X_{\overline{\kappa}}^{(2i-1)},\Lambda(r-i+1))=\pres{r}\rE^{1,2r-2}_1 \ar[d]^-{\rd^{1,2r-2}_1}\\
\pres{r}\rE^{0,2r}_1=\bigoplus\limits_{i=0}^r\rH^{2r-2i}(X_{\overline{\kappa}}^{(2i)},\Lambda(r-i))
\ar[d]_-{\rd^{0,2r}_1}\ar[rr]^-{\mu^{1,2r-1}}&&
\bigoplus\limits_{i=1}^r\rH^{2r-2i}(X_{\overline{\kappa}}^{(2i)},\Lambda(r-i+1))
=\pres{r}\rE^{2,2r-2}_1 \ar[d]^-{\rd^{1,2r-2}_1}\\
\pres{r}\rE^{1,2r}_1=\bigoplus\limits_{i=0}^r\rH^{2r-2i}(X_{\overline{\kappa}}^{(2i+1)},\Lambda(r-i))
\ar[rr]^-{\mu^{2,2r-1}}&&
\bigoplus\limits_{i=1}^r\rH^{2r-2i}(X_{\overline{\kappa}}^{(2i+1)},\Lambda(r-i+1))=\pres{r}\rE^{3,2r-2}_1,
}}
\end{align}
in which all differentials $\rd^{p,q}_1$ are appropriate sums of pullback and pushforward maps. The map $\mu^{p,q}\coloneqq\mu^{p,q}_1\colon\pres{r}\rE^{p-1,q+1}_1\to\pres{r}\rE^{p+1,q-1}_1$ is the sum of its restrictions to each direct summand $\rH^{q+1-2i}(X_{\overline{\kappa}}^{(2i+1)},\Lambda(r-i))$, and such restriction is the tensor product by $t_0(T)$ (resp.\ the zero map) if $\rH^{q+1-2i}(X_{\overline{\kappa}}^{(2i+1)},\Lambda(r-i+1))$ does (resp.\ does not) appear in the target. In particular, the map $\mu^{0,2r-1}$ is an isomorphism.

Denote by
\begin{align}\label{eq:filtration}
\cdots\subset\sF^{p+1}\rH^n(X_{\overline{K}},\Lambda(r))\subset
\sF^p\rH^n(X_{\overline{K}},\Lambda(r))\subset\cdots\subset\rH^n(X_{\overline{K}},\Lambda(r))
\end{align}
the filtration induced by the spectral sequence, such that
\[\frac{\sF^p\rH^n(X_{\overline{K}},\Lambda(r))}{\sF^{p+1}\rH^n(X_{\overline{K}},\Lambda(r))}\simeq\pres{r}\rE^{p,n-p}_\infty.\]

\subsection{Potential map and Galois cohomology}
\label{ss:potential_map}

Let $X$ be a proper strictly semistable scheme over $O_K$ purely of relative dimension $n$.

For every integer $j$, put
\[B^j(X_\kappa,\Lambda)=\Ker[\delta_0^*\colon\rH^{2j}(X_{\overline{\kappa}}^{(0)},\Lambda(j))\to\rH^{2j}(X_{\overline{\kappa}}^{(1)},\Lambda(j))],\]
and dually
\[B_j(X_\kappa,\Lambda)=
\coker[\delta_{1*}\colon\rH^{2(n-1-j)}(X_{\overline{\kappa}}^{(1)},\Lambda(n-1-j))\to\rH^{2(n-j)}(X_{\overline{\kappa}}^{(0)},\Lambda(n-j))].\]
Note that if we regard $B^j(X_\kappa,\Lambda)$ as a submodule of $\rE^{0,2j}_1$, then it is contained in the kernel of $\rd^{0,2j}_1$ since the restriction of $\rd^{0,2j}_1$ to $\rH^{2j}(X_{\overline{\kappa}}^{(0)},\Lambda(j))$ coincides with $\delta_0^*$ by \cite{Sai03}*{Proposition 2.10}. Thus we may put
\[B^j(X_\kappa,\Lambda)^0=\Ker[B^j(X_\kappa,\Lambda)\to\pres{j}\rE^{0,2j}_2],\]
which is a submodule of $B^j(X_\kappa,\Lambda)$. Set $B_j(X_\kappa,\Lambda)\to B_j(X_\kappa,\Lambda)_0$ to be the quotient map dual to the inclusion $B^j(X_\kappa,\Lambda)^0\subset B^j(X_\kappa,\Lambda)$.

\begin{lem}\label{le:galois_cohomology}
In \eqref{eq:spectral_sequence}, we have the equality
\[\IM\rd^{-1,2r}_1\cap\rH^{2r}(X_{\overline{\kappa}}^{(0)},\Lambda(r))=B^r(X_\kappa,\Lambda)^0\]
of submodules of $\pres{r}\rE^{0,2r}_1$.
\end{lem}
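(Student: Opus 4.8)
The plan is to unwind the definitions of both sides inside the first page of the weight spectral sequence $\pres{r}\rE$ displayed in \eqref{eq:spectral_sequence}, and to identify $\rd^{-1,2r}_1$ explicitly on the relevant direct summands. Recall that $\pres{r}\rE^{-1,2r}_1=\bigoplus_{i=1}^r\rH^{2r-2i}(X_{\overline{\kappa}}^{(2i-1)},\Lambda(r-i))$ and that the differential $\rd^{-1,2r}_1\colon\pres{r}\rE^{-1,2r}_1\to\pres{r}\rE^{0,2r}_1=\bigoplus_{i=0}^r\rH^{2r-2i}(X_{\overline{\kappa}}^{(2i)},\Lambda(r-i))$ is, by \cite{Sai03}*{Proposition 2.10}, an appropriate alternating sum of pullback maps $\delta^*$ and Gysin maps $\delta_*$ between the summands. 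The component of $\rd^{-1,2r}_1$ landing in $\rH^{2r}(X_{\overline{\kappa}}^{(0)},\Lambda(r))$ (the $i=0$ summand of the target) receives contributions only from source summands that map into it, namely from the $i=1$ piece $\rH^{2r-2}(X_{\overline{\kappa}}^{(1)},\Lambda(r-1))$ via the Gysin map $\delta_{1*}$. First I would make this bookkeeping precise: write an element of $\pres{r}\rE^{-1,2r}_1$ as a tuple $(x_i)_{i=1}^r$ and compute $\rd^{-1,2r}_1(x_i)_i$, isolating its $\rH^{2r}(X_{\overline{\kappa}}^{(0)},\Lambda(r))$-component as $\delta_{1*}(x_1)$ (up to sign).

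Granting that, the inclusion ``$\subseteq$'' is immediate: any element of $\IM\rd^{-1,2r}_1$ whose image lies in $\rH^{2r}(X_{\overline{\kappa}}^{(0)},\Lambda(r))$ is of the form $\delta_{1*}(x_1)$, hence lies in $\IM\delta_{1*}$, hence maps to zero in $B^r(X_\kappa,\Lambda)$ (since $B^r$ is by definition the kernel of $\delta_0^*$ modulo nothing — wait, more precisely $B^r$ is a submodule of $\rE^{0,2r}_1$ and the claim is that $\delta_{1*}(x_1)$ lies in $B^r$ and additionally dies in $\pres{r}\rE^{0,2r}_2$). For the first point, $\delta_0^*\circ\delta_{1*}=0$ follows from \eqref{eq:delta} together with the fact that $\delta_{-1}^*$ is zero on the relevant range (or directly from $\delta_{1*}\circ\delta_0^*$ landing where it does); so $\delta_{1*}(x_1)\in B^r(X_\kappa,\Lambda)$. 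For the second point, $\delta_{1*}(x_1)$ is in the image of the differential $\rd^{-1,2r}_1$, hence represents zero in $\pres{r}\rE^{0,2r}_2$; so it lies in $B^r(X_\kappa,\Lambda)^0$ by definition of the latter as $\Ker[B^r(X_\kappa,\Lambda)\to\pres{r}\rE^{0,2r}_2]$. Conversely, given $b\in B^r(X_\kappa,\Lambda)^0$, by definition $b$ maps to zero in $\pres{r}\rE^{0,2r}_2$, which means $b\in\IM\rd^{-1,2r}_1$; and $b$ lies in $\rH^{2r}(X_{\overline{\kappa}}^{(0)},\Lambda(r))$ since $B^r(X_\kappa,\Lambda)$ is by construction a submodule of that summand. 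This gives ``$\supseteq$''.

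The subtle point — and the step I expect to require the most care — is the exact identification of which pullback/Gysin components of $\rd^{-1,2r}_1$ can carry mass into the $i=0$ summand $\rH^{2r}(X_{\overline{\kappa}}^{(0)},\Lambda(r))$ of the target, and with what Tate twists and signs, so that one genuinely knows the image of $\rd^{-1,2r}_1$ intersected with that summand equals $\IM\delta_{1*}$ (not something smaller, and not something larger coming from pullbacks out of $X^{(0)}$ — which cannot happen since $\delta_0^*$ raises the simplicial degree, but one must confirm the source indexing in \eqref{eq:spectral_sequence} forbids an $i=0$ source term for $\rd^{-1,2r}_1$). Here one must be careful that the only source summand mapping by a degree-preserving-in-cohomological-degree, simplicial-degree-lowering map into $\rH^{2r}(X^{(0)})$ is $\rH^{2r-2}(X^{(1)},\Lambda(r-1))$, and that the component map is exactly $\delta_{1*}$ up to the sign conventions of \cite{Sai03}. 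The typo flagged in the footnote to \eqref{eq:spectral_sequence} concerning the index range is exactly of this flavour, so I would double-check the indexing against \cite{Sai03}*{Corollary 2.8, Proposition 2.10} before concluding. Once the component description is nailed down, the equality of submodules follows formally as above.
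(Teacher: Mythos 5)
Your converse direction and the overall reduction to the two conditions ``$y\in B^r(X_\kappa,\Lambda)$'' and ``$y$ dies in $\pres{r}\rE^{0,2r}_2$'' are fine, but the forward direction rests on a false identity. You claim $\delta_0^*\circ\delta_{1*}=0$ ``from \eqref{eq:delta}''. What \eqref{eq:delta} (with $p=1$) actually gives is $\delta_0^*\circ\delta_{1*}=-\delta_{2*}\circ\delta_1^*$, which is nonzero in general; the paper uses precisely this identity in the proof of Lemma \ref{le:galois_cohomology_1}, where the containment $\IM(\delta_{1*}\circ\delta_0^*)\subset B^r(X_\kappa,\Lambda)$ has to be deduced from the simplicial identity $\delta_1^*\circ\delta_0^*=0$ rather than from any vanishing of $\delta_0^*\circ\delta_{1*}$. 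So ``$y=\pm\delta_{1*}(x_1)$, hence $y\in\Ker\delta_0^*$'' does not follow: an arbitrary element of $\IM\delta_{1*}$ need not lie in $B^r(X_\kappa,\Lambda)$, and your argument as written proves too much.

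The statement you need is nevertheless true, and the correct route bypasses the component analysis of $\rd^{-1,2r}_1$ that you flag as the delicate point. Since $\rd^{0,2r}_1\circ\rd^{-1,2r}_1=0$, every $y\in\IM\rd^{-1,2r}_1$ lies in $\Ker\rd^{0,2r}_1$; and by \cite{Sai03}*{Proposition 2.10} (already quoted just before the lemma) the restriction of $\rd^{0,2r}_1$ to the summand $\rH^{2r}(X_{\overline{\kappa}}^{(0)},\Lambda(r))$ is exactly $\delta_0^*$, because no Gysin component can emanate from the $i=0$ summand. Hence if $y$ moreover lies in that summand, then $\delta_0^*y=0$, i.e.\ $y\in B^r(X_\kappa,\Lambda)$. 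Combined with your (correct) observation that, for elements of $B^r(X_\kappa,\Lambda)\subset\Ker\rd^{0,2r}_1$, dying in $\pres{r}\rE^{0,2r}_2$ is equivalent to lying in $\IM\rd^{-1,2r}_1$, this yields $\IM\rd^{-1,2r}_1\cap\rH^{2r}(X_{\overline{\kappa}}^{(0)},\Lambda(r))=\IM\rd^{-1,2r}_1\cap B^r(X_\kappa,\Lambda)=B^r(X_\kappa,\Lambda)^0$, which is the paper's two-line proof. The careful identification of which components of $\rd^{-1,2r}_1$ hit the $i=0$ summand, and with what signs, is simply not needed for this lemma.
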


\begin{proof}
Note that $\Ker\rd^{0,2r}_1\cap\rH^{2r}(X_{\overline{\kappa}}^{(0)},\Lambda(r))=B^r(X_\kappa,\Lambda)$. Thus
\[\IM\rd^{-1,2r}_1\cap\rH^{2r}(X_{\overline{\kappa}}^{(0)},\Lambda(r))=\IM\rd^{-1,2r}_1\cap B^r(X_\kappa,\Lambda),\]
which coincides with $B^r(X_\kappa,\Lambda)^0$ by definition.
\end{proof}

\begin{lem}\label{le:galois_cohomology_1}
Consider the composite map
\[\rH^{2r-2}(X_{\overline{\kappa}}^{(0)},\Lambda(r-1))\xrightarrow{\delta_0^*}\rH^{2r-2}(X_{\overline{\kappa}}^{(1)},\Lambda(r-1))
\xrightarrow{\delta_{1*}}\rH^{2r}(X_{\overline{\kappa}}^{(0)},\Lambda(r)).\]
Then its image is contained in $B^r(X_\kappa,\Lambda)^0$ and it factorizes through $B_{n+1-r}(X_\kappa,\Lambda)_0$.
\end{lem}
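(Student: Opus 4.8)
The plan is to verify the two assertions separately, both by tracking elements through the explicit description of the differentials in \eqref{eq:spectral_sequence} together with the combinatorial identity \eqref{eq:delta}. For the first claim, I would argue that the composite $\delta_{1*}\circ\delta_0^*$ lands in $B^r(X_\kappa,\Lambda)$ and then in $B^r(X_\kappa,\Lambda)^0$. That the image lies in $B^r(X_\kappa,\Lambda)=\Ker\delta_0^*$ on $\rH^{2r}(X^{(0)}_{\overline\kappa},\Lambda(r))$ is immediate from \eqref{eq:delta}: applying $\delta_0^*$ to $\delta_{1*}\delta_0^*$ gives $(\delta_0^*\circ\delta_{1*})\circ\delta_0^* = -(\delta_{2*}\circ\delta_1^*)\circ\delta_0^* = -\delta_{2*}(\delta_1^*\delta_0^*)$, and $\delta_1^*\circ\delta_0^* = 0$ since it is a composite of consecutive arrows in the Čech-type complex (equivalently, use \eqref{eq:delta} one degree lower, noting that there is no $\delta_{-1}$). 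To upgrade to $B^r(X_\kappa,\Lambda)^0$, I would exhibit $\delta_{1*}\delta_0^*(x)$ as an element of $\IM\rd^{-1,2r}_1$ and invoke Lemma \ref{le:galois_cohomology}: indeed the class $x\in\rH^{2r-2}(X^{(0)}_{\overline\kappa},\Lambda(r-1))$ sits as a direct summand (the $i=1$ term) inside $\pres{r}\rE^{-1,2r}_1=\bigoplus_{i=1}^r\rH^{2r-2i}(X^{(2i-1)}_{\overline\kappa},\Lambda(r-i))$, and by the description of $\rd^{-1,2r}_1$ as an appropriate sum of pullback and pushforward maps, the component of $\rd^{-1,2r}_1(x)$ landing in $\rH^{2r}(X^{(0)}_{\overline\kappa},\Lambda(r))\subset\pres{r}\rE^{0,2r}_1$ is precisely $\pm\delta_{1*}\delta_0^*(x)$ (the $\delta_0^*$ appears because the differential on a single summand involves first a pullback to $X^{(2i)}$ and then, combined with the Gysin maps, produces $\delta_{1*}$; here $i=1$ so $X^{(2i-1)}=X^{(1)}$ and $X^{(2i-2)}=X^{(0)}$, and the only way to reach the $(0)$-summand of the target is via $\delta_{1*}$, preceded by the $\delta_0^*$ that is part of the internal differential structure). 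The sign is harmless. Thus the image lies in $\IM\rd^{-1,2r}_1\cap\rH^{2r}(X^{(0)}_{\overline\kappa},\Lambda(r)) = B^r(X_\kappa,\Lambda)^0$.

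For the factorization claim, I would use Poincaré duality on the smooth proper $\overline\kappa$-schemes $X^{(0)}_{\overline\kappa}$ (of dimension $n$) and $X^{(1)}_{\overline\kappa}$ (of dimension $n-1$). Under the perfect pairing
\[
\rH^{2r-2}(X^{(0)}_{\overline\kappa},\Lambda(r-1))\times\rH^{2(n-r+1)}(X^{(0)}_{\overline\kappa},\Lambda(n-r+1))\to\Lambda,
\]
the map $\delta_0^*\colon\rH^{2r-2}(X^{(0)})\to\rH^{2r-2}(X^{(1)})$ is adjoint to $\delta_{1*}\colon\rH^{2(n-r)}(X^{(1)})\to\rH^{2(n-r+1)}(X^{(0)})$, by compatibility of pullback and Gysin with duality (this is exactly the symmetry built into the definitions of $B^j$ and $B_j$ in \S\ref{ss:potential_map}). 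Consequently the composite $\delta_{1*}\circ\delta_0^*$ on $\rH^{2r-2}(X^{(0)})$ is, up to the duality identification, the transpose of the composite $\delta_{1*}\circ\delta_0^*$ on $\rH^{2(n-r)}(X^{(0)})$, i.e.\ the map defining $B_{n+1-r}(X_\kappa,\Lambda)$ in the relevant degree. To make this bookkeeping precise: $B_j(X_\kappa,\Lambda)$ is the cokernel of $\delta_{1*}$ out of $\rH^{2(n-1-j)}(X^{(1)})$, so with $j = n+1-r$ we have $n-1-j = r-2$, and $B_{n+1-r}(X_\kappa,\Lambda) = \coker[\delta_{1*}\colon\rH^{2r-4}(X^{(1)})\to\rH^{2r-2}(X^{(0)})]$ — wait, this needs re-checking against the source degree $\rH^{2r-2}(X^{(0)},\Lambda(r-1))$ of our map. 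The cleaner route is: the map in the Lemma has source $\rH^{2r-2}(X^{(0)},\Lambda(r-1))$; dualizing, its target-side data matches $B_j$ where the duality sends this cohomology to $\rH^{2(n-r+1)}(X^{(0)},\Lambda(n-r+1))$, and one reads off $j$ from $2(n-j) = 2(n-r+1)$, i.e.\ $j = r-1$; but the \emph{source} of $\delta_{1*}$ in the definition of $B_{r-1}$ is $\rH^{2(n-r)}(X^{(1)},\Lambda(n-r))$, which under duality is $\rH^{2r-2}(X^{(1)},\Lambda(r-1))$ — precisely the middle term of our composite. Hmm, so the index should be $j = r-1$, not $n+1-r$; I would double-check the normalization against \S\ref{ss:potential_map} when writing the final version, since the discrepancy suggests either a relative-dimension shift I'm mishandling or a different duality convention, but in any case the factorization is formal once the correct index is pinned down: the composite annihilates nothing extra beyond what is forced, and passes to the quotient $B_{\bullet}(X_\kappa,\Lambda)$ by the very definition of that cokernel.

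The factorization \emph{through} $B_{n+1-r}(X_\kappa,\Lambda)_0$ (the further quotient dual to $B^{\bullet}(X_\kappa,\Lambda)^0\subset B^{\bullet}(X_\kappa,\Lambda)$) then follows by dualizing the first assertion: since the image of $\delta_{1*}\delta_0^*$ on the $(n-r)$-side lands in $B^{\bullet}(X_\kappa,\Lambda)^0$, its transpose kills the annihilator of $B^{\bullet}(X_\kappa,\Lambda)^0$, hence descends through the quotient $B_\bullet(X_\kappa,\Lambda)\to B_\bullet(X_\kappa,\Lambda)_0$. I expect the main obstacle to be purely one of sign- and index-bookkeeping: correctly identifying which direct summand of $\pres{r}\rE^{-1,2r}_1$ the source cohomology group sits in, which component of the matrix $\rd^{-1,2r}_1$ picks out $\delta_{1*}\delta_0^*$, and getting the duality-induced index ($j = r-1$ versus $n+1-r$ — to be reconciled with the paper's own normalization in \S\ref{ss:potential_map}) exactly right so that the statement as printed is literally what the duality yields. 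None of these steps is conceptually deep; the content is entirely in \eqref{eq:delta}, Saito's description of $\rd_1$ as sums of $\delta^*$'s and $\delta_*$'s, Lemma \ref{le:galois_cohomology}, and the duality compatibility of $\delta^*$ with $\delta_*$.
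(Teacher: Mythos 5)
Your overall strategy is the same as the paper's (use \eqref{eq:delta} to land in $B^r(X_\kappa,\Lambda)$, exhibit the class as lying in $\IM\rd^{-1,2r}_1$ and invoke Lemma \ref{le:galois_cohomology}, then dualize for the second half), but the implementation of the middle step is wrong as written. The class $x\in\rH^{2r-2}(X_{\overline\kappa}^{(0)},\Lambda(r-1))$ is \emph{not} a summand of $\pres{r}\rE^{-1,2r}_1$: the $i=1$ summand there is $\rH^{2r-2}(X_{\overline\kappa}^{(1)},\Lambda(r-1))$, i.e.\ cohomology of $X^{(1)}$, not of $X^{(0)}$ (and $\rH^{2r-2}(X_{\overline\kappa}^{(0)},\Lambda(r-1))$ does not occur anywhere on the first page of $\pres{r}\rE$, since the would-be position $(p,q)=(-2,2r)$, $i=1$ violates $i\geq\max(0,-p)$). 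Moreover the $E_1$-differential restricted to a single summand is a \emph{sum} of a pullback and a Gysin map (\cite{Sai03}*{Proposition 2.10}), never a composite "first pull back, then push forward", so there is no "internal $\delta_0^*$" inside $\rd^{-1,2r}_1$ that could produce $\delta_{1*}\delta_0^*(x)$ from $x$. The correct move — which is exactly what the paper does — is to apply $\rd^{-1,2r}_1$ to the element $\delta_0^*x$, which \emph{does} lie in the $i=1$ summand $\rH^{2r-2}(X_{\overline\kappa}^{(1)},\Lambda(r-1))$ of $\pres{r}\rE^{-1,2r}_1$: there the restriction of $\rd^{-1,2r}_1$ is $\delta_{1*}+\delta_1^*$, so $\rd^{-1,2r}_1(\delta_0^*x)=\delta_{1*}\delta_0^*x+\delta_1^*\delta_0^*x=\delta_{1*}\delta_0^*x$, and Lemma \ref{le:galois_cohomology} gives $\delta_{1*}\delta_0^*x\in B^r(X_\kappa,\Lambda)^0$.

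Your index worry in the second half dissolves once you notice that no Poincar\'e dualization of the \emph{source} is needed: by definition $B_j(X_\kappa,\Lambda)$ is a quotient of $\rH^{2(n-j)}(X_{\overline\kappa}^{(0)},\Lambda(n-j))$, so setting $2(n-j)=2r-2$ gives $j=n+1-r$, and $B_{n+1-r}(X_\kappa,\Lambda)$ is a quotient of the source group $\rH^{2r-2}(X_{\overline\kappa}^{(0)},\Lambda(r-1))$ itself; your $j=r-1$ arose from matching the \emph{dual} of the source against the definition, which double-counts the duality. Duality enters only as in the paper's one-line remark: the composite in the Lemma is the Poincar\'e transpose of the composite $\delta_{1*}\circ\delta_0^*$ in complementary degrees, i.e.\ the first assertion with $r$ replaced by $n+1-r$; since that map has image in $B^{n+1-r}(X_\kappa,\Lambda)^0$, its transpose kills the annihilator of $B^{n+1-r}(X_\kappa,\Lambda)^0$ and hence factors through the quotient $B_{n+1-r}(X_\kappa,\Lambda)\to B_{n+1-r}(X_\kappa,\Lambda)_0$, which is dual to the inclusion $B^{n+1-r}(X_\kappa,\Lambda)^0\subset B^{n+1-r}(X_\kappa,\Lambda)$ by construction. (Note in passing that along the way one also needs the transpose to kill $\IM[\delta_{1*}\colon\rH^{2r-4}(X_{\overline\kappa}^{(1)},\Lambda(r-2))\to\rH^{2r-2}(X_{\overline\kappa}^{(0)},\Lambda(r-1))]$, which is the adjoint statement to the containment $\IM(\delta_{1*}\delta_0^*)\subset\Ker\delta_0^*=B^{n+1-r}(X_\kappa,\Lambda)$ already established; this is part of what "factorizes through $B_{n+1-r}(X_\kappa,\Lambda)_0$" packages.) With these two corrections your argument coincides with the paper's proof.
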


\begin{proof}
It suffices to show that the image of $\delta_{1*}\circ\delta_0^*$ is contained in $B^r(X_\kappa,\Lambda)^0$, since the other half follows from duality.

By \eqref{eq:delta}, we have $\delta_0^*\circ\delta_{1*}=-\delta_{2*}\circ\delta_1^*$. Thus the image of $\delta_{1*}\circ\delta_0^*$ is contained in $B^r(X_\kappa,\Lambda)$ as $\delta_1^*\circ\delta_0^*=0$. It remains to show that the image of $\delta_{1*}\circ\delta_0^*$ is contained in the image of $\rd^{-1,2r}_1$. This follows from \cite{Sai03}*{Proposition 2.10} that the restriction of $\rd^{-1,2r}_1$ to $\rH^{2r-2}(X_{\overline{\kappa}}^{(1)},\Lambda(r-1))$ is the sum of $\delta_{1*}$ and $\delta_1^*$.
\end{proof}

By the above lemma, we obtain the following two maps
\begin{align}\label{eq:potential_prepremap}
B_{n+1-r}(X_\kappa,\Lambda)\to B^r(X_\kappa,\Lambda);
\end{align}
\begin{align}\label{eq:potential_premap}
B_{n+1-r}(X_\kappa,\Lambda)_0\to B^r(X_\kappa,\Lambda)^0.
\end{align}
Put $A^j(X_\kappa,\Lambda)^?=(B^j(X_\kappa,\Lambda)^?)^{\rG_\kappa}$ and $A_j(X_\kappa,\Lambda)_?=B_j(X_\kappa,\Lambda)_?^{\rG_\kappa}$ for $?=\emptyset,0$.

\begin{definition}[Potential map]\label{de:potential_map}
We call the following map
\begin{align*}
\Delta^r\colon A_{n+1-r}(X_\kappa,\Lambda)_0\to A^r(X_\kappa,\Lambda)^0.
\end{align*}
induced by \eqref{eq:potential_premap} the \emph{potential map}.
\end{definition}

Now we use the potential map to study the group $\rH^1_\sing(K,\rH^{2r-1}(X_{\overline{K}},\Lambda(r)))$. We start from the following lemma.

\begin{lem}\label{le:singular}
There is a canonical isomorphism
\begin{align*}
\rH^1_\sing(K,\rH^{2r-1}(X_{\overline{K}},\Lambda(r)))\simeq\rH^1(\rI_K,\rH^{2r-1}(X_{\overline{K}},\Lambda(r)))^{\rG_\kappa}.
\end{align*}
\end{lem}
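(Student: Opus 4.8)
The statement to prove is that there is a canonical isomorphism
\[
\rH^1_\sing(K,\rH^{2r-1}(X_{\overline{K}},\Lambda(r)))\simeq\rH^1(\rI_K,\rH^{2r-1}(X_{\overline{K}},\Lambda(r)))^{\rG_\kappa}.
\]
Write $M=\rH^{2r-1}(X_{\overline{K}},\Lambda(r))$, a $\Lambda[\rG_K]$-module. The natural starting point is the inflation--restriction exact sequence for the normal subgroup $\rI_K\subset\rG_K$ with quotient $\rG_K/\rI_K\simeq\rG_\kappa$:
\[
0\to\rH^1(\rG_\kappa,M^{\rI_K})\to\rH^1(\rG_K,M)\to\rH^1(\rI_K,M)^{\rG_\kappa}\xrightarrow{\ \partial'\ }\rH^2(\rG_\kappa,M^{\rI_K}).
\]
By definition, $\rH^1_\unr(K,M)=\rH^1(\rG_\kappa,M^{\rI_K})$ is the image of the inflation map, and $\rH^1_\sing(K,M)=\rH^1(K,M)/\rH^1_\unr(K,M)$, so the restriction map identifies $\rH^1_\sing(K,M)$ with the image of $\rH^1(\rG_K,M)\to\rH^1(\rI_K,M)^{\rG_\kappa}$, i.e.\ with $\ker(\partial')$. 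Thus the claim is equivalent to the surjectivity of the restriction map onto $\rH^1(\rI_K,M)^{\rG_\kappa}$, i.e.\ to the vanishing (or at least the vanishing on $\rH^1(\rI_K,M)^{\rG_\kappa}$) of the transgression $\partial'$.

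The key point is that $\rG_\kappa$ has strict cohomological dimension $\leq 1$ for the relevant coefficients. When $\kappa$ is finite (the case that matters for the applications, since $K$ will be a completion of a number field), $\rG_\kappa\simeq\widehat{\dZ}$ has cohomological dimension $1$, so $\rH^2(\rG_\kappa,M^{\rI_K})=0$ for any $\Lambda$-module, whether $\Lambda=\dQ_p$ or $\Lambda=\dZ/p^\nu$; hence $\partial'=0$ identically and the restriction map is onto. For a general henselian discretely valued $\kappa$ one can instead invoke $\mathrm{cd}(\rG_\kappa)\le 1$ when $\kappa$ is separably closed or quasi-algebraically closed; but since in this paper $\kappa$ is always finite, I would simply state that hypothesis (it is implicit from the global setup) and quote $\mathrm{cd}(\widehat{\dZ})=1$ as in Serre's \emph{Galois Cohomology}. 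Canonicity of the isomorphism is then automatic: it is induced by the restriction map $\rH^1(\rG_K,M)\to\rH^1(\rI_K,M)^{\rG_\kappa}$, which involves no choices (in particular it is independent of the chosen uniformizer $\varpi$ and of the generator $T\in\rI_K$).

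\emph{The main obstacle} is essentially bookkeeping rather than depth: one must make sure the identification $\rH^1_\sing(K,M)=\rH^1(\rG_K,M)/\rH^1_\unr(K,M)$ from the paper's conventions matches the cokernel of inflation in inflation--restriction, and that "unramified classes" $\rH^1_\unr(K,M)$ is exactly $\rH^1(\rG_\kappa,M^{\rI_K})$ (this is the definition recalled in \Sec\ref{ss:notation}). After that, the only substantive input is $\mathrm{cd}(\rG_\kappa)\le 1$, which kills $\rH^2(\rG_\kappa,-)$ and forces restriction to be surjective, giving the displayed canonical isomorphism. I would write the proof in three short lines: invoke inflation--restriction, recall the definitions of $\rH^1_\unr$ and $\rH^1_\sing$, and conclude by $\rH^2(\rG_\kappa,-)=0$.
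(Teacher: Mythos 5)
Your argument is correct and is essentially the paper's: inflation–restriction identifies $\rH^1_\sing(K,M)$ with the image of restriction inside $\rH^1(\rI_K,M)^{\rG_\kappa}$, and the isomorphism follows from the vanishing of $\rH^2(\rG_\kappa,M^{\rI_K})$, exactly as in the paper (where, as you note, the finiteness of $\kappa$ relevant to the applications is what makes this vanishing hold). The only point to tighten is the case $\Lambda=\dQ_p$ (and the intermediate $\dZ_p$-lattice), where the coefficients are not discrete torsion modules so $\mathrm{cd}(\widehat{\dZ})=1$ does not apply verbatim to the continuous cohomology; the paper fills this by Jannsen's comparison $\rH^2(\rG_\kappa,N)\simeq\varprojlim_n\rH^2(\rG_\kappa,N/p^n)$ for a $\dZ_p$-lattice and then tensoring with $\dQ_p$, and your blanket appeal to $\mathrm{cd}=1$ should be supplemented by this routine reduction.
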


\begin{proof}
By the inflation-restriction exact sequence, it suffices to show the cohomology group $\rH^2(\rG_\kappa,\rH^{2r-1}(X_{\overline{K}},\Lambda(r))^{\rI_K})$ vanishes. Suppose that $\Lambda$ is not $\dQ_p$. Then
\[\rH^2(\rG_\kappa,\rH^{2r-1}(X_{\overline{K}},\Lambda(r))^{\rI_K})\simeq
\varprojlim_{n}\rH^2(\rG_\kappa,\rH^{2r-1}(X_{\overline{K}},\Lambda(r))^{\rI_K}/p^n)\]
by \cite{Jan88}*{(2.1)}. Thus $\rH^2(\rG_\kappa,\rH^{2r-1}(X_{\overline{K}},\Lambda(r))^{\rI_K})=0$ as $\rH^2(\rG_\kappa,\rH^{2r-1}(X_{\overline{K}},\Lambda(r))^{\rI_K}/p^n)=0$. When $\Lambda=\dQ_p$, $\rH^2(\rG_\kappa,\rH^{2r-1}(X_{\overline{K}},\Lambda(r))^{\rI_K})\simeq
\rH^2(\rG_\kappa,\rH^{2r-1}(X_{\overline{K}},\Lambda_0(r))^{\rI_K})\otimes_{\Lambda_0}\Lambda=0$ as well. Thus the lemma follows.
\end{proof}

\begin{definition}[Nice coefficient]\label{de:nice_coefficient}
We say that $\Lambda$ is a \emph{nice coefficient} for the spectral sequence $\pres{r}\rE=\pres{r}\rE_X$ if the following are satisfied:
\begin{enumerate}
  \item $\pres{r}\rE$, with the coefficient $\Lambda$, degenerates at the second page;

  \item if $\pres{r}\rE^{p,2r-1-p}_2(-1)$ has a non-trivial subquotient on which $\rG_\kappa$ acts trivially, then $p=1$.
\end{enumerate}
We say that $\Lambda$ is a \emph{very nice coefficient} if moreover
\begin{enumerate}\setcounter{enumi}{2}
  \item for every subquotient $\Lambda[\rG_\kappa]$-module $M$ of $\ker\mu^{1,2r-1}\oplus\coker\mu^{-1,2r-1}(-1)$, the natural map
      $M^{\rG_\kappa}\to M_{\rG_\kappa}$ is an isomorphism.
\end{enumerate}
\end{definition}

\begin{remark}
The coefficient ring $\Lambda=\dQ_p$ is always very nice for $\pres{r}\rE_X$.
\end{remark}

\begin{theorem}\label{th:galois_cohomology}
Suppose that $\Lambda$ is a nice coefficient for $\pres{r}\rE_X$.
\begin{enumerate}
  \item We have a canonical map
        \[\eta^r\colon A^r(X_\kappa,\Lambda)^0\to\rH^1_\sing(K,\rH^{2r-1}(X_{\overline{K}},\Lambda(r))).\]

  \item If $\Lambda$ is moreover very nice, then we have a canonical exact sequence
        \begin{multline*}
        0 \to \rH^1_\unr(K,\rH^{2r-1}(X_{\overline{K}},\Lambda(r)))\to
        A_{n+1-r}(X_\kappa,\Lambda)_0 \\
        \xrightarrow{\Delta^r} A^r(X_\kappa,\Lambda)^0
        \xrightarrow{\eta^r}
        \rH^1_\sing(K,\rH^{2r-1}(X_{\overline{K}},\Lambda(r))) \to
        0.
        \end{multline*}
\end{enumerate}
\end{theorem}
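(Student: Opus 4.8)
The plan is to read off everything from the weight spectral sequence $\pres{r}\rE_X$ together with the tame/inertia analysis already set up. First I would use Lemma \ref{le:singular} to replace $\rH^1_\sing(K,\rH^{2r-1}(X_{\overline K},\Lambda(r)))$ by $\rH^1(\rI_K,\rH^{2r-1}(X_{\overline K},\Lambda(r)))^{\rG_\kappa}$. Since $\rI_K$ acts through its tame quotient $\Lambda_0(1)\cong\widehat{\dZ}_p(1)$ with fixed topological generator $T$, the group $\rH^1(\rI_K,-)$ is computed by the two-term complex $1-T$ (equivalently, by the monodromy operator $N=\log$ of $T$ acting as $\mu$), so $\rH^1(\rI_K,\rH^{2r-1}(X_{\overline K},\Lambda(r)))\cong \coker\bigl(N\colon \rH^{2r-1}(X_{\overline K},\Lambda(r))\to \rH^{2r-1}(X_{\overline K},\Lambda(r))(-1)\bigr)$, while the fixed part $\rH^{2r-1}(X_{\overline K},\Lambda(r))^{\rI_K}$ is $\ker N$, which governs $\rH^1_\unr$.

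Next I would feed in the degeneration hypothesis. Because $\Lambda$ is nice, $\pres{r}\rE_X$ degenerates at $E_2$, so the associated graded of $\rH^{2r-1}(X_{\overline K},\Lambda(r))$ for the monodromy filtration is $\bigoplus_p \pres{r}\rE^{p,2r-1-p}_2$; the monodromy operator $N$ is induced by the map $\mu$ of spectral sequences, shifting $p\mapsto p+2$. Condition (2) of niceness forces the only $E_2$-piece in total degree $2r-1$ on which $\rG_\kappa$ can act trivially (after the Tate twist $(-1)$) to sit in column $p=1$; hence, taking $\rG_\kappa$-invariants and using that $\rG_\kappa$-cohomology of the other weight pieces contributes nothing in the relevant spot, the $\rG_\kappa$-invariants of $\coker N$ are computed entirely from the single arrow $\mu\colon\pres{r}\rE^{-1,2r}_2\to\pres{r}\rE^{1,2r-2}_2$ (and its source), i.e.\ from $\ker\mu^{1,2r-1}\oplus\coker\mu^{-1,2r-1}(-1)$. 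At the level of the first page, $\mu^{0,2r-1}$ is an isomorphism and the diagram \eqref{eq:spectral_sequence} identifies the $E_1$-entries $\pres{r}\rE^{-1,2r}_1$, $\pres{r}\rE^{0,2r-2}_1$, $\pres{r}\rE^{1,2r-2}_1$ with direct sums involving $\rH^{2r}(X^{(0)}_{\overline\kappa},\Lambda(r))$, $\rH^{2r-2}(X^{(0)}_{\overline\kappa},\Lambda(r-1))$, etc.; Lemmas \ref{le:galois_cohomology} and \ref{le:galois_cohomology_1} pin down the relevant kernels/images inside $\rH^{2r}(X^{(0)}_{\overline\kappa},\Lambda(r))$ as exactly $B^r(X_\kappa,\Lambda)^0$, and identify the incoming differential from $\rH^{2r-2}(X^{(0)}_{\overline\kappa},\Lambda(r-1))$ with (the relevant piece of) $\delta_{1*}\circ\delta_0^*$, i.e.\ with the premap \eqref{eq:potential_premap}. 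Chasing $E_2 = \ker/\IM$ at these three spots then produces the identification $\pres{r}\rE^{1,2r-2}_2$-part with $A^r(X_\kappa,\Lambda)^0$ modulo the image of $\Delta^r$, and identifies $\ker$ at $p=-1$ with $A_{n+1-r}(X_\kappa,\Lambda)_0$ modulo $\Delta^r$. For part (1), the map $\eta^r$ is just the composite $A^r(X_\kappa,\Lambda)^0\hookrightarrow (\coker N)^{\rG_\kappa}\xrightarrow{\sim}\rH^1_\sing$; this needs only niceness (it does not need surjectivity or the invariants-to-coinvariants comparison). For part (2), putting the pieces together gives the four-term exact sequence: exactness at $A_{n+1-r}(X_\kappa,\Lambda)_0$ is the $\ker N$ computation ($\rH^1_\unr = (\ker N)^{\rG_\kappa}$, mapping in via the edge map), exactness at $A^r(X_\kappa,\Lambda)^0$ is the statement that $\IM\Delta^r = \ker\eta^r$, which is the $E_2$-subquotient computation, and surjectivity of $\eta^r$ is where the \emph{very} nice hypothesis (condition (3), that $M^{\rG_\kappa}\to M_{\rG_\kappa}$ is an isomorphism for the relevant subquotients $M$) is used to commute taking $\rG_\kappa$-(co)invariants past the cokernel.

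The main obstacle I anticipate is bookkeeping the Tate twists and the precise interplay between the monodromy filtration weight pieces and $\rG_\kappa$-cohomology: one must verify that $\rH^0(\rG_\kappa,-)$ and $\rH^1(\rG_\kappa,-)$ of the weight pieces in columns $p\neq 1$ do not leak into the computation of $(\coker N)^{\rG_\kappa}$ in the range that matters — this is exactly what niceness condition (2) is designed to ensure, by Weil-bound/weight-monodromy purity arguments forcing those pieces to have no trivial $\rG_\kappa$-subquotient after the twist, hence vanishing $\rH^0$ and, via the vanishing already proved in Lemma \ref{le:singular}-style arguments, controlled $\rH^1$. The second delicate point, confined to part (2), is the passage from coinvariants to invariants across $\coker N$; here condition (3) is invoked literally, but one still has to check that the relevant subquotient is in fact a subquotient of $\ker\mu^{1,2r-1}\oplus\coker\mu^{-1,2r-1}(-1)$, which follows from the $E_1$-description in \eqref{eq:spectral_sequence} combined with $\mu^{0,2r-1}$ being an isomorphism. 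Everything else is a diagram chase through \eqref{eq:spectral_sequence} using Lemmas \ref{le:galois_cohomology} and \ref{le:galois_cohomology_1} and the definitions of $B^r(X_\kappa,\Lambda)^0$, $B_{n+1-r}(X_\kappa,\Lambda)_0$, and $\Delta^r$.
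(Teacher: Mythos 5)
Your outline of part (1), of exactness at $A^r(X_\kappa,\Lambda)^0$, and of the surjectivity of $\eta^r$ is essentially the paper's own argument: reduce via Lemma \ref{le:singular} (after first killing wild inertia by inflation--restriction, a step you elide but which is routine) to the $\rG_\kappa$-invariants of $\rH^{2r-1}(X_{\overline K},\Lambda(r))/(T-1)$, use (N1) to read off the graded pieces of the induced filtration as $\pres{r}\rE^{p,2r-1-p}_2(-1)/\tilde\mu$, use (N2) to isolate the column $p=1$, identify the resulting quotient at the $E_1$-level with $B^r(X_\kappa,\Lambda)^0$ modulo the image of \eqref{eq:potential_premap} via Lemmas \ref{le:galois_cohomology} and \ref{le:galois_cohomology_1} and the invertibility of $\mu^{0,2r-1}$, and invoke (N3) to pass between invariants and coinvariants so that $\eta^r$ is surjective. (One cosmetic caution: for $\Lambda=\dZ/p^\nu$ there is no $\log T$; the paper works throughout with $T-1$, i.e.\ with $\mu$, which is what you in effect do.)

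The genuine gap is at the left end of the sequence in (2). You assert ``$\rH^1_\unr=(\ker N)^{\rG_\kappa}$, mapping in via the edge map,'' but $\rH^1_\unr(K,M)=\rH^1(\rG_\kappa,M^{\rI_K})$ is the $\rG_\kappa$-\emph{coinvariants} of the inertia invariants, not the invariants, and — more seriously — you never construct the map $\rH^1_\unr(K,\rH^{2r-1}(X_{\overline K},\Lambda(r)))\to A_{n+1-r}(X_\kappa,\Lambda)_0$ nor show its image is exactly $\ker\Delta^r$. Carrying this out directly would require a second spectral-sequence computation, this time of the weight filtration on $\ker(T-1)$, together with a further invariants/coinvariants comparison, none of which is in your proposal and which is not obviously covered by (N3) as stated (that condition concerns subquotients of $\ker\mu^{1,2r-1}\oplus\coker\mu^{-1,2r-1}(-1)$, and you would have to check the relevant piece of $\ker(T-1)$ qualifies). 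The paper avoids this entirely: having proved exactness at $A^r$ and surjectivity of $\eta^r$, it obtains the injection $\rH^1_\unr\hookrightarrow A_{n+1-r}(X_\kappa,\Lambda)_0$ and exactness there by applying that statement in the complementary degree (i.e.\ with $r$ replaced by $n+1-r$, using Poincar\'e duality on $X_{\overline K}$ and on the strata $X^{(p)}_{\overline\kappa}$, which interchanges $B^j$ and $B_j$) and then dualizing with local Tate duality, which pairs $\rH^1_\unr$ with $\rH^1_\sing$ of the dual module. You should either adopt that duality argument or supply the missing direct computation; as written, this half of the four-term exact sequence is not established.
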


\begin{proof}
Denote by $\rP_K\subset\rI_K$ the kernel of $t_0\colon\rI_K\to\Lambda_0(1)$. Again by the inflation-restriction exact sequence, we have the following exact sequence
\[0 \to \rH^1(\rI_K/\rP_K,\rH^{2r-1}(X_{\overline{K}},\Lambda(r))^{\rP_K})\to \rH^1(\rI_K,\rH^{2r-1}(X_{\overline{K}},\Lambda(r)))
\to\rH^1(\rP_K,\rH^{2r-1}(X_{\overline{K}},\Lambda(r))).\]
Since $\rH^{2r-1}(X_{\overline{K}},\Lambda(r))$ is a finitely generated $\Lambda$-module, we have
\[\rH^{2r-1}(X_{\overline{K}},\Lambda(r))^{\rP_K}=\rH^{2r-1}(X_{\overline{K}},\Lambda(r));\quad
\rH^1(\rP_K,\rH^{2r-1}(X_{\overline{K}},\Lambda(r)))=0.\]
Thus, we have a canonical isomorphism
\begin{align}\label{eq:galois1}
\rH^1(\rI_K,\rH^{2r-1}(X_{\overline{K}},\Lambda(r)))^{\rG_\kappa}
\simeq\rH^1(\rI_K/\rP_K,\rH^{2r-1}(X_{\overline{K}},\Lambda(r)))^{\rG_\kappa}.
\end{align}

On the other hand, the map $t_0\colon\rI_K\to\Lambda_0(1)$ induces a $\rG_\kappa$-equivariant isomorphism $\rI_K/\rP_K\simeq\Lambda_0(1)$. In
particular, the quotient group $\rI_K/\rP_K$ is topologically generated by $T$ and we have a canonical isomorphism of $\Lambda[\rG_\kappa]$-modules
\begin{align}\label{eq:galois2}
\xymatrix{\rH^1(\rI_K/\rP_K,\rH^{2r-1}(X_{\overline{K}},\Lambda(r)))
\simeq\(\frac{\rH^{2r-1}(X_{\overline{K}},\Lambda(r))}{(T-1)\rH^{2r-1}(X_{\overline{K}},\Lambda(r))}\)(-1).}
\end{align}
Denote the right-hand side of \eqref{eq:galois2} by $\rH^{2r-1}(X_{\overline{K}},\Lambda(r))_T$. The map $\mu^{p,q}$ in \eqref{eq:spectral_sequence} induces a map, known as the monodromy operator,
\[\tilde\mu^{p,q}\colon\pres{r}\rE^{p-1,q+1}_2\to\pres{r}\rE^{p+1,q-1}_2(-1)\]
of $\Lambda[\rG_\kappa]$-modules.

Now we consider (1). The filtration \eqref{eq:filtration} induces a quotient filtration
\[\cdots\subset\sF^{p+1}\rH^{2r-1}(X_{\overline{K}},\Lambda(r))_T
\subset\sF^p\rH^{2r-1}(X_{\overline{K}},\Lambda(r))_T\subset\cdots\subset\rH^{2r-1}(X_{\overline{K}},\Lambda(r))_T.\]
By the computation of the action of $T$ on $\rH^{2r-1}(X_{\overline{K}},\Lambda(r))$ via the weight spectral sequence (see \cite{Sai03}*{Corollary 2.8 (2)}) and Definition \ref{de:nice_coefficient} (N1), we have canonical isomorphisms
\[\xymatrix{\frac{\sF^p\rH^{2r-1}(X_{\overline{K}},\Lambda(r))_T}{\sF^{p+1}\rH^{2r-1}(X_{\overline{K}},\Lambda(r))_T}
\simeq\frac{\pres{r}\rE^{p,2r-1-p}_2(-1)}{\tilde\mu^{p-1,2r-p}\pres{r}\rE^{p-2,2r+1-p}_2}}\]
of $\Lambda[\rG_\kappa]$-modules. By Definition \ref{de:nice_coefficient} (N2), we have that
\[\xymatrix{\rH^0\(\rG_\kappa,\frac{\sF^p\rH^{2r-1}(X_{\overline{K}},\Lambda(r))_T}{\sF^{p+1}\rH^{2r-1}(X_{\overline{K}},\Lambda(r))_T}\)
=\rH^1\(\rG_\kappa,\frac{\sF^p\rH^{2r-1}(X_{\overline{K}},\Lambda(r))_T}{\sF^{p+1}\rH^{2r-1}(X_{\overline{K}},\Lambda(r))_T}\)=0}\]
unless $p=1$. By induction, we obtain the following canonical isomorphisms
\begin{align}\label{eq:galois3}
\xymatrix{\(\rH^{2r-1}(X_{\overline{K}},\Lambda(r))_T\)^{\rG_\kappa}
\simeq\(\frac{\pres{r}\rE^{1,2r-2}_2(-1)}{\tilde\mu^{0,2r-1}\pres{r}\rE^{-1,2r}_2}\)^{\rG_\kappa};}
\end{align}
\begin{align}\label{eq:galois7}
\xymatrix{\(\rH^{2r-1}(X_{\overline{K}},\Lambda(r))_T\)_{\rG_\kappa}
\simeq\(\frac{\pres{r}\rE^{1,2r-2}_2(-1)}{\tilde\mu^{0,2r-1}\pres{r}\rE^{-1,2r}_2}\)_{\rG_\kappa}.}
\end{align}
By definition, we have the equality
\begin{align}\label{eq:galois6}
\xymatrix{\frac{\pres{r}\rE^{1,2r-2}_2(-1)}{\tilde\mu^{0,2r-1}\pres{r}\rE^{-1,2r}_2}=
\frac{\Ker(\rd^{1,2r-2}_1(-1))}{\mu^{0,2r-1}(\Ker\rd^{-1,2r}_1)+\IM(\rd^{0,2r-2}_1(-1))}.}
\end{align}
In \eqref{eq:spectral_sequence}, we have an induced isomorphism
\[\xymatrix{\rd^{-1,2r}_1\circ(\mu^{0,2r-1})^{-1}\colon
\frac{\Ker(\rd^{1,2r-2}_1(-1))}{\mu^{0,2r-1}(\Ker\rd^{-1,2r}_1)}\xrightarrow{\sim}
\IM\rd^{-1,2r}_1\cap\rH^{2r}(X_{\overline{\kappa}}^{(0)},\Lambda(r))}\]
which equals $B^r(X_\kappa,\Lambda)^0$ by Lemma \ref{le:galois_cohomology}. The kernel of the induced quotient map
\[\xymatrix{B^r(X_\kappa,\Lambda)^0\to\frac{\Ker(\rd^{1,2r-2}_1(-1))}{\mu^{0,2r-1}(\Ker\rd^{-1,2r}_1)+\IM(\rd^{0,2r-2}_1(-1))}}\]
is equal to $\IM(\rd_1^{-1,2r}\circ(\mu^{0,2r-1})^{-1}\circ\rd_1^{0,2r-2}(-1))\cap B^r(X_\kappa,\Lambda)^0$. In other words, we have
the exact sequence
\[\xymatrix{\rH^{2r-2}(X_\kappa^{(0)},\Lambda(r))\to B^r(X_\kappa,\Lambda)^0\to\frac{\Ker(\rd^{1,2r-2}_1(-1))}{\mu^{0,2r-1}(\Ker\rd^{-1,2r}_1)+\IM(\rd^{0,2r-2}_1(-1))}\to0,}\]
where the first map is $\rd_1^{-1,2r}\circ(\mu^{0,2r-1})^{-1}\circ\rd_1^{0,2r-2}(-1)$. We may rewrite the above sequence as
\begin{align}\label{eq:galois4}
\xymatrix{B_{n+1-r}(X_\kappa,\Lambda)_0\to B^r(X_\kappa,\Lambda)^0\to\frac{\Ker(\rd^{1,2r-2}_1(-1))}{\mu^{0,2r-1}(\Ker\rd^{-1,2r}_1)+\IM(\rd^{0,2r-2}_1(-1))}\to0.}
\end{align}
Taking $\rG_\kappa$-invariants and combining with Lemma \ref{le:singular}, \eqref{eq:galois1}, \eqref{eq:galois2}, \eqref{eq:galois3} and \eqref{eq:galois6}, we obtain two canonical maps
\begin{align}\label{eq:galois5}
A_{n+1-r}(X_\kappa,\Lambda)_0\xrightarrow{\Delta^r}A^r(X_\kappa,\Lambda)^0
\xrightarrow{\eta^r}\rH^1_\sing(K,\rH^{2r-1}(X_{\overline{K}},\Lambda(r))),
\end{align}
whose composition is zero. In particular, (1) follows.

Now we consider (2). We have the following commutative diagram
\begin{align}\label{eq:coinvariant}
\xymatrix{
\rH^1(\rI_K,\rH^{2r-1}(X_{\overline{K}},\Lambda(r)))^{\rG_\kappa} \ar[r]^-{\sim}\ar[d]
& \(\frac{\pres{r}\rE^{1,2r-2}_2(-1)}{\tilde\mu^{0,2r-1}\pres{r}\rE^{-1,2r}_2}\)^{\rG_\kappa} \ar[d] \\
\rH^1(\rI_K,\rH^{2r-1}(X_{\overline{K}},\Lambda(r)))_{\rG_\kappa}
\ar[r]^-{\sim} &
\(\frac{\pres{r}\rE^{1,2r-2}_2(-1)}{\tilde\mu^{0,2r-1}\pres{r}\rE^{-1,2r}_2}\)_{\rG_\kappa}
}\end{align}
from \eqref{eq:galois3} and \eqref{eq:galois7}.

Recall that $B^r(X_\kappa,\Lambda)^0$ is a $\Lambda[\rG_\kappa]$-submodule of $\ker\mu^{1,2r-1}$ and $B_{n+1-r}(X_\kappa^{(0)},\Lambda)_0$ is a quotient $\Lambda[\rG_\kappa]$-module of $\coker\mu^{-1,2r-1}(-1)$. By Definition \ref{de:nice_coefficient} (N3) and \eqref{eq:galois4}, the following maps
\[A^r(X_\kappa,\Lambda)^0\to(B^r(X_\kappa,\Lambda)^0)_{\rG_\kappa},\quad
A_{n+1-r}(X_\kappa,\Lambda)_0\to(B_{n+1-r}(X_\kappa,\Lambda)_0)_{\rG_\kappa},\]
and the right vertical map in \eqref{eq:coinvariant} are all isomorphisms. In particular, the left vertical map in \eqref{eq:coinvariant} is an isomorphism as well. Therefore, \eqref{eq:galois5} is exact and $\eta^r$ is surjective. Thus, (2) follows from Lemma \ref{le:singular} and the Tate and Poincar\'{e} dualities.
\end{proof}

\begin{remark}
When the coefficient ring $\Lambda$ is $\dQ_p$, the monodromy-weight conjecture implies that the map $\Delta^r\colon A_{n+1-r}(X_\kappa,\Lambda)_0\to A^r(X_\kappa,\Lambda)^0$ is an isomorphism, which is equivalent to the vanishing of $\rH^1(K,\rH^{2r-1}(X_{\overline{K}},\Lambda(r)))$ by Theorem \ref{th:galois_cohomology}.
\end{remark}

\subsection{Correspondence, functoriality, and localization}
\label{ss:correspondence}

\begin{definition}[\'{E}tale correspondence]\label{de:correspondence}
Let $X$ be a presheaf (valued in groupoids) on the overcategory of a scheme. An \emph{\'{e}tale correspondence} $T=(X',f,g)$ of $X$ is a diagram
\[X\xleftarrow{f}X'\xrightarrow{g}X,\]
where both $f$ and $g$ are finite \'{e}tale morphisms. The composition $T_2\circ T_1=(X_3,f_3,g_3)$ of two \'{e}tale correspondences $T_1=(X_1,f_1,g_1)$ and $T_2=(X_2,f_2,g_2)$ is defined as
\[\xymatrix{
&& X_3 \ar[ld]\ar[rd] \ar@/_2.3pc/[ddll]_-{f_3} \ar@/^2.3pc/[ddrr]^-{g_3} \\
& X_1 \ar[ld]_-{f_1}\ar[rd]^-{g_1} && X_2 \ar[ld]_-{f_2}\ar[rd]^-{g_2} \\
X && X && X, }\]
in which the middle square Cartesian. The collection of all \'{e}tale correspondences of $X$ forms a monoidal category $\EC(X)$ where the
multiplication is given by compositions and units are those $T=(X',f,g)$ such that $f$ and $g$ are isomorphisms.
\end{definition}

Now let $X$ be a proper strictly semistable scheme over $O_K$ purely of relative dimension $n$. Let $f\colon X'\to X$ be a finite \'{e}tale morphism. In particular, $X'$ is a proper strictly semistable scheme over $O_K$ as well.

\begin{construction}
By \cite{Sai03}*{Corollary 2.14}, we have the following commutative diagram
\[\xymatrix{
\rH^{2j}(X^{\prime(0)}_{\overline\kappa},\Lambda(j)) \ar[r]^-{\delta_0^*}\ar[d]_-{f^{(0)}_{\overline\kappa*}}
& \rH^{2j}(X^{\prime(1)}_{\overline\kappa},\Lambda(j)) \ar[d]^-{f^{(1)}_{\overline\kappa*}} \\
\rH^{2j}(X^{(0)}_{\overline\kappa},\Lambda(j)) \ar[r]^-{\delta_0^*} &
\rH^{2j}(X^{(1)}_{\overline\kappa},\Lambda(j)),
}\]
which induces, together with duality, maps
\[f_{\kappa!}\colon B^j(X'_\kappa,\Lambda)\to B^j(X_\kappa,\Lambda),\quad f_\kappa^!\colon B_j(X_\kappa,\Lambda)\to B_j(X'_\kappa,\Lambda)\]
for every integer $j$. Trivially, we have maps
\[f_\kappa^*\colon B^j(X_\kappa,\Lambda)\to B^j(X'_\kappa,\Lambda),\quad f_{\kappa*}\colon B_j(X'_\kappa,\Lambda)\to B_j(X_\kappa,\Lambda)\]
for every integer $j$.
\end{construction}

\begin{construction}\label{co:correspondence}
An \'{e}tale correspondence $T=(X',f,g)$ of $X$ induces an endomorphism on $B_j(X_\kappa,\Lambda)$ as $f_{\kappa*}\circ g_\kappa^!$, and an endomorphism on $B^j(X_\kappa,\Lambda)$ as $f_{\kappa!}\circ g_\kappa^*$, for every integer $j$. In both cases, we denote this endomorphism by $T^\star$. Clearly, we have $(T_2\circ T_1)^\star=T_1^\star\circ T_2^\star$.

By the functoriality of weight spectral sequences \cite{Sai03}*{\Sec 2.3}, the following commutative diagram commutes
\[\xymatrix{
B_{n+1-r}(X_\kappa,\Lambda) \ar[r]^-{\eqref{eq:potential_premap}}
& B^r(X_\kappa,\Lambda)  \\
B_{n+1-r}(X_\kappa,\Lambda) \ar[u]^-{T^\star}\ar[r]^-{\eqref{eq:potential_premap}}
& B^r(X_\kappa,\Lambda). \ar[u]_-{T^\star}}
\]
Moreover, the map $T^\star$ factorizes through (resp.\ preserves) $B_j(X_\kappa,\Lambda)_0$ (resp.\ $B^j(X_\kappa,\Lambda)^0$).

Now if $\Lambda$ is a nice coefficient for $\pres{r}\rE_X$, then by (the proof of) Theorem \ref{th:galois_cohomology}, we have the following commutative diagram
\[\xymatrix{
A^r(X_\kappa,\Lambda)^0 \ar[r]^-{\eta^r}
& \rH^1_\sing(K,\rH^{2r-1}(X_{\overline{K}},\Lambda(r))) \\
A^r(X_\kappa,\Lambda)^0 \ar[r]^-{\eta^r}\ar[u]_-{T^\star} &
\rH^1_\sing(K,\rH^{2r-1}(X_{\overline{K}},\Lambda(r))), \ar[u]_-{T^\star} }\]
where the second endomorphism $T^\star$ is defined as $f_*\circ g^*$.
\end{construction}

Suppose that we have a monoid $\dT$, regarded as a discrete monoidal category, and a monoidal functor $\dT\to\EC(X)$. Then $\pres{r}\rE=\pres{r}\rE_X$ becomes a spectral sequence of $\Lambda[\dT]$-complexes through the monoidal functor. In particular, every term $\pres{r}\rE^{p,q}_s$ in the spectral sequence and cohomology groups $\rH^q(X_{\overline\kappa}^{(p)},\Lambda(r))$, $\rH^{p+q}(X_{\overline{K}},\Lambda(r))$ become $\Lambda[\dT]$-modules. In what follow, we will always assume that $\dT$ is commutative. Let $\fm$ be a prime ideal of $\Lambda[\dT]$. Since localization is exact, we have the localized weight spectral sequences $\pres{r}\rE_\fm=\pres{r}\rE_{X,\fm}$ such that $\pres{r}\rE_{\fm,s}^{p,q}=(\pres{r}\rE^{p,q}_s)_\fm$ and
\[\pres{r}\rE_{\fm,1}^{p,q}=\bigoplus_{i\geq\max(0,-p)}\rH^{q-2i}(X_{\overline{\kappa}}^{(p+2i)},\Lambda(r-i))_\fm
\Rightarrow\rH^{p+q}(X_{\overline{K}},\Lambda(r))_\fm.\]

By Construction \ref{co:correspondence}, the map \eqref{eq:potential_premap} is $\Lambda[\dT]$-linear. Thus, we have the following localized map
\begin{align}\label{eq:potential_premap_localized}
(B_{n+1-r}(X_\kappa,\Lambda)_0)_\fm\to B^r(X_\kappa,\Lambda)^0_\fm.
\end{align}
For the sake of notational symmetry, we write $B_{n+1-r}(X_\kappa,\Lambda)_0^\fm$ for $(B_{n+1-r}(X_\kappa,\Lambda)_0)_\fm$. Since taking $\rG_\kappa$-invariants and $\rG_\kappa$-coinvariants commute with localization, we have \[A_{n+1-r}(X_\kappa,\Lambda)_0^\fm\coloneqq(A_{n+1-r}(X_\kappa,\Lambda)_0)_\fm=(B_{n+1-r}(X_\kappa,\Lambda)_0^\fm)^{\rG_\kappa},\] and $A^r(X_\kappa,\Lambda)^0_\fm=(B^r(X_\kappa,\Lambda)^0_\fm)^{\rG_\kappa}$.

\begin{definition}[Localized potential map]
We call the following map
\begin{align*}
\Delta^r_\fm\colon A_{n+1-r}(X_\kappa,\Lambda)_0^\fm\to A^r(X_\kappa,\Lambda)^0_\fm,
\end{align*}
induced by \eqref{eq:potential_premap_localized} the \emph{localized potential map}.
\end{definition}

\begin{definition}[Nice coefficient, case of localization]\label{de:nice_coefficient_localized}
We say that $\Lambda$ is a \emph{nice coefficient} for the spectral sequence $\pres{r}\rE_\fm=\pres{r}\rE_{X,\fm}$ if the following are satisfied:
\begin{description}
  \item[(N1)] $\pres{r}\rE_\fm$, with the coefficient $\Lambda$, degenerates at the second page;

  \item[(N2)] if $\pres{r}\rE_{\fm,2}^{p,2r-1-p}(-1)$ has a non-trivial subquotient on which $\rG_\kappa$ acts trivially, then $p=1$.
\end{description}
We say that $\Lambda$ is a \emph{very nice coefficient} if moreover
\begin{description}
  \item[(N3)] for every subquotient $\Lambda[\rG_\kappa]$-module $M$ of $\ker\mu^{1,2r-1}_\fm\oplus\coker\mu^{-1,2r-1}_\fm(-1)$, the natural map $M^{\rG_\kappa}\to M_{\rG_\kappa}$ is an isomorphism.
\end{description}
\end{definition}

The following is the localized version of Theorem \ref{th:galois_cohomology}, with the same proof \footnote{However, it is not a corollary of Theorem \ref{th:galois_cohomology} since $\Lambda$ might not be a nice coefficient for the original spectral sequence $\pres{r}\rE_X$.}.

\begin{theorem}\label{th:galois_cohomology_localized}
Suppose that $\Lambda$ is a nice coefficient for $\pres{r}\rE_{X,\fm}$.
\begin{enumerate}
  \item We have a canonical map
        \[\eta^r_\fm\colon A^r(X_\kappa,\Lambda)^0_\fm\to\rH^1_\sing(K,\rH^{2r-1}(X_{\overline{K}},\Lambda(r))_\fm).\]

  \item If $\Lambda$ is moreover very nice, then we have a canonical exact sequence
        \begin{multline*}
        0 \to \rH^1_\unr(K,\rH^{2r-1}(X_{\overline{K}},\Lambda(r))_\fm)\to
        A_{n+1-r}(X_\kappa,\Lambda)_0^\fm \\
        \xrightarrow{\Delta^r_\fm} A^r(X_\kappa,\Lambda)^0_\fm
        \xrightarrow{\eta^r_\fm}
        \rH^1_\sing(K,\rH^{2r-1}(X_{\overline{K}},\Lambda(r))_\fm) \to
        0.
        \end{multline*}
\end{enumerate}
\end{theorem}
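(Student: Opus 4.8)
The plan is to mimic the proof of Theorem~\ref{th:galois_cohomology} verbatim, tracking that every step commutes with the localization functor $(-)_\fm$ at a prime ideal $\fm$ of $\Lambda[\dT]$. The key observation is that localization at $\fm$ is exact, and that $\dT$ is commutative so that localization makes sense on each term $\pres{r}\rE^{p,q}_s$ of the weight spectral sequence, on the cohomology groups $\rH^q(X^{(p)}_{\overline\kappa},\Lambda(r))$, and on $\rH^{2r-1}(X_{\overline K},\Lambda(r))$, compatibly with all the differentials, pullback/Gysin maps, and the monodromy map $\mu$, by Construction~\ref{co:correspondence} and the functoriality of weight spectral sequences in \cite{Sai03}*{\Sec 2.3}. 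Since $\dT$ acts on $X$ through \'{e}tale correspondences which are insensitive to the $\rG_K$-action on the geometric generic/special fibers, the $\Lambda[\dT]$-module structure commutes with the $\rG_\kappa$- and $\rI_K$-actions; hence taking $\rG_\kappa$-invariants, $\rG_\kappa$-coinvariants, $\rI_K$-cohomology, and the inflation-restriction sequences all commute with $(-)_\fm$.

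Concretely, I would proceed as follows. First, localize Lemma~\ref{le:singular}: the vanishing of $\rH^2(\rG_\kappa,\rH^{2r-1}(X_{\overline K},\Lambda(r))^{\rI_K})$ is preserved under $(-)_\fm$ because localization is exact (in the $\Lambda=\dQ_p$ case there is nothing to do, and in the $\dZ/p^\nu$ case the argument via \cite{Jan88}*{(2.1)} goes through after localization since each $\rH^2(\rG_\kappa,-/p^n)$ is already zero), so one obtains $\rH^1_\sing(K,\rH^{2r-1}(X_{\overline K},\Lambda(r))_\fm)\simeq\rH^1(\rI_K,\rH^{2r-1}(X_{\overline K},\Lambda(r))_\fm)^{\rG_\kappa}$. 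Next, localize the identifications \eqref{eq:galois1} and \eqref{eq:galois2}: the purely group-theoretic computation of $\rH^1(\rI_K/\rP_K,-)$ commutes with $(-)_\fm$, giving the localized analogue of $\rH^{2r-1}(X_{\overline K},\Lambda(r))_T$. Then, using hypothesis (N1) for $\pres{r}\rE_{X,\fm}$ (degeneration at the second page), the computation of the $T$-action via the weight spectral sequence localizes to give the filtration on $\rH^{2r-1}(X_{\overline K},\Lambda(r))_{T,\fm}$ with graded pieces $\pres{r}\rE_{\fm,2}^{p,2r-1-p}(-1)/\tilde\mu^{p-1,2r-p}\pres{r}\rE_{\fm,2}^{p-2,2r+1-p}$; hypothesis (N2) for $\pres{r}\rE_{X,\fm}$ then forces these graded pieces to have trivial $\rG_\kappa$-cohomology in degrees $0$ and $1$ unless $p=1$, yielding the localized versions of \eqref{eq:galois3} and \eqref{eq:galois7}. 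Finally, Lemma~\ref{le:galois_cohomology}, Lemma~\ref{le:galois_cohomology_1}, and the exact sequence \eqref{eq:galois4} all localize termwise since they are built from the differentials and $\mu$ in \eqref{eq:spectral_sequence}, so the localized map $\eqref{eq:potential_premap_localized}$ fits into a localized \eqref{eq:galois4}; taking $\rG_\kappa$-invariants produces the maps $\Delta^r_\fm$ and $\eta^r_\fm$ of part~(1) with composition zero. For part~(2), one localizes the commutative square \eqref{eq:coinvariant} and invokes hypothesis (N3) for $\pres{r}\rE_{X,\fm}$ to conclude that the natural maps $A^r(X_\kappa,\Lambda)^0_\fm\to(B^r(X_\kappa,\Lambda)^0_\fm)_{\rG_\kappa}$, $A_{n+1-r}(X_\kappa,\Lambda)^\fm_0\to(B_{n+1-r}(X_\kappa,\Lambda)_0^\fm)_{\rG_\kappa}$, and the right vertical map are isomorphisms, whence the left vertical map is too; exactness of the localized \eqref{eq:galois5} and surjectivity of $\eta^r_\fm$ follow, and one finishes with the Tate and Poincar\'{e} dualities (which are compatible with the $\Lambda[\dT]$-action up to the usual transpose, hence localize after possibly replacing $\fm$ by its image under the transpose anti-involution, but since the statement only concerns $\rH^1_\unr$ and $\rH^1_\sing$ of the localized module this causes no difficulty).

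The only genuinely new point, as the footnote in the excerpt already flags, is that $\Lambda$ need \emph{not} be a nice coefficient for the unlocalized $\pres{r}\rE_X$ even when it is nice for $\pres{r}\rE_{X,\fm}$ — degeneration and the $\rG_\kappa$-triviality conditions can fail globally while holding after localization. So one cannot simply apply Theorem~\ref{th:galois_cohomology} and localize its conclusion; one must re-run the proof with $\pres{r}\rE_{X,\fm}$ in place of $\pres{r}\rE_X$ throughout. I expect the main (mild) obstacle to be bookkeeping: verifying that each invocation of \cite{Sai03} (Proposition 2.10, Corollary 2.8(2), Corollary 2.14, \Sec 2.3) is compatible with the $\Lambda[\dT]$-module structure and hence survives localization, and checking that the inflation-restriction arguments — which involve the profinite groups $\rG_\kappa$, $\rI_K$, $\rP_K$ acting on a finitely generated $\Lambda$-module — remain valid after replacing that module by its localization, which is again finitely generated over the Noetherian ring $\Lambda[\dT]_\fm$ (or finite over $\Lambda$ when $\Lambda=\dZ/p^\nu$). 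None of this presents a real difficulty, which is why the statement can honestly be asserted ``with the same proof''.
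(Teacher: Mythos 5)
Your proposal is correct and coincides with the paper's own treatment: the paper states Theorem \ref{th:galois_cohomology_localized} "with the same proof" as Theorem \ref{th:galois_cohomology}, relying exactly on the exactness of localization, its compatibility with the $\Lambda[\dT]$-structure on the weight spectral sequence, and the observation (made in the paper's footnote, and by you) that one must re-run the argument for $\pres{r}\rE_{X,\fm}$ rather than localize the conclusion of the unlocalized theorem. Nothing in your route differs in substance from the intended proof.
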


\subsection{Relation with Abel--Jacobi maps}
\label{ss:relation_abel}

Let $X$ be a proper strictly semistable scheme over $O_K$ purely of relative dimension $n$. For every integer $r\geq0$, we have the (absolute) cycle class map
\begin{align}\label{eq:cycle_class}
\cl\colon\CH^r(X_K)\otimes\Lambda\to\rH^{2r}(X_K,\Lambda(r)).
\end{align}
Denote by $\CH^r(X_K,\Lambda)^0$ the kernel of $\cl$ composed with $\rH^{2r}(X_K,\Lambda(r))\to\rH^{2r}(X_{\overline{K}},\Lambda(r))$. Then we further have the Abel--Jacobi map
\begin{align*}
\alpha\colon\CH^r(X_K,\Lambda)^0\to\rH^1(K,\rH^{2r-1}(X_{\overline{K}},\Lambda(r))).
\end{align*}

Suppose that we have a commutative monoid $\dT$ and a monoidal functor $\dT\to\EC(X)$. Let $\fm$ be a prime ideal of $\Lambda[\dT]$. Then \eqref{eq:cycle_class} is a map of $\Lambda[\dT]$-modules. Thus we obtain the localized Abel--Jacobi map
\begin{align}\label{eq:abel_jacobi}
\alpha_\fm\colon\CH^r(X_K,\Lambda)^0_\fm\to\rH^1(K,\rH^{2r-1}(X_{\overline{K}},\Lambda(r))_\fm)
\end{align}
as the canonical map $\rH^1(K,\rH^{2r-1}(X_{\overline{K}},\Lambda(r)))_\fm\to\rH^1(K,\rH^{2r-1}(X_{\overline{K}},\Lambda(r))_\fm)$ is an isomorphism.

Let $z$ be an algebraic cycle on $X_K$ of codimension $r$. Denote by $Z$ the Zariski closure of the support of $z$ in $X$. We have a closed immersion $\iota\colon Z\to X$. Let $z^\sharp$ be the unique cycle on $X$ of codimension $r$ supported on $Z$ whose restriction to $X_K$ is $z$. Then $z^\sharp$ determines a class $[z^\sharp]\in\rH^{2r}_Z(X,\Lambda(r))$ by the cycle class map. Denote by $\widetilde{z}$ the image of $[z^\sharp]$ under the composite map
\[\rH^{2r}_Z(X,\Lambda(r))=\rH^{2r}(X,\iota_!\iota^!\Lambda(r))\to\rH^{2r}(X,\Lambda(r))
\to\rH^{2r}(X_\kappa,\Lambda(r))\to\rH^{2r}(X_{\overline\kappa},\Lambda(r))^{\rG_\kappa}.\]

For further discussion, let us review some notation for the nearby cycles functor. Put $S=\Spec O_{K^\ur}$. Denote by $\overline{j}\colon X_{\overline{K}}\to X_S$ a geometric generic fiber and $\overline{i}\colon X_{\overline\kappa}\to X_S$ the special fiber. Then the nearby cycles functor $\rR\Psi\colon\rD^+(X_S,\Lambda)\to\rD^+(X_{\overline\kappa},\Lambda)$ is defined to be the composite functor $\overline{i}^*\circ\overline{j}_*\circ\overline{j}^*$. Here, the $*$-pushforward is understood in the derived sense. By adjunction, we have a natural transformation $\overline{i}^*\to\rR\Psi$ whose cone is usually called the vanishing cycles functor. In particular, we have a map \[\rH^{2r}(X_{\overline\kappa},\Lambda(r))\to\rH^{2r}(X_{\overline\kappa},\rR\Psi\Lambda(r))\simeq\rH^{2r}(X_{\overline{K}},\Lambda(r))\]
of $\Lambda[\dT]$-modules, where the isomorphism is given by the proper base change. Denote by $\rH^{2r}(X_{\overline\kappa},\Lambda(r))^0$ the kernel of the above map.

\begin{lem}\label{le:cohomology}
If (the induced localized Chow class of) $z$ belongs to $\CH^r(X_K,\Lambda)^0_\fm$, then $\widetilde{z}$ belongs to $\rH^{2r}(X_{\overline\kappa},\Lambda(r))^0_\fm$.
\end{lem}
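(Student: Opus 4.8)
The plan is to compare two descriptions of the cycle class, one living in the cohomology of the special fiber and one in the cohomology of the generic fiber, and to show that the condition on $z$ forces the relevant generic class to vanish. First I would recall that, by construction, $\widetilde{z}$ is the image of $[z^\sharp]\in\rH^{2r}_Z(X,\Lambda(r))$ under
\[\rH^{2r}_Z(X,\Lambda(r))\to\rH^{2r}(X,\Lambda(r))\to\rH^{2r}(X_\kappa,\Lambda(r))\to\rH^{2r}(X_{\overline\kappa},\Lambda(r))^{\rG_\kappa},\]
and that the obstruction to $\widetilde{z}$ lying in $\rH^{2r}(X_{\overline\kappa},\Lambda(r))^0_\fm$ is precisely its image under the specialization-to-nearby-cycles map $\rH^{2r}(X_{\overline\kappa},\Lambda(r))\to\rH^{2r}(X_{\overline{K}},\Lambda(r))$ (all localized at $\fm$). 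So the whole statement reduces to identifying that image.

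The key step is the commutativity of the diagram relating the specialization map and the absolute cycle class map. Concretely, the composite
\[\rH^{2r}_Z(X,\Lambda(r))\to\rH^{2r}(X,\Lambda(r))\to\rH^{2r}(X_{\overline\kappa},\Lambda(r))\to\rH^{2r}(X_{\overline\kappa},\rR\Psi\Lambda(r))\simeq\rH^{2r}(X_{\overline{K}},\Lambda(r))\]
should agree with the composite
\[\rH^{2r}_Z(X,\Lambda(r))\to\rH^{2r}(X,\Lambda(r))\to\rH^{2r}(X_K,\Lambda(r))\to\rH^{2r}(X_{\overline{K}},\Lambda(r)),\]
because both are induced by the canonical maps $\overline i^*\Lambda\to\rR\Psi\Lambda$ and $\overline i^*\circ\overline j_*\circ\overline j^*$ applied to the constant sheaf on $X_S$, using proper smooth base change to identify $\rR\Psi$-cohomology with generic cohomology. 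I would phrase this via the adjunction triangle for $\rR\Psi=\overline i^*\overline j_*\overline j^*$ together with functoriality of cycle classes under pullback: the cycle class $[z^\sharp]$ restricts on the generic fiber to the class of $z$, which maps to $\alpha_\fm$'s target, namely $\cl(z)\in\rH^{2r}(X_{\overline K},\Lambda(r))_\fm$. Therefore the image of $\widetilde z$ under the specialization-to-nearby-cycles map is exactly the localized cycle class of $z$ in $\rH^{2r}(X_{\overline K},\Lambda(r))_\fm$.

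Finally, by hypothesis the localized Chow class of $z$ lies in $\CH^r(X_K,\Lambda)^0_\fm$, which by definition means that $\cl(z)$ vanishes after the map to $\rH^{2r}(X_{\overline K},\Lambda(r))$; localizing at $\fm$ (an exact operation) this gives $\cl(z)=0$ in $\rH^{2r}(X_{\overline K},\Lambda(r))_\fm$. Combining with the previous paragraph, $\widetilde z$ maps to $0$ under $\rH^{2r}(X_{\overline\kappa},\Lambda(r))_\fm\to\rH^{2r}(X_{\overline K},\Lambda(r))_\fm$, i.e.\ $\widetilde z\in\rH^{2r}(X_{\overline\kappa},\Lambda(r))^0_\fm$, as claimed. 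The main obstacle I anticipate is the bookkeeping in the compatibility diagram: one must be careful that the various maps (cycle class on $X$ with supports, restriction to $X_\kappa$ versus restriction to $X_K$, and the comparison $\rR\Psi\simeq$ nearby cycles via proper base change) are genuinely compatible, and that all of this commutes with the $\Lambda[\dT]$-action so that localization at $\fm$ is legitimate. This is standard but requires invoking the functoriality of the weight/nearby-cycle formalism from \cite{Sai03} carefully; once set up, the argument is formal.
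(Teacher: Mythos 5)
Your proposal is correct and follows essentially the same route as the paper: reduce to the unlocalized statement (the paper clears denominators by an explicit $t\in\Lambda[\dT]\setminus\fm$, which is what your appeal to exactness of localization amounts to), then use the compatibility of the cycle class with supports under restriction to the generic fiber together with the factorization of $\rH^{2r}(X,\Lambda(r))\to\rH^{2r}(X_{\overline K},\Lambda(r))$ through $\rH^{2r}(X_{\overline\kappa},\Lambda(r))\to\rH^{2r}(X_{\overline\kappa},\rR\Psi\Lambda(r))$ via proper base change. Only a cosmetic caveat: membership in $\CH^r(X_K,\Lambda)^0_\fm$ gives vanishing of $\cl(z)$ only after localization (or after multiplying by some $t\notin\fm$), not in $\rH^{2r}(X_{\overline K},\Lambda(r))$ itself, but your argument already uses only the localized vanishing, so nothing is affected.
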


\begin{proof}
We may take the base change to $S$. Thus we assume that $K=K^\ur$ and $\kappa=\overline\kappa$. Now take an element $t\in\Lambda[\dT]$ that does not belong to $\fm$ such that the Chow class of $tz$ belongs to $\CH^r(X_K,\Lambda)^0$. If we replace $z$ by $tz$ (and $Z$ by $tZ$), then $t\widetilde{z}=\widetilde{tz}$ and we only need to show that $\widetilde{tz}\in\rH^{2r}(X_{\overline\kappa},\Lambda(r))^0$. Thus we may assume at the beginning that $z$ belongs to $\CH^r(X_K,\Lambda)^0$.

Note that the following diagram
\[\xymatrix{
\rH^{2r}(X,\iota_!\iota^!\Lambda(r)) \ar[r]\ar[d] & \rH^{2r}(X_{\overline{K}},\iota_!\iota^!\Lambda(r)) \ar[d] \\
\rH^{2r}(X,\Lambda(r)) \ar[r] & \rH^{2r}(X_{\overline{K}},\Lambda(r))
}\]
commutes. Since $z$ belongs to $\CH^r(X_K,\Lambda)^0$, we have that the image of $[z^\sharp]$ under the composite map
\[\rH^{2r}(X,\iota_!\iota^!\Lambda(r))\to\rH^{2r}(X,\Lambda(r))\to\rH^{2r}(X_{\overline{K}},\Lambda(r))\]
is zero. Since the second map $\rH^{2r}(X,\Lambda(r))\to\rH^{2r}(X_{\overline{K}},\Lambda(r))$ coincides with the composition
\[\rH^{2r}(X,\Lambda(r))\to\rH^{2r}(X_\kappa,\Lambda(r))\to\rH^{2r}(X_\kappa,\rR\Psi\Lambda(r))
\xrightarrow{\sim}\rH^{2r}(X_{\overline{K}},\Lambda(r)),\]
we have $\widetilde{z}\in\rH^{2r}(X_{\overline\kappa},\Lambda(r))^0$. The lemma follows.
\end{proof}

Lemma \ref{le:cohomology} implies that if $z$ belongs to $\CH^r(X_K,\Lambda)^0_\fm$ and $\pres{r}\rE^{0,2r}_{\fm,2}=\pres{r}\rE^{0,2r}_{\fm,\infty}$, then the image of $\widetilde{z}$ under the composite map \[\rH^{2r}(X_{\overline\kappa},\Lambda(r))^{\rG_\kappa}\to\rH^{2r}(X^{(0)}_{\overline\kappa},\Lambda(r))
\to\rH^{2r}(X^{(0)}_{\overline\kappa},\Lambda(r))_\fm\]
belongs to the subspace $A^r(X_\kappa,\Lambda)^0_\fm$. Recall from \Sec\ref{ss:notation} that we have a map
\[\partial\colon\rH^1(K,\rH^{2r-1}(X_{\overline{K}},\Lambda(r)))\to\rH^1_\sing(K,\rH^{2r-1}(X_{\overline{K}},\Lambda(r)))\]
of $\Lambda[\dT]$-modules. The following theorem establishes the compatibility between the (localized) Abel--Jacobi map and the map $\eta^r_\fm$ constructed in Theorem \ref{th:galois_cohomology_localized}.

\begin{theorem}\label{th:relation_abel}
Suppose that $\Lambda$ is a nice coefficient for $\pres{r}\rE_\fm=\pres{r}\rE_{X,\fm}$ (Definition \ref{de:nice_coefficient_localized}). Then for every algebraic cycle $z$ on $X_K$ of codimension $r$ belonging to $\CH^r(X_K,\Lambda)^0_\fm$, we have
\[\eta^r_\fm(\widetilde{z})=\partial(\alpha_\fm(z))\]
as an equality in $\rH^1_\sing(K,\rH^{2r-1}(X_{\overline{K}},\Lambda(r))_\fm)$. Here, we view $\widetilde{z}$ as an element in $A^r(X_\kappa,\Lambda)^0_\fm$ by the above discussion as $\pres{r}\rE^{0,2r}_{\fm,2}=\pres{r}\rE^{0,2r}_{\fm,\infty}$.
\end{theorem}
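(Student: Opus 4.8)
The plan is to trace the class $[z^\sharp]$ through both sides of the asserted identity and to reduce everything, by base change, to the case $K = K^\ur$, $\kappa = \overline\kappa$, so that the group $\rH^1_\sing$ becomes simply $\rH^1(\rI_K, -)^{\rG_\kappa}$ via Lemma \ref{le:singular}. After this reduction, the left-hand side $\eta^r_\fm(\widetilde z)$ is computed, by construction of $\eta^r$ in the proof of Theorem \ref{th:galois_cohomology_localized}, by moving $\widetilde z \in A^r(X_\kappa,\Lambda)^0_\fm \subset B^r(X_\kappa,\Lambda)^0_\fm$ to $\pres{r}\rE^{0,2r}_{\fm,1}$, lifting along the isomorphism $\rd^{-1,2r}_1 \circ (\mu^{0,2r-1})^{-1}$ from Lemma \ref{le:galois_cohomology}, and reading the result in the quotient describing $(\rH^{2r-1}(X_{\overline K},\Lambda(r))_T)^{\rG_\kappa} \cong \rH^1(\rI_K,\rH^{2r-1})^{\rG_\kappa}$. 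So the crux is to identify $\partial(\alpha_\fm(z))$ with exactly this recipe.

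First I would unwind $\alpha_\fm(z)$. The Abel--Jacobi map factors through the connecting map of the Hochschild--Serre (or Leray) spectral sequence for $X_{\overline K} \to X_K$: the class $[z^\sharp] \in \rH^{2r}_Z(X,\Lambda(r))$ maps to $\rH^{2r}(X_K,\Lambda(r))$, dies in $\rH^{2r}(X_{\overline K},\Lambda(r))$ by hypothesis, and hence lifts to $\rH^1(K, \rH^{2r-1}(X_{\overline K},\Lambda(r)))$. Applying $\partial$ restricts this to the inertia cohomology. The key geometric input, which I would extract from Saito's analysis of the weight spectral sequence and the comparison of $\overline i^*$ with $\rR\Psi$, is that the vanishing-cycle class $\widetilde z \in \rH^{2r}(X_{\overline\kappa},\Lambda(r))$ together with the monodromy operator computes precisely the image of this inertia class under \eqref{eq:galois2}: the action of $T-1$ on the Leray lift of $[z^\sharp]$ is governed by $N = \log(T)$ acting on $\rH^{2r-1}$, and via the weight spectral sequence this $N$ is the map $\mu$. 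Concretely, one shows that the diagram relating $\rH^{2r}_Z(X) \to \rH^{2r}(X_{\overline\kappa})^0 \to A^r(X_\kappa,\Lambda)^0$ on one side, and the monodromy description $\rH^1(\rI_K,\rH^{2r-1}) \cong \rH^{2r-1}_T(-1)$ on the other, are intertwined by exactly the zig-zag $\rd^{-1,2r}_1 \circ (\mu^{0,2r-1})^{-1}$ that defines $\eta^r_\fm$.

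The main technical steps, in order, are: (i) reduce to $K = K^\ur$ and clear denominators at $\fm$ exactly as in the proof of Lemma \ref{le:cohomology}, so that we may assume $z \in \CH^r(X_K,\Lambda)^0$ and $\widetilde z$ lands in $A^r(X_\kappa,\Lambda)^0$; (ii) express $\partial \circ \alpha$ via the monodromy operator $N$ on $\rH^{2r-1}(X_{\overline K},\Lambda(r))$, using that on the finite module $\rH^{2r-1}$ the inertia action factors through the tame quotient and $\rH^1(\rI_K,-) \cong \operatorname{coker}(T-1)(-1)$; (iii) invoke the explicit formula of \cite{Sai03}*{Corollary 2.8 (2)} identifying the $T$-action (equivalently $N$) on the spectral sequence with the map $\mu$, so that $\partial(\alpha_\fm(z))$ is computed by lifting $\widetilde z$ through $\mu^{0,2r-1}$ and applying $\rd^{-1,2r}_1$; (iv) match this with the construction of $\eta^r_\fm$ in Theorem \ref{th:galois_cohomology_localized}, where the same zig-zag appears, and check the $\rG_\kappa$-equivariance and $\fm$-localization are compatible throughout (this is where niceness of $\Lambda$ and the degeneration $\pres{r}\rE^{0,2r}_{\fm,2} = \pres{r}\rE^{0,2r}_{\fm,\infty}$ are used, to make sense of $\widetilde z$ as an element of $A^r$).

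I expect the main obstacle to be step (iii): pinning down the precise compatibility, including signs and Tate twists, between the topological/arithmetic monodromy $N$ acting on $\rH^{2r-1}(X_{\overline K},\Lambda(r))$ (coming from $T-1$ on inertia cohomology) and the combinatorial map $\mu$ on the first page of the weight spectral sequence, when applied to a class originating from a vanishing cycle of an algebraic cycle rather than from a smooth component. One must verify that the cycle-class refinement $[z^\sharp] \in \rH^{2r}_Z(X,\Lambda(r))$, pushed into $\rH^{2r}(X_{\overline\kappa},\rR\Psi\Lambda(r))$ and then described via the complex $\bigoplus_i \rH^{\bullet}(X^{(\bullet)}_{\overline\kappa})$, really does have its monodromy computed by $\delta_{1*}$ composed with the tame generator — i.e. that the abstract machinery of Theorem \ref{th:galois_cohomology_localized} is compatible with the geometric cycle class. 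This is essentially a careful bookkeeping exercise inside Saito's formalism, but it is where all the potential for sign errors and twist mismatches lives; everything else is formal manipulation of exact sequences and localization.
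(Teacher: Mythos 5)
Your outline follows the same route as the paper's proof (reduce to the unlocalized case, express $\partial(\alpha_\fm(z))$ as the image of $T-1$ applied to a lift of the cycle class, and match this with the zig-zag $\rd^{-1,2r}_{\fm,1}\circ(\mu^{0,2r-1}_\fm)^{-1}$ defining $\eta^r_\fm$), but the step you defer as ``a careful bookkeeping exercise inside Saito's formalism'' is precisely the substance of the theorem, and as written it is a genuine gap. Saito's Corollary 2.8 (2) describes the $T$-action on $\rH^{2r-1}(X_{\overline{K}},\Lambda(r))$ via the weight spectral sequence, but the class to which you must apply $T-1$ is the lift $[z]'\in\rH^{2r-1}((X\setminus Z)_{\overline{K}},\Lambda(r))$ of $[z]$, which that statement does not touch. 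The paper's proof manufactures a computable lift by introducing the cone $F$ of $(\iota_{S!}\iota_S^!\Lambda)\res_{X_{\overline\kappa}}\to\rR\Psi\Lambda$, realizing the lift in $\rH^{2r-1}(X_{\overline\kappa},F(r))$, and using that $T$ acts trivially on $\iota_{S!}\iota_S^!\Lambda$ to lift $T-1$ to a map $F\to\rR\Psi\Lambda$; it then compares with the monodromy filtration $M_\bullet\rR\Psi\Lambda$, where the niceness hypothesis is needed not only to view $\widetilde{z}$ in $A^r(X_\kappa,\Lambda)^0_\fm$ but, more importantly, to prove that the relevant maps on $\rG_\kappa$-invariants of the $\fm$-localized quotients are injective, so that the image of $(T-1)[z^\sharp]'_0$ can be read off on graded pieces; and it concludes with a tautological diagram of exact triangles identifying the image of the lift in $\rH^{2r}(X_{\overline\kappa},M_0\rR\Psi\Lambda/M_{-1}\rR\Psi\Lambda)\simeq\pres{r}\rE^{0,2r}_1$ with the restriction of $\widetilde{z}$ to $X^{(0)}_{\overline\kappa}$. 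None of these constructions, nor any substitute for them, appears in your proposal, so the asserted compatibility is named but not proved.

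Two smaller points. With $\Lambda=\dZ/p^\nu$ there is no $\log(T)$, so the argument must be run with $T-1$ throughout (as you in fact do elsewhere in the write-up). Also, the recipe in your step (iii) is inverted: by Lemma \ref{le:galois_cohomology} and the proof of Theorem \ref{th:galois_cohomology_localized}, the identification carries the class of the quotient to $B^r(X_\kappa,\Lambda)^0_\fm$ via $\rd^{-1,2r}_{\fm,1}\circ(\mu^{0,2r-1}_\fm)^{-1}$, so $\eta^r_\fm(\widetilde{z})$ is computed by lifting $\widetilde{z}$ through $\rd^{-1,2r}_{\fm,1}$ and then applying $\mu^{0,2r-1}_\fm$ — your opening paragraph states this correctly, but the step list reverses it.
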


\begin{proof}
The proof is divided into several steps.

\emph{Step 1.} As in the proof of Lemma \ref{le:cohomology}, we take an element $t\in\Lambda[\dT]$ that does not belong to $\fm$ such that the Chow class of $tz$ belongs to $\CH^r(X_K,\Lambda)^0$. If we can show that $\eta^r_\fm(\widetilde{tz})=\partial(\alpha_\fm(tz))$, then the theorem follows since $t$ is an invertible operator on $\rH^1_\sing(K,\rH^{2r-1}(X_{\overline{K}},\Lambda(r))_\fm)$. Thus we may assume at the beginning that $z$ belongs to $\CH^r(X_K,\Lambda)^0$.

\emph{Step 2.} Let us first recall the definition of $\partial(\alpha(z))$. By the semi-purity theorem \cite{Fuj02}*{\Sec 8}, we know that $\iota_!\iota^!\Lambda\in\rD^{\geq 2r}(X_{\overline\kappa},\Lambda)$. Therefore, we have a short exact sequence
\begin{align}\label{eq:vanishing1}
\rH^{2r-1}(X_{\overline{K}},\Lambda(r))\hookrightarrow\rH^{2r-1}((X\setminus Z)_{\overline{K}},\Lambda(r))
\to\rH^{2r}(X_{\overline{K}},\iota_!\iota^!\Lambda(r))\to\rH^{2r}(X_{\overline{K}},\Lambda(r)).
\end{align}
Denote by $[z]$ the cycle class of $z$ in $\rH^{2r}(X_{\overline{K}},\iota_!\iota^!\Lambda(r))$, which maps to the geometric cycle class of $z$ in $\rH^{2r}(X_{\overline{K}},\Lambda(r))$. Then $[z]$ is in the kernel of the last map in \eqref{eq:vanishing1} since $z$ belongs to $\CH^r(X_K,\Lambda)^0$. As $[z]$ is fixed by $\rG_K$, it induces an element
\[\delta[z]\in\rH^1(\rI_K,\rH^{2r-1}(X_{\overline{K}},\Lambda(r)))^{\rG_\kappa}\]
by the coboundary map for the functor $\rH^0(\rI_K,-)$. Then $\partial(\alpha(z))=\delta[z]$ under the isomorphism in Lemma \ref{le:singular}.

\emph{Step 3.} To compute $\delta[z]$, we pick up an arbitrary $[z]'\in\rH^{2r-1}((X\setminus Z)_{\overline{K}},\Lambda(r))$ in the preimage of $[z]$. Then $(T-1)[z]'$ belongs to $\rH^{2r-1}(X_{\overline{K}},\Lambda(r))$, and its image in the quotient
\begin{align}\label{eq:vanishing3}
\xymatrix{\frac{\rH^{2r-1}(X_{\overline{K}},\Lambda(r))_\fm}{(T-1)\rH^{2r-1}(X_{\overline{K}},\Lambda(r))_\fm},}
\end{align}
which is isomorphic to $\rH^1(\rI_K,\rH^{2r-1}(X_{\overline{K}},\Lambda(r)))$ as $\Lambda$-modules, coincides with $\delta[z]$. Therefore, our goal is to compute the image of $(T-1)[z]'$ in \eqref{eq:vanishing3}. It does not depend on the choice of $[z]'$.

\emph{Step 4.} Now we want to make a convenient choice of $[z]'$. Denote by $F$ the cone of the composite map $(\iota_{S!}\iota_S^!\Lambda)\res_{X_{\overline\kappa}}\to\Lambda\to\rR\Psi\Lambda$ (in the triangulated category $\rD^+(X_{\overline\kappa},\Lambda)$), and by $G$ the cone of the map $\rR\Psi\iota_{S!}\iota_S^!\Lambda\to\rR\Psi\Lambda$. Then we have the following map
\[\xymatrix{
(\iota_{S!}\iota_S^!\Lambda)\res_{X_{\overline\kappa}}  \ar[r]\ar[d] & \rR\Psi\Lambda \ar[r]\ar[d]^-{=} & F \ar[r]^-{+1}\ar[d] & \\
\rR\Psi\iota_{S!}\iota_S^!\Lambda  \ar[r] & \rR\Psi\Lambda \ar[r] & G \ar[r]^-{+1} &
}\]
of exact triangles. It induces the following commutative diagram
\begin{align}\label{eq:vanishing2}
\xymatrix{
\rH^{2r-1}(X_{\overline{K}},\Lambda(r)) \ar@{^(->}[r]\ar[d]^-{=} & \rH^{2r-1}(X_{\overline\kappa},F(r)) \ar[r]\ar[d] &
\rH^{2r}(X_{\overline\kappa},\iota_{S!}\iota_S^!\Lambda) \ar[r]\ar[d] & \rH^{2r}(X_{\overline{K}},\Lambda(r)) \ar[d]^-{=}  \\
\rH^{2r-1}(X_{\overline{K}},\Lambda(r)) \ar@{^(->}[r] & \rH^{2r-1}(X_{\overline\kappa},G(r)) \ar[r] &
\rH^{2r}(X_{\overline\kappa},\rR\Psi\iota_{S!}\iota_S^!\Lambda) \ar[r] & \rH^{2r}(X_{\overline{K}},\Lambda(r)) }
\end{align}
in which the bottom line is canonically isomorphic to \eqref{eq:vanishing1}. Note that $z$, or rather $z^\sharp$, induces a class $[z^\sharp]\in\rH^{2r}(X,\iota_!\iota^!\Lambda(r))$. Denote by $[z^\sharp]_0$ the image of $[z^\sharp]$ under the restriction map $\rH^{2r}(X,\iota_!\iota^!\Lambda(r))\to\rH^{2r}(X_{\overline\kappa},\iota_{S!}\iota_S^!\Lambda)$. Then $[z^\sharp]_0$ maps to $[z]$ under the third vertical map in \eqref{eq:vanishing2}. Pick up an element $[z^\sharp]'_0\in\rH^{2r-1}(X_{\overline\kappa},F(r))$ that maps to $[z^\sharp]_0$, and then we may take $[z]'$ to be the image of $[z^\sharp]'_0$ under the second vertical map in \eqref{eq:vanishing2}.

Now since $T$ acts trivially on $(\iota_{S!}\iota_S^!\Lambda)\res_{X_{\overline\kappa}}$, the map $T-1\colon F\to F$ lifts to a map $T-1\colon F\to\rR\Psi\Lambda$ in $\rD^+(X_{\overline\kappa},\Lambda)$. It induces a canonical map $T-1\colon\rH^{2r-1}(X_{\overline\kappa},F(r))\to\rH^{2r-1}(X_{\overline\kappa},\rR\Psi\Lambda(r))$ on the level of cohomology. So the question is reduced to the computation of the image of $(T-1)[z^\sharp]'_0$, after localization and quotient, in \eqref{eq:vanishing3}.

\emph{Step 5.} To proceed, we need to use the monodromy filtration studied in \cite{Sai03}*{\Sec 2.2}. It is an increasing filtration $M_\bullet$ of $\rR\Psi\Lambda$ such that the monodromy operator $T-1$ sends $M_\bullet\rR\Psi\Lambda$ into $M_{\bullet-2}\rR\Psi\Lambda$. In particular, we have a map $T-1\colon \rR\Psi\Lambda/M_0\rR\Psi\Lambda\to \rR\Psi\Lambda/M_{-2}\rR\Psi\Lambda$. Therefore we have submodules
$(T-1)\rH^{2r-1}(X_{\overline\kappa},\rR\Psi\Lambda(r)/M_{-2}\rR\Psi\Lambda(r))\subset
(T-1)\rH^{2r-1}(X_{\overline\kappa},\rR\Psi\Lambda(r)/M_0\rR\Psi\Lambda(r))\subset
\rH^{2r-1}(X_{\overline\kappa},\rR\Psi\Lambda(r)/M_{-2}\rR\Psi\Lambda(r))$. Since $\Lambda$ is a nice coefficient for $\pres{r}\rE_{X,\fm}$, a similar argument in the proof of Theorem \ref{th:galois_cohomology} (1) will show that
the canonical inclusion
\[\xymatrix{\(\frac{(T-1)\rH^{2r-1}(X_{\overline\kappa},\rR\Psi\Lambda(r)/M_0\rR\Psi\Lambda(r))_\fm}
{(T-1)\rH^{2r-1}(X_{\overline\kappa},\rR\Psi\Lambda(r)/M_{-2}\rR\Psi\Lambda(r))_\fm}\)^{\rG_\kappa}
\subset\(\frac{\rH^{2r-1}(X_{\overline\kappa},\rR\Psi\Lambda(r)/M_{-2}\rR\Psi\Lambda(r))_\fm}
{(T-1)\rH^{2r-1}(X_{\overline\kappa},\rR\Psi\Lambda(r)/M_{-2}\rR\Psi\Lambda(r))_\fm}\)^{\rG_\kappa}}\]
is an isomorphism, both being canonically isomorphic to
\[\xymatrix{
\(\frac{\pres{r}\rE^{1,2r-2}_{\fm,1}(-1)}{\mu^{0,2r-1}_\fm(\Ker\rd^{-1,2r}_{\fm,1})+\IM(\rd^{0,2r-2}_{\fm,1}(-1))}\)^{\rG_\kappa}.}\]

\emph{Step 6.} By the construction in \cite{Sai03}*{\Sec 2.2}, the adjunction map $\Lambda\to\rR\Psi\Lambda$ factorizes through $M_0\rR\Psi\Lambda$. Thus we have the composite map $(\iota_{S!}\iota_S^!\Lambda)\res_{X_{\overline\kappa}}\to\Lambda\to M_0\rR\Psi\Lambda/M_{-2}\rR\Psi\Lambda$, hence a map
\[\xymatrix{
(\iota_{S!}\iota_S^!\Lambda)\res_{X_{\overline\kappa}}  \ar[r]\ar[d] & \rR\Psi\Lambda \ar[r]\ar[d] & F \ar[r]^-{+1}\ar[d] & \\
M_0\rR\Psi\Lambda/M_{-2}\rR\Psi\Lambda  \ar[r] & \rR\Psi\Lambda/M_{-2}\rR\Psi\Lambda \ar[r] & \rR\Psi\Lambda/M_0\rR\Psi\Lambda \ar[r]^-{+1} &
}\]
of exact triangles. Applying the monodromy operator $T-1$, we obtain the following commutative diagram
\begin{align}\label{eq:vanishing4}
\xymatrix{
\frac{(T-1)\rH^{2r-1}(X_{\overline\kappa},F(r))}{(T-1)\rH^{2r-1}(X_{\overline\kappa},\rR\Psi\Lambda(r))}
\ar[r]\ar@{^(->}[d] & \frac{(T-1)\rH^{2r-1}(X_{\overline\kappa},\rR\Psi\Lambda(r)/M_0\rR\Psi\Lambda(r))}
{(T-1)\rH^{2r-1}(X_{\overline\kappa},\rR\Psi\Lambda(r)/M_{-2}\rR\Psi\Lambda(r))}  \ar@{^(->}[d] \\
\frac{\rH^{2r-1}(X_{\overline\kappa},\rR\Psi\Lambda(r))}{(T-1)\rH^{2r-1}(X_{\overline\kappa},\rR\Psi\Lambda(r))}
\ar[r] & \frac{\rH^{2r-1}(X_{\overline\kappa},\rR\Psi\Lambda(r)/M_{-2}\rR\Psi\Lambda(r))}
{(T-1)\rH^{2r-1}(X_{\overline\kappa},\rR\Psi\Lambda(r)/M_{-2}\rR\Psi\Lambda(r))}
}
\end{align}
By Step 5 and a similar argument in the proof of Theorem \ref{th:galois_cohomology} (1), the map
\[\xymatrix{\(\frac{\rH^{2r-1}(X_{\overline\kappa},\rR\Psi\Lambda(r))_\fm}{(T-1)\rH^{2r-1}(X_{\overline\kappa},\rR\Psi\Lambda(r))_\fm}\)^{\rG_\kappa}
\to\(\frac{\rH^{2r-1}(X_{\overline\kappa},\rR\Psi\Lambda(r)/M_{-2}\rR\Psi\Lambda(r))_\fm}
{(T-1)\rH^{2r-1}(X_{\overline\kappa},\rR\Psi\Lambda(r)/M_{-2}\rR\Psi\Lambda(r))_\fm}\)^{\rG_\kappa}}\]
induced from the bottom arrow in \eqref{eq:vanishing4} is canonically isomorphic to the map
\begin{align}\label{eq:vanishing6}
\xymatrix{\(\frac{\Ker(\rd^{1,2r-2}_{\fm,1}(-1))}{\mu^{0,2r-1}_\fm(\Ker\rd^{-1,2r}_{\fm,1})+\IM(\rd^{0,2r-2}_{\fm,1}(-1))}\)^{\rG_\kappa}
\to\(\frac{\pres{r}\rE^{1,2r-2}_{\fm,1}(-1)}{\mu^{0,2r-1}_\fm(\Ker\rd^{-1,2r}_{\fm,1})+\IM(\rd^{0,2r-2}_{\fm,1}(-1))}\)^{\rG_\kappa}.}
\end{align}
In particular, \eqref{eq:vanishing6} is injective. Since $(T-1)[z^\sharp]'_0$ belongs to the $\rG_\kappa$-invariant submodule of \eqref{eq:vanishing3}, it suffices to compute its image in
\begin{align}\label{eq:vanishing7}
\xymatrix{\(\frac{(T-1)\rH^{2r-1}(X_{\overline\kappa},\rR\Psi\Lambda(r)/M_0\rR\Psi\Lambda(r))_\fm}
{(T-1)\rH^{2r-1}(X_{\overline\kappa},\rR\Psi\Lambda(r)/M_{-2}\rR\Psi\Lambda(r))_\fm}\)^{\rG_\kappa}.}
\end{align}

\emph{Step 7.} Now we use the following map
\[\xymatrix{
M_0\rR\Psi\Lambda/M_{-1}\rR\Psi\Lambda  \ar[r]\ar[d]^-{T-1} & \rR\Psi\Lambda/M_{-1}\rR\Psi\Lambda \ar[r]\ar[d]^-{T-1}
& \rR\Psi\Lambda/M_0\rR\Psi\Lambda \ar[r]^-{+1}\ar[d]^-{T-1} & \\
M_{-2}\rR\Psi\Lambda/M_{-3}\rR\Psi\Lambda  \ar[r] & \rR\Psi\Lambda/M_{-3}\rR\Psi\Lambda \ar[r]
& \rR\Psi\Lambda/M_{-2}\rR\Psi\Lambda \ar[r]^-{+1} &
}\]
of exact triangles. Note that the image of the map $T-1\colon\rH^{2r}(X_{\overline\kappa},M_0\rR\Psi\Lambda/M_{-1}\rR\Psi\Lambda)
\to\rH^{2r}(X_{\overline\kappa},M_{-2}\rR\Psi\Lambda/M_{-3}\rR\Psi\Lambda)$ is canonically isomorphic to
\[\xymatrix{\frac{\pres{r}\rE^{0,2r}_1}{\IM(\rd_1^{-1,2r}\circ(\mu^{0,2r-1})^{-1}\circ\rd_1^{0,2r-2}(-1))}.}\]
Together with the discussion in Step 5, we have the induced map
\[\xymatrix{\(\frac{\pres{r}\rE^{1,2r-2}_{\fm,1}(-1)}{\mu^{0,2r-1}_\fm(\Ker\rd^{-1,2r}_{\fm,1})+\IM(\rd^{0,2r-2}_{\fm,1}(-1))}\)^{\rG_\kappa}
\simeq\eqref{eq:vanishing7}\to
\(\frac{\pres{r}\rE^{0,2r}_{\fm,1}}{\IM(\rd_{\fm,1}^{-1,2r}\circ(\mu^{0,2r-1}_\fm)^{-1}\circ\rd_{\fm,1}^{0,2r-2}(-1))}\)^{\rG_\kappa}}.\]
It is simply $\rd_{\fm,1}^{-1,2r}\circ(\mu^{0,2r-1}_\fm)^{-1}$, which is in particular injective. Therefore, to prove the theorem, it suffices to show that the image of $[z^\sharp]'_0$ under the composite map
\[\rH^{2r-1}(X_{\overline\kappa},F(r))\to\rH^{2r-1}(X_{\overline\kappa},\rR\Psi\Lambda(r)/M_0\rR\Psi\Lambda(r))
\to\rH^{2r}(X_{\overline\kappa},M_0\rR\Psi\Lambda/M_{-1}\rR\Psi\Lambda)\simeq\pres{r}\rE^{0,2r}_1\]
coincides with the image of $\widetilde{z}$ under the restriction map
\[\rH^{2r}(X_{\overline\kappa},\Lambda(r))\to\rH^{2r}(X^{(0)}_{\overline\kappa},\Lambda(r))\to\pres{r}\rE^{0,2r}_1.\]
This follows from the following diagram
\[\xymatrix{
F \ar[r]\ar[d] & (\iota_{S!}\iota_S^!\Lambda[1])\res_{X_{\overline\kappa}} \ar[r]\ar[d] & \Lambda[1] \ar[d] \\
\rR\Psi\Lambda/M_0\rR\Psi\Lambda \ar[r] & (M_0\rR\Psi\Lambda/M_{-1}\rR\Psi\Lambda)[1] \ar[r]^-{=} & (M_0\rR\Psi\Lambda/M_{-1}\rR\Psi\Lambda)[1]
}\]
which is tautologically commutative by definition. The theorem follows.
\end{proof}

The following proposition provides a way to compute the image of $\widetilde{z}$ in $\rH^{2r}(X^{(0)}_{\overline\kappa},\Lambda(r))$, which will be used later. We first introduce more notation. For an algebraic cycle $z$ on $X_K$ of codimension $r$, let $Z$ and $z^\sharp$ be as in the previous discussion. Suppose that $Z$ has irreducible components $Z_1,\dots,Z_t$ where each is of codimension $r$ in $X$. If we have $z=\sum_{i=1}^ta_iZ_{iK}$, then $z^\sharp=\sum_{i=1}^ta_iZ_i$. For each $i$, the fiber product $Z_i\times_XX^{(0)}_\kappa$ is a closed subscheme of $X^{(0)}_\kappa$ of pure codimension $r$. Thus $z^\sharp\times_XX^{(0)}_\kappa\coloneqq\sum_{i=1}^ta_i(Z_i\times_XX^{(0)}_\kappa)$ is an algebraic cycle on $X^{(0)}_\kappa$ of codimension $r$.

\begin{proposition}\label{pr:relation_abel}
Let $z$ be an algebraic cycle on $X_K$ of codimension $r$. Suppose that for every irreducible component $Z_i$ of $Z$, the codimension of the singular locus $Z_i^\sing$ of $Z_i$ in each irreducible component $X_j$ of $X_\kappa$ is at least $r+1$. Then the image of $\widetilde{z}$ under the restriction map $\rH^{2r}(X_{\overline\kappa},\Lambda(r))\to\rH^{2r}(X^{(0)}_{\overline\kappa},\Lambda(r))$ coincides with the geometric cycle class of $z^\sharp\times_XX^{(0)}_\kappa$.
\end{proposition}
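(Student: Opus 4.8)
The plan is to unwind the definition of $\widetilde z$ so that the assertion becomes a local compatibility between the \'etale cycle class map and restriction to a smooth Cartier divisor. The composite $\rH^{2r}_Z(X,\Lambda(r))\to\rH^{2r}(X,\Lambda(r))\to\rH^{2r}(X_{\overline\kappa},\Lambda(r))^{\rG_\kappa}$ carries $[z^\sharp]$ to the restriction of the ordinary cycle class $\cl(z^\sharp)\in\rH^{2r}(X,\Lambda(r))$ to the geometric special fiber, so $\widetilde z=\cl(z^\sharp)\res_{X_{\overline\kappa}}$. As in the proof of Lemma \ref{le:cohomology} we may base change to $\Spec O_{K^\ur}$ and assume $\kappa=\overline\kappa$; by linearity in $z$ we may further assume $z=Z_K$ with $Z\subset X$ integral, flat over $O_K$, of codimension $r$, so that $z^\sharp=Z$. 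Since $X^{(0)}_\kappa=\coprod_j X_j$, it then suffices to fix an irreducible component $X_j$ of $X_\kappa$ and to show that the image of $\cl(Z)$ under the restriction $\rH^{2r}(X,\Lambda(r))\to\rH^{2r}(X_j,\Lambda(r))$ equals the cycle class of the cycle associated with $Z\times_X X_j$.

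Next I record the relevant geometry. A strictly semistable scheme is regular, hence locally factorial, and \'etale-locally on $X$ the component $X_j$ is cut out by a single coordinate; thus $X_j\hookrightarrow X$ is an effective Cartier divisor and a regular closed immersion of codimension $1$, with $X_j$ itself regular. Because $Z$ is integral and dominates $\Spec O_K$, it is not contained in $X_\kappa$, hence not in $X_j$, so the local equation of $X_j$ restricts to a nonzerodivisor on $\cO_Z$; therefore $Z\times_X X_j=Z\cap X_j$ is an effective Cartier divisor on $Z$, purely of codimension $1$ in $Z$, i.e.\ purely of codimension $r$ in $X_j$. In particular $Z$ meets $X_j$ properly, and the refined Gysin pullback $X_j^![Z]$ is defined and equals the cycle associated with the scheme $Z\times_X X_j$, the multiplicities being the orders of vanishing of the local equation of $X_j$ along the components.

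To carry out the comparison I factor the restriction through cohomology with supports, $\rH^{2r}_Z(X,\Lambda(r))\to\rH^{2r}_{Z\cap X_j}(X_j,\Lambda(r))\to\rH^{2r}(X_j,\Lambda(r))$, so that it is enough to show $[Z]$ and the fundamental class of $Z\times_X X_j$ have the same image in $\rH^{2r}_{Z\cap X_j}(X_j,\Lambda(r))$. Put $\Sigma=Z^{\sing}$ and $U=X\setminus\Sigma$. The hypothesis says $\Sigma\cap X_j$ has codimension $\geq r+1$ in the regular scheme $X_j$, so by the semi-purity theorem \cite{Fuj02}*{\Sec 8} one has $\rH^q_{\Sigma\cap X_j}(X_j,\Lambda(r))=0$ for $q\leq 2r+1$; the localization sequence for supports then shows that restriction to $U$ induces an isomorphism $\rH^{2r}_{Z\cap X_j}(X_j,\Lambda(r))\xrightarrow{\sim}\rH^{2r}_{Z\cap X_j\cap U}(X_j\cap U,\Lambda(r))$. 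Hence we may replace $X$ by $U$, where now $Z\cap U$ is \emph{regular} of codimension $r$. There $[Z\cap U]\in\rH^{2r}_{Z\cap U}(U,\Lambda(r))$ is the fundamental class by Gabber's absolute purity, and the compatibility of the \'etale cycle class map with Gysin pullback along the regular immersion $X_j\cap U\hookrightarrow U$ — available since the intersection is proper by the previous paragraph — identifies its image in $\rH^{2r}_{Z\cap X_j\cap U}(X_j\cap U,\Lambda(r))$ with the fundamental class of $X_j^![Z]\res_U=(Z\times_X X_j)\res_U$, as desired.

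The technical heart is this last compatibility: the Gysin pullback along the Cartier divisor $X_j\cap U$ of the fundamental class of the regular subscheme $Z\cap U$ must be the fundamental class of the cycle associated with the \emph{scheme-theoretic} pullback, with its correct intersection multiplicities, and not merely of the underlying reduced subscheme. This follows from the Tor-independence of $Z\cap U$ and $X_j\cap U$ over $U$ — which holds because $X_j\cap U$ is cut by a nonzerodivisor on $Z\cap U$ — together with the base-change and projection formulas for the cycle class map, in the same spirit as the functoriality of the weight spectral sequence used throughout, cf.\ \cite{Sai03}*{\Sec 2.3}. One should also note that the role of the hypothesis is exactly to make the semi-purity reduction legitimate: it guarantees that $\Sigma\cap X_j$ contains no generic point of $Z\cap X_j$, so that passing to $U$ loses no component of the intersection cycle.
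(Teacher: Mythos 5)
Your proof is correct and follows essentially the same route as the paper: excise the singular locus $Z^{\sing}$ using semi-purity (legitimate precisely because of the codimension hypothesis in each $X_j$), then invoke the compatibility of the \'etale cycle class map with pullback along $X_j\hookrightarrow X$ over the complement, where $Z$ is regular. The only difference is presentational — you track the class in cohomology with supports and make the multiplicity bookkeeping explicit via Tor-independence, whereas the paper restricts on all of $\rH^{2r}(X,\Lambda(r))$ and cites Fujiwara's functoriality of the cycle class map directly.
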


\begin{proof}
Without lost of generality, we assume that $Z$ is a prime divisor. Since the codimension of $Z^\sing$ in $X$ is at least $r+1$, the restriction map $\rH^{2r}(X,\Lambda(r))\to\rH^{2r}(X\setminus Z^\sing,\Lambda(r))$ is an isomorphism by semi-purity theorem. Under such isomorphism, we have $\cl(Z)=\cl(Z\setminus Z^\sing)$. Now we choose an arbitrary irreducible component $X_j$ of $X_\kappa$, and consider the morphism $f\colon X_j\setminus Z^\sing\to X\setminus Z^\sing$. By the functoriality of the cycle class map \cite{Fuj02}*{Proposition 1.1.3}, we know that the image of $\widetilde{z}$ under the restriction map $\rH^{2r}(X_{\overline\kappa},\Lambda(r))\to\rH^{2r}((X_j\setminus Z^\sing)_{\overline\kappa},\Lambda(r))$ coincides with the geometric cycle class of $f^*(Z\setminus Z^\sing)$. By the assumption that the codimension of $Z^\sing$ in $X_j$ is at least $r+1$, we have the canonical isomorphism $\rH^{2r}(X_{j,\overline\kappa},\Lambda(r))\to\rH^{2r}((X_j\setminus Z^\sing)_{\overline\kappa},\Lambda(r))$ under which the geometric cycle class of $f^*(Z)$ coincides with the geometric cycle class of $f^*(Z\setminus Z^\sing)$. The proposition follows.
\end{proof}

\section{A reciprocity law for cubic Hirzebruch--Zagier cycles}
\label{ss:3}

Recall from \Sec\ref{ss:main} that we have fixed a totally real cubic field $F$, with various notation $O_F$, $\dA_F$, $\Phi_F$, and
$\TF\subset\dC$, $F_0\subset\dC$.

In this chapter, we fix a finite set of even cardinality $\nabla$ of places of $F$ containing $\Phi_F$.

\subsection{Level raising on Hilbert threefolds}
\label{ss:level_raising}

The initial data for the level raising is a quadruple $(\rho,\fr_\rho,\fr_0,\fr_1)$ where
\begin{itemize}
 \item $\rho\colon\rG_F\to\GL(\rN_\rho)$ is a homomorphism with a free $\dZ/p^\nu$-module $\rN_\rho$ of rank $2$, such that $\det\rho\simeq\dZ/p^\nu(-1)$;

  \item $\fr_\rho$ is an ideal of $O_F$ coprime to $\nabla$ such that $\rho$ is unramified outside $\nabla$ and $\fr_\rho$;

  \item $\fr_0$ is an ideal of $O_F$ coprime to $\nabla$;

  \item $\fr_1$ is an absolutely neat ideal (Definition \ref{bde:absolutely_neat}) of $O_F$ that is coprime to $\nabla$ and $\fr_0$, and such that $\fr_0\fr_1\subset\fr_\rho$.
\end{itemize}
For a given quadruple, we will denote by $\fr$ the ideal $\fr_0\fr_1$, and $\Lambda$ for $\dZ/p^\nu$.

\begin{notation}\label{no:nabla}
Let $\fr$ be an ideal of $O_F$.
\begin{enumerate}
  \item Put $\cS_\fr=\cS(\nabla)_\fr$ (Definition \ref{ade:eichler}) if $\fr$ is coprime to $\nabla$.
  \item Let $\dT^\fr$ be the coproduct of $\dT_\fq$ (Definition \ref{ade:hecke}) for all primes $\fq$ of $F$ that are coprime to $\nabla$ and $\fr$ in the category of commutative monoids.
\end{enumerate}
\end{notation}

For every ideal $\fs$ of $O_F$ contained in $\fr_\rho$, we have an induced homomorphism
\[\phi^\fs_\rho\colon\dZ[\dT^\fs]\to\Lambda(=\dZ/p^\nu)\]
such that $\phi^\fs_\rho(\rT_\fq)=\tr\rho(\Frob_\fq)$ and $\phi^\fs_\rho(\rS_\fq)=1$, for every prime $\fq$ of $F$ coprime to $\nabla$ and $\fs$. Here, $\Frob_\fq$ denotes a geometric Frobenius at $\fq$. Put
\[\fm_\rho^\fs=\Ker[\dZ[\dT^\fs]\xrightarrow{\phi^\fs_\rho}\Lambda\to\dF_p]\]
which is a maximal ideal of $\dZ[\dT^\fs]$. If we put $\bar\rho\coloneqq\rho\mod p$, then $\fm_\rho^\fs=\fm_{\bar\rho}^\fs$.

\begin{definition}[Perfect quadruple]\label{de:perfect_prime}
We say that
\begin{enumerate}
  \item $\bar\rho$ is \emph{generic} if $(\Ind^\dQ_F\bar\rho)\res_{\rG_{\TF}}$ has the largest possible image, which is isomorphic to $\rG(\SL_2(\dF_p)\oplus\SL_2(\dF_p)\oplus\SL_2(\dF_p))$;

  \item a quadruple $(\rho,\fr_\rho,\fr_0,\fr_1)$ is \emph{$\fs$-isolated}, for an ideal $\fs$ of $O_F$ contained in $\fr$, if $\Gamma(\cS_{\fr_\rho},\Lambda)_{\fm^\fs_\rho}$ (see \Sec\ref{ss:notation} for $\Gamma$) is a free $\Lambda$-module of rank $1$ and the maps in the following commutative diagram
      \[\xymatrix{
      \Gamma(\cS_\fr,\Lambda)_{\fm^\fs_\rho}
      \ar[rrr]^-{\bigoplus_{\fd\in\fD(\fr,\fr_\rho)}\delta^\fd_*}\ar[d] &&&
      \bigoplus_{\fd\in\fD(\fr,\fr_\rho)}\Gamma(\cS_{\fr_\rho},\Lambda)_{\fm^\fs_\rho} \ar[d] \\
      \Gamma(\cS_\fr,\Lambda)/\Ker\phi^\fs_\rho
      \ar[rrr]^-{\bigoplus_{\fd\in\fD(\fr,\fr_\rho)}\delta^\fd_*}  &&&
      \bigoplus_{\fd\in\fD(\fr,\fr_\rho)}\Gamma(\cS_{\fr_\rho},\Lambda)/\Ker\phi^\fs_\rho
      }\]
      are all isomorphisms, where $\fD(\fr,\fr_\rho)$ is introduced in Notation \ref{ano:divisor} and $\delta^\fd_*$ is the (normalized) pushforward map in Definition \ref{ade:pushforward};

  \item a quadruple $(\rho,\fr_\rho,\fr_0,\fr_1)$ is \emph{perfect} if
      \begin{enumerate}
        \item $p\geq 11$ and $p\neq 13,19$;
        \item $p$ is coprime to $\nabla$ and $\fr\cdot\mu(\fr,\fr_\rho)\cdot|\Cl(F)_{\fr_1}|\cdot\disc F$ (Notation \ref{ano:divisor} for $\mu(\fr,\fr_\rho)$);
        \item $\bar\rho$ is generic; and
        \item it is $\fr$-isolated.
      \end{enumerate}
\end{enumerate}
\end{definition}

From now on, we will fix a perfect quadruple $(\rho,\fr_\rho,\fr_0,\fr_1)$.

\begin{definition}[Cubic-level raising prime]\label{de:cubic_level_raising}
We say that a rational prime $\ell$ is a \emph{cubic-level raising prime} for the quadruple $(\rho,\fr_\rho,\fr_0,\fr_1)$ if
\begin{description}
  \item[(C1)] $\ell$ is inert in $F$, unramified in $\TF$, and coprime to $\nabla$ and $2\fr$;

  \item[(C2)] $(\rho,\fr_\rho,\fr_0,\fr_1)$ is $\fr\fl$-isolated, where $\fl$ is the unique prime of $F$ above $\ell$;

  \item[(C3)] $p$ does not divide $\ell(\ell^{18}-1)(\ell^6+1)$;

  \item[(C4)] $\phi_\Pi(\rT_\fl)\equiv\ell^3+1\mod p^\nu$.
\end{description}
\end{definition}

\begin{notation}\label{no:reduction}
We denote by $\rho^\sharp\colon\rG_\dQ\to\GL(\rN_\rho^\sharp)$ the multiplicative induction of $\rho$ from $\rG_F$ to $\rG_\dQ$, where $\rN_\rho^\sharp=\rN_\rho^{\otimes 3}$. Put $\bar\rN_\rho\coloneqq\rN_\rho\otimes\dF_p$ and $\bar\rN_\rho^\sharp\coloneqq\rN_\rho^\sharp\otimes\dF_p$, which are $\dF_p[\rG_\dQ]$-modules.
\end{notation}

Let $\ell$ be a prime satisfying (C1) in Definition \ref{de:cubic_level_raising} and $\fl$ the unique prime of $F$ above $\ell$. From Definition \ref{bde:hilbert_shimura} (with $\Delta=\{\fl\}\cup\nabla\setminus\Phi_F$), we have the scheme \[\cX(\ell)_{\fr_0,\fr_1}\coloneqq\cX(\{\fl\}\cup\nabla\setminus\Phi_F)_{\fr_0,\fr_1}\]
which is projective and of relative dimension $3$ over $\Spec\dZ[(\fr\disc F)^{-1}]$. Moreover, we have a monoidal functor
\begin{align}\label{eq:monoidal}
\dT^{\fr\fl}\to\EC(\cX(\ell)_{\fr_0,\fr_1})
\end{align}
(\Sec\ref{ss:correspondence}) given by Hecke correspondences (Definition \ref{bde:hecke}, Remark \ref{bre:hecke}).

\begin{theorem}\label{th:cubic_level_raising}
Let $(\rho,\fr_\rho,\fr_0,\fr_1)$ be a perfect quadruple. Let $\ell$ be a cubic-level raising prime for $(\rho,\fr_\rho,\fr_0,\fr_1)$. Then we have
\begin{enumerate}
  \item $\rH^j(\cX(\ell)_{\fr_0,\fr_1}\otimes\overline\dQ,\dZ_p)_{\fm_\rho^{\fr\fl}}=0$ for $j\neq 3$;

  \item a canonical decomposition of the $\Lambda[\rG_\dQ]$-module
      \[\rH^3(\cX(\ell)_{\fr_0,\fr_1}\otimes\overline\dQ,\dZ_p(2))/\Ker\phi^{\fr\fl}_\rho=\bigoplus_{\fd\in\fD(\fr,\fr_\rho)}\rM_0\]
      where $\rM_0$ is isomorphic to $\rN_\rho^\sharp(2)$ as a $\Lambda[\rG_{F_0}]$-module;

  \item a canonical isomorphism
      \[\rH^1_\sing(\dQ_\ell,\rH^3(\cX(\ell)_{\fr_0,\fr_1}\otimes\overline\dQ,\dZ_p(2))/\Ker\phi^{\fr\fl}_\rho)
      \simeq\Gamma(\cS_\fr,\dZ)/\Ker\phi^\fr_\rho.\]
\end{enumerate}
\end{theorem}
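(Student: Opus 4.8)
The plan is to reduce all three statements to the analysis of the weight spectral sequence for a semistable model of $\cX(\ell)_{\fr_0,\fr_1}$ at the prime $\ell$, combined with the parameterization of the relevant cycles via oriented Eichler orders. First I would pass to a suitable integral model: since $\ell$ is inert in $F$ (condition (C1)) and satisfies the level-raising congruences (C3)--(C4), the scheme $\cX(\ell)_{\fr_0,\fr_1}$ admits a strictly semistable model $\cX$ over $O_{\dQ_\ell}$ (or a small unramified extension) whose special fiber is a pluri-nodal arrangement of Hilbert modular surfaces, as described in Appendix~\ref{ss:b} improving Zink's construction; the double and triple intersections $\cX_\kappa^{(p)}$ are (disjoint unions of) smooth proper varieties whose cohomology is governed by quaternionic automorphic forms for the indefinite/definite quaternion algebras obtained by switching invariants at $\fl$ and places of $\nabla$. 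The combinatorics of these strata, and in particular the identification of $\rH^0$ and $\rH^2$ of the surfaces appearing at the ``center'' of the spectral sequence with spaces of the form $\Gamma(\cS_\fr,\Lambda)$ and $\Gamma(\cS_{\fr_\rho},\Lambda)$, is exactly what the $\fr$- and $\fr\fl$-isolatedness hypotheses (perfectness, and (C2)) are designed to control after localization at $\fm_\rho^{\fr\fl}$.

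The core computation is the \emph{localized potential map} $\Delta^2_\fm$ of Definition~\ref{de:nice_coefficient_localized} and Theorem~\ref{th:galois_cohomology_localized} in middle degree $2r-1 = 3$, i.e.\ $r=2$, $n=3$, so $n+1-r = 2$. After localizing at $\fm = \fm_\rho^{\fr\fl}$ and dividing by $\Ker\phi^{\fr\fl}_\rho$, I would show: (a) $\Lambda$ is a \emph{very nice} coefficient for $\pres{2}\rE_{\cX,\fm}$ --- degeneration at $\rE_2$ follows from purity and weight reasons once the cohomology of the strata has been identified, and the genericity of $\bar\rho$ together with conditions (C3) on $p$ dividing $\ell(\ell^{18}-1)(\ell^6+1)$ guarantees that the only $\rG_\kappa$-trivial subquotients of $\pres{2}\rE^{p,3-p}_2(-1)$ sit in $p=1$ and that $M^{\rG_\kappa}\to M_{\rG_\kappa}$ is an isomorphism (Frobenius eigenvalue $\ell^3+1$ against $\ell^3, \ell^6$, etc.); (b) the terms $A^2(\cX_\kappa,\Lambda)^0_\fm$ and $A_2(\cX_\kappa,\Lambda)^{0,\fm}_0$ are both identified, via the Eichler-order parameterization of Appendix~\ref{ss:a} and the $\fr$-isolatedness, with copies of $\Gamma(\cS_\fr,\Lambda)/\Ker\phi^\fr_\rho$ indexed by $\fD(\fr,\fr_\rho)$; and (c) the potential map $\Delta^2_\fm$ between them is given, in these coordinates, by the degeneracy/pushforward matrix $\bigoplus_{\fd}\delta^\fd_*$, which the perfectness hypothesis (in the form of the commutative diagram in Definition~\ref{de:perfect_prime}(2)) forces to be an isomorphism. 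Granting (a)--(c), Theorem~\ref{th:galois_cohomology_localized}(2) gives $\rH^1_\unr(\dQ_\ell, \rH^3(\cdots)/\Ker\phi^{\fr\fl}_\rho) = 0$, and its exact sequence collapses to the canonical isomorphism
\[
\rH^1_\sing\bigl(\dQ_\ell,\ \rH^3(\cX(\ell)_{\fr_0,\fr_1}\otimes\overline\dQ,\dZ_p(2))/\Ker\phi^{\fr\fl}_\rho\bigr)\ \xrightarrow{\ \sim\ }\ \Gamma(\cS_\fr,\dZ)/\Ker\phi^\fr_\rho,
\]
which is part (3); parts (1) and (2) come out of the same spectral-sequence analysis (vanishing of $\rH^j_\fm$ for $j\neq 3$ follows from the identification of all strata cohomologies and the collapse of $\pres{0}\rE_\fm$ outside the middle, while (2) is the monodromy-filtration graded piece in degree $3$ after using that the Frobenius eigenvalue condition (C4) puts the relevant piece in the right weight).

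The main obstacle --- and the genuinely new technical content relative to \cite{Liu} and \cite{BD05} --- is step (c): computing the potential map explicitly. Unlike the Shimura-curve case, here $A^2(\cX_\kappa,\Lambda)^0_\fm$ and $A_2(\cX_\kappa,\Lambda)^{0,\fm}_0$ are built from $\rH^2$ and $\rH^0$ of the \emph{surface} strata $\cX_\kappa^{(0)}$, and the map $\delta_{1*}\circ\delta_0^*$ of Lemma~\ref{le:galois_cohomology_1} involves genuine intersection numbers of codimension-two (in fact, curve and divisor) classes on Hilbert modular surfaces, which cannot be reduced to degree computations on curves by any pullback trick. I expect to have to: isolate a finite explicit list of ``relevant'' cycle classes (coming from the boundary components and the Hirzebruch--Zagier divisors), compute their pairwise intersection matrix directly using the modular description of the strata, and then prove a vanishing/irrelevance statement for all remaining classes --- this is where the local conditions on $p$ (avoiding $13,19$, the divisibility conditions) and the absolute neatness of $\fr_1$ will be consumed. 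This is precisely the ``most technical part'' flagged in the outline of \Sec\ref{ss:3}, and I anticipate it will occupy the bulk of the chapter.
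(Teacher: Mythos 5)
Your overall strategy is the right one and matches the paper's: pass to the (pluri-nodal, then blown-up) semistable model over $\dZ_{\ell^6}$, run the localized weight spectral sequence of \Sec\ref{ss:2}, and compute the localized potential map, with the $\fr$- and $\fr\fl$-isolatedness controlling the automorphic side. But your step (c) contains a fatal error: you assert that $\Delta^2_\fm$ is the matrix $\bigoplus_{\fd}\delta^\fd_*$ and that perfectness forces it to be an \emph{isomorphism}, and then deduce part (3) from the exact sequence of Theorem \ref{th:galois_cohomology_localized}(2). This is internally inconsistent: if $\Delta^2_\fm$ were surjective, exactness would force $\rH^1_\sing(K,\rH^3(\cdot)(2)_\fm)=0$, i.e.\ there would be no level raising at all, contradicting the nonvanishing target $\Gamma(\cS_\fr,\dZ)/\Ker\phi^\fr_\rho$ of part (3). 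In the paper the potential map is emphatically not a degeneracy matrix; it is computed (Lemmas \ref{le:intersection1}--\ref{le:intersection4}, Proposition \ref{pr:potential_matrix}) as an explicit $19\times19$ block intersection matrix on the ``featuring cycles'' $F^i_g$, $E^i_{g'}$, $H_h$ living on the blown-up special fiber, its source and target are localized versions of $\Gamma(\cS_\fr,\Lambda)^{\oplus18}\oplus\Gamma(\cS_{\fr\fl},\Lambda)$ (not $|\fD(\fr,\fr_\rho)|$ copies of $\Gamma(\cS_\fr,\Lambda)$ as you claim), and Proposition \ref{pr:corank_cubic} shows the cokernel is free of rank $3|\fD(\fr,\fr_\rho)|$ over $K=\dQ_{\ell^6}$. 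You also never address the descent from $\dQ_{\ell^6}$ (where the strictly semistable model and the strata exist) to $\dQ_\ell$, which is where statement (3) lives: the paper obtains it by taking invariants under the arithmetic Frobenius, which permutes the six strata-coordinates and cuts the rank from $3|\fD|$ down to $|\fD|$; this step is invisible in your outline, and the cycles you propose to intersect are misidentified (the Hirzebruch--Zagier cycles only enter in the reciprocity law of \Sec\ref{ss:4}, not in the potential-map computation).

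Part (2) is also not established by your sketch. Saying it is ``the monodromy-filtration graded piece in degree $3$'' gives at best the semisimplification of $\rH^3/\Ker\phi^{\fr\fl}_\rho$ up to an unknown multiplicity (via Eichler--Shimura and Chebotarev); the actual proof must pin the multiplicity down to $|\fD(\fr,\fr_\rho)|$, which the paper does by a dimension count through $\dE^{-3,6}_2$ compared against the much easier level raising on an auxiliary Shimura curve via \v{C}erednik--Drinfeld uniformization (Proposition \ref{pr:dimension}), and must then upgrade the semisimple statement to a genuine $\Lambda[\rG_{F_0}]$-module isomorphism with $\rN_\rho^\sharp(2)$ using the degeneracy correspondences, the injectivity of $\sum_\fd\tilde\delta^*\circ\tilde\delta^\fd_*$ on $\rH^1_\sing$, Mazur--Carayol, and an explicit analysis of the Frobenius action on the deepest stratum. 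None of these ingredients appear in your proposal. (A minor further discrepancy: the paper proves part (1) directly by Fontaine--Laffaille weights and Dimitrov's argument, not from the spectral sequence; your route would still need that input for the strata surfaces, as in Lemma \ref{le:vanishing}.)
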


\begin{proof}[Proof of Theorem \ref{th:cubic_level_raising} (1)]
The proof is similar to \cite{Dim05}*{Theorem 6.6 (1)}. By the Poincar\'{e} duality and the Nakayama lemma, it suffices to show that $\rH^j(\cX(\ell)_{\fr_0,\fr_1}\otimes\overline\dQ,\dF_p)/{\fm_\rho^{\fr\fl}}=0$ for $j=0,1,2$. As $p$ is coprime to $\nabla$ and $\fr\cdot\disc F$, the scheme $\cX(\ell)_{\fr_0,\fr_1}\otimes\dZ_p$ is smooth and projective over $\Spec\dZ_p$. By Faltings' Comparison Theorem \cite{Fal88}, we know that $3$ is not a Fontaine--Laffaille weight for $\rH^j(\cX(\ell)_{\fr_0,\fr_1}\otimes\overline\dQ,\dF_p)$ for $j=0,1,2$. The rest of the proof is same as for \cite{Dim05}*{Theorem 6.6 (1)}, as $\bar\rho$ is generic (Definition \ref{de:perfect_prime}).
\end{proof}

The remaining sections of this chapter are dedicated to the proof of Theorem \ref{th:cubic_level_raising} (2,3). Before the end of this section, we record the following lemma.

\begin{lem}\label{le:sharp}
The $\Lambda[\rG_\ell]$-module $\rN_\rho^\sharp(2)$ is unramified and isomorphic to $\Lambda(-1)\oplus\Lambda\oplus\Lambda(1)\oplus\Lambda(2)\oplus\rR\oplus\rR(1)$, where $\rR=\Lambda^2$ as a $\Lambda$-module on which $\Frob_\ell$ acts via the matrix $\big(\begin{array}{cc} 0 & 1 \\ -1 & -1 \\ \end{array} \big)$. In particular, $\rH^1_\sing(\dQ_\ell,\rN_\rho^\sharp(2))$ is a free $\Lambda$-module of rank $1$.
\end{lem}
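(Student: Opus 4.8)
The plan is to make $\rN_\rho^\sharp|_{\rG_\ell}$ completely explicit and then read off $\rH^1_\sing$ one summand at a time. Since $\ell$ is inert in $F$ by (C1), the unique prime $\fl$ above it has completion $F_\fl=\dQ_{\ell^3}$, the unramified cubic extension of $\dQ_\ell$, so $\rG_\fl\subset\rG_\ell$ has index $3$ and $\rG_\ell/\rG_\fl$ is topologically generated by a geometric Frobenius $\phi$ with $\phi^3=\Frob_\fl$. As $\fl$ divides neither $\nabla$ nor $\fr_\rho$ (the latter because $\fr_\rho$ contains $\fr_0\fr_1$, which is prime to $\ell$ by (C1)), $\rho$ is unramified at $\fl$. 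First I would apply the Mackey formula for multiplicative induction: the double cosets $\rG_\ell\backslash\rG_\dQ/\rG_F$ correspond to the primes of $F$ above $\ell$, here only $\fl$, so $\rN_\rho^\sharp|_{\rG_\ell}$ is the multiplicative induction of $\rho|_{\rG_\fl}$ from $\rG_\fl$ to $\rG_\ell$; in particular it is unramified.

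Next I would determine $\rho|_{\rG_\fl}$. It is unramified of rank $2$, and its geometric Frobenius $\Frob_\fl$ has determinant $\ell^3$ (because $\det\rho\simeq\Lambda(-1)$) and trace $\equiv\ell^3+1\pmod{p^\nu}$ by (C4), hence characteristic polynomial $(X-1)(X-\ell^3)$. Since $\ell^3-1$ is a unit of $\Lambda=\dZ/p^\nu$ (by (C3), $p\nmid\ell^{18}-1$), the two factors are coprime in $\Lambda[X]$, and reduction modulo $p$ shows $\Frob_\fl$ is semisimple with one-dimensional eigenspaces; so Nakayama gives $\rho|_{\rG_\fl}\simeq\Lambda\oplus\Lambda(-1)$ as $\Lambda[\rG_\fl]$-modules, with $\Lambda$-bases $e_1$ (Frobenius eigenvalue $1$) and $e_2$ (eigenvalue $\ell^3$). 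On $\rN_\rho^\sharp=\rN_\rho^{\otimes3}$ the element $\phi$ acts by $v_1\otimes v_2\otimes v_3\mapsto(\Frob_\fl v_3)\otimes v_1\otimes v_2$, so breaking the basis $\{e_{i_1}\otimes e_{i_2}\otimes e_{i_3}\}$ into orbits of the cyclic shift on $\{1,2\}^3$ yields a $\phi$-stable decomposition consisting of the line $\Lambda e_1^{\otimes3}$ (characteristic polynomial $X-1$), the line $\Lambda e_2^{\otimes3}$ ($X-\ell^3$), and two free rank-$3$ summands on which $\phi$ has characteristic polynomial $X^3-\ell^3$, resp.\ $X^3-\ell^6$. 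Writing $X^3-\ell^{3j}=(X-\ell^j)(X^2+\ell^jX+\ell^{2j})$ for $j=1,2$, the two factors are again coprime in $\Lambda[X]$ (their resultant is $\pm3\ell^{2j}\in\Lambda^\times$, using $p\geq11$ and $p\nmid\ell$), so each rank-$3$ summand splits as $\Lambda(-j)\oplus P_j$ with $P_j$ free of rank $2$ and $\phi|_{P_j}$ of characteristic polynomial $X^2+\ell^jX+\ell^{2j}$; that polynomial is separable modulo $p$ (discriminant $-3\ell^{2j}$), so $P_j$ is cyclic over $\Lambda[X]$ and $\phi|_{P_j}$ is conjugate to its companion matrix $\ell^j\bigl(\begin{smallmatrix}0&1\\-1&-1\end{smallmatrix}\bigr)$, i.e.\ $P_j\cong\rR(-j)$. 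Collecting the pieces, $\rN_\rho^\sharp\cong\Lambda\oplus\Lambda(-1)\oplus\Lambda(-2)\oplus\Lambda(-3)\oplus\rR(-1)\oplus\rR(-2)$, and twisting by $2$ gives the asserted decomposition of $\rN_\rho^\sharp(2)$.

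For the last assertion I would use additivity of $\rH^1_\sing(\dQ_\ell,-)$ together with the inertia description $\rH^1_\sing(\dQ_\ell,N)\simeq\rH^1(\rI_\ell,N)^{\rG_\kappa}\simeq\bigl(N(-1)\bigr)^{\Frob_\ell}$ valid for unramified $N$ (Lemma \ref{le:singular}, using $\rH^1(\rI_\ell,\Lambda)\simeq\Lambda(-1)$). Applied to the six summands of $\rN_\rho^\sharp(2)$ this yields $\Lambda(j-1)^{\Frob_\ell}$ for $j\in\{-1,0,1,2\}$ and $\rR(j-1)^{\Frob_\ell}$ for $j\in\{0,1\}$; the former vanishes unless $j=1$ and equals $\Lambda$ when $j=1$, since $\Frob_\ell$ acts on $\Lambda(j-1)$ by $\ell^{1-j}$, a unit $\neq1$ for $j\neq1$ by (C3); the latter always vanishes because $\det\bigl(\ell^{1-j}\bigl(\begin{smallmatrix}0&1\\-1&-1\end{smallmatrix}\bigr)-I\bigr)$ is $3$ (for $j=1$) or $1+\ell+\ell^2$ (for $j=0$), each a unit of $\Lambda$ by $p\geq11$ and $p\nmid\ell^3-1$, so $\Frob_\ell-I$ is invertible there. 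Hence $\rH^1_\sing(\dQ_\ell,\rN_\rho^\sharp(2))\simeq\Lambda$, free of rank $1$.

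The only substantive point — and the only place the quantitative conditions on $p$ enter — is the integral linear algebra over $\Lambda=\dZ/p^\nu$ in the middle paragraph: each splitting rests on the relevant characteristic polynomials factoring into pairwise coprime pieces \emph{modulo $p$}, which is guaranteed precisely by $p\geq11$, $p\neq13,19$, and the divisibility conditions in (C3). Once those coprimalities are in hand, reduction modulo $p$ together with Nakayama's lemma makes all the decompositions, and the companion-matrix normal form for the rank-$2$ pieces, automatic, with no geometric or automorphic input needed.
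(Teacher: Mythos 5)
Your proof is correct and follows the same route as the paper, whose own proof is just a one-line citation of (C1), (C3), (C4), the definition of multiplicative induction, and Lemma \ref{le:singular}; your write-up is the detailed execution of exactly those ingredients (Mackey for tensor induction, Frobenius eigenvalues $1,\ell^3$ from $\det\rho\simeq\Lambda(-1)$ and (C4), coprime factorizations over $\Lambda$ guaranteed by (C3) and $p\geq 11$, then $\rH^1_\sing\simeq(N(-1))^{\Frob_\ell}$ summand by summand). No changes needed.
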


\begin{proof}
The first part is a consequence of Definition \ref{de:cubic_level_raising} (C1, C3, C4) and the definition of multiplicative induction. The second part is a consequence of the first part and Lemma \ref{le:singular}.
\end{proof}

\subsection{Semistable model and featuring cycles}
\label{ss:featuring}

As in Example \ref{bex:cube}, we identify $\Phi$ with the set $\dZ/6\dZ=\{0,1,2,3,4,5\}$ such that $\sigma i=i+1$, and $\Phi_F$ with the set $\dZ/3\dZ=\{0,1,2\}$ such that $\Phi\to\Phi_F$ is the natural map of modulo $3$. Put $K=\dQ_{\ell^6}$ hence $O_K=\dZ_{\ell^6}$ and $\kappa=\dF_{\ell^6}$ in the setup of \Sec\ref{ss:2}. Recall that $\rG_\kappa=\Gal(\overline\kappa/\kappa)$ is the absolute Galois group of $\kappa$.

From now to the end of \Sec\ref{ss:3}, we fix a rational prime $\ell$ that is a \emph{cubic-level raising prime} for the perfect quadruple $(\rho,\fr_\rho,\fr_0,\fr_1)$.

We have the following geometric objects from \Sec\ref{bss:semistable}.
\begin{itemize}
  \item $\cX_{\fr_0,\fr_1}\coloneqq\cX(\ell)_{\fr_0,\fr_1}=\cX(\{\fl\}\cup\nabla\setminus\Phi_F)_{\fr_0,\fr_1}$, with the special fiber $X_{\fr_0,\fr_1}\coloneqq\cX_{\fr_0,\fr_1}\otimes\kappa$,

  \item the canonical semistable resolution $\pi\colon\cY_{\fr_0,\fr_1}\to\cX_{\fr_0,\fr_1}\otimes O_K$ (Definition \ref{bde:blowup}),

  \item subschemes $X_{\fr_0,\fr_1}^{ijk\cdots}$ of $X_{\fr_0,\fr_1}$,

  \item the morphism $\wp_{ijk\cdots}\colon X_{\fr_0,\fr_1}^{ijk\cdots}\to Z_{\fr_0,\fr_1}^{ijk}$ if $\{i,j,k,\cdots\}\subset\{0,1,2,3,4,5\}$ is a proper subset satisfying $\{i,j,k,\cdots\}^\dag=\{i,j,k\}$.
\end{itemize}

We collect some facts from Appendix \ref{ss:b} in this particular case.

\begin{lem}\label{le:strata}
Let notation be as above. Then for $i\in\{0,1,2,3,4,5\}$,
\begin{enumerate}
  \item $\wp_{i(i+1)(i+2)}\colon X_{\fr_0,\fr_1}^{i(i+1)(i+2)}\to Z_{\fr_0,\fr_1}^{i(i+1)(i+2)}$ is a $\dP^1$-bundle;

  \item $\wp_{i(i+2)(i+4)}\colon X_{\fr_0,\fr_1}^{i(i+2)(i+4)}\to Z_{\fr_0,\fr_1}^{i(i+2)(i+4)}$ is a $(\dP^1)^3$-bundle;

  \item $\wp_{i(i+1)(i+2)(i+3)}\colon X_{\fr_0,\fr_1}^{i(i+1)(i+2)(i+3)}\to Z_{\fr_0,\fr_1}^{(i+1)(i+2)(i+3)}$ is an isomorphism;

  \item the composite morphism
       \[X_{\fr_0,\fr_1}^{i(i+1)(i+2)(i+3)}\to X_{\fr_0,\fr_1}^{i(i+1)(i+2)}\xrightarrow{\wp_{i(i+1)(i+2)}}Z_{\fr_0,\fr_1}^{i(i+1)(i+2)}\]
       is a Frobenius factor, which is finite flat of degree $\ell$;
%
%
%
\end{enumerate}
\end{lem}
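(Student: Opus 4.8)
The plan is to deduce all four assertions by specialising, to the case at hand, the global description of the canonical semistable resolution $\cY_{\fr_0,\fr_1}$ and of the stratification of its special fibre that is established in Appendix \ref{ss:b}: Lemma \ref{le:strata} is precisely the instance of those results obtained by feeding in the datum
\[\Delta=\{\fl\}\cup\nabla\setminus\Phi_F,\qquad \Phi=\dZ/6\dZ,\qquad \Phi_F=\dZ/3\dZ,\]
together with a cubic-level raising prime $\ell$, so that $\fl$ is the unique prime of $F$ above $\ell$ and has residue degree $3$. First I would invoke from \Sec\ref{bss:semistable} and its surrounding material the general statement that, for a proper subset $I\subset\Phi$ with $|I^\dag|=3$, the morphism $\wp_I\colon X^I_{\fr_0,\fr_1}\to Z^{I^\dag}_{\fr_0,\fr_1}$ is a bundle whose fibre is a product of projective lines indexed by an explicit combinatorial invariant of $I$ (its set of ``free directions''), and that this bundle degenerates to an isomorphism, resp.\ to a purely inseparable cover of degree $\ell$, exactly in the combinatorially prescribed situations. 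I would also record the (routine) observation that the places of $\nabla$ prime to $\ell$ enter only through the level and the coefficient ring and do not affect the \'{e}tale-local geometry at $\ell$, so that the appendix applies verbatim.

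What then remains is a finite bookkeeping exercise with subsets of $\dZ/6\dZ$ equipped with the shift $\sigma i=i+1$ and the reduction $\dZ/6\dZ\to\dZ/3\dZ$. For each of the three shapes in the statement I would compute $(-)^\dag$ and the set of free directions: (i) a run $I=\{i,i+1,i+2\}$ satisfies $I^\dag=I$ and has a single free direction, giving a $\dP^1$-bundle; (ii) an evenly spaced transversal $I=\{i,i+2,i+4\}$ (that is, $\{0,2,4\}$ or $\{1,3,5\}$) again satisfies $I^\dag=I$ but now all three directions are free, giving a $(\dP^1)^3$-bundle; (iii) a four-element run $I=\{i,i+1,i+2,i+3\}$ satisfies $I^\dag=\{i+1,i+2,i+3\}$ — so the ``extra'' index $i$ is precisely the one discarded by $(-)^\dag$ — and has no free direction, so $\wp_I$ is an isomorphism. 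In each case the identification of the target $Z^{I^\dag}_{\fr_0,\fr_1}$ and of the fibre is read off from the recipe of Appendix \ref{ss:b} once its orientation conventions have been matched against the cyclic labelling used here; I would not re-derive the dichotomy governing the number of $\dP^1$-factors, as that is the content of the appendix.

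For part (4) I would identify the composite $X^{i(i+1)(i+2)(i+3)}_{\fr_0,\fr_1}\to X^{i(i+1)(i+2)}_{\fr_0,\fr_1}\xrightarrow{\wp_{i(i+1)(i+2)}}Z^{i(i+1)(i+2)}_{\fr_0,\fr_1}$ with the partial-Frobenius morphism that Appendix \ref{ss:b} attaches to the passage from the four-element stratum to the three-element stratum obtained by deleting the ``wrong'' end; since $\fl$ contributes a single $\dF_\ell$-rational embedding to this passage, that morphism is a Frobenius factor, finite flat of degree $\ell$, as asserted. I expect the only genuine obstacle to be the translation of notation: pinning down the dictionary between the abstract $\Delta$-indexed stratification of Appendix \ref{ss:b} (with its orientation and its operation $(-)^\dag$), specialised to the non-Galois cubic situation, and the concrete labelling $\Phi=\dZ/6\dZ$ with $\sigma i=i+1$, so that ``run'' versus ``evenly spaced transversal'' is exactly what distinguishes one $\dP^1$-factor from three, and so that no subsets other than the listed ones contribute strata of the relevant kind. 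Once that dictionary is fixed, (1)--(4) follow with no further computation.
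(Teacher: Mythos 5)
Your proposal is correct and follows essentially the same route as the paper: the paper's proof of (1)--(3) is a direct appeal to Theorem \ref{bth:hilbert_subbundle} (the statement that $\wp_S$ is a $(\dP^1)^{|\Phi_S|}$-bundle over $Z^{S^\dag}$), and of (4) to Proposition \ref{bpr:translation}, with exactly the combinatorial evaluations of $S^\dag$ and $\Phi_S$ that you spell out (run: one free direction; sparse type: three; four-element run: $S^\dag=\{i+1,i+2,i+3\}$ and $\Phi_S=\emptyset$; and degree $\ell^{|\Phi_S|}=\ell$ for the composite, since $|\Phi_{\{i+1,i+2,i+3\}}|=1$). Your bookkeeping matches the paper's conventions, so nothing further is needed.
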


\begin{proof}
Parts (1--3) follow from Theorem \ref{bth:hilbert_subbundle}. Part (4) follows from Proposition \ref{bpr:translation}.
\end{proof}

\begin{notation}\label{no:tilde}
We label the two sparse types as $S^+=\{0,2,4\}$ and $S^-=\{1,3,5\}$. Sometimes we write
$Y_{\fr_0,\fr_1}^+$ (resp.\ $X_{\fr_0,\fr_1}^+$, $Z_{\fr_0,\fr_1}^+$) for $Y_{\fr_0,\fr_1}^{024}$ (resp.\ $X_{\fr_0,\fr_1}^{024}$, $Z_{\fr_0,\fr_1}^{024}$), and $Y_{\fr_0,\fr_1}^-$ (resp.\ $X_{\fr_0,\fr_1}^-$, $Z_{\fr_0,\fr_1}^-$) for $Y_{\fr_0,\fr_1}^{135}$ (resp.\ $X_{\fr_0,\fr_1}^{135}$, $Z_{\fr_0,\fr_1}^{135}$). We have canonical $\rG_\kappa$-equivariant maps
\[\delta^\pm\colon X_{\fr_0,\fr_1}^{012345}(\overline\kappa)\to Z_{\fr_0,\fr_1}^\pm(\overline\kappa)\]
induced from the composite morphisms $X_{\fr_0,\fr_1}^{012345}\to X_{\fr_0,\fr_1}^\pm\xrightarrow{\wp_\pm}Z_{\fr_0,\fr_1}^\pm$, respectively. For an element $g\in Z_{\fr_0,\fr_1}^\pm(\overline\kappa)$, we put $\rT(g)=\delta^\mp((\delta^\pm)^{-1}(g))\subset Z_{\fr_0,\fr_1}^\mp(\overline\kappa)$.
\end{notation}

The following lemma identifies certain $1$-cycles on the surfaces $Z_{\fr_0,\fr_1,\overline\kappa}^{i(i+1)(i+2)}$.

\begin{lem}\label{le:cycle_base}
Suppose $i\in S^\pm$. Under the isomorphism (Lemma \ref{le:strata})
\[\wp=\wp_{i(i+1)(i+2)(i+5)}\colon X_{\fr_0,\fr_1,\overline\kappa}^{i(i+1)(i+2)(i+5)}\xrightarrow{\sim}Z_{\fr_0,\fr_1,\overline\kappa}^{i(i+1)(i+2)},\]
we have
\begin{enumerate}
  \item $\wp X_{\fr_0,\fr_1,\overline\kappa}^{i(i+1)(i+2)(i+4)(i+5)}=\coprod_{g\in Z_{\fr_0,\fr_1}^\pm(\overline\kappa)}C^i_g$ where each $C^i_g$ is isomorphic to $\dP^1_{\overline\kappa}$;

  \item $\wp X_{\fr_0,\fr_1,\overline\kappa}^{i(i+1)(i+2)(i+3)(i+5)}=\coprod_{g'\in Z_{\fr_0,\fr_1}^\mp(\overline\kappa)}C^i_{g'}$ where each $C^i_{g'}$ is isomorphic to $\dP^1_{\overline\kappa}$;

  \item the self intersection number of $C^i_g$ (resp.\ $C^i_{g'}$) equals $-2\ell$ (resp.\ $-2\ell^2$) for $g\in Z_{\fr_0,\fr_1}^\pm(\overline\kappa)$ (resp.\ $g'\in Z_{\fr_0,\fr_1}^\mp(\overline\kappa)$).
\end{enumerate}
\end{lem}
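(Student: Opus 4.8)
The plan is to work entirely over $\overline\kappa$ and to use the explicit local structure of the semistable model $\cY_{\fr_0,\fr_1}$ recorded in Lemma \ref{le:strata} together with the geometry from Appendix \ref{ss:b}. First I would analyze the stratum $X_{\fr_0,\fr_1,\overline\kappa}^{i(i+1)(i+2)(i+5)}$: by Lemma \ref{le:strata}(3) (applied to the index set $\{i,i+1,i+2,i+5\}$, whose ``dagger'' is again a triple) the map $\wp$ is an isomorphism onto the surface $Z^{i(i+1)(i+2)}_{\fr_0,\fr_1,\overline\kappa}$, so it suffices to understand the images of the two deeper strata $X^{i(i+1)(i+2)(i+4)(i+5)}$ and $X^{i(i+1)(i+2)(i+3)(i+5)}$ inside that surface. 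These are intersections with one more component of $X_\kappa$, so each is a curve; the point is to identify these curves and compute their self-intersection inside the surface.

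For part (1) and (2): I would use that under $\wp_{i(i+1)(i+2)}\colon X^{i(i+1)(i+2)}\to Z^{i(i+1)(i+2)}$ the surface $X^{i(i+1)(i+2)}$ is a $\dP^1$-bundle (Lemma \ref{le:strata}(1)), and that $X^{i(i+1)(i+2)(i+5)}$ is a section of this bundle (via the isomorphism $\wp$). The further intersection with the component indexed $i+4$ (a sparse-type index, giving the sparse surface $Z^\pm$) should, by the explicit description of how the blow-up components meet in Appendix \ref{ss:b}, be a union of fibers of the projection to $Z^\pm_{\fr_0,\fr_1}$ — one fiber $C^i_g$ over each $\overline\kappa$-point $g$ of $Z^\pm$, each a $\dP^1$; symmetrically for the intersection with the component $i+3$, giving fibers $C^i_{g'}$ over $Z^\mp$. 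Concretely I would trace through the local chart $\Spec O_K[t_1,\dots,t_n]/(t_1\cdots t_s-\varpi)$ near a point of $X^{i(i+1)(i+2)(i+5)}$, see which further coordinate hyperplanes $\{t_j=0\}$ cut out the $i+4$- and $i+3$-components, and read off that each resulting curve is a line in a $\dP^1$-fiber of the appropriate bundle from Lemma \ref{le:strata}(2). The disjointness of the $C^i_g$ (resp.\ $C^i_{g'}$) over distinct $g$ (resp.\ $g'$) is then clear because distinct fibers of the bundle $Z^{i(i+1)(i+2)}\to Z^\pm$ are disjoint.

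For part (3): this is the arithmetic heart. I would compute the self-intersection of $C^i_g$ inside the surface $Z^{i(i+1)(i+2)}_{\fr_0,\fr_1,\overline\kappa}$ by exhibiting $C^i_g$ as (the image under $\wp$ of) the intersection locus of two components of the special fiber, and then invoking the adjunction formula on the smooth surface together with the normal-bundle computation for a node of the pluri-nodal model. The normal bundle of the curve $C^i_g$ in $Z^{i(i+1)(i+2)}$ should factor through a line bundle on $\dP^1$ of degree equal to (minus) the local intersection multiplicity coming from the Frobenius factor in Lemma \ref{le:strata}(4), which is where the factors $\ell$ and $\ell^2$ enter: the morphism $X^{i(i+1)(i+2)(i+3)}\to Z^{i(i+1)(i+2)}$ is finite flat of degree $\ell$, and the analogous composite toward the sparse surface $Z^\mp$ picks up degree $\ell^2$ (being a composite of two such Frobenius factors, or equivalently because $\ell^2$-Frobenius appears for the ``double step'' between sparse types). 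Thus the self-intersection is $-2\ell$ for $C^i_g$ and $-2\ell^2$ for $C^i_{g'}$, the $-2$ being the usual $\dP^1$ contribution. I expect this last step — correctly bookkeeping the Frobenius twists and the resulting normal-bundle degrees, and matching them to the $-2\ell$, $-2\ell^2$ — to be the main obstacle; it requires careful use of Proposition \ref{bpr:translation} and the explicit equations of the model in Appendix \ref{ss:b}, rather than any deep new idea.
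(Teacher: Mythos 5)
Your treatment of parts (1) and (2) is essentially the paper's argument: $X_{\fr_0,\fr_1,\overline\kappa}^{i(i+1)(i+2)(i+4)(i+5)}$ is the intersection of $X_{\fr_0,\fr_1,\overline\kappa}^{i(i+1)(i+2)(i+5)}$ with the sparse-type component, and by Theorem \ref{bth:hilbert_subbundle} this is a $\dP^1$-bundle over the zero-dimensional sparse stratum $Z_{\fr_0,\fr_1}^\pm$, so its connected components are copies of $\dP^1_{\overline\kappa}$ indexed by $Z_{\fr_0,\fr_1}^\pm(\overline\kappa)$; similarly for (2). No local-chart analysis of the pluri-nodal model is needed there.

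The genuine gap is in part (3). You propose to get the self-intersections from adjunction plus the nodal local structure, taking ``$-2$ the usual $\dP^1$ contribution'' times the degree of the Frobenius factor of Lemma \ref{le:strata}(4). This is not a derivation: adjunction only gives $C\cdot C=-2-K_{Z^{i(i+1)(i+2)}}\cdot C$, so you would still need to compute $K\cdot C$, and nothing in your sketch does so. Moreover the ambient surface $Z_{\fr_0,\fr_1,\overline\kappa}^{i(i+1)(i+2)}$ is \emph{not} a stratum of the special fiber (only its $\dP^1$-bundle $X_{\fr_0,\fr_1,\overline\kappa}^{i(i+1)(i+2)}$ is), so ``exhibiting $C^i_g$ as the intersection locus of two components of the special fiber'' does not take place inside that surface; and the local charts $\Spec O_K[t_1,\dots,t_n]/(t_1\cdots t_s-\varpi)$ of a pluri-nodal model only constrain sums of normal-bundle degrees along double curves, never the individual values $-2\ell$ and $-2\ell^2$. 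The paper instead identifies $C^i_g$ (resp.\ $C^i_{g'}$) with an irreducible component of the Goren--Oort divisor $Z_{\fr_0,\fr_1,\overline\kappa}^{i(i+1)(i+2),i+4}$ (resp.\ $Z_{\fr_0,\fr_1,\overline\kappa}^{i(i+1)(i+2),i+3}$) via Proposition \ref{bpr:divisor}, and then invokes \cite{TX14}*{Proposition 2.31 (2)}, which computes the normal bundle of such a divisor to be $\sO(-2\ell^{n_\tau})$, here with $n_{i+4}=1$ and $n_{i+3}=2$. That normal-bundle computation (via partial Hasse invariants on the quaternionic Shimura surface) is the essential input; your Frobenius-degree heuristic reproduces the shape of the answer but does not prove it, and Proposition \ref{bpr:translation} alone cannot supply it.
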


\begin{proof}
Without lost of generality, we assume $i=0\in S^+$.

For (1), we have $X_{\fr_0,\fr_1,\overline\kappa}^{01245}=X_{\fr_0,\fr_1,\overline\kappa}^{0125}\cap X_{\fr_0,\fr_1,\overline\kappa}^{024}$, which is a $\dP^1$-bundle over $Z_{\fr_0,\fr_1,\overline\kappa}^{024}$ by Theorem \ref{bth:hilbert_subbundle}. Therefore, we have a canonical bijection
\[\pi_0(X_{\fr_0,\fr_1,\overline\kappa}^{01245})\simeq Z_{\fr_0,\fr_1}^{024}(\overline\kappa)=Z_{\fr_0,\fr_1}^+(\overline\kappa).\]
Thus (1) follows. Part (2) is similar.

For (3), by Proposition \ref{bpr:divisor}, we know that $C^0_g$ (resp.\ $C^0_{g'}$) is an irreducible component of the Goren--Oort divisor $Z_{\fr_0,\fr_1,\overline\kappa}^{012,4}$ (resp.\ $Z_{\fr_0,\fr_1,\overline\kappa}^{012,3}$) of $Z_{\fr_0,\fr_1,\overline\kappa}^{012}$. By \cite{TX14}*{Proposition 2.31 (2)}, we know that the normal bundle of the inclusion $Z_{\fr_0,\fr_1,\overline\kappa}^{012,4}\hookrightarrow Z_{\fr_0,\fr_1,\overline\kappa}^{012}$ (resp.\ $Z_{\fr_0,\fr_1,\overline\kappa}^{012,3}\hookrightarrow Z_{\fr_0,\fr_1,\overline\kappa}^{012}$) is $\sO(-2\ell^{n_4})$ (resp.\ $\sO(-2\ell^{n_3})$). In particular, the self intersection number of $C^i_g$ (resp.\ $C^i_{g'}$) is equal to $-2\ell^{n_4}$ (resp.\ $-2\ell^{n_3}$). Part (3) follows as
$n_4=1$ and $n_3=2$.
\end{proof}

\begin{lem}\label{le:cycle_strata}
Suppose $i\in S^\pm$. We have
\begin{enumerate}
  \item $Y_{\fr_0,\fr_1,\overline\kappa}^{i(i+1)(i+2)(i+4)}=\coprod_{g\in Z_{\fr_0,\fr_1}^\pm(\overline\kappa)}F^i_g$ where each $F^i_g$ is a connected smooth projective surface over $\Spec\overline\kappa$, which coincides with the inverse image of $C^i_g$ under $\wp_{i(i+1)(i+2)}\circ\pi$;

  \item $Y_{\fr_0,\fr_1,\overline\kappa}^{i(i+1)(i+2)(i+3)(i+5)}=\coprod_{g'\in Z_{\fr_0,\fr_1}^\mp(\overline\kappa)}E^i_{g'}$ where each $E^i_{g'}$ is a connected smooth projective surface over $\Spec\overline\kappa$, which is the exceptional divisor whose image under $\wp_{i(i+1)(i+2)}\circ\pi$ coincides with $C^i_{g'}$.

  \item $Y_{\fr_0,\fr_1,\overline\kappa}^{012345}=\coprod_{h\in X_{\fr_0,\fr_1}^{012345}(\overline\kappa)}H_h$ where each $H_h$ is a connected smooth projective surface over $\Spec\overline\kappa$.
\end{enumerate}
\end{lem}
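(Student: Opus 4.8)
The plan is to read off all three statements from the explicit global description of the semistable resolution $\pi\colon\cY_{\fr_0,\fr_1}\to\cX_{\fr_0,\fr_1}\otimes O_K$ worked out in Appendix~\ref{ss:b} (Definition~\ref{bde:blowup}, Theorem~\ref{bth:hilbert_subbundle}), together with the decompositions of the relevant $1$-cycles in the base surfaces furnished by Lemma~\ref{le:cycle_base}. As in the proof of Lemma~\ref{le:cycle_base}, the cyclic action of $\dZ/6\dZ$ on $\Phi$ by $\sigma i=i+1$, which is compatible with $\Frob$, reduces everything to the case $i=0$; for odd $i$ one argues identically after interchanging the roles of $S^+$ and $S^-$, the asymmetry between the two sparse types recorded in Lemma~\ref{le:cycle_base}(3) playing no role here since only connectedness, smoothness and properness are at issue. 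So I would fix $i=0\in S^+$ throughout.

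I would handle (1) and (2) together. The composite $\wp_{012}\circ\pi$, restricted to the relevant locus, gives a proper morphism $q$ from $Y_{\fr_0,\fr_1,\overline\kappa}^{0124}$ (resp.\ from $Y_{\fr_0,\fr_1,\overline\kappa}^{01235}$) to the base surface $Z_{\fr_0,\fr_1,\overline\kappa}^{012}$. Using Theorem~\ref{bth:hilbert_subbundle} and the blow-up structure of $\pi$, I would identify this stratum with $q^{-1}$ of the $1$-cycle locus $\wp X_{\fr_0,\fr_1,\overline\kappa}^{01245}$ (resp.\ $\wp X_{\fr_0,\fr_1,\overline\kappa}^{01235}$), which by Lemma~\ref{le:cycle_base}(1) (resp.\ (2)) is the disjoint union $\coprod_{g}C^0_g$ over $g\in Z_{\fr_0,\fr_1}^{+}(\overline\kappa)$ (resp.\ $\coprod_{g'}C^0_{g'}$ over $g'\in Z_{\fr_0,\fr_1}^{-}(\overline\kappa)$), each $C^0_g$, resp.\ $C^0_{g'}$, being a copy of $\dP^1_{\overline\kappa}$. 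This at once produces the disjoint decompositions $Y_{\fr_0,\fr_1,\overline\kappa}^{0124}=\coprod_g F^0_g$ with $F^0_g\coloneqq q^{-1}(C^0_g)$, and $Y_{\fr_0,\fr_1,\overline\kappa}^{01235}=\coprod_{g'}E^0_{g'}$ with $E^0_{g'}$ the component of the exceptional divisor of $\pi$ lying over $C^0_{g'}$, the parametrizing sets $Z_{\fr_0,\fr_1}^{\pm}(\overline\kappa)$ being inherited from Lemma~\ref{le:cycle_base}. It then remains to record that $F^0_g$ is a $\dP^1$-bundle over $C^0_g$ (possibly composed with the finitely many further blow-ups dictated by $\pi$) and that $E^0_{g'}$ is a $\dP^1$-bundle over $C^0_{g'}\cong\dP^1_{\overline\kappa}$; hence both are connected smooth projective surfaces over $\overline\kappa$ with the stated images under $\wp_{012}\circ\pi$.

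For (3), the point is that $X_{\fr_0,\fr_1}^{012345}$ is the deepest stratum, a finite and hence zero-dimensional $\overline\kappa$-scheme — already implicit in Notation~\ref{no:tilde} — and that it is exactly the locus along which $\cX_{\fr_0,\fr_1}\otimes O_K$ carries its most degenerate pluri-nodal singularity. I would invoke the explicit local form of the resolution from Appendix~\ref{ss:b} to conclude that $\pi$ replaces each closed point $h\in X_{\fr_0,\fr_1}^{012345}(\overline\kappa)$ by its fibre $H_h\coloneqq\pi^{-1}(h)$, which is a connected smooth projective surface inside the strictly semistable model $\cY_{\fr_0,\fr_1}$, these fibres being pairwise disjoint and indexed exactly by $X_{\fr_0,\fr_1}^{012345}(\overline\kappa)$; this gives $Y_{\fr_0,\fr_1,\overline\kappa}^{012345}=\coprod_h H_h$. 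The main obstacle is precisely this last step together with the bookkeeping behind (1) and (2): one must match each combinatorial label $J\subset\{0,1,2,3,4,5\}$ in the statement with the correct piece of the iterated blow-up $\pi$ — a strict transform of an $X$-stratum, a component of an exceptional divisor, or an exceptional fibre over a deepest point — and check that passing to $\cY_{\fr_0,\fr_1}$ does not destroy connectedness. This is where the precise results of Appendix~\ref{ss:b}, and not merely the existence of some semistable model, are indispensable, most delicately so for the deepest stratum in (3).
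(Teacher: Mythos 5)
Your reduction to $i=0$ and your treatment of part (1) are essentially fine: the component of $Y_{\fr_0,\fr_1,\overline\kappa}^{0124}$ indexed by $g$ really is the full preimage $(\wp_{012}\circ\pi)^{-1}(C^0_g)$, which is what the lemma asserts, and the paper gets this from Proposition \ref{bpr:blowup}(5) together with the same component count as in Lemma \ref{le:cycle_base} (you do leave the inclusion $\wp_{012}^{-1}(C^0_g)\subset X_{\fr_0,\fr_1,\overline\kappa}^{024}$ unargued, but that is the kind of input Theorem \ref{bth:hilbert_subbundle} supplies). The trouble starts with part (2): the identification $Y_{\fr_0,\fr_1,\overline\kappa}^{01235}=(\wp_{012}\circ\pi)^{-1}\bigl(\coprod_{g'}C^0_{g'}\bigr)$ is false. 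Since $C^0_{g'}=\wp_{012}(D^0_{g'})$ with $D^0_{g'}$ a component of the center $X_{\fr_0,\fr_1,\overline\kappa}^{01235}$ of the blow-up $\pi\colon Y_{\fr_0,\fr_1,\overline\kappa}^{012}\to X_{\fr_0,\fr_1,\overline\kappa}^{012}$ (Proposition \ref{bpr:blowup}(1)), the surface $\wp_{012}^{-1}(C^0_{g'})$ contains $D^0_{g'}$, so its preimage under $\pi$ is the union of the strict transform $G^0_{g'}$ and the exceptional divisor $E^0_{g'}$; only the latter lies in $Y_{\fr_0,\fr_1,\overline\kappa}^{01235}=Y_{\fr_0,\fr_1,\overline\kappa}^{012}\cap Y_{\fr_0,\fr_1,\overline\kappa}^{135}$. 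This is exactly the distinction the paper maintains between $F^0_{g'}=(\wp_{012}\circ\pi)^{-1}C^0_{g'}$ and $E^0_{g'}$ (Definition \ref{de:featuring}, Lemma \ref{le:pullback}(2), where $\pi^{-1}\wp^{-1}C^0_{g'}=G^0_{g'}+E^0_{g'}$), and conflating the two would corrupt the later intersection computations. The correct route for (2) is the paper's: $Y_{\fr_0,\fr_1,\overline\kappa}^{01235}$ is the exceptional divisor of that blow-up, a $\dP^1$-bundle over $X_{\fr_0,\fr_1,\overline\kappa}^{01235}=\coprod_{g'}D^0_{g'}$ with $D^0_{g'}\simeq C^0_{g'}$ (Proposition \ref{bpr:blowup}(6), Notation \ref{no:fiber}).

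Part (3) has the same kind of error, and there it is fatal to your argument: $H_h$ is not $\pi^{-1}(h)$, and $Y_{\fr_0,\fr_1,\overline\kappa}^{012345}\neq\pi^{-1}(X_{\fr_0,\fr_1}^{012345})$. The fiber $\pi^{-1}(h)$ of the resolution over a deepest point is a reducible two-dimensional scheme: besides the surface in question it contains, for each of the six non-sparse types, the $\dP^1$-fiber of the corresponding exceptional ruled surface over $h$, as well as the extra exceptional curves produced by the second-stage blow-ups inside $Y_{\fr_0,\fr_1,\overline\kappa}^{024}$ and $Y_{\fr_0,\fr_1,\overline\kappa}^{135}$; in particular it is neither irreducible nor smooth, so your claim that $\pi^{-1}(h)$ is a connected smooth projective surface, and the decomposition $Y_{\fr_0,\fr_1,\overline\kappa}^{012345}=\coprod_h\pi^{-1}(h)$, cannot be right. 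What the lemma calls $H_h$ is the connected component over $h$ of the closed stratum $Y_{\fr_0,\fr_1,\overline\kappa}^{012345}=Y_{\fr_0,\fr_1,\overline\kappa}^{024}\cap Y_{\fr_0,\fr_1,\overline\kappa}^{135}$ of the semistable special fiber, namely the strict transform of the exceptional $\dP^2$ of the first blow-up (a blow-up of $\dP^2$ at finitely many points, as used in the proof of Lemma \ref{le:intersection4}); the paper obtains the decomposition and the smoothness from Proposition \ref{bpr:blowup}(2,3), which describe $\pi$ restricted to the two sparse strata, not from the fibers of $\pi$ on the ambient scheme. To repair your write-up you should replace both wrong identifications by the stratum-by-stratum descriptions of $\pi$ in Proposition \ref{bpr:blowup}.
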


\begin{proof}
Parts (1) and (2) are consequences of Proposition \ref{bpr:blowup} (5) and (6), respectively, together with the similar discussion as for Lemma \ref{le:cycle_base}.

For (3), by definition, we have $Y_{\fr_0,\fr_1}^{012345}=Y_{\fr_0,\fr_1}^{024}\cap Y_{\fr_0,\fr_1}^{135}$. By Proposition \ref{bpr:blowup} (2,3), we have the canonical isomorphism $\pi_0(Y_{\fr_0,\fr_1,\overline\kappa}^{012345})\simeq X_{\fr_0,\fr_1}^{012345}(\overline\kappa)$, and that every geometric fiber of $Y_{\fr_0,\fr_1,\overline\kappa}^{012345}\to X_{\fr_0,\fr_1,\overline\kappa}^{012345}$ is a smooth projective surface.
\end{proof}

To proceed, we review some intersection theory from \cite{Ful}. Let $k$ be a field. For every smooth proper scheme $X$ over $\Spec k$, let $\rZ^r(X)$ be the group of cycles on $X$ of codimension $r$. We have a (Chow) cycle class map $\rZ^r(X)\to\CH^r(X)$ sending $D$ to $[D]$. If $X$ is of pure dimension $d$, then we have the degree map $\rZ^d(X)\to\CH^d(X)\xrightarrow{\deg}\dZ$. Let $f\colon Y\to X$ be a morphism of smooth proper schemes over $\Spec k$. Then we have a pullback map $f^*\colon\CH^r(X)\to\CH^r(Y)$ and a pushforward map $f_*\colon\CH^r(Y)\to\CH^{r-d}(X)$ where $d=\dim(Y)-\dim(X)$.

\begin{lem}\label{le:fulton}
Let notation be as above.
\begin{enumerate}
  \item If $f$ is flat, then $f^*[D]=[f^{-1}D]$ for every $D\in\rZ^\bullet(X)$.

  \item We have $f^*[D_1]\cup f^*[D_2]=f^*[D_1\cdot D_2]$ for every $D_1,D_2\in\rZ^\bullet(X)$, where $\cup$ (resp.\ $\cdot$) is the cup product (resp.\ intersection product) on $\CH^\bullet$ (resp.\ $\rZ^\bullet$).

  \item We have $f_*(E\cup f^*D)=f_*E\cup D$ for every $D\in\CH^\bullet(X)$ and $E\in\CH^\bullet(Y)$.
\end{enumerate}
\end{lem}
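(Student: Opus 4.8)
All three formulas are standard facts of intersection theory on smooth proper schemes over a field, so the plan is not to reprove them but to pin down the precise statements in \cite{Ful} that yield each one, after observing that since $X$ and $Y$ are smooth and proper over $k$ the Chow groups $\CH^\bullet(X)$ and $\CH^\bullet(Y)$ carry graded commutative ring structures under the intersection product, the morphism $f$ is automatically proper, and $f^*\colon\CH^\bullet(X)\to\CH^\bullet(Y)$ is a graded ring homomorphism (\cite{Ful}*{\Sec 8.1, \Sec 8.3}). These are the inputs.

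For part (1) I would recall the construction of flat pullback of cycles from \cite{Ful}*{\Sec 1.7}: for an integral closed subscheme $V\subseteq X$ one \emph{defines} $f^*[V]$ to be the cycle $[f^{-1}(V)]$ associated to the scheme-theoretic preimage with its component multiplicities, and by \cite{Ful}*{Theorem 1.7} this operation respects rational equivalence, hence coincides with the map $f^*$ appearing in the statement; extending by linearity gives $f^*[D]=[f^{-1}D]$ for every $D\in\rZ^\bullet(X)$. One should note in passing that the grading is respected because $f$ has constant relative dimension $d=\dim Y-\dim X$ and $X,Y$ are equidimensional, so $f^{-1}$ of a codimension-$r$ cycle is again of codimension $r$. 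For part (2) I would invoke that $f^*$ is a ring homomorphism for the intersection product (\cite{Ful}*{\Sec 8.3}), combined with the fact (Serre's Tor-formula, \cite{Ful}*{\Sec 8.2}) that when $D_1$ and $D_2$ meet properly on the smooth scheme $X$ one has $[D_1]\cup[D_2]=[D_1\cdot D_2]$ in $\CH^\bullet(X)$; then $f^*[D_1]\cup f^*[D_2]=f^*([D_1]\cup[D_2])=f^*[D_1\cdot D_2]$. For part (3), this is exactly the projection formula for the proper morphism $f$, namely \cite{Ful}*{Proposition 8.3(c)} (or \cite{Ful}*{Proposition 2.5(c)} combined with the module-structure description of the products).

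The lemma is organizational rather than substantive, so I do not anticipate any genuine obstacle; the only point requiring care is bookkeeping of the codimension/dimension conventions across the three formulas — in particular that $f_*$ lowers codimension by $d$ (equivalently preserves dimension), so that $f_*E\cup D$ in part (3) lands in the expected degree, and that in part (1) no flatness-free version is being claimed (flatness is used only there, to make the set-theoretic preimage a cycle of the right dimension with well-defined multiplicities).
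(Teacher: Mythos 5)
Your proposal is correct and matches the paper in spirit: the paper's proof is likewise a bare citation of standard intersection theory in \cite{Ful}, except that it invokes the bivariant formalism of \S 17 (in particular Corollary 17.4, which identifies operational classes with Chow classes on smooth schemes and packages the ring structure, pullback compatibilities, and projection formula at once), whereas you assemble the same three facts from the classical route via \S 1.7, \S 8.2--8.3. The only point worth tightening is in part (1): the agreement of the smooth-morphism pullback with flat pullback is itself a stated compatibility in Fulton (not merely a consequence of flat pullback respecting rational equivalence), but this is a bookkeeping remark, not a gap.
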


\begin{proof}
These can be found in \cite{Ful}*{\Sec 17}. In particular, we use \cite{Ful}*{Corollary 17.4}.
\end{proof}

Now we can introduce the notion of featuring cycles.

\begin{definition}\label{de:featuring}
Let $\cB(Y_{\fr_0,\fr_1})$ be the free abelian group generated by the following symbols, which we call \emph{(abstract) featuring cycles},
\begin{itemize}
  \item $\sfF^i_g$ for $i\in S^\pm$ and $g\in Z_{\fr_0,\fr_1}^\pm(\overline\kappa)$;

  \item $\sfF^i_{g'}$ for $i\in S^\pm$ and $g'\in Z_{\fr_0,\fr_1}^\mp(\overline\kappa)$;

  \item $\sfE^i_{g'}$ for $i\in S^\pm$ and $g\in Z_{\fr_0,\fr_1}^\mp(\overline\kappa)$;

  \item $\sfH^i_h$ for $h\in X_{\fr_0,\fr_1}^{012345}(\overline\kappa)$.
\end{itemize}
Note that by Proposition \ref{bpr:blowup} (7), we have $\CH^1(Y^{(0)}_{\overline\kappa})=\bigoplus_{|S|=3}\CH^1(Y_{\overline\kappa}^S)$. We define a map $\beta_0\colon\cB(Y_{\fr_0,\fr_1})\to\CH^1(Y^{(0)}_{\overline\kappa})$ that sends
\begin{itemize}
  \item $\sfF^i_g$ to $[F^i_g]\in\CH^1(Y_{\fr_0,\fr_1,\overline\kappa}^{i(i+1)(i+2)})$ for $i\in S^\pm$ and $g\in Z_{\fr_0,\fr_1}^\pm(\overline\kappa)$;

  \item $\sfF^i_{g'}$ to $[F^i_{g'}]\in\CH^1(Y_{\fr_0,\fr_1,\overline\kappa}^{i(i+1)(i+2)})$, where $F^i_{g'}=(\wp_{i(i+1)(i+2)}\circ\pi)^{-1}C^i_{g'}$, for $i\in S^\pm$ and $g'\in Z_{\fr_0,\fr_1}^\mp(\overline\kappa)$;

  \item $\sfE^i_{g'}$ to $[E^i_{g'}]\in\CH^1(Y_{\fr_0,\fr_1,\overline\kappa}^{i(i+1)(i+2)})$ for $i\in S^\pm$ and $g\in Z_{\fr_0,\fr_1}^\mp(\overline\kappa)$;

  \item $\sfH^i_h$ to $[H_h]\in\CH^1(Y_{\fr_0,\fr_1,\overline\kappa}^+)$ for $h\in X_{\fr_0,\fr_1}^{012345}(\overline\kappa)$.
\end{itemize}
Note that for a featuring cycle $\sfC$, we always have $\beta_0\sfC=[C]$ for some explicit cycle $C$ on $Y_{\fr_0,\fr_1,\overline\kappa}^S$ for some type $S$. Then we say that $\sfC$ has the \emph{realization} $C$ and the \emph{location} $Y_{\fr_0,\fr_1,\overline\kappa}^S$.
\end{definition}

The abelian group $\cB(Y_{\fr_0,\fr_1})$ is a $\dZ[\dT^{\fr\fl}][\rG_\kappa]$-module, where the actions of $\dT^{\fr\fl}$ and $\rG_\kappa$ are induced from those on the indexing sets $Z_{\fr_0,\fr_1}^\pm(\overline\kappa)$ and $X_{\fr_0,\fr_1}^{012345}(\overline\kappa)$ accordingly. Put $\cB(Y_{\fr_0,\fr_1},\Lambda)=\cB(Y_{\fr_0,\fr_1})\otimes\Lambda$. We have the following composite map
\begin{align}\label{eq:beta0}
\beta\colon\cB(Y_{\fr_0,\fr_1},\Lambda)\xrightarrow{\beta_0}\CH^1(Y^{(0)}_{\fr_0,\fr_1,\overline\kappa})\otimes\Lambda
\xrightarrow{\r{cl}}\rH^2(Y^{(0)}_{\fr_0,\fr_1,\overline\kappa},\Lambda(1))\to B_2(Y_{\fr_0,\fr_1},\Lambda)
\end{align}
where $\r{cl}$ denotes the class map as before, and $B_2(Y_{\fr_0,\fr_1},\Lambda)$ is defined in \Sec\ref{ss:potential_map} with the canonical quotient map $\rH^2(Y^{(0)}_{\fr_0,\fr_1,\overline\kappa},\Lambda(1))\to B_2(Y_{\fr_0,\fr_1},\Lambda)$. Taking $\Lambda$-dual, we obtain another map
\begin{align}\label{eq:beta}
\beta'\colon B^2(Y_{\fr_0,\fr_1},\Lambda)\simeq\Hom(B_2(Y_{\fr_0,\fr_1},\Lambda),\Lambda)
\xrightarrow{\beta^\vee}\Hom(\cB(Y_{\fr_0,\fr_1}),\Lambda).
\end{align}
In summary, we have the following commutative diagram
\begin{align}\label{eq:beta1}
\xymatrix{
\cB(Y_{\fr_0,\fr_1},\Lambda)  \ar[r]^-{\beta}\ar[d] & B_2(Y_{\fr_0,\fr_1},\Lambda) \ar[r]^-{\eqref{eq:potential_prepremap}}  & B^2(Y_{\fr_0,\fr_1},\Lambda) \ar[r]^-{\beta'}\ar[d] & \Hom(\cB(Y_{\fr_0,\fr_1}),\Lambda)  \\
\CH^1(Y^{(0)}_{\fr_0,\fr_1,\overline\kappa})\otimes\Lambda \ar[r]^-{\r{cl}} &  \rH^2(Y^{(0)}_{\fr_0,\fr_1,\overline\kappa},\Lambda(1)) \ar[r]^-{\delta_{1*}\circ\delta^*_0}\ar[u]
&  \rH^4(Y^{(0)}_{\fr_0,\fr_1,\overline\kappa},\Lambda(2)) \ar[r]^-{\r{cl}^\vee} & \Hom(\CH^1(Y^{(0)}_{\fr_0,\fr_1,\overline\kappa}),\Lambda). \ar[u]
}
\end{align}
Denote the composition of the top line by
\begin{align}\label{eq:potential_featuring}
\tilde\Delta\colon\cB(Y_{\fr_0,\fr_1},\Lambda)\to\Hom(\cB(Y_{\fr_0,\fr_1}),\Lambda).
\end{align}
The motivation behind Definition \ref{de:featuring} is explained in Proposition \ref{pr:featuring}.

\subsection{Computation of potential map}
\label{ss:computation}

We start to compute the map $\tilde\Delta$ \eqref{eq:potential_featuring}. To make formulae easier to read, throughout this section, we will suppress all subscripts $\fr_0,\fr_1$ indicating levels, as they will not be changed in the discussion here.

Let us first explain what it means for computing $\tilde\Delta$. Note that by definition, $\tilde\Delta$ is determined by the value of the function $\tilde\Delta\sfC$ on $\sfD$ for every pair of featuring cycles $\sfC$ and $\sfD$. Suppose that $\sfC$ (resp.\ $\sfD$) has the realization $C$ (resp.\ $D$) and the location $Y_{\overline\kappa}^S$ (resp.\ $Y_{\overline\kappa}^{S'}$). To compute $\tilde\Delta\sfC(\sfD)$, there are three cases:
\begin{enumerate}
  \item We have $Y_{\overline\kappa}^S\cap Y_{\overline\kappa}^{S'}=\emptyset$. Then $\tilde\Delta\sfC(\sfD)=0$.

  \item We have $Y_{\overline\kappa}^S=Y_{\overline\kappa}^{S'}$. Let $Y_{\overline\kappa}^{S''},Y_{\overline\kappa}^{S'''},\dots$ be all other strata of dimension $3$ that have nonempty intersection with $Y_{\overline\kappa}^S$. Then
      \[\tilde\Delta\sfC(\sfD)=\deg(C\cdot(Y_{\overline\kappa}^S\cap Y_{\overline\kappa}^{S''}+Y_{\overline\kappa}^S\cap Y_{\overline\kappa}^{S'''}+\cdots)\cdot D)\]
      where the intersection number is computed on $Y_{\overline\kappa}^S$.

  \item We have $Y_{\overline\kappa}^S\neq Y_{\overline\kappa}^{S'}$ but $Y_{\overline\kappa}^S\cap Y_{\overline\kappa}^{S'}\neq\emptyset$. Let $j\colon Y_{\overline\kappa}^S\cap Y_{\overline\kappa}^{S'}\to Y_{\overline\kappa}^S$ and $j'\colon Y_{\overline\kappa}^S\cap Y_{\overline\kappa}^{S'}\to Y_{\overline\kappa}^{S'}$ be the canonical embeddings, respectively. Then
      \[\tilde\Delta\sfC(\sfD)=\deg(-j'_*j^*(C\cdot(Y_{\overline\kappa}^S\cap Y_{\overline\kappa}^{S'}))\cdot D)
      =-\deg(j^*C\cdot j'^*D).\]
\end{enumerate}
The sign difference in (2) and (3) is due to the fact that $\delta_0^*$ and $\delta_{1*}$ are defined as alternating sums (see \Sec\ref{ss:semistable_schemes}, in particular Remark \ref{re:sign}). In particular, we have
\begin{align}\label{eq:symmetric}
\tilde\Delta\sfC(\sfD)=\tilde\Delta\sfD(\sfC).
\end{align}

In what follows, we will suppress the notation ``$\deg$'' if we are expecting a number (rather than a $0$-cycle). Moreover, in case (3), we will simply write the intersection number as $\tilde\Delta\sfC(\sfD)=-C\cdot(Y_{\overline\kappa}^S\cap Y_{\overline\kappa}^{S'})\cdot D$, with the interpretation implicitly.

\begin{remark}\label{re:symmetric}
By Definition \ref{de:featuring}, the location of $\sfH_h$ is always $Y_{\overline\kappa}^+$. However, since $\tilde\Delta$ factorizes through $B^2(Y,\Lambda)$, we may compute $\tilde\Delta\sfC(\sfH_h)$ by regarding $H_h$ as a $2$-cycle on $Y_{\overline\kappa}^-$.
\end{remark}

We need the following notation and lemma before the computation of $\tilde\Delta\sfC(\sfD)$.

\begin{notation}\label{no:fiber}
For $i\in S^\pm$ and $g'\in Z^\mp(\overline\kappa)$, we have the exceptional divisor $E^i_{g'}$. Let $D^i_{g'}$ be its image under $\pi$.
Then $E^i_{g'}$ is a ruled surface with the base curve $D^i_{g'}$. Moreover, we have
\[X_{\overline\kappa}^{i(i+1)(i+2)(i+3)(i+5)}=\coprod_{g'\in Z^\mp(\overline\kappa)}D^i_{g'}.\]
Finally, denote by $f^i_{g'}$ the the general fiber of $E^i_{g'}\to D^i_{g'}$.
\end{notation}

\begin{lem}\label{le:pullback}
For every $i\in S^\pm$, consider the morphisms
\[Y_{\overline\kappa}^{i(i+1)(i+2)}\xrightarrow{\pi}X_{\overline\kappa}^{i(i+1)(i+2)}
\xrightarrow{\wp_{i(i+1)(i+2)}}Z_{\overline\kappa}^{i(i+1)(i+2)}.\]
Then we have
\begin{enumerate}
  \item $[F^i_g]=\pi^*\wp_{i(i+1)(i+2)}^*[C^i_g]$ for $g\in Z^\pm(\overline\kappa)$;
  \item $[F^i_{g'}]=\pi^*\wp_{i(i+1)(i+2)}^*[C^i_{g'}]$ for $g\in Z^\mp(\overline\kappa)$.
\end{enumerate}
\end{lem}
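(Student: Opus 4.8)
The plan is to reduce both identities to the flat-pullback compatibility in Lemma \ref{le:fulton} (1). First I would observe that the composite morphism $\wp_{i(i+1)(i+2)}\circ\pi\colon Y_{\overline\kappa}^{i(i+1)(i+2)}\to Z_{\overline\kappa}^{i(i+1)(i+2)}$ factors through the strict transform/blowup structure described in Proposition \ref{bpr:blowup}: $\wp_{i(i+1)(i+2)}$ is a $\dP^1$-bundle by Lemma \ref{le:strata} (1), hence flat, and $\pi$ is the canonical semistable resolution. For part (1), the cycle $F^i_g$ is by Lemma \ref{le:cycle_strata} (1) precisely the inverse image of $C^i_g$ under $\wp_{i(i+1)(i+2)}\circ\pi$, and $C^i_g$ is a closed subvariety (a $\dP^1$) of $Z_{\overline\kappa}^{i(i+1)(i+2)}$ that is disjoint from the locus over which $\pi$ is not an isomorphism — indeed, by Notation \ref{no:tilde} and Lemma \ref{le:cycle_base} (1), $C^i_g$ sits in the stratum $Z_{\overline\kappa}^{i(i+1)(i+2),i+4}$ coming from the \emph{sparse} type containing $i$, whereas the exceptional divisors $E^i_{g'}$ of $\pi$ lie over the curves $C^i_{g'}$ attached to the \emph{opposite} sparse type. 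So $\pi$ is flat (in fact an isomorphism) over a neighborhood of $C^i_g$, $\wp_{i(i+1)(i+2)}$ is flat everywhere, and applying Lemma \ref{le:fulton} (1) twice gives $\pi^*\wp_{i(i+1)(i+2)}^*[C^i_g]=[(\wp_{i(i+1)(i+2)}\circ\pi)^{-1}C^i_g]=[F^i_g]$.

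For part (2), the situation is the one genuinely requiring care: here $C^i_{g'}$ lies in the stratum associated to the sparse type opposite to $i$, which is exactly where $\pi$ blows up, so $\pi$ is \emph{not} an isomorphism near $\wp_{i(i+1)(i+2)}^{-1}C^i_{g'}$ — its fiber there contains the exceptional divisor $E^i_{g'}$. The point is that $\pi$ is still flat: the canonical semistable resolution of Definition \ref{bde:blowup} is a blowup along a regularly embedded center, hence flat when the ambient scheme and center are suitably Cohen–Macaulay, or more simply one invokes that each geometric fiber of $Y_{\overline\kappa}^{i(i+1)(i+2)}\to X_{\overline\kappa}^{i(i+1)(i+2)}$ has the same dimension (Proposition \ref{bpr:blowup}) so that miracle flatness applies. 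Granting flatness of $\pi$ on the relevant chart, Lemma \ref{le:fulton} (1) gives $\pi^*[\wp_{i(i+1)(i+2)}^{-1}C^i_{g'}]=[\pi^{-1}\wp_{i(i+1)(i+2)}^{-1}C^i_{g'}]$, and by Definition \ref{de:featuring} (the bullet defining $\beta_0$ on $\sfF^i_{g'}$) the right-hand cycle is precisely $F^i_{g'}=(\wp_{i(i+1)(i+2)}\circ\pi)^{-1}C^i_{g'}$, so again composing with the flat $\wp_{i(i+1)(i+2)}^*$ yields $[F^i_{g'}]=\pi^*\wp_{i(i+1)(i+2)}^*[C^i_{g'}]$.

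The main obstacle I anticipate is verifying flatness of $\pi$ in the second case and, relatedly, making sure the set-theoretic preimage $\pi^{-1}\wp_{i(i+1)(i+2)}^{-1}C^i_{g'}$ carries its reduced-induced (multiplicity-one) scheme structure so that $[\pi^{-1}(\cdots)]$ really equals $[F^i_{g'}]$ as cycles, not merely up to a multiplicity factor. This should follow from the explicit local description of $\pi$ at the relevant stratum in Appendix \ref{ss:b} (the blowup is along a smooth center meeting the fibers transversally, so the scheme-theoretic preimage of a reduced divisor is reduced), but it is the one place where one must descend to local coordinates rather than cite a black box. Everything else is a formal manipulation with flat pullbacks and the projection formula from Lemma \ref{le:fulton}.
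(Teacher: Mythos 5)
Your argument has a genuine gap, and its central tool fails: $\pi\colon Y_{\overline\kappa}^{i(i+1)(i+2)}\to X_{\overline\kappa}^{i(i+1)(i+2)}$ is the blow-up of a smooth threefold along the smooth curve $X_{\overline\kappa}^{i(i+1)(i+2)(i+3)(i+5)}$ (Proposition \ref{bpr:blowup} (1)), so its fibers are points away from the center and $\dP^1$'s over it. The fibers therefore do not have constant dimension, miracle flatness does not apply, and a nontrivial blow-up is never flat; Lemma \ref{le:fulton} (1) cannot be invoked for $\pi$ in either part. In part (1) your fallback claim---that $C^i_g$ is disjoint from the curves $C^i_{g'}$ over which $\pi$ blows up, so that $\pi$ is an isomorphism near $\wp^{-1}C^i_g$---is also false: $C^i_g\cdot C^i_{g'}=1$ for every $g'\in\rT(g)$ (this very intersection number is used in the proof of Lemma \ref{le:intersection1}), so $\wp^{-1}C^i_g$ meets the blow-up center in finitely many points. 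What actually saves (1) is properness, not emptiness, of this intersection: by the blow-up pullback formula (\cite{Ful}*{Proposition 6.7}), $\pi^*[\wp^{-1}C^i_g]$ differs from $[\pi^{-1}\wp^{-1}C^i_g]$ only by a cycle supported over a zero-dimensional set, which has codimension $\geq 2$ and hence contributes nothing to divisor classes.

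In part (2) the missing step is the heart of the lemma. The surface $\wp^{-1}C^i_{g'}$ \emph{contains} the component $D^i_{g'}$ of the blow-up center (Notation \ref{no:fiber}), so the same formula only gives $\pi^*[\wp^{-1}C^i_{g'}]=[\pi^{-1}\wp^{-1}C^i_{g'}]+a[E^i_{g'}]$ for some integer $a$, and the whole content of the statement is that $a=0$. Your proposal contains no mechanism for determining this coefficient; worrying about reducedness of the scheme-theoretic preimage is beside the point once flat pullback is unavailable. The paper pins down $a$ by intersecting with the ruling fiber $f^i_{g'}$ of the exceptional ruled surface: the projection formula (Lemma \ref{le:fulton} (3)) gives $\pi^*[\wp^{-1}C^i_{g'}]\cup[f^i_{g'}]=[\wp^{-1}C^i_{g'}]\cup\pi_*[f^i_{g'}]=0$, while writing $\pi^{-1}\wp^{-1}C^i_{g'}=G^i_{g'}+E^i_{g'}$ (strict transform plus exceptional divisor) and using $G^i_{g'}\cdot f^i_{g'}=1$, $E^i_{g'}\cdot f^i_{g'}=-1$ yields $1-(a+1)=0$, i.e.\ $a=0$. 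Some argument of this kind (equivalently, the multiplicity-one computation for the pullback of a smooth divisor through the center) must be supplied before the identity $[F^i_{g'}]=\pi^*\wp_{i(i+1)(i+2)}^*[C^i_{g'}]$ can be asserted.
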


\begin{proof}
Note that the morphism $\pi\colon Y_{\overline\kappa}^{i(i+1)(i+2)}\to X_{\overline\kappa}^{i(i+1)(i+2)}$ is the blow-up along a smooth proper curve $X_{\overline\kappa}^{i(i+1)(i+2)(i+3)(i+5)}$ by Proposition \ref{bpr:blowup}. We put $\wp=\wp_{i(i+1)(i+2)}$ for simplicity.

For (1), note that the smooth surface $\wp^{-1}C^i_g$ intersects with $X_{\overline\kappa}^{i(i+1)(i+2)(i+3)(i+5)}$ properly by Lemma \ref{le:cycle_base}. By \cite{Ful}*{Proposition 6.7}, we know that $\pi^*[\wp^{-1}C^i_g]=[\pi^{-1}\wp^{-1}C^i_g]+[D]$ where $D$ is supported on $\pi^{-1}(\wp^{-1}C^i_g\cap X_{\overline\kappa}^{i(i+1)(i+2)(i+3)(i+5)})$. Thus $[D]=0$, and $[\pi^{-1}\wp^{-1}C^i_g]=\pi^*[\wp^{-1}C^i_g]=\pi^*\wp^*[C^i_g]$ by Lemma \ref{le:fulton} (1).

For (2), by the same argument, we know that $\pi^*[\wp^{-1}C^i_{g'}]=[\pi^{-1}\wp^{-1}C^i_{g'}]+a[E^i_{g'}]$ for some integer $a$. Let $G^i_{g'}\in\rZ^1(Y_{\overline\kappa}^{i(i+1)(i+2)})$ be the strict transform of $\wp^{-1}C^i_{g'}$ under $\pi$. Then we have $\pi^{-1}\wp^{-1}C^i_{g'}=G^i_{g'}+E^i_{g'}$. On one hand, by Lemma \ref{le:fulton} (3), we have (Notation \ref{no:fiber})
\[\pi^*[\wp^{-1}C^i_{g'}]\cup[f^i_{g'}]=[\wp^{-1}C^i_{g'}]\cup\pi_*[f^i_{g'}]=0.\]
On the other hand, we have
\[\deg(([\pi^{-1}\wp^{-1}C^i_{g'}]+a[E^i_{g'}])\cup[f^i_{g'}])=(G^i_{g'}+(a+1)E^i_{g'})\cdot f^i_{g'}=1+(-1)(a+1)\]
by \cite{AG5}*{Lemma 2.2.14 (ii b)}. Therefore, $a=0$ and $[F^i_{g'}]=\pi^*\wp_{i(i+1)(i+2)}^*[C^i_{g'}]$.
\end{proof}

In the proof of Lemmas \ref{le:intersection1}, \ref{le:intersection2}, \ref{le:intersection3} and \ref{le:intersection4}, We will abuse notation by simply writing $D$ instead of $[D]$ for a cycle $D$ on a $\overline\kappa$-scheme.

\begin{lem}\label{le:intersection1}
For $i\in S^\pm$ and $g\in Z^\pm(\overline\kappa)$, we have
\[
\tilde\Delta\sfF^i_g
\begin{cases}
(\sfF^i_g)= -2\ell(\ell+1), \\
(\sfF^i_{g'})= \ell+1, & \forall g'\in\rT(g), \\
(\sfE^i_{g'})= 1, & \forall g'\in\rT(g),\\
(\sfF^{i-1}_{g'})= -1, & \forall g'\in\rT(g),\\
(\sfF^{i-1}_g)=2\ell^2, \\
(\sfE^{i-1}_g)=2\ell, \\
(\sfF^{i+1}_{g'})=-1, & \forall g'\in\rT(g), \\
(\sfF^{i+1}_g)=2\ell^2, \\
(\sfE^{i+1}_g)=2\ell^2,
\end{cases}
\]
where $\rT(g)$ is introduced in Notation \ref{no:tilde}. On other featuring cycles, the function $\tilde\Delta\sfF^i_g$ takes value zero.
\end{lem}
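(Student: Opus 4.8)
The plan is to reduce everything to the three-case recipe for $\tilde\Delta\sfC(\sfD)$ spelled out in the paragraphs immediately preceding the lemma, and then carry out the resulting intersection numbers using the geometry of the special fiber recorded in Lemmas \ref{le:strata}, \ref{le:cycle_base}, \ref{le:cycle_strata} together with the pullback identities of Lemma \ref{le:pullback}. First I would fix notation: by Lemma \ref{le:cycle_strata}(1) the featuring cycle $\sfF^i_g$ has location $Y^{i(i+1)(i+2)}_{\overline\kappa}$ with realization $F^i_g=(\wp_{i(i+1)(i+2)}\circ\pi)^{-1}C^i_g$, and by Lemma \ref{le:pullback}(1) one has $[F^i_g]=\pi^*\wp_{i(i+1)(i+2)}^*[C^i_g]$ in $\CH^1(Y^{i(i+1)(i+2)}_{\overline\kappa})$; likewise $[F^i_{g'}]=\pi^*\wp_{i(i+1)(i+2)}^*[C^i_{g'}]$ by Lemma \ref{le:pullback}(2), while $\sfE^i_{g'}$ is realized by the exceptional divisor $E^i_{g'}$ of $\pi$ on the same stratum. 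The next step is purely combinatorial: using the description of the $3$-dimensional components of $Y_{\overline\kappa}$ and their mutual intersections from Appendix \ref{ss:b}, list which strata meet $Y^{i(i+1)(i+2)}_{\overline\kappa}$ — these are $Y^{(i-1)i(i+1)}_{\overline\kappa}$, $Y^{(i+1)(i+2)(i+3)}_{\overline\kappa}$ and the sparse component $Y^{\pm}_{\overline\kappa}$ (for $i\in S^\pm$). Every featuring cycle whose location is disjoint from $Y^{i(i+1)(i+2)}_{\overline\kappa}$ contributes $0$ by case (1), which at once reduces the claim to the featuring cycles supported on these four strata; and since the only featuring cycle living on $Y^{\pm}_{\overline\kappa}$ is $\sfH_h$, whose realization lies over the locus of $Z^{i(i+1)(i+2)}_{\overline\kappa}$ where $Y^{i(i+1)(i+2)}_{\overline\kappa}$ meets $Y^{\pm}_{\overline\kappa}$, a dimension count shows $\tilde\Delta\sfF^i_g(\sfH_h)=0$.

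For the entries with $\sfD\in\{\sfF^i_g,\sfF^i_{g'},\sfE^i_{g'}\}$, i.e.\ $\sfD$ located on the same stratum $Y^{i(i+1)(i+2)}_{\overline\kappa}$, I am in case (2): the value is $\deg(F^i_g\cdot\Xi\cdot D)$ computed on the threefold $Y^{i(i+1)(i+2)}_{\overline\kappa}$, where $\Xi$ is the sum of the divisors cut out on it by its neighbouring components. Writing $F^i_g=\pi^*\wp_{i(i+1)(i+2)}^*C^i_g$ and applying the projection formula (Lemma \ref{le:fulton}) one reduces each such number to an intersection downstairs on $Z^{i(i+1)(i+2)}_{\overline\kappa}$ or on a generic fibre $\dP^1$ of $\wp_{i(i+1)(i+2)}$: the inputs are the self-intersections $(C^i_g)^2=-2\ell$ and $(C^i_{g'})^2=-2\ell^2$ (Lemma \ref{le:cycle_base}(3)), the intersection $C^i_g\cdot C^i_{g'}$ of the two Goren--Oort curves for $g'\in\rT(g)$ (which is a single reduced point), and the fact — read off from Lemma \ref{le:strata}(3,4) — that $\Xi$ restricts to $Z^{i(i+1)(i+2)}_{\overline\kappa}$-degree $1+\ell$ on a generic fibre (one section of degree $1$, one Frobenius factor of degree $\ell$, and the sparse-component divisor, which does not dominate the base). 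This yields $-2\ell(\ell+1)$, $\ell+1$, and $1$ respectively; the last of these, involving the exceptional divisor $E^i_{g'}$, is handled exactly as in the proof of Lemma \ref{le:pullback} via \cite{Ful}*{Proposition 6.7} and \cite{AG5}*{Lemma 2.2.14}.

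For the entries with $\sfD$ on an adjacent stratum — $\sfF^{i\mp 1}_{g'},\sfF^{i\mp 1}_g,\sfE^{i\mp 1}_g$ on $Y^{(i-1)i(i+1)}_{\overline\kappa}$ or $Y^{(i+1)(i+2)(i+3)}_{\overline\kappa}$ — I am in case (3): with $j$ and $j'$ the inclusions of the common surface $W=Y^{i(i+1)(i+2)}_{\overline\kappa}\cap Y^{(i\mp 1)\cdots}_{\overline\kappa}$ into the two threefolds, the value is $-\deg(j^*F^i_g\cdot j'^*D)$. Here I would identify $W$ (it is isomorphic to one of the surfaces $Z^{\bullet}_{\overline\kappa}$, or to an exceptional divisor after resolution, by Lemma \ref{le:strata}(3) and Proposition \ref{bpr:blowup}), pull $F^i_g=\pi^*\wp_{i(i+1)(i+2)}^*C^i_g$ and $D$ back to $W$, and evaluate the resulting product of divisor classes on $W$; the degrees $\ell$, $\ell^2$ of the relevant Frobenius factors (Lemma \ref{le:strata}(4)) and the twists $n_3=2$, $n_4=1$ appearing in the proof of Lemma \ref{le:cycle_base} produce the displayed values $-1$, $2\ell^2$, $2\ell$ and $-1$, $2\ell^2$, $2\ell^2$. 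Vanishing of $\tilde\Delta\sfF^i_g$ on all remaining featuring cycles is then immediate from case (1) and the adjacency list of the first step.

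I expect the combinatorial bookkeeping of the first paragraph and the sign-tracking throughout (the alternating signs in the definitions of $\delta_0^*$ and $\delta_{1*}$, which are what makes cases (2) and (3) differ by a sign) to be the main obstacle, rather than any single intersection number; the symmetry $\tilde\Delta\sfC(\sfD)=\tilde\Delta\sfD(\sfC)$ of \eqref{eq:symmetric} can be used to cut the number of independent computations roughly in half, and the only genuinely delicate intersections are those involving the exceptional divisors $E^i_{g'}$, for which the blow-up formulas already invoked in Lemma \ref{le:pullback} suffice.
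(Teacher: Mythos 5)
Your overall route is the same as the paper's: reduce each value to the three-case recipe preceding the lemma, write $[F^i_g]=\pi^*\wp_{i(i+1)(i+2)}^*[C^i_g]$ (Lemma \ref{le:pullback}), and evaluate via the projection formula, Lemma \ref{le:strata} and Lemma \ref{le:cycle_base}. But there are two concrete gaps. First, your incidence list is wrong: for $i=0$ the three-dimensional strata meeting $Y^{012}_{\overline\kappa}$ are \emph{four}, namely $Y^{015}_{\overline\kappa}$, $Y^{123}_{\overline\kappa}$, $Y^{024}_{\overline\kappa}$ \emph{and} $Y^{135}_{\overline\kappa}$; the last meets $Y^{012}_{\overline\kappa}$ along $Y^{01235}_{\overline\kappa}=\coprod_{g'}E^0_{g'}$ and the one before along $Y^{0124}_{\overline\kappa}=\coprod_{\tilde g}F^0_{\tilde g}$ (Lemma \ref{le:cycle_strata}). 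Your $\Xi$ contains only one ``sparse-component divisor'', so it omits one of these two families, and the omission is not harmless: in the entry $\tilde\Delta\sfF^i_g(\sfE^i_{g'})$ the term $F^0_g\cdot E^0_{g'}\cdot E^0_{g'}=f^0_{g'}\cdot E^0_{g'}=-1$ supplied by the $E$-part of $\Xi$ is exactly what corrects the two contributions $F^0_g\cdot Y^{012k}\cdot E^0_{g'}=C^0_g\cdot C^0_{g'}=1$ ($k=3,5$) down to the stated value $1$; without it your recipe returns $2$.

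Second, the vanishing $\tilde\Delta\sfF^i_g(\sfH_h)=0$ is not a dimension count. Setting it up through $Y^\pm_{\overline\kappa}$ runs into excess intersection, since $F^i_g$ is itself a component of $Y^{i(i+1)(i+2)}_{\overline\kappa}\cap Y^\pm_{\overline\kappa}$; the paper instead uses Remark \ref{re:symmetric} to regard $H_h$ on $Y^\mp_{\overline\kappa}$, writes $\tilde\Delta\sfF^0_g(\sfH_h)=-F^0_g\cdot\big(\sum_{g'}E^0_{g'}\big)\cdot H_h$ with $F^0_g\cdot E^0_{g'}=f^0_{g'}$, and then has to \emph{move} the fibers $f^0_{g'}$ in their rational equivalence class so that they avoid $Y^{012345}_{\overline\kappa}$ — without that moving argument the cycles can genuinely meet, so nothing vanishes for positional reasons. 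Relatedly, your treatment of the cross-stratum entries is only an assertion of the answers: for instance the asymmetry $2\ell$ versus $2\ell^2$ between $\tilde\Delta\sfF^i_g(\sfE^{i-1}_g)$ and $\tilde\Delta\sfF^i_g(\sfE^{i+1}_g)$ arises because $E^5_g\cap Y^{0125}_{\overline\kappa}$ is identified with $C^0_g$ (self-intersection $-2\ell$) under $Y^{0125}_{\overline\kappa}\simeq Z^{012}_{\overline\kappa}$, while $E^1_g\cap Y^{0123}_{\overline\kappa}$ is identified with $C^1_g$ (self-intersection $-2\ell^2$) under $Y^{0123}_{\overline\kappa}\simeq Z^{123}_{\overline\kappa}$; your sketch never pins down which curve appears where, so the stated values are not yet derived.
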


\begin{proof}
We may assume $i=0$ without lost of generality by Remark \ref{re:symmetric}. Note that $\sfF^0_g$ has the realization $F^0_g$ and location $Y_{\overline\kappa}^{012}$. All other $3$-dimensional strata that have nonempty intersection with $Y_{\overline\kappa}^{012}$ are: $Y_{\overline\kappa}^{015}$, $Y_{\overline\kappa}^{123}$, $Y_{\overline\kappa}^{024}$ and $Y_{\overline\kappa}^{135}$. We have
\begin{align*}
Y_{\overline\kappa}^{012}\cap Y_{\overline\kappa}^{015}=Y_{\overline\kappa}^{0125},&\qquad
Y_{\overline\kappa}^{012}\cap Y_{\overline\kappa}^{123}=Y_{\overline\kappa}^{0123},\\
Y_{\overline\kappa}^{012}\cap Y_{\overline\kappa}^{024}=Y_{\overline\kappa}^{0124}=\coprod_{\tilde{g}\in Z^+(\overline\kappa)}F^0_{\tilde{g}},&\qquad
Y_{\overline\kappa}^{012}\cap Y_{\overline\kappa}^{135}=Y_{\overline\kappa}^{01235}=\coprod_{g'\in Z^-(\overline\kappa)}E^0_{g'},
\end{align*}
by Lemma \ref{le:cycle_strata}. Moreover, $F^0_g\cdot F^0_{\tilde{g}}=0$ if $\tilde{g}\neq g$, and $F^0_g\cdot E^0_{g'}=0$ if $g'\not\in\rT(g)$. Now we show the 9 equalities and the vanishing part one by one.

\begin{enumerate}
  \item We have $\tilde\Delta\sfF^0_g(\sfF^0_g)=F^0_g\cdot(\sum_{g'\in\rT(g)}E^0_{g'}+F^0_g+Y_{\overline\kappa}^{0125}+
      Y_{\overline\kappa}^{0123})\cdot F^0_g$. By Lemmas \ref{le:fulton} and \ref{le:pullback}, we have
      \begin{multline}\label{eq:111}
      F^0_g\cdot E^0_{g'}\cdot F^0_g=(F^0_g\cdot F^0_g)\cdot E^0_{g'}=((\wp_{123}\circ\pi)^*C^0_g\cdot(\wp_{123}\circ\pi)^*C^0_g)\cdot E^0_{g'}\\
      =(\wp_{123}\circ\pi)^*(C^0_g\cdot C^0_g)\cdot E^0_{g'}=(C^0_g\cdot C^0_g)\cdot(\wp_{123}\circ\pi)_*E^0_{g'}=
      (C^0_g\cdot C^0_g)\cdot0=0.
      \end{multline}
      Similarly, we have
      \begin{align}\label{eq:112}
      F^0_g\cdot F^0_g\cdot F^0_g=(C^0_g\cdot C^0_g)\cdot(\wp_{012}\circ\pi)_*F^0_g=(C^0_g\cdot C^0_g)\cdot0=0.
      \end{align}
      Similarly, we have for $k=3,5$ that
      \begin{align*}
      F^0_g\cdot Y_{\overline\kappa}^{012k}\cdot F^0_g=(F^0_g\cdot F^0_g)\cdot Y_{\overline\kappa}^{012k}=(C^0_g\cdot C^0_g)\cdot\wp_{012*}\pi_*Y_{\overline\kappa}^{012k}
      =(C^0_g\cdot C^0_g)\cdot\wp_{012*}X_{\overline\kappa}^{012k}.
      \end{align*}
      By Lemma \ref{le:cycle_base} (3), we have $C^0_g\cdot C^0_g=-2\ell$. By Lemma \ref{le:strata} (3) (resp.\ Lemma \ref{le:strata} (4)), we know that $\wp_{012*}X_{\overline\kappa}^{0125}=Z_{\overline\kappa}^{012}$ (resp.\ $\wp_{012*}X_{\overline\kappa}^{0123}=\ell Z_{\overline\kappa}^{012}$). Thus we have
      \begin{align}\label{eq:113}
      F^0_g\cdot (Y_{\overline\kappa}^{0123}+Y_{\overline\kappa}^{0125})\cdot F^0_g=-2\ell(\ell+1).
      \end{align}
      The first equality follows from \eqref{eq:111}, \eqref{eq:112} and \eqref{eq:113}.

  \item We have $\tilde\Delta\sfF^0_g(\sfF^0_{g'})=F^0_g\cdot
      (\sum_{\tilde{g}'\in\rT(g)}E^0_{\tilde{g}'}+F^0_g+Y_{\overline\kappa}^{0125}+Y_{\overline\kappa}^{0123})\cdot F^0_{g'}$. Similar to \eqref{eq:111}, we have
      \begin{align}\label{eq:121}
      F^0_g\cdot E^0_{\tilde{g}'}\cdot F^0_{g'}=(F^0_g\cdot F^0_{g'})\cdot E^0_{\tilde{g}'}=(C^0_g\cdot C^0_{g'})\cdot(\wp_{123}\circ\pi)_*E^0_{\tilde{g}'}=0.
      \end{align}
      Similar to \eqref{eq:112}, we have
      \begin{align}\label{eq:122}
      F^0_g\cdot F^0_g\cdot F^0_{g'}=0.
      \end{align}
      Similar to \eqref{eq:113}, we have for $k=3,5$ that
      \begin{align*}
      F^0_g\cdot Y_{\overline\kappa}^{012k}\cdot F^0_{g'}=(F^0_g\cdot F^0_{g'})\cdot Y_{\overline\kappa}^{012k}=(C^0_g\cdot C^0_{g'})\cdot\wp_{012*}\pi_*Y_{\overline\kappa}^{012k}.
      \end{align*}
      Note that $C^0_g\cdot C^0_{g'}$ equals $1$ (resp.\ $0$) if $g'\in\rT(g)$ (resp.\ $g'\not\in\rT(g)$). Thus we have for $g'\in\rT(g)$,
      \begin{align}\label{eq:123}
      F^0_g\cdot (Y_{\overline\kappa}^{0123}+Y_{\overline\kappa}^{0125})\cdot F^0_{g'}=\ell+1.
      \end{align}
      The second equality follows from \eqref{eq:121}, \eqref{eq:122} and \eqref{eq:123}.

  \item We have $\tilde\Delta\sfF^0_g(\sfE^0_{g'})=F^0_g\cdot
      (\sum_{\tilde{g}'\in\rT(g)}E^0_{\tilde{g}'}+F^0_g+Y_{\overline\kappa}^{0125}+Y_{\overline\kappa}^{0123})\cdot E^0_{g'}$. Note that $E^0_{\tilde{g}'}\cdot E^0_{g'}=0$ unless $\tilde{g}'=g'$. Thus $\tilde\Delta\sfF^0_g(\sfE^0_{g'})=F^0_g\cdot(E^0_{g'}+F^0_g+Y_{\overline\kappa}^{0125}+Y_{\overline\kappa}^{0123})\cdot E^0_{g'}$. As $F^0_g$ intersects $E^0_{g'}$ transversally at a fiber $f^0_{g'}$ of the ruled surface $E^0_{g'}$ (Notation \ref{no:fiber}), we have
      \begin{align}\label{eq:131}
      F^0_g\cdot E^0_{g'}\cdot E^0_{g'}=f^0_{g'}\cdot E^0_{g'}=-1
      \end{align}
      by \cite{AG5}*{Lemma 2.2.14 (ii b)}. For $k=3,5$, the restricted morphism $\pi\colon Y_{\overline\kappa}^{012k}\to X_{\overline\kappa}^{012k}$ is an isomorphism by Proposition \ref{bpr:blowup} (4). Thus the morphism $\wp_{012}\circ\pi$ induces an isomorphism from $Y_{\overline\kappa}^{012k}\cap E^0_{g'}$ to $C^0_{g'}$. As the intersection $Y_{\overline\kappa}^{012k}\cap E^0_{g'}$ is proper, we have by Lemma \ref{le:fulton}
      \begin{align}\label{eq:132}
      F^0_g\cdot Y_{\overline\kappa}^{012k}\cdot E^0_{g'}=C^0_g\cdot C^0_{g'}=1.
      \end{align}
      The third equality follows from \eqref{eq:111}, \eqref{eq:131} and \eqref{eq:132}.

  \item We have $\tilde\Delta\sfF^0_g(\sfF^5_{g'})=-F^0_g\cdot Y_{\overline\kappa}^{0125}\cdot F^5_{g'}$. Note that $F^0_g\subset Y_{\overline\kappa}^{0124}$ intersects transversally with $Y_{\overline\kappa}^{0125}$ (in $Y_{\overline\kappa}^{012}$), and the restricted morphism $\pi\colon Y_{\overline\kappa}^{0124}\cap Y_{\overline\kappa}^{0125}\to X_{\overline\kappa}^{01245}$ is an isomorphism by Proposition \ref{bpr:blowup}. The following composite morphism
      \[F^0_g\cap Y_{\overline\kappa}^{0125}\to X_{\overline\kappa}^{01245}
      \to X_{\overline\kappa}^{0145}\to Z_{\overline\kappa}^{015},\]
      which is same as $\wp_{015}\circ\pi$, induces an isomorphism onto the image $C^5_g$ by Lemma \ref{le:cycle_base} (2). Therefore,
      \[F^0_g\cdot Y_{\overline\kappa}^{0125}\cdot F^5_{g'}=(F^0_g\cap Y_{\overline\kappa}^{0125})\cdot\pi^*\wp_{015}^*C^5_{g'}=
      \wp_{015*}\pi_*(F^0_g\cap Y_{\overline\kappa}^{0125})\cdot C^5_{g'}=C^5_g\cdot C^5_{g'}\]
      which equals $1$ (resp.\ $0$) if $g'\in\rT(g)$ (resp\ $g'\not\in\rT(g)$). The fourth equality follows.

  \item Similar to (4), we have $F^0_g\cdot Y_{\overline\kappa}^{0125}\cdot F^5_g=C^5_g\cdot C^5_g=-2\ell^2$ by Lemma \ref{le:cycle_base} (3). The fifth equality follows. However, on the other hand,
      \[F^0_g\cdot Y_{\overline\kappa}^{0125}\cdot F^5_g=\wp_{012*}\pi_*(F^5_g\cdot Y_{\overline\kappa}^{0125})\cdot C^0_g
      =aC^0_g\cdot C^0_g.\]
      As $C^0_g\cdot C^0_g=-2\ell$, we have $a=\ell$. In other words,
      \begin{align}\label{eq:151}
      \wp_{012*}\pi_*(F^5_g\cdot Y_{\overline\kappa}^{0125})=\ell C^0_g.
      \end{align}
      This will be used later.

  \item We have $\tilde\Delta\sfF^0_g(\sfE^5_g)=-F^0_g\cdot Y_{\overline\kappa}^{0125}\cdot E^5_g$. Note that $F^0_g\cap Y_{\overline\kappa}^{0125}\subset Y_{\overline\kappa}^{0124}\cap Y_{\overline\kappa}^{0125}$. Therefore, $F^0_g\cap Y_{\overline\kappa}^{0125}$, as a $1$-cycle on $Y_{\overline\kappa}^{0125}$, is contained in $Y_{\overline\kappa}^{01245}$ and $Y_{\overline\kappa}^{0125}$. By Lemma \ref{le:cycle_strata}, $Y_{\overline\kappa}^{01245}=\coprod_{\tilde{g}\in Z^+(\overline\kappa)}E^5_g$, thus $F^0_g\cap Y_{\overline\kappa}^{0125}$ is simply $E^5_g\cap Y_{\overline\kappa}^{0125}$ as a $1$-cycle of $Y_{\overline\kappa}^{0125}$. Therefore, $F^0_g\cdot Y_{\overline\kappa}^{0125}\cdot E^5_g=Y_{\overline\kappa}^{0125}\cdot (E^5_g\cdot E^5_g)$. Applying the projection formula (Lemma \ref{le:fulton} (3)) to the closed embedding $Y_{\overline\kappa}^{0125}\to Y_{\overline\kappa}^{015}$, we obtain
      \[Y_{\overline\kappa}^{0125}\cdot (E^5_g\cdot E^5_g)=(E^5_g\cap Y_{\overline\kappa}^{0125})\cdot(E^5_g\cap Y_{\overline\kappa}^{0125}).\]
      Under the isomorphism $Y_{\overline\kappa}^{0125}\xrightarrow{\pi}X_{\overline\kappa}^{0125}\xrightarrow{\wp_{0125}}Z_{\overline\kappa}^{012}$ (Proposition \ref{bpr:blowup} and Lemma \ref{le:strata}), the image of $E^5_g\cap Y_{\overline\kappa}^{0125}$ is simply $C^0_g$ ($g\in Z^+(\overline\kappa)$). Thus the sixth equality follows from Lemma \ref{le:cycle_base} (3).

  \item The seventh equality follows from \eqref{eq:symmetric} and the fourth with $i=1$.

  \item Similar to (5), we have $\tilde\Delta\sfF^0_g(\sfF^1_g)=-F^0_g\cdot Y_{\overline\kappa}^{0123}\cdot F^5_g=-C^1_g\cdot C^1_g=2\ell^2$
      by Lemma \ref{le:cycle_base} (3). The eighth equality follows.

  \item We have $\tilde\Delta\sfF^0_g(\sfE^1_g)=-F^0_g\cdot Y_{\overline\kappa}^{0123}\cdot E^1_g$. Similar to (6), we have
      \[F^0_g\cdot Y_{\overline\kappa}^{0123}\cdot E^1_g=Y_{\overline\kappa}^{0123}\cdot (E^5_g\cdot E^5_g)=(E^1_g\cap Y_{\overline\kappa}^{0123})\cdot(E^1_g\cap Y_{\overline\kappa}^{0123}).\]
      Under the isomorphism $Y_{\overline\kappa}^{0123}\xrightarrow{\pi}X_{\overline\kappa}^{0123}\xrightarrow{\wp_{0123}}Z_{\overline\kappa}^{123}$ (Proposition \ref{bpr:blowup} and Lemma \ref{le:strata}), the image of $E^1_g\cap Y_{\overline\kappa}^{0123}$ is simply $C^1_g$ ($g\in Z^+(\overline\kappa)$). Thus the ninth equality follows from Lemma \ref{le:cycle_base} (3).

  \item For the remaining featuring cycles, we trivially have $\tilde\Delta\sfF^0_g(\sfC)=0$ unless $\sfC=\sfH_h$, since for else, there is obvious empty intersection in the computation of intersection numbers. Now we show that $\tilde\Delta\sfF^0_g(\sfH_h)=0$ for every $h\in X^{012345}(\overline\kappa)$. By Remark \ref{re:symmetric}, we may regard $H_h$ as a cycle on $Y_{\overline\kappa}^{135}$. Thus
      \[\tilde\Delta\sfF^0_g(\sfH_h)=-F^0_g\cdot Y_{\overline\kappa}^{01235}\cdot H_h=
      -F^0_g\cdot(\sum_{g'\in Z^-(\overline\kappa)}E^0_{g'})\cdot H_h.\]
      Note that $F^0_g\cdot E^0_{g'}=f^0_{g'}$ for $g'\in\rT(g)$. Now we can move the fiber $f^0_{g'}$ such that their projection in $X_{\overline\kappa}^{01235}$ does not belong to $X_{\overline\kappa}^{024}$. In other words, we can move $f^0_{g'}$ such that it has empty intersection with $Y_{\overline\kappa}^{024}$ hence with $Y_{\overline\kappa}^{012345}=\coprod_{h\in X^{012345}(\overline\kappa)}H_h$. Therefore, $F^0_g\cdot Y_{\overline\kappa}^{01235}\cdot H_h=0$ hence $\tilde\Delta\sfF^0_g(\sfH_h)=0$.
\end{enumerate}
\end{proof}

\begin{lem}\label{le:intersection2}
For $i\in S^\pm$ and $g'\in Z^\mp(\overline\kappa)$, we have
\[
\tilde\Delta\sfF^i_{g'}
\begin{cases}
(\sfF^i_{g'})= -2\ell^2(\ell+1), \\
(\sfF^i_g)= \ell+1, & \forall g\in\rT(g'), \\
(\sfE^i_{g'})=-2\ell^2, \\
(\sfF^{i-1}_g)= -\ell, & \forall g\in\rT(g'),\\
(\sfF^{i-1}_{g'})=2\ell^2, \\
(\sfE^{i-1}_g)=-1, &\forall g\in\rT(g'),\\
(\sfF^{i+1}_g)=-\ell, & \forall g\in\rT(g'), \\
(\sfF^{i+1}_{g'})=2\ell^2, \\
(\sfE^{i+1}_g)=-\ell, & \forall g\in\rT(g').
\end{cases}
\]
On other featuring cycles, the function $\tilde\Delta\sfF^i_{g'}$ takes value zero.
\end{lem}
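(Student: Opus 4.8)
The plan is to run through the proof of Lemma \ref{le:intersection1} essentially verbatim, with the base curve $C^i_g$ (of self-intersection $-2\ell$) systematically replaced by $C^i_{g'}$ (of self-intersection $-2\ell^2$). By Remark \ref{re:symmetric} one may assume $i=0\in S^+$, so that $g'\in Z^-(\overline\kappa)$ and the featuring cycle $\sfF^0_{g'}$ has realization $F^0_{g'}=(\wp_{012}\circ\pi)^{-1}C^0_{g'}$ and location $Y^{012}_{\overline\kappa}$ --- the \emph{same} location as $\sfF^0_g$. Hence the list of $3$-dimensional strata meeting $Y^{012}_{\overline\kappa}$ and their pairwise intersections with it are exactly those recorded in the proof of Lemma \ref{le:intersection1}: $Y^{012}_{\overline\kappa}\cap Y^{015}_{\overline\kappa}=Y^{0125}_{\overline\kappa}$, $Y^{012}_{\overline\kappa}\cap Y^{123}_{\overline\kappa}=Y^{0123}_{\overline\kappa}$, $Y^{012}_{\overline\kappa}\cap Y^{024}_{\overline\kappa}=\coprod_{\tilde g}F^0_{\tilde g}$ and $Y^{012}_{\overline\kappa}\cap Y^{135}_{\overline\kappa}=\coprod_{\tilde g'}E^0_{\tilde g'}$, by Lemma \ref{le:cycle_strata}.

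Several of the listed values are then immediate. The entry $\tilde\Delta\sfF^0_{g'}(\sfF^0_g)=\ell+1$ for $g\in\rT(g')$ follows from \eqref{eq:symmetric} together with the corresponding entry of Lemma \ref{le:intersection1}, using that $g\in\rT(g')\iff g'\in\rT(g)$ (both say that $g$ and $g'$ are the two $\delta^\pm$-images of a common point of $X^{012345}(\overline\kappa)$). The two entries $(\sfF^{i-1}_g)=-\ell$ and $(\sfF^{i+1}_g)=-\ell$ for $g\in\rT(g')$ are interchanged by \eqref{eq:symmetric} (applied at the adjacent index), so only one of them needs a direct computation, and similarly for $(\sfE^{i-1}_g)$ and $(\sfE^{i+1}_g)$. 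Finally, all values on featuring cycles not appearing in the list vanish because one of the strata entering the relevant triple intersection is empty, the only nontrivial case being $\tilde\Delta\sfF^0_{g'}(\sfH_h)$, which vanishes by the fiber-moving argument of step (10) in the proof of Lemma \ref{le:intersection1}.

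The remaining, genuinely new, numbers are computed exactly along the lines of Lemma \ref{le:intersection1}: Lemmas \ref{le:fulton} and \ref{le:pullback} reduce each intersection number to an intersection on the surface $Z^{012}_{\overline\kappa}$, or on a ruled surface $E^0_{g'}$ via \cite{AG5}*{Lemma 2.2.14 (ii b)}, after which the answer is read off from Lemma \ref{le:cycle_base} (3) (namely $C^0_{g'}\cdot C^0_{g'}=-2\ell^2$, and $C^0_{g'}\cdot C^0_g=1$ for $g\in\rT(g')$) and Lemma \ref{le:strata} (3, 4) (namely $\wp_{012*}X^{0125}_{\overline\kappa}=Z^{012}_{\overline\kappa}$ and $\wp_{012*}X^{0123}_{\overline\kappa}=\ell\,Z^{012}_{\overline\kappa}$). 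For instance $\tilde\Delta\sfF^0_{g'}(\sfF^0_{g'})$ reduces to $(C^0_{g'}\cdot C^0_{g'})\cdot\wp_{012*}(X^{0123}_{\overline\kappa}+X^{0125}_{\overline\kappa})=-2\ell^2(\ell+1)$, and $\tilde\Delta\sfF^0_{g'}(\sfE^0_{g'})$ reduces to a combination of $C^0_{g'}\cdot C^0_{g'}$ with a fiber self-intersection on $E^0_{g'}$, giving $-2\ell^2$. The only point needing a slightly new input is the analogue of the identity \eqref{eq:151}: here the surface $F^{i-1}_g$ meets $Y^{0125}_{\overline\kappa}$ in a cycle whose image under $\wp_{012}\circ\pi$ is $\ell\,C^0_g$ (not $C^0_g$), because the relevant vertical map is the degree-$\ell$ Frobenius factor of Lemma \ref{le:strata} (4); this is precisely the source of the extra factor of $\ell$ in the entries $(\sfF^{i\pm1}_g)=-\ell$ and $(\sfE^{i+1}_g)=-\ell$.

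I expect the main obstacle to be purely organisational rather than conceptual: one must correctly track, for each of the roughly fifteen triple intersections involved, which base curve ($C^i_g$ or $C^i_{g'}$, hence $-2\ell$ or $-2\ell^2$) and which degree-$\ell$ Frobenius factor enters, and in particular one must be careful to distribute the signs coming from the alternating definitions of $\delta_0^*$ and $\delta_{1*}$ correctly between the ``same stratum'' case (2) and the ``adjacent stratum'' case (3) of the recipe stated just before Remark \ref{re:symmetric}. Once this dictionary with Lemma \ref{le:intersection1} is in place, no geometric input beyond Lemmas \ref{le:strata}, \ref{le:cycle_base}, \ref{le:cycle_strata}, \ref{le:fulton} and \ref{le:pullback} is required.
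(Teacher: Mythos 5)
Your overall strategy is the paper's: assume $i=0$, reuse the stratum/intersection bookkeeping on $Y^{012}_{\overline\kappa}$ from Lemma \ref{le:intersection1}, get several entries by \eqref{eq:symmetric}, and reduce the rest via Lemmas \ref{le:fulton}, \ref{le:pullback} to intersection numbers on $Z^{012}_{\overline\kappa}$, with the extra factor of $\ell$ traced to \eqref{eq:151} (i.e.\ the degree-$\ell$ Frobenius factor of Lemma \ref{le:strata} (4)). However, two specific steps as you describe them do not work. First, the claim that of the pair $(\sfE^{i-1}_g)$, $(\sfE^{i+1}_g)$ ``only one needs a direct computation'' because \eqref{eq:symmetric} interchanges them is false: applying \eqref{eq:symmetric} turns $\tilde\Delta\sfF^i_{g'}(\sfE^{i\mp1}_g)$ into $\tilde\Delta\sfE^{i\mp1}_g(\sfF^i_{g'})$, which is an entry of $\tilde\Delta$ on an $\sfE$-cycle, i.e.\ of Lemma \ref{le:intersection3}, not of the present lemma at a shifted index (and in the paper those Lemma \ref{le:intersection3} entries are themselves deduced from the present lemma, so importing them would be circular). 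Moreover the two values are genuinely different, $-1$ and $-\ell$, reflecting the asymmetry between the isomorphism $\wp_{i(i+1)(i+2)(i+5)}$ and the degree-$\ell$ morphism $\wp_{i(i+1)(i+2)(i+3)}$; both must be computed directly, as the paper does in items (6) and (9) of its proof ($-F^0_{g'}\cdot Y^{0125}\cdot E^5_g=-C^0_{g'}\cdot C^0_g=-1$, versus $-F^0_{g'}\cdot Y^{0123}\cdot E^1_g=-\ell\,Y^{0123}\cdot f^1_g=-\ell$ using $F^0_{g'}\cdot Y^{0123}=\ell(Y^{0123}\cdot F^1_{g'})$ from \eqref{eq:151}).

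Second, the vanishing $\tilde\Delta\sfF^i_{g'}(\sfH_h)=0$ cannot be obtained by ``the fiber-moving argument of step (10)'' of Lemma \ref{le:intersection1}: that argument used that $F^0_g$ meets each exceptional divisor $E^0_{g'}$ transversally in a movable fiber $f^0_{g'}$, whereas $F^0_{g'}$ is the \emph{total} transform $\pi^*\wp_{012}^*C^0_{g'}$ and contains $E^0_{g'}$ as a component (cf.\ the proof of Lemma \ref{le:pullback} (2)), so the intersection $F^0_{g'}\cap E^0_{g'}$ is two-dimensional and there is no curve to move. The paper instead regards $H_h$ as a cycle on $Y^{024}_{\overline\kappa}$, so that $\tilde\Delta\sfF^0_{g'}(\sfH_h)=-F^0_{g'}\cdot Y^{0124}\cdot H_h$, observes that $F^0_{g'}\cdot\bigl(\sum_g F^0_g\bigr)$ is a sum of full $(\wp_{012}\circ\pi)$-fibers over points of $Z^{012}_{\overline\kappa}$, and concludes by the projection formula since $\wp_{012*}\pi_*H_h=0$. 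You need some such replacement argument; the one you cite fails at exactly the point where $F^i_{g'}$ differs from $F^i_g$.
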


\begin{proof}
Again we assume that $i=0$.

\begin{enumerate}
  \item We have $\tilde\Delta\sfF^0_{g'}(\sfF^0_{g'})=F^0_{g'}\cdot
      (E^0_{g'}+\sum_{g\in\rT(g')}F^0_g+Y_{\overline\kappa}^{0125}+Y_{\overline\kappa}^{0123})\cdot F^0_{g'}$. By the same argument in Lemma \ref{le:intersection1} (1), we have $F^0_{g'}\cdot E^0_{g'}\cdot F^0_{g'}=F^0_{g'}\cdot F^0_g\cdot F^0_{g'}=0$. For $k=3,5$, we have
      $F^0_{g'}\cdot Y_{\overline\kappa}^{012k}\cdot F^0_{g'}=(C^0_{g'}\cdot C^0_{g'})\cdot\wp_{012*}\pi_*Y_{\overline\kappa}^{012k}
      =(C^0_g\cdot C^0_g)\cdot\wp_{012*}X_{\overline\kappa}^{012k}$. By Lemma \ref{le:cycle_base} (3), we have $C^0_g\cdot C^0_g=-2\ell^2$. Thus we have
      \begin{align*}
      F^0_{g'}\cdot (Y_{\overline\kappa}^{0123}+Y_{\overline\kappa}^{0125})\cdot F^0_{g'}=-2\ell^2(\ell+1).
      \end{align*}
      The first equality is proved.

  \item The second equality follows from \eqref{eq:symmetric} and the second equality of Lemma \ref{le:intersection1}.

  \item We have $\tilde\Delta\sfF^0_{g'}(\sfE^0_{g'})=F^0_{g'}\cdot
      (E^0_{g'}+\sum_{g\in\rT(g')}F^0_g+Y_{\overline\kappa}^{0125}+Y_{\overline\kappa}^{0123})\cdot E^0_{g'}$.  By the same argument in Lemma \ref{le:intersection1} (3), we have
      \begin{align}\label{eq:231}
      F^0_{g'}\cdot Y_{\overline\kappa}^{012k}\cdot E^0_{g'}=C^0_{g'}\cdot C^0_{g'}=-2\ell^2
      \end{align}
      for $k=3,5$. By \cite{AG5}*{Lemma 2.2.14 (ii b)}, we have $E^0_{g'}\cdot E^0_{g'}=-\pi^* D^0_{g'}+a f^0_{g'}$ (Notation \ref{no:fiber}) for some integer $a$. We have $\pi_*\pi^* D^0_{g'}=D^0_{g'}$ by \cite{AG5}*{Lemma 2.2.14 (i)}; $\pi_*f^0_{g'}=0$; and that $\wp_{012}\colon D^0_{g'}\to C^0_{g'}$ is an isomorphism. Thus
      \begin{align}\label{eq:232}
      F^0_{g'}\cdot E^0_{g'}\cdot E^0_{g'}=C^0_{g'}\cdot\wp_{012*}\pi_*(-\pi^* D^0_{g'}+a f^0_{g'})=C^0_{g'}\cdot \wp_{012*}(-D^0_{g'})
      =-C^0_{g'}\cdot C^0_{g'}=2\ell^2.
      \end{align}
      The third equality follows from \eqref{eq:231}, \eqref{eq:232} and \eqref{eq:121}.

  \item We have $\tilde\Delta\sfF^0_{g'}(\sfF^5_g)=-F^0_{g'}\cdot Y_{\overline\kappa}^{0125}\cdot F^5_g$. We have
      \[F^0_{g'}\cdot Y_{\overline\kappa}^{0125}\cdot F^5_g=\pi^*\wp_{012}^*C^0_{g'}\cdot(Y_{\overline\kappa}^{0125}\cdot F^5_g)=C^0_{g'}\cdot\wp_{012*}\pi_*(Y_{\overline\kappa}^{0125}\cdot F^5_g)\]
      which by \eqref{eq:151} equals to $C^0_{g'}\cdot(\ell C^0_g)=\ell$ as $g\in\rT(g')$. The fourth equality follows.

  \item The fifth equality follows from \eqref{eq:symmetric} and the eighth equality of Lemma \ref{le:intersection1}.

  \item We have $\tilde\Delta\sfF^0_{g'}(\sfE^5_g)=-F^0_{g'}\cdot Y_{\overline\kappa}^{0125}\cdot E^5_g$. Again $\pi$ induces an isomorphism from $Y_{\overline\kappa}^{0125}\cdot E^5_g=Y_{\overline\kappa}^{0125}\cap E^5_g$ to $D^5_g\subset X_{\overline\kappa}^{01245}$. By Lemma \ref{le:strata}, we have $\wp_{012*}\pi_*(Y_{\overline\kappa}^{0125}\cdot E^5_g)=C^0_g$.
      Thus, $F^0_{g'}\cdot Y_{\overline\kappa}^{0125}\cdot E^5_g=\pi^*\wp_{012}^*C^0_{g'}\cdot(Y_{\overline\kappa}^{0125}\cdot E^5_g)
      =C^0_{g'}\cdot C^0_g=1$ as $g\in\rT(g')$. The sixth equality follows.

  \item The seventh equality follows from \eqref{eq:symmetric} and the fourth equality with $i=1$.

  \item The eighth equality follows from \eqref{eq:symmetric} and the fifth equality of Lemma \ref{le:intersection1}.

  \item We have $\tilde\Delta\sfF^0_{g'}(\sfE^1_g)=-F^0_{g'}\cdot Y_{\overline\kappa}^{0123}\cdot E^1_g$. To compute $F^0_{g'}\cdot Y_{\overline\kappa}^{0123}$ as an element in $\CH^2(Y_{\overline\kappa}^{123})$, we may write $F^0_{g'}\cdot Y_{\overline\kappa}^{0123}=a(Y_{\overline\kappa}^{0123}\cap F^1_{g'})=a(Y_{\overline\kappa}^{0123}\cdot F^1_{g'})$ by looking at the support. By \eqref{eq:151}, we have $a=\ell$. Thus $F^0_{g'}\cdot Y_{\overline\kappa}^{0123}\cdot E^1_g=\ell Y_{\overline\kappa}^{0123}\cdot F^1_{g'}\cdot E^1_g=\ell Y_{\overline\kappa}^{0123}\cdot f^1_g=\ell$. The ninth equality follows.

  \item Similar to Lemma \ref{le:intersection1}, the only nontrivial equality is $\tilde\Delta\sfF^0_{g'}(\sfH_h)=0$ for all $h\in X^{012345}(\overline\kappa)$. We have
      $tilde\Delta\sfF^0_{g'}(\sfH_h)=-F^0_{g'}\cdot Y_{\overline\kappa}^{0124}\cdot H_h=
      -F^0_{g'}\cdot(\sum_{g\in Z^+(\overline\kappa)}F^0_g)\cdot H_h$.
      Note that $F^0_{g'}\cdot(\sum_{g\in Z^+(\overline\kappa)}F^0_g)=\sum_{\tilde{h}\in(\delta^-)^{-1}(g')}\pi^*\wp_{012}^*\wp_{012}(\tilde{h})$, where $\wp_{012}(\tilde{h})\in Z_{\overline\kappa}^{012}(\overline\kappa)$. Since on $Y_{\overline\kappa}^{024}$, we have $\pi^*\wp_{012}^*\wp_{012}(\tilde{h})\cdot H_h=\wp_{012}(\tilde{h})\cdot\wp_{012*}\pi_*H_h=0$, we conclude $\tilde\Delta\sfF^0_{g'}(\sfH_h)=0$.
\end{enumerate}
\end{proof}

\begin{lem}\label{le:intersection3}
For $i\in S^\pm$ and $g'\in Z^\mp(\overline\kappa)$, we have
\[
\tilde\Delta\sfE^i_{g'}
\begin{cases}
(\sfF^i_{g'})= -2\ell^2, \\
(\sfF^i_g)=1, & \forall g\in\rT(g'), \\
(\sfE^i_{g'})=-(\ell^3+1), \\
(\sfF^{i-1}_g)= -\ell, & \forall g\in\rT(g'),\\
(\sfF^{i-1}_{g'})=2\ell^2, \\
(\sfE^{i-1}_g)=-1, &\forall g\in\rT(g'),\\
(\sfF^{i+1}_g)=-1, & \forall g\in\rT(g'), \\
(\sfF^{i+1}_{g'})=2\ell, \\
(\sfE^{i+1}_g)=-1, & \forall g\in\rT(g'), \\
(\sfH_h)=1, & \forall h\in(\delta^\mp)^{-1}(g').
\end{cases}
\]
On other featuring cycles, the function $\tilde\Delta\sfE^i_{g'}$ takes value zero.
\end{lem}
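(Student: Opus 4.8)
The proof proceeds exactly as for Lemmas \ref{le:intersection1} and \ref{le:intersection2}. By Remark \ref{re:symmetric} we may take $i=0$, so $\sfE^0_{g'}$ has realization the exceptional divisor $E^0_{g'}$ and location $Y^{012}_{\overline\kappa}$. The $3$-dimensional strata meeting $Y^{012}_{\overline\kappa}$ are $Y^{015}_{\overline\kappa},Y^{123}_{\overline\kappa},Y^{024}_{\overline\kappa},Y^{135}_{\overline\kappa}$, with pairwise intersections $Y^{0125}_{\overline\kappa}$, $Y^{0123}_{\overline\kappa}$, $Y^{0124}_{\overline\kappa}=\coprod_{\tilde g}F^0_{\tilde g}$ and $Y^{01235}_{\overline\kappa}=\coprod_{\tilde g'}E^0_{\tilde g'}$ (Lemma \ref{le:cycle_strata}), and $F^0_g\cdot E^0_{g'}=0$ unless $g\in\rT(g')$, $E^0_{\tilde g'}\cdot E^0_{g'}=0$ unless $\tilde g'=g'$. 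Hence for $\sfD$ located on $Y^{012}_{\overline\kappa}$ one has
\[
\tilde\Delta\sfE^0_{g'}(\sfD)=E^0_{g'}\cdot\Bigl(E^0_{g'}+\sum_{g\in\rT(g')}F^0_g+Y^{0125}_{\overline\kappa}+Y^{0123}_{\overline\kappa}\Bigr)\cdot D
\]
computed on $Y^{012}_{\overline\kappa}$, and for $\sfD$ on an adjacent component the ``case (3)'' formula $-\,j^*E^0_{g'}\cdot j'^*D$ of Section \ref{ss:computation}. The toolkit is unchanged: Lemma \ref{le:pullback} ($[F^i_g]=\pi^*\wp^*[C^i_g]$, likewise for the $g'$-cycles), Lemma \ref{le:fulton} (flat pullback, projection formula), the self-intersections $C^0_g\cdot C^0_g=-2\ell$ and $C^0_{g'}\cdot C^0_{g'}=-2\ell^2$ of Lemma \ref{le:cycle_base}, the $\dP^1$-bundle and Frobenius-factor statements of Lemma \ref{le:strata}, the blow-up formulas of \cite{AG5}*{Lemma 2.2.14}, and the identity \eqref{eq:151}.

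Most of the ten values require no new computation. By the symmetry \eqref{eq:symmetric}, $\tilde\Delta\sfE^0_{g'}(\sfF^0_{g'})=\tilde\Delta\sfF^0_{g'}(\sfE^0_{g'})$, $\tilde\Delta\sfE^0_{g'}(\sfF^0_g)=\tilde\Delta\sfF^0_g(\sfE^0_{g'})$, and $\tilde\Delta\sfE^0_{g'}(\sfF^1_g),\tilde\Delta\sfE^0_{g'}(\sfF^5_g),\tilde\Delta\sfE^0_{g'}(\sfF^1_{g'}),\tilde\Delta\sfE^0_{g'}(\sfF^5_{g'})$ are read off from the entries of Lemmas \ref{le:intersection1} and \ref{le:intersection2} with $i=1,5$, matching indices through $\rT$ exactly as the cross-terms were matched there; the vanishing on all remaining featuring cycles except $\sfH_h$ with $h\in(\delta^-)^{-1}(g')$ is by empty intersection, as in step (10) of those proofs. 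The genuinely new inputs are: (a) the diagonal value $\tilde\Delta\sfE^0_{g'}(\sfE^0_{g'})=-(\ell^3+1)$; (b) $\tilde\Delta\sfE^0_{g'}(\sfE^5_g)=\tilde\Delta\sfE^0_{g'}(\sfE^1_g)=-1$ for $g\in\rT(g')$, which by \eqref{eq:symmetric} and the $S^+\leftrightarrow S^-$ relabeling are equivalent to one another (self-referential instances of the lemma itself); and (c) $\tilde\Delta\sfE^0_{g'}(\sfH_h)=1$ for $h\in(\delta^-)^{-1}(g')$.

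For (b) the two exceptional divisors lie on distinct components ($Y^{012}_{\overline\kappa}$ and $Y^{015}_{\overline\kappa}$, resp.\ $Y^{012}_{\overline\kappa}$ and $Y^{123}_{\overline\kappa}$), so the value is $-\,j^*E^0_{g'}\cdot j'^*E^5_g$ on the surface $Y^{0125}_{\overline\kappa}$ (resp.\ $Y^{0123}_{\overline\kappa}$); under its identification with $Z^{012}_{\overline\kappa}$ (resp.\ $Z^{123}_{\overline\kappa}$) from Lemma \ref{le:strata} and Proposition \ref{bpr:blowup}, both divisors restrict to sections mapping to curves of $C^0$-type, meeting transversally in a single point precisely when $g\in\rT(g')$ — the same computation shape as Lemma \ref{le:intersection1} (4) and Lemma \ref{le:intersection2} (6). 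For (c) one regards $H_h$ as a $2$-cycle on $Y^{135}_{\overline\kappa}$ (Remark \ref{re:symmetric}), whence $\tilde\Delta\sfE^0_{g'}(\sfH_h)=-\,E^0_{g'}\cdot Y^{01235}_{\overline\kappa}\cdot H_h=-\,E^0_{g'}\cdot E^0_{g'}\cdot H_h$ on $Y^{012}_{\overline\kappa}$; unlike in Lemmas \ref{le:intersection1} (10) and \ref{le:intersection2} (10) the fibre $f^0_{g'}$ cannot be moved off $Y^{012345}_{\overline\kappa}$, and one checks that $E^0_{g'}$ meets each $H_h$ with $\delta^-(h)=g'$ transversally in one point and misses the others, with $E^0_{g'}|_{E^0_{g'}}$ pairing with that point to $-1$ (via \cite{AG5}*{Lemma 2.2.14 (ii b)}), so each such $h$ contributes exactly $1$.

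The main obstacle is (a). Expanding $\tilde\Delta\sfE^0_{g'}(\sfE^0_{g'})$ on $Y^{012}_{\overline\kappa}$, the triple self-intersection $(E^0_{g'})^3$ is computed by the standard blow-up formula as $\deg c_1 N_{D^0_{g'}/X^{012}_{\overline\kappa}}$ (recall $\pi\colon Y^{012}_{\overline\kappa}\to X^{012}_{\overline\kappa}$ is the blow-up along $X^{01235}_{\overline\kappa}=\coprod_{\tilde g'}D^0_{\tilde g'}$), the term $\sum_{g\in\rT(g')}F^0_g\cdot(E^0_{g'})^2$ equals $-|\rT(g')|$ (each $F^0_g\cdot(E^0_{g'})^2=f^0_{g'}\cdot E^0_{g'}=-1$) and cancels against $Y^{0124}_{\overline\kappa}\cdot(E^0_{g'})^2$, and the remaining terms $Y^{0125}_{\overline\kappa}\cdot(E^0_{g'})^2$ and $Y^{0123}_{\overline\kappa}\cdot(E^0_{g'})^2$ are self-intersections of $C^0$-type curves weighted by Frobenius degrees as in Lemma \ref{le:intersection1} (1); invoking the numerical triviality $\sum_i[Y_i]=0$ of the special fibre restricted to $Y^{012}_{\overline\kappa}$ then collapses everything to a single normal-bundle degree. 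Pinning down that degree — from the $\dP^1$-bundle structure $X^{012}_{\overline\kappa}\to Z^{012}_{\overline\kappa}$ in which $D^0_{g'}$ is a section over $C^0_{g'}$ (using $C^0_{g'}\cdot C^0_{g'}=-2\ell^2$), together with the Frobenius-factor degrees $\ell$ and $1$ of $X^{0123}_{\overline\kappa},X^{0125}_{\overline\kappa}$ over $Z^{012}_{\overline\kappa}$ (Lemma \ref{le:strata} (4)) and the explicit bundle description of Theorem \ref{bth:hilbert_subbundle} and \cite{TX14}*{Proposition 2.31} — so that the alternating sum comes out to $-(\ell^3+1)$, in agreement with the Hecke normalization of Definition \ref{de:cubic_level_raising} (C4), is the delicate part of the argument.
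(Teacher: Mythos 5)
Your overall architecture matches the paper's: take $i=0$, get the entries against $\sfF^0_{g'},\sfF^0_g,\sfF^{i\pm1}_g,\sfF^{i\pm1}_{g'}$ from \eqref{eq:symmetric} and Lemmas \ref{le:intersection1}--\ref{le:intersection2}, and treat the diagonal entry, the $\sfE^{i\pm1}_g$ entries and the $\sfH_h$ entry as the new computations. But your treatment of the diagonal value $(a)$ has a genuine gap, and it is the one entry that matters most (it is where $\ell^3+1$, hence the Hecke congruence (C4), enters). After restricting to the components actually meeting $E^0_{g'}$, the relevant expression is $E^0_{g'}\cdot\bigl(\sum_{g\in\rT(g')}F^0_g+E^0_{g'}+Y_{\overline\kappa}^{0123}+Y_{\overline\kappa}^{0125}\bigr)\cdot E^0_{g'}$. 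Your claim that ``$\sum_{g\in\rT(g')}F^0_g\cdot(E^0_{g'})^2$ cancels against $Y^{0124}_{\overline\kappa}\cdot(E^0_{g'})^2$'' is spurious: $Y^{0124}_{\overline\kappa}=\coprod_{\tilde g}F^0_{\tilde g}$, and only the components with $\tilde g\in\rT(g')$ meet $E^0_{g'}$, so these two quantities are literally the same term, not two terms that cancel. That term is exactly the source of the answer: $F^0_g\cdot(E^0_{g'})^2=f^0_{g'}\cdot E^0_{g'}=-1$ and $|\rT(g')|=\ell^3+1$, giving $-(\ell^3+1)$. Having discarded it, you are forced to recover $-(\ell^3+1)$ from a normal-bundle/Frobenius-degree bookkeeping (note also $(E^0_{g'})^3=-\deg N_{D^0_{g'}/X^{012}_{\overline\kappa}}$, with a sign opposite to the one you wrote), and you explicitly leave that computation undone (``pinning down that degree \dots\ is the delicate part''). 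The correct completion is the opposite division of labor: the $F$-term gives $-(\ell^3+1)$, and one shows $E^0_{g'}\cdot(E^0_{g'}+Y^{0123}_{\overline\kappa}+Y^{0125}_{\overline\kappa})\cdot E^0_{g'}=0$, e.g.\ by writing $E^0_{g'}+Y^{012k}_{\overline\kappa}=\pi^*X^{012k}_{\overline\kappa}$ (modulo components disjoint from $E^0_{g'}$), expanding, using the projection formula with $\pi_*E^0_{g'}=0$, $\pi_*Y^{012k}_{\overline\kappa}\cdot E^0_{g'}=D^0_{g'}$, and the emptiness of $Y^{012}_{\overline\kappa}\cap Y^{015}_{\overline\kappa}\cap Y^{123}_{\overline\kappa}\cap Y^{135}_{\overline\kappa}$ read off from the dual reduction building. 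As written, your proposal does not establish the third equality.

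For $(b)$ and $(c)$ your route differs from the paper's but is workable in principle. The paper substitutes $E^0_{g'}=F^0_{g'}-G^0_{g'}$ (strict transform of $\wp_{012}^{-1}C^0_{g'}$), reuses the already computed values $\tilde\Delta\sfF^0_{g'}(\sfE^5_g)=-1$ and $\tilde\Delta\sfF^0_{g'}(\sfH_h)=0$, and reduces to showing the $G^0_{g'}$-contribution is $0$ (by disjointness on $Y^{0125}_{\overline\kappa}$) resp.\ $1$ (by an explicit transversal point on $Y^{0124}_{\overline\kappa}$); this avoids having to identify the restriction $j^*[E^0_{g'}]$ directly. Your direct computation of the restrictions of the two exceptional divisors to the double surfaces needs the transversality and multiplicity-one statements you assert (that the strict transform of $X^{0125}_{\overline\kappa}$ meets $E^0_{g'}$ transversally along a section mapping to $C^0_{g'}$, and that $j'^*H_h$ restricts to a single ruling fiber of $E^0_{g'}$ when $\delta^-(h)=g'$); these can be checked in local coordinates for the blow-up, so the difference is one of bookkeeping rather than substance. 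The gap is confined to the diagonal entry $(a)$.
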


\begin{proof}
Again we assume that $i=0$ hence $g'\in Z^-(\overline\kappa)$.

\begin{enumerate}
  \item The first equality follows from \eqref{eq:symmetric} and the third equality of Lemma \ref{le:intersection2}.

  \item The second equality follows from \eqref{eq:symmetric} and the third equality of Lemma \ref{le:intersection1}.

  \item We have $\tilde\Delta\sfE^0_{g'}(\sfE^0_{g'})=E^0_{g'}\cdot(\sum_{g\in\rT(g')}F^0_g+E^0_{g'}+Y_{\overline\kappa}^{0123}
  +Y_{\overline\kappa}^{0125})\cdot E^0_{g'}$. By \eqref{eq:131}, we know that $E^0_{g'}\cdot F^0_g\cdot E^0_{g'}=-1$ for $g\in\rT(g')$. Note that $|\rT(g')|=\ell^3+1$. Thus we only need to show that
  \begin{align}\label{eq:331}
  E^0_{g'}\cdot(E^0_{g'}+Y_{\overline\kappa}^{0123}+Y_{\overline\kappa}^{0125})\cdot E^0_{g'}=0.
  \end{align}

  By a similar argument for Lemma \ref{le:pullback} (2), we know that $\pi^*Y_{\overline\kappa}^{012k}=Y_{\overline\kappa}^{012k}+\sum_{g'\in Z^\mp(\overline\kappa)}E^0_{g'}$ for $k=3,5$. Therefore,
  \begin{align*}
  &E^0_{g'}\cdot(E^0_{g'}+Y_{\overline\kappa}^{0123}+Y_{\overline\kappa}^{0125})\cdot E^0_{g'}
  =(E^0_{g'}+Y_{\overline\kappa}^{0123}-Y_{\overline\kappa}^{0123})\cdot
  (E^0_{g'}+Y_{\overline\kappa}^{0123}+Y_{\overline\kappa}^{0125})\cdot E^0_{g'} \\
  &=(\pi^*X_{\overline\kappa}^{0123}-Y_{\overline\kappa}^{0123})\cdot
  (\pi^*X_{\overline\kappa}^{0123}+Y_{\overline\kappa}^{0125})\cdot E^0_{g'}\\
  &=\pi^*X_{\overline\kappa}^{0123}\cdot\pi^*X_{\overline\kappa}^{0123}\cdot E^0_{g'}+\pi^*X_{\overline\kappa}^{0123}\cdot
  Y_{\overline\kappa}^{0125}\cdot E^0_{g'}-\pi^*X_{\overline\kappa}^{0123}\cdot Y_{\overline\kappa}^{0123}\cdot E^0_{g'}
  -Y_{\overline\kappa}^{0123}\cdot Y_{\overline\kappa}^{0125}\cdot E^0_{g'}.
  \end{align*}
  By \cite{AG5}*{Lemma 2.2.14 (ii b)}, we have
  \begin{align}\label{eq:332}
  \pi^*X_{\overline\kappa}^{0123}\cdot\pi^*X_{\overline\kappa}^{0123}\cdot E^0_{g'}=
  \pi^*(X_{\overline\kappa}^{0123}\cdot X_{\overline\kappa}^{0125})\cdot E^0_{g'}=0.
  \end{align}
  We also have (Notation \ref{no:fiber})
  \begin{multline}\label{eq:333}
  \pi^*X_{\overline\kappa}^{0123}\cdot
  Y_{\overline\kappa}^{0125}\cdot E^0_{g'}-\pi^*X_{\overline\kappa}^{0123}\cdot Y_{\overline\kappa}^{0123}\cdot E^0_{g'}\\
  =X_{\overline\kappa}^{0123}\cdot(\pi_*Y_{\overline\kappa}^{0125}\cdot E^0_{g'}-\pi_*Y_{\overline\kappa}^{0123}\cdot E^0_{g'})
  =X_{\overline\kappa}^{0123}\cdot(D^0_{g'}-D^0_{g'})=0.
  \end{multline}
  Finally, as $Y_{\overline\kappa}^{0123}\cap Y_{\overline\kappa}^{0125}\cap E^0_{g'}\subset Y_{\overline\kappa}^{0123}\cap Y_{\overline\kappa}^{0125}\cap Y_{\overline\kappa}^{01235}\subset Y_{\overline\kappa}^{012}\cap Y_{\overline\kappa}^{015}\cap Y_{\overline\kappa}^{123}\cap Y_{\overline\kappa}^{135}$, which is empty from the dual reduction building \eqref{beq:reduction}, we have
  \begin{align}\label{eq:334}
  Y_{\overline\kappa}^{0123}\cdot Y_{\overline\kappa}^{0125}\cdot E^0_{g'}=0.
  \end{align}
  Thus \eqref{eq:331} hence the third equality follow from \eqref{eq:332}, \eqref{eq:333} and \eqref{eq:334}.

  \item The fourth equality follows from \eqref{eq:symmetric} and the ninth equality of Lemma \ref{le:intersection2}.

  \item The fifth equality follows from \eqref{eq:symmetric} and the ninth equality of Lemma \ref{le:intersection1}.

  \item We have $\tilde\Delta\sfE^0_{g'}(\sfE^5_g)=-E^0_{g'}\cdot Y_{\overline\kappa}^{0125}\cdot E^5_g$. Let $G^0_{g'}$ be the strict transform of $\wp_{012}^{-1}C^0_{g'}$ as in the proof of Lemma \ref{le:pullback} (2). Then
      \[-E^0_{g'}\cdot Y_{\overline\kappa}^{0125}\cdot E^5_g=(G^0_{g'}-F^0_{g'})\cdot Y_{\overline\kappa}^{0125}\cdot E^5_g
      =G^0_{g'}\cdot Y_{\overline\kappa}^{0125}\cdot E^5_g+\tilde\Delta\sfF^0_{g'}(\sfE^5_g)=G^0_{g'}\cdot Y_{\overline\kappa}^{0125}\cdot E^5_g-1\]
      by the sixth equality of Lemma \ref{le:intersection2}.
      Note that $\pi$ induces an isomorphism from $Y_{\overline\kappa}^{0125}\cap E^5_g$ to $D^5_g\subset X_{\overline\kappa}^{01245}$. In other words, $Y_{\overline\kappa}^{0125}\cap E^5_g$ is the strict transform of $D^5_g$ under the blow-up $\pi\colon Y_{\overline\kappa}^{012}\to X_{\overline\kappa}^{012}$. Now as $X_{\overline\kappa}^{01245}$ intersects transversally with $\wp_{012}^*C^0_{g'}$ at $X_{\overline\kappa}^{012345}$, we know that $Y_{\overline\kappa}^{0125}\cap E^5_g\cap G^0_{g'}=\emptyset$. Thus $Y_{\overline\kappa}^{0125}\cdot E^5_g=0$ and the sixth equality follows.

  \item The seventh equality follows from \eqref{eq:symmetric} and the sixth equality of Lemma \ref{le:intersection2}.

  \item The eighth equality follows from \eqref{eq:symmetric} and the sixth equality of Lemma \ref{le:intersection1}.

  \item The ninth equality follows from \eqref{eq:symmetric} and the sixth equality with $i=1$.

  \item We have $\tilde\Delta\sfE^0_{g'}(\sfH_h)=-E^0_{g'}\cdot Y_{\overline\kappa}^{0124}\cdot H_h$. However,
     \[-E^0_{g'}\cdot Y_{\overline\kappa}^{0124}\cdot H_h=G^0_{g'}\cdot Y_{\overline\kappa}^{0124}\cdot H_h-F^0_{g'}\cdot Y_{\overline\kappa}^{0124}\cdot H_h=
     G^0_{g'}\cdot Y_{\overline\kappa}^{0124}\cdot H_h+\tilde\Delta\sfF^0_{g'}(\sfH_h)=G^0_{g'}\cdot Y_{\overline\kappa}^{0124}\cdot H_h\]
     by Lemma \ref{le:intersection2}. By Proposition \ref{bpr:blowup} (5), $\pi\colon Y_{\overline\kappa}^{0124}\to X_{\overline\kappa}^{0124}$ is the blow-up along $X_{\overline\kappa}^{012345}$. Moreover, $H_h$ intersects with $Y_{\overline\kappa}^{0124}$ transversally at the exceptional divisor (inside $Y_{\overline\kappa}^{0124}$) above $h\in X_{\overline\kappa}^{012345}(\overline\kappa)$; and $G^0_{g'}$ intersects with $Y_{\overline\kappa}^{0124}$ transversally at the strict transform (inside $Y_{\overline\kappa}^{0124}$) of the curve $\wp_{0124}^{-1}\rT(g')\subset X_{\overline\kappa}^{0124}$, where we regard $\rT(g')$ as a (reduced) $0$-cycle of $Z_{\overline\kappa}^{024}$. Therefore, $H_h\cap Y_{\overline\kappa}^{0124}$ and $G^0_{g'}\cap Y_{\overline\kappa}^{0124}$ intersect transversally in $Y_{\overline\kappa}^{0124}$. The intersection consists of one point (resp.\ is empty) if $g'=\delta^-(h)$ (resp.\ if $g'\neq\delta^-(h)$). In summary, we have
     \[G^0_{g'}\cdot Y_{\overline\kappa}^{0124}\cdot H_h=(G^0_{g'}\cap Y_{\overline\kappa}^{0124})\cdot(H_h\cap Y_{\overline\kappa}^{0124})=1.\]
     The tenth equality follows.

  \item The vanishing of the remaining values is obvious.
\end{enumerate}
\end{proof}

\begin{lem}\label{le:intersection4}
For $h\in X^{012345}(\overline\kappa)$, we have
\[
\tilde\Delta\sfH_h
\begin{cases}
(\sfE^0_{\delta^-(h)})= 1, \\
(\sfE^1_{\delta^+(h)})= 1, \\
(\sfE^2_{\delta^-(h)})= 1, \\
(\sfE^3_{\delta^+(h)})= 1, \\
(\sfE^4_{\delta^-(h)})= 1, \\
(\sfE^5_{\delta^+(h)})= 1, \\
(\sfH_h)=-2.
\end{cases}
\]
On other featuring cycles, the function $\tilde\Delta\sfH_h$ takes value zero.
\end{lem}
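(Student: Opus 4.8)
The featuring cycle $\sfH_h$ has realization $H_h$ and location $Y^+_{\overline\kappa}$, and by Remark \ref{re:symmetric} we may also regard $H_h$ as a $2$-cycle on $Y^-_{\overline\kappa}$. The plan is to reduce everything except the two pairings $\tilde\Delta\sfH_h(\sfH_{h'})$ to the symmetry \eqref{eq:symmetric} together with Lemmas \ref{le:intersection1}, \ref{le:intersection2} and \ref{le:intersection3}. Indeed $\tilde\Delta\sfH_h(\sfF^i_g)=\tilde\Delta\sfF^i_g(\sfH_h)=0$ and $\tilde\Delta\sfH_h(\sfF^i_{g'})=\tilde\Delta\sfF^i_{g'}(\sfH_h)=0$ by the final vanishing clauses of Lemmas \ref{le:intersection1} and \ref{le:intersection2}; and $\tilde\Delta\sfH_h(\sfE^i_{g'})=\tilde\Delta\sfE^i_{g'}(\sfH_h)$, which by Lemma \ref{le:intersection3} is $1$ when $h\in(\delta^\mp)^{-1}(g')$ for $i\in S^\pm$ — that is, when $g'=\delta^-(h)$ for $i\in S^+$ and when $g'=\delta^+(h)$ for $i\in S^-$ — and $0$ otherwise. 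This gives the six equalities $\tilde\Delta\sfH_h(\sfE^i_\bullet)=1$ and all the asserted vanishings except those on the cycles $\sfH_{h'}$.

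For $h'\neq h$ I will compute $\tilde\Delta\sfH_h(\sfH_{h'})$ on $Y^+_{\overline\kappa}$, where both realizations are located: by the case of the recipe of \Sec\ref{ss:computation} in which the two locations coincide, it equals $\deg\bigl(H_h\cdot D\cdot H_{h'}\bigr)$, with $D$ the divisor on the smooth threefold $Y^+_{\overline\kappa}$ given by the sum of $C\cap Y^+_{\overline\kappa}$ over the components $C$ of $Y_{\overline\kappa}$ that meet $Y^+_{\overline\kappa}$ and are distinct from it. Since $H_h$, $H_{h'}$ and $D$ are all divisors on the smooth threefold $Y^+_{\overline\kappa}$, this number is symmetric in the three factors, hence equals $\deg\bigl((H_h\cdot H_{h'})\cdot D\bigr)$; and $H_h$, $H_{h'}$ are distinct connected components of $Y^{012345}_{\overline\kappa}$ by Lemma \ref{le:cycle_strata} (3), so they are disjoint, $H_h\cdot H_{h'}=0$, and $\tilde\Delta\sfH_h(\sfH_{h'})=0$.

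It remains to prove $\tilde\Delta\sfH_h(\sfH_h)=-2$. Working again on $Y^+_{\overline\kappa}$, this pairing equals $\deg\bigl(H_h\cdot D\cdot H_h\bigr)$ with $D$ as above. Because $Y_{\overline\kappa}$ is the special fibre of $\cY$, the line bundle $\sO_{\cY}(Y_{\overline\kappa})$ is trivial; restricting to $Y^+_{\overline\kappa}$ and separating off the contribution $N_{Y^+_{\overline\kappa}/\cY}$ of the component $Y^+_{\overline\kappa}$ itself yields $\sO_{Y^+_{\overline\kappa}}(D)\simeq N_{Y^+_{\overline\kappa}/\cY}^{-1}$. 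Therefore $\tilde\Delta\sfH_h(\sfH_h)=-\deg\bigl(c_1(N_{H_h/Y^+_{\overline\kappa}})\cdot c_1(N_{Y^+_{\overline\kappa}/\cY}|_{H_h})\bigr)$, evaluated on the surface $H_h$. Since $H_h$ is a connected component of the transverse intersection $Y^+_{\overline\kappa}\cap Y^-_{\overline\kappa}$ of two smooth divisors in $\cY$, we have $N_{H_h/Y^+_{\overline\kappa}}\simeq N_{Y^-_{\overline\kappa}/\cY}|_{H_h}$ and $N_{H_h/\cY}\simeq N_{Y^+_{\overline\kappa}/\cY}|_{H_h}\oplus N_{Y^-_{\overline\kappa}/\cY}|_{H_h}$, so the pairing equals $-\deg c_2(N_{H_h/\cY})$, i.e.\ minus the self-intersection of the surface $H_h$ in the fourfold $\cY$. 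The final and hardest input is the explicit description of the canonical semistable resolution $\pi$ near a point of $X^{012345}$ furnished by \Sec\ref{ss:b} (Proposition \ref{bpr:blowup}); I expect it to give $H_h\cong\dP^1_{\overline\kappa}\times\dP^1_{\overline\kappa}$ with $N_{Y^+_{\overline\kappa}/\cY}|_{H_h}\simeq N_{Y^-_{\overline\kappa}/\cY}|_{H_h}\simeq\sO(-1,-1)$, whence $\deg c_2(N_{H_h/\cY})=2$ and $\tilde\Delta\sfH_h(\sfH_h)=-2$. The only genuinely delicate point should be extracting those two normal-bundle identifications from \Sec\ref{ss:b}; the rest is the same bookkeeping with strata, Lemma \ref{le:fulton} and the projection formula already used in Lemmas \ref{le:intersection1}, \ref{le:intersection2} and \ref{le:intersection3}.
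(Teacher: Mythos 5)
Your treatment of the six equalities and of all the vanishings, including $\tilde\Delta\sfH_h(\sfH_{h'})=0$ for $h'\neq h$, matches the paper's (symmetry \eqref{eq:symmetric} plus the tenth equality and the vanishing clauses of Lemmas \ref{le:intersection1}--\ref{le:intersection3}), and your reformulation of $\tilde\Delta\sfH_h(\sfH_h)$ as $-\deg c_2(N_{H_h/\cY_{\fr_0,\fr_1}})$ via the triviality of $\sO_{\cY_{\fr_0,\fr_1}}(Y_{\fr_0,\fr_1,\overline\kappa})$ is correct and is a genuinely different (and tidier) packaging than the paper's, which instead expands $D|_{H_h}$ as $H_h+F^0_g+F^2_g+F^4_g+E^1_g+E^3_g+E^5_g$ on $Y_{\fr_0,\fr_1,\overline\kappa}^{024}$ and evaluates each triple product separately. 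The gap is exactly at the step you flagged: the identification $H_h\cong\dP^1_{\overline\kappa}\times\dP^1_{\overline\kappa}$ with both normal bundles $\sO(-1,-1)$ is false. By Proposition \ref{bpr:blowup} (2), $\pi\colon Y_{\fr_0,\fr_1,\overline\kappa}^{024}\to X_{\fr_0,\fr_1,\overline\kappa}^{024}$ factors as the blow-up $\pi_1$ at the point $h$ followed by the blow-up $\pi_2$ along the strict transforms of the three curves $X_{\fr_0,\fr_1,\overline\kappa}^{01234}$, $X_{\fr_0,\fr_1,\overline\kappa}^{01245}$, $X_{\fr_0,\fr_1,\overline\kappa}^{02345}$ passing through $h$. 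Hence $H_h$ is the exceptional $\dP^2$ of $\pi_1$ blown up at the three points where those strict transforms meet it (a degree-$6$ del Pezzo surface, of Picard rank $4$, so not $\dP^1\times\dP^1$), and $N_{H_h/Y_{\fr_0,\fr_1,\overline\kappa}^{024}}=N_{Y_{\fr_0,\fr_1,\overline\kappa}^{135}/\cY}|_{H_h}=\pi_2^*\sO_{\dP^2}(-1)$; by Remark \ref{bre:blowup} the other normal bundle is the pullback of $\sO_{\dP^2}(-1)$ under the second contraction of $H_h$ to $\dP^2$. Your model is quantitatively inconsistent with this: it would give $\deg c_1(N_{H_h/Y_{\fr_0,\fr_1,\overline\kappa}^{024}})^2=2$, whereas the correct value is $\deg c_1(\pi_2^*\sO_{\dP^2}(-1))^2=1$ (this is the triple product $H_h\cdot H_h\cdot H_h$ that the paper computes via $H_h=\pi_2^*H'_h$).

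Your final answer survives only by a numerical coincidence: in the standard basis of $\Pic$ of the three-point blow-up of $\dP^2$, the two normal bundle classes are $-H$ and $-(2H-E_1-E_2-E_3)$ (the second contraction is the Cremona one, contracting the strict transforms of the three lines), so $\deg c_2(N_{H_h/\cY})=H\cdot(2H-E_1-E_2-E_3)=2$, which agrees with $(e+f)^2=2$ on $\dP^1\times\dP^1$. So the route is viable and would be shorter than the paper's term-by-term computation, but the geometric input you deferred is precisely the nontrivial content, and as stated it is wrong; you need to replace it by the description above extracted from Proposition \ref{bpr:blowup} (2) and (3).
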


\begin{proof}
The first six equalities follow from \eqref{eq:symmetric} and the tenth equality of Lemma \ref{le:intersection3}. The vanishing of the the remaining values follows from \eqref{eq:symmetric} and (the part of vanishing of) Lemmas \ref{le:intersection1}, \ref{le:intersection2} and \ref{le:intersection3}.

It remains to prove the seventh equality. Put $g=\delta^+(h)$. We have
\begin{align*}
\tilde\Delta\sfH_h(\sfH_h)&=H_h\cdot(Y_{\overline\kappa}^{012345}+Y_{\overline\kappa}^{0124}+Y_{\overline\kappa}^{0234}
+Y_{\overline\kappa}^{0245}+Y_{\overline\kappa}^{01234}+Y_{\overline\kappa}^{02345}+Y_{\overline\kappa}^{01245})\cdot H_h \\
&=H_h\cdot(H_h+F^0_g+F^2_g+F^4_g+E^1_g+E^3_g+E^5_g)\cdot H_h.
\end{align*}
Here, we regard all terms in the middle summation as $2$-cycles on $Y_{\overline\kappa}^{024}$. By Proposition \ref{bpr:blowup} (2), we can decompose $\pi$ as
\[Y_{\overline\kappa}^{024}\xrightarrow{\pi_2} X_{\overline\kappa}^{\prime024}\xrightarrow{\pi_1}X_{\overline\kappa}^{024}\]
where $\pi_1$ is the blow-up along $X_{\overline\kappa}^{012345}$ and $\pi_2$ is the blow-up along the strict transform of $X_{\overline\kappa}^{01234}\cup X_{\overline\kappa}^{01245}\cup X_{\overline\kappa}^{02345}$. Let $H'_g\subset  X_{\overline\kappa}^{\prime024}$ be the exceptional divisor above $h\in X_{\overline\kappa}^{012345}(\overline\kappa)$. By a similar argument for Lemma \ref{le:pullback} (1), we have $H_h=\pi_2^*H'_h$. Thus
\begin{align}\label{eq:471}
H_h \cdot H_h \cdot H_h=\pi_2^*(H'_h\cdot H'_h)\cdot \pi_2^*H'_h=(H'_h\cdot H'_h)\cdot\pi_{2*}\pi_2^*H'_h=H'_h\cdot H'_h\cdot H'_h=-1
\end{align}
by \cite{AG5}*{Lemma 2.2.14 (ii a)}. Note that for $k=1,3,5$, $E^k_g$ is an exceptional divisor for the blow-up $\pi_2$. Thus
\begin{align}\label{eq:472}
H_h \cdot E^k_g \cdot H_h=\pi_2^*H'_h\cdot \pi_2^*H'_h\cdot E^k_g=\pi_2^*(H'_h\cdot H'_h)\cdot E^k_g=0
\end{align}
by \cite{AG5}*{Lemma 2.2.14 (ii b)}. For $k=0,2,4$, the Chow cycle $\pi_*F^k_g$ is equal to the image of $F^k_g$ under $\pi$. Since $\pi_*F^k_g\cap X_{\overline\kappa}^{012345}=(\delta^+)^{-1}(g)$, we have $\pi_1^*\pi_*F^k_g=\pi_{2*}F^k_g+\sum_{h\in(\delta^+)^{-1}(g)}H'_h$. In particular,
\begin{multline*}
H_h \cdot F^k_g \cdot H_h=\pi_2^*(H'_h\cdot H'_h)\cdot F^k_g=(H'_h\cdot H'_h)\cdot\pi_{2*}F^k_g\\
=(H'_h\cdot H'_h)\cdot(\pi_1^*\pi_*F^k_g-\sum_{h\in(\delta^+)^{-1}(g)}H'_h)
=H'_h\cdot H'_h\cdot \pi_1^*\pi_*F^k_g-H'_h\cdot H'_h\cdot H'_h.
\end{multline*}
By \cite{AG5}*{Lemma 2.2.14 (ii a)}, $H'_h\cdot H'_h\cdot \pi_1^*\pi_*F^k_g=0$, and by \eqref{eq:471}, $H'_h\cdot H'_h\cdot H'_h=-1$. Together with \eqref{eq:471} and \eqref{eq:472}, we obtain the seventh equality.
\end{proof}

We will now write down the map $\tilde\Delta$ in terms of a block matrix. Thus, we need to fix bases of the free $\Lambda$-modules $\cB(Y,\Lambda)$ and $\Hom(\cB(Y,\Lambda),\Lambda)$ and make them into $19$ groups. For $\cB(Y,\Lambda)$, we
\begin{itemize}
  \item for $0\leq i\leq 5$, let the $(2i+1)$-th group of the basis be $\{\sfF^i_g\res g\in Z^\pm(\overline\kappa)\}$ if $i\in S^\pm$;
  \item for $0\leq i\leq 5$, let the $(2i+2)$-th group of the basis be $\{\sfF^i_{g'}\res g\in Z^\mp(\overline\kappa)\}$ if $i\in S^\pm$;
  \item for $0\leq i\leq 5$, let the $(13+i)$-th group of the basis be $\{\sfE^i_{g'}\res g\in Z^\mp(\overline\kappa)\}$ if $i\in S^\pm$;
  \item let the 19-th group of the basis be $\{\sfH_h\res h\in X^{012345}(\overline\kappa)\}$.
\end{itemize}
For $\Hom(\cB(Y,\Lambda),\Lambda)$, we take the dual basis and group it accordingly.

\begin{proposition}\label{pr:potential_matrix}
Under the above chosen bases, the map $\tilde\Delta$ is given by the following 19-by-19 block matrix:
\[\left(
    \begin{array}{ccc}
      A & \tp{B} & 0 \\
      B & C & D^\star \\
      0 & D & -2 \\
    \end{array}
  \right),
\] where
\[A=
\left(
  \begin{array}{cccccc}
    A_1 & A_2 & 0 & 0 & 0 & A_2 \\
    A_2 & A_1 & A_2 & 0 &0  & 0 \\
    0 & A_2 & A_1 & A_2 & 0 & 0 \\
    0 & 0 & A_2 & A_1 & A_2 & 0 \\
    0 & 0 & 0 & A_2 & A_1 & A_2 \\
    A_2 &0  & 0 & 0 & A_2 & A_1 \\
  \end{array}
\right)\] with
\[A_1=\left(
      \begin{array}{cc}
        -2\ell(\ell+1) & (\ell+1)\rT \\
        (\ell+1)\rT & -2\ell^2(\ell+1) \\
      \end{array}
    \right),\quad
A_2=\left(
      \begin{array}{cc}
        -\rT & 2\ell^2 \\
        2\ell^2 & -\ell\rT \\
      \end{array}
    \right);\]
and
\begin{align*}
B=\left(
    \begin{array}{cccccccccccc}
       \rT & -2\ell^2  &  2\ell & -\rT  & 0 & 0 & 0 & 0 &0  & 0 &  2\ell^2 & -\ell\rT  \\
       2\ell^2 & -\ell\rT  &  \rT & -2\ell^2  &  2\ell & -\rT  & 0 & 0 & 0 & 0 & 0 & 0 \\
       0& 0 &  2\ell^2 & -\ell\rT  &  \rT & -2\ell^2  &  2\ell & -\rT  & 0 &0  & 0 &  0\\
       0& 0 & 0 & 0 &  2\ell^2 & -\ell\rT  &  \rT & -2\ell^2  &  2\ell & -\rT  & 0 &0  \\
       0& 0 & 0 & 0 & 0 & 0 &  2\ell^2 & -\ell\rT  &  \rT & -2\ell^2  &  2\ell & -\rT  \\
      2\ell & -\rT  & 0 & 0 &0  & 0 & 0 & 0 & 2\ell^2 & -\ell\rT &  \rT & -2\ell^2  \\
    \end{array}
  \right),
\end{align*}
\begin{align*}
C=\left(
    \begin{array}{cccccc}
     -(\ell^3+1) & -\rT & 0 & 0 & 0 & -\rT \\
      -\rT & -(\ell^3+1) & -\rT & 0 &0  & 0 \\
      0 & -\rT & -(\ell^3+1) & -\rT & 0 & 0 \\
      0 & 0 & -\rT & -(\ell^3+1) & -\rT & 0 \\
      0 & 0 & 0 & -\rT & -(\ell^3+1) & -\rT \\
      -\rT & 0 & 0 & 0 & -\rT & -(\ell^3+1) \\
    \end{array}
  \right),
\end{align*}
\begin{align*}
D=\left(
     \begin{array}{cccccc}
      \delta^{+*} & \delta^{-*} & \delta^{+*} & \delta^{-*} & \delta^{+*} & \delta^{-*} \\
     \end{array}
   \right),\quad
D^\star=\tp{\left(
     \begin{array}{cccccc}
      \delta^+_* & \delta^-_* & \delta^+_* & \delta^-_* & \delta^+_* & \delta^-_* \\
     \end{array}
   \right)}.
\end{align*}
Here, $\rT=\delta_{\pm*}\circ\delta_{\mp}^*$ by abuse of notation.
\end{proposition}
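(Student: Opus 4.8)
The plan is to derive the displayed block matrix entirely by bookkeeping from the four intersection computations, Lemmas \ref{le:intersection1}, \ref{le:intersection2}, \ref{le:intersection3} and \ref{le:intersection4}, together with the three-case recipe for $\tilde\Delta\sfC(\sfD)$ established just before Lemma \ref{le:intersection1} and the symmetry \eqref{eq:symmetric}. First I would recall the $19$-group ordered basis fixed above and note that, under the dual-basis identification of $\Hom(\cB(Y,\Lambda),\Lambda)$ with $\cB(Y,\Lambda)$, the $(\sfD,\sfC)$-entry of the matrix of $\tilde\Delta$ is exactly $\tilde\Delta\sfC(\sfD)$; by \eqref{eq:symmetric} this matrix is symmetric, which both halves the work (it suffices to read off $\tilde\Delta\sfC(\sfD)$ with $\sfC$ in a group of index at most that of $\sfD$) and provides a running consistency check, matching the way many entries of Lemmas \ref{le:intersection2}, \ref{le:intersection3}, \ref{le:intersection4} were themselves deduced from earlier ones via \eqref{eq:symmetric}.

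Next I would set up the dictionary translating the ``for all $g'\in\rT(g)$''-style answers in the lemmas into operator-valued matrix entries. A clause ``value $c$ on $\rT(g)$, zero elsewhere'' is precisely $c$ times the incidence operator of the correspondence $g\mapsto\rT(g)$ of Notation \ref{no:tilde}, and this operator together with its transpose is what the statement denotes by $\rT=\delta_{\pm*}\circ\delta_\mp^*$ (``by abuse of notation''); a clause ``value $1$ on $(\delta^\mp)^{-1}(g')$'' is the matrix of the pullback $\delta^{\mp*}$, whose transpose is the pushforward $\delta^\mp_*$, and these are the operators populating the last row and column through $D$ and $D^\star$. I would also record the two structural features that shape the block pattern: the indices $i$ lie in $\dZ/6\dZ$, so each $\sfF^i_?$ and each $\sfE^i_?$ interacts nontrivially only with indices $i$ and $i\pm 1$ (whence the zero sub-blocks in $A$ and the band shapes of $B$ and $C$); and passing from $i$ to $i\pm 1$ exchanges the sparse types $S^+$ and $S^-$, hence swaps the roles of $g\in Z^\pm(\overline\kappa)$ and $g'\in Z^\mp(\overline\kappa)$, which is what produces the internal row-reshuffling inside $A_2$ and $B$.

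With the dictionary in place the assembly is mechanical. The diagonal $2\times 2$ sub-blocks $A_1$ of the $\sfF$-part come from the $(\sfF^i_g)$ and $(\sfF^i_{g'})$ entries of $\tilde\Delta\sfF^i_g$ and $\tilde\Delta\sfF^i_{g'}$ in Lemmas \ref{le:intersection1} and \ref{le:intersection2}; the adjacent-index sub-blocks $A_2$ at positions $(i,i\pm 1)$ from the $(\sfF^{i\pm 1}_?)$ entries of the same lemmas, the placements $(i,i+1)$ and $(i,i-1)$ being mutual transposes; the remaining sub-blocks of $A$ vanish by the vanishing clauses of those lemmas. The block $B$ and its transpose $\tp B$ record $\tilde\Delta\sfE^i_{g'}(\sfF^j_?)$, nonzero only for $j\in\{i-1,i,i+1\}$ by Lemma \ref{le:intersection3}, giving the displayed band. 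The block $C$ records $\tilde\Delta\sfE^i_{g'}(\sfE^j_{g''})$ from Lemma \ref{le:intersection3}: $-(\ell^3+1)$ on the diagonal (here $|\rT(g')|=\ell^3+1$ is used), $-\rT$ at adjacent indices, and $0$ otherwise. The last row and column come from $\tilde\Delta\sfH_h$: its $\sfF$-entries all vanish (the two zero corners), its $\sfE^i_?$-entry is ``$1$ on the single index $\delta^\pm(h)$'' for each $i$, assembled into $D$ and $D^\star$, and $\tilde\Delta\sfH_h(\sfH_h)=-2$ gives the bottom-right scalar, all by Lemma \ref{le:intersection4}. This exhausts every pair of basis groups. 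The step I expect to need the most care — and essentially the only non-routine point — is fixing the orientation conventions consistently so that $\tp B$ and the pair $(D,D^\star)$ carry their operators in the correct variance and that the ``$\rT(g)$''-clauses genuinely realize $\delta_{\pm*}\circ\delta_\mp^*$ rather than a twist; once the bases and the signs from cases (2) and (3) of the computation of $\tilde\Delta$ are pinned down, everything, including the symmetry of the assembled matrix, falls out.
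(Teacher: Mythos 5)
Your proposal is correct and is essentially the paper's own argument: the paper proves Proposition \ref{pr:potential_matrix} simply by declaring it a direct consequence of Lemmas \ref{le:intersection1}--\ref{le:intersection4}, and your assembly — translating the ``value on $\rT(g)$'' clauses into the operator $\rT=\delta_{\pm*}\circ\delta_\mp^*$, using \eqref{eq:symmetric} for the transposed blocks, and reading $D$, $D^\star$ and the $-2$ from Lemma \ref{le:intersection4} — is exactly that bookkeeping, spelled out in more detail than the paper gives.
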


\begin{proof}
This is a direct consequence of Lemmas \ref{le:intersection1}, \ref{le:intersection2}, \ref{le:intersection3} and \ref{le:intersection4}.
\end{proof}

For later use, we put
\begin{align}\label{eq:potential_matrix_change}
C'=C+\frac{1}{2}D^\star D=\left(
    \begin{array}{cccccc}
     -\frac{\ell^3+1}{2} & -\frac{\rT}{2} & \frac{\ell^3+1}{2} & \frac{\rT}{2}  & \frac{\ell^3+1}{2} & -\frac{\rT}{2} \\
     -\frac{\rT}{2} & -\frac{\ell^3+1}{2} & -\frac{\rT}{2} & \frac{\ell^3+1}{2} & \frac{\rT}{2}  & \frac{\ell^3+1}{2} \\
      \frac{\ell^3+1}{2} & -\frac{\rT}{2} & -\frac{\ell^3+1}{2} & -\frac{\rT}{2} & \frac{\ell^3+1}{2} & \frac{\rT}{2}  \\
      \frac{\rT}{2} & \frac{\ell^3+1}{2} & -\frac{\rT}{2} & -\frac{\ell^3+1}{2} & -\frac{\rT}{2} & \frac{\ell^3+1}{2} \\
      \frac{\ell^3+1}{2} & \frac{\rT}{2} & \frac{\ell^3+1}{2} & -\frac{\rT}{2} & -\frac{\ell^3+1}{2} & -\frac{\rT}{2} \\
      -\frac{\rT}{2} & \frac{\ell^3+1}{2} & \frac{\rT}{2}  & \frac{\ell^3+1}{2} & -\frac{\rT}{2} & -\frac{\ell^3+1}{2} \\
    \end{array}
  \right).
\end{align}

\subsection{Cohomology of Shimura surfaces}
\label{ss:cohomology}

By Remark \ref{bre:hecke_cube}, we have a monoidal functor $\dT^{\fr\fl}\to\EC(\cY_{\fr_0,\fr_1})$ canonically lifting \eqref{eq:monoidal}. We also have natural monoidal functors $\dT^{\fr\fl}\to\EC(Y_{\fr_0,\fr_1}^S)$ for $S$ ample, and $\dT^{\fr\fl}\to\EC(Z_{\fr_0,\fr_1}^S)$ for $S$ a type. They are compatible under embeddings and $\wp_S$ in the obvious sense. Moreover, \eqref{eq:beta1} is a diagram of $\Lambda[\dT^{\fr\fl}][\rG_\kappa]$-modules.

Put $\fm=\fm_\rho^{\fr\fl}$. In what follows, we will focus on the spectral sequence $\pres{2}\rE_{\cY_{\fr_0,\fr_1},\fm}$. To simplify notation, we denote it by $\dE$. We start from the following vanishing result.

\begin{lem}\label{le:vanishing}
We have for $i\in\{0,1,2,3,4,5\}$,
\begin{enumerate}
  \item $\rH^j(Z_{\fr_0,\fr_1,\overline\kappa}^{i(i+1)(i+2)},\dZ_p)_\fm=0$ for $j\neq 2$;
  \item $\rH^2(Z_{\fr_0,\fr_1,\overline\kappa}^{i(i+1)(i+2)},\dZ_p)_\fm$ is a free $\dZ_p$-module.
\end{enumerate}

\end{lem}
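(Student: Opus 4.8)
The surface $Z_{\fr_0,\fr_1,\overline\kappa}^{i(i+1)(i+2)}$ is (by Lemma~\ref{le:strata}, or rather its source in Appendix~\ref{ss:b}) a quaternionic Shimura surface in characteristic $\ell$, attached to a totally indefinite quaternion algebra over $F$ together with appropriate level structure at $\fr_0,\fr_1$. Both parts of the lemma are assertions about the $\fm$-localized \'etale cohomology of such a surface, and the strategy is the standard one: reduce to a statement about mod-$p$ cohomology, then kill the "wrong-weight" cohomology groups using the genericity of $\bar\rho$ and a Fontaine--Laffaille/comparison argument, exactly as in the proof of Theorem~\ref{th:cubic_level_raising}~(1) (which follows \cite{Dim05}*{Theorem~6.6~(1)}). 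Concretely, the first step is to observe that since $p$ is coprime to $\nabla\cdot\fr\cdot\disc F$ (Definition~\ref{de:perfect_prime}~(3b)) and $\ell$ is a cubic-level raising prime coprime to $p$ (Definition~\ref{de:cubic_level_raising}~(C1)), the relevant Shimura surface has a smooth proper model over $\Spec\dZ_{\ell^6}$ — indeed it is (a union of components of) the special fiber of such a model, cut out inside the smooth ambient variety — so Faltings' comparison theorem applies and one controls the Fontaine--Laffaille weights of $\rH^j(Z^{i(i+1)(i+2)}_{\overline\kappa},\dF_p)$.

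\textbf{Main steps.} First I would record that $Z^{i(i+1)(i+2)}_{\fr_0,\fr_1}$ is smooth projective of dimension $2$ over the relevant base, so that Poincar\'e duality identifies $\rH^j$ with $\rH^{4-j}$ up to a twist and the problem for $j\neq 2$ reduces to $j=0,1$. By the Nakayama lemma it suffices to prove $\rH^j(Z^{i(i+1)(i+2)}_{\overline\kappa},\dF_p)/\fm = 0$ for $j=0,1$. Second, $\rH^0$ is spanned by the fundamental classes of connected components, on which every Hecke operator $\rT_\fq$ acts by $\Nm\fq+1$ and $\rS_\fq$ by $1$; since $\bar\rho$ is generic, $\tr\bar\rho(\Frob_\fq)$ is \emph{not} congruent to $\Nm\fq+1$ for a positive-density set of $\fq$ (the "Eisenstein" maximal ideal is distinct from $\fm_{\bar\rho}^{\fr\fl}$ — this is where genericity of $\bar\rho$ is essential), so $\rH^0/\fm = 0$. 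Third, for $\rH^1$ one invokes the comparison/Fontaine--Laffaille input: the Hodge--Tate weights of $\rH^1$ of a Shimura surface force, together with genericity of $\bar\rho$ (so that the residual Galois representations appearing have large image and in particular are not of the shape that could occur in weight-$0$ or weight-$1$ quotients of $\rH^1$ compatible with $\fm$), the vanishing $\rH^1(Z^{i(i+1)(i+2)}_{\overline\kappa},\dF_p)/\fm=0$; this is verbatim the argument of \cite{Dim05}*{Theorem~6.6~(1)} transported from the threefold to this surface. This proves (1). Fourth, for (2): the universal coefficient / Bockstein exact sequence
\[
0\to\rH^{j-1}(Z^{i(i+1)(i+2)}_{\overline\kappa},\dZ_p)_\fm\otimes\dF_p\to\rH^{j-1}(Z^{i(i+1)(i+2)}_{\overline\kappa},\dF_p)_\fm\to\rH^j(Z^{i(i+1)(i+2)}_{\overline\kappa},\dZ_p)_\fm[p]\to 0
\]
shows that the $p$-torsion in $\rH^2(Z^{i(i+1)(i+2)}_{\overline\kappa},\dZ_p)_\fm$ is a quotient of $\rH^1(Z^{i(i+1)(i+2)}_{\overline\kappa},\dF_p)_\fm$, which vanishes by part (1); hence $\rH^2(Z^{i(i+1)(i+2)}_{\overline\kappa},\dZ_p)_\fm$ is $p$-torsion-free, and being a finitely generated $\dZ_p$-module it is free.

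\textbf{Main obstacle.} The genuinely substantive point — and the one I expect to require the most care — is the $\rH^1$ vanishing in step three: controlling which residual automorphic/Galois representations can contribute to $\rH^1$ of this particular Shimura surface and verifying that genericity of $\bar\rho$ (in the sense of Definition~\ref{de:perfect_prime}~(1), i.e. the image of $(\Ind^\dQ_F\bar\rho)|_{\rG_{\TF}}$ being as large as possible) rules all of them out after localizing at $\fm$. The bookkeeping of Hodge--Tate weights on a quaternionic Shimura surface attached to $F$, and the check that $p\geq 11$, $p\neq 13,19$ (Definition~\ref{de:perfect_prime}~(3a)) suffices to run the Fontaine--Laffaille machinery, is exactly the input imported from \cite{Dim05}; the only real work is confirming that nothing in the passage from the Hilbert threefold $\cX(\ell)_{\fr_0,\fr_1}$ to its codimension-one strata $Z^{i(i+1)(i+2)}_{\fr_0,\fr_1}$ breaks the argument — and since these strata are themselves smooth Shimura varieties of PEL type with the same kind of Galois-theoretic description, it does not.
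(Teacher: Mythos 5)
Your overall route is the paper's: reduce to $j=0,1$ by Poincar\'e duality, pass to $\dF_p$-coefficients by Nakayama, kill $\rH^j/\fm$ by a Dimitrov-style argument combining genericity of $\bar\rho$ with Fontaine--Laffaille weight bounds, and deduce (2) from (1) by the universal-coefficient sequence (your Eisenstein argument for $\rH^0$ is a harmless variant of the paper's uniform treatment). However, two points in your justification are off in ways that touch the actual content. First, Faltings' comparison is a statement at $p$, not at $\ell$: the smooth model over $\Spec\dZ_{\ell^6}$ by itself gives you nothing Fontaine--Laffaille-theoretic. What the paper does is first identify $Z^{i(i+1)(i+2)}_{\fr_0,\fr_1,\overline\kappa}$ (via Lemma \ref{ble:zink}, i.e.\ Zink's theorem --- not by cutting it out of the ambient threefold, whose $\dZ_{\ell^6}$-model is only semistable, and noting that $Z^{i(i+1)(i+2)}$ is the base of a bundle, not a stratum of $X_{\fr_0,\fr_1}$) with the special fiber of a smooth projective $\dZ_{\ell^6}$-model of a quaternionic Shimura surface $\cZ$; smooth proper base change at $\ell$ then transports $\rH^j(\cdot,\dF_p)/\fm$ to the characteristic-zero fiber, and only then does one use that $p\nmid\nabla\fr\disc F$ gives $\cZ$ good reduction at $p$, so that Faltings applies over $O_{\TF}\otimes\dZ_p$.

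Second, the surface in question is of \emph{non}-PEL type (the paper says so explicitly), and this is precisely why the step you defer to --- ``controlling which residual Galois representations contribute to $\rH^j$'' --- is not automatic. The mechanism in the paper is the Eichler--Shimura (congruence) relation for these non-PEL quaternionic Shimura varieties, due to Nekov\'a\v{r} \cite{Nek}, combined with Chebotarev and \cite{Dim05}*{Lemma 6.5}: it shows that the characteristic polynomial of $(\bar\rho_0\otimes\bar\rho_1)(\Frob)$ annihilates the $\rG_{\TF}$-action on $\rH^j/\fm$, hence (by genericity) every irreducible subquotient is $\bar\rho_0\otimes\bar\rho_1$, whose Fontaine--Laffaille weights $\{0,1,1,2\}$ are incompatible with $\rH^0$ and $\rH^1$ of a surface with good reduction at $p$. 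Your proposal names this as the main obstacle but then waves it away on the (incorrect) ground that the strata are PEL; supplying the Nekov\'a\v{r} input is exactly what closes that gap.
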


\begin{proof}
Without lost of generality, we assume $i=0$. Let $\tau_0,\tau_1,\tau_2\colon F\hookrightarrow\TF$ be the three distinct embeddings corresponding to $0,1,2$ in $\Phi_F$, respectively.

For (1), we may assume $j=0,1$ by the Poincar\'{e} duality. By the Nakayama lemma, it suffices to show that $\rH^j(Z_{\fr_0,\fr_1,\overline\kappa}^{012},\dF_p)/\fm=0$. Let $\cZ$ be the non-PEL type Shimura surface $\cX(\nabla\setminus\{\tau_0,\tau_1\})_{O_F,\fr}$ (over $\Spec\dZ_{\ell^6}$) as in Lemma \ref{ble:zink}. Then by the same lemma, it suffices to show that $\rH^j(\cZ\otimes_{\dZ_{\ell^6}}\overline\dQ,\dF_p)/\fm=0$ for $j=0,1$. We use the same ideal in the proof of \cite{Dim05}*{Theorem 6.6 (i)}.

Let $Z$ be the canonical model of $\cZ\otimes_{\dZ_{\ell^6}}\overline\dQ$ over $\Spec\TF$. For $j=0,1$, let $\rho'_j$ be the representation of $\rG_{\TF}$ on $\rH^j(\cZ\otimes_{\dZ_{\ell^6}}\overline\dQ,\dF_p)/\fm\simeq\rH^j(Z\otimes_{\TF}\overline\dQ,\dF_p)/\fm$. For $i=0,1,2$, put $\bar\rho_i\coloneqq\bar\rho\res_{\rG_{\TF},\tau_i}$, which is a two dimensional $\dF_p$-representation of $\rG_{\TF}$. By the Eichler--Shimura relation proved in \cite{Nek}*{\Sec A6} and the Chebotarev density theorem, we know that for every $g\in\rG_{\TF}$, the characteristic polynomial of $\bar\rho_0\otimes\bar\rho_1$ annihilates $\rho'_j(g)$. Since $\bar\rho$ is generic (Definition \ref{de:perfect_prime}), by a similar argument for \cite{Dim05}*{Lemma 6.5}, we know that each $\rG_{\TF}$-irreducible subquotient of $\rho'_j$ is isomorphic to $\bar\rho_0\otimes\bar\rho_1$. Since $p>4$, the Fontaine--Laffaille weights of $\bar\rho_0\otimes\bar\rho_1$ are $\{0,1,1,2\}$. As $p$ is coprime to $\nabla$ and $\fr\cdot\disc F$, the surface $Z$ has a smooth projective model over $\Spec O_{\TF}\otimes\dZ_p$. Together with Faltings' Comparison Theorem \cite{Fal88}, we know that $2$ is not a Fontaine--Laffaille weight for both $\rH^0(Z\otimes_{\TF}\overline\dQ,\dF_p)$ and $\rH^1(Z\otimes_{\TF}\overline\dQ,\dF_p)$. Therefore, $\rH^j(Z\otimes_{\TF}\overline\dQ,\dF_p)/\fm=0$ for $j=0,1$, and (1) follows.

Part (2) follows from (1) by the similar argument for \cite{Dim05}*{Theorem 6.6 (ii)}.
\end{proof}

\begin{lem}\label{le:shimura_set}
There are canonical $\rG_\kappa$-invariant isomorphisms (Notation \ref{no:nabla})
\[Z_{\fr_0,\fr_1}^\pm(\overline\kappa)/\Cl(F)_{\fr_1}\simeq\cS_\fr,\qquad X_{\fr_0,\fr_1}^{012345}(\overline\kappa)/\Cl(F)_{\fr_1}\simeq\cS_{\fr\fl}.\]
\end{lem}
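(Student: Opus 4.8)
The plan is to derive the lemma from the moduli-theoretic description of the zero-dimensional strata given in Appendix~\ref{ss:b} (improving Zink \cite{Zin82}), combined with the parameterization of the relevant abelian varieties in characteristic $\ell$ by oriented Eichler orders over $F$ established in Appendix~\ref{ss:a} (which generalizes Ribet \cite{Rib89}). Recall from Lemma~\ref{le:strata} and Appendix~\ref{ss:b} that $Z_{\fr_0,\fr_1}^\pm$ and $X_{\fr_0,\fr_1}^{012345}$ are finite $\kappa$-schemes: the points $X_{\fr_0,\fr_1}^{012345}(\overline\kappa)$ are the closed points of the special fiber of $\cX(\ell)_{\fr_0,\fr_1}$ lying in the deepest Goren--Oort stratum (all partial Hasse invariants vanishing), while $Z_{\fr_0,\fr_1}^\pm(\overline\kappa)$ are the closed points in the two ``sparse'' strata, reached after contracting the $(\dP^1)^3$-bundle in the canonical resolution $\pi$. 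First I would use Appendix~\ref{ss:b} to identify, $\rG_\kappa$-equivariantly, $X_{\fr_0,\fr_1}^{012345}(\overline\kappa)$ with the set of isomorphism classes of abelian varieties over $\overline\kappa$ equipped with the relevant $O_F$-action, polarization, tame level structure of type $\fr_0\fr_1$, and an Iwahori-type structure at $\fl$ coming from the extra ramification of $\cX(\ell)_{\fr_0,\fr_1}$ at $\fl$, all of which are superspecial at $\fl$; likewise $Z_{\fr_0,\fr_1}^\pm(\overline\kappa)$ is identified with the same moduli set but with the $\fl$-Iwahori structure omitted, the label $S^\pm$ recording one of the two forgetful maps.

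Next I would invoke Honda--Tate theory: all such abelian varieties lie in a single $F$-isogeny class, whose $F$-endomorphism algebra is the totally definite quaternion algebra $B$ over $F$ ramified exactly at $\nabla$ (the three archimedean places together with the finite places in $\nabla$; note that $\fl$ is \emph{split} in $B$, since $\ell$ was inert in $F$ and the superspecial condition forces ramification at the archimedean places which, with the original ramification away from $\fl$, already accounts for an even set). Fixing a base point and trivializing the level structure turns the two moduli sets into double coset spaces $B^\times\backslash\widehat B^\times/U$ for appropriate open compact subgroups $U$; by the theory of Appendix~\ref{ss:a}, matching the local component at $\fl$ shows that for $X_{\fr_0,\fr_1}^{012345}$ one gets an Eichler order of level $\fr\fl$, for $Z_{\fr_0,\fr_1}^\pm$ one of level $\fr$, and that the two sparse types give the same double coset set. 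These double coset spaces carry a free action of the ray class group $\Cl(F)_{\fr_1}$, which is exactly the group of geometric connected components of $\cX(\ell)_{\fr_0,\fr_1}\otimes\overline\dQ$; by the definitions recalled in Notation~\ref{no:nabla} and Appendix~\ref{ss:a}, the quotients of these double coset spaces by $\Cl(F)_{\fr_1}$ are precisely $\cS_{\fr\fl}$ and $\cS_\fr$. Assembling the three identifications yields the stated isomorphisms.

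For $\rG_\kappa$-equivariance, one observes that all schemes, morphisms and moduli interpretations are defined over $\dF_\ell$, so $\Frob_{\ell^6}\in\rG_\kappa$ acts on the moduli sets by $A\mapsto A^{(\ell^6)}$; under the quaternionic parameterization this becomes translation by an explicit id\`ele, and the induced action on $\cS_\fr$ and $\cS_{\fr\fl}$ is the one used to define them (it is given by an Atkin--Lehner/Frobenius element at $\fl$). The step I expect to be the main obstacle is the precise identification of the Eichler order and its orientation data at the prime $\fl$ inside the second step: showing that the deepest stratum of the $\fl$-ramified (``$\Gamma_0(\fl)$-type'') integral model contributes exactly an Iwahori level at $\fl$, that the sparse strata contribute the spherical level, and that the Atkin--Lehner symmetry interchanges $S^+$ and $S^-$ without altering the class set. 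This is exactly the phenomenon that Appendix~\ref{ss:a} is designed to package, so in practice the proof reduces to assembling the results proved there with the global geometry of the model recalled in Appendix~\ref{ss:b}; the bookkeeping of connected components and of the $\rG_\kappa$-action then requires care but no new idea.
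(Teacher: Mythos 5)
Your proposal ends up at the same place as the paper, but by a noticeably different intermediate route, and a few of the specifics you sketch would need repair if you tried to carry them out instead of deferring to the appendices. The paper's proof of this lemma is literally a one-line appeal to Proposition \ref{bpr:hilbert_sparse}: the points of $X_{\fr_0,\fr_1}^{012345}$ (resp.\ of $Z_{\fr_0,\fr_1}^{\pm}$) are identified with the sets $\cA(\Delta)^0_{\fr_0,\fr_1}$ (resp.\ $\cA(\Delta)^{\pm1}_{\fr_0,\fr_1}$) of \emph{exceptional} abelian varieties that are mixed (resp.\ pure) in the sense of Definition \ref{ade:pure_ramified}, and then Proposition \ref{apr:ramified} — Ribet's oriented-bimodule method — produces the bijection with $\cS_{\fr\fl}$ and $\cS_\fr$ by the intrinsic assignment $(A,\iota,C,\lambda)\mapsto\End(A,\iota,C,\lambda)$, with no choice of base point; adelic double cosets enter only as a cardinality count inside Proposition \ref{apr:bimodule}. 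Your route instead goes through Honda--Tate, a fixed base point, and uniformization by $B^\times\backslash\widehat B^\times/U$. That can be made to work, but it is exactly what the paper's Appendix \ref{ss:a} is engineered to avoid, because the lemma is needed in a \emph{canonical}, Hecke-compatible form (it feeds into Lemma \ref{le:quotient}, Proposition \ref{pr:tate} and Lemma \ref{le:degenerate}), and a base-point-dependent uniformization forces you to track how the bijection depends on the trivializations; the endomorphism-order construction makes canonicity and Hecke equivariance automatic.

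Two of your specific assertions should also be corrected. First, there is no Iwahori structure at $\fl$ in the moduli problem: $\cO$ is ramified at $\fl$, and the difference between level $\fr$ and level $\fr\fl$ on the Eichler-order side comes from the pure versus mixed behavior of $\Pi$ on the Lie algebra (equivalently, the bimodule invariant at $\fl$ in Appendix \ref{ss:a}), not from adding or dropping a parahoric level. Second, the $\rG_\kappa$-invariance is not that Frobenius acts through an ``Atkin--Lehner element used to define'' $\cS_{\fr\fl}$ — the sets $\cS_\fr$, $\cS_{\fr\fl}$ carry no Galois action — but that the Frobenius action on the quotients is \emph{trivial}: the arithmetic Frobenius translates by the class of $\fl$ and swaps the two sparse types (resp.\ flips the orientation at $\fl$), cf.\ Lemma \ref{ble:zink} and Proposition \ref{bpr:hilbert_sparse}, and these effects die after quotienting by $\Cl(F)_{\fr_1}$ and passing to $\sigma^6$ over $\kappa=\dF_{\ell^6}$. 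Since you explicitly reduce the hard step to Appendix \ref{ss:a} in your last sentence, your argument is acceptable as a reduction; but if the Honda--Tate/double-coset sketch were taken as the actual proof, the canonicity and equivariance bookkeeping just described is where the genuine gap would lie.
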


\begin{proof}
These are special cases of Proposition \ref{bpr:hilbert_sparse}.
\end{proof}

\begin{definition}\label{de:quotient}
Denote by
\[\psi^\pm\colon Z_{\fr_0,\fr_1}^\pm(\overline\kappa)\to\cS_\fr\qquad \psi\colon X_{\fr_0,\fr_1}^{012345}(\overline\kappa)\to\cS_{\fr\fl}\]
the canonical maps obtained from Lemma \ref{le:shimura_set}. For an abelian group $M$, we denote by
\begin{enumerate}
  \item $\psi^{\pm*}\colon\Gamma(\cS_\fr,M)\to\Gamma(Z_{\fr_0,\fr_1}^\pm(\overline\kappa),M)$ and $\psi^*\colon\Gamma(\cS_{\fr\fl},M)\to\Gamma(X_{\fr_0,\fr_1}^{012345}(\overline\kappa),M)$ the usual pullback maps;

  \item  $\psi^\pm_*\colon\Gamma(Z_{\fr_0,\fr_1}^\pm(\overline\kappa),M)\to\Gamma(\cS_\fr,M)$ and $\psi_*\colon\Gamma(X_{\fr_0,\fr_1}^{012345}(\overline\kappa),M)\to\Gamma(\cS_{\fr\fl},M)$ the (normalized) pushforward maps as in Definition \ref{ade:pushforward} where we replace $\mu(\fN,\fM)$ by $|\Cl(F)_{\fr_1}|$.
\end{enumerate}
\end{definition}

The following lemma shows that the ray class group $\Cl(F)_{\fr_1}$ appearing in Lemma \ref{le:shimura_set} is negligible after localization.

\begin{lem}\label{le:quotient}
The maps in the following two commutative diagrams
\[\xymatrix{
\Gamma(Z_{\fr_0,\fr_1}^\pm(\overline\kappa),\Lambda)_\fm \ar[r]^-{\psi^\pm_*}\ar[d]
& \Gamma(\cS_\fr,\Lambda)_\fm \ar[d] \\
\Gamma(Z_{\fr_0,\fr_1}^\pm(\overline\kappa),\Lambda)/\Ker\phi^{\fr\fl}_\rho \ar[r]^-{\psi^\pm_*}
& \Gamma(\cS_\fr,\Lambda)/\Ker\phi^{\fr\fl}_\rho
}\quad
\xymatrix{
\Gamma(X_{\fr_0,\fr_1}^{012345}(\overline\kappa),\Lambda)_\fm
\ar[r]^-{\psi_*}\ar[d] & \Gamma(\cS_{\fr\fl},\Lambda)_\fm \ar[d] \\
\Gamma(X_{\fr_0,\fr_1}^{012345}(\overline\kappa),\Lambda)/\Ker\phi^{\fr\fl}_\rho
\ar[r]^-{\psi_*} & \Gamma(\cS_{\fr\fl},\Lambda)/\Ker\phi^{\fr\fl}_\rho
}\]
are all isomorphisms, where we recall $\fm=\fm^{\fr\fl}_\rho$.
\end{lem}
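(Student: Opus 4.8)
The plan is to reduce the lemma to the single assertion that the finite group $\Cl(F)_{\fr_1}$ acts trivially on each of the four modules occurring in the two diagrams, and then to prove that assertion by comparing the action with central Hecke operators. For the reduction: by Lemma \ref{le:shimura_set} the maps $\psi^\pm$ and $\psi$ are the quotient maps for the natural $\Cl(F)_{\fr_1}$-action on the finite sets $Z_{\fr_0,\fr_1}^\pm(\overline\kappa)$ and $X_{\fr_0,\fr_1}^{012345}(\overline\kappa)$; since $\fr_1$ is absolutely neat (Definition \ref{bde:absolutely_neat}) this action is free, so every fiber of $\psi^\pm$ (resp.\ $\psi$) has exactly $|\Cl(F)_{\fr_1}|$ points and $\psi^{\pm*}$ (resp.\ $\psi^*$) identifies $\Gamma(\cS_\fr,\Lambda)$ (resp.\ $\Gamma(\cS_{\fr\fl},\Lambda)$) with the submodule of $\Cl(F)_{\fr_1}$-invariants on the source. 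Because $p$ is coprime to $|\Cl(F)_{\fr_1}|$ (Definition \ref{de:perfect_prime} (3)), the normalized pushforwards of Definition \ref{de:quotient} (following Definition \ref{ade:pushforward}) are defined, and one checks directly that $\psi^\pm_*\circ\psi^{\pm*}=\r{id}$, $\psi^{\pm*}\circ\psi^\pm_*=|\Cl(F)_{\fr_1}|^{-1}\sum_{c\in\Cl(F)_{\fr_1}}\langle c\rangle$, and likewise for $\psi_*,\psi^*$. Hence all eight maps in the lemma become isomorphisms as soon as $\Cl(F)_{\fr_1}$ acts trivially on the relevant localizations and quotients.

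To prove the triviality I would first recall that the $\Cl(F)_{\fr_1}$-action factors through the Hecke algebra. Using the adelic description of $Z_{\fr_0,\fr_1}^\pm(\overline\kappa)$ and $X_{\fr_0,\fr_1}^{012345}(\overline\kappa)$ furnished by Appendix \ref{ss:b} (Proposition \ref{bpr:hilbert_sparse}), the action of $\Cl(F)_{\fr_1}$ is translation along the central, reduced-norm direction, and hence commutes with $\dT^{\fr\fl}$. Via the reciprocity map of class field theory together with the Chebotarev density theorem, for each $c\in\Cl(F)_{\fr_1}$ one may choose a prime $\fq$ of $F$ coprime to $p\nabla\fr\fl$ whose class is $c$, and then $\langle c\rangle$ coincides with the central Hecke operator $\rS_\fq\in\dT^{\fr\fl}$ on $\Gamma$. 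Since $\det\rho\simeq\dZ/p^\nu(-1)$ has trivial nebentypus, $\phi^{\fr\fl}_\rho(\rS_\fq)=1$; therefore on $\Gamma(Z_{\fr_0,\fr_1}^\pm(\overline\kappa),\Lambda)/\Ker\phi^{\fr\fl}_\rho$ and on $\Gamma(X_{\fr_0,\fr_1}^{012345}(\overline\kappa),\Lambda)/\Ker\phi^{\fr\fl}_\rho$ each $\rS_\fq$, hence each $\langle c\rangle$, acts by the scalar $1$. This settles the bottom rows of both diagrams.

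For the localizations I would argue by Nakayama. Put $M=\Gamma(Z_{\fr_0,\fr_1}^\pm(\overline\kappa),\Lambda)_\fm$, a finitely generated module over the local ring $\dZ[\dT^{\fr\fl}]_\fm$, fix $c$ and a prime $\fq$ as above, and set $\sigma=\langle c\rangle\in\Aut_\Lambda(M)$, an automorphism commuting with $\dT^{\fr\fl}$. Then $\sigma^{|\Cl(F)_{\fr_1}|}=1$, and by the previous paragraph $\sigma\equiv\rS_\fq\equiv1$ modulo $\fm$, i.e.\ $(\sigma-1)M\subseteq\fm M$. As $p\nmid|\Cl(F)_{\fr_1}|$, the averaging element $e=|\Cl(F)_{\fr_1}|^{-1}\sum_{0\le i<|\Cl(F)_{\fr_1}|}\sigma^i$ is a $\dT^{\fr\fl}$-linear idempotent with $\sigma e=e$, so $M=eM\oplus(1-e)M$ with $\sigma=1$ on $eM$; since $1-\sigma^i$ is a multiple of $\sigma-1$, one has $(1-e)M\subseteq(\sigma-1)M\subseteq\fm M$, and, being a $\dZ[\dT^{\fr\fl}]_\fm$-direct summand, $(1-e)M$ then satisfies $(1-e)M=\fm(1-e)M$, whence $(1-e)M=0$ by Nakayama. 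Thus $\sigma=1$ on $M$; as $c$ was arbitrary, $\Cl(F)_{\fr_1}$ acts trivially on $M$, and the same computation applies verbatim to $\Gamma(X_{\fr_0,\fr_1}^{012345}(\overline\kappa),\Lambda)_\fm$ with $\cS_{\fr\fl}$ replacing $\cS_\fr$ throughout. Combining the three paragraphs gives the lemma.

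The one genuinely nontrivial step is the middle one: identifying the geometric $\Cl(F)_{\fr_1}$-action on the zero-dimensional strata with the central Hecke operators $\rS_\fq$. This rests on the explicit adelic / oriented-Eichler-order parameterization of these strata developed in Appendices \ref{ss:a} and \ref{ss:b}, and on the Hecke normalization being chosen so that $\rS_\fq$ is exactly translation by a central uniformizer at $\fq$ (no spurious factor of $\Nm\fq$), so that the triviality of the central character encoded in $\det\rho\simeq\dZ/p^\nu(-1)$ really forces $\phi^{\fr\fl}_\rho(\rS_\fq)=1$. Everything else in the argument is formal.
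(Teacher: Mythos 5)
There is a genuine gap: your reduction asserts that all eight maps become isomorphisms as soon as $\Cl(F)_{\fr_1}$ acts trivially on the relevant localizations and quotients, but the two \emph{vertical} maps $\Gamma(\cdot,\Lambda)_\fm\to\Gamma(\cdot,\Lambda)/\Ker\phi^{\fr\fl}_\rho$ have nothing to do with the class-group action. They compare the localization at $\fm$ with the quotient by $\Ker\phi^{\fr\fl}_\rho$, and such a map is always surjective but in general not injective: any second Hecke eigensystem congruent to $\phi^{\fr\fl}_\rho$ modulo $p$ contributes to the localization while dying in the quotient. This is precisely where the hypothesis that $(\rho,\fr_\rho,\fr_0,\fr_1)$ is $\fr\fl$-isolated (Definition \ref{de:cubic_level_raising} (C2), via Definition \ref{de:perfect_prime} (2)) enters in the paper: it gives directly that the right vertical map $\Gamma(\cS_\fr,\Lambda)_\fm\to\Gamma(\cS_\fr,\Lambda)/\Ker\phi^{\fr\fl}_\rho$ is an isomorphism, and the left vertical map then follows from commutativity of the square once the two horizontal maps are shown to be isomorphisms. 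Since your argument never invokes (C2), it can at best establish the horizontal arrows and does not prove the stated lemma; without isolatedness the vertical arrows can genuinely fail.

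For the horizontal maps your mechanism is essentially the paper's: the paper fixes prime representatives $\fq_1,\dots,\fq_{|\Cl(F)_{\fr_1}|}$ of the ray classes, notes that $\psi^{\pm*}\circ\psi^\pm_*$ is multiplication by $\rS=\sum_i\rS_{\fq_i}$, which lies outside $\fm$ because $\phi^{\fr\fl}_\rho(\rS_{\fq_i})=1$ and $p\nmid|\Cl(F)_{\fr_1}|$, and that $\psi^\pm_*\circ\psi^{\pm*}=|\Cl(F)_{\fr_1}|$ is a unit in $\Lambda$; your averaging idempotent plus Nakayama is an equivalent repackaging of the same congruence $\rS_\fq\equiv 1$. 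Two smaller inaccuracies in your write-up: with the paper's normalization (Definition \ref{de:quotient}, i.e.\ Definition \ref{ade:pushforward} with $\mu(\fN,\fM)$ replaced by $|\Cl(F)_{\fr_1}|$) the composite $\psi^\pm_*\circ\psi^{\pm*}$ is multiplication by $|\Cl(F)_{\fr_1}|$, not the identity; and the freeness of the $\Cl(F)_{\fr_1}$-action, which you assert follows from absolute neatness, is neither proved nor needed --- the normalized pushforward is set up exactly so that unequal fiber sizes cause no harm. Finally, $\phi^{\fr\fl}_\rho(\rS_\fq)=1$ is part of the definition of $\phi^{\fr\fl}_\rho$, and the identification of the class-group action with the operators $\rS_\fq$ is built into Definition \ref{bde:hecke}, so beyond choosing prime representatives of the ray classes no class-field-theoretic input is required there.
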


\begin{proof}
We only give the proof for the first diagram for $+$, as it is similar for the others.

Since $(\rho,\fr_\rho,\fr_0,\fr_1)$ is $\fr\fl$-isolated, the right vertical map is an isomorphism. It suffices to show that the two horizontal maps are isomorphisms. We fix a set of representatives $\{\fq_1,\dots,\fq_{|\Cl(F)_{\fr_1}|}\}$ of the ray class group $\Cl(F)_{\fr_1}$ using prime ideals of $O_F$ that are coprime to $\nabla$ and $\fr\fl$. Put $\rS\coloneqq\sum_{i=1}^{|\Cl(F)_{\fr_1}|}\rS_{\fq_i}\in\dZ[\dT^{\fr\fl}]$. As $\phi_\rho^{\fr\fl}(s)=|\Cl(F)_{\fr_1}|$ which is not divisible by $p$ by Definition \ref{de:perfect_prime} (3b), we know that $\rS\in\dZ[\dT^{\fr\fl}]\setminus\fm$.

We have maps $\psi^+_*\colon\Gamma(Z_{\fr_0,\fr_1}^+(\overline\kappa),\Lambda)\to\Gamma(\cS_\fr,\Lambda)$ and $\psi^{+*}\colon\Gamma(\cS_\fr,\Lambda)\to\Gamma(Z_{\fr_0,\fr_1}^+(\overline\kappa),\Lambda)$ such that $\psi^+_*\circ\psi^{+*}=|\Cl(F)_{\fr_1}|\in\Lambda^\times$. It is clear that the composite map $\psi^{+*}\circ\psi^+_*\colon\Gamma(Z_{\fr_0,\fr_1}^+(\overline\kappa),\Lambda)_\fm\to\Gamma(Z_{\fr_0,\fr_1}^+(\overline\kappa),\Lambda)_\fm$ is equal to the multiplication by $\rS$, which is an automorphism. Thus the upper horizontal map is an isomorphism. The composite map $\psi^{+*}\circ\psi^+_*\colon\Gamma(Z_{\fr_0,\fr_1}^+(\overline\kappa),\Lambda)/\Ker\phi^{\fr\fl}_\rho
\to\Gamma(Z_{\fr_0,\fr_1}^+(\overline\kappa),\Lambda)/\Ker\phi^{\fr\fl}_\rho$ is also equal to the multiplication by $|\Cl(F)_{\fr_1}|$, which is an automorphism. Thus the lower horizontal map is an isomorphism as well. The lemma is proved.
\end{proof}

The following proposition confirms the Tate conjecture for the localized cohomology $\rH^2(Z_{\fr_0,\fr_1,\overline\kappa}^{i(i+1)(i+2)},\Lambda(1))_\fm$. Since we are working with torsion coefficients and localization, the results of \cite{TX14} do not seem to be applicable directly. We will provide a proof following the method of \cite{TX14}.

\begin{proposition}\label{pr:tate}
For $i\in S^\pm$, we have
\begin{enumerate}
  \item $\rH^2(Z_{\fr_0,\fr_1,\overline\kappa}^{i(i+1)(i+2)},\Lambda(1))_\fm\simeq
      (\Lambda(1)\oplus\Lambda^{\oplus2}\oplus\Lambda(-1))^{\oplus|\fD(\fr,\fr_\rho)|}$ as $\Lambda[\rG_\kappa]$-modules;

  \item that the map $\rH^2(Z_{\fr_0,\fr_1,\overline\kappa}^{i(i+1)(i+2)},\Lambda(1))\to
      \Gamma(Z_{\fr_0,\fr_1}^\pm(\overline\kappa),\Lambda)\oplus\Gamma(Z_{\fr_0,\fr_1}^\mp(\overline\kappa),\Lambda)$
      sending $c$ in the domain to the function assigning $(g,g')\in Z_{\fr_0,\fr_1}^\pm(\overline\kappa)\times Z_{\fr_0,\fr_1}^\mp(\overline\kappa)$ to $(c\cdot\cl(C^i_g),c\cdot\cl(C^i_{g'}))$ induces an isomorphism
      \[(\rH^2(Z_{\fr_0,\fr_1,\overline\kappa}^{i(i+1)(i+2)},\Lambda(1))_\fm)_{\rG_\kappa}\xrightarrow{\sim} \Gamma(Z_{\fr_0,\fr_1}^\pm(\overline\kappa),\Lambda)_\fm\oplus\Gamma(Z_{\fr_0,\fr_1}^\mp(\overline\kappa),\Lambda)_\fm.\]
\end{enumerate}
\end{proposition}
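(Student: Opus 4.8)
The plan is to follow the method of \cite{TX14}, adapting it to the setting of torsion coefficients by working after localization at $\fm$, where the vanishing results of Lemma \ref{le:vanishing} make everything behave as in the torsion-free case. First I would use the $(\dP^1)^2$-fibration (or Goren--Oort stratification) description of $Z_{\fr_0,\fr_1,\overline\kappa}^{i(i+1)(i+2)}$ together with Lemma \ref{le:strata} and Lemma \ref{le:cycle_base} to exhibit explicit algebraic cycles generating the relevant part of $\rH^2$: the classes $\cl(C^i_g)$ for $g\in Z_{\fr_0,\fr_1}^\pm(\overline\kappa)$ and $\cl(C^i_{g'})$ for $g'\in Z_{\fr_0,\fr_1}^\mp(\overline\kappa)$. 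By Lemma \ref{le:vanishing}, $\rH^2(Z_{\fr_0,\fr_1,\overline\kappa}^{i(i+1)(i+2)},\Lambda(1))_\fm$ is a free $\Lambda$-module (the integral $\dZ_p$-cohomology localized at $\fm$ is free, and reduction mod $p^\nu$ is flat), so one may compute its $\rG_\kappa$-module structure by reduction from the $\dZ_p$-coefficient statement, which is exactly \cite{TX14}. This gives (1): the weight decomposition $\Lambda(1)\oplus\Lambda^{\oplus 2}\oplus\Lambda(-1)$ per connected component, where the number of components after localization is $|\fD(\fr,\fr_\rho)|$ by the $\fr$-isolatedness hypothesis (Definition \ref{de:perfect_prime}(2)) combined with Lemma \ref{le:quotient}, which lets one ignore the ray-class-group factor $\Cl(F)_{\fr_1}$.

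For (2), the intersection pairings $c\mapsto (c\cdot\cl(C^i_g),\, c\cdot\cl(C^i_{g'}))$ are $\rG_\kappa$-equivariant maps to $\Gamma(Z_{\fr_0,\fr_1}^\pm(\overline\kappa),\Lambda)\oplus\Gamma(Z_{\fr_0,\fr_1}^\mp(\overline\kappa),\Lambda)$, hence factor through the coinvariants $(\rH^2(\cdots)_\fm)_{\rG_\kappa}$ since the target is a permutation module on which $\rG_\kappa$ acts through a finite quotient and $p$ is coprime to that quotient's order (by Definition \ref{de:perfect_prime}(2b), $p \nmid |\Cl(F)_{\fr_1}|$, and the $\rG_\kappa$-action on the index sets factors through $\Cl(F)_{\fr_1}$ up to the Frobenius). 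From part (1), both source and target, after localization, are free $\Lambda$-modules of the same rank: the coinvariants of $\Lambda(1)\oplus\Lambda^{\oplus 2}\oplus\Lambda(-1)$ under the (pro-cyclic, prime-to-$p$) action is $\Lambda^{\oplus 2}$ per component — precisely the rank of $\Gamma(Z^\pm_{\fr_0,\fr_1}(\overline\kappa),\Lambda)_\fm\oplus\Gamma(Z^\mp_{\fr_0,\fr_1}(\overline\kappa),\Lambda)_\fm$ by Lemma \ref{le:quotient} and $\fr$-isolatedness. So it suffices to check the map is surjective (or, equivalently, that the intersection matrix is invertible after localization); this I would do by computing the pairing matrix on the explicit cycle basis. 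The key inputs are the self-intersection numbers $C^i_g\cdot C^i_g=-2\ell$, $C^i_{g'}\cdot C^i_{g'}=-2\ell^2$ from Lemma \ref{le:cycle_base}(3), together with the cross terms $C^i_g\cdot C^i_{g'}\in\{0,1\}$ (equal to $1$ exactly when $g'\in\rT(g)$); the resulting block matrix has the shape $\begin{pmatrix}-2\ell & \rT \\ \tp{\rT} & -2\ell^2\end{pmatrix}$ in terms of the Hecke-type operator $\rT=\delta_{\pm*}\circ\delta_\mp^*$, whose determinant (up to units) is $4\ell^3 - \rT\tp{\rT}$, and one checks invertibility mod $p$ using (C3) ($p\nmid\ell(\ell^{18}-1)(\ell^6+1)$) and (C4) ($\phi_\Pi(\rT_\fl)\equiv\ell^3+1$), after passing to the residue representation and using that $\bar\rho$ is generic to control the eigenvalues of $\rT$.

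The main obstacle I anticipate is the torsion-coefficient bookkeeping: the arguments in \cite{TX14} are phrased for $\overline{\dQ}_\ell$- or $\dZ_p$-cohomology and use purity/weights freely, whereas here one must verify that passing to $\Lambda = \dZ/p^\nu$ and localizing at $\fm$ preserves both freeness and the weight decomposition. This is where Lemma \ref{le:vanishing} is essential — it gives torsion-freeness of the integral localized $\rH^2$ and vanishing in odd and low degrees — but one still needs to argue carefully that the cycle classes $\cl(C^i_g)$ remain independent mod $p^\nu$ after localization, i.e. that the intersection matrix, known to be invertible over $\dZ_p$ after localization (the $\dZ_p$-statement), stays invertible mod $p^\nu$; this follows from its determinant being a $p$-adic unit after localization, which is exactly the content of conditions (C3) and (C4). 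A secondary subtlety is confirming that the number of geometrically connected components of the localized cohomology is genuinely $|\fD(\fr,\fr_\rho)|$ and not some multiple thereof — this is handled by combining the $\fr$-isolatedness of the perfect quadruple with Lemma \ref{le:quotient}, which together pin down $\Gamma(\cS_\fr,\Lambda)_\fm$ (and hence the localized cohomology) as free of the expected rank.
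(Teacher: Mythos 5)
Your overall strategy is the same as the paper's: follow the method of \cite{TX14}, use the automorphic/Jacquet--Langlands argument together with the $\fr\fl$-isolatedness (C2) and Lemma \ref{le:quotient} to pin down the rank $|\fD(\fr,\fr_\rho)|$ and the fact that $\rT$ acts by the scalar $\ell^3+1$ after localization at $\fm$, and then test $\rH^2$ against the cycle classes $\cl(C^i_g)$, $\cl(C^i_{g'})$ via the intersection matrix $\bigl(\begin{smallmatrix}-2\ell&\rT\\ \rT&-2\ell^2\end{smallmatrix}\bigr)$, whose determinant $4\ell^3-(\ell^3+1)^2=-(\ell^3-1)^2$ is a unit by (C3). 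That computation is exactly the paper's key step. (One small correction: the scalar action of $\rT$ comes from (C2) and (C4), not from genericity of $\bar\rho$.)

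The genuine gap is in your part (1). You claim the $\Lambda[\rG_\kappa]$-decomposition follows ``by reduction from the $\dZ_p$-coefficient statement, which is exactly \cite{TX14}'', with Lemma \ref{le:vanishing} supplying freeness. But the paper explicitly notes that the results of \cite{TX14} do not apply directly with torsion coefficients after localization, and even following its method one only obtains that $\rH^2(Z^{i(i+1)(i+2)}_{\fr_0,\fr_1,\overline\kappa},\Lambda(1))_\fm$ is $|\fD(\fr,\fr_\rho)|$ copies of a module $Q$ with $Q^{\r{ss}}\simeq\Lambda(1)\oplus\Lambda^{\oplus2}\oplus\Lambda(-1)$. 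Freeness over $\Lambda$ does not split the Galois extension: a nonsplit unipotent extension of $\Lambda$ by $\Lambda$ (Frobenius $\bigl(\begin{smallmatrix}1&p^a\\0&1\end{smallmatrix}\bigr)$) is still free, and its coinvariants are not free of rank $2$, so your rank count for (2), which uses (1), is not yet available. The paper's resolution — which your own matrix computation already supplies the input for — is to prove first that the map to $\Gamma(Z^+_{\fr_0,\fr_1}(\overline\kappa),\Lambda)_\fm\oplus\Gamma(Z^-_{\fr_0,\fr_1}(\overline\kappa),\Lambda)_\fm$ (free of rank $2|\fD(\fr,\fr_\rho)|$) is surjective, and to deduce \emph{both} (1) and (2) from that surjectivity, using the eigenvalue separation mod $p$ guaranteed by (C3) to split off the $\Lambda(\pm1)$ parts. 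So you should reorder your argument: get only the semisimplification and the length count from the \cite{TX14}-style argument, establish surjectivity via the intersection matrix, and then conclude the splitting rather than assuming it.
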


\begin{proof}
Note that there is a canonical bijection between $Z_{\fr_0,\fr_1}^+(\overline\kappa)$ and $Z_{\fr_0,\fr_1}^-(\overline\kappa)$ induced by the arithmetic Frobenius isomorphism (Remark \ref{bre:frobenius}). It is compatible with Hecke actions.

Let $L$ be an algebraic closure of $\dQ_p$, $O_L\subset L$ the ring of integers, and $P_L\subset O_L$ the maximal ideal. By the theory of automorphic forms, there is a finite set $R$ of homomorphisms $\phi\colon\dZ[\dT^{\fr\fl}]\to O_L$ such that $\Gamma(\cS_{\fr_\rho},L)/\Ker\phi\neq 0$ and $\Gamma(\cS_{\fr_\rho},L)=\bigoplus_{\phi\in R}\Gamma(\cS_{\fr_\rho},L)/\Ker\phi$. In particular, we obtain an injective homomorphism $\Gamma(\cS_{\fr_\rho},O_L)\hookrightarrow\bigoplus_{\phi\in R}\Gamma(\cS_{\fr_\rho},O_L)/\Ker\phi$ with cokernel of finite $O_L$-length. Let $R_\rho\subset R$ be the subset of $\phi$ such that $\Ker[\dZ[\dT^{\fr\fl}]\xrightarrow{\phi} O_L\to O_L/P_L]=\fm$. Then $(\Gamma(\cS_{\fr_\rho},O_L)/\Ker\phi)_\fm=\Gamma(\cS_{\fr_\rho},O_L)/\Ker\phi$ (resp.\ $(\Gamma(\cS_{\fr_\rho},O_L)/\Ker\phi)_\fm=0$) if $\phi\in R_\rho$ (resp.\ $\phi\in R\setminus R_\rho$). In other words, we obtain an injective map
\begin{align}\label{eq:tate1}
\Gamma(\cS_{\fr_\rho},O_L)_\fm\hookrightarrow\bigoplus_{\phi\in R_\rho}\Gamma(\cS_{\fr_\rho},O_L)/\Ker\phi
\end{align}
Note that $\Gamma(\cS_{\fr_\rho},O_L)_\fm\otimes_{O_L}O_L/P_L=\Gamma(\cS_{\fr_\rho},\dF_p)_\fm\otimes_{\dF_p}O_L/P_L$, and $(\Gamma(\cS_{\fr_\rho},O_L)/\Ker\phi)\otimes_{O_L}O_L/P_L=(\Gamma(\cS_{\fr_\rho},\dF_p)/\fm)\otimes_{\dF_p}O_L/P_L$ for $\phi\in R_\rho$.
By Definition \ref{de:cubic_level_raising} (C2) and the Nakayama lemma, we know that \eqref{eq:tate1} is an isomorphism of free $O_L$-modules of rank $1$, and $R_\rho=\{\phi\}$ is a singleton.

By the similar argument, we obtain an injective map
\begin{align}\label{eq:tate2}
\Gamma(Z_{\fr_0,\fr_1}^\pm(\overline\kappa),O_L)_\fm\hookrightarrow\bigoplus_{\phi^\pm\in R^\pm_\rho}\Gamma(Z_{\fr_0,\fr_1}^\pm(\overline\kappa),O_L)/\Ker\phi^\pm
\end{align}
for similarly defined sets $R^+_\rho$ and $R^-_\rho$, both containing $\phi$. By Lemma \ref{le:quotient}, we have induced isomorphisms $\psi^\pm_*\colon\Gamma(Z_{\fr_0,\fr_1}^\pm(\overline\kappa),O_L)_\fm\otimes_{O_L}O_L/P_L
\xrightarrow{\sim}\Gamma(\cS_\fr,\dF_p)_\fm\otimes_{\dF_p}O_L/P_L$, and $\psi^\pm_*\colon((Z_{\fr_0,\fr_1}^\pm(\overline\kappa),O_L)/\Ker\phi^\pm)\otimes_{O_L}O_L/P_L\to(\Gamma(\cS_\fr,\dF_p)/\fm)\otimes_{\dF_p}O_L/P_L$. Again by Definition \ref{de:cubic_level_raising} (C2) and the Nakayama lemma, we know that \eqref{eq:tate2} is an isomorphism of free $O_L$-modules of rank $|\fD(\fr,\fr_\rho)|$, and $R^\pm_\rho=\{\phi\}$ are both singletons.

By Definition \ref{de:cubic_level_raising} (C2) one more time, we have isomorphisms \[\Gamma(\cS_{\fr_\rho},\Lambda)_\fm\xrightarrow{\sim}\Gamma(\cS_{\fr_\rho},\Lambda)/\Ker\phi_\rho^{\fr\fl}
\xrightarrow{\sim}\Gamma(\cS_{\fr_\rho},\Lambda)/\Ker\phi_\rho^\fr.\]
Together, we know that the composite map $\dZ[\dT^{\fr\fl}]\xrightarrow{\phi}O_L\to O_L/p^\nu$ coincides with $\phi_\rho^{\fr\fl}$; and $\rT_\fl$ acts on $\Gamma(\cS_{\fr_\rho},O_L/p^\nu)/\Ker\phi$ by $\ell^3+1$. In other words, the Galois representation attached to $\phi$ has eigenvalues $\ell^3$ and $1$ modulo $p^\nu$ for a geometric Frobenius at $\fl$.

Now we move to the cohomology $\rH^2(Z_{\fr_0,\fr_1,\overline\kappa}^{i(i+1)(i+2)},\Lambda(1))$. Without lost of generality, we assume that $i=0$. By Lemma \ref{le:vanishing} (2), the Jacquet--Langlands correspondence, and a similar argument as above, we obtain an injective map
\begin{align*}
\rH^2(Z_{\fr_0,\fr_1,\overline\kappa}^{012},O_L(1))_\fm\hookrightarrow(\rH^2(Z_{\fr_0,\fr_1,\overline\kappa}^{012},O_L(1))/\Ker\phi)_\fm
\simeq\rH^2(Z_{\fr_0,\fr_1,\overline\kappa}^{012},O_L(1))/\Ker\phi
\end{align*}
with cokernel of finite $O_L$-length. By \cite{TX14}*{Proposition 2.26}, we know that as a $\Lambda[\rG_\kappa]$-module, $\rH^2(Z_{\fr_0,\fr_1,\overline\kappa}^{012},\Lambda(1))_\fm$ is isomorphic to $|\fD(\fr,\fr_\rho)|$ copies of $Q$ such that $Q^{\r{ss}}\simeq\Lambda(1)\oplus\Lambda^{\oplus2}\oplus\Lambda(-1)$. As $\Gamma(Z_{\fr_0,\fr_1}^+(\overline\kappa),\Lambda)_\fm\oplus\Gamma(Z_{\fr_0,\fr_1}^-(\overline\kappa),\Lambda)_\fm$ is a free $\Lambda$-module of rank $2|\fD(\fr,\fr_\rho)|$ by the previous discussion, both (1) and (2) will follow if we can show that $\rH^2(Z_{\fr_0,\fr_1,\overline\kappa}^{012},\Lambda(1))_\fm\to
\Gamma(Z_{\fr_0,\fr_1}^+(\overline\kappa),\Lambda)_\fm\oplus\Gamma(Z_{\fr_0,\fr_1}^-(\overline\kappa),\Lambda)_\fm$ is surjective. For this, consider the composite map
\begin{align}\label{eq:tate3}
\Lambda[Z_{\fr_0,\fr_1}^+(\overline\kappa)]\oplus\Lambda[Z_{\fr_0,\fr_1}^-(\overline\kappa)]\to
\rH^2(Z_{\fr_0,\fr_1,\overline\kappa}^{012},\Lambda(1))\to
\Gamma(Z_{\fr_0,\fr_1}^+(\overline\kappa),\Lambda)\oplus\Gamma(Z_{\fr_0,\fr_1}^-(\overline\kappa),\Lambda)
\end{align}
where the first map sends $g\in Z_{\fr_0,\fr_1}^+(\overline\kappa)$ (resp.\ $g'\in Z_{\fr_0,\fr_1}^-(\overline\kappa)$) to $\cl(C^0_g)$ (resp.\ $\cl(C^0_{g'})$). Under natural bases, \eqref{eq:tate3} is given by the matrix
\[\left(
    \begin{array}{cc}
      -2\ell & \rT \\
      \rT & -2\ell^2 \\
    \end{array}
  \right)
\]
by Lemma \ref{le:cycle_base} (3). Now after localization at $\fm$, we know that $\rT$ acts by the constant $\ell^3+1$. Therefore, the localization of \eqref{eq:tate3} at $\fm$ is an isomorphism as $p\nmid\ell^3-1$ by Definition \ref{de:cubic_level_raising} (C3). In particular, $\rH^2(Z_{\fr_0,\fr_1,\overline\kappa}^{012},\Lambda(1))_\fm\to
\Gamma(Z_{\fr_0,\fr_1}^+(\overline\kappa),\Lambda)_\fm\oplus\Gamma(Z_{\fr_0,\fr_1}^-(\overline\kappa),\Lambda)_\fm$ is surjective. The proposition is proved.
\end{proof}

\begin{lem}\label{le:modification}
Let $Z$ be a smooth proper $\kappa$-scheme, and $\varphi\colon Y\to Z$ the composition of finitely many morphisms that are either blow-ups along smooth centers or projective bundles. Then
\begin{enumerate}
  \item $\varphi^*\colon\rH^j(Z_{\overline\kappa},\Lambda)\to\rH^j(Y_{\overline\kappa},\Lambda)$ is an isomorphism for $j$ odd;
  \item $\varphi^*\colon\rH^2(Z_{\overline\kappa},\Lambda(1))\to\rH^2(Y_{\overline\kappa},\Lambda(1))$ is injective whose image is a direct summand as a $\Lambda[\rG_\kappa]$-module. Moreover, the action of $\rG_\kappa$ on $\coker\varphi^*$ is trivial.
\end{enumerate}
\end{lem}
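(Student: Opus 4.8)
The plan is to induct on the number of elementary morphisms composing $\varphi$, reducing to the two base cases where $\varphi$ is a single projective bundle $P=\dP(\cE)\to Z$, or a single blow-up along a smooth closed subscheme. The only inputs are the projective bundle formula and the blow-up formula for \'etale cohomology with coefficients in $\Lambda=\dQ_p$ or $\dZ/p^\nu$ (the latter with torsion coefficients by reducing to the former applied to the exceptional divisor, which is a projective bundle over the center). In each base case I would realize $\varphi^*$ as the inclusion of the bottom graded piece of a canonical $\Lambda[\rG_\kappa]$-module decomposition, hence a $\Lambda[\rG_\kappa]$-split injection; the content is then to identify the complementary pieces and to chain the decompositions through the composition.

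For a projective bundle $P=\dP(\cE)\to Z$ of relative dimension $d$, cup product with the powers of the relative hyperplane class $\xi\in\rH^2(P_{\overline\kappa},\Lambda(1))$ gives a $\rG_\kappa$-equivariant isomorphism $\rH^k(P_{\overline\kappa},\Lambda)\simeq\bigoplus_{i=0}^{d}\rH^{k-2i}(Z_{\overline\kappa},\Lambda)(-i)$ under which $\varphi^*$ is the inclusion of the $i=0$ summand. For a blow-up $\varphi\colon\widetilde X\to X$ along a smooth closed $C$ of codimension $c$, one has $\rH^k(\widetilde X_{\overline\kappa},\Lambda)\simeq\rH^k(X_{\overline\kappa},\Lambda)\oplus\bigoplus_{i=1}^{c-1}\rH^{k-2i}(C_{\overline\kappa},\Lambda)(-i)$, again $\rG_\kappa$-equivariantly and $\Lambda$-split, with $\varphi^*$ the first inclusion. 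Composing $\Lambda[\rG_\kappa]$-split injections, $\varphi^*\colon\rH^j(Z_{\overline\kappa},\Lambda)\to\rH^j(Y_{\overline\kappa},\Lambda)$ is a split injection onto a $\Lambda[\rG_\kappa]$-direct summand, and $\coker\varphi^*$ is an iterated $\Lambda[\rG_\kappa]$-extension of the correction pieces $\rH^{j-2i}(\,\cdot\,_{\overline\kappa},\Lambda)(-i)$ with $i\geq1$ contributed by the successive smooth centers and bundle bases.

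Claim (2) is then a degree count: for $j=2$, twisting by $(1)$, the only correction pieces in total degree $2$ are the $i=1$ terms $\rH^0(\,\cdot\,_{\overline\kappa},\Lambda)(0)$ of those centers and bases, every $i\geq2$ involving $\rH^{<0}=0$. Each such term is the free $\Lambda$-module of $\Lambda$-valued functions on the finite set of geometric components, with trivial $\rG_\kappa$-action since $\kappa=\dF_{\ell^6}$ is large enough that all these components are defined over $\kappa$; hence $\coker\varphi^*$ is an extension of trivial $\Lambda[\rG_\kappa]$-modules, so $\rG_\kappa$ acts trivially on it, and it is a direct summand. For claim (1) and $j$ odd, the correction pieces $\rH^{j-2i}(\,\cdot\,)(-i)$ have $j-2i$ odd and positive, so for $j=1$ there are none and $\varphi^*$ is an isomorphism outright; for odd $j\geq3$ the statement reduces to the vanishing of the \'etale cohomology in odd degrees below $j$ of every smooth center and every bundle base that occurs. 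This is the substantive input, supplied by the explicit geometry of the semistable model: by Lemma \ref{le:strata} and Lemma \ref{le:cycle_base} the centers blown up to form $\cY_{\fr_0,\fr_1}$ are disjoint unions of projective spaces (in fact of $\dP^1$'s) or are $0$-dimensional, hence carry no odd cohomology, while the bundle maps $\wp_S$ are projective bundles over strata whose odd cohomology vanishes in the relevant range (after localizing at $\fm$, by Lemma \ref{le:vanishing}); feeding this in kills every odd correction piece.

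The hard part is claim (1): whereas (2) is forced by degree reasons alone, (1) genuinely requires the vanishing of the odd \'etale cohomology of the blow-up centers and bundle bases, which is not formal and rests on the stratification of the semistable model from Appendix \ref{ss:b} together with Lemma \ref{le:vanishing}. The remainder — the two standard formulas, the $\rG_\kappa$-equivariance and $\Lambda$-linearity of the splittings, and their compatibility under composition — is routine bookkeeping.
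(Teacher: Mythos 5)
Your handling of part (2) and the formal skeleton is exactly what the paper does: its entire proof is the citation of the projective-bundle and blow-up formulas (\cite{SGA5}*{VII, 2.2.6 \& 8.5}), and your induction over the elementary steps, with the degree-$2$ cokernel identified with the untwisted $\rH^0$'s of the successive centers and bases, is just that citation written out. One caveat: the triviality of the $\rG_\kappa$-action on $\coker\varphi^*$ amounts to $\rG_\kappa$ fixing every geometric connected component of every center and base, and your justification (``$\kappa=\dF_{\ell^6}$ is large enough that all these components are defined over $\kappa$'') is an appeal to the ambient application, not a proof from the hypotheses of the lemma; the paper's citation does not address this point either, so you are no worse off than the paper here, but you should not present it as automatic.

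The genuine gap is in your part (1). The lemma is stated, and is invoked (e.g.\ in the proof of Lemma \ref{le:spectral_sequence_cubic}, in combination with Lemma \ref{le:vanishing}), as an \emph{unconditional} statement about an arbitrary smooth proper $Z$ and an arbitrary composition of blow-ups and projective bundles, and the paper's proof simply declares it well-known. Your argument does not prove that statement: after the formal reduction you dispose of the odd correction terms $\rH^{j-2i}(\,\cdot\,)(-i)$ by importing the specific geometry of $\cY_{\fr_0,\fr_1}$ (Lemmas \ref{le:strata}, \ref{le:cycle_base}) and even localization at $\fm$ via Lemma \ref{le:vanishing}, none of which occur in the hypotheses or the conclusion of the lemma; what you actually establish is the localized vanishing needed in \Sec\ref{ss:cohomology}, not the printed assertion that $\varphi^*$ is an isomorphism on odd-degree cohomology. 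Your own bookkeeping shows why no purely formal argument can close this: already for a $\dP^1$-bundle over a curve $Z$ of positive genus one has $\rH^3(Y_{\overline\kappa},\Lambda)\simeq\rH^1(Z_{\overline\kappa},\Lambda)(-1)\neq 0$ while $\rH^3(Z_{\overline\kappa},\Lambda)=0$, so (1) in the stated generality requires the odd cohomology of the bases and centers to vanish in the relevant degrees. The honest ways out are either to add that vanishing as a hypothesis (it is exactly what the applications supply) or to state and prove the localized version directly; silently feeding in Lemma \ref{le:vanishing}, as your last step does, proves a different statement from the one you were asked to prove, and that mismatch should be flagged rather than papered over.
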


\begin{proof}
Part (1) is well-known. Part (2) follows from \cite{SGA5}*{VII, 2.2.6 \& 8.5}.
\end{proof}

\begin{lem}\label{le:spectral_sequence_cubic}
The following diagram contains all nonzero terms in the first page $\dE_1$.
\begin{align*}
\resizebox{16cm}{!}{\xymatrix{
\dE^{p,q}_1  \ar@{}[r] & q=0 \ar@{}[r] & q=2 \ar@{}[r] & q=4 \ar@{}[r] & q=6 \\
p=-3 \ar@{}[r] & & & & \rH^0(Y_{\fr_0,\fr_1,\overline\kappa}^{(3)},\Lambda(-1))_\fm \ar@{-->}[ddl]\ar[d] \\
p=-2 \ar@{}[r] & & & \rH^0(Y_{\fr_0,\fr_1,\overline\kappa}^{(2)},\Lambda)_\fm \ar[d]
& \rH^2(Y_{\fr_0,\fr_1,\overline\kappa}^{(2)},\Lambda)_\fm \ar[d] \\
p=-1 \ar@{}[r] & & \rH^0(Y_{\fr_0,\fr_1,\overline\kappa}^{(1)},\Lambda(1))_\fm \ar[d]
& \rH^2(Y_{\fr_0,\fr_1,\overline\kappa}^{(1)},\Lambda(1))_\fm\oplus\rH^0(Y_{\fr_0,\fr_1,\overline\kappa}^{(3)},\Lambda)_\fm
\ar@{-->}[ddl]_-{\mu^{0,3}}\ar[d]
& \rH^4(Y_{\fr_0,\fr_1,\overline\kappa}^{(1)},\Lambda(1))_\fm \ar[d] \\
p=0 \ar@{}[r] & \rH^0(Y_{\fr_0,\fr_1,\overline\kappa}^{(0)},\Lambda(2))_\fm \ar[d]
& \rH^2(Y_{\fr_0,\fr_1,\overline\kappa}^{(0)},\Lambda(2))_\fm\oplus\rH^0(Y_{\fr_0,\fr_1,\overline\kappa}^{(2)},\Lambda(1))_\fm \ar[d]
& \rH^4(Y_{\fr_0,\fr_1,\overline\kappa}^{(0)},\Lambda(2))_\fm\oplus\rH^2(Y_{\fr_0,\fr_1,\overline\kappa}^{(2)},\Lambda(1))_\fm \ar[d]
& \rH^6(Y_{\fr_0,\fr_1,\overline\kappa}^{(0)},\Lambda(2))_\fm \\
p=1 \ar@{}[r] & \rH^0(Y_{\fr_0,\fr_1,\overline\kappa}^{(1)},\Lambda(2))_\fm \ar[d]
& \rH^2(Y_{\fr_0,\fr_1,\overline\kappa}^{(1)},\Lambda(2))_\fm\oplus\rH^0(Y_{\fr_0,\fr_1,\overline\kappa}^{(3)},\Lambda(1))_\fm \ar@{-->}[ddl]\ar[d]
& \rH^4(Y_{\fr_0,\fr_1,\overline\kappa}^{(1)},\Lambda(2))_\fm   \\
p=2 \ar@{}[r] & \rH^0(Y_{\fr_0,\fr_1,\overline\kappa}^{(2)},\Lambda(2))_\fm \ar[d]
& \rH^2(Y_{\fr_0,\fr_1,\overline\kappa}^{(2)},\Lambda(2))_\fm   \\
p=3 \ar@{}[r] & \rH^0(Y_{\fr_0,\fr_1,\overline\kappa}^{(3)},\Lambda(2))_\fm}}
\end{align*}
\end{lem}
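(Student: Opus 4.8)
The plan is to read off the first page $\dE_1$ of $\dE=\pres{2}\rE_{\cY_{\fr_0,\fr_1},\fm}$ entry by entry from the general formula, to discard every position killed by a crude dimension count, and then to observe that the only positions surviving that count are exactly the ones drawn in the diagram together with the three odd columns $q=1,3,5$; the vanishing of those columns after localization at $\fm=\fm_\rho^{\fr\fl}$ is the whole content of the lemma.

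First I would recall that $\cY_{\fr_0,\fr_1}$ is proper and strictly semistable over $O_K=\dZ_{\ell^6}$ of relative dimension $3$, so the strata $Y^{(k)}_{\fr_0,\fr_1,\overline\kappa}$ are smooth and proper of pure dimension $3-k$ and are empty unless $0\le k\le 3$. Substituting into
\[
\dE^{p,q}_1=\bigoplus_{i\ge\max(0,-p)}\rH^{q-2i}(Y^{(p+2i)}_{\fr_0,\fr_1,\overline\kappa},\Lambda(2-i))_\fm
\]
(the localized first page, with $r=2$, $n=3$), the constraints $0\le p+2i\le 3$ and $0\le q-2i\le 2(3-(p+2i))$ force $-3\le p\le 3$ and $0\le q\le 6$; a direct inspection then shows that, in these ranges, the possibly nonzero positions are exactly those displayed in the diagram together with the entries in the odd columns $q=1,3,5$. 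Indeed, in an odd column every summand $\rH^{q-2i}(Y^{(p+2i)}_{\fr_0,\fr_1,\overline\kappa},\ast)_\fm$ is an odd-degree cohomology group, and odd-degree cohomology contributes nowhere else. Hence the lemma is equivalent to the vanishing $\rH^{j}(Y^{(k)}_{\fr_0,\fr_1,\overline\kappa},\Lambda)_\fm=0$ for every odd $j$ and every $0\le k\le 3$.

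To prove that I would run through the strata using the description of the special fibre from Appendix \ref{ss:b} recalled in Lemmas \ref{le:strata}, \ref{le:cycle_base} and \ref{le:cycle_strata}. The zero-dimensional strata $Y^{(3)}_{\fr_0,\fr_1,\overline\kappa}$ have no positive-degree cohomology, and the one-dimensional strata $Y^{(2)}_{\fr_0,\fr_1,\overline\kappa}$ are finite disjoint unions of copies of $\dP^1_{\overline\kappa}$, so their $\rH^1$ vanishes integrally. Each connected component of $Y^{(1)}_{\fr_0,\fr_1,\overline\kappa}$ is, via $\pi$, either one of the Shimura surfaces $Z^{i(i+1)(i+2)}_{\fr_0,\fr_1,\overline\kappa}$ (the strata $Y^{i(i+1)(i+2)(i+3)}$ and $Y^{i(i+1)(i+2)(i+5)}$, isomorphic to such surfaces by Lemma \ref{le:strata}(3)) or an iterated $\dP^1$-bundle and blow-up along smooth centres over $\dP^1_{\overline\kappa}$ or over a point (the surfaces $F^i_g$, $F^i_{g'}$, $E^i_{g'}$, $H_h$); and each connected component of $Y^{(0)}_{\fr_0,\fr_1,\overline\kappa}$ is, via $\pi$, such a tower over either $Z^{i(i+1)(i+2)}_{\fr_0,\fr_1,\overline\kappa}$ (the six consecutive types) or a finite set of points (the two sparse types $Y^{\pm}_{\fr_0,\fr_1}$), by Proposition \ref{bpr:blowup}, Theorem \ref{bth:hilbert_subbundle} and Lemma \ref{le:strata}. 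By the projective-bundle and blow-up formulas — equivalently Lemma \ref{le:modification}(1) — the odd-degree cohomology of such a tower is a finite direct sum of Tate twists of the odd-degree cohomology of its base together with that of the smooth blow-up centres and bundle fibres; the latter, as well as the sparse bases, are finite disjoint unions of products of projective lines and of points, hence concentrated in even degrees, while the odd-degree cohomology of the Shimura surfaces $Z^{i(i+1)(i+2)}_{\fr_0,\fr_1,\overline\kappa}$ vanishes after localizing at $\fm$ by Lemma \ref{le:vanishing}(1) (the relevant odd degrees $1,3,5$ being all $\ne 2$; the statement with $\Lambda=\dZ/p^\nu$ coefficients follows from part (2) of that lemma by a Bockstein argument). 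Hence $\rH^{j}(Y^{(k)}_{\fr_0,\fr_1,\overline\kappa},\Lambda)_\fm=0$ for all odd $j$, which is what remained.

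The main obstacle here is not homological but organizational: to make the previous paragraph rigorous one must go through the dual reduction building \eqref{beq:reduction} stratum by stratum and certify that each $Y^{S}_{\fr_0,\fr_1,\overline\kappa}$ really is of the claimed ``iterated $\dP^1$-bundle and blow-up over a Shimura surface or over a finite set'' form — in particular that no Shimura curves intervene among the bases — drawing on Proposition \ref{bpr:blowup}, Theorem \ref{bth:hilbert_subbundle} and Proposition \ref{bpr:translation}. Once this dictionary is assembled, the rest is a mechanical inspection of the formula for $\dE^{p,q}_1$.
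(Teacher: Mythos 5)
Your proof is correct and follows essentially the same route as the paper, whose entire proof is the citation of Lemmas \ref{le:strata}, \ref{le:vanishing} and \ref{le:modification} (1): enumerate the possible $\dE_1$-terms, reduce the statement to the vanishing of odd-degree localized cohomology of the strata $Y^{(k)}_{\fr_0,\fr_1,\overline\kappa}$, and deduce that vanishing from the bundle/blow-up structure of the strata over the Shimura surfaces $Z^{i(i+1)(i+2)}_{\fr_0,\fr_1}$ (or over rational and zero-dimensional bases) combined with Lemma \ref{le:vanishing}. Your explicit accounting of the contributions of blow-up centres and bundle fibres is in fact slightly more careful than the literal statement of Lemma \ref{le:modification} (1), and it is exactly what the application requires.
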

in which the dashed maps are those in \eqref{eq:spectral_sequence}.

\begin{proof}
This is a consequence of Lemmas \ref{le:strata}, \ref{le:vanishing} and \ref{le:modification} (1).
\end{proof}

\begin{proposition}\label{pr:degenerate}
The Galois group $\rG_\kappa$ acts trivially on $\dE^{p,q}_2(\frac{q-4}{2})$.
\end{proposition}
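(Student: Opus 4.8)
The plan is to show that every graded piece $\dE^{p,q}_2$ of the weight spectral sequence, after the Tate twist $(\frac{q-4}{2})$, carries the trivial $\rG_\kappa$-action. Since $\dE^{p,q}_1$ is built from the cohomology groups $\rH^{q-2i}(Y^{(p+2i)}_{\fr_0,\fr_1,\overline\kappa},\Lambda(\bullet))_\fm$ of smooth proper $\kappa$-schemes, and $\dE^{p,q}_2$ is a subquotient of $\dE^{p,q}_1$, it suffices to check the claim on each direct summand of $\dE^{p,q}_1$ listed in Lemma \ref{le:spectral_sequence_cubic}, i.e.\ to show that $\rH^{q-2i}(Y^{(p+2i)}_{\fr_0,\fr_1,\overline\kappa},\Lambda(\bullet))_\fm$, with the twist making the total weight equal to $q-4$ (so that after the $(\frac{q-4}{2})$ twist the weight becomes $0$), has trivial Galois action. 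Concretely, inspecting the diagram, the only summands that survive localization at $\fm$ and appear in the second page are (for $q=2,4,6$): zeroth cohomology groups $\rH^0(Y^{(p)}_{\overline\kappa},\Lambda(j))$, which after twisting to weight $0$ are literally $\Lambda$ with trivial action; second cohomology groups $\rH^2(Y^{(1)}_{\overline\kappa},\Lambda(1))_\fm$ and $\rH^2(Y^{(2)}_{\overline\kappa},\Lambda(1))_\fm$ and $\rH^2(Y^{(0)}_{\overline\kappa},\Lambda(1))_\fm$ coming from the various $3$-dimensional strata; and the fourth/sixth cohomology of the same strata, which by Poincar\'e duality are dual to the $\rH^2$ and $\rH^0$ cases.

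The heart of the argument is therefore the $\rH^2$ computation. First I would use Lemma \ref{le:strata} to identify each stratum $Y^S_{\fr_0,\fr_1,\overline\kappa}$ (for $S$ of size $3$) as a composition of blow-ups along smooth centers and projective bundles over one of the Shimura surfaces $Z^{i(i+1)(i+2)}_{\fr_0,\fr_1,\overline\kappa}$ (for the "ample" types like $\{0,1,2\}$) or over lower-dimensional Shimura varieties. Then Lemma \ref{le:modification} (2) reduces the structure of $\rH^2(Y^S_{\overline\kappa},\Lambda(1))_\fm$ to that of $\rH^2$ of the base Shimura surface together with a piece on which $\rG_\kappa$ acts trivially (the cokernel of $\varphi^*$, generated by exceptional divisors and hyperplane classes of the projective bundles, which are all $\rG_\kappa$-fixed). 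Finally, Proposition \ref{pr:tate} (1) tells us precisely that $\rH^2(Z^{i(i+1)(i+2)}_{\fr_0,\fr_1,\overline\kappa},\Lambda(1))_\fm \simeq (\Lambda(1)\oplus\Lambda^{\oplus 2}\oplus\Lambda(-1))^{\oplus|\fD(\fr,\fr_\rho)|}$ as $\Lambda[\rG_\kappa]$-modules. So $\rH^2(Z,\Lambda(1))_\fm$ already has semisimple Galois action with weights concentrated in $\{-2,0,2\}$ in the cohomological normalization (weights $\{0,2,2,4\}$ before twisting), and these are exactly the pieces that get placed into $\dE^{p,q}_2$ for various $(p,q)$ so that the weight is $q-4$ after accounting for the second-page differentials and the monodromy filtration. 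For the strata lying over lower-dimensional Shimura varieties (e.g.\ the $(\dP^1)^3$-bundles over surfaces, or the Frobenius factors), I would argue similarly: their odd cohomology is killed after localization by the vanishing statements, and their relevant even cohomology is spanned by algebraic cycles (products of the base classes with fiber hyperplane classes), on which $\rG_\kappa$ acts through roots of unity times the appropriate Tate twist — but the relevant weight-$0$ graded pieces are the ones coming from algebraic classes, which are $\rG_\kappa$-fixed after twisting.

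More carefully, the mechanism is this: the $E_1$ page entry $\dE^{p,q}_1 = \bigoplus_{i\geq\max(0,-p)}\rH^{q-2i}(Y^{(p+2i)}_{\overline\kappa},\Lambda(r-i))_\fm$ with $r=2$; a summand indexed by $i$ contributes a piece of weight $(q-2i) - 2(2-i) = q-4$, independent of $i$ — this is the whole point of the weight spectral sequence, all of $\dE^{p,q}_1$ is pure of weight $q-4$ (more precisely, $q+2p-\bullet$, but the $(-i)$ twists are arranged so the weight of the $i$-summand of $\dE^{p,q}$ is $q-4+2p$... I should double-check the bookkeeping, but in any case the twist $(\frac{q-4}{2})$ is designed to bring everything to weight $-2p$ or $0$). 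The Galois action on a weight-$0$ pure piece that moreover is a subquotient of the cohomology of smooth proper varieties localized at a non-Eisenstein-type maximal ideal is then forced to be trivial, because the only weight-$0$ subquotients available are spanned by algebraic cycle classes (the $\Lambda^{\oplus 2}$ in Proposition \ref{pr:tate} (1), and the $\Lambda$'s from $\rH^0$ and from the blow-up/bundle cokernels), all of which are $\rG_\kappa$-fixed. I would organize the proof by going $p$ by $p$ (or rather $(p,q)$ by $(p,q)$) through the nonzero entries of Lemma \ref{le:spectral_sequence_cubic}, in each case citing Lemma \ref{le:modification}, Lemma \ref{le:strata}, and Proposition \ref{pr:tate} to read off that the corresponding weight-$0$ graded piece is $\rG_\kappa$-trivial, and invoking Poincar\'e duality to handle $q=6$ via $q=2$ and the high-degree summands via the low-degree ones.

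The main obstacle I expect is the careful bookkeeping of which summands of $\dE^{p,q}_1$ actually survive to $\dE^{p,q}_2$ and with which Tate twist — in particular making sure that no "wrong weight" piece (e.g.\ a $\Lambda(1)$ or $\Lambda(-1)$ summand of $\rH^2(Z,\Lambda(1))_\fm$ landing in a slot where the twist $(\frac{q-4}{2})$ does \emph{not} normalize it to weight $0$) contributes. But precisely because $\dE^{p,q}_1$ is pure of the weight that makes $(\frac{q-4}{2})$ the correct normalizing twist, this cannot happen: the $\Lambda(1)$ and $\Lambda(-1)$ pieces of $\rH^2(Z,\Lambda(1))_\fm$ appear in $\dE^{p,q}_1$ for values of $(p,q)$ and $i$ such that the extra twist $(r-i) = (2-i)$ cancels exactly the weight discrepancy, so that after the global twist $(\frac{q-4}{2})$ the action is trivial. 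A secondary technical point is that we are working with torsion coefficients $\Lambda=\dZ/p^\nu$ rather than with $\dQ_\ell$-sheaves, so "pure of weight $w$" must be interpreted via the integral structure and the Frobenius eigenvalues mod $p^\nu$; here Definition \ref{de:cubic_level_raising} (C3) (namely $p\nmid\ell(\ell^{18}-1)(\ell^6+1)$) guarantees that the relevant Frobenius eigenvalues (powers of $\ell$ and sixth roots of unity) remain distinct mod $p$, so the weight decomposition persists integrally, and the argument of Proposition \ref{pr:tate} applies verbatim. With these checks in place, the proposition follows by the case analysis described above.
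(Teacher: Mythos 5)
Your opening reduction (work summand by summand through the entries of Lemma \ref{le:spectral_sequence_cubic}, use Poincar\'e duality for the upper rows) matches the paper's first step, but the core of your argument --- that every summand of $\dE^{p,q}_1$ surviving localization is \emph{already} $\rG_\kappa$-trivial after the twist $(\tfrac{q-4}{2})$, so that the claim can be checked on the first page --- is false, and this is exactly where the content of the proposition lies. Take $(p,q)=(0,2)$: the twisted term $\dE^{0,2}_1(-1)$ contains $\rH^2(Y^{(0)}_{\fr_0,\fr_1,\overline\kappa},\Lambda(1))_\fm$, and by Proposition \ref{pr:tate}(1) together with Lemma \ref{le:modification} this has direct summands isomorphic to $\Lambda(1)$ and $\Lambda(-1)$, on which $\rG_\kappa$ acts nontrivially because $p\nmid\ell^{6}-1$ (Definition \ref{de:cubic_level_raising} (C3)). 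There is no ``extra twist $(2-i)$'' that cancels them: Weil-type purity only constrains the archimedean absolute values of Frobenius eigenvalues, it does not force a weight-normalized piece to have trivial Galois action, and here the eigenvalues $\ell^{\pm6}$ genuinely occur (this is the whole point of Proposition \ref{pr:tate}, which is a Tate-conjecture statement, not a triviality statement). So your key assertion fails at the $E_1$ level for $(p,q)=(0,2)$ and $(1,2)$ (and their Poincar\'e duals), and the proposition is true only at $E_2$, because the nontrivial-weight parts are killed in passing to the second page.

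What is missing is precisely that cancellation, which is the technical heart of the paper's proof. One writes $\dE^{0,2}_1(-1)=M_1\oplus M_0\oplus M_{-1}$ and $\dE^{1,2}_1(-1)=N_1\oplus N_0\oplus N_{-1}$ (with $M_i,N_i$ sums of copies of $\Lambda(i)$) and must prove that $\rd^{0,2}_1$ restricts to an isomorphism $M_1\oplus M_{-1}\to N_1\oplus N_{-1}$; only then do both $\dE^{0,2}_2(-1)$ and $\dE^{1,2}_2(-1)$ become trivial $\rG_\kappa$-modules. This requires genuinely geometric input that your proposal never touches: after projecting to the Shimura-surface parts $\bigoplus_{i}\rH^2(Z^{i(i+1)(i+2)}_{\fr_0,\fr_1,\overline\kappa},\Lambda(1))_\fm$, the differential is an explicit $6\times 6$ matrix whose entries are $\pm\r{id}$ and pullbacks $\b{f}^*$ along the Frobenius factor \eqref{beq:translation2}; its kernel consists of tuples $(c,\b{f}^*c,\dots,\b{f}^{5*}c)$ with $c=\b{f}^{6*}c$, and Proposition \ref{bpr:translation2} (identifying a suitable iterate of $\b{f}$ with a relative Frobenius) shows such $c$ are $\rG_\kappa$-invariant, hence meet $M_1\oplus M_{-1}$ only in $0$ by (C3). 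Without this computation your ``the wrong-weight pieces cannot contribute'' is not a proof but a restatement of the proposition, and as stated your purity heuristic would prove the (false) stronger claim that $\rG_\kappa$ acts trivially on $\dE^{p,q}_1(\tfrac{q-4}{2})$.
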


\begin{proof}
By the Poincar\'{e} duality and the fact that $\rG_\kappa$ acts trivially on $\rH^0(Y_{\overline\kappa},\Lambda)$ for every $\kappa$-scheme $Y$, we only need to consider two terms: $\dE^{0,2}_2(-1)$ and $\dE^{1,2}_2(-1)$. By Proposition \ref{pr:tate} and Lemma \ref{le:modification}, we have a canonical decomposition $\dE^{0,2}_1(-1)=M_1\oplus M_0\oplus M_{-1}$ of $\Lambda[\rG_\kappa]$-modules in which $M_i$ is isomorphic to the direct sum of finitely many copies of $\Lambda(i)$ for $i=-1,0,1$. Similarly, we have $\dE^{1,2}_1(-1)=N_1\oplus N_0\oplus N_{-1}$.

We claim that the restricted differential map $\rd^{0,2}_1\colon M_1\oplus M_{-1}\to N_1\oplus N_{-1}$ is an isomorphism. Then the action of $\rG_\kappa$ on both $\dE^{0,2}_2(-1)$ and $\dE^{1,2}_2(-1)$ will be trivial.

By Lemma \ref{le:modification}, we have an injective map
\[\bigoplus_{i=0}^5\rH^2(Z_{\fr_0,\fr_1,\overline\kappa}^{i(i+1)(i+2)},\Lambda(1))_\fm
\hookrightarrow\bigoplus_{i=0}^5\rH^2(Y_{\fr_0,\fr_1,\overline\kappa}^{i(i+1)(i+2)},\Lambda(1))_\fm\subset
\rH^2(Y_{\fr_0,\fr_1,\overline\kappa}^{(0)},\Lambda(1))_\fm\subset\dE^{0,2}_1(-1)\]
whose image contains $M_1\oplus M_{-1}$. By Proposition \ref{bpr:blowup} (4) and Lemma \ref{le:strata} (3), we have a surjective map
\[\dE^{1,2}_1(-1)\to\rH^2(Y_{\fr_0,\fr_1,\overline\kappa}^{(1)},\Lambda(1))_\fm\to
\bigoplus_{i=0}^5\rH^2(Y_{\fr_0,\fr_1,\overline\kappa}^{i(i+1)(i+2)(i+5)},\Lambda(1))_\fm
\simeq\bigoplus_{i=0}^5\rH^2(Z_{\fr_0,\fr_1,\overline\kappa}^{i(i+1)(i+2)},\Lambda(1))_\fm\]
under which $N_1\oplus N_{-1}$ maps isomorphically onto its image. Since $M_1\oplus M_{-1}$ and $N_1\oplus N_{-1}$ have the same length, to prove the claim, it suffices to show the map
\[\rd\colon\bigoplus_{i=0}^5\rH^2(Z_{\fr_0,\fr_1,\overline\kappa}^{i(i+1)(i+2)},\Lambda(1))_\fm
\to\bigoplus_{i=0}^5\rH^2(Z_{\fr_0,\fr_1,\overline\kappa}^{i(i+1)(i+2)},\Lambda(1))_\fm\]
induced from $\rd^{0,2}_1$ has injective restriction to $M_1\oplus M_{-1}$. Recall from \Sec\ref{ss:semistable_schemes}, to define $\rd^{0,2}_1$, we have to fix an order of (relevant) irreducible components. Without lost of generality, we suppose that $Y_{\fr_0,\fr_1}^{012},Y_{\fr_0,\fr_1}^{123},Y_{\fr_0,\fr_1}^{234},Y_{\fr_0,\fr_1}^{345},Y_{\fr_0,\fr_1}^{450},Y_{\fr_0,\fr_1}^{501}$ are listed in the order. Then under the natural basis from $i=0$ to $5$, the map $\rd$ is given by the matrix (applied from left)
\[\left(
    \begin{array}{cccccc}
      -\r{id} &  &  &  &  & \b{f}^* \\
      -\b{f}^* & \r{id} &  &  &  &  \\
       &-\b{f}^*  & \r{id} &  &  &  \\
       &  & -\b{f}^* & \r{id} &  &  \\
       &  &  & -\b{f}^* & \r{id} &  \\
        &  &  &  & -\b{f}^* & \r{id} \\
    \end{array}
  \right)
\]
where $\b{f}$ is the morphism \eqref{beq:translation2}. In particular,
\[\Ker\rd=\{(c,\b{f}^*c,\b{f}^{2*}c,\b{f}^{3*}c,\b{f}^{4*}c,\b{f}^{5*}c)\res
c\in\rH^2(Z_{\fr_0,\fr_1,\overline\kappa}^{012},\Lambda(1))_\fm,c=\b{f}^{6*}c\}\]
By Proposition \ref{bpr:translation2}, we know that $c=\b{f}^{6*}c$ implies $c\in\rH^2(Z_{\fr_0,\fr_1,\overline\kappa}^{012},\Lambda(1))_\fm^{\rG_\kappa}$. Thus, $\Ker\rd\cap(M_1\oplus M_{-1})=\{0\}$. The proposition is proved.
\end{proof}

\begin{proposition}\label{pr:featuring}
Put $\cA(Y_{\fr_0,\fr_1},\Lambda)=\cB(Y_{\fr_0,\fr_1},\Lambda)^{\rG_\kappa}$.
\begin{enumerate}
  \item The canonical map $\Hom(\cB(Y_{\fr_0,\fr_1}),\Lambda)^{\rG_\kappa}\to\Hom(\cA(Y_{\fr_0,\fr_1},\Lambda),\Lambda)$ is an isomorphism.

  \item $\Lambda$ is a very nice coefficient for $\dE$ (Definition \ref{de:nice_coefficient_localized}).

  \item The maps $\beta$ \eqref{eq:beta0} and $\beta'$ \eqref{eq:beta} induce isomorphisms
     \[\cA(Y_{\fr_0,\fr_1},\Lambda)_\fm\xrightarrow{\sim}A_2(Y_{\fr_0,\fr_1},\Lambda)_0^\fm,\qquad
     A^2(Y_{\fr_0,\fr_1},\Lambda)^0_\fm\xrightarrow{\sim}\Hom(\cA(Y_{\fr_0,\fr_1},\Lambda),\Lambda)_\fm.\]
     Here, $A_2(Y_{\fr_0,\fr_1},\Lambda)_0^\fm$ and $A^2(Y_{\fr_0,\fr_1},\Lambda)^0_\fm$ are defined in \Sec\ref{ss:correspondence}.
\end{enumerate}
\end{proposition}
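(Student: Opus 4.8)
The plan is to establish (1), (2), (3) more or less independently; (1) is formal, while (2) and (3) both rest on the two structural inputs already in hand --- the $\rG_\kappa$-triviality of the twisted page $\dE^{p,q}_2(\tfrac{q-4}{2})$ (Proposition \ref{pr:degenerate}) and the description of $\rH^2(Z^{i(i+1)(i+2)}_{\fr_0,\fr_1,\overline\kappa},\Lambda(1))_\fm$ with its pairing against the curves $C^i_g$, $C^i_{g'}$ (Proposition \ref{pr:tate}). Throughout I suppress the level subscripts $\fr_0,\fr_1$ as in \S\ref{ss:computation} and write $\fm=\fm^{\fr\fl}_\rho$. For (1): by Definition \ref{de:featuring}, $\cB(Y,\Lambda)=\Lambda[\cS]$ is a permutation $\Lambda[\rG_\kappa]$-module on the finite $\rG_\kappa$-set $\cS$ assembled from $Z^\pm(\overline\kappa)$ and $X^{012345}(\overline\kappa)$. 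Decomposing $\cS$ into orbits $\cO$, the module $\cA(Y,\Lambda)$ is free on the orbit sums and $\Hom(\cB(Y),\Lambda)^{\rG_\kappa}$ is free on the orbit-indicator functionals, and the natural map sends the indicator of $\cO$ to $|\cO|$ times the dual of the orbit sum of $\cO$; it is therefore an isomorphism provided every $|\cO|\in\Lambda^\times$. By Lemma \ref{le:shimura_set} and Remark \ref{bre:frobenius} the $\rG_\kappa$-action factors through a cyclic quotient of order dividing $2|\Cl(F)_{\fr_1}|$ (the factor $2$ from the Frobenius interchange of $Z^+$ and $Z^-$), which is prime to $p$ by Definition \ref{de:perfect_prime}\,(3b) and $p\ge 11$; so (1) holds.

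For (2), I would check (N1)--(N3) of Definition \ref{de:nice_coefficient_localized} for $\dE$. \emph{(N1):} by Lemma \ref{le:spectral_sequence_cubic} the first page is concentrated in even $q$, so the differentials $\rd_s$ for $s=2$ and for even $s$ land in zero groups; for odd $s\ge 3$ the differential is $\rG_\kappa$-equivariant between subquotients of $\dE^{p,q}_2$ and $\dE^{p+s,q-s+1}_2$, on which Frobenius acts (Proposition \ref{pr:degenerate}) by powers of $\ell$ differing by $\ell^{(s-1)/2}$, and $p\nmid\ell^{(s-1)/2}-1$ for the relevant $s\le 5$ by Definition \ref{de:cubic_level_raising}\,(C3); thus all $\rd_s=0$ and $\dE$ degenerates at page $2$. \emph{(N2):} Proposition \ref{pr:degenerate} exhibits $\dE^{p,3-p}_2(-1)$ as a direct sum of copies of $\Lambda(\tfrac{p-1}{2})$; among the nonzero terms (with $p\in\{-3,-1,1,3\}$) this has a nonzero $\rG_\kappa$-trivial subquotient only for $\tfrac{p-1}{2}=0$, i.e.\ $p=1$, the cases $p=-3,-1,3$ being excluded since $\ell^{\pm1}-1,\ell^{\pm2}-1\in\Lambda^\times$ by (C3). \emph{(N3):} $\mu^{1,3}_\fm$ and $\mu^{-1,3}_\fm$ are $\rG_\kappa$-maps among the groups of Lemma \ref{le:spectral_sequence_cubic}; using Proposition \ref{pr:tate}, the projective-bundle/blow-up formula (Lemma \ref{le:modification}) over the fibrations of Lemma \ref{le:strata}, Poincar\'e duality for the $\rH^4$-terms, and the triviality of the $\rH^0$-terms, each such group localized at $\fm$ is a direct sum of Tate twists $\Lambda(j)$ with $|j|\le 3$; hence so are $\ker\mu^{1,3}_\fm$, $\coker\mu^{-1,3}_\fm(-1)$ and all their subquotients $M$, and for each such $M$ one has $M^{\rG_\kappa}=M_{\rG_\kappa}$ ($=M$ if $j=0$, $=0$ if $j\ne 0$ since $\ell^j-1\in\Lambda^\times$ by (C3)). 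So $\Lambda$ is very nice for $\dE$.

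For (3), the crux is a computation of $\rH^2(Y^{(0)}_{\overline\kappa},\Lambda(1))_\fm$. On a main component, $Y^{i(i+1)(i+2)}\to X^{i(i+1)(i+2)}\xrightarrow{\wp}Z^{i(i+1)(i+2)}$ is a $\dP^1$-bundle followed by one blow-up along $X^{i(i+1)(i+2)(i+3)(i+5)}=\coprod_{g'}D^i_{g'}$ (Lemmas \ref{le:strata}, \ref{le:pullback}), so Lemma \ref{le:modification} gives $\rH^2(Y^{i(i+1)(i+2)}_{\overline\kappa},\Lambda(1))_\fm\simeq\rH^2(Z^{i(i+1)(i+2)}_{\overline\kappa},\Lambda(1))_\fm\oplus\Gamma(Z^\mp(\overline\kappa),\Lambda)_\fm$, the $\dP^1$-fibre summand $\rH^0(Z^{i(i+1)(i+2)}_{\overline\kappa},\Lambda)_\fm$ vanishing because $\fm$ is non-Eisenstein ($\bar\rho$ generic). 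By Proposition \ref{pr:tate}\,(1) the first summand is $(\Lambda(1)\oplus\Lambda^{\oplus 2}\oplus\Lambda(-1))^{\oplus|\fD(\fr,\fr_\rho)|}$, and Proposition \ref{pr:tate}\,(2) together with the intersection numbers of Lemma \ref{le:cycle_base}\,(3) --- the matrix $\left(\begin{smallmatrix}-2\ell&\rT\\\rT&-2\ell^2\end{smallmatrix}\right)$ localizing at $\fm$ to one of determinant $-(\ell^3-1)^2\in\Lambda^\times$ by (C3) --- shows that its weight-$0$ summand $\Lambda^{\oplus 2|\fD(\fr,\fr_\rho)|}$ is freely generated by the images of $[C^i_g]$ and $[C^i_{g'}]$; pulling back (Lemma \ref{le:pullback}), the $\rG_\kappa$-trivial part of $\rH^2(Y^{i(i+1)(i+2)}_{\overline\kappa},\Lambda(1))_\fm$ is freely generated by $\beta_0\sfF^i_g$, $\beta_0\sfF^i_{g'}$, $\beta_0\sfE^i_{g'}$, while a complementary $\Lambda(\pm1)$-part carries no $\rG_\kappa$-invariants or coinvariants. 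For the sparse components $Y^\pm$ all of $\rH^2(Y^\pm_{\overline\kappa},\Lambda(1))_\fm$ is of permutation type (the fibration over the finite set $Z^\pm$ being a $(\dP^1)^3$-bundle and $Y^\pm\to X^\pm$ a composite of blow-ups, Lemma \ref{le:strata}, Proposition \ref{bpr:blowup}), and the only classes among these not already pulled back from a main component along a double intersection are the exceptional divisors $\beta_0\sfH_h$ over the $0$-dimensional stratum $X^{012345}$. Feeding this into $B_2(Y,\Lambda)=\coker\bigl[\delta_{1*}\colon\rH^0(Y^{(1)}_{\overline\kappa},\Lambda)\to\rH^2(Y^{(0)}_{\overline\kappa},\Lambda(1))\bigr]$ and then into the quotient $B_2(Y,\Lambda)_0^\fm$ (dual to $B^2(Y,\Lambda)^0_\fm\subset B^2(Y,\Lambda)_\fm$, the quotient $B^2(Y,\Lambda)/B^2(Y,\Lambda)^0$ embedding into $\dE^{0,4}_2$ and being $\rG_\kappa$-trivial by Proposition \ref{pr:degenerate}), I expect to obtain that $\beta$ is surjective --- the $\Lambda(\pm1)$-parts contributing nothing on $\rG_\kappa$-invariants, and the fibre-type classes on $Y^\pm$ either lying in $\IM\delta_{1*}$ or dying in the passage to $B_2(Y,\Lambda)_0$ --- so that, taking $\rG_\kappa$-invariants (which commute with localization) and matching $\Lambda$-ranks group by group against the featuring-cycle orbit sums via part (1), one gets the isomorphism $\cA(Y,\Lambda)_\fm\xrightarrow{\sim}A_2(Y,\Lambda)_0^\fm$. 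The second isomorphism is then formal: by \eqref{eq:beta} and \eqref{eq:beta1}, $\beta'$ is the $\Lambda$-dual of $\beta$ composed with the Poincar\'e--Tate duality isomorphism $B^2(Y,\Lambda)_\fm\simeq\Hom(B_2(Y,\Lambda)_\fm,\Lambda)$, under which $B^2(Y,\Lambda)^0_\fm$ corresponds to $B_2(Y,\Lambda)_0^\fm$; combining with part (1) gives $A^2(Y,\Lambda)^0_\fm\xrightarrow{\sim}\Hom(\cA(Y,\Lambda),\Lambda)_\fm$.

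I expect the main difficulty to be the bookkeeping just indicated for (3): showing that on the sparse components $Y^\pm$ no algebraic divisor class, beyond the boundary divisors from the main components and the exceptional divisors $H_h$, survives in $B_2(Y,\Lambda)_0$ after localization --- the ``irrelevance of all other cycles'' flagged in the introduction to \S\ref{ss:3}. This needs a hands-on analysis of $\delta_{1*}$ restricted to $Y^\pm$ and of the $\rG_\kappa$-trivial subquotient $B^2(Y,\Lambda)/B^2(Y,\Lambda)^0\hookrightarrow\dE^{0,4}_2$, carried out with the explicit blow-up structure of the pluri-nodal model from Appendix \ref{ss:b}; once that is settled, the rank count against part (1) and Proposition \ref{pr:tate} forces the residual injectivity.
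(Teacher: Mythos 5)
Parts (1) and (2) are correct and follow the paper's route, modulo two harmless slips: $\rG_\kappa=\Gal(\overline\kappa/\dF_{\ell^6})$ does not interchange $Z^+$ and $Z^-$ (that is the $\ell$-Frobenius, not $\ell^6$-Frobenius), so no factor of $2$ enters in (1) --- the action simply factors through $\Cl(F)_{\fr_1}$ by Lemma \ref{le:shimura_set}; and in (2) the Frobenius eigenvalue discrepancies are powers $\ell^{3(s-1)}$, not $\ell^{(s-1)/2}$, since a Tate twist by $j$ over $\dF_{\ell^6}$ contributes $\ell^{6j}$. Your conclusions survive because (C3) kills $\ell^{6}-1$ and $\ell^{12}-1$ as well.

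Part (3), however, has a genuine gap, and you have flagged it yourself without closing it: you ``expect'' $\beta$ to be surjective and defer the entire question of which divisor classes on $Y^\pm$ die ``in the passage to $B_2(Y,\Lambda)_0$'' to an unperformed hands-on analysis plus an unperformed rank count. The missing idea is that after localization there is no passage to make: $B^2(Y,\Lambda)^0_\fm=\Ker[B^2(Y,\Lambda)_\fm\to\dE^{0,4}_2]$, and $\dE^{0,4}_2=\dE^{0,4}_\infty$ by the degeneration you proved in (2), while $\dE^{0,4}_\infty$ is a subquotient of $\rH^4(Y_{\overline{K}},\Lambda(2))_\fm=\rH^4(\cX_{\fr_0,\fr_1}\otimes\overline\dQ,\Lambda(2))_\fm=0$ by Theorem \ref{th:cubic_level_raising}(1) (whose proof is independent of this proposition) and the fact that $\pi$ is an isomorphism on generic fibers. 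Hence $B^2(Y,\Lambda)^0_\fm=B^2(Y,\Lambda)_\fm$ and dually $B_2(Y,\Lambda)_0^\fm=B_2(Y,\Lambda)_\fm$, so the first isomorphism reduces to identifying $\cA(Y,\Lambda)_\fm$ with $\coker[\delta_{1*}\colon\rH^0(Y^{(1)}_{\overline\kappa},\Lambda)_\fm\to\rH^2(Y^{(0)}_{\overline\kappa},\Lambda(1))_\fm^{\rG_\kappa}]$, which follows from the dual of Proposition \ref{pr:tate}(2), Lemma \ref{le:modification}, and the vanishing $\rH^0(Y^{i(i+1)(i+2)(i+3)}_{\overline\kappa},\Lambda)_\fm=0$ ($\fm$ non-Eisenstein). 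Merely observing, as you do, that $B^2(Y,\Lambda)/B^2(Y,\Lambda)^0$ embeds into $\dE^{0,4}_2$ and is $\rG_\kappa$-trivial does not kill it; you need its vanishing, and without that your proposed rank count has no fixed target rank for $A_2(Y,\Lambda)_0^\fm$.
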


\begin{proof}
For (1), by Lemma \ref{le:shimura_set}, the action of $\Gamma_\kappa$ on $\cB(Y_{\fr_0,\fr_1},\Lambda)$ factorizes through $\Cl(F)_{\fr_1}$. Since $p\nmid|\Cl(F)_{\fr_1}|$, the canonical map $\cB(Y_{\fr_0,\fr_1},\Lambda)^{\rG_\kappa}\to\cB(Y_{\fr_0,\fr_1},\Lambda)_{\rG_\kappa}$ is an isomorphism. Thus the composite map $\Hom(\cB(Y_{\fr_0,\fr_1})_{\rG_\kappa},\Lambda)
\xrightarrow{\sim}\Hom(\cB(Y_{\fr_0,\fr_1}),\Lambda)^{\rG_\kappa}\to\Hom(\cA(Y_{\fr_0,\fr_1},\Lambda),\Lambda)$ is an isomorphism; (1) follows.

For (2), conditions (N1) and (N2) of Definition \ref{de:nice_coefficient_localized} are satisfied due to Proposition \ref{pr:degenerate} and the fact that $p\nmid(\ell^6-1)(\ell^{12}-1)(\ell^{18}-1)$ by Definition \ref{de:cubic_level_raising} (C3). For (N3), we have $\Ker\mu^{1,3}_\fm=\rH^4(Y_{\fr_0,\fr_1,\overline\kappa}^{(0)},\Lambda(2))_\fm$ and $\coker\mu^{-1,3}_\fm(-1)=\rH^2(Y_{\fr_0,\fr_1,\overline\kappa}^{(0)},\Lambda(1))_\fm$. By the Poincar\'{e} duality, it suffices to check the subquotient for $\rH^2(Y_{\fr_0,\fr_1,\overline\kappa}^{(0)},\Lambda(1))_\fm$. By Proposition \ref{bpr:blowup}, Lemmas \ref{le:strata} and \ref{le:modification}, $\rH^2(Y_{\fr_0,\fr_1,\overline\kappa}^{(0)},\Lambda(1))_\fm$ is isomorphic to the direct sum of $\bigoplus_{i=1}^5\rH^2(Z_{\fr_0,\fr_1,\overline\kappa}^{i(i+1)(i+2)},\Lambda(1))_\fm$ and a trivial $\Lambda[\rG_\kappa]$-module. Thus (N3) follows from Proposition \ref{pr:tate} (1).

For (3), by definition we have $B^2(Y_{\fr_0,\fr_1},\Lambda)^0_\fm=\Ker[B^2(Y_{\fr_0,\fr_1},\Lambda)_\fm\to\dE^{0,4}_2]$. By (2), Theorem \ref{th:cubic_level_raising} (1), and the fact that $\pi\colon\cY_{\fr_0,\fr_1}\to\cX_{\fr_0,\fr_1}$ induces an isomorphism on the generic fiber, we have $\dE^{0,4}_2=\dE^{0,4}_\infty=0$. Thus, $B^2(Y_{\fr_0,\fr_1},\Lambda)^0_\fm=B^2(Y_{\fr_0,\fr_1},\Lambda)_\fm$, and the first isomorphism amounts to the following isomorphism
\[\cA(Y_{\fr_0,\fr_1},\Lambda)_\fm\xrightarrow{\sim}\coker[\rH^0(Y_{\fr_0,\fr_1,\overline\kappa}^{(1)},\Lambda)_\fm\xrightarrow{\delta_{1*}}
\rH^2(Y_{\fr_0,\fr_1,\overline\kappa}^{(0)},\Lambda(1))_\fm^{\rG_\kappa}].\]
However, this is a straightforward consequence of (the dual of) Proposition \ref{pr:tate} (2), Lemma \ref{le:modification}, and the fact that $\rH^0(Y_{\fr_0,\fr_1,\overline\kappa}^{i(i+1)(i+2)(i+3)},\Lambda)_\fm=0$ for every $i$ as $\fm$ is not an Eisenstein ideal. The second isomorphism follows from the first one and the Poincar\'{e} duality.
\end{proof}

\subsection{Proof of Theorem \ref{th:cubic_level_raising}}
\label{ss:cubic}

By Lemma \ref{le:quotient}, we may identify $\Hom(\cB(Y_{\fr_0,\fr_1},\Lambda),\Lambda)_\fm$ with
\begin{align}\label{eq:19}
\Gamma(\cS_\fr,\Lambda)_\fm^{\oplus 18}\bigoplus\Gamma(\cS_{\fr\fl},\Lambda)_\fm
\end{align}
via the (ordered) basis chosen before Proposition \ref{pr:potential_matrix}. We write an element in \eqref{eq:19} in the form $(\boldsymbol{f},\boldsymbol{e},\boldsymbol{h})$, where $\boldsymbol{f}$ encodes the first 12 components; $\boldsymbol{e}$ encodes the next six components; and $\boldsymbol{h}$ encodes the last component.

\begin{proposition}\label{pr:corank_cubic}
The assignment
\begin{align}\label{eq:corank_cubic}
(\boldsymbol{f},\boldsymbol{e},\boldsymbol{h})\mapsto(e_1+e_4,e_2+e_5,e_1+e_6)
\end{align}
where $e_i$ is the $i$-th component of $\boldsymbol{e}-BA^{-1}\boldsymbol{f}+\frac{1}{2}D^\star\boldsymbol{h}$, induces an isomorphism \[\coker\tilde\Delta_\fm\simeq\Gamma(\cS_\fr,\Lambda)_\fm^{\oplus 3}.\]
In particular, $\coker\tilde\Delta_\fm$ is a free $\Lambda$-module of rank $3|\fD(\fr,\fr_\rho)|$.
\end{proposition}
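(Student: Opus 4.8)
The plan is to diagonalize the block matrix of $\tilde\Delta_\fm$ from Proposition \ref{pr:potential_matrix} by block row/column operations, reducing the computation of $\coker\tilde\Delta_\fm$ to an explicit $6\times 6$ system over $\Gamma(\cS_\fr,\Lambda)_\fm$. First I would observe that after localizing at $\fm=\fm^{\fr\fl}_\rho$, the operator $\rT$ (which is $\delta_{\pm*}\circ\delta_\mp^*$) acts as the scalar $\ell^3+1$: indeed by Definition \ref{de:cubic_level_raising} (C4) and the discussion in the proof of Proposition \ref{pr:tate}, $\rT_\fl$ acts on $\Gamma(\cS_\fr,\Lambda)_\fm$ by $\ell^3+1$, and $\rT$ corresponds to this Hecke operator under the identification of Lemma \ref{le:shimura_set}/Lemma \ref{le:quotient}. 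Similarly $\delta^{+*},\delta^{-*}$ (and their transposes $\delta^+_*,\delta^-_*$) become, after localization, the identity map on $\Gamma(\cS_\fr,\Lambda)_\fm$ up to the unit $|\Cl(F)_{\fr_1}|$, again by Lemma \ref{le:quotient}; so $D$ and $D^\star$ become (up to units) the row, resp.\ column, vector all of whose six entries are $\mathrm{id}$, and $\frac12 D^\star D$ is the $6\times 6$ matrix with every entry $\frac12\mathrm{id}$.

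Next I would perform the block elimination. Writing the matrix as $\left(\begin{smallmatrix} A & \tp B & 0 \\ B & C & D^\star \\ 0 & D & -2\end{smallmatrix}\right)$, the key point is that $A$ becomes invertible over $\Lambda_\fm$: each $2\times 2$ block $A_1$, $A_2$ has, after substituting $\rT=\ell^3+1$, determinant a unit because $p\nmid\ell(\ell^{18}-1)(\ell^6+1)$ and the other numerical conditions of Definition \ref{de:cubic_level_raising} (C3) and Definition \ref{de:perfect_prime} hold — this is exactly the kind of cyclic-block determinant computation already used in Proposition \ref{pr:tate} for the $2\times 2$ matrix $\left(\begin{smallmatrix} -2\ell & \rT \\ \rT & -2\ell^2\end{smallmatrix}\right)$, whose determinant $4\ell^3-(\ell^3+1)^2=-(\ell^3-1)^2$ is a unit. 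Using $A$ to clear the first block column/row replaces $C$ by the Schur complement $C-BA^{-1}\tp B$, and then absorbing $-2$ into the middle via the last block changes $C-BA^{-1}\tp B$ into $C-BA^{-1}\tp B+\frac12 D^\star D = C'-BA^{-1}\tp B$, where $C'$ is the matrix \eqref{eq:potential_matrix_change}. This is precisely why $C'$ was introduced. So $\coker\tilde\Delta_\fm\simeq\coker\bigl(C'-BA^{-1}\tp B\colon \Gamma(\cS_\fr,\Lambda)_\fm^{\oplus 6}\to\Gamma(\cS_\fr,\Lambda)_\fm^{\oplus 6}\bigr)$, with the surviving coordinate being $\boldsymbol e - BA^{-1}\boldsymbol f + \frac12 D^\star\boldsymbol h$ as in the statement of \eqref{eq:corank_cubic}.

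Then I would compute $C'-BA^{-1}\tp B$ explicitly with $\rT=\ell^3+1$. Both $C'$ and $BA^{-1}\tp B$ are circulant-type $6\times 6$ matrices (with $\mathbb Z/6$-symmetry coming from the $\sigma$-action on $\Phi$), so the computation is a finite scalar calculation: substitute, use $C'$ from \eqref{eq:potential_matrix_change} and the explicit $B$, $A$ from Proposition \ref{pr:potential_matrix}, and simplify. The claim is that the resulting matrix, as a map on $\Gamma(\cS_\fr,\Lambda)_\fm^{\oplus6}$, has cokernel $\Gamma(\cS_\fr,\Lambda)_\fm^{\oplus 3}$, with the quotient realized by the three linear forms $e_1+e_4$, $e_2+e_5$, $e_1+e_6$ in the components $e_i$ of $\boldsymbol e-BA^{-1}\boldsymbol f+\frac12 D^\star\boldsymbol h$. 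Concretely one exhibits three explicit generators of the image lattice and checks the quotient, or equivalently computes the Smith normal form of the $6\times 6$ integer matrix obtained after the substitution and verifies it has exactly three unit invariant factors and three zero ones (the primes dividing the nonzero ones being among those excluded by $p\in\cP_E^c$ / Definition \ref{de:cubic_level_raising}). Finally, $\Gamma(\cS_\fr,\Lambda)_\fm/\Ker\phi^\fr_\rho$ being free of rank $|\fD(\fr,\fr_\rho)|$ (since the quadruple is $\fr$-isolated, Definition \ref{de:perfect_prime}) gives the freeness of rank $3|\fD(\fr,\fr_\rho)|$.

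The main obstacle is the explicit computation of the Schur complement $C'-BA^{-1}\tp B$: one must invert the $12\times 12$ block matrix $A$ (after substituting $\rT=\ell^3+1$), which has the cyclic structure of a $\mathbb Z/6$-circulant with $2\times 2$ blocks, then multiply by $B$ on both sides and subtract from $C'$, and finally identify the cokernel of the resulting $6\times 6$ matrix over $\Lambda_\fm$. The cyclic symmetry helps enormously — one can diagonalize the $\mathbb Z/6$-action using sixth roots of unity (available in $\Lambda_\fm$ up to the conditions $p\nmid\ell^6-1$ etc., and in any case formally), reducing each block to a $2\times 2$ (resp.\ scalar) computation indexed by a character of $\mathbb Z/6$ — but keeping track of the fixed coordinate and showing the cokernel is honestly free of the asserted rank over $\Lambda_\fm$ (rather than having extra torsion) requires care, and is exactly where the numerical hypotheses on $p$ in Definition \ref{de:cubic_level_raising} (C3) and Definition \ref{de:perfect_prime} get used. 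I expect the verification that the three forms $e_1+e_4,\ e_2+e_5,\ e_1+e_6$ give a well-defined isomorphism (i.e.\ that they kill the image and that no further relations survive) to be the delicate bookkeeping step.
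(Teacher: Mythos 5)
Your overall route is the paper's: after localizing at $\fm$ one has $\rT=\ell^3+1$, one inverts $A$ (and the scalar $-2$), and the Schur complement $C'-BA^{-1}\tp{B}$ acting on $\Gamma(\cS_\fr,\Lambda)_\fm^{\oplus 6}$, reached through the coordinate $\boldsymbol{e}-BA^{-1}\boldsymbol{f}+\frac{1}{2}D^\star\boldsymbol{h}$, computes $\coker\tilde\Delta_\fm$. But two points need repair and one essential step is missing. First, your justification that $A$ is invertible --- that the blocks $A_1,A_2$ have unit determinant --- is a non sequitur: invertibility of the individual $2\times 2$ blocks does not give invertibility of the $12\times 12$ block-circulant matrix $A$. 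What is actually needed (and what the paper records, via a machine computation) is $\det A=(\ell-1)^{16}(\ell^2+\ell+1)^{16}=(\ell^3-1)^{16}$, a unit by Definition \ref{de:cubic_level_raising} (C3). Second, the claim that $\delta^{\pm*}$ and $\delta^\pm_*$ localize to identities, so that $\frac{1}{2}D^\star D$ has every entry $\frac{1}{2}$, is false: these are degeneracy maps between $\cS_{\fr\fl}$ and $\cS_\fr$, and the composites are $\ell^3+1$ (equal parity) and $\rT$ (opposite parity); this is precisely why $C'=C+\frac{1}{2}D^\star D$ has entries $\pm\frac{\ell^3+1}{2}$ and $\pm\frac{\rT}{2}$ in \eqref{eq:potential_matrix_change}. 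The slip is harmless only because you then quote the paper's $C'$ rather than your own formula.

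The substantive gap is the final step. The proposition does not merely say that $\coker(C'-BA^{-1}\tp{B})$ has rank $3$; it asserts that the three specific forms $e_1+e_4$, $e_2+e_5$, $e_1+e_6$ present it as a free module, and this is exactly where the numerical hypotheses are consumed. Your proposal asserts this and defers it to an unspecified Smith-normal-form or sixth-root-of-unity computation, but never identifies what must actually be checked, and nothing in your write-up singles out these three forms rather than some other independent combinations. In the paper this is settled by computing $BA^{*}\tp{B}$ explicitly ($A^{*}$ the adjugate): it is the circulant matrix with entries $\pm Q_1(\ell),\pm Q_2(\ell)$ displayed in the proof, and the isomorphism then reduces, by elementary linear algebra on the circulant $C'-BA^{-1}\tp{B}$, to the two non-divisibilities $p\nmid Q_1(\ell)+Q_2(\ell)+(\ell^3+1)Q_0(\ell)$ and $p\nmid Q_1(\ell)-2Q_2(\ell)-\frac{\ell^3+1}{2}Q_0(\ell)$, both guaranteed by (C3). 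Until that computation is carried out (by hand via the $\dZ/6$-symmetry you mention, or by machine as in the paper), your argument does not yet establish the statement.
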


\begin{proof}
The statement implicitly asserts that $A$ is invertible after localization at $\fm$. Since $(\rho,\fr_\rho,\fr_0,\fr_1)$ is both $\fr$-isolated and $\fr\fl$-isolated by Definition \ref{de:cubic_level_raising} (C2), the canonical maps \[\Gamma(\cS_\fr,\Lambda)_\fm\to\Gamma(\cS_\fr,\Lambda)/\Ker\phi^{\fr\fl}_\rho\to\Gamma(\cS_\fr,\Lambda)/\Ker\phi^\fr_\rho\]
are isomorphisms by Lemma \ref{le:quotient}. In particular, $\rT=\phi^\fr_\rho(\rT_\fl)=\ell^3+1$ on $\Hom(\cB(Y_{\fr_0,\fr_1},\Lambda),\Lambda)_\fm$ by Definition \ref{de:cubic_level_raising} (C4). Using \emph{Mathematica},
we find that
\begin{enumerate}
  \item the determinant of the matrix $A$ is equal to $Q_0(\ell)\coloneqq(\ell-1)^{16}(\ell^2+\ell+1)^{16}$;

  \item if we let $A^*$ be the adjoint matrix of $A$, then
        \[BA^*\tp{B}=
        \left(
        \begin{array}{cccccc}
        Q_1(\ell) & Q_2(\ell) & -Q_2(\ell) & -Q_1(\ell) & -Q_2(\ell) &Q_2(\ell) \\
        Q_2(\ell) & Q_1(\ell) & Q_2(\ell) & -Q_2(\ell) & -Q_1(\ell) & -Q_2(\ell)  \\
        -Q_2(\ell) & Q_2(\ell) & Q_1(\ell) & Q_2(\ell) & -Q_2(\ell) & -Q_1(\ell) \\
        -Q_1(\ell) & -Q_2(\ell) & Q_2(\ell) & Q_1(\ell) & Q_2(\ell) & -Q_2(\ell) \\
        -Q_2(\ell) & -Q_1(\ell) & -Q_2(\ell) & Q_2(\ell) & Q_1(\ell) & Q_2(\ell) \\
        Q_2(\ell) & -Q_2(\ell) & -Q_1(\ell) & -Q_2(\ell) & Q_2(\ell) & Q_1(\ell)  \\
        \end{array}
        \right),\]
        where $Q_1(\ell)=-2\ell(\ell+1)(\ell^2-\ell+1)^2(\ell-1)^{14}(\ell^2+\ell+1)^{15}$ and $Q_2(\ell)=2\ell^2(\ell^3+1)(\ell-1)^{14}(\ell^2+\ell+1)^{15}$.
\end{enumerate}
In particular, (1) implies that $A$ is invertible after localization at $\fm$ by Definition \ref{de:cubic_level_raising} (C3).

Note that the following diagram
\[\xymatrix{
\Gamma(\cS_\fr,\Lambda)_\fm^{\oplus 18}\bigoplus\Gamma(\cS_{\fr\fl},\Lambda)_\fm
\ar[r]\ar[d]^-{\tilde\Delta_\fm} & \Gamma(\cS_\fr,\Lambda)_\fm^{\oplus 6} \ar[d] \\
\Gamma(\cS_\fr,\Lambda)_\fm^{\oplus 18}\bigoplus\Gamma(\cS_{\fr\fl},\Lambda)_\fm
\ar[r] &  \Gamma(\cS_\fr,\Lambda)_\fm^{\oplus 6}
}\]
commutes, in which horizontal vertical maps are the assignment
\[(\boldsymbol{f},\boldsymbol{e},\boldsymbol{h})\mapsto\boldsymbol{e}-BA^{-1}\boldsymbol{f}+\frac{1}{2}D^\star\boldsymbol{h},\]
and the right vertical map is given by the matrix $C'-BA^{-1}\tp{B}$ where $C'$ is in \eqref{eq:potential_matrix_change}. Moreover, the induced map
\begin{align}\label{eq:corank}
\coker\tilde\Delta_\fm\to\coker[C'-BA^{-1}\tp{B}\colon\Gamma(\cS_\fr,\Lambda)_\fm^{\oplus 6}\xrightarrow{\sim}
\Gamma(\cS_\fr,\Lambda)_\fm^{\oplus 6}]
\end{align}
is an isomorphism. As $p$ does not divide
\begin{align*}
Q_1(\ell)+Q_2(\ell)+(\ell^3+1)Q_0(\ell)&=(\ell-1)^{16}(\ell^2+\ell+1)^{15}(\ell^2-\ell+1)(\ell^3+1),\\
Q_1(t)-2Q_2(t)-\frac{t^3+1}{2}Q_0(t)&=-\frac{1}{2}(\ell+1)^2(\ell-1)^{14}(\ell^2+\ell+1)^{16}(\ell^3+1)
\end{align*}
by Definition \ref{de:cubic_level_raising} (C3), we know from linear algebra and \eqref{eq:corank} that the assignment \eqref{eq:corank_cubic} induces an isomorphism $\coker\tilde\Delta_\fm\simeq\Gamma(\cS_\fr,\Lambda)_\fm^{\oplus 3}$.
\end{proof}



\begin{corollary}\label{co:corank_cubic}
We have a canonical isomorphism
\[(\Gamma(\cS_\fr,\Lambda)/\Ker\phi^{\fr\fl}_\rho)^{\oplus 3}
\xrightarrow{\sim}\rH^1_\sing(K,\rH^3(\cX_{\fr_0,\fr_1}\otimes\overline\dQ,\Lambda(2))/\Ker\phi^{\fr\fl}_\rho).\]
\end{corollary}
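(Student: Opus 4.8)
The plan is to assemble the corollary from three ingredients already in place: the very-niceness of $\Lambda$ for the localized weight spectral sequence $\dE=\pres{2}\rE_{\cY_{\fr_0,\fr_1},\fm}$ (Proposition \ref{pr:featuring} (2)), the identification of the potential map in terms of the featuring-cycle matrix (Proposition \ref{pr:featuring} (3) together with Proposition \ref{pr:potential_matrix}), and the cokernel computation of Proposition \ref{pr:corank_cubic}. First I would invoke Theorem \ref{th:galois_cohomology_localized} (2), applied to $X=\cY_{\fr_0,\fr_1}$ over $K=\dQ_{\ell^6}$, with $r=2$ and $n=3$ (so $n+1-r=2$), and with the maximal ideal $\fm=\fm^{\fr\fl}_\rho$: since $\Lambda=\dZ/p^\nu$ is very nice for $\dE$, we get the exact sequence
\[
0\to\rH^1_\unr(K,\sH)\to A_2(Y_{\fr_0,\fr_1},\Lambda)_0^\fm\xrightarrow{\Delta^2_\fm}A^2(Y_{\fr_0,\fr_1},\Lambda)^0_\fm\xrightarrow{\eta^2_\fm}\rH^1_\sing(K,\sH)\to0,
\]
where I abbreviate $\sH=\rH^3(\cY_{\fr_0,\fr_1}\otimes\overline\dQ,\dZ_p(2))_\fm$. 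Because $\pi\colon\cY_{\fr_0,\fr_1}\to\cX_{\fr_0,\fr_1}\otimes O_K$ is an isomorphism on generic fibers, $\rH^3(\cY_{\fr_0,\fr_1}\otimes\overline\dQ,\dZ_p(2))_\fm=\rH^3(\cX_{\fr_0,\fr_1}\otimes\overline\dQ,\dZ_p(2))_\fm$ as $\Lambda[\rG_\dQ]$-modules, so $\rH^1_\sing(K,\sH)=\rH^1_\sing(K,\rH^3(\cX_{\fr_0,\fr_1}\otimes\overline\dQ,\dZ_p(2))_\fm)$.

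Next I would read off from the exactness of the above sequence that $\eta^2_\fm$ induces a canonical isomorphism $\coker\Delta^2_\fm\xrightarrow{\sim}\rH^1_\sing(K,\sH)$. Now I use Proposition \ref{pr:featuring} (3): the maps $\beta,\beta'$ identify $\cA(Y_{\fr_0,\fr_1},\Lambda)_\fm\xrightarrow{\sim}A_2(Y_{\fr_0,\fr_1},\Lambda)_0^\fm$ and $A^2(Y_{\fr_0,\fr_1},\Lambda)^0_\fm\xrightarrow{\sim}\Hom(\cA(Y_{\fr_0,\fr_1},\Lambda),\Lambda)_\fm$, and by the commutativity of \eqref{eq:beta1} the map $\Delta^2_\fm$ is carried, under these identifications, to the localization $\tilde\Delta_\fm$ of the map $\tilde\Delta$ of \eqref{eq:potential_featuring} (here I also use Proposition \ref{pr:featuring} (1) to pass between $\Hom(\cB(Y_{\fr_0,\fr_1}),\Lambda)^{\rG_\kappa}$ and $\Hom(\cA(Y_{\fr_0,\fr_1},\Lambda),\Lambda)$). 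Hence $\coker\Delta^2_\fm\simeq\coker\tilde\Delta_\fm$ canonically. Combining with the previous paragraph, $\rH^1_\sing(K,\sH)\simeq\coker\tilde\Delta_\fm$, and then Proposition \ref{pr:corank_cubic} gives $\coker\tilde\Delta_\fm\simeq\Gamma(\cS_\fr,\Lambda)_\fm^{\oplus3}$. Finally, since $(\rho,\fr_\rho,\fr_0,\fr_1)$ is $\fr\fl$-isolated (Definition \ref{de:cubic_level_raising} (C2)), Lemma \ref{le:quotient} (or rather the $\fr\fl$-isolatedness directly) gives $\Gamma(\cS_\fr,\Lambda)_\fm\xrightarrow{\sim}\Gamma(\cS_\fr,\Lambda)/\Ker\phi^{\fr\fl}_\rho$, so $\Gamma(\cS_\fr,\Lambda)_\fm^{\oplus3}\simeq(\Gamma(\cS_\fr,\Lambda)/\Ker\phi^{\fr\fl}_\rho)^{\oplus3}$. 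Stringing the isomorphisms together yields the claimed canonical isomorphism.

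The main obstacle is bookkeeping rather than a new idea: I must check that the quotient of $\sH$ by $\Ker\phi^{\fr\fl}_\rho$ is compatible with the identifications, i.e. that $\rH^1_\sing(K,\rH^3(\cX_{\fr_0,\fr_1}\otimes\overline\dQ,\dZ_p(2))_\fm)$ already equals $\rH^1_\sing(K,\rH^3(\cX_{\fr_0,\fr_1}\otimes\overline\dQ,\dZ_p(2))/\Ker\phi^{\fr\fl}_\rho)$. This follows because $\Gamma(\cS_\fr,\Lambda)_\fm=\Gamma(\cS_\fr,\Lambda)/\Ker\phi^{\fr\fl}_\rho$ forces the relevant Hecke module to be already ``$\fm$-isotypic modulo $\phi^{\fr\fl}_\rho$'' — more precisely, the localized cohomology $\rH^3(\cX_{\fr_0,\fr_1}\otimes\overline\dQ,\dZ_p(2))_\fm$ is, after reduction, annihilated by $\Ker\phi^{\fr\fl}_\rho$, so $\rH^1_\sing$ of the two coincide; alternatively one notes the singular cohomology computed above is a $\Gamma(\cS_\fr,\Lambda)_\fm$-module and that module equals $\Gamma(\cS_\fr,\Lambda)/\Ker\phi^{\fr\fl}_\rho$. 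One should also record that the operator $\rT_\fl$ acts on everything by $\ell^3+1$ (Definition \ref{de:cubic_level_raising} (C4)), which is what makes the matrix $A$ of Proposition \ref{pr:potential_matrix} specialize to the explicit integer matrix whose determinant and adjoint are computed in Proposition \ref{pr:corank_cubic}; all of this is already contained in the cited results, so no further computation is needed here.
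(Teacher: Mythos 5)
Your main line of argument coincides with the paper's: Proposition \ref{pr:featuring} together with Theorem \ref{th:galois_cohomology_localized} give the canonical isomorphism $\coker\tilde\Delta_\fm\simeq\rH^1_\sing(K,V_\fm)$, where $V=\rH^3(\cX_{\fr_0,\fr_1}\otimes\overline\dQ,\Lambda(2))$, and Proposition \ref{pr:corank_cubic} computes that cokernel as $\Gamma(\cS_\fr,\Lambda)_\fm^{\oplus 3}$. The gap is in your last paragraph, where you dismiss as bookkeeping the passage from the localized coefficient module $V_\fm$ to the quotient $V/\Ker\phi^{\fr\fl}_\rho$ ($=V_\fm/\Ker\phi^{\fr\fl}_\rho$, since $\dZ[\dT^{\fr\fl}]/\Ker\phi^{\fr\fl}_\rho$ is local at $\fm$). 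Both of your justifications show at most that $\rH^1_\sing(K,V_\fm)$ is annihilated by $\Ker\phi^{\fr\fl}_\rho$ (it is indeed a module over $\Gamma(\cS_\fr,\Lambda)_\fm\simeq\Gamma(\cS_\fr,\Lambda)/\Ker\phi^{\fr\fl}_\rho$ by Hecke-equivariance of all the identifications), but that says nothing about whether the natural map $\rH^1_\sing(K,V_\fm)\to\rH^1_\sing(K,V_\fm/\Ker\phi^{\fr\fl}_\rho)$ is injective or surjective. By Lemma \ref{le:singular}, the functor $N\mapsto\rH^1_\sing(K,N)$ is $\rH^1(\rI_K,N)^{\rG_\kappa}$, i.e.\ $T$-coinvariants (twisted by $(-1)$) followed by $\rG_\kappa$-invariants; this is neither left nor right exact, so applying it before versus after the quotient by $\Ker\phi^{\fr\fl}_\rho$ can a priori differ in both directions: classes can die and new $\rG_\kappa$-invariants can appear in the quotient.

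The paper closes exactly this point with a short but genuine argument: by Lemma \ref{le:singular} together with Proposition \ref{pr:degenerate} (triviality of the $\rG_\kappa$-action on the terms $\dE^{p,q}_2(\tfrac{q-4}{2})$ of the localized weight spectral sequence), the canonical map $\rH^1_\sing(K,V_\fm)/\Ker\phi^{\fr\fl}_\rho\to\rH^1_\sing(K,V/\Ker\phi^{\fr\fl}_\rho)$ is an isomorphism; combined with $\Gamma(\cS_\fr,\Lambda)_\fm\simeq\Gamma(\cS_\fr,\Lambda)/\Ker\phi^{\fr\fl}_\rho$ from the $\fr\fl$-isolatedness (Lemma \ref{le:quotient}, Definition \ref{de:cubic_level_raising} (C2)), this yields the corollary. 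So you must add this invariants/coinvariants control, which is precisely what Proposition \ref{pr:degenerate} supplies; it does not follow from Hecke-linearity or from the source being killed by $\Ker\phi^{\fr\fl}_\rho$. A minor further point: keep the coefficient ring $\Lambda=\dZ/p^\nu$ throughout — you wrote $\dZ_p$ coefficients for $\sH$, but the very-niceness in Proposition \ref{pr:featuring} (2) and the whole featuring-cycle computation are with $\Lambda$.
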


\begin{proof}
By Proposition \ref{pr:featuring} and Theorem \ref{th:galois_cohomology_localized}, we have a canonical isomorphism
\[\coker\tilde\Delta_\fm\xrightarrow{\sim}\rH^1_\sing(K,\rH^3(\cX_{\fr_0,\fr_1}\otimes\overline\dQ,\Lambda(2))_\fm).\]
By Lemma \ref{le:singular} and Proposition \ref{pr:degenerate}, we know that the canonical map
\[\rH^1_\sing(K,\rH^3(\cX_{\fr_0,\fr_1}\otimes\overline\dQ,\Lambda(2))_\fm)/\Ker\phi^{\fr\fl}_\rho
\to\rH^1_\sing(K,\rH^3(\cX_{\fr_0,\fr_1}\otimes\overline\dQ,\Lambda(2))/\Ker\phi^{\fr\fl}_\rho)\]
is an isomorphism. Thus the corollary follows from Proposition \ref{pr:corank_cubic}.
\end{proof}

To ease notation, we put $\rM\coloneqq\rH^3(\cX_{\fr_0,\fr_1}\otimes\overline\dQ,\Lambda(2))/\Ker\phi^{\fr\fl}_\rho$ as a $\Lambda[\rG_\dQ]$-module, and $\bar\rM\coloneqq\rM\otimes\dF_p=\rH^3(\cX_{\fr_0,\fr_1}\otimes\overline\dQ,\dF_p(2))/\fm$.

\begin{proposition}\label{pr:dimension}
As an $\dF_p[\rG_{\TF}]$-module, the semisimplification $\bar\rM^{\r{ss}}$ is isomorphic to $\bar\rN_\rho^\sharp(2)^{\oplus|\fD(\fr,\fr_\rho)|}$ (Notation \ref{no:reduction}). In particular, $\dim_{\dF_p}\bar\rM=8|\fD(\fr,\fr_\rho)|$.
\end{proposition}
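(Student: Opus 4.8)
The plan is to combine the Eichler--Shimura type description of $\bar\rM$ that is already implicit in the cohomology computations of this section with the triple product structure of $\bar\rN_\rho^\sharp$. First I would invoke Theorem \ref{th:cubic_level_raising} (1), which gives $\rH^j(\cX(\ell)_{\fr_0,\fr_1}\otimes\overline\dQ,\dZ_p)_{\fm_\rho^{\fr\fl}}=0$ for $j\neq 3$ and hence that $\bar\rM=\rH^3(\cX_{\fr_0,\fr_1}\otimes\overline\dQ,\dF_p(2))/\fm$ computes the whole localized cohomology; together with the smoothness and properness of $\cX(\ell)_{\fr_0,\fr_1}\otimes\dZ_p$ (already noted in the proof of part (1)), this identifies $\bar\rM$ with the corresponding localized \'{e}tale cohomology of the Hilbert threefold over $\overline\dQ$.

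Next I would pin down the $\dF_p[\rG_{\TF}]$-module structure by the same argument used in Lemma \ref{le:vanishing} and Proposition \ref{pr:tate}: via the Jacquet--Langlands / Matsushima-type decomposition on the Shimura threefold and the Eichler--Shimura relation of \cite{Nek}*{\Sec A6}, every $\rG_{\TF}$-irreducible subquotient of $\bar\rM$ has characteristic polynomial of Frobenius dividing that of $\bar\rho_0\otimes\bar\rho_1\otimes\bar\rho_2$, where $\bar\rho_i=\bar\rho\res_{\rG_{\TF},\tau_i}$. Since $\bar\rho$ is generic (Definition \ref{de:perfect_prime}), the image of $(\Ind^\dQ_F\bar\rho)\res_{\rG_{\TF}}$ is $\rG(\SL_2(\dF_p)^{\oplus 3})$, so $\bar\rho_0\otimes\bar\rho_1\otimes\bar\rho_2=\bar\rN_\rho^\sharp$ is an irreducible $8$-dimensional $\dF_p[\rG_{\TF}]$-module, and an argument parallel to \cite{Dim05}*{Lemma 6.5} forces every $\rG_{\TF}$-Jordan--H\"{o}lder factor of $\bar\rM$ to be isomorphic to $\bar\rN_\rho^\sharp$. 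Hence $\bar\rM^{\r{ss}}\simeq(\bar\rN_\rho^\sharp)^{\oplus m}$ for some multiplicity $m\geq 0$, and after the Tate twist $\bar\rM^{\r{ss}}\simeq\bar\rN_\rho^\sharp(2)^{\oplus m}$.

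It then remains to compute $m$, equivalently $\dim_{\dF_p}\bar\rM=8m$. Here I would use the local information at $\ell$ furnished by Corollary \ref{co:corank_cubic}: $\rH^1_\sing(K,\bar\rM)\simeq\rH^1_\sing(K,\rM)\otimes\dF_p$ has $\dF_p$-dimension $3\dim_{\dF_p}(\Gamma(\cS_\fr,\dF_p)/\fm)=3|\fD(\fr,\fr_\rho)|$, using that $(\rho,\fr_\rho,\fr_0,\fr_1)$ is $\fr$-isolated so $\Gamma(\cS_\fr,\Lambda)/\Ker\phi^\fr_\rho$ is free of rank $|\fD(\fr,\fr_\rho)|$ over $\Lambda$. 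On the other hand, by Lemma \ref{le:sharp} the local Galois module $\bar\rN_\rho^\sharp(2)$ (which is unramified at $\ell$ by (C1), (C3), (C4)) has $\rH^1_\sing(\dQ_\ell,\bar\rN_\rho^\sharp(2))$ of $\dF_p$-dimension $1$, since only the summand $\rR(1)$ contributes a nonzero $\rH^1_\unr\cong\rH^1_\sing$-type class and $\rH^1_\sing(\dQ_\ell,-)$ is computed from the coinvariants of Frobenius; more precisely $\rH^1_\sing(K,\bar\rN_\rho^\sharp(2))=\rH^1(\rI_K,\bar\rN_\rho^\sharp(2))^{\rG_\kappa}$ is free of rank $1$ over $\dF_p$ by the second assertion of Lemma \ref{le:sharp} applied over $K$, and passing from $K=\dQ_{\ell^6}$ to $\dQ_\ell$ changes nothing since the module is unramified. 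Since $\rH^1_\sing(K,-)$ is exact on the category of these (semisimple, unramified) $\dF_p[\rG_\ell]$-modules, $\dim_{\dF_p}\rH^1_\sing(K,\bar\rM)=m\cdot\dim_{\dF_p}\rH^1_\sing(K,\bar\rN_\rho^\sharp(2))=m$. Comparing the two counts gives $m=|\fD(\fr,\fr_\rho)|$, whence $\bar\rM^{\r{ss}}\simeq\bar\rN_\rho^\sharp(2)^{\oplus|\fD(\fr,\fr_\rho)|}$ and $\dim_{\dF_p}\bar\rM=8|\fD(\fr,\fr_\rho)|$.

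The main obstacle I anticipate is the rigorous identification, at mod $p$ level and after localization, of the $\rG_{\TF}$-Jordan--H\"{o}lder factors of $\rH^3$ of the Hilbert \emph{threefold} with $\bar\rN_\rho^\sharp$ — i.e.\ making the Eichler--Shimura argument of \cite{Dim05}, \cite{Nek} work in this three-variable setting, including checking that genericity of $\bar\rho$ rules out any smaller constituent and that no extra Eisenstein or endoscopic contributions survive localization at $\fm$. The dimension count via $\rH^1_\sing$ at $\ell$ is comparatively robust, once one knows (as in Lemma \ref{le:sharp}) the precise local structure of $\bar\rN_\rho^\sharp(2)$ and the exactness of the singular-quotient functor on unramified modules.
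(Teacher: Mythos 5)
Your first two paragraphs reproduce the paper's opening move (Eichler--Shimura relation, Chebotarev, genericity of $\bar\rho$, and the argument of \cite{Dim05}*{Lemma 6.5} to get $\bar\rM^{\r{ss}}\simeq\bar\rN_\rho^\sharp(2)^{\oplus m}$), and that part is fine. The computation of the multiplicity $m$, however, has two genuine gaps. First, a local miscalculation: Corollary \ref{co:corank_cubic} lives over $K=\dQ_{\ell^6}$, and over $K$ one has $\dim_{\dF_p}\rH^1_\sing(K,\bar\rN_\rho^\sharp(2))=3$, not $1$. Your assertion that ``passing from $K=\dQ_{\ell^6}$ to $\dQ_\ell$ changes nothing since the module is unramified'' is false: unramifiedness fixes $\rH^1(\rI_K,-)$, but the Frobenius whose invariants you then take changes from $\Frob_\ell$ to $\Frob_\ell^6$. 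Over $K$ the summands $\rR\oplus\rR(1)$ of Lemma \ref{le:sharp} become Frobenius-trivial after the $(-1)$-twist (the matrix has order $3$ and $(\ell^6)^{\pm1}\not\equiv1$ is what (C3) controls), contributing two extra dimensions. With your value $1$ your own count would give $m=3|\fD(\fr,\fr_\rho)|$, contradicting the statement; the final line of your proof silently corrects the arithmetic.

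Second, and more fundamentally, the equality $\dim\rH^1_\sing(K,\bar\rM)=m\cdot\dim\rH^1_\sing(K,\bar\rN_\rho^\sharp(2))$ is not available: $\bar\rM$ is \emph{not} unramified or semisimple as a $\rG_K$-module --- it is the cohomology of a threefold with semistable reduction and carries nontrivial monodromy, which is the entire point of level raising. The functor $\rH^1_\sing(K,-)$ is not additive on the Jordan--H\"{o}lder filtration of $\bar\rM$; under the hypotheses guaranteed by Proposition \ref{pr:degenerate} one only gets $\dim\rH^1_\sing(K,\bar\rM)\leq\dim\rH^1_\sing(K,\bar\rM^{\r{ss}})$ (this is exactly the inequality the paper uses in Step 2 of the proof of Theorem \ref{th:cubic_level_raising} (2,3)), so your comparison yields only $m\geq|\fD(\fr,\fr_\rho)|$. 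The missing upper bound cannot be extracted from local data at $\ell$ alone. The paper instead shows $m=\dim_{\dF_p}(\bar\rM(1))_{\rG_K}=\dim_{\dF_p}\dE^{-3,6}_2/\fm$ by tracking which weight-graded piece of each copy of $\bar\rN_\rho^\sharp(2)$ survives to the top quotient of the monodromy filtration, identifies $\dE^{-3,6}_2(1)$ with $\Gamma(X_{\fr_0,\fr_1}^{012345}(\overline\kappa),\Lambda)_\fm^\heartsuit$ via the weight spectral sequence, and then computes that dimension by comparison with the \v{C}erednik--Drinfeld special fiber of an auxiliary Shimura curve, using level raising for curves and \cite{BLR}*{Theorem 1}. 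That global automorphic input is what your proposal is missing.
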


\begin{proof}
\emph{Step 1.} By the Eichler--Shimura relation in \cite{Wed00}, the Chebotarev density theorem, and \cite{Dim05}*{Lemma 6.5}, we have $\bar\rM^{\r{ss}}\simeq\bar\rN_\rho^\sharp(2)^{\oplus n}$ as an $\dF_p[\rG_{\TF}]$-module for some integer $n\geq 0$. Fix an embedding $\TF\hookrightarrow K(=\dQ_{\ell^6})$.

\emph{Step 2.} We claim that $n=\dim_{\dF_p}(\bar\rM(1))_{\rG_K}=\dim_{\dF_p}\dE^{-3,6}_2/\fm$. In fact, by Propositions \ref{pr:featuring} (2) and \ref{pr:degenerate}, we have a filtration $\sF^3\bar\rM\subset\sF^1\bar\rM\subset \sF^{-1}\bar\rM\subset\bar\rM$ such that
\begin{enumerate}[label=(\alph*)]
  \item $(T-1)\bar\rM\subset\sF^{-1}\bar\rM$ where we recall that $T$ is a topological generator of $\rI_K/\rP_K$;

  \item $(\bar\rM(1))_{\rG_K}=((\bar\rM/(T-1)\bar\rM)(1))_{\rG_\kappa}\simeq(\bar\rM/\sF^{-1}\bar\rM)(1)\simeq\dE^{-3,6}_2(1)/\fm$;

  \item as an $\dF_p[\rG_\kappa]$-module, $\sF^{-1}\bar\rM/(T-1)\bar\rM$ is isomorphic to the direct sum of finitely many copies of $\dF_p$, $\dF_p(1)$ and $\dF_p(2)$.
\end{enumerate}
Take a filtration $0=\bar\rM_0\subset\bar\rM_1\subset\cdots\subset\bar\rM_n=\bar\rM$ of $\dF_p[\rG_{\TF}]$-modules such that $\bar\rM_i/\bar\rM_{i-1}\simeq\rN_\rho^\sharp(2)$ for $1\leq i\leq n$. We prove by induction that
\begin{align}\label{eq:dimension1}
\dim_{\dF_p}\frac{\bar\rM_i}{\sF^{-1}\bar\rM\cap\bar\rM_i}=i.
\end{align}
For each $1\leq i\leq n$, we have the exact sequence of $\dF_p[\rG_K]$-modules
\[0\to\frac{\bar\rM_{i-1}}{\sF^{-1}\bar\rM\cap\bar\rM_{i-1}}\to\frac{\bar\rM_i}{\sF^{-1}\bar\rM\cap\bar\rM_i}
\to\frac{\bar\rM_i/\bar\rM_{i-1}}{(\sF^{-1}\bar\rM\cap\bar\rM_i)/(\sF^{-1}\bar\rM\cap\bar\rM_{i-1})}\to0.\]
Note that as an $\dF_p[\rG_K]$-module, $\bar\rM_i/\bar\rM_{i-1}$ is canonically isomorphic to $\dF_p(-1)\oplus\dF_p^{\oplus 3}\oplus\dF_p(1)^{\oplus 3}\oplus\dF_p(2)$. On one hand, since $\frac{\bar\rM_i}{\sF^{-1}\bar\rM\cap\bar\rM_i}\subset\frac{\bar\rM}{\sF^{-1}\bar\rM}$, which is isomorphic to the direct sum of finitely many copies of $\dF_p(-1)$ by (b), we know that $\dim_{\dF_p}\frac{\bar\rM_i/\bar\rM_{i-1}}{(\sF^{-1}\bar\rM\cap\bar\rM_i)/(\sF^{-1}\bar\rM\cap\bar\rM_{i-1})}\leq 1$. On the other hand, $\frac{\sF^{-1}\bar\rM\cap\bar\rM_i}{(T-1)\bar\rM\cap\bar\rM_i}$ is isomorphic to the direct sum of finitely many copies of $\dF_p$, $\dF_p(1)$ and $\dF_p(2)$ by (c), we know that $(\sF^{-1}\bar\rM\cap\bar\rM_i)/(\sF^{-1}\bar\rM\cap\bar\rM_{i-1})\cap\Lambda(-1)=0$ inside $\bar\rM_i/\bar\rM_{i-1}$, hence $\dim_{\dF_p}\frac{\bar\rM_i/\bar\rM_{i-1}}{(\sF^{-1}\bar\rM\cap\bar\rM_i)/(\sF^{-1}\bar\rM\cap\bar\rM_{i-1})}\geq 1$. Therefore, $\dim_{\dF_p}\frac{\bar\rM_i/\bar\rM_{i-1}}{(\sF^{-1}\bar\rM\cap\bar\rM_i)/(\sF^{-1}\bar\rM\cap\bar\rM_{i-1})}=1$ hence the claim follows by (b).

\emph{Step 3.} We compute $\dE^{-3,6}_2(1)$. By Lemma \ref{le:spectral_sequence_cubic}, we have
\[\dE^{-3,6}_2(1)=\Ker[\rH^0(Y_{\fr_0,\fr_1,\overline\kappa}^{(3)},\Lambda)_\fm\xrightarrow{\rd^{-3,6}_1}
\rH^1(Y_{\fr_0,\fr_1,\overline\kappa}^{(2)},\Lambda(1))_\fm].\]
From \eqref{beq:reduction}, we have $Y_{\fr_0,\fr_1}^{(3)}=\coprod_{i=0}^5 Y_{\fr_0,\fr_1}^+\cap Y_{\fr_0,\fr_1}^-\cap Y_{\fr_0,\fr_1}^{i(i+1)(i+2)}\cap Y_{\fr_0,\fr_1}^{i(i+1)(i+5)}$. By Proposition \ref{bpr:blowup} (4), the restricted morphism
\[\pi\colon Y_{\fr_0,\fr_1}^+\cap Y_{\fr_0,\fr_1}^-\cap Y_{\fr_0,\fr_1}^{i(i+1)(i+2)}\cap Y_{\fr_0,\fr_1}^{i(i+1)(i+5)}
\to X_{\fr_0,\fr_1}^+\cap X_{\fr_0,\fr_1}^-\cap X_{\fr_0,\fr_1}^{i(i+1)(i+2)}\cap X_{\fr_0,\fr_1}^{i(i+1)(i+5)}=X_{\fr_0,\fr_1}^{123456}\]
is an isomorphism. Thus we have $\rH^0(Y_{\fr_0,\fr_1,\overline\kappa}^{(3)},\Lambda)_\fm\simeq\bigoplus_{i=0}^5\Gamma(X_{\fr_0,\fr_1}^{012345}(\overline\kappa),\Lambda)_\fm$ canonically. It is straightforward to check that an element $(f_0,\dots,f_5)$ belongs to the kernel of $\rd^{-3,6}_1$ if and only if $f_0=f_1=\cdots=f_5$ and $\delta^+_*f_0=\delta^-_*f_0=0$, where we recall Notation \ref{no:tilde}. Thus if we put $\Gamma(X_{\fr_0,\fr_1}^{012345}(\overline\kappa),\Lambda)_\fm^\heartsuit=
\Ker\delta^+_*\cap\Ker\delta^-_*\subset\Gamma(X_{\fr_0,\fr_1}^{012345}(\overline\kappa),\Lambda)_\fm$, then $\dE^{-3,6}_2(1)\simeq\Gamma(X_{\fr_0,\fr_1}^{012345}(\overline\kappa),\Lambda)_\fm^\heartsuit$.

\emph{Step 4.}  We compute $\dim_{\dF_p}\Gamma(X_{\fr_0,\fr_1}^{012345}(\overline\kappa),\Lambda)_\fm^\heartsuit/\fm$. We use the (much easier) level raising for Shimura curves. Let $\tau_0,\tau_1,\tau_2\colon F\hookrightarrow\TF$ be the three distinct embeddings corresponding to $0,1,2$ in $\Phi_F$, respectively. Choose an $O_F$-Eichler order $\cR$ of discriminant $\{\fl\}\cup\nabla\setminus\{\tau_0\}$ and level $\fr$ together with a surjective $O_F$-linear homomorphism $\cR\to O_F/\fr_1$. Let $\fH_\cR$ be the symmetric hermitian domain attached to $\cR\otimes\dQ$, which is isomorphic to $\dC\setminus\dR$. The complex Shimura curve
$\cR_\dQ^\times\backslash\fH_\cR\times \widehat\cR_\dQ^\times/\Ker[\widehat\cR^\times\to(O_F/\fr_1)^\times]$ has a canonical projective model $\cC$ over $\Spec O_{\TF}[\fr^{-1}]$. It has an action by the Hecke monoid $\dT^{\fr\fl}$. By \v{C}erednik--Drinfeld uniformization \cite{Cer76}, we know that
\begin{itemize}
  \item $\cC\otimes_{O_{\TF}[\fr^{-1}]}O_K$ is a strictly semistable $O_K$-scheme of relative dimension $1$.

  \item If we put $C=\cC\otimes_{O_{\TF}[\fr^{-1}]}\kappa$, then $C^{(0)}=C^+\coprod C^-$ and $C^{(1)}=C^+\cap C^-$.

  \item We have $C_{\overline\kappa}^\pm\simeq\coprod_{\pi_0(C_{\overline\kappa}^\pm)}\dP^1_{\overline\kappa}$, and $C_{\overline\kappa}^{(1)}\simeq\coprod_{\pi_0(C_{\overline\kappa}^{(1)})}\Spec\overline\kappa$.

  \item There are (non-canonical) isomorphisms $\pi_0(C_{\overline\kappa}^\pm)\simeq Z_{\fr_0,\fr_1}^\pm(\overline\kappa)$ and $\pi_0(C_{\overline\kappa}^{(1)})\simeq X_{\fr_0,\fr_1}^{012345}(\overline\kappa)$ that are compatible under Hecke actions, which we fix.
\end{itemize}
By a much easier discussion toward Corollary \ref{co:corank_cubic} for the curve $\cC$ (and $\Lambda=\dF_p$), we obtain an isomorphism
$\rH^1_\sing(K,\rH^1(\cC\otimes_{O_{\TF}[\fr^{-1}]}\overline\dQ,\dF_p(1))/\fm)\xrightarrow{\sim}\Gamma(\cS_\fr,\dF_p)/\fm$. In particular, we have $\dim_{\dF_p}\rH^1_\sing(K,\rH^1(\cC\otimes_{O_{\TF}[\fr^{-1}]}\overline\dQ,\dF_p(1))/\fm)=|\fD(\fr,\fr_\rho)|$ by Definition \ref{de:cubic_level_raising} (C2). However, by the Eichler--Shimura relation (for $\cC$), the Chebotarev density theorem, and \cite{BLR}*{Theorem 1}, we know that as an $\dF_p[\rG_{\TF}]$-module, $\rH^1(\cC\otimes_{O_{\TF}[\fr^{-1}]}\overline\dQ,\dF_p(1))/\fm$ is isomorphic, not just up to semisimplification, to the direct sum of $n$ copies of $\bar\rN_\rho(1)$ (Notation \ref{no:reduction}). Therefore, $n=|\fD(\fr,\fr_\rho)|$ as $\dim_{\dF_p}\rH^1_\sing(K,\bar\rN_\rho(1))=1$. On the other hand, by a similar argument in Steps 2 \& 3, we also have an isomorphism
$(\rH^1(\cC\otimes_{O_{\TF}[\fr^{-1}]}\overline\dQ,\dF_p(1))/\fm)_{\rG_K}
\simeq\Gamma(X_{\fr_0,\fr_1}^{012345}(\overline\kappa),\Lambda)_\fm^\heartsuit/\fm$. In particular, we have $\dim_{\dF_p}\Gamma(X_{\fr_0,\fr_1}^{012345}(\overline\kappa),\Lambda)_\fm^\heartsuit/\fm
=|\fD(\fr,\fr_\rho)|\dim_{\dF_p}(\bar\rN_\rho(1))_{\rG_K}=|\fD(\fr,\fr_\rho)|$. In other words, $n=\dim_{\dF_p}\dE^{-3,6}_2(1)/\fm=\dim_{\dF_p}\Gamma(X_{\fr_0,\fr_1}^{012345}(\overline\kappa),\Lambda)_\fm^\heartsuit/\fm=|\fD(\fr,\fr_\rho)|$. The proposition follows.
\end{proof}

We now introduce a new set of \'{e}tale correspondences of $\cX_{\fr_0,\fr_1}$. For every $\fd\in\fD(\fr,\fr_\rho)$, we have the following composite morphism
\begin{align}\label{eq:correspondence0}
\tilde\delta^\fd\colon\cX_{\fr_0,\fr_1}\to\cX_{\fr,O_F}\xrightarrow{\delta^\fd}\cX_{\fr_\rho,O_F}
\end{align}
(see Remark \ref{bre:hecke} and \eqref{beq:degeneracy}). It is a finite \'{e}tale morphism of Deligne--Mumford stacks. As usual, we put $\tilde\delta=\tilde\delta^{O_F}$. Form the following pullback square
\begin{align}\label{eq:correspondence1}
\xymatrix{
\cX_{\fr_0,\fr_1}^\fd \ar[r]^-{\varepsilon}\ar[d]_-{\varepsilon^\fd} &  \cX_{\fr_0,\fr_1} \ar[d]^-{\tilde\delta^\fd} \\
\cX_{\fr_0,\fr_1} \ar[r]^-{\tilde\delta} & \cX_{\fr_\rho,O_F}
}
\end{align}
which produces an \'{e}tale correspondence (Definition \ref{de:correspondence})
\begin{align}\label{eq:correspondence2}
\cX_{\fr_0,\fr_1}\xleftarrow{\varepsilon^\fd}\cX_{\fr_0,\fr_1}^\fd\xrightarrow{\varepsilon}\cX_{\fr_0,\fr_1}.
\end{align}
By base change along the morphism $\cX_{\fr_0,\fr_1}^\fd\to\cX_{\fr_\rho,O_F}$, we obtain an action of the Hecke monoid $\dT^{\fr\fl}$ on $\cX_{\fr_0,\fr_1}^\fd$ which is compatible with the above \'{e}tale correspondence. For an ample set $S\subset\{0,1,2,3,4,5\}$, we put $X_{\fr_0,\fr_1}^{\fd,S}=\varepsilon^{-1}X_{\fr_0,\fr_1}^S=(\varepsilon^\fd)^{-1}X_{\fr_0,\fr_1}^S$, which is a closed subscheme of $X_{\fr_0,\fr_1}^\fd\coloneqq\cX_{\fr_0,\fr_1}^\fd\otimes\kappa$. Similar to Definition \ref{bde:blowup}, we obtain a semistable resolution $\pi\colon\cY_{\fr_0,\fr_1}^\fd\to\cX_{\fr_0,\fr_1}^\fd\otimes O_K$. It is easy to see that \eqref{eq:correspondence2} lifts uniquely to an \'{e}tale correspondence
\begin{align*}\label{eq:correspondence3}
\cY_{\fr_0,\fr_1}\xleftarrow{\varepsilon^\fd}\cY_{\fr_0,\fr_1}^\fd\xrightarrow{\varepsilon}\cY_{\fr_0,\fr_1}.
\end{align*}

\begin{lem}\label{le:degenerate}
For every $\fd\in\fD(\fr,\fr_\rho)$, the following diagram commutes
\[\xymatrix{
(\Gamma(\cS_\fr,\Lambda)/\Ker\phi^{\fr\fl}_\rho)^{\oplus 3} \ar[d]_-{|\Cl(F)_{\fr_1}|\cdot\delta^*\circ\delta^\fd_*}\ar[r]^-\simeq
& \rH^1_\sing(K,\rM) \ar[d]^-{\varepsilon^\fd_*\circ\varepsilon^*} \\
(\Gamma(\cS_\fr,\Lambda)/\Ker\phi^{\fr\fl}_\rho)^{\oplus 3} \ar[r]^-\simeq &
\rH^1_\sing(K,\rM)
}\]
where the horizontal isomorphisms are the one from Corollary \ref{co:corank_cubic}.
\end{lem}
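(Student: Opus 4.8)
```latex
\begin{proof}[Proof proposal for Lemma \ref{le:degenerate}]
The plan is to trace both vertical maps through the explicit description of the isomorphism in Corollary \ref{co:corank_cubic}, which itself factors through the potential map $\tilde\Delta_\fm$ on featuring cycles (Proposition \ref{pr:featuring} (3), Proposition \ref{pr:corank_cubic}) and the identification $\rH^1_\sing(K,\rM)\simeq\coker\tilde\Delta_\fm$ via Theorem \ref{th:galois_cohomology_localized}. The key point is that all the objects in sight are $\Lambda[\dT^{\fr\fl}]$-modules and all the maps (the $\beta$, $\beta'$ of \eqref{eq:beta0}, \eqref{eq:beta}, the potential map, $\eta^r_\fm$) are $\Lambda[\dT^{\fr\fl}]$-linear by Construction \ref{co:correspondence}; moreover, by the functoriality of the weight spectral sequence under \'{e}tale correspondences (again Construction \ref{co:correspondence}), the endomorphism $\varepsilon^\fd_*\circ\varepsilon^*$ of $\rH^1_\sing(K,\rH^3(\cY_{\fr_0,\fr_1}\otimes\overline\dQ,\Lambda(2)))_\fm$ is computed on the level of $B^2$ and $B_2$ by the correspondence-induced map $T^\star$, where $T$ is the \'{e}tale correspondence \eqref{eq:correspondence2}.

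First I would identify the correspondence \eqref{eq:correspondence2} concretely: by the Cartesian square \eqref{eq:correspondence1}, $T=(\cX_{\fr_0,\fr_1}^\fd,\varepsilon^\fd,\varepsilon)$ is, up to the uniformly bounded ambiguity coming from the ray class group $\Cl(F)_{\fr_1}$ (negligible after localization at $\fm$ by Lemma \ref{le:quotient}), precisely the correspondence that on the indexing sets $Z^\pm_{\fr_0,\fr_1}(\overline\kappa)$ and $X^{012345}_{\fr_0,\fr_1}(\overline\kappa)$ of featuring cycles induces $\psi^{\pm*}\circ\delta^\fd_*\circ\delta_*\circ\dots$ --- that is, via the identifications of Lemma \ref{le:shimura_set} and Definition \ref{de:quotient}, it descends to $|\Cl(F)_{\fr_1}|\cdot\delta^*\circ\delta^\fd_*$ on $\Gamma(\cS_\fr,\Lambda)_\fm$ and the analogous map on $\Gamma(\cS_{\fr\fl},\Lambda)_\fm$. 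Thus $T^\star$ acts on $\cB(Y_{\fr_0,\fr_1},\Lambda)_\fm\simeq\Gamma(\cS_\fr,\Lambda)_\fm^{\oplus 18}\oplus\Gamma(\cS_{\fr\fl},\Lambda)_\fm$ block-diagonally through these two maps (one copy per featuring-cycle group), and since $\delta^\fd_*$ and $\delta_*$ are Hecke-equivariant, $T^\star$ commutes with $\tilde\Delta_\fm$ by Construction \ref{co:correspondence}. Therefore it descends to $\coker\tilde\Delta_\fm$, and under the explicit isomorphism \eqref{eq:corank_cubic} of Proposition \ref{pr:corank_cubic} --- which only involves the $\boldsymbol{e}$-components modified by $BA^{-1}\boldsymbol{f}$ and $\frac{1}{2}D^\star\boldsymbol{h}$ and then three further linear combinations --- it acts on $\Gamma(\cS_\fr,\Lambda)_\fm^{\oplus 3}$ as $|\Cl(F)_{\fr_1}|\cdot\delta^*\circ\delta^\fd_*$ on each factor, because that operator commutes with $A,B,C,D,D^\star$ (all of whose entries are Hecke operators in $\dZ[\dT^{\fr\fl}]$ or pushforward/pullback maps $\delta^\pm_*,\delta^{\pm*}$, each commuting with $\delta^\fd_*,\delta_*$).

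Next I would descend from $\Lambda_\fm$ to $\Lambda/\Ker\phi^{\fr\fl}_\rho$: by Lemma \ref{le:singular}, Proposition \ref{pr:degenerate}, and the argument already used in Corollary \ref{co:corank_cubic}, the quotient by $\Ker\phi^{\fr\fl}_\rho$ commutes with everything in sight, so the square in the statement is obtained from the $\Lambda_\fm$-version by applying $-\otimes_{\dZ[\dT^{\fr\fl}]}\dZ[\dT^{\fr\fl}]/\Ker\phi^{\fr\fl}_\rho$. It then remains only to match the vertical arrows: on the right, $\varepsilon^\fd_*\circ\varepsilon^*$ by definition of $T^\star$ in Construction \ref{co:correspondence}; on the left, $|\Cl(F)_{\fr_1}|\cdot\delta^*\circ\delta^\fd_*$ by the computation of the previous paragraph together with Definition \ref{de:quotient} (where the normalized pushforward $\psi^\pm_*$ carries the factor $|\Cl(F)_{\fr_1}|^{-1}$, which is why the bare $|\Cl(F)_{\fr_1}|$ reappears).

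The main obstacle I expect is the bookkeeping in the first step: verifying that the \'{e}tale correspondence \eqref{eq:correspondence2}, after passing to special fibers, to the semistable resolution $\cY^\fd_{\fr_0,\fr_1}$, and then to the indexing sets of featuring cycles, really induces exactly $\delta^*\circ\delta^\fd_*$ (up to the $\Cl(F)_{\fr_1}$-ambiguity). This requires checking that the degeneracy morphisms $\tilde\delta^\fd$ of \eqref{eq:correspondence0} are compatible with the parameterizations of Appendix \ref{ss:a} used in Appendix \ref{ss:b} to describe the strata $Z^\pm_{\fr_0,\fr_1}$ and $X^{012345}_{\fr_0,\fr_1}$, i.e. that the Hecke action by $\delta^\fd$ at level $\fr$ versus level $\fr_\rho$ is the same whether computed on the Shimura set $\cS_\fr$ or on the components of the special fiber. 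Once this compatibility is in place, everything else is formal linear algebra over $\dZ[\dT^{\fr\fl}]$, using that all the matrix entries in Proposition \ref{pr:potential_matrix} are Hecke-equivariant and that $\delta^\fd_*,\delta_*$ commute with $\delta^\pm_*,\delta^{\pm*}$ and with $\rT$.
\end{proof}
```
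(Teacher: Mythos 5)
Your overall framework (computing both vertical arrows through the featuring-cycle description of $\rH^1_\sing(K,\rM)$, the potential map, and the explicit isomorphism of Proposition \ref{pr:corank_cubic}) is the same one the paper uses, but your proposal stops exactly where the lemma's actual content begins. The step you yourself flag as ``the main obstacle'' --- that the correspondence $(\cX^\fd_{\fr_0,\fr_1},\varepsilon^\fd,\varepsilon)$, after passing to $\cY^\fd_{\fr_0,\fr_1}$ and to featuring cycles, induces on the $\Gamma$-blocks precisely the normalized operator $|\Cl(F)_{\fr_1}|\cdot\delta^*\circ\delta^\fd_*$ --- is not deferrable bookkeeping; it is the whole point of the lemma, and you give no argument for it (your displayed formula $\psi^{\pm*}\circ\delta^\fd_*\circ\delta_*\circ\dots$ is not even a well-formed composite). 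Note also that Construction \ref{co:correspondence} only gives commutation of $T^\star$ with \eqref{eq:potential_premap} on $B_2$ and $B^2$; commutation with $\tilde\Delta$, which runs through $\beta$ and $\beta'$, already presupposes the geometric identification of the correspondence's action on featuring-cycle classes with an action on the indexing sets, i.e.\ the very compatibility you postpone. The paper supplies it by introducing the level-$\fr_\rho$ analogues $Z^{S^\pm}_{\fs,O_F}$ of the strata, forming the corresponding pullback square for $Z^\pm_{\fr_0,\fr_1}$, observing that $\varepsilon^\fd$ carries each exceptional divisor $E^i_{g'}$ of $\cY^\fd_{\fr_0,\fr_1}$ isomorphically onto $E^i_{\varepsilon^\fd(g')}$, and then invoking the identities $\varepsilon^\fd_*\circ\varepsilon^*=\tilde\delta^*\circ\tilde\delta^\fd_*$ and $\psi^\pm_*\circ\tilde\delta^*\circ\tilde\delta^\fd_*=|\Cl(F)_{\fr_1}|\cdot\delta^*\circ\delta^\fd_*\circ\psi^\pm_*$.

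Your route is also heavier than necessary in a way that creates real extra work: you want $T^\star$ to act block-diagonally on all nineteen featuring-cycle blocks and to commute with $A$, $B$, $C$, $D$, $D^\star$, so as to push it through the assignment $(\boldsymbol{f},\boldsymbol{e},\boldsymbol{h})\mapsto\boldsymbol{e}-BA^{-1}\boldsymbol{f}+\tfrac{1}{2}D^\star\boldsymbol{h}$. That forces you to identify $\varepsilon^\fd_*\circ\varepsilon^*$ on the $\sfF$- and $\sfH$-classes too, with the correct multiplicities coming from the degrees of $\varepsilon^\fd$ restricted to each component and from the normalizations in Definitions \ref{de:quotient} and \ref{ade:pushforward}; none of this is verified in the proposal. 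The paper sidesteps all of it with one observation you miss: the isomorphism of Corollary \ref{co:corank_cubic} is already induced by a surjection from the six $\sfE$-blocks (via the basis $\{\sfE^0_{g'},\dots,\sfE^5_{g'}\}$), so only the exceptional-divisor compatibility above is needed. Without either that reduction or a worked-out verification on all blocks, the proposal has a genuine gap.
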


\begin{proof}
For an ideal $\fs$ of $O_F$ that is coprime to $\nabla$ and $\fl$, we define a moduli functor $Z_{\fs,O_F}^{S^\pm}\coloneqq Z(\{\fl\}\cup\nabla\setminus\Phi_F)_{\fs,O_F}^{S^\pm}$ valued in groupoids in the same way as Definition \ref{bde:hilbert_base}. It is a smooth Deligne--Mumford stack over $\Spec\dF_{\ell^6}$. For every $\fd\in\fD(\fr,\fr_\rho)$, we have a finite \'{e}tale morphism
\[\tilde\delta^\fd\colon Z_{\fr_0,\fr_1}^\pm\to Z_{\fr,O_F}^{S^\pm}\xrightarrow{\delta^\fd}Z_{\fr_\rho,O_F}^{S^\pm}\]
similar to \eqref{eq:correspondence0}. Like \eqref{eq:correspondence1}, we form the following pullback square
\begin{align*}\label{eq:correspondence4}
\xymatrix{
Z_{\fr_0,\fr_1}^{\fd,\pm} \ar[r]^-{\varepsilon}\ar[d]_-{\varepsilon^\fd} &  Z_{\fr_0,\fr_1}^\pm \ar[d]^-{\tilde\delta^\fd} \\
Z_{\fr_0,\fr_1}^\pm \ar[r]^-{\tilde\delta} & Z_{\fr_\rho,O_F}^{S^\pm}.
}
\end{align*}
Similar to $\cY_{\fr_0,\fr_1}$, we have exceptional divisors $E^i_{g'}$ on $Y_{\fr_0,\fr_1,\overline\kappa}^{\fd,i(i+1)(i+2)}$ for $i\in S^\pm$ and $g'\in Z_{\fr_0,\fr_1}^{\fd,\mp}(\overline\kappa)$. The morphism $\varepsilon^\fd$ induces an isomorphism from $E^i_{g'}$ to its image which is $E^i_{\varepsilon^\fd(g')}$.

Note that the isomorphism in Corollary \eqref{co:corank_cubic} comes from a surjection
\[\big(\bigoplus_{i\in S^+}\Gamma(Z_{\fr_0,\fr_1}^-(\overline\kappa),\Lambda)/\Ker\phi^{\fr\fl}_\rho\big)
\bigoplus\big(\bigoplus_{i\in S^-}\Gamma(Z_{\fr_0,\fr_1}^+(\overline\kappa),\Lambda)/\Ker\phi^{\fr\fl}_\rho\big)\to\rH^1_\sing(K,\rM)\]
via the basis $\{\sfE^0_{g'},\sfE^1_{g'},\sfE^2_{g'},\sfE^3_{g'},\sfE^4_{g'},\sfE^5_{g'}\}$. Therefore, the lemma amounts to that the diagram
\begin{align*}
\xymatrix{
\Gamma(Z_{\fr_0,\fr_1}^\pm(\overline\kappa),\Lambda)/\Ker\phi^{\fr\fl}_\rho \ar[r]^-{\psi^\pm_*}\ar[d]_-{\varepsilon^\fd_*\circ\varepsilon^*}
&  \Gamma(\rS_\fr,\Lambda)/\Ker\phi^{\fr\fl}_\rho \ar[d]^-{|\Cl(F)_{\fr_1}|\cdot\delta^*\circ\delta^\fd_*} \\
\Gamma(Z_{\fr_0,\fr_1}^\pm(\overline\kappa),\Lambda)/\Ker\phi^{\fr\fl}_\rho \ar[r]^-{\psi^\pm_*}
&  \Gamma(\rS_\fr,\Lambda)/\Ker\phi^{\fr\fl}_\rho }
\end{align*}
commutes. This follows form the elementary fact that $\varepsilon^\fd_*\circ\varepsilon^*=\tilde\delta^*\circ\tilde\delta^\fd_*\colon
\Gamma(Z_{\fr_0,\fr_1}^\pm(\overline\kappa),\Lambda)\to\Gamma(Z_{\fr_0,\fr_1}^\pm(\overline\kappa),\Lambda)$, and the identity $\psi^\pm_*\circ\tilde\delta^*\circ\tilde\delta^\fd_*=|\Cl(F)_{\fr_1}|\cdot\delta^*\circ\delta^\fd_*\circ\psi^\pm_*$. The lemma follows.
\end{proof}

Now we are ready to finish the proof of Theorem \ref{th:cubic_level_raising}.

\begin{proof}[Proof of Theorem \ref{th:cubic_level_raising} (2,3)]
Put $\rM_0\coloneqq\rH^3(\cX_{\fr_\rho,O_F}\otimes\overline\dQ,\Lambda(2))/\Ker\phi^{\fr\fl}_\rho$ as a $\Lambda[\rG_\dQ]$-module, and $\bar\rM_0\coloneqq\rM_0\otimes\dF_p$.

\emph{Step 1.} For every $\fd\in\fD(\fr,\fr_\rho)$, the map $\tilde\delta^\fd_*\colon\rM\to\rM_0$ is surjective. In fact, $\tilde\delta^\fd_*\circ\tilde\delta^{\fd*}=\deg(\tilde\delta^\fd)=\mu(\fr,\fr_\rho)\cdot|\Cl(F)_{\fr_1}|$ which is invertible in $\Lambda$.

\emph{Step 2.} We claim that the $\dF_p[\rG_{\TF}]$-module $\bar\rM_0$ is isomorphic to $\bar\rN_\rho^\sharp(2)$, and the canonical map
\begin{align}\label{eq:raising1}
\sum_{\fd\in\fD(\fr,\fr_\rho)}\tilde\delta^\fd_*\colon\bar\rM\to\bigoplus_{\fd\in\fD(\fr,\fr_\rho)}\bar\rM_0
\end{align}
is an isomorphism. By Proposition \ref{pr:dimension}, we know that $\bar\rM_0^{\r{ss}}\simeq\bar\rN_\rho^\sharp(2)^{\oplus n_0}$ for some $n_0\geq 0$. Let $\bar\rM_1$ be the kernel of \eqref{eq:raising1}. Then $\bar\rM_1^{\r{ss}}\simeq(\bar\rN_\rho(2))^{\oplus n_1}$ and $(\bar\rM/\bar\rM_1)^{\r{ss}}\simeq(\bar\rN_\rho(2))^{\oplus|\fD(\fr,\fr_\rho)|-n_1}$ as $\dF_p[\rG_{\TF}]$-modules. By Lemma \ref{le:degenerate} and Definition \ref{de:cubic_level_raising} (C2), we know that the map
\begin{align}\label{eq:raising2}
\sum_{\fd\in\fD(\fr,\fr_\rho)}\tilde\delta^*\circ\tilde\delta^\fd_*=\sum_{\fd\in\fD(\fr,\fr_\rho)}\varepsilon^\fd_*\circ\varepsilon^*
\colon\rH^1_\sing(K,\bar\rM)\to\bigoplus_{\fd\in\fD(\fr,\fr_\rho)}\rH^1_\sing(K,\bar\rM)
\end{align}
has trivial kernel. By Corollary \ref{co:corank_cubic}, we know that $\dim_{\dF_p}\rH^1_\sing(K,\bar\rM)=3|\fD(\fr,\fr_\rho)|$. On the other hand, as a direct consequence of Lemma \ref{le:singular} and Proposition \ref{pr:degenerate}, we have $\dim_{\dF_p}\rH^1_\sing(K,\bar\rM/\bar\rM_1)\leq\dim_{\dF_p}\rH^1_\sing(K,(\bar\rM/\bar\rM_1)^{\r{ss}})$. Since $\dim_{\dF_p}\rH^1_\sing(K,\bar\rN_\rho(2))=3$ by Lemma \ref{le:sharp}, we have $\dim_{\dF_p}\rH^1_\sing(K,\bar\rM/\bar\rM_1)\leq 3(|\fD(\fr,\fr_\rho)|-n_1)$. This implies that $n_1=0$ hence \eqref{eq:raising1} is injective. In particular, $\rM_0$ is nontrivial; $\bar\rM$ is semisimple hence $\bar\rM_0$ is semisimple as well by Step 1. It remains to show that $n_0=1$. However, we know that for every $\fd\in\fD(\fr,\fr_\rho)$, the kernel of the map $\tilde\delta^*\circ\tilde\delta^\fd_*\colon\rH^1_\sing(K,\bar\rM)\to\rH^1_\sing(K,\bar\rM)$ has dimension $3(|\fD(\fr,\fr_\rho)|-1)$. Therefore, $n_0$ has to be $1$.

\emph{Step 3.} We claim that $\rM_0\simeq\rN_\rho^\sharp(2)$ as $\Lambda[\rG_{\TF}]$-modules, and the canonical map
\begin{align}\label{eq:raising3}
\sum_{\fd\in\fD(\fr,\fr_\rho)}\tilde\delta^\fd_*\colon\rM\to\bigoplus_{\fd\in\fD(\fr,\fr_\rho)}\rM_0
\end{align}
is an isomorphism. Note that the map \eqref{eq:raising2} also has trivial kernel if we replace $\bar\rM$ by $\rM$. In particular, the exponent of $\rM_0$ is $p^\nu$. Since $\bar\rho$ is generic (Definition \ref{de:perfect_prime}), $\bar\rM_0$ is an absolutely irreducible representation of $\rG_{\TF}$. By the Eichler--Shimura relation, the Chebotarev density theorem, and a result of Mazur and Carayol (see \cite{Kis}*{Theorem 1.4.1}), we have $\rM_0\simeq\rN_\rho^\sharp(2)$. By Step 2 and the Nakayama lemma, we know that \eqref{eq:raising3} is surjective. Since the length of $\rM$ is at most $8\nu|\fD(\fr,\fr_\rho)|$ by Proposition \ref{pr:dimension}, \eqref{eq:raising3} has to be an isomorphism.

\emph{Step 4.} Let $\varphi\colon\rM_0\to\rN_\rho^\sharp(2)$ be the isomorphism of $\Lambda[\rG_{\TF}]$-modules. We show that $\varphi$ is in fact an isomorphism of $\Lambda[\rG_{F_0}]$-modules. For every element $\gamma\in\Gal(\TF/\dQ)$, choose a lifting $\tilde\gamma\in\rG_\dQ$.
The composition $\tilde\gamma^{-1}\varphi^{-1}\tilde\gamma\varphi$ does not depend on the choice of $\tilde\gamma$, and is a $\Lambda[\rG_{\TF}]$-linear automorphism of $\rM_0$. Since $\rM_0$ has no nontrivial $\Lambda[\rG_{\TF}]$-submodule, $\tilde\gamma^{-1}\varphi^{-1}\tilde\gamma\varphi\in\Lambda^\times$. In other words, we obtain a character $\chi\colon\Gal(\TF/\dQ)\to\Lambda^\times$. It suffices to show that $\chi\res_{\Gal(\TF/F_0)}$ is trivial. The group $\Gal(\TF/F_0)$ is generated by an absolute $\ell$-Frobenius morphism $\sigma$. Since $\sigma^2$ acts trivially on $(\rN_\rho^\sharp(3))_{\rG_\kappa}$, it suffices to show that $\sigma^2$ acts trivially on $(\rM(1))_{\rG_K}$ as well. By Propositions \ref{pr:featuring} (2) and \ref{pr:degenerate}, we have a canonical isomorphism $(\rM(1))_{\rG_K}\simeq\dE^{-3,6}(1)/\Ker\phi^{\fr\fl}_\rho$. In the proof of Proposition \ref{pr:dimension}, we already showed that $\dE^{-3,6}(1)$ is contained in the diagonal submodule of $\Gamma(X_{\fr_0,\fr_1}^{012345}(\overline\kappa),\Lambda)_\fm$. By Proposition \ref{bpr:hilbert_sparse} (2) and Remark \ref{bre:frobenius}, we know that $\sigma^2$ acts on $\Gamma(X_{\fr_0,\fr_1}^{012345}(\overline\kappa),\Lambda)_\fm$ by sending $(f_0,f_1,f_2,f_3,f_4,f_5)$ to $(f_2,f_3,f_4,f_5,f_0,f_1)$. In particular, $\sigma^2$ acts trivially on $\dE^{-3,6}(1)$ hence $(\rM(1))_{\rG_K}$. At this moment, Theorem \ref{th:cubic_level_raising} (2) has been proved.

\emph{Step 5.} Again by Proposition \ref{bpr:hilbert_sparse} (1) and Remark \ref{bre:frobenius}, the $\ell$-Frobenius action on $\Hom(\cB(Y_{\fr_0,\fr_1},\Lambda),\Lambda)_\fm$ induces the translation on the coordinate $\boldsymbol{e}\in\Gamma(\cS_\fr,\Lambda)_\fm^{\oplus 6}$. In particular, by taking $\sigma$-invariants in the isomorphism of Corollary \ref{co:corank_cubic}, we obtain a canonical isomorphism
$\Gamma(\cS_\fr,\Lambda)/\Ker\phi^{\fr\fl}_\rho
\xrightarrow{\sim}\rH^1_\sing(\dQ_\ell,\rH^3(\cX_{\fr_0,\fr_1}\otimes\overline\dQ,\Lambda(2))/\Ker\phi^{\fr\fl}_\rho)$.
Part (3) then follows from the isomorphism $\Gamma(\cS_\fr,\Lambda)/\Ker\phi^{\fr\fl}_\rho\simeq\Gamma(\cS_\fr,\Lambda)/\Ker\phi^\fl_\rho$
ensured by Definition \ref{de:cubic_level_raising} (C2).

Theorem \ref{th:cubic_level_raising} has been fully proved.
\end{proof}

\section{Bounding Selmer groups}
\label{ss:4}

In this chapter, we fix an isomorphism $O_F/\fp\simeq\dF_{\Nm\fp}$ for every prime $\fp$ of $F$.

\subsection{Hirzebruch--Zagier cycle and a reciprocity law}
\label{ss:reciprocity}

We fix a finite set of even cardinality $\nabla^\flat$ of places of $\dQ$ containing $\infty$. Let $\nabla$ (resp.\ $\nabla'$, $\nabla''$) be the set of places of $F$ above $\nabla^\flat$ that is of degree either $1$ or $3$ (resp.\ unramified of degree $2$, ramified of degree $2$). Then $\nabla$ is a finite set of even cardinality of places of $F$ containing $\Phi_F$. Write $\fr_\rho=\fr'_\rho\fr''_\rho\fr'''_\rho$ where $\fr'_\rho$ (resp.\ $\fr''_\rho$, $\fr'''_\rho$) is the product of primes in $\nabla'$ (resp.\ in $\nabla''$, not in $\nabla'\cup\nabla''$).

\begin{assumption}\label{as:quadruple}
We assume that the perfect quadruple $(\rho,\fr_\rho,\fr_0,\fr_1)$ from \Sec\ref{ss:level_raising} satisfies: (1) $\fr'_\rho$ is square-free; (2) $\fr''_\rho=O_F$; (3) $\fr_0=\fr'_\rho$; and (4) $\fr_1$ is coprime to $\nabla'\cup\nabla''$.
\end{assumption}

Let $\ell$ be a cubic-level raising prime for $(\rho,\fr_\rho,\fr_0,\fr_1)$. Put
\[\cX(\ell)_{\fr_1}^\flat\coloneqq\cX(\{\ell\}\cup\nabla^\flat\setminus\{\infty\})_{\dZ,\fr_1\cap\dZ}\]
over $\Spec\dZ[\fr_1^{-1}]$ (Definition \ref{bde:hilbert_shimura}). Since $\fr_1\cap\dZ$ is an absolutely neat ideal of $\dZ$, $\cX(\ell)_{\fr_1}^\flat$ is a projective curve over $\Spec\dZ[\fr_1^{-1}]$, strictly semistable over $\Spec\dZ_{\ell^2}$ by Theorem \ref{bpr:zink}.

\begin{definition}\label{de:compatible}
Let $\cO^\flat$ be an oriented maximal order of discriminant $\{\ell\}\cup\nabla^\flat\setminus\{\infty\}$, with orientations $o_p\colon\cO^\flat\to\dF_{p^2}$. Let $\cO$ be an oriented $O_F$-maximal order of discriminant $\{\fl\}\cup\nabla\setminus\{\Phi_F\}$, with orientations $o_\fp\colon\cO\to\dF_{\Nm\fp^2}$. We say that an injective homomorphism $\cO^\flat\otimes O_F\hookrightarrow\cO$ of $O_F$-algebras is \emph{orientation compatible} if
\begin{enumerate}
  \item for every $\fp\in\{\fl\}\cup\nabla\setminus\{\Phi_F\}$, the composed map $\cO^\flat\to\cO\xrightarrow{o_\fp}\dF_{\Nm\fp^2}$ coincides with $o_p$ where $p$ is the prime underlying $\fp$;
  \item for every $\fp\in\nabla'$ above $p$, the action of $\cO^\flat$ on the one-dimensional $O_F/\fp$ (which has been identified with $\dF_{\Nm\fp}$) vector space $(\cO/O_F\otimes\cO^\flat)\otimes\dZ_p$ via left multiplication is given by the homomorphism $o_p$.
\end{enumerate}
\end{definition}

From now on, we fix an orientation compatible homomorphism $\cO^\flat\otimes O_F\hookrightarrow\cO$ of $O_F$-algebras\footnote{Such data is in fact unique up to isomorphism.}. Let $\Omega_{\fr_1}$ be the unique $O_F$-submodule of $O_F\otimes(\dZ/(\fr_1\cap\dZ))$ that is isomorphic to $O_F/\fr_1$.

Let $T$ be a scheme over $\Spec\dZ[(\fr\disc F)^{-1}]$. Let $(A,\iota_A,\lambda_A)$ be an element of $\cX(\ell)_{\fr_1}^\flat(T)$ where we recall that $A$ is an abelian surface over $T$; $\iota_A\colon\cO^\flat\hookrightarrow\End(A)$ is an injective homomorphism; and $\lambda_A\colon\underline{(\dZ/(\fr_1\cap\dZ))^{\oplus 2}}_{/T}\hookrightarrow A$ is an $\cO^\flat$-equivariant injective homomorphism. We let
\begin{itemize}
  \item $A'$ to be $\cO\otimes_{\cO^\flat}A$, which is an abelian scheme over $T$;
  \item $\iota_{A'}\colon\cO\hookrightarrow\cO\otimes_{\cO^\flat}\End(A)\subset\End(A')$ be the induced homomorphism;
  \item $\lambda_{A'}\colon\underline{\Omega_{\fr_1}^{\oplus 2}}_{/T}\subset\underline{O_F\otimes(\dZ/(\fr_1\cap\dZ))^{\oplus 2}}_{/T}
     \xrightarrow{O_F\otimes\lambda_A}O_F\otimes A\to\cO\otimes_{\cO^\flat}A$ the induced homomorphism, which is $\cO$-equivariant and injective;
  \item $C_{A'}=\prod_{\fp\mid\fr_0}e_\fp A'[\fp]$, where $e_\fp$ is a nontrivial idempotent in $(O_F\otimes\cO^\flat)/\fp\cO$, which is isomorphic to the upper triangular $O_F/\fp$-subalgebra of $\cO/\fp\cO\simeq\Mat_2(O_F/\fp)$ (since $\fp\in\nabla'$), such that $((O_F\otimes\cO^\flat)/\fp\cO)e_\fp=(O_F/\fp)e_\fp$. It is easy to see that the subgroup $e_\fp A'[\fp]$ does not depend on the choice of $e_\fp$, and is an $\cO$-stable flat subgroup of $A'[\fp]$ over $T$ such that at every geometric point, it is isomorphic to $(O_F/\fp)^{\oplus 2}$.
\end{itemize}
Therefore, once we fix an isomorphism $\Omega_{\fr_1}\simeq O_F/\fr_1$ of $O_F$-modules, the assignment $(A,\iota_A,\lambda_A)\mapsto(A',\iota_{A'},C_{A'},\lambda_{A'})$ defines a morphism
\begin{align}\label{eq:special}
\theta\colon\cX(\ell)_{\fr_1}^\flat\otimes\dZ[(\fr\disc F)^{-1}]\to\cX(\ell)_{\fr_0,\fr_1}
\end{align}
over $\Spec\dZ[(\fr\disc F)^{-1}]$. It is a finite morphism.

Let $\fs$ be either $\fr_1$ or $\fr_1\fl$. Put $\cS^\flat_\fs=\cS(\nabla^\flat)_{\fs\cap\dZ}$. We now construct a similar map
\begin{align}\label{eq:special2}
\vartheta\colon\cS^\flat_\fs\to\cS_{\fr_0\fs}
\end{align}
as follows. Let $(\cR,\cR_0,\{\epsilon_p\})$ \footnote{We use $\epsilon$ since $o$ has been used for $\cO^\flat$ and $\cO$.} be an element of $\cS^\flat_\fs$. We let
\begin{itemize}
  \item $\cR'$ be the unique minimal $O_F$-Eichler order in $F\otimes\cR_0$ containing $O_F\otimes\cR$; it is of discriminant $\nabla$ and level $\ft(\fs\cap\dZ)$, where $\ft=\prod_{\fp\in\nabla'}\fp$;

  \item $\cR'_0$ be the unique $O_F$-maximal order in $F\otimes\cR_0$ containing $O_F\otimes\cR_0$ such that for every $\fp\in\nabla'$ above $p$, the action of $\cR$ on the one-dimensional $O_F/\fp$ (which has been identified with $\dF_{\Nm\fp}$) vector space $(\cR'_0/O_F\otimes\cR_0)\otimes\dZ_p$ via \emph{right} multiplication is given by the homomorphism $\epsilon_p$;

  \item $\epsilon'_\fp\colon\cR'\to\dF_{\Nm\fp^2}$ be the unique orientation at $\fp$, for every $\fp\in\nabla$ above $p$, such that $\epsilon'_\fp\res_\cR=\epsilon_p$, and $\epsilon'_\fp\res O_F$ induces the fixed isomorphism $O_F/\fp\xrightarrow{\sim}\dF_{\Nm\fp}\subset\dF_{\Nm\fp^2}$.
\end{itemize}
The assignment $(\cR,\cR_0,\{\epsilon_p\})$ to $(\cR',\cR'_0,\{\epsilon'_\fp\})$ defines a map $\vartheta'\colon\cS^\flat_\fs\to\cS_{\ft(\fs\cap\dZ)}$. Since $\fr_0\fs$ contains $\ft(\fs\cap\dZ)$, we have the degenerate map $\delta\colon\cS_{\ft(\fs\cap\dZ)}\to\cS_{\fr_0\fs}$ \eqref{aeq:degeneracy}. Put $\vartheta=\delta\circ\vartheta'$. It is clear that the following diagram
\begin{align}\label{eq:reciprocity0}
\xymatrix{
\cS^\flat_{\fr_1\fl} \ar[r]^-{\vartheta}\ar[d]_-{\delta^\fd} &  \cS_{\fr\fl} \ar[d]^-{\delta^\fd}  \\
\cS^\flat_{\fr_1} \ar[r]^-{\vartheta} &  \cS_\fr
}\end{align}
commutes, for $\fd=O_F,\fl$.

The following proposition reveals the relation between the morphism $\theta$ \eqref{eq:special} and the map $\vartheta$ \eqref{eq:special2}. Put $X_{\fr_1}^\flat=\cX(\ell)_{\fr_1}^\flat\otimes\dF_{\ell^6}$.

\begin{proposition}\label{pr:special}
The morphism $\theta$ sends $X^{\flat(1)}_{\fr_1}$ (recall the notation from \Sec\ref{ss:semistable_schemes}) into $X_{\fr_0,\fr_1}^{012345}$. Moreover, the following diagram
\[\xymatrix{
X^{\flat(1)}_{\fr_1}(\dF_{\ell^\infty}) \ar[r]^-{\theta}\ar[d]_-{\psi^\flat} & X_{\fr_0,\fr_1}^{012345}(\dF_{\ell^\infty})  \ar[d]^-{\psi} \\
\cS^\flat_{\fr_1\fl} \ar[r]^-{\vartheta} & \cS_{\fr\fl}
}\]
commutes, where $\psi^\flat$ is the quotient map induced from the canonical isomorphism $X_{\fr_1}^{\flat(1)}(\dF_{\ell^\infty})/\Cl(\dQ)_{\fr_1\cap\dZ}\simeq\cS^\flat_{\fr_1\fl}$ by Proposition \ref{bpr:hilbert_sparse}.
\end{proposition}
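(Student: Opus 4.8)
The statement has two parts: (i) that $\theta$ carries the deepest stratum $X^{\flat(1)}_{\fr_1}$ (the singular locus of the semistable fiber of the quaternionic Shimura curve) into the deepest stratum $X_{\fr_0,\fr_1}^{012345}$ of the Hilbert threefold; and (ii) that on geometric points of this stratum the diagram relating the two quotient maps and $\vartheta$ commutes. Both are fundamentally moduli-theoretic: one has to identify what $\theta$ does to the abelian-variety data attached to a point of the deepest stratum and compare with the purely quaternion-algebra construction $\vartheta$.

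The plan is as follows. First I would recall, from Appendix \ref{ss:b} and the \v{C}erednik--Drinfeld description, the moduli-theoretic characterization of $X^{\flat(1)}_{\fr_1}(\dF_{\ell^\infty})$: a geometric point lying in it corresponds to a triple $(A,\iota_A,\lambda_A)$ whose underlying abelian surface is ``maximally degenerate'' at $\ell$, i.e.\ whose $\ell$-divisible group is (supersingular/superspecial in the relevant sense) so that the associated point in the dual reduction complex is a vertex meeting the maximal number of components. Then I would trace $\theta$ through: $A' = \cO\otimes_{\cO^\flat}A$ inherits this maximal degeneracy at $\fl$ coordinatewise (the three embeddings $\tau_0,\tau_1,\tau_2$ of $F$ all see the same degeneration of $A$), so $A'$ lands at the ``sparse''/deepest stratum $X_{\fr_0,\fr_1}^{012345}$ — this is exactly the combinatorial statement that a vertex of the building of $\mathrm{GL}_2(F_\fl)=\mathrm{GL}_2(\dQ_{\ell^3})$ pulls back under the diagonal $\mathrm{GL}_2(\dQ_\ell)\hookrightarrow\mathrm{GL}_2(\dQ_{\ell^3})$ to a face of maximal dimension in the product building. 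The orientation-compatibility of $\cO^\flat\otimes O_F\hookrightarrow\cO$ (Definition \ref{de:compatible}) guarantees the level structure $\lambda_{A'}$ and the subgroup $C_{A'}$ are the expected ones, so the image point indeed lies in $X_{\fr_0,\fr_1}^{012345}$ and not merely in some codimension-$\leq 2$ stratum. This establishes (i).

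For (ii), both $\psi^\flat$ and $\psi$ are the canonical bijections identifying the set of geometric points of the deepest stratum (modulo the ray class group) with the relevant Eichler-order class sets $\cS^\flat_{\fr_1\fl}$ and $\cS_{\fr\fl}$, coming from Proposition \ref{bpr:hilbert_sparse}. Under these identifications $\theta$ becomes, on the level of parametrizing data, precisely the operation $(\cR,\cR_0,\{\epsilon_p\})\mapsto(\cR',\cR'_0,\{\epsilon'_\fp\})$ followed by the degeneracy map $\delta\colon\cS_{\ft(\fr_1\fl\cap\dZ)}\to\cS_{\fr\fl}$, because: the order $\cR$ controlling the superspecial abelian surface $A$ tensors up to the order $\cR'$ controlling $A'=\cO\otimes_{\cO^\flat}A$ (minimal $O_F$-Eichler order containing $O_F\otimes\cR$, of the predicted discriminant $\nabla$ and level $\ft(\fr_1\fl\cap\dZ)$); the maximal order $\cR'_0$ is pinned down by the right-multiplication condition at primes in $\nabla'$, which matches how $C_{A'}=\prod_{\fp\mid\fr_0}e_\fp A'[\fp]$ is defined via the idempotents $e_\fp$; and the orientations $\epsilon'_\fp$ are the unique ones restricting to $\epsilon_p$ on $\cR$ and to the fixed $O_F/\fp\hookrightarrow\dF_{\Nm\fp^2}$, which matches clause (1) of orientation-compatibility. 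The map $\vartheta$ is defined to be exactly $\delta\circ\vartheta'$ with $\vartheta'$ this assignment, so the square commutes essentially by unwinding the definitions.

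\textbf{Main obstacle.} The delicate point is not the formal bookkeeping but the precise moduli-theoretic dictionary at $\ell$: one must check that the superspecial/maximally-degenerate abelian surfaces parametrized by the vertices of the \v{C}erednik--Drinfeld tree for $B^\flat$ (the quaternion algebra over $\dQ$ ramified at $\{\ell\}\cup\nabla^\flat\setminus\{\infty\}$) are sent by $A\mapsto\cO\otimes_{\cO^\flat}A$ to the abelian varieties sitting over the deepest stratum of the Zink/Hilbert model, \emph{with the correct Eichler order as automorphism-like data}, and that the class-set bijections of Proposition \ref{bpr:hilbert_sparse} are compatible with Hecke actions in a way that makes \eqref{eq:reciprocity0} and the present square simultaneously commute (including the degeneracy map $\delta^\fd$ for $\fd=O_F,\fl$). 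Concretely, I expect the hardest input to be verifying that $\cR'$ really is the \emph{minimal} $O_F$-Eichler order containing $O_F\otimes\cR$ of level exactly $\ft(\fs\cap\dZ)$ — i.e.\ that no unexpected level-raising or level-lowering occurs at primes dividing $\fr_0$ — and that this is compatible on the geometric side with the definition of $C_{A'}$ via the idempotents $e_\fp$. This will rely on the local structure results for oriented Eichler orders in Appendix \ref{ss:a} together with the global integral model description in Appendix \ref{ss:b}, and is where the orientation-compatibility hypothesis does its real work.
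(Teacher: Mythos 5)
Your proposal is correct and follows essentially the same route as the paper: part (i) is proved by observing that the degeneracy condition ($\Pi$ acting trivially on $\Lie(A)$) transfers to every $\tau$-component of $\Lie(A')=O_F\otimes\Lie(A)$, and part (ii) by matching the moduli data of $(A',\iota_{A'},C_{A'},\lambda_{A'})$ with the Eichler-order data via the parameterization of Appendix \ref{ss:a} (specifically Proposition \ref{apr:ramified}), exactly where you locate the real work, including the minimality/level claim for $\cR'$ and the role of orientation compatibility in pinning down $\cR'_0$ and $\{\epsilon'_\fp\}$.
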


\begin{proof}
To prove the proposition, we may assume that $\fr_0=\ft=\prod_{\fp\in\nabla'}\fp$. Let $(A,\iota_A,\lambda_A)$ be an element in $X^{\flat(1)}_{\fr_1}(\dF_{\ell^\infty})$ whose image under $\theta$ is $(A',\iota_{A'},C_{A'},\lambda_{A'})$. Put $C'_{A'}=C_{A'}\times\IM(\lambda_{A'})$.

We choose a decomposition $\cO^\flat\otimes\dZ_\ell=\dZ_{\ell^2}\oplus\dZ_{\ell^2}\Pi$ with $\Pi^2=\ell$. Then $\Pi$ acts trivially on $\Lie(A)$. As $\Lie(A')$ is canonically isomorphic to $O_F\otimes\Lie(A)$ and $\Pi$ remains a uniformizer in $\cO\otimes\dZ_\ell$, $\Pi$ acts trivially on $\Lie(A')$ as well. In other words, $(A',\iota_{A'},C_{A'},\lambda_{A'})\in X_{\fr_0,\fr_1}^{012345}(\dF_{\ell^\infty})$. The first part follows.

Now we show the commutativity of the diagram. Suppose that the image of $(A,\iota_A,\lambda_A)$ (resp.\ $(A',\iota_{A'},C_{A'},\lambda_{A'})$) in $\cS^\flat_{\fr_1\fl}$ (resp.\ $\cS_{\fr\fl}$) is represented by $(\cR,\cR_0,\{\epsilon_p\})$ (resp.\ $(\cR',\cR'_0,\{\epsilon'_\fp\})$). By Proposition \ref{apr:ramified}, $\cR$ (resp.\ $\cR'$) is the subalgebra of $\End(A,\iota_A)$ (resp.\ $\End(A',\iota_{A'})$) preserving the subgroup $C_A$ (resp.\ $C'_{A'}$). However, since $\cR'$ contains $O_F\otimes\cR$ and has level $\ft\fr_1\fl$, we know that $\cR'$ must be the (unique) minimal $O_F$-Eichler order in $F\otimes\cR_0$ containing $O_F\otimes\cR$. Thus $\cR$ and $\cR'$ satisfy the relation in the definition of $\vartheta$.

To check that $\cR_0$ and $\cR'_0$ satisfy the relation in the definition of $\vartheta$, it suffices to show that for every $\fp\in\nabla'$ above $p$, the action of $\cR$ on the one-dimensional $O_F/\fp$-vector space $(\cR'_0/O_F\otimes\cR_0)\otimes\dZ_p$ via right multiplication is given by the homomorphism $\epsilon_p$. We first recall how $\epsilon_p$ is obtained (see the paragraphs after \cite{Rib89}*{Theorem 4.12}). Let $p_{\cO^\flat}$ be the kernel of $o_p\colon\cO^\flat\to\dF_{p^2}$. Then $\rT_pA/p_{\cO^\flat}\rT_pA$ is a one-dimensional $\dF_{p^2}$-vector space, and $\epsilon_p\colon\cR\to\dF_{p^2}$ is the homomorphism induced from the action of $\cR$ on $\rT_pA/p_{\cO^\flat}\rT_pA$. Note that $(\cR'_0/O_F\otimes\cR_0)\otimes\dZ_p$ is canonically isomorphic $\End(\rT_pA',\iota_{A'})/O_F\otimes\End(\rT_pA,\iota_A)$. Fixing an isomorphism $\rT_p(A)\simeq\cO^\flat_p\coloneqq\cO^\flat\otimes\dZ_p$ as a left $\cO^\flat_p$-module, we have $\cR_0\otimes\dZ_p=\cR\otimes\dZ_p\simeq(\cO^\flat_p)^{\r{op}}$ under which $\epsilon_p=(o_p)^{\r{op}}$, and $\cR'_0\otimes\dZ_p\simeq(\cO\otimes\dZ_p)^{\r{op}}$. Since the action of $(\cO^\flat_p)^{\r{op}}$ on $(\cO\otimes\dZ_p)^{\r{op}}/O_F\otimes(\cO^\flat_p)^{\r{op}}$ via right multiplication is given by the homomorphism $(o_p)^{\r{op}}$, we are done.

It is straightforward to check that $\{\epsilon_p\}$ and $\{\epsilon'_\fp\}$ satisfy the relation in the definition of $\vartheta$. We will leave details to the reader. The proposition is then proved.
\end{proof}

\begin{definition}[Hirzebruch--Zagier cycle]\label{de:hirzebruch}
We define the \emph{Hirzebruch--Zagier cycle} of $\cX(\ell)_{\fr_0,\fr_1}\otimes\dQ$ to be
\[\Theta^\ell_{\fr_0,\fr_1}\coloneqq\theta_*[\cX(\ell)^\flat_{\fr_1}\otimes\dQ]\in\CH^2(\cX(\ell)_{\fr_0,\fr_1}\otimes\dQ).\]
\end{definition}

By Theorem \ref{th:cubic_level_raising} (1), we have $\CH^2(\cX(\ell)_{\fr_0,\fr_1}\otimes\dQ)^0_\fm=\CH^2(\cX(\ell)_{\fr_0,\fr_1}\otimes\dQ)_\fm$. Similar to \eqref{eq:abel_jacobi}, we have the localized Abel--Jacobi map
\[\AJ_\fm\colon\CH^2(\cX(\ell)_{\fr_0,\fr_1}\otimes\dQ)\to\rH^1(\dQ,\rH^3(\cX(\ell)_{\fr_0,\fr_1}\otimes\overline\dQ,\dZ_p(2))_\fm).\]
Note that we have the localization map
\[\loc_\ell\colon\rH^1(\dQ,\rH^3(\cX(\ell)_{\fr_0,\fr_1}\otimes\overline\dQ,\dZ_p(2))_\fm)\to
\rH^1(\dQ_\ell,\rH^3(\cX(\ell)_{\fr_0,\fr_1}\otimes\overline\dQ,\dZ_p(2))_\fm)\]
and the quotient map
\[\partial\colon\rH^1(\dQ_\ell,\rH^3(\cX(\ell)_{\fr_0,\fr_1}\otimes\overline\dQ,\dZ_p(2))_\fm)\to
\rH^1_\sing(\dQ_\ell,\rH^3(\cX(\ell)_{\fr_0,\fr_1}\otimes\overline\dQ,\dZ_p(2))_\fm)\]
from \Sec\ref{ss:notation}. We have a bilinear pairing $(\;,\;)\colon\Gamma(\cS_\fr,\dZ)\times\Gamma(\cS_\fr,\dZ)\to\dZ$ such that $(\phi_1,\phi_2)=\sum_{g\in\cS_\fr}\phi_1\phi_2(g)$. It induces a pairing $(\;,\;)\colon\Gamma(\cS_\fr,\dZ)/\Ker\phi^\fr_\rho\times\Gamma(\cS_\fr,\Lambda)[\Ker\phi^\fr_\rho]\to\Lambda$. Now we can state the reciprocity law for the Hirzebruch--Zagier cycle.

\begin{theorem}\label{th:reciprocity}
Let $(\rho,\fr_\rho,\fr_0,\fr_1)$ be a perfect quadruple satisfying Assumption \ref{as:quadruple}, and $\ell$ a cubic-level raising prime. Then under the isomorphism in Theorem \ref{th:cubic_level_raising} (3), we have
\[(\partial\loc_\ell\AJ_\fm\Theta^\ell_{\fr_0,\fr_1},\phi)=(\ell+1)\cdot|(\dZ/\fr_1\cap\dZ)^\times|\cdot\sum_{\cS^\flat_{\fr_1}}\phi(\vartheta(x))\]
for every $\phi\in\Gamma(\cS_\fr,\Lambda)[\Ker\phi^\fr_\rho]$.
\end{theorem}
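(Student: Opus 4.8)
<br>

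\textbf{Proof proposal.} The plan is to reduce the reciprocity law to an equality of explicit $0$-cycles on the semistable special fiber, via Theorem \ref{th:relation_abel}, and then to carry out that computation on the auxiliary Shimura curve $\cX(\ell)^\flat_{\fr_1}$, transporting the answer through the morphism $\theta$ of \eqref{eq:special} and the map $\vartheta$ of \eqref{eq:special2}. Concretely: first, I would apply Theorem \ref{th:relation_abel} to the smooth model $\cY_{\fr_0,\fr_1}$ of $\cX(\ell)_{\fr_0,\fr_1}$ over $O_K=\dZ_{\ell^6}$, with $r=2$, $n=3$, and the coefficient ring $\Lambda$ localized at $\fm=\fm_\rho^{\fr\fl}$ (which is a nice, indeed very nice, coefficient by Proposition \ref{pr:featuring} (2)). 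This identifies $\partial(\loc_\ell\AJ_\fm\Theta^\ell_{\fr_0,\fr_1})$, after passing to $\sigma$-invariants as in Step 5 of the proof of Theorem \ref{th:cubic_level_raising}, with $\eta^2_\fm(\widetilde{\Theta^\ell_{\fr_0,\fr_1}})$, where $\widetilde{\Theta^\ell_{\fr_0,\fr_1}}\in A^2(Y_{\fr_0,\fr_1,\kappa},\Lambda)^0_\fm$ is the class built from the Zariski closure. Note there is a minor subtlety here: $\Theta^\ell_{\fr_0,\fr_1}$ lives on $\cX_{\fr_0,\fr_1}\otimes\dQ$, not on $\cX_{\fr_0,\fr_1}\otimes K$; but since $\ell$ is inert in $F$ and we may base change to $\dQ_\ell$ and then to $K$, the pullback of the cycle and of the relevant cohomology classes is compatible with the localization maps, so the computation over $K$ suffices.

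Second, I would compute $\widetilde{\Theta^\ell_{\fr_0,\fr_1}}$ as an explicit combination of featuring cycles. Since $\theta$ is finite and, by Proposition \ref{pr:special}, sends $X^{\flat(1)}_{\fr_1}$ into $X_{\fr_0,\fr_1}^{012345}$, the hypothesis of Proposition \ref{pr:relation_abel} should be verified (the singular locus of the closure of $\Theta^\ell_{\fr_0,\fr_1}$ has the required codimension in each component of $X_{\fr_0,\fr_1,\kappa}$ because $\theta$ is finite and the target strata are smooth), giving that the image of $\widetilde{\Theta^\ell_{\fr_0,\fr_1}}$ in $\rH^4(Y^{(0)}_{\fr_0,\fr_1,\overline\kappa},\Lambda(2))$ is the geometric cycle class of $\theta^\sharp\times_{\cX}X^{(0)}_\kappa$. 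Chasing through the blow-up $\pi\colon\cY_{\fr_0,\fr_1}\to\cX_{\fr_0,\fr_1}$ and the explicit description of $Y^{012345}_{\fr_0,\fr_1,\overline\kappa}=\coprod_h H_h$ from Lemma \ref{le:cycle_strata} (3), the special cycle lands in the $\sfH_h$-part of $\cB(Y_{\fr_0,\fr_1},\Lambda)$, with multiplicity governed by the fiber of $\theta$ over each $h$. Via $\psi$ and the commutative square of Proposition \ref{pr:special}, this multiplicity is read off from $\psi^\flat$ and $\vartheta$; the factor $|(\dZ/\fr_1\cap\dZ)^\times|$ enters exactly from the normalization of the pushforward $\psi_*$ (Definition \ref{de:quotient}) against the non-normalized count of points in the fiber of $X^{\flat(1)}_{\fr_1}(\overline\kappa)\to\cS^\flat_{\fr_1\fl}$, i.e.\ the size of the ray class group $\Cl(\dQ)_{\fr_1\cap\dZ}$, which is $|(\dZ/\fr_1\cap\dZ)^\times|/\text{(unit contribution)}$ — this bookkeeping will need care.

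Third, I would push $\widetilde{\Theta^\ell_{\fr_0,\fr_1}}$, now expressed in the $\sfH_h$-coordinate, through the map $\tilde\Delta$ and then through the isomorphism $\coker\tilde\Delta_\fm\simeq\Gamma(\cS_\fr,\Lambda)_\fm^{\oplus 3}$ of Proposition \ref{pr:corank_cubic}. By Lemma \ref{le:intersection4}, the $\sfH_h$-row of the potential matrix has nonzero entries $1$ at $\sfE^i_{\delta^\pm(h)}$ and $-2$ at $\sfH_h$; but the relevant output is the image under \eqref{eq:corank_cubic}, where the $\tfrac12 D^\star\boldsymbol h$ correction term is precisely designed to absorb the contribution of the $\sfH$-block (this is why $C'=C+\tfrac12 D^\star D$ appears in \eqref{eq:potential_matrix_change}). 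Tracking the element $\boldsymbol e-BA^{-1}\boldsymbol f+\tfrac12 D^\star\boldsymbol h$ with $\boldsymbol f=0$, $\boldsymbol e=0$, and $\boldsymbol h=\psi^\flat_*\theta^*(\text{const})$, the three output components $(e_1+e_4,e_2+e_5,e_1+e_6)$ each evaluate to $\tfrac12\delta^\pm_*\psi^\flat_*(\dots)$, which by Proposition \ref{pr:special} and the compatibility \eqref{eq:reciprocity0} equals a multiple of $\vartheta_*$ of the constant function; pairing against $\phi\in\Gamma(\cS_\fr,\Lambda)[\Ker\phi^\fr_\rho]$ then yields $\sum_{\cS^\flat_{\fr_1}}\phi(\vartheta(x))$ up to the scalar. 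Finally, the scalar $(\ell+1)$ should emerge from the degree-$(\ell+1)$ maps relating the two sparse types in the fiber structure of Lemma \ref{le:strata} (4) together with the $\tfrac12$ and the $2$ in $D^\star D$; I would pin it down by comparing with the much-easier Shimura-curve computation (the analogue of Corollary \ref{co:corank_cubic} for $\cC$ used in Step 4 of the proof of Proposition \ref{pr:dimension}), where the same Hirzebruch--Zagier-type cycle appears as a $0$-cycle on a curve and the reciprocity reduces to the classical first explicit reciprocity law of \cite{BD05}.

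\textbf{Main obstacle.} The hard part will be the precise matching of normalization constants: the factor $(\ell+1)\cdot|(\dZ/\fr_1\cap\dZ)^\times|$ must be assembled from (i) the degrees of the Frobenius factors in Lemma \ref{le:strata} (4), (ii) the normalizations of $\psi_*$, $\psi^\flat_*$, $\delta^\pm_*$ hidden in Definition \ref{de:quotient}, (iii) the $\tfrac12$ appearing in \eqref{eq:corank_cubic}, and (iv) the ramification of $\theta$ along the special fiber (whether $\theta$ is étale onto its image near $X^{012345}$, which Proposition \ref{pr:special}'s proof via $\Pi$ acting trivially on $\Lie$ suggests but does not immediately quantify). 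Getting every one of these right, and in particular checking there is no spurious power of $\ell$ or of $2$, is where the bulk of the careful work lies; everything else is a formal consequence of the machinery of \Sec\ref{ss:2} and \Sec\ref{ss:3} already in place.
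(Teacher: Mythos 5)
Your skeleton is the same as the paper's: lift the computation to the semistable model, apply Theorem \ref{th:relation_abel} together with Proposition \ref{pr:relation_abel} and Corollary \ref{co:corank_cubic} to identify $\partial\loc_\ell\AJ_\fm\Theta^\ell_{\fr_0,\fr_1}$ with the intersection-theoretic functional $z'\in\Hom(\cB(Y_{\fr_0,\fr_1}),\Lambda)$ attached to the closure of the Hirzebruch--Zagier cycle, read it off in $\coker\tilde\Delta_\fm$ via the formula \eqref{eq:corank_cubic} of Proposition \ref{pr:corank_cubic} with $\boldsymbol f=\boldsymbol e=0$, and convert to a sum over $\cS^\flat_{\fr_1}$ via Proposition \ref{pr:special} and \eqref{eq:reciprocity0}. (One slip: you do not ``push $\widetilde{\Theta}$ through $\tilde\Delta$''; the class is already a functional and one reduces modulo $\IM\tilde\Delta_\fm$ --- but your subsequent use of \eqref{eq:corank_cubic} is the correct recipe.) However, two genuine gaps remain. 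First, the assertion $\boldsymbol f=\boldsymbol e=0$ together with the multiplicity-one value of $\boldsymbol h$ at points of the image of $\theta$ is the geometric heart of the proof, and your justification (``$\theta$ finite, target strata smooth'') does not give it. The paper proves it by a moduli-theoretic argument: one first lifts $\theta$ to $\tilde\theta\colon\cX(\ell)^\flat_{\fr_1}\otimes\dZ_{\ell^6}\to\cY_{\fr_0,\fr_1}$ using regularity of the curve's integral model and the universal property of the blow-up (your proposal never constructs this lift, yet it is what makes the closure in $\cY_{\fr_0,\fr_1}$ accessible), and then shows that the scheme-theoretic intersection of the graph of $\theta|_{X^{\flat\pm}_{\fr_1}}$ with $X^{\flat\pm}_{\fr_1}\times X^S_{\fr_0,\fr_1}$ (for $S\supset S^\pm$, $|S|=4$) is reduced and zero-dimensional, because it is forced into the graph over $X^{\flat(1)}_{\fr_1}$; this is what shows $\tilde\theta(X^{\flat\pm}_{\fr_1})$ misses every non-sparse stratum $Y^S$ and meets each $H_h$ transversally in at most one point, and it also supplies the hypothesis needed for Proposition \ref{pr:relation_abel}.

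Second, your account of the constant is off, and you explicitly defer it. The factor $\ell+1$ does not come from Lemma \ref{le:strata} (4) (that Frobenius factor has degree $\ell$), nor from the maps $\delta^\pm$ between sparse strata (whose fibers have size $\ell^3+1$), nor from the $\tfrac12$ and the $2$ in $D^\star D$. In the paper it arises, after the identification $z'=\tfrac12(\psi^+_*\delta^+_*\theta_*\b{1}+\psi^-_*\delta^-_*\theta_*\b{1})$ and adjunction against $\phi$ through the diagrams of Proposition \ref{pr:special} and \eqref{eq:reciprocity0}, from the single evaluation $\delta_*\psi^\flat_*\b{1}=\delta^\fl_*\psi^\flat_*\b{1}=(\ell+1)\cdot|(\dZ/\fr_1\cap\dZ)^\times|$ on $\cS^\flat_{\fr_1}$: the $\ell+1$ is the normalization constant $\mu$ of the degeneracy pushforward from level $\fr_1\fl$ to level $\fr_1$ on the definite quaternion side (Definition \ref{ade:pushforward}), and $|(\dZ/\fr_1\cap\dZ)^\times|$ is the ray-class normalization of $\psi^\flat_*$ (the analogue of Definition \ref{de:quotient}). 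In particular no comparison with the Shimura-curve reciprocity law of Bertolini--Darmon is needed (and the paper deliberately avoids such a reduction); the constant is pinned down by this bookkeeping alone, which your proposal leaves open.
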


\begin{proof}
Similar to $X_{\fr_0,\fr_1}$, we have subschemes $X_{\fr_1}^{\flat\pm}$ of $X_{\fr_1}^\flat$ such that $X_{\fr_1}^{\flat(0)}=X_{\fr_1}^{\flat+}\coprod X_{\fr_1}^{\flat-}$, $X_{\fr_1}^{\flat+}\cap X_{\fr_1}^{\flat-}=X_{\fr_1}^{\flat(1)}$, and the image of $X_{\fr_1}^{\flat\pm}$ under $\theta$ is contained in $X_{\fr_0,\fr_1}^\pm$. Since $\cX_{\fr_1}^\flat\otimes\dZ_{\ell^6}$ is regular, by the universal property of blow-up, we have a unique morphism $\tilde\theta\colon\cX(\ell)_{\fr_1}^\flat\otimes\dZ_{\ell^6}\to\cY_{\fr_0,\fr_1}$ such that $\pi\circ\tilde\theta=\theta$, where we recall that $\pi\colon\cY_{\fr_0,\fr_1}\to\cX(\ell)_{\fr_0,\fr_1}\otimes\dZ_{\ell^6}$ is the semistable resolution used in \Sec\ref{ss:featuring}.

Let $S$ be an (ample) subset of $\{0,1,2,3,4,5\}$ containing $S^\pm$ with $|S|=4$. We claim that the scheme-theoretical intersection of the graph of the restricted morphism $\theta\colon X_{\fr_1}^{\flat\pm}\to X_{\fr_0,\fr_1}^\pm$ and $X_{\fr_1}^{\flat\pm}\times_{\Spec\dF_{\ell^6}}X_{\fr_0,\fr_1}^S$ inside $X_{\fr_1}^{\flat\pm}\times_{\Spec\dF_{\ell^6}}X_{\fr_0,\fr_1}^\pm$ is a reduced $0$-dimensional $\dF_{\ell^6}$-scheme. In fact, from the moduli interpretation, the intersection is contained in the graph of the restricted morphism $\theta\colon X_{\fr_1}^{\flat(1)}\to X_{\fr_0,\fr_1}^\pm$, which is a reduced $0$-dimensional $\dF_{\ell^6}$-scheme.

By the above claim, we know that the image of $X_{\fr_1}^{\flat\pm}$ under $\tilde\theta$ does not intersect with $Y_{\fr_0,\fr_1}^S$ if $S$ is a type but not sparse. Moreover, the intersection number of the graph of the restricted morphism $\tilde\theta\colon X_{\fr_1,\dF_{\ell^\infty}}^{\flat+}\to Y_{\fr_0,\fr_1,\dF_{\ell^\infty}}^+$ and $X_{\fr_1,\dF_{\ell^\infty}}^{\flat+}\times_{\Spec\dF_{\ell^\infty}}H_h$ inside $X_{\fr_1,\dF_{\ell^\infty}}^{\flat+}\times_{\Spec\dF_{\ell^\infty}}Y_{\fr_0,\fr_1,\dF_{\ell^\infty}}^+$ is equal to $1$ (resp.\ $0$) if $h$ is (resp.\ is not) in the image of $\theta\colon X^{\flat(1)}_{\fr_1}(\dF_{\ell^\infty})\to X_{\fr_0,\fr_1}^{012345}(\dF_{\ell^\infty})$.

For every type $S$, denote by $z$ the geometric cycle class of $Y_{\fr_0,\fr_1}^{(0)}\times_{Y_{\fr_0,\fr_1}}\tilde\theta_*X^\flat_{\fr_1}$ in $\rH^4(Y_{\fr_0,\fr_1,\dF_{\ell^\infty}}^{(0)},\Lambda(2))$. Let $z'\in\Hom(\cB(Y_{\fr_0,\fr_1}),\Lambda)$ be the image of $z$ under the map $\beta'$ \eqref{eq:beta}. It is clear that $z'$ belongs to $\Hom(\cB(Y_{\fr_0,\fr_1}),\Lambda)^{\rG_{\dF_{\ell^6}}}$, which is canonically isomorphic to $\Hom(\cA(Y_{\fr_0,\fr_1},\Lambda),\Lambda)$ by Proposition \ref{pr:featuring} (1). By Theorem \ref{th:relation_abel}, Proposition \ref{pr:relation_abel} and Corollary \ref{co:corank_cubic}, we know that $\partial\loc_\ell\AJ_\fm\Theta^\ell_{\fr_0,\fr_1}$ coincides with $z'$, regarded as in the cokernel of $\coker\tilde\Delta_\fm/\Ker\phi^{\fr\fl}_\rho$. Moreover, it is fixed by $\Gal(\dF_{\ell^6}/\dF_\ell)$. Now we regard $z'$ as an element in $(\coker\tilde\Delta_\fm/\Ker\phi^{\fr\fl}_\rho)^{\Gal(\dF_{\ell^6}/\dF_\ell)}\simeq\Gamma(\cS_\fr,\Lambda)/\Ker\phi^\fr_\rho$.

By the previous discussion and Proposition \ref{pr:corank_cubic}, we have $z'=(\psi^+_*\delta^+_*\theta_*\b{1}+\psi^-_*\delta^-_*\theta_*\b{1})/2$, where $\b{1}$ is the constant function on $X^{\flat(1)}_{\fr_1}(\dF_{\ell^\infty})$ with value $1$. In other words, we have
\begin{align}\label{eq:reciprocity1}
(\partial\loc_\ell\AJ_\fm\Theta^\ell_{\fr_0,\fr_1},\phi)=\frac{1}{2}(\b{1},\theta^*\delta^{+*}\psi^{+*}\phi+\theta^*\delta^{-*}\psi^{-*}\phi).
\end{align}
By Proposition \ref{pr:special}, we have the following commutative diagram
\[\xymatrix{
X^{\flat(1)}_{\fr_1}(\dF_{\ell^\infty}) \ar[r]^-{\theta}\ar[d]_-{\psi^\flat} & X_{\fr_0,\fr_1}^{012345}(\dF_{\ell^\infty})
\ar[d]^-{\psi}\ar[r]^-{\delta^\pm} & Z_{\fr_0,\fr_1}^\pm(\dF_{\ell^\infty})  \ar[d] \\
\cS^\flat_{\fr_1\fl} \ar[r]^-{\vartheta} & \cS_{\fr\fl} \ar[r]^-{\delta\text{ resp. }\delta^\fl} & \cS_\fr
}\]
which, together with \eqref{eq:reciprocity0}, implies that
\begin{align*}
\eqref{eq:reciprocity1}=\frac{1}{2}(\b{1},\psi^{\flat^*}\vartheta^*\delta^*\phi+\psi^{\flat^*}\vartheta^*\delta^{\fl*}\phi)
=\frac{1}{2}(\b{1},\psi^{\flat^*}\delta^*\vartheta^*\phi+\psi^{\flat^*}\delta^{\fl*}\vartheta^*\phi).
\end{align*}
The theorem follows as $\delta_*\psi^\flat_*\b{1}=\delta^\fl_*\psi^\flat_*\b{1}$ is equal to the constant function on $\cS^\flat_{\fr_1}$ with value $(\ell+1)\cdot|(\dZ/\fr_1\cap\dZ)^\times|$.
\end{proof}

\subsection{Bloch--Kato Selmer groups}
\label{ss:bloch}

Let $E$ be an elliptic curve over $\Spec F$. For every prime $p$, we have a Galois representation $\rho_{E,p}\colon\rG_F\to\GL(\rN_{E,p})$ where $\rN_{E,p}=\rH^1(E\otimes\overline{F},\dZ_p)$. Let $\rho_{E,p}^\sharp\colon\rG_\dQ\to\GL(\rN_{E,p}^\sharp)$ be the multiplicative induction, similar to Notation \ref{no:reduction}, where $\rN_{E,p}^\sharp=\rN_{E,p}^{\otimes 3}$. Then $(\rN_{E,p}^\sharp\otimes\dQ)(2)$ is the $p$-adic realization $\sfM(E)_p$ of the motive $\sfM(E)$ \eqref{eq:cubic_motive}.

\begin{lem}\label{le:tamely_pure}
The $\dQ_p$-representation $\sfM(E)_p$ of $\rG_\dQ$ is tamely pure (\cite{Liu}*{Definition 3.3}), that is, $\rH^1(\dQ_v,\sfM(E)_p)=0$ for all primes $v\nmid p$.
\end{lem}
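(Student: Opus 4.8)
The plan is to argue local-by-local at each prime $v \neq p$, reducing the vanishing of $\rH^1(\dQ_v, \sfM(E)_p)$ to a weight count on the Galois representation. The key general fact is that for a de Rham representation $V$ of $\rG_{\dQ_v}$ with $v \nmid p$, one has $\rH^1_f(\dQ_v, V) = \rH^1_g(\dQ_v, V) = \rH^1(\dQ_v, V)$ automatically, but more relevantly $\rH^1(\dQ_v, V) = 0$ whenever $V^{\rI_v} = 0$ \emph{and} $(V^*(1))^{\rI_v} = 0$ (local Euler characteristic plus local Tate duality, since $\rH^0(\dQ_v, V) = 0$ and $\rH^2(\dQ_v, V) \simeq \rH^0(\dQ_v, V^*(1))^\vee = 0$, and the Euler characteristic of a representation over $\dQ_p$ with $v \nmid p$ is zero). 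So the whole statement reduces to: for every prime $v \neq p$, neither $\sfM(E)_p$ nor its Tate dual has nonzero inertia invariants at $v$. Since $\sfM(E)_p$ is essentially self-dual up to a Tate twist by the canonical polarization (it sits in weight $-1$, being $(\otimes\Ind^F_\dQ \sfh^1(E))(2)$ of a weight-$3$ object, so $\sfM(E)_p^* (1) \simeq \sfM(E)_p$), it suffices to check $\sfM(E)_p^{\rI_v} = 0$ for all $v \neq p$.

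First I would unwind the multiplicative induction: for a prime $v$ of $\dQ$, the restriction of $\rho_{E,p}^\sharp$ to $\rG_{\dQ_v}$ decomposes according to how $v$ splits in $F$, and the inertia action is governed by the inertia action of $\rho_{E,p}$ at the places $w \mid v$ of $F$ together with the permutation of the factors. Concretely, $\sfM(E)_p|_{\rG_{\dQ_v}}$ is built from tensor products $\bigotimes_{w \mid v} \rho_{E,p}|_{\rG_{F_w}}$ (with appropriate induction when $v$ is not totally split), Tate-twisted by $(2)$ and with the Frobenius permuting factors in the non-split cases. The inertia invariants of such a tensor-induced module vanish as soon as, at each $w \mid v$, the local representation $\rN_{E,p}|_{\rG_{F_w}}$ has no quotient on which inertia acts through a finite-order (in fact trivial-after-twist) character matching the twist — and this is controlled by the Weil--Deligne parameter of $E$ at $w$.

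The main case analysis runs over the reduction type of $E$ at the places $w \mid v$: (i) if $E$ has good reduction at all $w \mid v$, then $\rN_{E,p}|_{\rG_{F_w}}$ is unramified and pure of weight $1$, so $\sfM(E)_p|_{\rG_{\dQ_v}}$ is unramified and pure of weight $8 \cdot 1 - 4 = 4 \neq 0$ after the twist by $(2)$... more precisely it is pure of motivic weight $-1$, hence has no Frobenius eigenvalue equal to $1$ on any inertia-invariant subspace, so $\sfM(E)_p^{\rI_v} = 0$ because it is already all of $\sfM(E)_p$ and $\rH^0(\dQ_v, \sfM(E)_p) = 0$ gives... wait, I actually need $\sfM(E)_p^{\rI_v} = 0$, not just $\rH^0 = 0$. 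In the good-reduction case the representation is unramified, so $\sfM(E)_p^{\rI_v} = \sfM(E)_p \neq 0$, and instead I use that $\rH^1(\dQ_v, \sfM(E)_p) = \rH^1_{\unr}(\dQ_v, \sfM(E)_p)$ injects into $\sfM(E)_p / (\Frob_v - 1)$, which vanishes since $\Frob_v$ has no eigenvalue $1$ by purity (weight $-1 \neq 0$). (ii) If $E$ has multiplicative reduction at some $w \mid v$, then $\rN_{E,p}|_{\rG_{F_w}}$ is a non-split extension of $\dZ_p$ by $\dZ_p(1)$ up to unramified twist, and the monodromy operator $N$ is nonzero; tensoring, the weight filtration on $\sfM(E)_p|_{\rG_{\dQ_v}}$ is nontrivial but the graded pieces are pure of weights symmetric about $-1$, none equal to $0$, and a direct computation of $\rH^1$ via the monodromy-weight filtration (exactly the kind of computation Section~\ref{ss:2} systematizes, though here in the easy one-dimensional-over-local-field setting) shows $\rH^1(\dQ_v, \sfM(E)_p) = 0$. (iii) If $E$ has additive reduction at some $w \mid v$: here $v$ need not lie over a prime of $\Sigma(\Pi_E, \tau)$ (Assumption (E1) only restricts reduction at those), so I must handle additive reduction in general. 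After a finite base change $E$ acquires good or multiplicative reduction; the inertia action on $\rN_{E,p}|_{\rG_{F_w}}$ factors through a finite quotient in the potentially-good case or is an extension of a finite-order character by a finite twist of $\dZ_p(1)$ in the potentially-multiplicative case. In either sub-case the Frobenius-semisimplified Weil--Deligne parameter is pure of weight $1$, and tensor-inducing and twisting by $(2)$ keeps it pure of weight $-1$; purity again forbids $\Frob_v$-eigenvalue $1$ on the inertia invariants, killing $\rH^1$. The unifying principle, which I would state as a lemma, is: a Weil--Deligne representation pure of a nonzero weight has vanishing $\rH^1$ of its associated $\rG_{\dQ_v}$-representation. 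The main obstacle is bookkeeping the tensor-induction combinatorics at non-totally-split primes and confirming in the potentially-multiplicative additive case that the monodromy filtration still has no weight-$0$ graded piece — this is where one genuinely uses that we are inducing from a weight-$1$ object to get overall weight $8 - 4 \cdot 2 \cdot \tfrac{1}{2}$... i.e. the arithmetic of the twist by $(2)$ landing us in weight $-1$ rather than $0$ is the crucial numerical input, and I would double-check it at the outset.
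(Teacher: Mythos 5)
Your strategy is at bottom the same as the paper's: everything reduces to purity of the monodromy filtration of $\sfM(E)_p|_{\rG_{\dQ_v}}$ in weight $-1$, after which $\rH^0$ and $\rH^2$ vanish (the inertia invariants of $\sfM(E)_p$ and of $\sfM(E)_p^\vee(1)\simeq\sfM(E)_p$ carry only Frobenius eigenvalues of strictly negative weight, so never the eigenvalue $1$), and $\rH^1=0$ follows from the local Euler characteristic formula — this is exactly what ``tamely pure'' packages in \cite{Liu}. The difference is in how purity of the tensor-induced representation is obtained: the paper does no case analysis at all, observing that purity may be checked after a finite base change (which disposes of the multiplicative induction), that it is stable under tensor products by Deligne's Weil II (1.6.9), and that it is classical for $\rH^1$ of an elliptic curve in every reduction type. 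Your hand-made case analysis over reduction types can be made to work, but as written it contains one substantive misstatement: in the (potentially) multiplicative cases you say one must confirm that ``the monodromy filtration still has no weight-$0$ graded piece.'' It certainly does have weight-$0$ graded pieces whenever $N\neq0$ (the graded weights are distributed symmetrically about $-1$); what you actually need, and what purity delivers, is that $\Ker N$ — hence $\sfM(E)_p^{\rI_v}$ — lies in $M_0$, so its Frobenius weights are $\leq-1$. Relatedly, the claim in your case (i) that $\rH^1=\rH^1_\unr$ for an unramified module is false in general (consider $\dQ_p(1)$ with $v\neq p$, where $\rH^1_\sing\neq0$); the correct route, already present in your opening Euler-characteristic framework, is to kill $\rH^1_\sing$ via the weight condition on the Tate dual. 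Your numerical check that the twist by $(2)$ lands the motive in weight $3-4=-1$, not $0$, is correct and is indeed the crucial input.
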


\begin{proof}
This follows from the purity of the monodromy filtration on $\rN_{E,p}^\sharp\otimes\dQ$. To show the purity, we are allowed to make a base change to a finite field extension. Thus, it suffices to show the purity for the $\dQ_p[\rG_F]$-module $(\rN_{E,p}\otimes\dQ)^{\otimes 3}$. By \cite{Del80}*{(1.6.9)}, it suffices to consider $\rN_{E,p}\otimes\dQ$, whose purity is well-known.
\end{proof}

\begin{definition}[\cite{BK90} or \cite{Liu}*{\Sec 3.1}]
In terms of the above lemma, the \emph{Bloch--Kato Selmer group} $\rH^1_f(\dQ,\sfM(E)_p)$ of the representation $\sfM(E)_p$ is the subspace of classes $s\in\rH^1(\dQ,\sfM(E)_p)$ such that
\[\loc_p(s)\in\rH^1_f(\dQ_p,\sfM(E)_p)\coloneqq\Ker[\rH^1(\dQ_p,\sfM(E)_p)\to\rH^1(\dQ_p,\sfM(E)_p\otimes_{\dQ_p}\rB_{\r{cris}})].\]
\end{definition}

\subsection{Proof of Theorem \ref{th:main}}
\label{ss:proof}

From now on, we assume that $E$ is a modular elliptic curve over $F$ satisfying Assumption \ref{as:elliptic_curve}. In particular, we have an irreducible cuspidal automorphic representation $\Pi_E$ of $\Res_{F/\dQ}\GL_2(\dA)$ with trivial central character, and the set $\Sigma(\Pi_E,\tau)$ contains $\infty$. We assume that $L(1/2,\Pi_E,\tau)\neq 0$.

Put $\nabla^\flat=\Sigma(\Pi_E,\tau)$. Then we have sets $\nabla,\nabla',\nabla''$ as in \Sec\ref{ss:reciprocity}. Let $\fr_\rho$ be the conductor of $E$ away from $\nabla$. Write $\fr_\rho=\fr'_\rho\fr''_\rho\fr'''_\rho$ as in \Sec\ref{ss:reciprocity}.

\begin{lem}\label{le:dichotomy}
We have that $\fr'_\rho$ is square-free and $\fr''_\rho=O_F$.
\end{lem}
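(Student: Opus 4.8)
The plan is to unwind what $\nabla'$ and $\nabla''$ are in terms of local data and to match them against the local components of $\Pi_E$, using that $\Pi_E$ comes from the modular elliptic curve $E$ and that the local root numbers at places in $\nabla^\flat$ are $-1$. Recall $\nabla^\flat = \Sigma(\Pi_E,\tau)$ consists of primes $v$ of $\dQ$ with $\epsilon(\Pi_{E,v},\tau) = -1$; $\nabla$ is the set of places $w$ of $F$ above such $v$ of local degree $1$ or $3$; $\nabla'$ (resp.\ $\nabla''$) consists of places of $F$ above $\nabla^\flat$ that are unramified (resp.\ ramified) of degree $2$. The ideal $\fr_\rho$ is the prime-to-$\nabla$ conductor of $E$, so $\fr'_\rho$ (product of primes in $\nabla'$), $\fr''_\rho$ (product of primes in $\nabla''$), $\fr'''_\rho$ (remaining primes) are just the parts of the conductor of $E$ supported at the corresponding sets.

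First I would reduce the claim to a purely local statement: for a place $w$ of $F$ lying above a prime $v \in \nabla^\flat$, I need (a) if $w$ has degree $2$ and is ramified over $v$, then $E$ has good reduction at $w$; and (b) if $w$ has degree $2$ and is unramified over $v$, then $E$ has at worst multiplicative reduction at $w$ (so $w^2 \nmid \cond E$). The key input is the local epsilon factor calculation for the triple product at $v$. Since $F_v = F \otimes_\dQ \dQ_v$ decomposes according to the places $w \mid v$, the local representation of $\pres{L}{G}$ at $v$ is an induction, and $\epsilon(\Pi_{E,v},\tau)$ factors through the local components $\Pi_{E,w}$ for $w \mid v$. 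The sign being $-1$ forces a constraint on the $\Pi_{E,w}$: by the theory of local triple product epsilon factors (Prasad, and the recipe in \cite{PSR87}), a $-1$ at $v$ is incompatible with $E$ having additive reduction of the ``wrong'' type at a degree-$2$ place above $v$. Concretely, I expect the argument to run: if $w$ is a degree-$2$ place above $v$ and $\Pi_{E,w}$ is supercuspidal or a ramified principal series with nontrivial conductor exponent $\geq 2$, then combined with the other local factors at $v$ the product $\epsilon(\Pi_{E,v},\tau)$ is forced to be $+1$, contradicting $v \in \Sigma(\Pi_E,\tau)$. This uses Assumption \ref{as:elliptic_curve} (E1) to know that at places above $\Sigma(\Pi_E,\tau)$ the curve has good or multiplicative reduction — wait, (E1) already says exactly that for finite places over primes in $\Sigma(\Pi_E,\tau)$! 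So in fact $E$ has good or multiplicative reduction at every $w \mid v$ for $v \in \nabla^\flat$, hence at every $w \in \nabla' \cup \nabla''$.

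Given that, the lemma is almost immediate: for $w$ of good reduction, $w \nmid \cond E$, so $w \nmid \fr_\rho$; for $w$ of multiplicative reduction, $w \| \cond E$, i.e.\ the conductor exponent is exactly $1$. Therefore $\fr'_\rho = \prod_{w \in \nabla'} w^{e_w}$ with each $e_w \in \{0,1\}$, which is square-free; and $\fr''_\rho = \prod_{w \in \nabla''} w^{e_w}$ — but here I need $e_w = 0$, i.e.\ the ramified-degree-$2$ places actually have good reduction, not multiplicative. For this I would argue that a place $w$ with multiplicative reduction contributes a Steinberg (or unramified-twist-of-Steinberg) local component $\Pi_{E,w}$, and then run the local triple-product sign computation at $v$: the contribution of a Steinberg component at a degree-$2$ ramified place $w$ to $\epsilon(\Pi_{E,v},\tau)$, together with the contributions of the other places above $v$, is incompatible with the total sign being $-1$. (Equivalently: $v \in \Sigma(\Pi_E,\tau)$ and $w \mid v$ ramified of degree $2$ with $w \in \nabla''$ forces, via the parity of the relevant root number, that $\Pi_{E,w}$ is unramified, i.e.\ $E$ has good reduction at $w$.) I would cite the explicit local sign formulas — this is the same kind of bookkeeping as in the analogous dichotomy lemma of \cite{Liu} (cf.\ \cite{Liu}*{\Sec 3}) and ultimately rests on Prasad's local triple product results.

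The main obstacle is the last point: pinning down the local epsilon factor at the ramified degree-$2$ places to rule out multiplicative reduction there. This requires being careful about how $\tau$ restricts to the local $L$-group $\pres{L}{(\Res_{F_v/\dQ_v}\GL_2)}$ when $F_v$ has a ramified quadratic factor, and invoking the correct formula for $\epsilon$ of the triple product of a Steinberg representation over a ramified quadratic extension. Everything else — identifying $\fr'_\rho, \fr''_\rho$ with parts of the conductor, reading off square-freeness from conductor exponent $\leq 1$ — is routine once (E1) and the sign constraint are in hand. I would structure the write-up as: (i) reduce to local statements at each $w \mid v$, $v \in \nabla^\flat$; (ii) invoke (E1) for good-or-multiplicative reduction, giving square-freeness of $\fr'_\rho$ and of the degree-$2$ part of $\fr_\rho$ in general; (iii) the local sign computation at ramified degree-$2$ places to upgrade ``multiplicative'' to ``good,'' yielding $\fr''_\rho = O_F$.
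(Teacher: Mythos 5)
Your overall strategy coincides with the paper's: square-freeness of $\fr'_\rho$ is read off from (E1), and $\fr''_\rho=O_F$ is to be extracted from the constraint $\epsilon(\Pi_{E,v},\tau)=-1$ at primes $v$ underlying $\nabla''$ via Prasad's local triple product theory. However, the step you yourself flag as ``the main obstacle'' — showing that a (twist of) Steinberg component at the ramified quadratic place above $v$ is incompatible with local sign $-1$ — is precisely the mathematical content of the lemma beyond (E1), and you do not supply an argument for it; you only assert that the sign bookkeeping should work out and that you ``would cite the explicit local sign formulas.'' That is a genuine gap rather than a routine citation: one must either evaluate the relevant epsilon factor (essentially of $\As(\Sp_L)\otimes\Sp_{\dQ_v}$ and its unramified twists, with $L/\dQ_v$ ramified quadratic) or decide the Prasad dichotomy in these specific cases, and neither is done in your sketch.

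For comparison, the paper settles this without any explicit epsilon-factor computation. Writing $F\otimes_\dQ\dQ_v=\dQ_v\times L$ with $L/\dQ_v$ ramified quadratic, (E1) leaves as the only problematic components $\Sp_L$ or $\Sp_L\otimes\omega_L$ at the $L$-place and $\Sp_{\dQ_v}$ or $\Sp_{\dQ_v}\otimes\omega_{\dQ_v}$ at the $\dQ_v$-place (if the $\dQ_v$-component is an unramified principal series the trilinear form exists automatically). Since $L$ is ramified, $\omega_L|_{\dQ_v^\times}$ is trivial, collapsing the four twist cases to two. Then the Saito--Tunnell dichotomy is invoked: the Jacquet--Langlands transfer of $\Sp_{\dQ_v}\otimes\omega_{\dQ_v}$ to $D_v^\times$ is the unramified quadratic character, whose restriction to $L^\times$ is nontrivial exactly because $L/\dQ_v$ is ramified, so the trivial character of $L^\times$ must occur in $\Sp_{\dQ_v}\otimes\omega_{\dQ_v}$ itself; by Prasad's Theorem D together with Remark 4.1.2 this makes all the relevant $\Hom_{\GL_2(\dQ_v)}$-spaces nonzero, i.e.\ $\epsilon(\Pi_{E,v},\tau)=+1$, contradicting $v\in\Sigma(\Pi_E,\tau)$. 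Some argument of this kind (or an honest computation of the Asai epsilon factor over a ramified quadratic extension) is what your step (iii) still needs, and you should also record explicitly why the good-reduction case at the degree-one place above $v$ causes no trouble.
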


\begin{proof}
For a local field $K$, denote by $\omega_K$ the unique nontrivial unramified quadratic character of $K^\times$, and $\Sp_K$ the Steinberg representation of $\GL_2(K)$.

By Assumption \ref{as:elliptic_curve} (E1), we know that $\fr'_\rho$ is square-free. For the second part, let $v$ be a rational prime underlying $\nabla''$. Then $F\otimes_\dQ\dQ_v=\dQ_v\times L$ where $L$ is a \emph{ramified} quadratic extension of $\dQ_v$. By the epsilon dichotomy \cite{Pra92}*{Theorem D}, it suffices to show that the following four spaces $\Hom_{\GL_2(\dQ_v)}(\Sp_L,\Sp_{\dQ_v})$, $\Hom_{\GL_2(\dQ_v)}(\Sp_L,\Sp_{\dQ_v}\otimes\omega_{\dQ_v})$, $\Hom_{\GL_2(\dQ_v)}(\Sp_L\otimes\omega_L,\Sp_{\dQ_v})$, $\Hom_{\GL_2(\dQ_v)}(\Sp_L\otimes\omega_L,\Sp_{\dQ_v}\otimes\omega_{\dQ_v})$ are all nonzero. As $L$ is ramified over $\dQ_v$, $\omega_L\res_{\dQ_v^\times}$ is trivial. Thus $\Hom_{\GL_2(\dQ_v)}(\Sp_L,\Sp_{\dQ_v})\simeq\Hom_{\GL_2(\dQ_v)}(\Sp_L\otimes\omega_L,\Sp_{\dQ_v})$ and $\Hom_{\GL_2(\dQ_v)}(\Sp_L,\Sp_{\dQ_v}\otimes\omega_{\dQ_v})=\Hom_{\GL_2(\dQ_v)}(\Sp_L\otimes\omega_L,\Sp_{\dQ_v}\otimes\omega_{\dQ_v})$. We claim that the trivial character of $L^\times$ appears in $\Sp_{\dQ_v}\otimes\omega_{\dQ_v}$. Then the second part follows from
\cite{Pra92}*{Remark 4.1.2}.

For the claim, if not, then by the Saito--Tunnell dichotomy \cites{Tun83, Sai93}, the trivial character of $L^\times$ appears in the Jacquet--Langlands transfer of $\Sp_{\dQ_v}\otimes\omega_{\dQ_v}$ to $D_v^\times$, where $D_v$ is the unique division $\dQ_v$-quaternion algebra. However, this is not the case since $L$ is ramified over $\dQ_v$.
\end{proof}

Let $\fr_1$ be an ideal of $O_F$ contained in $\fr'''_\rho$ and coprime to $\nabla'\cup\nabla''$. Put $\fr_0=\fr'_\rho$, and $\fr=\fr_0\fr_1$ as usual. The elliptic curve $E$ determines a homomorphism
\[\phi^\fr_E\colon\dZ[\dT^\fr]\to\dZ\]
such that $\phi^\fr_E(\rT_\fq)=a_\fq(E)=\tr\rho_{E,p}(\Frob_\fq)$ and $\phi^\fr_E(\rS_\fq)=1$.

\begin{proposition}\label{pr:period}
There is an absolutely neat ideal $\fr_1$ of $O_F$ contained in $\fr'''_\rho$ and coprime to $\nabla'\cup\nabla''$ such that the functional $\cJ\colon\Gamma(\cS_\fr,\dZ)[\Ker^\fr_E]\to\dZ$ defined by the formula
\[\cJ(\phi)=\sum_{\cS^\flat_{\fr_1}}\phi(\vartheta(x))\]
is nonzero, where $\vartheta$ is the map \eqref{eq:special2}.
\end{proposition}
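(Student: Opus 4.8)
The plan is to reinterpret $\cJ$, after $\otimes_\dZ\dC$, as a Gross--Prasad (triple product) period for the definite quaternionic eigenform attached to $E$, and to deduce its non-vanishing from the hypothesis $L(1/2,\Pi_E,\tau)\neq0$ together with the defining property of $\nabla^\flat=\Sigma(\Pi_E,\tau)$; since $\Gamma(\cS_\fr,\dZ)[\Ker\phi^\fr_E]$ is a $\dZ$-module and $\cJ$ is $\dZ$-valued, it suffices to exhibit, for a suitable $\fr_1$, a single $\phi$ with $\cJ(\phi)\neq0$ after base change to $\dC$. First I would set up the automorphic dictionary. Let $B^\flat$ be the quaternion algebra over $\dQ$ ramified exactly at $\nabla^\flat$ and put $B=B^\flat\otimes_\dQ F$; by the ramification computation used to define $\vartheta$, $B$ is the quaternion algebra over $F$ ramified exactly at $\nabla$, the finite set $\cS_\fr$ (resp.\ $\cS^\flat_{\fr_1}$) is a level-$\fr$ (resp.\ level-$(\fr_1\cap\dZ)$) class set for $B^\times$ (resp.\ $B^{\flat\times}$), and $\vartheta$ is induced by the base-change inclusion $B^{\flat\times}\hookrightarrow\Res_{F/\dQ}B^\times$. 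By Lemma \ref{le:dichotomy} the local components of $\Pi_E$ at the places of $\nabla$ are (twists of) Steinberg or discrete series, so by the Jacquet--Langlands correspondence $\Pi_E$ transfers to an automorphic representation $\Pi_E^B$ of $B^\times(\dA_F)$ with trivial central character, and $\Gamma(\cS_\fr,\dC)[\Ker\phi^\fr_E]$ is, by strong multiplicity one, the space of level-$\fr$ vectors in $\Pi_E^B$ (provided $\fr'''_\rho\mid\fr_1$, so that $\fr$ is divisible by the conductor of $\Pi_E^B$ away from $\nabla$). Because $\fr_1$ is absolutely neat, every class in $\cS^\flat_{\fr_1}$ has trivial stabilizer, so for $\phi\in\Gamma(\cS_\fr,\dC)[\Ker\phi^\fr_E]$ the sum $\cJ(\phi)$ is, up to an explicit nonzero volume constant, the trilinear period $P(\phi)=\int_{\dG_m(\dA)B^{\flat\times}(\dQ)\backslash B^{\flat\times}(\dA)}\phi(g)\,dg$ of the corresponding adelic form; the proposition thus reduces to producing a level-$\fr$ vector in $\Pi_E^B$ on which $P$ is nonzero.

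Next I would invoke the triple product period formula. By Ichino's formula, together with the non-vanishing criterion of Harris--Kudla and Prasad's local $\epsilon$-dichotomy, $P$ is nonzero on some pure tensor if and only if $L(1/2,\Pi_E,\tau)\neq0$ and all the normalized local trilinear forms are nonzero. The central value is nonzero by hypothesis. For the local forms: at the archimedean place and at each finite $v\in\nabla^\flat$ the algebra $B^\flat_v$ is the division algebra, and Prasad's theorem gives a nonzero $\Hom_{B^{\flat\times}(\dQ_v)}(\Pi^B_{E,v},\dC)$ precisely because $\epsilon(\Pi_{E,v},\tau)=-1$; at every $v\notin\nabla^\flat$ we have $B^\flat_v=\Mat_2(\dQ_v)$ and $\epsilon(\Pi_{E,v},\tau)=+1$, so the local Hom is again nonzero, and at the places $v\nmid\fr$ (where all factors are unramified) the local integral is an explicit nonzero product of local $L$-values. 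Hence every local factor is nonzero and $P$ does not vanish identically on $\Pi_E^B$.

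It then remains to choose $\fr_1$ so that $P$ is nonzero on a vector of level exactly $\fr=\fr'_\rho\fr_1$. The period $P$ is nonzero on some vector of $\Pi^B_E$ at some finite level; at the primes dividing $\fr'_\rho$ the relevant local trilinear integral is already nonzero on the new vector (a twisted-Steinberg situation, where no test vector obstruction arises), while at the primes dividing $\fr'''_\rho$ the local integral becomes nonzero on the $\Gamma_0$-invariant vectors once the local level is sufficiently deep, by standard local harmonic analysis on $\GL_2$. I would therefore take $\fr_1$ to be an absolutely neat ideal of $O_F$ divisible by $\fr'''_\rho$ and by a sufficiently high power of each prime of $\fr'''_\rho$ (hence automatically coprime to $\nabla$, $\nabla'$, $\nabla''$, $\fr_0$ and $p$), so that the level-$\fr$ eigenform $\phi$ satisfies $P(\phi)\neq0$; such an $\fr_1$ exists by weak approximation. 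Equivalently, one may enlarge $\Gamma(\cS_\fr,\dZ)[\Ker\phi^\fr_E]$ by auxiliary Hecke translates at primes away from $\nabla\fr p$, as in \cite{Liu}. Descending from $\dC$ back to $\dZ$ then yields $\cJ\neq0$, which is the assertion.

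\textbf{Main obstacle.} The delicate point is the last step: one must reconcile the purely local non-vanishing of $\Hom$-spaces with the concrete Eichler level $\fr=\fr'_\rho\fr_1$, i.e.\ guarantee that the specific vector singled out by that level is not annihilated by the local trilinear forms at the ramified primes (the newvector trilinear form can vanish even when the Hom-space is nonzero). Making this precise forces the deep choice of $\fr_1$, and, combined with the bookkeeping of the volume constants relating $P(\phi)$ to the bare sum $\sum_{\cS^\flat_{\fr_1}}\phi(\vartheta(x))$ and the verification that absolute neatness kills all stabilizers, constitutes the technical core of the argument.
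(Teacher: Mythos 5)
Your overall strategy is the same as the paper's: transfer to the definite quaternion algebra $B=B^\flat\otimes_\dQ F$ via Jacquet--Langlands, identify $\cJ$ (up to volume factors, using absolute neatness to kill stabilizers) with the global trilinear period, and reduce via Ichino's formula to a local test-vector problem. The difficulty is that you have located the "main obstacle" at the wrong places and then dismissed it where it actually lives. At the primes dividing $\fr'''_\rho$ (which lie under rational primes \emph{outside} $\nabla^\flat$, where $B^\flat_v$ is split), deepening $\fr_1$ does resolve everything, exactly as you say; this is the easy part. But at the finite places $v\in\Sigma(\Pi_E,\tau)$ with $F\otimes_\dQ\dQ_v\simeq\dQ_v\times L$, you \emph{cannot} deepen the level: $\fr_1$ is required to be coprime to $\nabla'\cup\nabla''$, and by Lemma \ref{le:dichotomy} the level there is pinned to the square-free $\fr'_\rho$ (resp.\ trivial at $\nabla''$). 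So the forced test vector is the new vector, and your parenthetical claim that in the "twisted-Steinberg situation \ldots no test vector obstruction arises" is precisely the assertion that requires proof — you yourself note two sentences later that the new-vector trilinear form can vanish even when the $\Hom$-space is nonzero.

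The paper resolves this by an explicit local computation. Writing $\Pi_v=\pi\boxtimes\sigma$ with $\pi$ a representation of $\GL_2(L)$ and $\sigma$ of $D_v^\times$, Prasad's dichotomy forces $\sigma$ to be the nontrivial unramified quadratic character when $\pi$ is a twist of $\Sp_L$ and $L/\dQ_v$ is unramified; one then realizes $\Sp_L$ in the induced model, evaluates the Iwahori-fixed new vector $f_0$ at the identity, the Weyl element and the uniformizer $\Pi$ of $D_v$, and checks directly that $\int_{L^\times\backslash D_v^\times}\sigma(g^{-1})f_0(g)\,\rd g\neq 0$ (the nontriviality of $\sigma$ is essential here). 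A separate, easier verification is needed when $L/\dQ_v$ is ramified (where $\sigma$ must be trivial and $\pi$ unramified, by Lemma \ref{le:dichotomy}). Without some such computation at the places of $\nabla'$, your argument does not close.
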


\begin{proof}
Let $B^\flat$ the (definite) $\dQ$-quaternion algebra with the discriminant set $\nabla^\flat$, and put $B=B^\flat\otimes_\dQ F$. Let $\Pi$ be the Jacquet--Langlands correspondence of $\Pi_E$ to $\Res_{F/\dQ}B^\times(\dA)$. We write $\Pi=\otimes'_v\Pi_v$ where $\Pi_v$ is an irreducible admissible representation of $\Res_{F/\dQ}B^\times(\dQ_v)$ for every place $v$ of $\dQ$.

By \cite{Ich08} and Remark \ref{bre:absolutely_neat}, it suffices to show the following statement: for $v\in\Sigma(\Pi_E,\tau)$, if we denote by $l$ a generator of the $1$-dimensional space $\Hom_{B^\flat_v}(\Pi_v,\b{1})$, then $l$ takes non-zero values on new vectors. This is obvious if
$F\otimes_\dQ\dQ_v=\dQ_v\times\dQ_v\times\dQ_v$ or $F\otimes_\dQ\dQ_v$ is a field. Thus, we may assume that $F\otimes_\dQ\dQ_v=\dQ_v\times L$ for a quadratic field extension $L/\dQ_v$.

We resume the notation in the proof of Lemma \ref{le:dichotomy}. Write $\Pi_v=\pi\boxtimes\sigma$ where $\pi$ (resp.\ $\sigma$) is an irreducible admissible representation of $\GL_2(L)$ (resp.\ $D_v^\times$). There are three cases.

(1) If $\pi$ is an unramified principal series and $L/\dQ_v$ is unramified, then it follows from Case 2 of \cite{Liu}*{Lemma 3.21}.

(2) If $\pi$ is an unramified principal series and $L/\dQ_v$ is ramified, then $\sigma$ must be the trivial character by the same argument in Case 2 of \cite{Liu}*{Lemma 3.21}. The remaining argument is similar as well.

(3) If $\pi$ is either $\Sp_L$ or $\Sp_L\otimes\omega_L$ and $L/\dQ_v$ is unramified, then we may assume $\pi=\Sp_L$ without lost of generality. Then $\sigma$ must be the nontrivial unramified quadratic character of $D_v^\times$ by \cite{Pra92}*{Theorem D, Remark 4.1.2}. We identify $D_v\otimes_{\dQ_v}L$ with $\Mat_2(L)$ such that $O_{D_v}^\times$ is contained in the standard Iwahori subgroup of $\GL_2(L)$ and
$\Pi\coloneqq\big(\begin{array}{cc} 0 & 1 \\ v & 0 \\ \end{array} \big)$ is a uniformizer of $D_v$. We realize $\pi=\Sp_L$ as the unique irreducible subrepresentation of
\[\left\{f\colon\GL_2(L)\to\dC\res f\(\left(
                                              \begin{array}{cc}
                                                a & * \\
                                                0 & d \\
                                              \end{array}
                                            \right)g
\)=\left|\frac{a}{d}\right|_Lf(g),\forall g\in\GL_2(L)\right\}.\]
It is well-known the Iwahori fixed function $f_0$ satisfying
\[f_0\(\left(
             \begin{array}{cc}
               1 & 0 \\
               0 & 1 \\
             \end{array}
           \right)
\)=v^2,\quad f_0\(\left(
             \begin{array}{cc}
               0 & 1 \\
               1 & 0 \\
             \end{array}
           \right)
\)=-1\] is a new vector of $\pi$. In particular, we have $f_0(\Pi)=-v^2$. By the same argument in Case 2 of \cite{Liu}*{Lemma 3.21}, we only need to show that the integral
\[\int_{L^\times\backslash D_v^\times}\sigma(g^{-1})f_0(g)\rd g\]
does not vanish, which is true since $\sigma$ is nontrivial.
\end{proof}

From now on, we fix an ideal $\fr_1$ as in Proposition \ref{pr:period}.

\begin{lem}\label{le:prime_bad}
There exists a finite set $\cP_E$ of primes such that for every $p\not\in\cP_E$, we have (compare with Definition \ref{de:perfect_prime})
\begin{enumerate}
  \item $p\geq 11$ and $p\neq 13,19$;

  \item $p$ is coprime to $\nabla$ and $\fr\cdot\mu(\fr,\fr_\rho)\cdot|\Cl(F)_{\fr_1}|\cdot\disc F$;

  \item $\bar\rho$ is generic;

  \item $\Gamma(\cS_{\fr_\rho},\dZ)_{\fm^\fr_{E,p}}$ is a free $\dZ_p$-module of rank $1$, and the maps in the following diagram
      \[\xymatrix{
      \Gamma(\cS_\fr,\dZ)_{\fm^\fr_{E,p}}
      \ar[rrr]^-{\bigoplus_{\fd\in\fD(\fr,\fr_\rho)}\delta^\fd_*}\ar[d] &&&
      \bigoplus_{\fd\in\fD(\fr,\fr_\rho)}\Gamma(\cS_{\fr_\rho},\dZ)_{\fm^\fr_{E,p}} \ar[d] \\
      (\Gamma(\cS_\fr,\dZ)/\Ker\phi^\fr_E)\otimes\dZ_p
      \ar[rrr]^-{\bigoplus_{\fd\in\fD(\fr,\fr_\rho)}\delta^\fd_*}  &&&
      \bigoplus_{\fd\in\fD(\fr,\fr_\rho)}(\Gamma(\cS_{\fr_\rho},\dZ)/\Ker\phi^\fr_E)\otimes\dZ_p
      }\]
      are all isomorphisms.
\end{enumerate}
\end{lem}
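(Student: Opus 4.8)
The plan is to show that each of the four conditions in Lemma \ref{le:prime_bad} fails for only finitely many primes $p$, so that $\cP_E$ can be taken to be the union of the corresponding (finite) exceptional sets. Conditions (1) and (2) are immediate: condition (1) excludes the finite explicit set $\{2,3,5,7,13,19\}$, and condition (2) excludes the finitely many primes dividing the fixed nonzero integer $\Nm(\nabla)\cdot\fr\cdot\mu(\fr,\fr_\rho)\cdot|\Cl(F)_{\fr_1}|\cdot\disc F$ (all of these quantities are determined once $E$, hence $\nabla^\flat$, $\nabla$, $\fr_\rho$, and the chosen $\fr_1$, are fixed). So the content is in (3) and (4).

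For condition (3), I would invoke the hypotheses on $E$ in Assumption \ref{as:elliptic_curve}. Genericity of $\bar\rho=\bar\rho_{E,p}$ means $(\Ind^\dQ_F\bar\rho_{E,p})\res_{\rG_{\TF}}$ has image containing $\rG(\SL_2(\dF_p)^{\oplus 3})$. Here the key input is that, by (E3), $E\otimes_F\overline F$ has no CM, so by Serre's open image theorem the image of $\rho_{E,p}$ in $\GL_2(\dZ_p)$ is open, and for all but finitely many $p$ the reduction $\bar\rho_{E,p}(\rG_F)$ contains $\SL_2(\dF_p)$; moreover (E2) guarantees that the three conjugates $E\otimes_{F,\tau_i}\TF$ are pairwise non-isogenous even up to quadratic twist, which by a Goursat-type / Ribet-style argument forces the image of the product representation $\rG_{\TF}\to\prod_{i}\GL_2(\dF_p)$ to be as large as possible (containing $\SL_2(\dF_p)^{\oplus 3}$) for all $p$ outside a finite set depending on $E$. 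This is a standard but somewhat delicate argument; one should cite the relevant results on images of mod $p$ representations attached to products of non-isogenous elliptic curves. The finitely many $p$ where largeness fails go into $\cP_E$.

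For condition (4), I would argue as follows. Working with the rational Hecke module $\Gamma(\cS_{\fr_\rho},\dZ)$ and its decomposition under $\dT^\fr$, the elliptic curve $E$ (more precisely, the automorphic representation $\Pi_E$ and its Jacquet--Langlands transfer) contributes an eigenline; the hypothesis that $\Pi_E$ is cuspidal new of the appropriate level implies $\Gamma(\cS_{\fr_\rho},\dQ)[\Ker\phi^\fr_E]$ is one-dimensional over $\dQ$ (multiplicity one), so after localizing at $\fm^\fr_{E,p}=\fm^\fr_{\bar\rho_{E,p}}$ the module $\Gamma(\cS_{\fr_\rho},\dZ)_{\fm^\fr_{E,p}}$ is free of rank $1$ over $\dZ_p$ provided $\bar\rho_{E,p}$ is not congruent mod $p$ to any other automorphic form of that level — a condition that holds outside the finitely many $p$ dividing a suitable ``congruence number''. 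The same congruence-number reasoning, together with genericity of $\bar\rho_{E,p}$ (so that $\fm^\fr_{E,p}$ is non-Eisenstein and the relevant degeneracy/pushforward maps $\bigoplus_\fd\delta^\fd_*$ are isomorphisms after localization), shows that the displayed square of $\delta^\fd_*$-maps consists of isomorphisms for all but finitely many $p$; the vertical maps are reductions modulo $\Ker\phi^\fr_E$ of the localized modules, which become isomorphisms once the localized modules are $\dZ_p$-free of the expected rank and $p\nmid|\fD(\fr,\fr_\rho)|$-torsion. I expect the main obstacle to be making the ``congruence number is nonzero and divisible by only finitely many $p$'' step fully precise: one must identify the obstruction to rank-one freeness and to bijectivity of $\delta^\fd_*$ with the nonvanishing of an explicit integer (a product of congruence numbers and indices of oriented Eichler orders, controlled by Appendix \ref{ss:a}), and check that this integer depends only on $E$ and the fixed auxiliary data, not on $p$. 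Once that is done, $\cP_E$ is the union over (1)--(4) of the finite exceptional sets, and the lemma — and with it the effectivity remark after Theorem \ref{th:main} — follows.
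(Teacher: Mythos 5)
Your overall structure --- show that each of (1)--(4) fails for only finitely many $p$ and take $\cP_E$ to be the union of the exceptional sets --- is exactly the paper's, and your handling of (1), (2), (3) is fine (for (3) the paper simply cites \cite{Dim05}*{Proposition 0.1 (iii)} together with (E2), (E3), which packages the Serre-plus-Goursat argument you sketch). The genuine gap is in (4): you yourself flag the decisive step --- identifying the obstruction with a fixed integer independent of $p$ --- as ``the main obstacle'' and do not carry it out, and the two mechanisms you gesture at are not the ones that make it work. Non-Eisensteinness of $\fm^\fr_{E,p}$ does not by itself give that $\bigoplus_{\fd}\delta^\fd_*$ becomes an isomorphism after localization (that would be an Ihara-type input the paper never invokes), and no indices of oriented Eichler orders from the appendix enter the argument.

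The paper closes this gap by an elementary eigensystem argument that you should carry out explicitly: choose a number field $L$ and finite sets $R\subset R'$ of homomorphisms $\phi\colon\dZ[\dT^\fr]\to O_L$ such that $\Gamma(\cS_{\fr_\rho},L)=\bigoplus_{\phi\in R}\Gamma(\cS_{\fr_\rho},L)/\Ker\phi$ and $\Gamma(\cS_\fr,L)=\bigoplus_{\phi\in R'}\Gamma(\cS_\fr,L)/\Ker\phi$, with $\phi^\fr_E\in R$ and $\Gamma(\cS_{\fr_\rho},L)/\Ker\phi^\fr_E$ one-dimensional (your multiplicity-one point). Then (4) holds for every $p$ which does not divide the orders of three \emph{fixed} finite groups --- the cokernels of the lattice inclusions $\Gamma(\cS_{\fr_\rho},O_L)\hookrightarrow\bigoplus_{\phi\in R}\Gamma(\cS_{\fr_\rho},O_L)/\Ker\phi$ and $\Gamma(\cS_\fr,O_L)\hookrightarrow\bigoplus_{\phi\in R'}\Gamma(\cS_\fr,O_L)/\Ker\phi$, and the cokernel of $\bigoplus_{\fd\in\fD(\fr,\fr_\rho)}\delta^\fd_*$ on $\Gamma(\cS_\fr,\dZ)/\Ker\phi^\fr_E$, which is finite because this map is rationally an isomorphism by oldform theory --- and for which the reduction of $\phi^\fr_E$ modulo any prime of $L$ above $p$ differs from that of every other $\phi\in R'$. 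Since $R'$ is finite and its elements are pairwise distinct homomorphisms, each condition excludes only finitely many $p$. So your ``congruence number'' intuition is the right one, but to have a proof you must exhibit these specific $p$-independent cokernels and the pairwise distinctness of finitely many eigensystems, rather than appeal to non-Eisenstein localization or to Appendix \ref{ss:a}.
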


\begin{proof}
Properties (1, 2) only exclude finitely many primes. By \cite{Dim05}*{Proposition 0.1 (iii)} and Assumption \ref{as:elliptic_curve} (E2, E3), property (3) only excludes finitely many primes as well.

For (4), by the theory of automorphic forms, there is a number field $L$ and two finite sets $R\subset R'$ of homomorphisms $\phi\colon\dZ[\dT^\fr]\to O_L$ such that
\begin{itemize}
  \item $\Gamma(\cS_{\fr_\rho},L)/\Ker\phi\neq 0$ for $\phi\in R$, and $\Gamma(\cS_{\fr_\rho},L)=\bigoplus_{\phi\in R}\Gamma(\cS_{\fr_\rho},L)/\Ker\phi$;
  \item $\Gamma(\cS_\fr,L)/\Ker\phi\neq 0$ for $\phi\in R'$, and $\Gamma(\cS_\fr,L)=\bigoplus_{\phi\in R'}\Gamma(\cS_\fr,L)/\Ker\phi$.
\end{itemize}
Note that $R$ contains $\phi_E^\fr$ and $\Gamma(\cS_{\fr_\rho},L)/\Ker\phi_E^\fr=(\Gamma(\cS_{\fr_\rho},\dQ)/\Ker\phi_E^\fr)\otimes_\dQ L$ has dimension $1$. If a prime $p$ does not divide the order of
\begin{itemize}
  \item $\coker[\Gamma(\cS_{\fr_\rho},O_L)\hookrightarrow\bigoplus_{\phi\in R}\Gamma(\cS_{\fr_\rho},O_L)/\Ker\phi]$,
  \item $\coker[\Gamma(\cS_\fr,O_L)\hookrightarrow\bigoplus_{\phi\in R'}\Gamma(\cS_\fr,O_L)/\Ker\phi]$,
  \item $\coker[\bigoplus_{\fd\in\fD(\fr,\fr_\rho)}\delta^\fd_*\colon\Gamma(\cS_\fr,\dZ)/\Ker\phi^\fr_E\to\Gamma(\cS_{\fr_\rho},\dZ)/\Ker\phi^\fr_E]$,
\end{itemize}
and satisfies that for every prime ideal $P_L$ of $O_L$ above $p$ and every $\phi\in R'$ other than $\phi_E^\fr$, the composite map $\dZ[\dT^\fr]\xrightarrow{\phi}O_L\to O_L/P_L$ is different from $\dZ[\dT^\fr]\xrightarrow{\phi^\fr_E}\dZ\to\dF_p$, then $p$ satisfies property (4). However, the above requirement only excludes finitely many primes. The lemma follows.
\end{proof}

From now on, we fix a prime $p$ not in $\cP_E$. We also fix an arbitrary integer $\nu\geq 1$. Put $\rN_\rho=\rN_{E,p}\otimes\dZ/p^\nu$ with the induced homomorphism $\rho\colon\rG_F\to\GL(\rN_\rho)$. Then we obtain a perfect quadruple $(\rho,\fr_\rho,\fr_0,\fr_1)$ (Definition \ref{de:perfect_prime}) satisfying Assumption \ref{as:quadruple}.

\begin{lem}
Let $(\rho,\fr_\rho,\fr_0,\fr_1)$ be the quadruple as above. Then there are infinitely many cubic-level raising prime (Definition \ref{de:cubic_level_raising}) for $(\rho,\fr_\rho,\fr_0,\fr_1)$.
\end{lem}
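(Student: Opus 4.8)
The plan is to deduce the statement from the Chebotarev density theorem. First I would observe that conditions (C1), (C3), (C4) depend only on the Frobenius conjugacy class of $\ell$ in a suitable finite Galois extension $M/\dQ$, whereas condition (C2) is automatic for any $\ell$ satisfying (C1), once the quadruple is perfect.

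For (C2): the ideals $\fr_\rho$ and $\fr$ are coprime to $\fl$ (as $\ell$ is coprime to $\fr$ and $\fr\subset\fr_\rho$), so any homomorphism $\dZ[\dT^{\fr\fl}]\to\overline\dF_p$ occurring in $\Gamma(\cS_{\fr_\rho},\dF_p)$ or $\Gamma(\cS_\fr,\dF_p)$ and congruent to $\phi^{\fr\fl}_\rho$ has semisimplified mod-$p$ Galois representation isomorphic to $\bar\rho$ by the Brauer--Nesbitt theorem and Chebotarev; since any such system is unramified at $\fl$, its $\rT_\fl$- and $\rS_\fl$-eigenvalues are forced, so localization at $\fm^{\fr\fl}_\rho$ coincides with localization at $\fm^{\fr}_\rho$ on these spaces. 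Hence $\fr\fl$-isolatedness follows from $\fr$-isolatedness, which is part of perfectness, and I may ignore (C2) from now on.

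Next, let $M\subset\overline\dQ$ be the Galois closure over $\dQ$ of $F(\rho_{E,p}\bmod p^\nu)$; then $M/\dQ$ is finite Galois, it contains $\TF$ and $\dQ(\mu_{p^\nu})$ (the latter since $\det\rho_{E,p}$ is cyclotomic), and both $\rho_{E,p}\bmod p^\nu$ and $\rho_{E,p}^\sharp\bmod p^\nu$ factor through $\Gal(M/\dQ)$. I would then produce $\sigma\in\Gal(M/\dQ)$ such that: its image in $\Gal(\TF/\dQ)$ has order $3$; writing $u=\chi_{p^\nu}(\sigma)\in(\dZ/p^\nu)^\times$, one has $u^{18}\not\equiv 1$ and $u^6\not\equiv -1$ modulo $p$; and $\rho_{E,p}(\sigma^3)\bmod p^\nu$ — defined because $\sigma^3\in\rG_F$ — is conjugate in $\GL_2(\dZ/p^\nu)$ to $\diag(1,u^3)$. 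Granting this, Chebotarev gives infinitely many primes $\ell$, unramified in $M$, with $\Frob_\ell$ conjugate to $\sigma$; after discarding the finitely many $\ell$ dividing $2\fr\nabla$, each is inert in $F$ and unramified in $\TF$, so (C1) holds; since $\ell\equiv u\pmod p$, (C3) holds; and, writing $\fl$ for the prime above $\ell$, the geometric Frobenius satisfies $\Frob_\fl=\Frob_\ell^{3}$ in $\rG_F$, whence $\phi^{\fr}_\rho(\rT_\fl)=\tr\rho_{E,p}(\Frob_\fl)\equiv 1+u^3\equiv 1+\ell^3\pmod{p^\nu}$, i.e.\ (C4).

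The crux — and the only step genuinely using $p\notin\cP_E$, in particular the genericity of $\bar\rho$ — is the construction of $\sigma$, which I would carry out exactly as in the corresponding step of \cite{Liu}. Genericity gives that $(\Ind^\dQ_F\bar\rho)|_{\rG_\TF}$ has maximal image $\rG(\SL_2(\dF_p)^{\oplus 3})$; a standard pro-$p$ lifting argument (legitimate since $p\ge 11$) promotes this to the analogous maximal image modulo $p^\nu$. Consequently $\Gal(M/\TF)$ surjects onto a group of triples in $\GL_2(\dZ/p^\nu)^{\times 3}$ with common determinant, on which a lift of an order-$3$ element $\tau\in\Gal(\TF/\dQ)$ acts by cyclically permuting the three factors; one then selects a lift $\sigma_0$ of $\tau$ whose cube is a regular semisimple element with eigenvalues $1$ and $u^3$ in one factor, arranging that the common value $u\bmod p$ avoids the finitely many residues excluded by (C3) — an elementary finite check, using $p\ge 11$ and $p\ne 13,19$, shows this is possible even when $u\bmod p$ is constrained to the subgroup of $(\dZ/p)^\times$ corresponding to $\TF\cap\dQ(\mu_{p^\nu})$. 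Compatibility with the cyclotomic character is automatic. I expect this group-theoretic bookkeeping, and in particular the interaction of the cyclic permutation of factors with the cubing operation, to be the only delicate point; the rest is routine.
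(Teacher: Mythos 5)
Your Chebotarev construction for (C1), (C3), (C4) is essentially the paper's argument: the paper also fixes an integer $a$ with $p\nmid a(a^{18}-1)(a^6+1)$, uses genericity to place $(X_0,X_0,X_0,\sigma)$ with $X_0=\mathrm{diag}(a,1)$ and $\sigma$ a $3$-cycle in the image of $(\Ind^\dQ_F\rho)\res_{\rG_{F_0}}$ inside $\GL_2(\dZ/p^\nu)^3\rtimes\rA_3$ (lifting from mod $p$ to mod $p^\nu$ by raising to a large $p$-power rather than by your "maximal image mod $p^\nu$" claim, but to the same effect), and then applies Chebotarev to degree-one primes of $F_0$. The cube of this element is $\mathrm{diag}(a^3,1)$ in each factor, which is exactly your $\diag(1,u^3)$ condition, so this part matches.

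The genuine divergence is your treatment of (C2). The paper does \emph{not} claim that $\fr\fl$-isolatedness is automatic; instead it uses a purely algebraic finiteness trick: since $\End(\Gamma(\cS_\fs,\dZ/p^\nu))$ is a finite set for $\fs=\fr_\rho,\fr$, there is a finite set $\Sigma$ of primes such that the subalgebra $\dZ[\coprod_{\fq\in\Sigma}\dT_\fq]$ already cuts out the same image of $\Ker\phi^\fr_\rho$ and of $\dZ[\dT^\fr]\setminus\fm^\fr_\rho$, whence every $\fl\notin\Sigma$ satisfying (C1), (C3), (C4) satisfies (C2); this discards only finitely many primes, which is harmless. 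Your route instead asserts (C2) for \emph{all} $\ell$ satisfying (C1). This can be made to work, but as written it is under-justified: the statement that the $\rT_\fl$- and $\rS_\fl$-eigenvalues are ``forced'' requires Jacquet--Langlands plus the existence of Galois representations attached to every eigensystem in the support (separating out Eisenstein constituents, which are killed by localization at the non-Eisenstein $\fm$ since $\bar\rho$ is irreducible) and the Eichler--Shimura relation at $\fl$; and the isolatedness condition is not only about localizations but about the quotients by $\Ker\phi^{\fr\fl}_\rho$ modulo $p^\nu$, so after identifying the two localizations you still need to observe that $\fr$-isolatedness forces the Hecke action on the rank-one localized module to factor through $\phi^\fr_\rho$, so that $\Ker\phi^{\fr\fl}_\rho$ already annihilates it. These points are fixable, but the paper's finiteness argument buys the same conclusion with no automorphic input at all, at the negligible cost of excluding finitely many $\ell$.
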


\begin{proof}
By Definition \ref{de:perfect_prime} (3a), we may choose an integer $a$ such that $p\nmid a(a^{18}-1)(a^6+1)$. Put $X_0=\big(\begin{array}{cc} a & 0 \\ 0 & 1 \\ \end{array} \big)\in\GL_2(\dZ/p^\nu)$. We fix an isomorphism $\rN_\rho\simeq(\dZ/p^\nu)^{\oplus 2}$. Since $\bar\rho$ is generic, the image of $(\Ind^\dQ_F\rho)\res_{\rG_{\TF}}$ contains an element $(X_1,X_2,X_3)\in\GL_2(\dZ/p^\nu)\times\GL_2(\dZ/p^\nu)\times\GL_2(\dZ/p^\nu)$ satisfying $X_1\equiv X_2\equiv X_3\equiv X_0 \mod p$. Raising sufficiently large $p$-power, we may assume that $X_1=X_2=X_3=X_0$. Regard the image of $(\Ind^\dQ_F\rho)\res_{\rG_{F_0}}$ as a subgroup of $(\GL_2(\dZ/p^\nu)\times\GL_2(\dZ/p^\nu)\times\GL_2(\dZ/p^\nu))\rtimes\rA_3$, where $\rA_3$ is the alternating group. Then it contains an element $(X_0,X_0,X_0,\sigma)$ where $\sigma$ is a rotation. By the Chebotarev density theorem, there are infinitely many primes $\fl$ of $F_0$ of degree $1$ that is unramified in the splitting field of $(\Ind^\dQ_F\rho)\res_{\rG_{F_0}}$, coprime to $\nabla$ and $2\fr$, and such that $(\Ind^\dQ_F\rho)(\Frob_\fl)=(X_0,X_0,X_0,\sigma)$. Now the underlying prime $\ell$ of $\fl$ will satisfy Definition \ref{de:cubic_level_raising} (C1,C3,C4).

Now we show that Definition \ref{de:cubic_level_raising} (C2) only excludes finitely many such $\ell$. Since $\End(\Gamma(\cS_\fs,\dZ/p^\nu))$ is a finite set for $\fs=\fr_\rho,\fr$, there is a finite set $\Sigma$ of primes of $F$ that are coprime to $\nabla$ and $\fr$ such that $\dZ[\coprod_{\fq\in\Sigma}\dT_\fq]\cap\Ker\phi_\rho^\fr$ and $\Ker\phi_\rho^\fr$ have the same image in $\End(\Gamma(\cS_\fs,\dZ/p^\nu))$, and $\dZ[\coprod_{\fq\in\Sigma}\dT_\fq]\cap(\dZ[\dT^\fr]\setminus\fm_\rho^\fr)$ and $\dZ[\dT^\fr]\setminus\fm_\rho^\fr$ have the same image in $\End(\Gamma(\cS_\fs,\dZ/p^\nu))$. Then any prime $\ell$ satisfying Definition \ref{de:cubic_level_raising} (C1,C3,C4) such that $\fl\not\in\Sigma$ will satisfy (C2) as well. The lemma is proved.
\end{proof}

Recall that we have the multiplicatively induced representation $\rN_\rho^\sharp$ (Notation \ref{no:reduction}) and the $\dZ/p^\nu[\rG_\dQ]$-module $\rM_0$ as in Theorem \ref{th:cubic_level_raising}. By Theorem \ref{th:cubic_level_raising} (2), we have a $\rG_\dQ$-equivariant pairing $\rN_\rho^\sharp(2)\times\rM_0\to\dZ/p^\nu(\chi)(1)$ for some character $\chi\colon\rG_\dQ\to\Gal(F_0/\dQ)\to\{\pm1\}$. It induces, for every prime power $v$, a local Tate pairing
\begin{align}\label{eq:tate_pairing}
\langle\;,\;\rangle_v\colon\rH^1(\dQ_v,\rN_\rho^\sharp(2))\times\rH^1(\dQ_v,\rM_0)\to\rH^2(\dQ_v,\dZ/p^\nu(\chi)(1))\subset\dZ/p^\nu.
\end{align}
For $s\in\rH^1(\dQ,\rN_\rho^\sharp(2))$ and $r\in\rH^1(\dQ,\rM_0)$, we will write $\langle s,r\rangle_v$ rather than $\langle\loc_v(s),\loc_v(r)\rangle_v$.

\begin{lem}\label{le:local_tate}
We have the following statements.
\begin{enumerate}
  \item The restriction of $\sum_v\langle\;,\;\rangle_v$ to $\rH^1(\dQ,\rN_\rho^\sharp(2))\times\rH^1(\dQ,\rM_0)$ is trivial, where
      the sum is taken over all primes $v$.

  \item For every prime $v\neq p$, there exists an integer $\nu_v\geq 1$, independent of $\nu$, such that the image of the pairing \eqref{eq:tate_pairing} is annihilated by $p^{\nu_v}$.

  \item For every prime $v\neq p$, the restriction of $\langle\;,\;\rangle_v$ to $\rH^1_\unr(\dQ_v,\rN_\rho^\sharp(2))\times\rH^1_\unr(\dQ_v,\rM_0)$ is trivial.

  \item If $\ell$ is a cubic-level prime for $(\rho,\fr_\rho,\fr_0,\fr_1)$, then $\langle\;,\;\rangle_\ell$ induces a perfect pairing between
      $\rH^1_\unr(\dQ_\ell,\rN_\rho^\sharp(2))$ and $\rH^1_\sing(\dQ_\ell,\rM_0)$, in which both are free $\dZ/p^\nu$-modules of rank $1$.

  \item The restriction of $\langle\;,\;\rangle_p$ to $\rH^1_f(\dQ_p,\rN_\rho^\sharp(2))\times\rH^1_f(\dQ_p,\rM_0)$ is trivial (see \cite{Liu}*{Definition 4.6}).
\end{enumerate}
\end{lem}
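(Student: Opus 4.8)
\textbf{Proof strategy for Lemma \ref{le:local_tate}.}

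The plan is to prove the five statements essentially independently, using standard local and global duality for Galois cohomology together with the structural description of $\rN_\rho^\sharp(2)$ from Lemma \ref{le:sharp} and the local behavior of $\rM_0$ extracted from Theorem \ref{th:cubic_level_raising}.

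First, statement (1) is the global reciprocity (Poitou--Tate) law: since $\rN_\rho^\sharp(2)$ and $\rM_0$ are Tate-dual up to the quadratic twist $\chi$, the sum $\sum_v\langle\cdot,\cdot\rangle_v$ of local invariant pairings vanishes on the images of global classes. One must only check the standard hypotheses (finiteness of the coefficient module, ramified at finitely many places), which are immediate as $\rN_\rho^\sharp$ has $\dZ/p^\nu$-coefficients and is unramified outside $\nabla\cup\fr_\rho\fl$; I would simply invoke the Poitou--Tate exact sequence. For statement (2), fix $v\neq p$. The cup product factors through $\rH^2(\dQ_v,\dZ/p^\nu(\chi)(1))$, and the $p$-part of this group is cut out by the local Euler characteristic; more concretely, $\rH^1(\dQ_v,\rN_{E,p})$ (over $\dQ_v$ or over the relevant completion of $F$) has bounded $p$-torsion independent of $\nu$ once $v\neq p$, because the monodromy filtration and the inertia invariants of $\rN_{E,p}\otimes\dQ_p$ are fixed by $E$ alone. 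I would make $\nu_v$ explicit as the $p$-adic valuation of the order of $\rH^1_\unr(\dQ_v,\rN_{E,p}^\sharp\otimes\dQ_p/\dZ_p)$-torsion, or more crudely of $(\ell_v^{f}-1)$-type factors coming from Frobenius eigenvalues; this is the routine bound and I would not grind through it. Statement (3) is the classical fact that unramified classes are exact annihilators of unramified classes under the local Tate pairing at $v\neq p$ (the two unramified subgroups are each other's orthogonal complement when the coefficient module is unramified, hence in particular the pairing restricted to both is zero); I would cite this and note that $\rN_\rho^\sharp(2)$ is unramified at $v$ when $v\nmid\nabla\fr_\rho\fl$, and treat the finitely many bad $v$ by absorbing them into (2).

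Statement (4) is where the real content sits, and I expect it to be the main obstacle. By Lemma \ref{le:sharp}, over $\dQ_\ell$ we have $\rN_\rho^\sharp(2)\simeq\Lambda(-1)\oplus\Lambda\oplus\Lambda(1)\oplus\Lambda(2)\oplus\rR\oplus\rR(1)$ with $\rR$ the rank-$2$ piece on which $\Frob_\ell$ has characteristic polynomial $x^2+x+1$; by Theorem \ref{th:cubic_level_raising} (2) $\rM_0\simeq\rN_\rho^\sharp(2)$ as a $\Lambda[\rG_{F_0}]$-module, and $\ell$ splits completely in $F_0$ by Definition \ref{de:cubic_level_raising} (C1) together with the construction in \Sec\ref{ss:featuring}, so the same decomposition holds for $\rM_0$ over $\dQ_\ell$ (the character $\chi$ being trivial on $\rG_{\dQ_\ell}$). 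One then checks that $\rH^1_\unr(\dQ_\ell,\rN_\rho^\sharp(2))$ and $\rH^1_\sing(\dQ_\ell,\rM_0)$ are each free of rank $1$ over $\dZ/p^\nu$ — for the unramified part this is Lemma \ref{le:sharp}, and for the singular part it is Lemma \ref{le:singular} applied to the same module — and that the summand responsible is $\Lambda(1)$ (resp.\ its Tate twist), on which $\Frob_\ell$ acts by $\ell$, so the relevant factor $\ell-1$ is a $p$-adic unit by Definition \ref{de:cubic_level_raising} (C3). Perfectness of the induced pairing then reduces to the statement that the local Tate pairing $\rH^1_\unr(\dQ_\ell,\Lambda(1))\times\rH^1_\sing(\dQ_\ell,\Lambda(1))\to\dZ/p^\nu$ is perfect, which is the classical unramified-singular duality; the contributions of all other summands ($\Lambda(-1),\Lambda,\Lambda(2),\rR,\rR(1)$) either have trivial unramified or trivial singular part, or pair trivially against the wrong piece, and must be checked to not interfere — this bookkeeping, ensuring the pairing is block-diagonal in the eigen-decomposition and the relevant block is a unit, is the delicate part. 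Finally, statement (5) is the assertion that $\rH^1_f$ is isotropic for $\langle\cdot,\cdot\rangle_p$: this is a standard property of the Bloch--Kato finite local condition at $p$, namely that $\rH^1_f(\dQ_p,V)$ and $\rH^1_f(\dQ_p,V^*(1))$ are exact annihilators under local Tate duality (so in particular each is self-annihilating when $V\simeq V^*(1)$ up to the quadratic character $\chi$, which is unramified hence crystalline at $p$ and does not affect the finite condition); I would invoke the cited result \cite{Liu}*{Definition 4.6} and the crystalline duality of Bloch--Kato, and reduce the torsion statement to the integral lattice by the usual limit argument. The only mild subtlety is matching the integral structures, which I would handle by noting that $\rM_0$ and $\rN_\rho^\sharp(2)$ carry compatible lattices from étale cohomology.
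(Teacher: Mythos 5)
Your proposal is correct and follows essentially the same route as the paper, whose own proof is little more than a list of citations: (1) is global class field theory (the sum of local invariants of a global class vanishes), (2) is the bounded-torsion argument from \cite{Liu}*{Lemma 4.3 (1)}, (3) is the classical orthogonality of unramified subgroups, (4) rests on Lemma \ref{le:sharp} together with the fact that $\ell$ splits in $F_0$, and (5) is the argument of \cite{Liu}*{Lemma 4.8}. Your expanded treatment of (4) is the right bookkeeping, with one slip: the perfect block is $\rH^1_\unr(\dQ_\ell,\Lambda)\times\rH^1_\sing(\dQ_\ell,\Lambda(1))$, not $\rH^1_\unr(\dQ_\ell,\Lambda(1))\times\rH^1_\sing(\dQ_\ell,\Lambda(1))$ --- indeed $\rH^1_\unr(\dQ_\ell,\Lambda(1))$ vanishes here since $p\nmid\ell-1$ by (C3); the unramified rank-one piece of $\rN_\rho^\sharp(2)$ comes from the trivial-twist summand $\Lambda$, whose Cartier dual $\Lambda(1)$ inside $\rM_0$ carries the singular rank-one piece, and all other summands die because $\ell^j-1$ ($j=1,2,3$) and the $\rR$-factors are $p$-adic units by (C3).
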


\begin{proof}
Part (1) follows from the global class field theory and the fact that $p$ is odd. Part (2) follows from a similar argument for \cite{Liu}*{Lemma 4.3 (1)}. Part (3) is well-known. Part (4) follows from Lemma \ref{le:sharp} and the fact that $\ell$ splits in $F_0$. Part (5) follows from a similar argument for \cite{Liu}*{Lemma 4.8}.
\end{proof}

Now we review some results in Galois theory from \cite{Liu}*{\Sec 5.1}. Denote by $\rG$ the image of $\rho^\sharp(2)\colon\rG_\dQ\to\GL(\rN_\rho^\sharp(2))$ and $\dL/\dQ$ the Galois extension determined by $\rG$. In particular, we identify $\rG$ with the Galois group $\Gal(\dL/\dQ)$ in the following discussion.

\begin{lem}
We have $\rH^i(\rG,\rN_\rho^\sharp(2))=0$ for all $i\geq 0$. The restriction of classes gives an isomorphism
$\Res^\dL_\dQ\colon\rH^1(\dQ,\rN_\rho^\sharp(2))\xrightarrow{\sim}\rH^1(\dL,\rN_\rho^\sharp(2))^\rG=\Hom_\rG(\rG_\dL^\ab,\rN_\rho^\sharp(2))$.
\end{lem}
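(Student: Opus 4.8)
The statement is the standard ``big image'' inflation--restriction package, and I would prove it in two moves. First I would establish the cohomology vanishing $\rH^i(\rG,\rN_\rho^\sharp(2))=0$ for all $i\ge 0$. The point is that $\bar\rho$ is generic (Definition \ref{de:perfect_prime}), so the image of $(\Ind^\dQ_F\bar\rho)\res_{\rG_{\TF}}$ is $\rG(\SL_2(\dF_p)\oplus\SL_2(\dF_p)\oplus\SL_2(\dF_p))$; since $p\ge 11$ (in particular $p\ge 5$), the group $\SL_2(\dF_p)$ is perfect and acts absolutely irreducibly on its standard $2$-dimensional module, so by K\"unneth the three-fold tensor $\bar\rN_\rho^\sharp$ is an absolutely irreducible, non-trivial $\dF_p[\rG]$-module with no coinvariants or invariants. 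A transvection $u$ in one of the $\SL_2$-factors acts on $\bar\rN_\rho^\sharp$ with $(u-1)$ having a large enough image that one gets a $\rG$-stable complement forcing $\rH^i(\rG,\bar\rN_\rho^\sharp)=0$ for all $i$; I would cite \cite{Liu}*{\Sec 5.1} for the precise vanishing statement, which applies verbatim here because the group-theoretic input is identical. From $\rH^i(\rG,\bar\rN_\rho^\sharp)=0$ one propagates to $\rN_\rho^\sharp=\rN_\rho^{\otimes 3}$ a free $\dZ/p^\nu$-module by d\'evissage along $0\to p\bar\rN^{(k)}\to \rN_\rho^\sharp/p^{k+1}\to \rN_\rho^\sharp/p^k\to 0$ and induction on $k$, using the long exact sequence in $\rG$-cohomology; the Tate twist $(2)$ is harmless since $\rG$ acts on it through its action on $\rN_\rho^\sharp$ already (the cyclotomic character is absorbed into $\rG=\IM\rho^\sharp(2)$).

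Second, with $\rH^1(\rG,\rN_\rho^\sharp(2))=\rH^2(\rG,\rN_\rho^\sharp(2))=0$ in hand, I would run the inflation--restriction exact sequence for the tower $\dQ\subset\dL$ and the $\rG_\dQ$-module $\rN_\rho^\sharp(2)$:
\[
0\to \rH^1(\rG,\rN_\rho^\sharp(2))\to \rH^1(\dQ,\rN_\rho^\sharp(2))\xrightarrow{\Res^\dL_\dQ}\rH^1(\dL,\rN_\rho^\sharp(2))^\rG\to \rH^2(\rG,\rN_\rho^\sharp(2)).
\]
The two outer terms vanish by the first move, so $\Res^\dL_\dQ$ is an isomorphism onto $\rH^1(\dL,\rN_\rho^\sharp(2))^\rG$. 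Finally, since $\rG_\dL$ acts trivially on $\rN_\rho^\sharp(2)$ (that is the definition of $\dL$), we have $\rH^1(\dL,\rN_\rho^\sharp(2))=\Hom_{\r{cont}}(\rG_\dL,\rN_\rho^\sharp(2))=\Hom(\rG_\dL^\ab,\rN_\rho^\sharp(2))$ (the module is finite, so continuous homomorphisms factor through $\rG_\dL^\ab$ after profinite completion), and taking $\rG$-invariants of this identification gives $\rH^1(\dL,\rN_\rho^\sharp(2))^\rG=\Hom_\rG(\rG_\dL^\ab,\rN_\rho^\sharp(2))$, where $\rG$ acts on $\rG_\dL^\ab$ by conjugation through the extension $1\to\rG_\dL\to\rG_\dQ\to\rG\to 1$. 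Combining, $\Res^\dL_\dQ\colon \rH^1(\dQ,\rN_\rho^\sharp(2))\xrightarrow{\sim}\Hom_\rG(\rG_\dL^\ab,\rN_\rho^\sharp(2))$, as claimed.

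The only real obstacle is the first move: verifying $\rH^*(\rG,\rN_\rho^\sharp(2))=0$ cleanly, since $\rG$ is not literally $\SL_2(\dF_p)^3$ but a group sandwiched between $\SL_2(\dF_p)^3$ and $\GL_2(\dZ/p^\nu)^3\rtimes(\text{permutations})$ via the multiplicative-induction construction. I would handle this by first checking the statement mod $p$ for the subgroup $\SL_2(\dF_p)^3$ (absolute irreducibility plus a transvection argument, or simply quoting the relevant lemma from \cite{Liu}*{\Sec 5.1} and \cite{Dim05}*{Proposition 0.1}), then noting that vanishing for a normal subgroup of $p$-power-prime index, combined with the Hochschild--Serre spectral sequence and $|\rG/\SL_2(\dF_p)^3|$ being prime to $p$ (here one uses that $p$ is coprime to $\nabla$, $\disc F$ and that $p>3$ so $p\nmid |\Gal(F_0/\dQ)|$ and $p\nmid|\GL_2(\dZ/p^\nu)/\SL_2|=\phi$-part), yields the vanishing for $\rG$ itself; finally the $\dZ/p^\nu$-version follows by the d\'evissage already indicated. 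Everything else is formal manipulation of inflation--restriction and is routine.
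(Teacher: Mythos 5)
Your second move (inflation--restriction plus $\rH^1(\dL,\rN_\rho^\sharp(2))=\Hom(\rG_\dL^\ab,\rN_\rho^\sharp(2))$) is correct and is exactly the standard package the paper invokes. The gap is in your first move, the vanishing of $\rH^i(\rG,\rN_\rho^\sharp(2))$ for $i\geq 1$. Absolute irreducibility and nontriviality of $\bar\rN_\rho^\sharp$ only give $\rH^0=0$; the claim that a transvection produces ``a $\rG$-stable complement forcing $\rH^i=0$ for all $i$'' is not an argument for higher cohomology. More seriously, your reduction to $\SL_2(\dF_p)^3$ via Hochschild--Serre breaks down for $\nu>1$: the quotient of $\rG$ by (a lift of) $\SL_2(\dF_p)^3$ is \emph{not} of order prime to $p$, because $\rG\subset\GL(\rN_\rho^\sharp(2))$ with $\rN_\rho^\sharp$ a $\dZ/p^\nu$-module contains a nontrivial normal $p$-subgroup, namely the kernel of reduction modulo $p$; likewise $|\GL_2(\dZ/p^\nu)|/|\SL_2(\dZ/p^\nu)|=p^{\nu-1}(p-1)$ is divisible by $p$ once $\nu\geq 2$. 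Chasing the resulting spectral sequence leaves you with terms $\rH^p(\bar\rG,\rH^q(P,\dF_p)\otimes\bar\rN_\rho^\sharp)$ with $P$ a $p$-group, which your irreducibility input does not control.

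The paper closes this in one step: since $\bar\rho$ is generic, $\rG$ contains an element $X$ whose reduction mod $p$ is a nontrivial scalar of order coprime to $p$; replacing $X$ by a suitable $p$-power makes $X$ itself a scalar of order coprime to $p$ in $\rG$. Then \cite{Liu}*{Lemma 5.3} applies: a central element acts trivially on $\rH^i(\rG,M)$ by functoriality of conjugation, but also by the unit $\lambda-1\neq 0$ coming from its scalar action on $M$, so $\rH^i(\rG,\rN_\rho^\sharp(2))=0$ for every $i\geq 0$ and every $\nu$ simultaneously --- no d\'evissage, K\"unneth, or index computation is needed, and the Tate twist is automatically absorbed since $\rG$ is by definition the image of the twisted representation. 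If you want to cite Liu \S 5.1 as you suggest, the one thing you must actually verify is the existence of this scalar element; that verification is the entire content of the paper's proof and is missing from yours.
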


\begin{proof}
Since $\bar\rho$ is generic, $\rG$ contains an element $X$ such that $X\mod p$ is a scalar element of order coprime to $p$. Raising sufficiently large $p$-power of $X$, we may assume that $\rG$ itself contains a scalar element of order coprime to $p$. Then the lemma follows from \cite{Liu}*{Lemma 5.3}.
\end{proof}

The above lemma yields a $\dZ/p^\nu[\rG]$-linear pairing
\[[\;,\;]\colon \rH^1(\dQ,\rN_\rho^\sharp(2))\times\rG_\dL^\ab\to\rN_\rho^\sharp(2).\]
For each finitely generated $\dZ/p^\nu$-submodule $\rS$ of $\rH^1(\dQ,\rN_\rho^\sharp(2))$, denote by $\rG_\rS$ the subgroup of $\gamma\in\rG_\dL^\ab$ such that $[s,\gamma]=0$ for all $s\in\rS$. Let $\dL_\rS\subset\overline\dQ$ be the subfield fixed by $\rG_\rS$.

\begin{lem}
The induced pairing
\[[\;,\;]\colon\rS\times\Gal(\dL_\rS/\dL)\to\rN_\rho^\sharp(2)\]
yields an isomorphism $\Gal(\dL_\rS/\dL)\simeq\Hom(\rS,\rN_\rho^\sharp(2))$ of $\dZ/p^\nu[\rG]$-modules.
\end{lem}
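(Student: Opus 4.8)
The plan is to deduce this statement from the previous lemma exactly as in \cite{Liu}*{\Sec 5.1}. The previous lemma gives us the $\dZ/p^\nu[\rG]$-linear pairing $[\;,\;]\colon\rH^1(\dQ,\rN_\rho^\sharp(2))\times\rG_\dL^\ab\to\rN_\rho^\sharp(2)$, coming from the isomorphism $\Res^\dL_\dQ\colon\rH^1(\dQ,\rN_\rho^\sharp(2))\xrightarrow{\sim}\Hom_\rG(\rG_\dL^\ab,\rN_\rho^\sharp(2))$. By definition, $\rG_\rS$ is the subgroup of $\rG_\dL^\ab$ annihilated by all of $\rS$ under this pairing; equivalently, $\rG_\rS$ is the intersection of the kernels of the finitely many homomorphisms $s\colon\rG_\dL^\ab\to\rN_\rho^\sharp(2)$ for $s$ ranging over a finite generating set of $\rS$. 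Hence $\rG_\dL^\ab/\rG_\rS=\Gal(\dL_\rS/\dL)$ embeds into $\Hom(\rS,\rN_\rho^\sharp(2))$ (equivalently into a finite product of copies of $\rN_\rho^\sharp(2)$), which in particular shows $\dL_\rS/\dL$ is a finite extension and the induced pairing $[\;,\;]\colon\rS\times\Gal(\dL_\rS/\dL)\to\rN_\rho^\sharp(2)$ is well-defined and has trivial right kernel.

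First I would record the general reduction: since $\rS$ is a finitely generated $\dZ/p^\nu$-module, write $\rS=\bigoplus_{i}\dZ/p^{\nu_i}s_i$ for suitable $s_i\in\rS$ and $1\le\nu_i\le\nu$. The map $\Gal(\dL_\rS/\dL)\to\Hom(\rS,\rN_\rho^\sharp(2))=\bigoplus_i\rN_\rho^\sharp(2)[p^{\nu_i}]$ sending $\gamma\mapsto(s\mapsto[s,\gamma])$ is injective by construction of $\rG_\rS$, and it is $\dZ/p^\nu[\rG]$-linear because $[\;,\;]$ is. It remains to prove surjectivity. For this I would invoke the argument of \cite{Liu}*{\Sec 5.1}: one uses that $\bar\rho$ is generic, so $\rG$ has large image and $\rN_\rho^\sharp(2)\otimes\dF_p=\bar\rN_\rho^\sharp(2)$ is (after the relevant twist) a sum of tensor products of the standard representations of the three $\SL_2(\dF_p)$-factors, hence has no nonzero $\rG$-coinvariants and no nonzero proper $\rG$-submodule of a controlled type; combined with $\rH^1(\rG,\rN_\rho^\sharp(2))=0$ from the previous lemma, a Chebotarev/enlarging-the-field argument produces, for each prescribed value in $\Hom(\rS,\rN_\rho^\sharp(2))$, an element of $\Gal(\dL_\rS/\dL)$ realizing it. Concretely one first treats the case $\rS$ cyclic — where surjectivity onto $\rN_\rho^\sharp(2)[p^{\nu_1}]$ follows from the fact that the image of $s_1\colon\rG_\dL^\ab\to\rN_\rho^\sharp(2)$ is a $\rG$-submodule, hence (by genericity and the structure of $\rN_\rho^\sharp(2)$) must be all of $\rN_\rho^\sharp(2)[p^{\nu_1}]$ — and then handles the general case by induction on the number of generators, using the vanishing of the relevant $\rH^1$ to split the obstruction to lifting a partial realization.

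The main obstacle I anticipate is the induction step for non-cyclic $\rS$: given $s_1,\dots,s_m$ one must show that the simultaneous evaluation map $\Gal(\dL_\rS/\dL)\to\prod_i\rN_\rho^\sharp(2)[p^{\nu_i}]$ is surjective and not merely that each coordinate is. This is where genericity of $\bar\rho$ is essential — it guarantees that the $s_i$, if $\dZ/p^\nu$-linearly independent, give rise to ``independent'' $\rG$-equivariant quotients of $\rG_\dL^\ab$, so that no unexpected $\rG$-linear relations among the coordinates can occur; the formal mechanism is that the relevant Ext/$\rH^1$ groups controlling such relations vanish, again by \cite{Liu}*{Lemma 5.3} and the structure of $\bar\rN_\rho^\sharp(2)$. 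I would carry this out verbatim along the lines of \cite{Liu}*{\Sec 5.1}, since the group $\rN_\rho^\sharp(2)$ here plays exactly the role of the multiplicatively induced module there, and genericity has been arranged in Definition \ref{de:perfect_prime}. The other, routine, points — $\dZ/p^\nu[\rG]$-linearity, the identification $\Hom_\rG(\rG_\dL^\ab,\rN_\rho^\sharp(2))$, finiteness of $\dL_\rS/\dL$ — require no new ideas.
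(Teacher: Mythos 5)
Your overall instinct is exactly right: the paper's entire proof is the single sentence ``It is a special case of \cite{Liu}*{Lemma 5.4}'', so deferring to \cite{Liu}*{\Sec 5.1} is precisely what is done, and the hypotheses of that lemma (namely that $\rG$ contains a nontrivial scalar of order coprime to $p$, equivalently the vanishing $\rH^i(\rG,\rN_\rho^\sharp(2))=0$ together with absolute irreducibility of $\bar\rN_\rho^\sharp(2)$ and $\End_{\rG}(\rN_\rho^\sharp(2))=\dZ/p^\nu$) have already been arranged in the preceding lemma via genericity. Your setup of the injectivity $\Gal(\dL_\rS/\dL)\hookrightarrow\Hom(\rS,\rN_\rho^\sharp(2))$ and the treatment of the cyclic case (image of $\phi_{s_1}$ is a $\rG$-submodule of $\rN_\rho^\sharp(2)$, hence equal to $\rN_\rho^\sharp(2)[p^{\nu_1}]$ by irreducibility mod $p$ and Nakayama) are both correct.

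However, your description of the surjectivity mechanism in the non-cyclic case is not how the cited argument actually runs, and as stated it would be hard to make rigorous. There is no Chebotarev or ``enlarging-the-field'' step and no induction on generators with an $\rH^1$-splitting of obstructions in this lemma (Chebotarev only enters later, in the lemma producing admissible places). The correct, purely algebraic argument is: the image $M=\Gal(\dL_\rS/\dL)$ is a $\rG$-stable subgroup of $\Hom(\rS,\rN_\rho^\sharp(2))=\rS^\vee\otimes\rN_\rho^\sharp(2)$, where $\rG$ acts only on the second factor; because $\bar\rN_\rho^\sharp(2)$ is absolutely irreducible and $\End_\rG(\rN_\rho^\sharp(2))=\dZ/p^\nu$, every such submodule is of the form $T'\otimes\rN_\rho^\sharp(2)$ for a subgroup $T'\subset\rS^\vee$ (this is the submodule-classification lemma of \cite{Liu}*{\Sec 5.1}, and it is exactly what rules out your worry about ``graph-type'' submodules giving unexpected relations among the coordinates). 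On the other hand, $\rH^1(\rG,\rN_\rho^\sharp(2))=0$ gives the nondegeneracy on the $\rS$-side, i.e.\ an injection $\rS\hookrightarrow\Hom_\rG(M,\rN_\rho^\sharp(2))=(T')^\vee$, which forces $|T'|\geq|\rS^\vee|$ and hence $M=\Hom(\rS,\rN_\rho^\sharp(2))$. So your proposal reaches the right conclusion by the right citation, but the internal mechanism you sketch for the key surjectivity step should be replaced by this submodule-classification-plus-counting argument.
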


\begin{proof}
It is a special case of \cite{Liu}*{Lemma 5.4}.
\end{proof}

Put $\rH=\Gal(\dL_\rS/\dL)$. For each prime $w$ of $L$, we denote by $\ell_w$ the rational prime underlying $w$. If $\ell_w$ is unramified in $\dL_\rS$, then its Frobenius substitution in $\rH$ of an arbitrary place of $\dL_\rS$ above $w$ is the same one since $\rH$ is abelian. We denote it by $\Psi_w$.

Let $\gamma$ be an element of $\rG$ of order coprime to $p$. Let $\dL_\gamma\subset\dL$ be the subfield fixed by the cyclic subgroup
$\langle\gamma\rangle\subset\rG$, and put $\rH_\gamma=\Gal(\dL_\rS/\dL_\gamma)$. In particular, we have
$\rH_\gamma\simeq\rH\rtimes\langle\gamma\rangle\simeq\Hom(\rS,\rN_\rho^\sharp(2))\rtimes\langle\gamma\rangle$ as $\langle\gamma\rangle$ is of order coprime to $p$.

\begin{definition}
Let $\Box$ be a finite set of primes containing those ramified in $\dL_\rS$. We say that a prime $w$ of $\dL$ is \emph{$(\Box,\gamma)$-admissible} if its underlying prime $\ell_w$ does not belong to $\Box$ and its Frobenius substitution in $\Gal(\dL/\dQ)=\rG$, which is then well-defined, equals $\gamma$.
\end{definition}

Let $w$ be a $(\Box,\gamma)$-admissible place of $\dL$. Then for $s\in\rS\subset\rH^1(\dQ,\rN_\rho^\sharp(2))$, we have
\[\loc_{\ell_w}(s)\in\rH^1_\unr(\dQ_{\ell_w},\rN_\rho^\sharp(2))=
\rH^1(\dF_{\ell_w},\rN_\rho^\sharp(2))\subset\Hom(\rG_{\ell_w}/\rI_{\ell_w},\rN_\rho^\sharp(2)).\]
Moreover by \cite{Liu}*{Lemma 5.7} we have $[s,\Psi_w]=\loc_{\ell_w}(s)(\Psi_w)$ as an equality in $\rN_\rho^\sharp(2)$.

\begin{lem}\label{le:psi_image}
The subset consisting of elements $\Psi_w\in\rH=\Gal(\dL_\rS/\dL)=\Hom(\rS,\rN_\rho^\sharp(2))$ for all $(\Box,\gamma)$-admissible places $w$ coincides with $\Hom(\rS,\rN_\rho^\sharp(2)^{\langle\gamma\rangle})$.
\end{lem}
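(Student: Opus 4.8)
The statement of Lemma \ref{le:psi_image} asserts that as $w$ ranges over all $(\Box,\gamma)$-admissible places of $\dL$, the Frobenius elements $\Psi_w\in\rH=\Hom(\rS,\rN_\rho^\sharp(2))$ fill out exactly the subgroup $\Hom(\rS,\rN_\rho^\sharp(2)^{\langle\gamma\rangle})$.

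\begin{proof}[Proof proposal]
The plan is to run a Chebotarev argument inside the group $\rH_\gamma=\Gal(\dL_\rS/\dL_\gamma)\simeq\rH\rtimes\langle\gamma\rangle$, identifying which conjugacy classes of $\rH_\gamma$ are realized by $(\Box,\gamma)$-admissible places and then computing, for those classes, the resulting element of $\rH=\Hom(\rS,\rN_\rho^\sharp(2))$ under the identification $\rH\simeq\Hom(\rS,\rN_\rho^\sharp(2))$. First I would observe that a place $w$ of $\dL$ is $(\Box,\gamma)$-admissible precisely when $\ell_w\notin\Box$ and $\Frob_w\in\rG$ equals $\gamma$; lifting to $\dL_\rS$, the Frobenius at a place of $\dL_\rS$ above such $w$ lies in $\rH_\gamma$ and projects to $\gamma$ under $\rH_\gamma\to\langle\gamma\rangle$, i.e.\ it lies in the coset $\rH\cdot\tilde\gamma$ for a fixed lift $\tilde\gamma$. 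Conversely, by the Chebotarev density theorem applied to $\dL_\rS/\dQ$ (note $\Box$ contains all ramified primes), every element of $\rH_\gamma$ mapping to $\gamma$ arises as such a Frobenius up to $\rH_\gamma$-conjugacy, and we may further arrange by Chebotarev that $\Psi_w$ is any prescribed element of the appropriate $\rH_\gamma$-conjugacy class. So the set of $\Psi_w$ that occur is the image in $\rH$ of the union of $\rH_\gamma$-conjugacy classes meeting $\rH\cdot\tilde\gamma$.

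The second step is the group-theoretic computation: I claim that the image in $\rH$ of $\{x\tilde\gamma x^{-1}\cdot(\text{elements of }\rH\cdot\tilde\gamma) : x\in\rH_\gamma\}$ — more precisely, the set of $h\in\rH$ such that $h\tilde\gamma$ is $\rH_\gamma$-conjugate to an element of $\rH\cdot\tilde\gamma$ realized by some admissible $w$, together with the relation $[s,\Psi_w]=\loc_{\ell_w}(s)(\Psi_w)$ recorded from \cite{Liu}*{Lemma 5.7} — is exactly $\Hom(\rS,\rN_\rho^\sharp(2)^{\langle\gamma\rangle})$. Here the key point is that the $\langle\gamma\rangle$-action on $\rH\simeq\Hom(\rS,\rN_\rho^\sharp(2))$ is through the target, via the given $\rG$-action on $\rN_\rho^\sharp(2)$ composed with $\gamma$; conjugation in $\rH_\gamma$ by $\tilde\gamma$ sends $h\mapsto {}^\gamma h$, so the conjugacy class of $h\tilde\gamma$ inside the coset $\rH\tilde\gamma$ is $\{(h + {}^\gamma u - u)\tilde\gamma : u\in\rH\}$ (writing $\rH$ additively). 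Hence the set of "$h$-parts" realized is $h_0 + (\gamma - 1)\rH$ for the various realizable base classes $h_0$. One then checks, using that $\langle\gamma\rangle$ has order prime to $p$ so that $\rH = \rH^{\langle\gamma\rangle}\oplus(\gamma-1)\rH$ as $\dZ/p^\nu$-modules, that this set of cosets, intersected with the constraint coming from the local unramified condition at $\ell_w$ (which forces $\loc_{\ell_w}(s)(\Psi_w)\in\rN_\rho^\sharp(2)^{\langle\gamma\rangle}$ since $\Frob_{\ell_w}$ acts as $\gamma$), is exactly $\Hom(\rS,\rN_\rho^\sharp(2)^{\langle\gamma\rangle}) = \rH^{\langle\gamma\rangle}$. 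Both inclusions then follow: any element of $\Hom(\rS,\rN_\rho^\sharp(2)^{\langle\gamma\rangle})$ is $\langle\gamma\rangle$-fixed, hence of the form $h_0$ for an $\rH_\gamma$-conjugacy-class representative in $\rH\tilde\gamma$, and Chebotarev produces a $w$ with $\Psi_w$ equal to it; conversely every $\Psi_w$ is forced into $\rN_\rho^\sharp(2)^{\langle\gamma\rangle}$-valued homomorphisms by the unramifiedness at $\ell_w$.

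I expect the main obstacle to be the careful bookkeeping in the second step: correctly identifying the $\langle\gamma\rangle$-action on $\rH=\Hom(\rS,\rN_\rho^\sharp(2))$ (it acts only on the coefficient module, not on $\rS$, since $\rS\subset\rH^1(\dQ,-)$ is already $\rG$-invariant), and matching the two descriptions of which $h\in\rH$ are realized — one coming from $\rH_\gamma$-conjugacy classes in the coset $\rH\tilde\gamma$, the other from the local formula $[s,\Psi_w]=\loc_{\ell_w}(s)(\Psi_w)$ together with $\loc_{\ell_w}(s)\in\rH^1(\dF_{\ell_w},\rN_\rho^\sharp(2))$. The decomposition $\rH=\rH^{\langle\gamma\rangle}\oplus(\gamma-1)\rH$, valid because $p\nmid|\langle\gamma\rangle|$ (which is why $\gamma$ was chosen of order prime to $p$, and why $\bar\rho$ being generic is used to produce such $\gamma$), is what makes the two descriptions agree. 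This is the same mechanism as in \cite{Liu}*{\Sec 5.1}, so once the action is pinned down the verification is routine; the statement is in fact a direct analogue of the corresponding lemma there, applied with $\rN_\rho^\sharp(2)$ in place of the rank-$2$ coefficient module.
\end{proof}
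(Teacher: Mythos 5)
Your overall skeleton (Chebotarev in $\Gal(\dL_\rS/\dQ)$, the semidirect product $\rH_\gamma\simeq\Hom(\rS,\rN_\rho^\sharp(2))\rtimes\langle\gamma\rangle$, and the prime-to-$p$ order of $\gamma$) is the standard one; the paper itself gives no argument here but simply quotes \cite{Liu}*{Lemma 5.8}. However, your write-up has a genuine gap at the central step: you never pin down what $\Psi_w$ actually is in terms of the Frobenius $\Frob_v\in\rH_\gamma$ of a place $v$ of $\dL_\rS$ above $\ell_w$. You treat $\Psi_w$ as ``the $h$-part'' of $\Frob_v=h\tilde\gamma$, but that $h$-part is not a well-defined element of $\rH$ — it depends on the choice of lift $\tilde\gamma$ and of the place above $w$, and is only well defined modulo $(\gamma-1)\rH$ — whereas $\Psi_w$ is a genuine element of $\rH$ (the Frobenius of the place $w$ of $\dL$ in $\Gal(\dL_\rS/\dL)$). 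The correct relation is $\Psi_w=\Frob_v^{\,m}$ with $m=|\langle\gamma\rangle|$ the residue degree of $w$ over $\ell_w$, so that writing $\Frob_v=h\tilde\gamma$ (with $\tilde\gamma$ of order $m$, possible since $p\nmid m$) one gets $\Psi_w=(1+\gamma+\cdots+\gamma^{m-1})h$, i.e.\ the norm of $h$. With this identity the lemma is immediate: Chebotarev realizes every element of the coset $\rH\tilde\gamma$ as some $\Frob_v$ with $\ell_v\notin\Box$, the norm kills $(\gamma-1)\rH$ and is multiplication by $m$ (invertible mod $p^\nu$) on $\rH^{\langle\gamma\rangle}$, so the set of $\Psi_w$ is exactly the image of the norm, namely $\Hom(\rS,\rN_\rho^\sharp(2)^{\langle\gamma\rangle})$.

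Your substitute for this computation does not close the argument. Describing the realized ``$h$-parts'' as cosets $h_0+(\gamma-1)\rH$ and then ``intersecting with the local constraint'' that $\loc_{\ell_w}(s)(\Psi_w)\in\rN_\rho^\sharp(2)^{\langle\gamma\rangle}$ conflates a consequence with the mechanism: the containment of the $\Psi_w$ in $\Hom(\rS,\rN_\rho^\sharp(2)^{\langle\gamma\rangle})$ follows from the norm identity (the unramified cocycle evaluated at $\Frob^m$ is a norm), it is not an extra condition one may impose to cut down a larger set. More importantly, the reverse inclusion — that every element of $\Hom(\rS,\rN_\rho^\sharp(2)^{\langle\gamma\rangle})$ occurs as some $\Psi_w$ — is asserted only under the faulty identification $\Psi_w=h_0$; with the correct identification the value obtained is $m\cdot h_0$, and surjectivity holds only because $p\nmid m$, a point your argument never invokes for this purpose. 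So the proof as written would not survive scrutiny, although it is repaired by one displayed computation ($\Psi_w=\Frob_v^m=N_\gamma(h)$) followed by the observation that $N_\gamma\colon\rH\to\rH^{\langle\gamma\rangle}$ is surjective since $p\nmid m$.
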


\begin{proof}
It is a special case of \cite{Liu}*{Lemma 5.8}.
\end{proof}

Now we are ready to prove Theorem \ref{th:main}.

\begin{proof}[Proof of Theorem \ref{th:main}]
We prove by contradiction. Suppose $\dim_{\dQ_p}\rH^1_f(\dQ,\sfM(E)_p)>0$. Then there is a free $\dZ/p^\nu$-submodule $\rS$ of $\rH^1_f(\dQ,\rN_\rho^\sharp(2))$ (\cite{Liu}*{Definition 3.1}) of rank $1$ by \cite{Liu}*{Lemma 5.9}. Let $\Box$ be the (finite) set of primes that are either ramified in $\dL_\rS$ or not coprime to $\nabla$ or $\fr\disc F$. Put $\nu_\Box=\max\{\nu_v\res v\in\Box\}$ where $\nu_v$ is in Lemma \ref{le:local_tate} (2).

Let $\ell$ be a cubic-level raising prime for the quadruple $(\rho,\fr_\rho,\fr_0,\fr_1)$ that is not in $\Box$. Put $\gamma=\rho^\nu(\Frob_\ell)\in\rG$. Then $\gamma$ has order coprime to $p$ and $\rN_\rho^\sharp(2)^{\langle\gamma\rangle}\simeq\dZ/p^\nu$. Let $w$ be a $(\Box,\gamma)$-admissible place of $\dL$ such that $p\nmid[s,\Psi_w]$ for a generator $s$ of $\rS$, which is possible by Lemma \ref{le:psi_image}. Note that $\ell_w$ is also a cubic-level raising prime for $(\rho,\fr_\rho,\fr_0,\fr_1)$ if it is sufficiently large. Therefore, we may assume $\ell=\ell_w$.

Recall that we have the Hirzebruch--Zagier cycle $\Theta^\ell_{\fr_0,\fr_1}$ from Definition \ref{de:hirzebruch}. To ease notation, we will now regard $\Theta^\ell_{\fr_0,\fr_1}$ as its induced class in $\rH^1(\dQ,\rH^3(\cX(\ell)_{\fr_0,\fr_1}\otimes\overline\dQ,\dZ_p(2))/\Ker\phi^{\fr\fl}_\rho)$ via the localized Abel--Jacobi map. We claim that
\begin{enumerate}
  \item $\loc_v\Theta^\ell_{\fr_0,\fr_1}
      \in\rH^1_\unr(\dQ_v,\rH^3(\cX(\ell)_{\fr_0,\fr_1}\otimes\overline\dQ,\dZ_p(2))/\Ker\phi^{\fr\fl}_\rho)$ for a prime $v\not\in\Box\cup\{p,\ell\}$;

  \item $\loc_p\Theta^\ell_{\fr_0,\fr_1}
      \in\rH^1_f(\dQ_p,\rH^3(\cX(\ell)_{\fr_0,\fr_1}\otimes\overline\dQ,\dZ_p(2))/\Ker\phi^{\fr\fl}_\rho)$.
\end{enumerate}
In fact, (1) follows from \cite{Liu}*{Lemma 3.4} as $\cX(\ell)_{\fr_0,\fr_1}$ is smooth over $\Spec\dZ_{(v)}$ for such $v$; and (2) follows from \cite{Nek00}*{Theorem 3.1 (ii)}.

By Proposition \ref{pr:period}, there exists an integer $\nu_\cJ\geq 0$ and an element $\phi\in\Gamma(\cS_\fr,\dZ)[\Ker^\fr_E]$ such that
$p^{\nu_\cJ}$ does \emph{not} divide $|(\dZ/\fr_1\cap\dZ)^\times|\cdot\cJ(\phi)$. Now we choose the integer $\nu$ such that $\nu\geq\nu_\cJ+\nu_\Box$. By Theorem \ref{th:reciprocity}, we know that $p^{\nu_\cJ}$ does \emph{not} divide $(\partial\loc_\ell\Theta^\ell_{\fr_0,\fr_1},\phi)$. In particular, there is an ideal $\fd\in\fD(\fr,\fr_\rho)$ such that $p^{\nu_\cJ}$ does not divide $\partial\loc_\ell\Theta^{\ell,\fd}_{\fr_0,\fr_1}$, where $\Theta^{\ell,\fd}_{\fr_0,\fr_1}\in\rM_0$ is the component of $\Theta^\ell_{\fr_0,\fr_1}$ at $\fd$ under the isomorphism in Theorem \ref{th:cubic_level_raising} (2).

By Lemma \ref{le:tamely_pure} and \cite{Liu}*{Lemma 3.4}, we have $\loc_v(s)\in\rH^1_\unr(\dQ_v,\rN_\rho^\sharp(2))$ for every prime $v\not\in\Box\cup\{p,\ell\}$. By \cite{Liu}*{Definition 4.6, Remark 4.7}, we have $\loc_p(s)\in\rH^1_f(\dQ_v,\rN_\rho^\sharp(2))$. Then by Lemma \ref{le:local_tate} (2,3,5) and (1, 2) in the above claim, we have
\[p^{\nu-\nu_\Box}\mid\sum_{v\neq \ell}\langle s,\Theta^{\ell,\fd}_{\fr_0,\fr_1}\rangle_v.\]
However, by Lemma \ref{le:local_tate} (4), we know that $p^{\nu_\cJ}$ does not divide $\langle s,\Theta^{\ell,\fd}_{\fr_0,\fr_1}\rangle_\ell$. As $\nu\geq\nu_\cJ+\nu_\Box$, these contradict with Lemma \ref{le:local_tate} (1). Therefore, Theorem \ref{th:main} is proved.
\end{proof}

\appendix

\section{Eichler orders and abelian varieties}
\label{ss:a}

In this appendix, we study oriented Eichler orders over totally real number fields, and use them to canonically parameterize certain abelian varieties in positive characteristic. This generalizes the work of Ribet \cite{Rib89}.

We fix a totally real number field $F$, with the ring of integers $O_F$ and the set $\Phi_F$ of all archimedean places.

\subsection{Oriented orders and bimodules}
\label{ass:order}

Let $\Delta$ be a finite set of places of $F$ of even cardinality, and let $\fN$ be an ideal of $O_F$ that is coprime to (primes in) $\Delta$.

\begin{definition}\label{ade:eichler}
We recall the following definitions.
\begin{enumerate}
  \item An \emph{$O_F$-Eichler order of discriminant $\Delta$ and level $\fN$} is an $O_F$-algebra $\cR$ such that
     \begin{itemize}
        \item $\cR\otimes\dQ$ is an $F$-quaternion algebra that is ramified exactly at places in $\Delta$.

        \item $\cR$ is equal to the intersection of two maximal $O_F$-orders in $\cR\otimes\dQ$;

        \item for a prime $\fq\not\in\Delta$, the $O_{F_\fq}$-algebra $\cR\otimes_{O_F}O_{F_\fq}$ is isomorphic to
         \[\left\{\left(
                    \begin{array}{cc}
                      a & b \\
                      c & d \\
                    \end{array}
                  \right)
         \in\Mat_2(O_{F_\fq})\res c\in\fq^{m_\fq}\right\}\]
         where $\fq^{m_\fq}$ exactly divides $\fN$.
     \end{itemize}

  \item When $\fN=O_F$, we simply call $\cR$ an \emph{$O_F$-maximal order of discriminant $\Delta$}.

  \item An \emph{oriented $O_F$-Eichler order of discriminant $\Delta$ and level $\fN$} is a collection of data $(\cR,\cR_0,\{o_\fp\})$ where
      \begin{itemize}
        \item $\cR$ is an $O_F$-Eichler order of discriminant $\Delta$ and level $\fN$,
        \item $\cR_0$ is an $O_F$-maximal order of discriminant $\Delta$ containing $\cR$,
        \item $o_\fp\colon\cR\to\dF_{\Nm\fp^2}$ is a (ring) homomorphism such that $O_F\cap\Ker o_\fp=\fp$ for every prime $\fp$ in $\Delta$.
      \end{itemize}

  \item Denote by $\cS(\Delta)_\fN$ the set of isomorphism classes of oriented $O_F$-Eichler orders of discriminant $\Delta$ and level $\fN$.
\end{enumerate}
\end{definition}


Now suppose that we are given two oriented $O_F$-maximal orders $\cO$ and $\cE$. Define
\begin{itemize}
  \item $\Sigma$ to be the set of primes which belong to the discriminant of both $\cO$ and $\cE$;
  \item $\Delta$ to be the set of places (including archimedean ones) which belong to the discriminant of exactly one of $\cO$ and $\cE$.
\end{itemize}
We assume that $\Delta$ is not empty. The following definition generalizes the one in \cite{Rib89}*{\Sec 2}.

\begin{definition}\label{ade:bimodule}
An \emph{$(\cO,\cE)$-bimodule} $M$ is an $O_F$-module equipped with a left action by $\cO$ and a right action by $\cE$, both compatible with the underlying $O_F$-action.
\begin{enumerate}
  \item We say that $M$ is \emph{projective of rank $n$} if it is a projective $O_F$-module of rank $n$.

  \item We say that $M$ is \emph{admissible} if it is projective of finite rank and satisfies the condition
      \[\fp_\cO M=M\fp_\cE\]
      for every $\fp\in\Sigma$, where $\fp_\cO$ (resp.\ $\fp_\cE$) is the maximal ideal of $\cO$ (resp.\ $\cE$) such that $\fp_\cO\cap O_F$ (resp.\ $\fp_\cE\cap O_F$) equals $\fp$.

  \item We say that two $(\cO,\cE)$-bimodules $M$ and $N$ are \emph{locally isomorphic} if they are isomorphic after tensoring with $O_{F_\fp}$ for every prime $\fp$.
\end{enumerate}
\end{definition}

Let $M$ be an admissible $(\cO,\cE)$-bimodule. Put $\Lambda=\End(M)$ as the algebra of bimodule endomorphisms. For each admissible $(\cO,\cE)$-bimodule $N$ that is locally isomorphic to $M$, we put $J(N)=\Hom(M,N)$ as the set of bimodule homomorphisms, which is a right $\Lambda$-module via composition.

\begin{lem}
The assignment $N\mapsto J(N)$ establishes a bijection between the sets of isomorphism classes of the following objects:
\begin{itemize}
  \item$(\cO,\cE)$-bimodules that are locally isomorphic to $M$;
  \item locally free rank-$1$ right $\Lambda$-modules.
\end{itemize}
\end{lem}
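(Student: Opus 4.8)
The plan is to exhibit $N\mapsto J(N)=\Hom(M,N)$ as one direction of a Morita-type correspondence, with quasi-inverse $P\mapsto P\otimes_\Lambda M$, and to verify the two required natural isomorphisms by reduction to the completions $O_{F_\fp}$ at the finite primes $\fp$ of $F$. The reduction is legitimate because $M$ is admissible, hence a projective $O_F$-module of finite rank and in particular finitely presented over the Noetherian ring $O_F$; therefore formation of $\Hom_{O_F}$ and $\End_{O_F}$, and — since it is cut out inside these by the exact conditions of commuting with the $\cO$- and $\cE$-actions — also of $\Hom$ and $\Lambda=\End(M)$ in the bimodule sense, commutes with the flat base changes $O_F\to O_{F_\fp}$. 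Likewise $\Lambda$ is an $O_F$-order, so any locally free rank-$1$ right $\Lambda$-module $P$ is finitely generated over $O_F$ and locally free over each $O_{F_\fp}$, hence projective and finitely presented over $O_F$. Consequently a map between finitely presented $O_F$-modules is an isomorphism if and only if it becomes one after $-\otimes_{O_F}O_{F_\fp}$ for every $\fp$; this is the engine of the whole argument.

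First I would write down the two natural transformations. There is the evaluation map $\varepsilon_N\colon J(N)\otimes_\Lambda M\to N$, $f\otimes m\mapsto f(m)$, which is a morphism of $(\cO,\cE)$-bimodules because $M$ carries commuting left $\cO$-, right $\cE$- and left $\Lambda$-actions (the left $\Lambda$-action being $\phi\cdot m=\phi(m)$, the right $\Lambda$-action on $J(N)$ being precomposition). And there is the map $u_P\colon P\to J(P\otimes_\Lambda M)=\Hom(M,P\otimes_\Lambda M)$, $p\mapsto(m\mapsto p\otimes m)$, which is right $\Lambda$-linear. Now localize at $\fp$ and write $M_\fp=M\otimes_{O_F}O_{F_\fp}$, etc. By definition "$N$ is locally isomorphic to $M$" means exactly $N_\fp\cong M_\fp$ as bimodules over $O_{F_\fp}$; fixing such an isomorphism $\alpha$ transports $J(N)_\fp=\Hom(M_\fp,N_\fp)$, $f\mapsto\alpha^{-1}\circ f$, isomorphically onto $\End(M_\fp)=\Lambda_\fp$ \emph{as a right $\Lambda_\fp$-module} (precomposition is respected). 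Hence $J(N)$ is locally free of rank $1$ over $\Lambda$, and under this transport $(\varepsilon_N)_\fp$ becomes the tautological isomorphism $\Lambda_\fp\otimes_{\Lambda_\fp}M_\fp\xrightarrow{\ \sim\ }M_\fp$; so $\varepsilon_N$ is an isomorphism. Symmetrically, for $P$ locally free of rank $1$ over $\Lambda$, an isomorphism $P_\fp\cong\Lambda_\fp$ yields $(P\otimes_\Lambda M)_\fp\cong M_\fp$; this shows at once that $P\otimes_\Lambda M$ is an admissible $(\cO,\cE)$-bimodule locally isomorphic to $M$ — projectivity of finite rank over $O_F$ and the relation $\fp_\cO(P\otimes_\Lambda M)=(P\otimes_\Lambda M)\fp_\cE$ for $\fp\in\Sigma$ are local conditions inherited from $M$ — and it identifies $(u_P)_\fp$ with the identity of $\Lambda_\fp$; so $u_P$ is an isomorphism.

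With $\varepsilon$ and $u$ in hand, the lemma follows formally. The assignment $N\mapsto J(N)$ lands in locally free rank-$1$ right $\Lambda$-modules and descends to isomorphism classes; it is injective on classes since $J(N)\cong J(N')$ forces $N\cong J(N)\otimes_\Lambda M\cong J(N')\otimes_\Lambda M\cong N'$ via $\varepsilon$; and it is surjective since any such $P$ is $J(P\otimes_\Lambda M)$ via $u$, with $P\otimes_\Lambda M$ an object of the source category by the previous paragraph. I do not expect a genuinely hard step: the substance is the bookkeeping of the adjunction $(-\otimes_\Lambda M,\Hom(M,-))$ combined with the local–global principle over $O_F$. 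The two points that merit care are the local identification of $J(N)_\fp$ with the \emph{regular} right $\Lambda_\fp$-module (i.e.\ that the action inherited from $\End(M_\fp)$ matches the regular action after transport along a bimodule isomorphism $M_\fp\cong N_\fp$), and the verification that $P\otimes_\Lambda M$ lies in the category of \emph{admissible} $(\cO,\cE)$-bimodules rather than merely of $O_F$-modules; both are immediate once one notes that the defining conditions are preserved under tensoring with a locally free rank-$1$ module. This is the relative $O_F$-version of the argument of Ribet \cite{Rib89}.
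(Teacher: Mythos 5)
Your proof is correct, and it is essentially the argument the paper intends: the paper's proof of this lemma is simply a citation to \cite{Rib89}*{Theorem 2.3}, whose tensor--hom (Morita-type) argument with quasi-inverse $P\mapsto P\otimes_\Lambda M$ and local--global verification over the base ring is exactly what you have written out in the relative $O_F$-setting. No gaps; the two points you flag (the right $\Lambda$-module identification of $J(N)_\fp$ and admissibility of $P\otimes_\Lambda M$) are indeed the only ones needing care, and you handle them correctly.
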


\begin{proof}
The proof is same as for \cite{Rib89}*{Theorem 2.3}.
\end{proof}

From now on, we consider only admissible $(\cO,\cE)$-bimodules of rank $8$. For every collection of integers
\[\underline{r}=\{r_\fp\res \fp\in\Sigma\}\]
where $r_\fp\in\{0,1,2\}$, we may consider admissible rank-$8$ $(\cO,\cE)$-bimodules with invariants $\underline{r}$, defined in the same way as in \cite{Rib89}. We now briefly recall: An admissible rank-$8$ $(\cO,\cE)$-bimodule $M$ has invariants $\underline{r}$ if for every $\fp\in\Sigma$, there is an isomorphism of $(\cO,\cE)$-bimodules $M/M\fp_\cE=M/\fp_\cO M\simeq\dF_{\Nm\fp^2}^{\oplus r_\fp}\oplus\dF_{\Nm\fp^2}^{\oplus 2-r_\fp}$ such that $\cO$ and $\cE$ both act by the given orientation on the first direct summand; and exactly one of them acts by the given orientation on the second direct summand.

We denote by $\cM(\cO,\cE)^{\underline{r}}$ the set of isomorphism classes of admissible rank-$8$ $(\cO,\cE)$-bimodules with invariants $\underline{r}$. By a similar argument for \cite{Rib89}*{Proposition 2.2}, bimodules in $\cM(\cO,\cE)^{\underline{r}}$ with given $\underline{r}$ are locally isomorphic to each other. For $M\in\cM(\cO,\cE)^{\underline{r}}$, the endomorphism algebra $\End(M)$ is naturally an \emph{oriented} $O_F$-Eichler order of discriminant $\Delta$ and some level $\fs$ containing $\prod_{\fp\in\Sigma}\fp$. The class group $\Cl(F)$ of $F$ acts on $\cM(\cO,\cE)^{\underline{r}}$ via the formula $\fa. M=M\otimes_{O_F}\fa$ for every projective rank-$1$ $O_F$-module $\fa$. Obviously, we have $\End(\fa.M)\simeq\End(M)$.

More generally, for an ideal $\fN$ of $O_F$ coprime to $\Delta\cup\Sigma$. We denote by $\cM(\cO,\cE)^{\underline{r}}_\fN$ the set of pairs $(M,\cR)$ where $M\in\cM(\cO,\cE)^{\underline{r}}$ and $\cR\subset\End(M)$ is an $O_F$-Eichler order of level $\fN\fs$. The class group $\Cl(F)$ acts on $\cM(\cO,\cE)^{\underline{r}}_\fN$ via the first factor.

\begin{proposition}\label{apr:bimodule}
The assignment $(M,\cR)\mapsto(\cR,\End(M))$ induces a bijection
\begin{align}\label{aeq:bimodule}
\cM(\cO,\cE)^{\underline{r}}_\fN/\Cl(F)\xrightarrow{\sim}\cS(\Delta)_{\fN(\underline{r})},
\end{align}
where $\fN(\underline{r})$ is the product of $\fN$ and $\fp\in\Sigma$ with $r_\fp=1$. Here, we regarded $(\cR,\End(M))$ as the oriented $O_F$-Eichler order with the first variable (Definition \ref{ade:eichler} (3)) of $\End(M)$ replaced by $\cR$, and $\{o_\fp\}$ given by restriction.
\end{proposition}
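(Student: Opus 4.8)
The plan is to mimic the strategy of Ribet's parameterization \cite{Rib89}*{\Sec 3}, working one prime at a time and then gluing. The map $(M,\cR)\mapsto(\cR,\End(M))$ is well-defined on $\Cl(F)$-orbits because $\End(\fa.M)\simeq\End(M)$ canonically (as recalled in the text right before the statement), and because $\cR$ is already a sub-$O_F$-Eichler-order of $\End(M)$ of the prescribed level $\fN\fs = \fN\cdot\prod_{\fp\in\Sigma}\fp$, so after restricting the orientation $\{o_\fp\}$ one obtains an oriented $O_F$-Eichler order of discriminant $\Delta$ and level $\fN(\underline r)$: indeed at a prime $\fp\in\Sigma$ with $r_\fp=1$ the bimodule $M/\fp_\cO M$ has the mixed form $\dF_{\Nm\fp^2}\oplus\dF_{\Nm\fp^2}$ with the two actions agreeing on one summand only, which forces $\End(M)\otimes O_{F_\fp}$ to be the Eichler order of level exactly $\fp$ (rather than maximal), while for $r_\fp\in\{0,2\}$ it is maximal at $\fp$; this is exactly the local bookkeeping in \cite{Rib89}. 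So the invariant $\fN(\underline r)$ on the right is forced.

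\emph{Injectivity.} Given two pairs $(M,\cR)$ and $(M',\cR')$ with isomorphic images in $\cS(\Delta)_{\fN(\underline r)}$, first I would use the preceding lemma (the one identifying $(\cO,\cE)$-bimodules locally isomorphic to a fixed $M$ with locally free rank-$1$ right $\End(M)$-modules $J(N)=\Hom(M,N)$) to translate the problem into the arithmetic of the $O_F$-Eichler order $\Lambda=\End(M)$. An isomorphism of the associated oriented Eichler orders gives an isomorphism $\Lambda\xrightarrow{\sim}\Lambda'=\End(M')$ carrying $\cR$ to $\cR'$ and respecting orientations; transporting $M'$ through it, one reduces to the case $\End(M)=\End(M')=\Lambda$ and $\cR=\cR'$, where $M,M'$ correspond to rank-$1$ locally free right $\Lambda$-modules $P,P'$. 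Then $M\cong M'$ as bimodules iff $P\cong P'$ as $\Lambda$-modules iff they differ by an element of $\Cl(F)$ — here one invokes the standard fact (same as the end of \cite{Rib89}*{\Sec 2}, using strong approximation for the norm-one group of $\cO\otimes\dQ$, legitimate since $\Delta\neq\emptyset$ forces the relevant quaternion algebra to be non-split somewhere, in fact indefinite as an $F$-algebra after the totally-real hypothesis is used, or alternatively since $\cO$ is a \emph{maximal} order) that the class set of such rank-$1$ modules is a torsor under $\Cl(F)$ acting through $\fa.M = M\otimes_{O_F}\fa$. This yields that the two original pairs lie in the same $\Cl(F)$-orbit.

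\emph{Surjectivity.} Start from an arbitrary oriented $O_F$-Eichler order $(\cR,\cR_0,\{o_\fp\})$ of discriminant $\Delta$ and level $\fN(\underline r)$. I would first produce the maximal order $\cR_0$ of discriminant $\Delta$ from the data, then realize $\cR_0$ as $\End(M_0)$ for some admissible rank-$8$ $(\cO,\cE)$-bimodule $M_0$ with invariants $\underline r$: this is a purely local construction, done prime by prime exactly as in \cite{Rib89}*{Proposition 2.2 and \Sec 3} (at primes outside $\Sigma$ the bimodule is free of the evident shape; at $\fp\in\Sigma$ one uses $r_\fp$ and the orientations to pin down the local bimodule, the only subtlety being that the discriminant of exactly one of $\cO,\cE$ contains $\fp$, whence the $\Delta$ vs.\ $\Sigma$ dichotomy). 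Global existence of $M_0$ over the local data again uses the lemma and a class-group computation. Finally, the Eichler suborder $\cR\subset\cR_0$ of level $\fN(\underline r)$ and the orientations $\{o_\fp\}$ single out a pair $(M_0,\cR)\in\cM(\cO,\cE)^{\underline r}_\fN$ — one must check the level of $\cR$ inside $\End(M_0)$ is $\fN\fs$, i.e.\ that the $\fp\in\Sigma$ with $r_\fp=1$ contribute the extra $\prod\fp$; this is the same local identity as above read backwards. Mapping $(M_0,\cR)$ forward recovers $(\cR,\cR_0,\{o_\fp\})$ up to isomorphism.

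\emph{Main obstacle.} The routine local matching at primes in $\Sigma$ (matching the bimodule invariant $r_\fp$, the Eichler level $\fp^{m_\fp}$, and the two orientations simultaneously, with the correct placement dictated by which of $\cO,\cE$ is ramified at $\fp$) is fiddly but is essentially transcribed from \cite{Rib89}. The genuinely delicate point is the \emph{global} step — showing that the class set of admissible rank-$8$ bimodules with fixed invariants is a principal homogeneous space under $\Cl(F)$ (equivalently, that the rank-$1$ $\End(M)$-module class set is $\Cl(F)$), which requires the appropriate strong-approximation input; over a general totally real $F$ one has to be careful that the relevant reduced-norm map $\widehat{\End(M)}^\times\to\widehat{O_F}^\times$ is surjective on the pieces that matter. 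I expect that to be the step demanding the most care, everything else being a bookkeeping generalization of Ribet's arguments to the $O_F$-linear setting.
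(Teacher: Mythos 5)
Your overall architecture (well-definedness, Ribet-style injectivity, local-to-global surjectivity) is reasonable, but the global fact you lean on — and explicitly single out as the key step — is false, and it is precisely the point where the paper argues differently. You assert that the class set of locally free rank-$1$ right $\Lambda$-modules, $\Lambda=\End(M)$, is a torsor under $\Cl(F)$, justified by strong approximation because "$\Delta\neq\emptyset$ forces the algebra to be \ldots indefinite". This is backwards: $\Delta$ is a set of \emph{places} and may contain archimedean ones (in every application in this paper it contains all of $\Phi_F$), so $\Lambda\otimes\dQ$ is a totally definite quaternion algebra; strong approximation fails there, and the class number of the Eichler order $\Lambda$ (Eichler mass formula) is in general much larger than $|\Cl(F)|$. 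Even in an indefinite situation the claim is wrong, since the action $P\mapsto P\otimes_{O_F}\fa$ multiplies reduced norms by $\fa^2$ and is not simply transitive on the class set. Note also that your chain "$M\cong M'$ iff $P\cong P'$ iff they differ by an element of $\Cl(F)$" proves far too much: it would put all bimodules with invariants $\underline{r}$ in a single $\Cl(F)$-orbit, i.e.\ both sides of \eqref{aeq:bimodule} would be singletons, which is not the case. Your injectivity reduction also quietly discards the orientations ("transport $M'$ and reduce to $\cR=\cR'$"), whereas they are exactly what is needed there.

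The correct statement, and what the paper proves, is not that the class set is $\Cl(F)$ but that the class set \emph{modulo} $\Cl(F)$ is the set of oriented Eichler order classes. Concretely, for injectivity the paper follows Ribet: if $\End(M)\simeq\End(M')$ as \emph{oriented} orders, then the connecting module $\Hom(M,M')$ is a two-sided object whose class is controlled by the two-sided prime ideals at the ramified and level primes, and orientation-compatibility eliminates those contributions, leaving $\Hom(M,M')\simeq\fa\End(M)$ for an ideal $\fa$ of $O_F$, hence $M'\simeq M\otimes_{O_F}\fa$. For surjectivity the paper does not construct a preimage at all: it compares cardinalities via double cosets, identifying the bimodules locally isomorphic to $M$ with $\Lambda_\dQ^\times\backslash\widehat\Lambda_\dQ^\times/\widehat\Lambda^\times$ and, after choosing $\Lambda$ as base point, $\cS(\Delta)_{\fN(\underline{r})}$ with $\Lambda_\dQ^\times\backslash\widehat\Lambda_\dQ^\times/\widehat F^\times\widehat\Lambda^\times$; the extra $\widehat F^\times$ is exactly the $\Cl(F)$-action, so both sides of \eqref{aeq:bimodule} are finite of the same cardinality and injectivity suffices. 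No strong approximation is needed anywhere. To repair your write-up, replace the torsor claim by this double-coset comparison (it also subsumes the "global existence of $M_0$" step in your surjectivity paragraph) and run injectivity through the two-sided ideal/orientation argument rather than through any classification of rank-one classes.
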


\begin{proof}
It suffices to consider the case where $\fN=O_F$. Then the proof is almost same as for \cite{Rib89}*{Theorem 2.4}. We only point out the difference when one changes from $\dQ$ to $F$.

In the proof of injectivity, suppose that we have $M$ and $M'$ with $\End(M)\simeq\End(M')$. Then the $\End(M)$-module $\Hom(M,M')$ is isomorphic to $\fa\End(M)$ where $\fa$ is some ideal of $O_F$. In particular, we have $M'\simeq M\otimes_{O_F}\fa$.

In the proof of surjectivity, we fix an element $M\in\cM(\cO,\cE)^{\underline{r}}$, whose existence can be shown in the same way as in the proof of \cite{Rib89}*{Theorem 2.4}. Put $\Lambda=\End(M)$. There is a one-to-one correspondence between the set of locally free rank-$1$ right $\Lambda$-modules and the double coset $\Lambda_\dQ^\times\backslash\widehat\Lambda_\dQ^\times/\widehat\Lambda^\times$. On the other hand, the choice of the member $\Lambda\in\cS(\Delta)_{\fN(\underline{r})}$ induces a one-to-one correspondence between $\cS(\Delta)_{\fN(\underline{r})}$ and the double coset $\Lambda_\dQ^\times\backslash\widehat\Lambda_\dQ^\times/\widehat{F}^\times\widehat\Lambda^\times$. In particular, the two sides of \eqref{aeq:bimodule} have the same cardinality.
\end{proof}

\begin{notation}\label{ano:divisor}
Let $\fM,\fN$ be two ideals of $O_F$ such that $\fN\subset\fM$.
\begin{enumerate}
  \item Denote by $\fD(\fN,\fM)$ the set of all ideals of $O_F$ containing $\fN^{-1}\fM$.
  \item Put $\mu(\fN,\fM)=\Nm(\fN\fM^{-1})\cdot\prod_\fq\left(1+\frac{1}{\Nm\fq}\right)$, where the product is taken over all primes $\fq$ of $F$ that divide $\fM$ but not $\fN$.
\end{enumerate}
\end{notation}

Let $\fM,\fN$ be two ideals of $O_F$ such that $\fN\subset\fM$ and $\fN$ is coprime to $\Delta$, we have a \emph{degeneracy map}
\begin{align}\label{aeq:degeneracy}
\delta^\fd\colon \cS(\Delta)_\fN\to \cS(\Delta)_\fM
\end{align}
for every $\fd\in\fD(\fN,\fM)$. We write $\delta=\delta^{O_F}$ for simplicity.

\begin{definition}\label{ade:pushforward}
Let $\Lambda$ be an abelian group. Let $\fM,\fN$ be as in the previous definition. For every $\fd\in\fD(\fN,\fM)$, the map $\delta^\fd$ induces
\begin{enumerate}
  \item the \emph{pullback map} $\delta^{\fd*}\colon\Gamma(\cS(\Delta)_\fM,\Lambda)\to\Gamma(\cS(\Delta)_\fN,\Lambda)$ such that $(\delta^{\fd*}f)(y)=f(\delta^\fd(y))$ for every $f\in\Gamma(\cS(\Delta)_\fM,\Lambda)$ and $y\in\cS(\Delta)_\fN$;

  \item the \emph{(normalized) pushforward map} $\delta^\fd_*\colon\Gamma(\cS(\Delta)_\fN,\Lambda)\to\Gamma(\cS(\Delta)_\fM,\Lambda)$ such that \[(\delta^\fd_*g)(x)=\frac{\mu(\fN,\fM)}{|(\delta^\fd)^{-1}(x)|}\sum_{y\in(\delta^\fd)^{-1}(x)}g(y)\]
      for every $g\in\Gamma(\cS(\Delta)_\fN,\Lambda)$ and $x\in\cS(\Delta)_\fM$. Note that $|(\delta^\fd)^{-1}(x)|$ always divides $\mu(\fN,\fM)$ (Notation \ref{ano:divisor}).
\end{enumerate}
\end{definition}

In particular, the composite map $\delta^\fd_*\circ\delta^{\fd*}\colon\Gamma(\cS(\Delta)_\fM,\Lambda)\to\Gamma(\cS(\Delta)_\fM,\Lambda)$ is always the multiplication by $\mu(\fN,\fM)$.

\begin{definition}[Hecke monoid]\label{ade:hecke}
Let $\fq$ be a prime of $F$. We define the \emph{(abstract spherical) Hecke monoidal} at $\fq$, denoted by $\dT_\fq$, to be the commutative monoid generated by symbols $\rT_\fq,\rS_\fq,\rS_\fq^{-1}$ ($\rS_\fq\rS_\fq^{-1}=1$).
\end{definition}

\begin{definition}\label{ade:degeneracy}
For $\fq$ coprime to $\Delta$ and $\fN$, we define an action of the Hecke monoid $\dT_\fq$ on $\cS(\Delta)_\fN$ by correspondences such that
\begin{itemize}
  \item $\rT_\fq$ acts by the correspondence
     \[\cS(\Delta)_\fN\xleftarrow{\delta}\cS(\Delta)_{\fN\fq}\xrightarrow{\delta^\fq}\cS(\Delta)_\fN;\]

  \item $\rS_\fq$ and $\rS_{\fq}^{-1}$ act by the trivial correspondence.
\end{itemize}
\end{definition}

\subsection{Parameterizing abelian varieties: unramified case}
\label{ass:unramified}

Let $d$ be the degree of $F$. We fix an oriented $O_F$-maximal order $\cO$ of discriminant $\Delta$ containing no archimedean places, and a rational prime $\ell$ that is inert in $O_F$ and such that $\fl\not\in\Delta$, where $\fl$ is the unique prime of $F$ above $\ell$. Let $k$ be an algebraically closed field containing $\dF_{\ell^d}\dF_{\ell^2}$, and $\sigma$ the $\ell$-Frobenius map. We fix a homomorphism $o_\fl\colon O_F\to\dF_{\Nm\fl}=\dF_{\ell^d}$.

\begin{lem}\label{ale:rapoport}
Let $A$ be a superspecial abelian variety of dimension $2d$ over $\Spec k$. Let $\iota\colon\cO\to\End(A_{/k})$ be an injective homomorphism. We have the following.
\begin{enumerate}
  \item If $d$ is odd, then for every $r\in O_F$,
      \[\tr(\iota(r)\res\Lie(A_{/k}))=2\Tr_{\dF_{\ell^d}/\dF_\ell}o_\fl(r).\]

  \item If $d$ is even, then exactly one of the following holds:
      \begin{enumerate}
        \item for every $r\in O_F$, we have $\tr(\iota(r)\res\Lie(A_{/k}))=2\Tr_{\dF_{\ell^d}/\dF_\ell}o_\fl(r)$;
        \item for every $r\in O_F$, we have $\tr(\iota(r)\res\Lie(A_{/k}))=4\Tr_{\dF_{\ell^d}/\dF_{\ell^2}}o_\fl(r)$;
        \item for every $r\in O_F$, we have $\tr(\iota(r)\res\Lie(A_{/k}))=4\Tr_{\dF_{\ell^d}/\dF_{\ell^2}}\sigma(o_\fl(r))$.
      \end{enumerate}
\end{enumerate}
\end{lem}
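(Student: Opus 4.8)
\textbf{Proof proposal for Lemma \ref{ale:rapoport}.}

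The plan is to decompose the covariant Dieudonné module (equivalently the crystalline homology) $\rH \coloneqq \rH_1^{\r{cris}}(A/W(k))$ of the superspecial abelian variety $A$ into its local pieces over $O_F \otimes \dZ_\ell = O_{F_\fl}$, and to read off the Frobenius action on the Lie algebra $\Lie(A_{/k}) \simeq \rH / F\rH$ from the superspecial structure. Since $\ell$ is inert in $F$ and $\fl \notin \Delta$, the algebra $O_{F_\fl}$ is the unramified extension $\dZ_{\ell^d}$ of $\dZ_\ell$, and $\cO \otimes \dZ_\ell \simeq \Mat_2(\dZ_{\ell^d})$; thus $\rH$ is a free $\dZ_{\ell^d}\otimes W(k)$-module, and by Morita theory the $\Mat_2$-action splits off a rank-$2$ (over $\dZ_{\ell^d}\otimes W(k)$) piece on which everything is controlled. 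First I would set up the eigenspace decomposition $\dZ_{\ell^d}\otimes W(k) \simeq \prod_{j \in \dZ/d} W(k)$ indexed by the embeddings $\dZ_{\ell^d} \hookrightarrow W(k)$ lifting $o_\fl$ and its Frobenius twists, so that $\rH = \bigoplus_j \rH_j$ with $\rH_j$ free of rank $2$ over $W(k)$, and the Frobenius $F$ shifts the grading by one: $F\colon \rH_j \to \rH_{j+1}$. The trace of $\iota(r)$ on $\Lie(A_{/k})$ is then $\sum_j (\text{multiplicity of } \rH_j \text{ in } \Lie(A)) \cdot \Tr_{?}$ applied to $\sigma^j(o_\fl(r))$, so the whole computation reduces to determining, for each $j$, the dimension $\dim_k (\rH_j / (F\rH)_j)$.

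The key input is that $A$ is superspecial, i.e. $A \cong E^{2d}$ for a supersingular elliptic curve $E$, equivalently $\rH$ is a superspecial Dieudonné module: $V\rH = F\rH$ and $F^2 = \ell \cdot (\text{unit})$ up to the Frobenius twist, so that $F\colon \rH_j/\ell \to \rH_{j+1}/\ell$ has image exactly a line (is of rank one mod $\ell$) for \emph{every} $j$ — this is the defining ``a-number maximal'' property. Hence $\dim_k(\rH_{j+1}/(F\rH)_{j+1}) = 1$ for every $j$ after passing to the Morita-reduced rank-$1$ module, i.e. the Lie algebra contribution in each graded piece is the same. This forces the answer to be a sum over $j \in \dZ/d$ of $\sigma^j(o_\fl(r))$ with a \emph{uniform} coefficient, which is exactly a normalized trace down to some subfield of $\dF_{\ell^d}$; the precise normalization (the ``$2$'' in (1), (2a) versus the ``$4$'' in (2b), (2c)) is dictated by which subfield the collection of embeddings $\{\sigma^j o_\fl\}$ orbits under, together with the factor $2$ coming from the rank-$2$-over-$O_{F_\fl}$ nature of $\rH$ before Morita reduction. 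When $d$ is odd, $\dF_{\ell^d}$ and $\dF_{\ell^2}$ are linearly disjoint and the only $F$-stable normalization compatible with $\dim_k \Lie(A) = 2d$ is the full trace $\Tr_{\dF_{\ell^d}/\dF_\ell}$ with coefficient $2$, giving (1). When $d$ is even, there is room for the $k$-structure on the graded pieces to be constant on the coarser partition into $\sigma^2$-orbits; the three cases correspond to the three ways the superspecial module's Frobenius can ``align'' with the two embeddings $\dF_{\ell^2}\hookrightarrow \dF_{\ell^d}$: either it still sees the fine $\dZ/d$-grading (case (a)), or it collapses to the $\sigma^2$-invariant grading via $o_\fl$ (case (b)), or via $\sigma \circ o_\fl$ (case (c)). That these three cases are mutually exclusive and exhaustive follows from counting: in each case the total $\sum_j \dim_k(\rH_j/(F\rH)_j) = 2d$ must hold, and the three patterns are the only ones realizing this with the superspecial constraint that each $F\colon \rH_j \to \rH_{j+1}$ drops rank by exactly one mod $\ell$.

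I expect the main obstacle to be pinning down precisely \emph{which} of the three normalizations occurs in terms of intrinsic data — i.e. making rigorous the claim that the superspecial structure together with the $\cO$-action forces the Lie algebra grading to be one of exactly these three patterns — rather than merely that the trace is \emph{some} normalized trace. This is where I would invoke the classification of superspecial Dieudonné modules with an action of an unramified quaternion order (the analogue for general $d$ of the $d=1$ discussion in \cite{Rib89}*{\Sec 2} and the bimodule machinery of \Sec\ref{ass:order}): the key point is that $\End(\rH)$ as a Dieudonné module with $\cO$-action, reduced mod $\ell$, is governed by an admissible bimodule over a residue field, and its invariant $\underline{r}$ at $\fl$ — which here takes the role of selecting among cases (a), (b), (c) — is forced by the superspecial hypothesis to behave uniformly in the graded pieces. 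Once that structural statement is in place, parts (1) and (2) are a finite bookkeeping of Galois orbits of embeddings, which I would carry out by the ``$\deg$''-counting argument sketched above rather than spelling out every trace identity. A secondary, more routine point is the passage between $\Lie(A_{/k})$ and $\rH/F\rH$ and the compatibility of the $O_F$-action with the grading, which is standard crystalline Dieudonné theory and I would cite rather than reprove.
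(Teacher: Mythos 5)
Your central structural claim is where the proof breaks. You assert that superspeciality forces the (Morita-reduced) Frobenius $\cF\colon\rM_j\to\rM_{j+1}$ to have rank exactly one mod $\ell$ in \emph{every} graded piece, hence that every graded piece of the Lie algebra contributes dimension one, so that the trace is a uniform sum $2\sum_j\sigma^j(o_\fl(r))$. This is false, and it contradicts the very statement you are proving: in cases (2b) and (2c) — which genuinely occur, being exactly the $\pm$-pure objects of Definition \ref{ade:pure_unramified} on which the rest of the appendix relies — the graded Lie dimensions alternate $2,0,2,0,\dots$, i.e.\ $\cF$ is alternately surjective and zero mod $\ell$. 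What superspeciality ($\cF\rM=\cV\rM$) actually gives, writing $n_j$ for the $k$-dimension of the $j$-th graded piece of the reduced Lie algebra, is the relation $n_{j}+n_{j+1}=2$ for all $j$ (up to harmless re-indexing: $\cF\rM_{j-1}=\cV\rM_{j+1}$ identifies $\rM_j/\cV\rM_{j+1}$ with $\rM_j/\cF\rM_{j-1}$, and $\cF\cV=\ell$ forces the two consecutive quotients to have complementary dimensions in a rank-$2$ module). From this relation the lemma follows: for $d$ odd the alternation around an odd cycle forces $n_j\equiv 1$ (case (1)), and for $d$ even exactly the three patterns $1,1,\dots$; $2,0,2,0,\dots$; $0,2,0,2,\dots$ survive, giving (a), (b), (c). Your dimension-count argument alone does not rule out patterns such as $(2,0,1)$ for $d=3$, and your closing appeal to a ``classification of superspecial Dieudonné modules with quaternion action'' defers precisely the point at issue, so the gap is not filled elsewhere in your write-up.

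For comparison, the paper's proof avoids Dieudonné theory entirely: by the Deligne--Shioda--Ogus theorem (\cite{LO98}*{\Sec 1.6}), $A\simeq E^{\oplus 2d}$ with $E$ a supersingular elliptic curve all of whose endomorphisms are defined over $\dF_{\ell^2}$, so $(A,\iota)$ descends to $\dF_{\ell^2}$; after Morita reduction one gets a $d$-dimensional $\dF_{\ell^2}$-vector space with a commuting $\dF_{\ell^d}$-action, i.e.\ a module over $\dF_{\ell^d}\otimes_{\dF_\ell}\dF_{\ell^2}$, which is a field when $d$ is odd (forcing case (1)) and $\dF_{\ell^d}\times\dF_{\ell^d}$ when $d$ is even (the two multiplicities sum to $2$, giving the trichotomy). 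Your graded-module framework can be repaired by replacing the uniform-rank-one claim with the relation $n_j+n_{j+1}=2$ derived above, but as written the argument does not go through.
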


\begin{proof}
We fix a supersingular elliptic curve $E_0$ over $\dF_{\ell^2}$ and put $E=E_0\otimes_{\dF_{\ell^2}}k$. Then we have $\End(E_{0/\dF_{\ell^2}})=\End(E_{/k})$. By a well-known theorem of Deligne, Shioda and Ogus (see for example \cite{LO98}*{\Sec 1.6}), we have $A\simeq E^{\oplus 2d}$. If we put $A_0=E_0^{\oplus 2d}$, then we have an isomorphism $\End(A_{0/\dF_{\ell^2}})\simeq\End(A_{/k})$. In particular, we obtain an injective homomorphism $\iota\colon \cO\to\End(A_{0/\dF_{\ell^2}})$. Therefore, $\cO$ acts on $\Lie(A_{0/\dF_{\ell^2}})$, which is an $\dF_{\ell^2}$-vector space of dimension $2d$. On the other hand, $\cO/\fl\cO$ is isomorphic to $\Mat_2(\dF_\fl)$. Take a nontrivial idempotent $e$ of $\cO/\fl\cO$, we have for every $r\in O_F$,
\[\tr(\iota(r)\res\Lie(A_{/k}))=2\tr(\iota(r)\res e\Lie(A_{0/\dF_{\ell^2}})).\]
The lemma follows as $e\Lie(A_{0/\dF_{\ell^2}})$ is an $\dF_{\ell^2}$-vector space of dimension $d$.
\end{proof}

\begin{definition}\label{ade:pure_unramified}
Let $(A,\iota)$ be as in Lemma \ref{ale:rapoport}. We say that $(A,\iota)$ is \emph{mixed} if it is in case (1) or (2a), \emph{$+$-pure} if it is in case (2b) and, \emph{$-$-pure} if it is in case (2c).
\end{definition}

\begin{definition}\label{ade:abelian_unramified}
Let $\fN_0$ and $\fN_1$ be two ideals of $O_F$ such that $\fN_0$, $\fN_1$ and $\Delta\cup\{\fl\}$ are mutually coprime. We define $\cA(\Delta)^0_{\fN_0,\fN_1}$ (resp.\ $\cA(\Delta)^{\pm1}_{\fN_0,\fN_1}$) to be the set of isomorphism classes of quadruples $(A,\iota,C,\lambda)$ where
\begin{itemize}
  \item $A$ is a superspecial abelian variety of dimension $2d$ over $\Spec k$;
  \item $\iota\colon\cO\to\End(A_{/k})$ is an injective homomorphism such that $(A,\iota)$ is mixed (resp.\ $\pm$-pure);
  \item $C\subset A(k)$ is an $\cO$-submodule that is isomorphic to $(O_F/\fN_0)^{\oplus 2}$ (Remark \ref{are:cyclic});
  \item $\lambda\colon(O_F/\fN_1)^{\oplus 2}\to A(k)$ is an $\cO$-equivariant injective homomorphism (Remark \ref{are:cyclic}).
\end{itemize}
We simply write $\cA(\Delta)^i_\fN$ for $\cA(\Delta)^i_{\fN,O_F}$ for $i\in\{-1,0,+1\}$.
\end{definition}

\begin{remark}\label{are:cyclic}
Note that for an ideal $\fN$ of $O_F$ coprime to $\Delta$, the abelian group $(O_F/\fN)^{\oplus 2}$ is equipped with a unique up to isomorphism cyclic $\cO$-module structure that is compatible with the underlying $O_F$-module structure, since $\cO/\fN\cO$ is isomorphic to $\Mat_2(O_F/\fN)$.
\end{remark}

Let $\Cl(F)_{\fN_1}$ be the ray class group of $F$ with respect to the ideal $\fN_1$. We realize $\Cl(F)_{\fN_1}$ as the set of isomorphism classes of rank-$1$ projective $O_F$-module $\fa$ together with an $O_F$-linear surjective homomorphism $\fa\to O_F/\fN_1$. It acts on $\cA(\Delta)^i_{\fN_0,\fN_1}$ for each $i\in\{-1,0,+1\}$ such that the data $\fa\to O_F/\fN_1$ sends $(A,\iota,C,\lambda)$ to $(A\otimes_{O_F}\fa,\iota\otimes_{O_F}\fa,C\otimes_{O_F}\fa,\lambda)$, where we have used the isomorphism $(A\otimes_{O_F}\fa)[\fN_1]\simeq A[\fN_1]$ induced from $\fa\to O_F/\fN_1$.

For an element $(A,\iota,C,\lambda)\in\cA(\Delta)^i_{\fN_0,\fN_1}$, denote by $\End(A,\iota,C,\lambda)$ the subalgebra of $\End(A_{/k})$ of endomorphisms that commute with $\iota(O_F)$ and preserve $C\times\IM(\lambda)$.

\begin{proposition}\label{apr:unramified}
The $O_F$-algebra $\End(A,\iota,C,\lambda)$ is naturally an oriented $O_F$-Eichler order. The assignment $(A,\iota,C,\lambda)\mapsto\End(A,\iota,C,\lambda)$ induces a canonical isomorphism
\[\cA(\Delta)^i_{\fN_0,\fN_1}/\Cl(F)_{\fN_1}\xrightarrow{\sim}\cS(\Delta')_{\fN'}\]
where
\begin{enumerate}
  \item $\fN'=\fN_0\fN_1$ and $\Delta'=\Phi_F\cup\Delta\cup\{\fl\}$, if $d$ is odd and $i=0$;
  \item $\fN'=\fN_0\fN_1\fl$ and $\Delta'=\Phi_F\cup\Delta$, if $d$ is even and $i=0$;
  \item $\fN'=\fN_0\fN_1$ and $\Delta'=\Phi_F\cup\Delta$, if $d$ is even and $i=\pm1$.
\end{enumerate}
Moreover, the induced action of $\Gal(k/\dF_{\ell^d}\dF_{\ell^2})$ on $\cA(\Delta)^i_{\fN_0,\fN_1}/\Cl(F)_{\fN_1}$ is trivial.
\end{proposition}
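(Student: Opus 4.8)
\textbf{Proof proposal for Proposition \ref{apr:unramified}.}

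The plan is to reduce this to the bimodule classification of \Ssec\ref{ass:order}, specifically Proposition \ref{apr:bimodule}, by attaching to each quadruple $(A,\iota,C,\lambda)$ an admissible rank-$8$ bimodule over a suitable pair of oriented maximal orders, in the spirit of Ribet's original argument for the one-dimensional case. First I would fix an auxiliary superspecial object: a supersingular elliptic curve $E_0$ over $\dF_{\ell^2}$ and set $A_0=E_0^{\oplus 2d}$, so that by the Deligne--Shioda--Ogus theorem $A\simeq A_0\otimes_{\dF_{\ell^2}}k$ and $\End(A_{/k})\simeq\End(A_{0/\dF_{\ell^2}})$ is an order in $\Mat_{2d}(B_{\ell,\infty})$. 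The $O_F$-action $\iota$ together with a fixed oriented maximal order $\cE$ of discriminant $\Phi_F\cup\{\fl\}$ (or $\Phi_F$, in the even-degree pure case; this is where the arithmetic of Lemma \ref{ale:rapoport} enters) realizes $\Hom_{\cO}(\text{fixed model},A)$ as an $(\cE,\cO)$-bimodule $M$ of rank $8$ over $O_F$. The mixed/$\pm$-pure trichotomy of Definition \ref{ade:pure_unramified} translates precisely into the invariant $\underline r$ at the prime $\fl\in\Sigma$ (when $\fl$ is shared by the discriminants of $\cE$ and $\cO$), via the description of $M/\fl_\cE M$ in terms of how the two orientations act on $\Lie(A)$; Lemma \ref{ale:rapoport} is exactly what pins down which case of $\underline r$ occurs. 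The level data $C$ and $\lambda$ then refine $\cE$ to an $O_F$-Eichler order of the appropriate level $\fN_0\fN_1$ (or $\fN_0\fN_1\fl$), by taking the sub/quotient order preserving $C\times\IM(\lambda)$, just as in Definition \ref{ade:abelian_unramified}'s setup.

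Next I would check that $\End(A,\iota,C,\lambda)$ is an oriented $O_F$-Eichler order of the stated discriminant and level. The discriminant is computed locally: at $\fl$ and at archimedean places the endomorphism algebra of a superspecial abelian variety with $O_F$-action is ramified according to the parity of $d$ and the purity type, giving the three cases; at the primes dividing $\fN_0\fN_1$ the level-$\fN_0$ piece comes from the stabilizer of the cyclic subgroup $C$ and the level-$\fN_1$ piece from the stabilizer of the image of $\lambda$, each contributing the expected Eichler local condition; and away from $\Delta'$ and $\fN'$ it is maximal. The orientations $o_\fp$ for $\fp\in\Delta'$ are induced by restriction from the reduction maps on $\End(A_{/k})$, using $o_\fl$ for the finite ramified prime and the canonical orientations at archimedean places. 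Then I would invoke Proposition \ref{apr:bimodule} with the pair $(\cO,\cE)$: the assignment $(A,\iota,C,\lambda)\mapsto M$ identifies $\cA(\Delta)^i_{\fN_0,\fN_1}$ with $\cM(\cE,\cO)^{\underline r}_{\fN_0\fN_1}$ (the extra level $\fN_0\fN_1$ being coprime to $\Delta\cup\Sigma$), and the $\Cl(F)_{\fN_1}$-action on the left corresponds to twisting the bimodule by a rank-$1$ projective $O_F$-module equipped with its surjection to $O_F/\fN_1$ — compatibly with $\End(\fa.M)\simeq\End(M)$ — so that quotienting by $\Cl(F)_{\fN_1}$ on both sides yields the bijection $\cM(\cE,\cO)^{\underline r}_{\fN_0\fN_1}/\Cl(F)\xrightarrow{\sim}\cS(\Delta')_{\fN'}$ from \eqref{aeq:bimodule}. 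One must check that $\fN'(\underline r)$ in the notation of Proposition \ref{apr:bimodule} matches the claimed $\fN'$ — this is exactly the bookkeeping $r_\fl=1$ versus $r_\fl\in\{0,2\}$ corresponding to mixed versus pure.

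For the final assertion, that $\Gal(k/\dF_{\ell^d}\dF_{\ell^2})$ acts trivially on $\cA(\Delta)^i_{\fN_0,\fN_1}/\Cl(F)_{\fN_1}$: every object is defined over $\dF_{\ell^d}\dF_{\ell^2}$ up to the ambiguity captured precisely by $\Cl(F)_{\fN_1}$. Concretely, applying the $\ell$-Frobenius $\sigma$ to $(A,\iota,C,\lambda)$ produces $(A^{(\ell)},\iota^{(\ell)},C^{(\ell)},\lambda^{(\ell)})$, and via the model $A\simeq A_0\otimes k$ with $A_0/\dF_{\ell^2}$ and $\End(A_{/k})=\End(A_{0/\dF_{\ell^2}})$ the Frobenius pullback on the bimodule $M$ is an isomorphism of $(\cE,\cO)$-bimodules after twisting by a suitable class in $\Cl(F)$ (the class keeping track of how Frobenius permutes the connected components / moves the $O_F$-structure); since the orientations are chosen so that $o_\fl$ is $\Gal$-stable over $\dF_{\ell^d}$ and the purity type is a $\Gal$-invariant of $\Lie$, the induced endomorphism order is literally unchanged. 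Hence the class in $\cS(\Delta')_{\fN'}$ is fixed, i.e. the $\Gal$-action on the quotient is trivial. I expect the main obstacle to be the precise matching of the local invariant $\underline r$ at $\fl$ with the purity trichotomy of Lemma \ref{ale:rapoport} — in particular disentangling, in the even-degree case, which of the two pure types $\pm$ corresponds to $r_\fl=0$ versus $r_\fl=2$, and confirming that the $\sigma$ versus $\mathrm{id}$ in cases (2b)/(2c) is exactly the source of the (otherwise harmless) $\Cl(F)$-twist under Frobenius rather than an obstruction to $\Gal$-invariance.
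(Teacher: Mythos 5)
Your overall strategy is the paper's: reduce to the bimodule classification of Proposition \ref{apr:bimodule} via the superspecial model $A\simeq E_0^{\oplus 2d}\otimes k$, use Lemma \ref{ale:rapoport} to detect the trichotomy, and observe that the resulting map to $\cS(\Delta')_{\fN'}$ is visibly Galois-invariant. However, there is a genuine gap in the key step for $d$ even: you claim the mixed/$\pm$-pure trichotomy ``translates precisely into the invariant $\underline r$ at the prime $\fl\in\Sigma$,'' and later that the extra factor of $\fl$ in the level in case (2) is ``exactly the bookkeeping $r_\fl=1$ versus $r_\fl\in\{0,2\}$.'' But in this (unramified) proposition $\fl\notin\Delta$, so $\fl$ is not in the discriminant of $\cO$; and when $d$ is even, $F\otimes B_{\ell,\infty}$ is split at $\fl$, so $\fl$ is not in the discriminant of any maximal order $\cE$ either. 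Hence $\Sigma=\emptyset$, the invariants $\underline r$ are vacuous, and Proposition \ref{apr:bimodule} with $\Sigma=\emptyset$ always returns level $\fN(\underline r)=\fN$ — it can never produce the level $\fN_0\fN_1\fl$ of case (2) by your mechanism. (The $\underline r$-bookkeeping you describe is the mechanism of the \emph{ramified} case, Proposition \ref{apr:ramified}, where $\fl\in\Sigma$.)

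What is actually needed — and is the technical heart of the paper's argument — is a local analysis at $\ell$ when $d$ is even: $O_F\otimes\cE_{0,\ell}$ is then a non-maximal (level-$\fl$ Eichler) order in the split algebra $F_\fl\otimes\cE_{0,\ell}$, contained in exactly two maximal orders $\cE^\pm_\fl$; the $O_F\otimes\cE_{0,\ell}$-module attached to $(A,\iota)$ decomposes into two simple constituents $N^\pm$, and $(A,\iota)$ is $\pm$-pure iff both constituents are $N^\pm$, in which case $\End(A,\iota)$ is maximal and the bimodule extends to an $(\cO,\cE^\pm)$-bimodule, whereas in the mixed case $\End_{O_F\otimes\cE_{0,\ell}}(N^+\oplus N^-)$ has level $\fl$, which is where the extra $\fl$ in case (2) comes from (carried by the Eichler order $\cR\subset\End(M)$ in the pair $(M,\cR)\in\cM(\cO,\cE^+)_\fl$, not by $\underline r$). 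Your proposal omits this computation, and without it the identification of the three target sets $\cS(\Delta')_{\fN'}$ — in particular which pure type pairs with which maximal order $\cE^\pm$, and why the mixed case acquires level $\fl$ — cannot be completed. The remaining points (reduction from $(\fN_0,\fN_1)$ to $\fN=\fN_0\fN_1$ modulo the ray class group, the odd-$d$ case with $\cE=O_F\otimes\cE_0$ of discriminant $\Phi_F\cup\{\fl\}$, and the triviality of the Galois action) are in line with the paper.
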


\begin{proof}
Note that $\End(A,\iota,C,\lambda)$ contains $\iota(O_F)$ hence is an $O_F$-algebra via $\iota$.

\emph{Step 1.} Put $\fN=\fN_0\fN_1$. We have a canonical map from $\cA(\Delta)^i_{\fN_0,\fN_1}$ to $\cA(\Delta)^i_\fN$ sending $(A,\iota,C,\lambda)$ to $(A,\iota,C\times\IM(\lambda))$. It induces an isomorphism
\[\cA(\Delta)^i_{\fN_0,\fN_1}/\Cl(F)_{\fN_1}\xrightarrow{\sim}\cA(\Delta)^i_\fN/\Cl(F).\]
Therefore, we only need to study the set $\cA(\Delta)^i_\fN$.

\emph{Step 2.} We first assume $\fN=O_F$, and will write $\cA(\Delta)^i=\cA(\Delta)^i_{O_F}$ for short.

Fix a supersingular elliptic curve $E$ over $\Spec k$. Then $\cE_0\coloneqq\End(E)$ is a ($\dZ$-)maximal order of discriminant $\{\infty,\ell\}$. Moreover, $\cE_0$ is equipped with a homomorphism $\epsilon\colon\cE_0\to k$ through its action on $\Lie(E)$, whose image is contained in $\dF_{\ell^2}$. Put $\cA(\Delta)=\bigcup_{\{-1,0,1\}}\cA(\Delta)^i$. It is canonically isomorphic to the set of isomorphism classes of homomorphisms $\cO\to\Mat_{2d}(\cE_0)$. Now we construct the isomorphism case by case.

For (1), put $\cE=O_F\otimes\cE_0$ which is an $O_F$-maximal order of discriminant $\Phi_F\cup\{\fl\}$. We have a homomorphism $\epsilon_\fl\colon\cE\to\dF_{\ell^{2d}}=\dF_{\ell^d}\dF_{\ell^2}\subset k$ induced by $\epsilon$. In other words, $\cE$ is an oriented $O_F$-maximal order of discriminant $\Phi_F\cup\{\fl\}$. There is a one-to-one correspondence between the set $\cA(\Delta)^0=\cA(\Delta)$ and the set of isomorphism classes of (admissible) $(\cO,\cE)$-bimodules of rank $8$. In other words, we have an isomorphism $\cA(\Delta)^0\xrightarrow{\sim}\cM(\cO,\cE)$ sending $(A,\iota)$ to $M_{(A,\iota)}$, compatible with the $\Cl(F)$-action. Moreover, we have $\End(A,\iota)\simeq\End(M_{(A,\iota)})$ and we endow $\End(A,\iota)$ with the orientation from the latter. Therefore, the proposition follows from Proposition \ref{apr:bimodule}.

Now we consider (2) and (3). In particular, $d$ is even. For an element $(A,\iota)\in\cA(\Delta)$, let $f_{(A,\iota)}\colon\cO\to\Mat_{2d}(\cE_0)$ be the corresponding homomorphism. Then the commutator of $f_{(A,\iota)}(\cO)$ in $\Mat_{2d}(\cE_0)$ is canonically isomorphic to $\End(A,\iota)$. It is an $O_F$-Eichler order of discriminant $\Phi_F\cup\Delta$ whose level is a power of $\fl$. Therefore, the question for determining $\End(A,\iota)$ is local at $\ell$.

As $\cO\otimes\dZ_\ell\simeq\Mat_2(O_{F_\fl})$, the homomorphism $f_{(A,\iota)}\otimes\dZ_\ell$ induces another homomorphism $g_{(A,\iota)}\colon O_{F_\fl}\to\Mat_d(\cE_{0,\ell})$, where $\cE_{0,\ell}\coloneqq\cE_0\otimes\dZ_\ell$ is the maximal order in the division quaternion $\dZ_\ell$-algebra. The homomorphism $g_{(A,\iota)}$ amounts to an $O_F\otimes\cE_{0,\ell}$-module $N_{(A,\iota)}$ that is free of rank $4d$ as a $\dZ_\ell$-module. Such module is semisimple, which is a direct sum of (torsion-free) simple modules. Up to isomorphism, there are two simple torsion-free $O_F\otimes\cE_{0,\ell}$-modules $N^+$ and $N^-$: the underlying $\cE_{0,\ell}$-module of both $N^\pm$ are $\cE_{0,\ell}^{\oplus d/2}$, and the induced homomorphism $O_F\to\Mat_{d/2}(\cE_{0,\ell})\xrightarrow{\epsilon}\Mat_{d/2}(\dF_{\ell^2})$ has trace $\Tr_{\dF_{\ell^d}/\dF_{\ell^2}}o_\fl(r)$ (resp.\ $\Tr_{\dF_{\ell^d}/\dF_{\ell^2}}\sigma(o_\fl(r))$) for every $r\in O_F$ in the case of $N^+$ (resp.\ $N^-$). Comparing with Definition \ref{ade:abelian_unramified}, we know that $N_{(A,\iota)}$ is isomorphic to $N^+\oplus N^-$ (resp.\ $(N^+)^{\oplus 2}$, and $(N^-)^{\oplus 2}$) if and only if $(A,\iota)$ is mixed (resp.\ $+$-pure, and $-$-pure). Note that we have the isomorphism
\begin{align}\label{aeq:local}
\End(A,\iota)\otimes\dZ_\ell\simeq\End_{O_F\otimes\cE_{0,\ell}}(N_{(A,\iota)}).
\end{align}
Let $\cE^\pm_\fl$ be the two maximal orders in $F\otimes\cE_{0,\ell}$ containing $O_F\otimes\cE_{0,\ell}$ such that the action of $\cE_0$ on the one-dimensional $O_F/\fl$ (which is isomorphic to $\dF_{\ell^d}$ via $o_\fl$) vector space $\cE^+_\fl/O_F\otimes\cE_{0,\ell}$ (resp.\ $\cE^-_\fl/O_F\otimes\cE_{0,\ell}$) via left multiplication is given by the homomorphism $\epsilon$ (resp.\ $\sigma\circ\epsilon$). Let $\cE^\pm$ be the two maximal orders in $F\otimes\cE_0$ containing $O_F\otimes\cE_0$ such that $\cE^\pm\otimes\dZ_\ell=\cE^\pm_\fl$.

For (3), take an element $(A,\iota)$ in $\cA(\Delta)^{\pm 1}$. By \eqref{aeq:local}, we have \[\End(A,\iota)\otimes\dZ_\ell\simeq\End_{O_F\otimes\cE_{0,\ell}}(N_{(A,\iota)})
\simeq\Mat_2(\End_{O_F\otimes\cE_{0,\ell}}(N^\pm))=\Mat_2(O_{F_\fl}).\]
Therefore, $\End(A,\iota)$ is an $O_F$-maximal order of discriminant $\Phi_F\cup\Delta$.

We claim that the commutator of $\End_{O_F\otimes\cE_{0,\ell}}(N_{(A,\iota)})$ in $\End_{O_{F_\fl}}(N_{(A,\iota)})$, which is a subalgebra of $F\otimes\cE_{0,\ell}$, coincides with $\cE^\pm_\fl$. Without lost of generality, we only consider the $+$-pure case. Note that this commutator is same as the commutator $R$ of $\End_{O_F\otimes\cE_{0,\ell}}(N^+)$ in $\End_{O_{F_\fl}}(N^+)$. Let $W$ be the unique \'{e}tale $\dZ_\ell$-subalgebra of $O_{F_\fl}$ of degree $2$. Write $\cE_{0,\ell}=W\oplus W\Pi$ with $\Pi^2=\ell$ and such that $\epsilon\res_W=o_\fl\res_W\colon W\to\dF_{\ell^2}$. We may assume that the $O_{F_\fl}$-action preserves $W^{\oplus d/2}$ and $(W\Pi)^{\oplus d/2}$ respectively, such that the subalgebra $W$ acts by the scalar multiplication. This is possible by the definition of $N^+$. Therefore, $R$ is identified with $\Mat_2(O_{F_\fl})$ if we use the basis $\{(1,\dots,1),(\Pi,\dots,\Pi)\}$. Under the same basis, we have \[O_F\otimes\cE_{0,\ell}=\left\{\left(
                    \begin{array}{cc}
                      a & b \\
                      c & d \\
                    \end{array}
                  \right)
         \in\Mat_2(O_{F_\fl})\res b\in\fl\right\},\]
and that the homomorphism $W\subset\cE_{0,\ell}\to\Mat_2(O_{F_\fl})$ is given by the formula
\[w\mapsto\left(
                    \begin{array}{cc}
                      w &  \\
                       & \sigma(w) \\
                    \end{array}
                  \right)
         \in\Mat_2(O_{F_\fl})\]
in which $\sigma$ is understood as the canonical lift of the $\ell$-Frobenius map. This implies $R=\cE^\pm_\fl$.

The homomorphism $f_{(A,\iota)}$ amounts to an $(\cO,O_F\otimes\cE_0)$-bimodule $M_{(A,\iota)}$ that is a projective $O_F$-module of rank $8$. The above claim implies that if $(A,\iota)$ is $\pm$-pure, then the action of $\cE^\pm$ on $M_{(A,\iota)}\otimes\dQ$ preserves $M_{(A,\iota)}$; in other words, $M_{(A,\iota)}$ is an (admissible) $(\cO,\cE^\pm)$-bimodules of rank $8$. The remaining argument is same as in (1).

For (2),  take an element $(A,\iota)$ in $\cA(\Delta)^0$. By \eqref{aeq:local}, we have \[\End(A,\iota)\otimes\dZ_\ell\simeq\End_{O_F\otimes\cE_{0,\ell}}(N_{(A,\iota)})
\simeq\End_{O_F\otimes\cE_{0,\ell}}(N^+\oplus N^-),\]
which is of level $\fl$. Therefore, $\End(A,\iota)\simeq\End(M_{(A,\iota)})$ is an $O_F$-Eichler order of discriminant $\Phi_F\cup\Delta$ and level $\fl$. Put $M^+_{(A,\iota)}=M_{(A,\iota)}\otimes_{O_F\otimes\cE_0}\cE^+$. Then $M^+_{(A,\iota)}$ belongs to $\cM(\cO,\cE^+)$. Thus the assignment $(A,\iota)\mapsto(M^+_{(A,\iota)},\End(A,\iota))$ induces a bijection from $\cA(\Delta)^0$ to $\cM(\cO,\cE^+)_\fl$. Therefore, the proposition follows from Proposition \ref{apr:bimodule}.

\emph{Step 3.} Finally we consider a general ideal $\fN$. In case
\begin{enumerate}
  \item the assignment $(A,\iota,C)\mapsto (\End(A,\iota)\cap\End(A/C,\iota),M_{(A,\iota)})$ induces a bijection from $\cA(\Delta)_\fN^0$ to $\cM(\cO,\cE)_\fN$;

  \item the assignment $(A,\iota,C)\mapsto (\End(A,\iota)\cap\End(A/C,\iota),M^+_{(A,\iota)})$ induces a bijection from $\cA(\Delta)_\fN^0$ to $\cM(\cO,\cE^+)_{\fN\fl}$;

  \item the assignment $(A,\iota,C)\mapsto (\End(A,\iota)\cap\End(A/C,\iota),M_{(A,\iota)})$ induces a bijection from $\cA(\Delta)_\fN^{\pm1}$ to $\cM(\cO,\cE^\pm)_\fN$.
\end{enumerate}
Then the isomorphism follows from Proposition \ref{apr:bimodule}.

\emph{Step 4.} Note that the action of $\Gal(k/\dF_{\fl^2})$ on $\cA(\Delta)^i_{\fN_0,\fN_1}/\Cl(F)_{\fN_1}$ does not change values in $\cS(\Delta')_{\fN'}$ under the previous construction. Therefore, such action is trivial. The proposition is proved.
\end{proof}

\begin{remark}\label{are:erratum}
In \cite{Liu}, we consider the (coarse) moduli scheme $\cX^\natural_\fM$ (see \cite{Liu}*{\Sec 2.4}). However, in \cite{Liu}*{Proposition 2.21}, we overlooked the factor $\Cl(F)$. Instead, we should have canonical isomorphisms $\pi_0(\cX^?_{\fM;\dF_{\ell^2}})/\Cl(F)\simeq\cS_\fM$ for $?=\bullet,\circ$ and $\pi_0(\cX^{\r{ssp}}_{\fM;\dF_{\ell^2}})/\Cl(F)\simeq\cS_{\fM\ell}$, by Proposition \ref{apr:unramified} (with $d=2$) and the original proof of \cite{Liu}*{Proposition 2.21}. Such change will not affect the main results in \cite{Liu} (Theorem 4.10, 4.11, and all in \Sec1) and their proofs since the automorphic representation $\pi$ of $\Res_{F/\dQ}\GL_2(\dA)$ has trivial central character, as long as we require $p\nmid|\Cl(F)|$ at the beginning in \cite{Liu}*{Assumption 3.8}.
\end{remark}

\subsection{Parameterizing abelian varieties: ramified case}

Let $d$ be the degree of $F$. Suppose that we are given an oriented $O_F$-maximal order $\cO$ of discriminant $\Delta$ containing no archimedean places, and a rational prime $\ell$ that is inert in $O_F$ and such that $\fl\in\Delta$, where $\fl$ is the unique prime of $F$ above $\ell$. In particular, we have a homomorphism $o_\fl\colon\cO\to\dF_{\ell^{2d}}$ coming from the orientation at $\fl$. Let $k$ be an algebraically closed field containing $\dF_{\ell^{2d}}$.

The following definition generalizes the one in \cite{Rib89}*{\Sec 4}.

\begin{definition}\label{ade:exceptional}
Let $(A,\iota)$ be a pair where $A$ is an abelian variety over $\Spec k$ and $\iota\colon\cO\to\End(A_{/k})$ is an injective homomorphism. We say that $(A,\iota)$ is \emph{exceptional} if the action of $\cO$ on $\Lie(A_{/k})$ factorizes through the maximal ideal of $\cO$ containing $\fl$.
\end{definition}

We choose a uniformizer $\Pi$ of $\cO\otimes\dZ_\ell$ such that $\Pi^2=\ell$, and a $O_{F_\fl}$-subalgebra $W\subset\cO\otimes\dZ_\ell$ that is \'{e}tale of degree $2$.

\begin{lem}\label{ale:rapoport_ramified}
Let $A$ be an abelian variety of dimension $2d$ over $\Spec k$. Let $\iota\colon\cO\to\End(A_{/k})$ be an injective homomorphism such that $(A,\iota)$ is exceptional. Then exactly one of the following holds:
\begin{enumerate}
  \item for every $w\in W$, we have $\tr(\iota(w)\res\Lie(A_{/k}))=\Tr_{\dF_{\ell^{2d}}/\dF_\ell}o_\fl(w)$;
  \item for every $w\in W$, we have $\tr(\iota(w)\res\Lie(A_{/k}))=2\Tr_{\dF_{\ell^{2d}}/\dF_{\ell^2}}o_\fl(w)$;
  \item for every $w\in W$, we have $\tr(\iota(w)\res\Lie(A_{/k}))=2\Tr_{\dF_{\ell^{2d}}/\dF_{\ell^2}}\sigma(o_\fl(w))$.
\end{enumerate}
Note that $\iota$ extends to an action of $\cO\otimes\dZ_\ell$ on $\Lie(A_{/k})$.
\end{lem}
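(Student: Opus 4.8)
The plan is to mimic the structure of Lemma \ref{ale:rapoport} and its proof, but to carry out the analysis locally at $\fl$ rather than globally. Since $(A,\iota)$ is exceptional, $\cO$ acts on $\Lie(A_{/k})$ through the residue ring $\cO/\fl_\cO\cO$, where $\fl_\cO$ is the maximal ideal of $\cO$ over $\fl$. As $\fl\in\Delta$, the quaternion algebra $\cO\otimes\dZ_\ell$ is the maximal order in the division quaternion $\dZ_\ell$-algebra with $F_\fl$-coefficients, so $\cO/\fl_\cO\cO$ is a copy of $\dF_{\ell^{2d}}$; the action of $\cO$ on $\Lie(A_{/k})$ thus amounts to an $\dF_{\ell^{2d}}$-vector space structure on (a quotient of) $\Lie(A_{/k})$ compatible with $o_\fl$ up to the Galois action of $\Gal(\dF_{\ell^{2d}}/\dF_\ell)$. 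The key computational input is that $\Lie(A_{/k})$ is $2d$-dimensional over $k$ and that $\Pi$ annihilates it (this is exactly the exceptionality condition, spelled out via $\fl_\cO$), so only the $W$-action is nontrivial, and $W/\fl W\simeq\dF_{\ell^2}$ embeds into $\dF_{\ell^{2d}}$.

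First I would reduce to the superspecial/superspecial-like situation: by the Deligne--Shioda--Ogus theorem an abelian variety of dimension $2d$ over $k$ with an action of the division quaternion order $\cO\otimes\dZ_\ell$ making it exceptional is isogenous, hence (after fixing a supersingular elliptic curve $E_0/\dF_{\ell^2}$) its $\ell$-divisible group and Lie algebra are governed by modules over $O_F\otimes\cE_{0,\ell}$, exactly as in Step 2 of the proof of Proposition \ref{apr:unramified}. Then I would classify the possible $\cO\otimes\dZ_\ell$-module structures on $\Lie(A_{/k})$: since $\Pi$ acts as $0$ and $W$ acts through $W/\fl W\simeq\dF_{\ell^2}$, the $2d$-dimensional $k$-space $\Lie(A_{/k})$ decomposes according to the two $\dF_{\ell^2}$-algebra homomorphisms $W/\fl W\to k$, namely $o_\fl\res_W$ and $\sigma\circ(o_\fl\res_W)$; write $n^+$ and $n^-$ for the multiplicities, so $n^++n^-=2d$. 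The trace of $\iota(w)$ on $\Lie(A_{/k})$ is then $n^+\cdot\bar{o_\fl}(w)+n^-\cdot\sigma(\bar{o_\fl}(w))$ summed appropriately, i.e. it equals (after tracing down to $\dF_\ell$) a combination $n^+\Tr_{\dF_{\ell^2}/\dF_\ell}o_\fl(w)+n^-\Tr_{\dF_{\ell^2}/\dF_\ell}\sigma(o_\fl(w))$, which one rewrites using $\Tr_{\dF_{\ell^{2d}}/\dF_\ell}=\Tr_{\dF_{\ell^2}/\dF_\ell}\circ\Tr_{\dF_{\ell^{2d}}/\dF_{\ell^2}}$ and $\Tr_{\dF_{\ell^{2d}}/\dF_\ell}=2\Tr_{\dF_{\ell^{2d}}/\dF_{\ell^2}}$ on the subfield $\dF_{\ell^2}$ appropriately. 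The crystalline/Dieudonné constraint — really the structure theory of the simple $O_F\otimes\cE_{0,\ell}$-modules $N^\pm$ of $\dZ_\ell$-rank matching $\Lie$ dimension $d/2$ as in Proposition \ref{apr:unramified}(2,3) — forces $(n^+,n^-)$ to be one of $(d,d)$, $(2d,0)$, or $(0,2d)$; these give respectively cases (1), (2), (3) of the statement. The dichotomy "exactly one holds" follows because these three multiplicity vectors are pairwise distinct and the decomposition is canonical.

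The main obstacle I anticipate is making precise why the admissible multiplicity vectors are exactly $\{(d,d),(2d,0),(0,2d)\}$ and not, say, $(d+1,d-1)$. This is the ramified analogue of the fact that in the unramified case the Lie algebra splits evenly in the "mixed" case; here it should come from the requirement that $\Lie(A_{/k})$ together with its Verschiebung/Frobenius structure, and the compatibility with the $\Pi$-action (which swaps the two $W/\fl W$-eigenspaces after twisting, since $\Pi w = \sigma(w)\Pi$ in $\cO\otimes\dZ_\ell$), constrains the decomposition. Concretely, $\Pi$ conjugates the $o_\fl\res_W$-isotypic part of the tangent space of the $p$-divisible group into the $\sigma\circ o_\fl\res_W$-part on the dual side, and the self-duality/polarizability of $A$ (or simply the fact that $A$ is isogenous to a power of $E_0$ and $\cO\otimes\dZ_\ell$-linearity) pins $n^+-n^-\in\{-2d,0,2d\}$. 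I would handle this either by a direct Dieudonné-module computation over $\dZ_{\ell^{2d}}$ following Rapoport's method — writing $\cE_{0,\ell}=W\oplus W\Pi$ as in the proof of Proposition \ref{apr:unramified}(3) and analyzing the $O_{F_\fl}\otimes\cE_{0,\ell}$-module $N_{(A,\iota)}$ — or, more economically, by invoking the classification of simple torsion-free $O_F\otimes\cE_{0,\ell}$-modules already established in that proof and observing that exceptionality of $(A,\iota)$ is precisely the condition that $N_{(A,\iota)}$ is isotypic of type $N^+\oplus N^-$, $(N^+)^{\oplus 2}$, or $(N^-)^{\oplus 2}$; the trace formula then reads off immediately from the defining trace property of $N^\pm$ stated there. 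The rest is the bookkeeping of the trace identities among $\dF_{\ell^{2d}}$, $\dF_{\ell^2}$, $\dF_\ell$, which is routine.
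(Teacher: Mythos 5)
There is a genuine gap, in two places. First, the crucial step in the paper's argument is to show that $A$ is \emph{superspecial}: the statement, unlike Lemma \ref{ale:rapoport}, does not assume this, and the paper gets it from the argument of \cite{Rib89}*{Proposition 4.2}, which uses exceptionality together with the fact that $\cO\otimes\dZ_\ell$ is the maximal order in a \emph{division} quaternion algebra over $F_\fl$. Your reduction instead invokes Deligne--Shioda--Ogus plus the word ``isogenous'', but that theorem applies to abelian varieties already known to be superspecial, it does not prove $A$ is supersingular, and isogeny would not suffice anyway since $\Lie(A_{/k})$ is not an isogeny invariant. Superspecialness is exactly what lets one identify $\End(A_{/k})$ with $\End(E_0^{\oplus 2d}/\dF_{\ell^2})$ and hence descend $\Lie$ to a $2d$-dimensional $\dF_{\ell^2}$-vector space; it is this descent (making $\Lie$ a module over $\dF_{\ell^{2d}}\otimes_{\dF_\ell}\dF_{\ell^2}\simeq\dF_{\ell^{2d}}\times\dF_{\ell^{2d}}$, so only two multiplicities $a+b=2$ are possible) that cuts the a priori possibilities down to the three cases. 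Without it, nothing in your argument rules out arbitrary multiplicity patterns.

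Second, your eigenspace bookkeeping is off: $W$ is the unramified quadratic extension of $O_{F_\fl}$, so $W/\ell W\simeq\dF_{\ell^{2d}}$, not $\dF_{\ell^2}$, and $\Lie(A_{/k})$ decomposes into $2d$ eigenspaces indexed by $\sigma^i\circ o_\fl$, $0\le i<2d$, not two. Since distinct embeddings are linearly independent characters, the trace function determines the full multiplicity vector $(n_0,\dots,n_{2d-1})$, and the lemma asserts it is $(1,\dots,1)$, $(2,0,2,0,\dots)$ or $(0,2,0,2,\dots)$; the coarse pair $(n^+,n^-)$ you track does not determine this (e.g.\ for $d=3$ the vector $(2,1,0,1,1,1)$ has $n^+=n^-=d$ but is not case (1)), so even granting your claim $(n^+,n^-)\in\{(d,d),(2d,0),(0,2d)\}$ the statement does not follow. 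Relatedly, the modules $N^\pm$ from the proof of Proposition \ref{apr:unramified} belong to the \emph{unramified} situation, where $\cO\otimes\dZ_\ell\simeq\Mat_2(O_{F_\fl})$ and one reduces to $O_{F_\fl}$-actions on $\Mat_d(\cE_{0,\ell})$; in the present ramified case the local order is a division order and the relevant classification is the one in \cite{Rib89}*{\Sec 4} (cf.\ Proposition \ref{apr:ramified}), so they cannot be invoked verbatim as the ``defining trace property'' from which the result ``reads off''.
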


\begin{proof}
The same proof of \cite{Rib89}*{Proposition 4.2} shows that $A$ must be superspecial. Then the rest of proof is same as Lemma \ref{ale:rapoport}.
\end{proof}

\begin{definition}\label{ade:pure_ramified}
Let $(A,\iota)$ be as in Lemma \ref{ale:rapoport_ramified}. We say that $(A,\iota)$ is \emph{mixed} if it is in case (1), \emph{$+$-pure} if it is in case (2) and, \emph{$-$-pure} if it is in case (3).
\end{definition}

\begin{definition}\label{ade:abelian_ramified}
Let $\fN_0$ and $\fN_1$ be two ideals of $O_F$ such that $\fN_0$, $\fN_1$ and $\Delta$ are mutually coprime. We define $\cA(\Delta)^0_{\fN_0,\fN_1}$ (resp.\ $\cA(\Delta)^{\pm1}_{\fN_0,\fN_1}$) to be the set of isomorphism classes of quadruples $(A,\iota,C,\lambda)$ where
\begin{itemize}
  \item $A$ is an abelian variety of dimension $2d$ over $\Spec k$;
  \item $\iota\colon\cO\to\End(A_{/k})$ is an injective homomorphism such that $(A,\iota)$ is exceptional and mixed (resp.\ $\pm$-pure);
  \item $C\subset A(k)$ is an $\cO$-submodule that is isomorphic to $(O_F/\fN_0)^{\oplus 2}$ (Remark \ref{are:cyclic});
  \item $\lambda\colon(O_F/\fN_1)^{\oplus 2}\to A(k)$ is an $\cO$-equivariant injective homomorphism (Remark \ref{are:cyclic}).
\end{itemize}
We simply write $\cA(\Delta)^i_\fN$ for $\cA(\Delta)^i_{\fN,O_F}$ for $i\in\{-1,0,+1\}$.
\end{definition}

Similar to the situation in \ref{ass:unramified}, the ray class group $\Cl(F)_{\fN_1}$ acts on $\cA(\Delta)^i_{\fN_0,\fN_1}$, and we have a subalgebra $\End(A,\iota,C,\lambda)$ of $\End(A_{/k})$ for every element $(A,\iota,C,\lambda)\in\cA(\Delta)^i_{\fN_0,\fN_1}$.

\begin{proposition}\label{apr:ramified}
Suppose that $d$ is odd. The $O_F$-algebra $\End(A,\iota,C,\lambda)$ is naturally an oriented $O_F$-Eichler order. The assignment $(A,\iota,C,\lambda)\mapsto\End(A,\iota,C,\lambda)$ induces a canonical isomorphism
\[\cA(\Delta)^i_{\fN_0,\fN_1}/\Cl(F)_{\fN_1}\xrightarrow{\sim}\cS(\Phi_F\cup\Delta\setminus\{\fl\})_{\fN'}\]
where $\fN'=\fN_0\fN_1$ (resp.\ $\fN'=\fN_0\fN_1\fl$) if $i=\pm 1$ (resp.\ $i=0$). Moreover, the induced action of $\Gal(k/\dF_{\fl^2})$ on $\cA(\Delta)^i_{\fN_0,\fN_1}/\Cl(F)_{\fN_1}$ is trivial.
\end{proposition}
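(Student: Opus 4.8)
The strategy is to mimic the proof of Proposition \ref{apr:unramified}, replacing the unramified bimodule analysis of \Ssec\ref{ass:unramified} by the ramified one set up in Lemma \ref{ale:rapoport_ramified} and Definition \ref{ade:pure_ramified}, and to quote Proposition \ref{apr:bimodule} at the end. First I would reduce, exactly as in Step 1 of the proof of Proposition \ref{apr:unramified}, to the case $\fN_1 = O_F$ by sending $(A,\iota,C,\lambda)$ to $(A,\iota,C\times\IM(\lambda))$; this gives an isomorphism $\cA(\Delta)^i_{\fN_0,\fN_1}/\Cl(F)_{\fN_1} \xrightarrow{\sim} \cA(\Delta)^i_{\fN_0\fN_1}/\Cl(F)$, so it suffices to treat $\cA(\Delta)^i_\fN$ with $\fN = \fN_0\fN_1$. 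Next I would handle $\fN = O_F$: by the superspeciality forced by (the proof of) \cite{Rib89}*{Proposition 4.2} (invoked in Lemma \ref{ale:rapoport_ramified}), fixing a supersingular elliptic curve $E/k$ with $\cE_0 = \End(E)$ a maximal $\dZ$-order of discriminant $\{\infty,\ell\}$, each exceptional $(A,\iota)$ corresponds to a homomorphism $\cO\to\Mat_{2d}(\cE_0)$, hence to an $(\cO,\cE)$-bimodule for a suitable oriented $O_F$-maximal order $\cE$; since $\fl\in\Delta$ now sits in the discriminant of $\cO$, the relevant second maximal order $\cE$ has discriminant $\Phi_F\cup\Delta\setminus\{\fl\}$ (the place $\fl$ is common, hence lands in $\Sigma$ rather than in the new $\Delta$ of the bimodule setup). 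I would then identify the endomorphism ring $\End(A,\iota)$ with the bimodule endomorphism ring and apply Proposition \ref{apr:bimodule}.

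The core local computation is at $\ell$, parallel to Step 2 of Proposition \ref{apr:unramified}. Using $\cO\otimes\dZ_\ell = W\oplus W\Pi$ with $\Pi^2=\ell$ and the fact that $(A,\iota)$ is exceptional, the $\dZ_\ell$-action of $\cO\otimes\dZ_\ell$ on the relevant lattice decomposes into a direct sum of the two simple torsion-free modules $N^+, N^-$ distinguished by whether $\tr$ on $W$ reads off $o_\fl$ or $\sigma\circ o_\fl$ — this is precisely the trichotomy of Lemma \ref{ale:rapoport_ramified}, and matches mixed/$+$-pure/$-$-pure of Definition \ref{ade:pure_ramified}. Because $d$ is odd, $N^\pm$ has underlying $\cE_{0,\ell}$-module of \emph{odd} multiplicity, which is exactly what forces the endomorphism ring in the pure cases to be the full $\Mat_2$ (hence $\End(A,\iota)$ a maximal order with $\fl$ removed from the discriminant), and in the mixed case to have level exactly $\fl$; this is where the parity hypothesis is used, and the whole point is that when $d$ is odd there is no ``extra'' mixed-diagonal case of the kind $(2a)$ in Lemma \ref{ale:rapoport} — the mixed case here is the genuine $N^+\oplus N^-$ one. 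I would then, as in Step 3, incorporate the level structure $C$ via $\End(A,\iota)\cap\End(A/C,\iota)$ to land in $\cM(\cO,\cE^?)_{\fN}$ or $\cM(\cO,\cE^?)_{\fN\fl}$ according to the case, and conclude from Proposition \ref{apr:bimodule} that $\fN' = \fN_0\fN_1$ for $i=\pm1$ and $\fN' = \fN_0\fN_1\fl$ for $i=0$, with discriminant $\Phi_F\cup\Delta\setminus\{\fl\}$ throughout. Finally, the triviality of the $\Gal(k/\dF_{\fl^2})$-action follows as in Step 4: Galois acts on the abelian varieties but does not change the isomorphism class of the associated oriented Eichler order, so the induced action on the quotient by $\Cl(F)_{\fN_1}$ is trivial.

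I expect the main obstacle to be the bookkeeping at $\ell$ in the $d$-odd pure cases: verifying carefully that the commutator of $\End_{O_F\otimes\cE_{0,\ell}}(N^\pm)$ inside $\End_{O_{F_\fl}}(N^\pm)$ is the right maximal order $\cE^\pm_\fl$, and that the $\Pi$-action interchanges the two $W$-isotypic pieces correctly, so that the resulting $(\cO,\cE^\pm)$-bimodule is admissible of rank $8$ with the right invariants $\underline r$. This is the ramified analogue of the ``claim'' in Step 2 of the proof of Proposition \ref{apr:unramified} and the computation in \cite{Rib89}*{\Sec 4}; the odd-degree hypothesis is what makes the multiplicities work out, and one must check that the orientation data $\{o_\fp\}$ on $\End(A,\iota,C,\lambda)$ obtained by restriction is exactly the one matching $\cS(\Phi_F\cup\Delta\setminus\{\fl\})_{\fN'}$ under \eqref{aeq:bimodule}. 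Everything else — the reduction to $\fN_1=O_F$, the passage between $\cA$ and bimodules, and the Galois-triviality — is formally identical to the unramified case and should go through verbatim.
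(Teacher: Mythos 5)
Your overall route is the paper's: reduce to level $\fN=\fN_0\fN_1$, convert $(A,\iota,C)$ into an $(\cO,\cE)$-bimodule with $\cE$ built from a supersingular elliptic curve, run Ribet's \S 4 analysis at the common prime $\fl$ with invariant $r_\fl=i+1\in\{0,1,2\}$, and finish with Proposition \ref{apr:bimodule} together with the Step-4 argument for the triviality of the Galois action. However, two points need repair. First, $\cE=O_F\otimes\cE_0$ has discriminant $\Phi_F\cup\{\fl\}$, not $\Phi_F\cup\Delta\setminus\{\fl\}$: the latter set is the symmetric difference of the discriminants of $\cO$ and $\cE$ (the bimodule-theoretic $\Delta$), hence the discriminant of the resulting Eichler orders and of the target $\cS(\Phi_F\cup\Delta\setminus\{\fl\})_{\fN'}$, while the common set is $\Sigma=\{\fl\}$. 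Your parenthetical asserts exactly this, but the sentence it qualifies contradicts it; as written the bookkeeping of $\Sigma$ versus $\Delta$ would not make sense.

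Second, and more substantively, you have mislocated the use of the hypothesis that $d$ is odd. It has nothing to do with an ``odd multiplicity of $N^\pm$'' nor with excluding an analogue of case (2a) of Lemma \ref{ale:rapoport}: Lemma \ref{ale:rapoport_ramified} gives its trichotomy for every $d$. The parity is used exactly once, namely to guarantee that $(\cE_0\otimes\dQ)\otimes_\dQ F$ remains division at $\fl$ (because $[F_\fl:\dQ_\ell]=d$ is odd), so that $O_F\otimes\cE_0$ is an $O_F$-maximal order of discriminant $\Phi_F\cup\{\fl\}$ and $\fl$ is genuinely common to the two discriminants. This is precisely the fact your local analysis presupposes when you place $\fl$ in $\Sigma$ and read off the level from the invariant; for $d$ even, $\cE$ is split (and non-maximal as $O_F\otimes\cE_0$) at $\fl$, $\fl\notin\Sigma$, and the conclusion $\fN'=\fN$ versus $\fN\fl$ drawn from Proposition \ref{apr:bimodule} no longer applies in this form. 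With these two corrections, the remaining steps --- the bijection between $\cA(\Delta)^i_\fN$ and $\cM(\cO,\cE)^{i+1}_\fN$ as in Ribet \S 4, the identification $\fN(\underline{r})=\fN\fl$ exactly when $r_\fl=1$ (that is, $i=0$), and the Galois triviality --- go through as you describe and coincide with the paper's proof.
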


\begin{proof}
Put $\fN=\fN_0\fN_1$. Same to the proof of Proposition \ref{apr:unramified}, we have a canonical isomorphism
$\cA(\Delta)^i_{\fN_0,\fN_1}/\Cl(F)_{\fN_1}\xrightarrow{\sim}\cA(\Delta)^i_\fN/\Cl(F)$. Fix a supersingular elliptic curve $E$ over $\Spec k$. Then $\cE_0\coloneqq\End(E)$ is a maximal order of discriminant $\{\infty,\ell\}$, equipped with a homomorphism $\epsilon\colon\cE_0\to\dF_{\ell^2}$. Put $\cE=O_F\otimes\cE_0$ which is an $O_F$-maximal order of discriminant $\Phi_F\cup\{\fl\}$ (as $d$ is odd). We have a homomorphism $\epsilon_\fl\colon\cE\to\dF_{\ell^{2d}}$ induced by $\epsilon$. For the pair $(\cO,\cE)$, the corresponding set $\Sigma$ equals $\{\fl\}$. Thus, we have sets $\cM(\cO,\cE)^r_\fN$ for $r=0,1,2$. By the same argument in \cite{Rib89}*{\Sec 4}, we have a canonical bijection between $\cA(\Delta)^i_\fN$ and $\cM(\cO,\cE)^{i+1}_\fN$. Then the proposition follows from Proposition \ref{apr:bimodule}.
\end{proof}

\section{Stratification on Hilbert modular schemes with bad reduction}
\label{ss:b}

In this appendix, we improve (see Remark \ref{bre:zink}) the work of Zink \cite{Zin82} on the global description of some pluri-nodal integral model of Hilbert modular varieties at certain bad primes. We also prove some extra results which are used in \Sec\ref{ss:3}. We use the ideal in \cite{Hel12} and \cite{TX16}, known as the isogeny trick.

We fix a totally real number field $F$, with the ring of integers $O_F$ and the set $\Phi_F$ of all archimedean places.

\subsection{Hilbert modular schemes}
\label{bss:hilbert}

\begin{definition}[Absolutely neat ideal]\label{bde:absolutely_neat}
An ideal $\fr$ of $O_F$ is said to be \emph{absolutely neat} if for \emph{every} $O_F$-Eichler order $\cR$ of discriminant $\Delta$ to which $\fr$ is coprime and level $\fr$, together with a surjective $O_F$-linear homomorphism $\cR\to O_F/\fr$, we have
\begin{itemize}
  \item $\Ker[\cR^\times\to(O_F/\fr)^\times]=1$ if $\Phi_F\subset\Delta$;

  \item $\Ker[\cR^\times\to(O_F/\fr)^\times]$ acts freely on the symmetric hermitian domain attached to $\cR\otimes\dQ$ if $\Phi_F\not\subset\Delta$.
\end{itemize}
\end{definition}

\begin{remark}\label{bre:absolutely_neat}
Given an arbitrary finite set $\Box$ of primes of $F$, there exists an absolutely neat ideal that is coprime to $\Box$. If $\fr$ is an absolutely neat idea, then every ideal contained in $\fr$ is also absolutely neat.
\end{remark}

Let $\cO$ be an \emph{oriented} $O_F$-maximal order of discriminant $\Delta$ containing no archimedean places, which is unique up to isomorphism. Let $\fr_0$ and $\fr_1$ be two ideals of $O_F$ such that $\fr_0$, $\fr_1$ and $\Delta$ are mutually coprime.

\begin{definition}\label{bde:hilbert_shimura}
We define the moduli functor $\cX(\Delta)_{\fr_0,\fr_1}$ on the category of schemes over $\Spec\dZ[(\fr_0\fr_1\disc F)^{-1}]$ as follows. For every $\dZ[(\fr_0\fr_1\disc F)^{-1}]$-scheme $T$,  we let $\cX(\Delta)_{\fr_0,\fr_1}(T)$ be the groupoid of quadruples $(A,\iota_A,C_A,\lambda_A)$ where
\begin{itemize}
  \item $A$ is an abelian scheme over $T$;

  \item $\iota_A\colon\cO\to\End(A_{/T})$ is an injective homomorphism satisfying:
      \[\tr(\iota_A(r)\res\Lie(A_{/T}))=\Tr_{O_F/\dZ}\Tr^0_\cO(r)\]
      for every $r\in\cO$, where the right-hand side is regarded as a section of $\sO_T$ via the structure homomorphism;

  \item $C_A$ is an $\cO$-stable flat subgroup of $A[\fr_0]$ over $T$ such that at every geometric point, it is isomorphic to $(O_F/\fr_0)^{\oplus 2}$ (Remark \ref{are:cyclic});

  \item $\lambda_A\colon\underline{(O_F/\fr_1)^{\oplus 2}}_{/T}\to A$ is an $\cO$-equivariant injective homomorphism of group schemes over $T$ (Remark \ref{are:cyclic}).
\end{itemize}
\end{definition}

The functor $\cX(\Delta)_{\fr_0,\fr_1}$ receives an action by the ray class group $\Cl(F)_{\fr_1}$ as follows. We realize $\Cl(F)_{\fr_1}$ as the set of isomorphism classes of rank-$1$ projective $O_F$-module $\fa$ together with an $O_F$-linear surjective homomorphism $\fa\to O_F/\fr_1$. It acts on $\cX(\Delta)_{\fr_0,\fr_1}(T)$ in the way that the data $\fa\to O_F/\fr_1$ sends $(A,\iota_A,C_A,\lambda_A)$ to $(A\otimes_{O_F}\fa,\iota_A\otimes_{O_F}\fa,C_A\otimes_{O_F}\fa,\lambda_A)$, where we have used the isomorphism $(A\otimes_{O_F}\fa)[\fr_1]\simeq A[\fr_1]$ induced from $\fa\to O_F/\fr_1$.

Suppose that we have another ideal $\fs_0\subset\fr_0$ of $O_F$ coprime to $\Delta$ and $\fr_1$. For each $\fd\in\fD(\fs_0,\fr_0)$ (Notation \ref{ano:divisor}), we have a \emph{degeneracy morphism}:
\begin{align}\label{beq:degeneracy}
\delta^\fd\colon \cX(\Delta)_{\fs_0,\fr_1}\to \cX(\Delta)_{\fr_0,\fr_1}.
\end{align}
It is finite \'{e}tale. We write $\delta=\delta^{O_F}$ for simplicity.

\begin{definition}\label{bde:hecke}
For a prime $\fq$ of $F$ coprime to $\Delta$ and $\fr_0\fr_1$, we define an action of the Hecke monoid $\dT_\fq$ on $\cX(\Delta)_{\fr_0,\fr_1}$ via \'{e}tale correspondences (Definition \ref{de:correspondence}) as follows:
\begin{itemize}
  \item $\rT_\fq$ acts by the correspondence
     \[\cX(\Delta)_{\fr_0,\fr_1}\xleftarrow{\delta}\cX(\Delta)_{\fr_0\fq,\fr_1}\xrightarrow{\delta^\fq}\cX(\Delta)_{\fr_0,\fr_1};\]

  \item $\rS_\fq$ acts via the morphism given by $\fq$ (with the canonical surjection $\fq\to O_F/\fr_1$) in $\Cl(F)_{\fr_1}$;

  \item $\rS_\fq^{-1}$ acts via the morphism given by $\fq^{-1}$ (with the canonical surjection $\fq^{-1}\to O_F/\fr_1$) in $\Cl(F)_{\fr_1}$.
\end{itemize}
\end{definition}

\begin{remark}\label{bre:hecke}
Let $\fq'$ be another prime as in Definition \ref{bde:hecke}. Then the actions of $\dT_\fq$ and $\dT_{\fq'}$ commute canonically. Note that we have a canonical morphism $\cX(\Delta)_{\fr_0,\fr_1}\to\cX(\Delta)_{\fr_0\fr_1,O_F}$ by taking the image of $\lambda_A$; it is compatible with the action of Hecke monoids.
\end{remark}

It is known that $\cX(\Delta)_{\fr_0,\fr_1}$ is a Deligne--Mumford stack of finite type and relative dimension $\deg F$ over $\Spec\dZ[(\fr_0\fr_1\disc F)^{-1}]$, and smooth over $\Spec\dZ[(\fr_0\fr_1\disc F)^{-1},\Delta^{-1}]$. We call it the \emph{Hilbert modular stack} of discriminant $\Delta$ and level $(\fr_0,\fr_1)$. If $\fr_1$ is absolutely neat, then $\cX(\Delta)_{\fr_0,\fr_1}$ is representable by a quasi-projective scheme over $\Spec\dZ[(\fr_0\fr_1\disc F)^{-1}]$ (\cite{Zin82}*{1.7, 3.4, 3.8}). It is projective if and only if $\Delta\neq\emptyset$.

\subsection{Pluri-nodal reduction}
\label{bss:reduction}

Put $d=\deg F$. We fix the following data:
\begin{itemize}
  \item a rational prime $\ell$ that is inert in $F$, which induces a prime $\fl$ of $F$,

  \item an oriented $O_F$-maximal order $\cO$ of discriminant $\Delta$ containing $\fl$ but none from $\Phi_F$,

  \item an embedding $\dZ_{\ell^\infty}\hookrightarrow\dC$.
\end{itemize}
Denote by $\sigma$ the $\ell$-Frobenius map. In particular, $\Phi_F$ is now identified with $\Hom(O_F,\dF_{\ell^\infty})$ hence equipped with an action of $\sigma$ through the target $\dF_{\ell^\infty}$. Put $\Phi=\Hom(\cO,\dF_{\ell^\infty})$ which is also equipped with an action of $\sigma$. By restriction, there is a canonical map $\pi\colon\Phi\to\Phi_F$. It is $\sigma$-equivariant and two-to-one.

\begin{definition}\label{bde:ample_subset}
We say that a subset $S$ of $\Phi$ is \emph{ample} \footnote{It is called \emph{zul\"{a}ssig} in the terminology of \cite{Zin82}, which means \emph{admissible}.} if $\pi^{-1}\tau\cap S\neq\emptyset$ for every $\tau\in\Phi_F$.
\end{definition}

We choose a uniformizer $\Pi$ of $\cO\otimes\dZ_\ell$ such that $\Pi^2=\ell$, and an $O_{F_\fl}$-subalgebra $W\subset\cO\otimes\dZ_\ell$ that is \'{e}tale of degree $2$. Let $T$ be an $\dF_{\ell^{2d}}$-scheme and $A$ an abelian scheme over $T$ with $\iota_A\colon\cO\to\End(A_{/T})$. Then the locally free $\sO_T$-sheaf $\Lie(A_{/T})$ has a canonical decomposition
\begin{align}\label{beq:split}
\Lie(A_{/T})=\bigoplus_{\tau\in\Phi}\Lie(A_{/T})_\tau
\end{align}
of $\sO_T$-sheaves such that $\cO$ acts on $\Lie(A_{/T})_\tau$ via $\tau$.

Recall that we have the Hilbert modular stack $\cX(\Delta)_{\fr_0,\fr_1}$ introduced in \Sec\ref{bss:hilbert}. Put \[X(\Delta)_{\fr_0,\fr_1}=\cX(\Delta)_{\fr_0,\fr_1}\otimes\dF_{\ell^{2d}}.\]
For every ample subset $S\subset\Phi$, we are going to introduce a subfunctor $X(\Delta)_{\fr_0,\fr_1}^S$ of $X(\Delta)_{\fr_0,\fr_1}$ as follows.

\begin{definition}\label{bde:hilbert_stratum}
We define $X(\Delta)_{\fr_0,\fr_1}^S$ to be the subfunctor of $X(\Delta)_{\fr_0,\fr_1}$ such that the induced map
$\iota_A(\Pi)_*\colon\Lie(A_{/T})_\tau\to\Lie(A_{/T})_{\sigma^d\tau}$ vanishes for $\tau\in S$.
\end{definition}

\begin{remark}
The subfunctor $X(\Delta)_{\fr_0,\fr_1}^S$ in Definition \ref{bde:hilbert_stratum} does not depend on the choices of $\Pi$ and $W$.
\end{remark}

We collect some elementary properties of the subfunctors $X(\Delta)_{\fr_0,\fr_1}^S$.

\begin{lem}\label{ble:strata}
We have
\begin{enumerate}
  \item $X(\Delta)_{\fr_0,\fr_1}^S$ is a closed subfunctor of $X(\Delta)_{\fr_0,\fr_1}$ for every ample subset $S$.

  \item $X(\Delta)_{\fr_0,\fr_1}^S\cap X(\Delta)_{\fr_0,\fr_1}^{S'}=X(\Delta)_{\fr_0,\fr_1}^{S\cup S'}$ for ample subsets $S$, $S'$.

  \item For a field $k$ containing $\dF_{\ell^{2d}}$, we have
      \[X(\Delta)_{\fr_0,\fr_1}(k)=\bigcup_{|S|=d}X(\Delta)_{\fr_0,\fr_1}^S(k).\]

  \item Let $\fs_0\subset\fr_0$ be another ideal of $O_F$ coprime to $\Delta$ and $\fr_1$. For every $\fd\in\fD(\fs_0,\fr_0)$, the following diagram
       \[\xymatrix{
       X(\Delta)_{\fs_0,\fr_1}^S \ar[r]\ar[d]_-{\delta^\fd} & X(\Delta)_{\fs_0,\fr_1} \ar[d]^-{\delta^\fd}  \\
       X(\Delta)_{\fr_0,\fr_1}^S \ar[r] & X(\Delta)_{\fr_0,\fr_1}
       }\]
       is Cartesian.
\end{enumerate}
\end{lem}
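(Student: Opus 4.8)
\textbf{Proof plan for Lemma \ref{ble:strata}.}

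The four assertions are all of an elementary, formal nature, and I would dispatch them in the order in which they are stated. For (1), I would argue that the vanishing of the $\sO_T$-linear map $\iota_A(\Pi)_*\colon\Lie(A_{/T})_\tau\to\Lie(A_{/T})_{\sigma^d\tau}$ is a closed condition on $T$: locally on $X(\Delta)_{\fr_0,\fr_1}$ both source and target are free $\sO_T$-modules, so $\iota_A(\Pi)_*$ is given by a matrix, and its vanishing is cut out by the vanishing of the entries of that matrix. Intersecting these closed conditions over all $\tau\in S$ (a finite set) gives a closed subfunctor, which is $X(\Delta)_{\fr_0,\fr_1}^S$. This same description makes (2) immediate: imposing the vanishing for $\tau\in S$ and also for $\tau\in S'$ is the same as imposing it for $\tau\in S\cup S'$, and $S\cup S'$ is again ample; one only has to check that the equality of closed subfunctors can be tested on the defining equations, which it can.

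For (3), the content is that over a field $k\supseteq\dF_{\ell^{2d}}$ every point lies in some $X^S$ with $|S|=d$. Here I would invoke the trace condition in Definition \ref{bde:hilbert_shimura}, which forces, for each $\tau\in\Phi_F$, that on the two-dimensional piece $\bigoplus_{\tau'\in\pi^{-1}\tau}\Lie(A_{/k})_{\tau'}$ the operator $\iota_A(\Pi)$ (which squares to $\ell=0$ in characteristic $\ell$) is nilpotent of rank exactly $1$; concretely $\iota_A(\Pi)_*$ kills exactly one of the two lines in each fibre, so there is a canonical choice of $\tau\in\pi^{-1}\tau$ on which $\iota_A(\Pi)_*$ vanishes (using that $k$ is a field so the two eigen-lines are defined). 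Taking $S$ to be the resulting set of $d$ embeddings, one for each $\tau\in\Phi_F$, exhibits the point in $X^S(k)$; conversely every $X^S(k)$ with $|S|=d$ obviously lands in $X(\Delta)_{\fr_0,\fr_1}(k)$. I expect this is the only step with genuine (if mild) geometric input — the extraction of the rank-one statement from the trace/determinant condition on the $\Pi$-action — and so it is the main obstacle, though it is standard Dieudonné-theory bookkeeping. Finally (4) is pure base change: the degeneracy morphism $\delta^\fd$ of \eqref{beq:degeneracy} is defined by passing to a quotient isogeny which induces an isomorphism on Lie algebras compatibly with the $\cO$-action and the decomposition \eqref{beq:split}; hence the pullback of the defining equations of $X(\Delta)_{\fr_0,\fr_1}^S$ along $\delta^\fd$ are precisely the defining equations of $X(\Delta)_{\fs_0,\fr_1}^S$, which is exactly the statement that the square is Cartesian. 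I would phrase this last point by noting that $\delta^\fd$ does not touch the $\fr_0$-level structure's effect on $\Lie(A)$, so the stratification is pulled back verbatim.
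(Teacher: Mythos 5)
Your proposal is correct and takes essentially the paper's route: the paper declares (1), (2), (4) straightforward from the definitions and proves (3) by the single observation that the two maps $\Lie(A_{/k})_\tau\to\Lie(A_{/k})_{\sigma^d\tau}$ and $\Lie(A_{/k})_{\sigma^d\tau}\to\Lie(A_{/k})_\tau$ compose to multiplication by $\ell=0$, so at least one of them vanishes. Just note that your phrasing in (3) (rank \emph{exactly} $1$, a \emph{canonical} line being killed) overstates the situation --- on the deeper strata $\iota_A(\Pi)_*$ kills both lines --- but the weaker statement that at least one map vanishes, which is all the union in (3) requires, already follows from your parenthetical $\Pi^2=\ell=0$.
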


\begin{proof}
All but (3) are straightforward from the definitions. For (3), we only have to note that at least one of
$\iota_A(\Pi)_*\colon\Lie(A_{/T})_\tau\to\Lie(A_{/T})_{\sigma^d\tau}$ and $\iota_A(\Pi)_*\colon\Lie(A_{/T})_{\sigma^d\tau}\to\Lie(A_{/T})_\tau$
should vanish, as their composition vanishes.
\end{proof}

We recall the following definition.

\begin{definition}\label{bde:polynodal_scheme}
Let $X$ be a scheme locally of finite presentation over $\Spec\dZ_{\ell^{2d}}$. We say that $X$ is \emph{pluri-nodal} if for every closed point $x$ of $X$ of characteristic $\ell$, the completion of the strict henselization of the local ring of $X$ at $x$ is isomorphic to
\[\dZ_{\ell^\infty}[[X_1,\dots,X_s;Y_1,\dots,Y_s;Z_1,\dots]]/(X_1Y_1-\ell,\dots,X_sY_s-\ell)\]
for some (but unique) $s=s(x)\geq 0$. Denote by $(X\otimes\dF_{\ell^{2d}})^{[s]}$ the locus where $s(x)\geq s$, which is a closed subset of the special fiber $X\otimes\dF_{\ell^{2d}}$. We equip $(X\otimes\dF_{\ell^{2d}})^{[s]}$ with the induced reduced scheme structure.
\end{definition}

\begin{proposition}\label{bpr:zink}
Let $\fr_0$ and $\fr_1$ be two ideals of $O_F$ such that $\fr_0$, $\fr_1$ and $\Delta$ are mutually coprime. Suppose that $\fr_1$ is absolutely neat. Then
\begin{enumerate}
  \item $\cX(\Delta)_{\fr_0,\fr_1}\otimes\dZ_{\ell^{2d}}$ is a pluri-nodal scheme over $\Spec\dZ_{\ell^{2d}}$ (Definition \ref{bde:polynodal_scheme});

  \item for every ample subset $S$, $X(\Delta)_{\fr_0,\fr_1}^S$ is a smooth proper scheme over $\Spec\dF_{\ell^{2d}}$ of dimension $2d-|S|$, and is a disjoint union of irreducible components of $X(\Delta)_{\fr_0,\fr_1}^{[|S|-d]}$;

  \item $X(\Delta)_{\fr_0,\fr_1}^S\cap X(\Delta)_{\fr_0,\fr_1}^{S'}=X(\Delta)_{\fr_0,\fr_1}^{S\cup S'}$ for ample subsets $S$, $S'$;

  \item $X(\Delta)_{\fr_0,\fr_1}^{[s]}=\bigcup_{|S|=d+s}X(\Delta)_{\fr_0,\fr_1}^S$; in particular, $X(\Delta)_{\fr_0,\fr_1}=\bigcup_{|S|=d}X(\Delta)_{\fr_0,\fr_1}^S$.
\end{enumerate}
\end{proposition}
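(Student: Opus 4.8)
\textbf{Proof proposal for Proposition \ref{bpr:zink}.}
The plan is to reduce everything to a local computation of the completed strict henselian local rings of $\cX(\Delta)_{\fr_0,\fr_1}\otimes\dZ_{\ell^{2d}}$ at closed points of characteristic $\ell$, via a local model / deformation-theory argument of the type used by Zink \cite{Zin82}, but carried out with the ``isogeny trick'' of \cite{Hel12} and \cite{TX16} so that the bookkeeping is uniform across the components. Since $\fr_1$ is absolutely neat, $\cX(\Delta)_{\fr_0,\fr_1}$ is a quasi-projective $\dZ[(\fr_0\fr_1\disc F)^{-1}]$-scheme, and as $\fl\in\Delta$ it is proper over $\Spec\dZ_{\ell^{2d}}$; the level structures at $\fr_0$ and $\fr_1$ are finite \'etale (since $\ell\nmid\fr_0\fr_1$), so by Lemma \ref{ble:strata} (4) and standard descent it suffices to treat the case $\fr_0=\fr_1=O_F$ and then transport all statements along the finite \'etale degeneracy morphisms. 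The key input is the Serre--Tate theorem together with the Grothendieck--Messing crystalline deformation theory for the $p$-divisible group $A[\fl^\infty]$ with its $\cO_\fl$-action; writing $\cO\otimes\dZ_\ell = W[\Pi]$ with $\Pi^2=\ell$ and decomposing $\Lie(A)$ as in \eqref{beq:split}, the Kottwitz-type determinant (signature) condition in Definition \ref{bde:hilbert_shimura} pins down, at each geometric point, which of the maps $\iota_A(\Pi)_*\colon\Lie(A)_\tau\to\Lie(A)_{\sigma^d\tau}$ can vanish, and a point lies in $X(\Delta)_{\fr_0,\fr_1}^S$ exactly when the ones indexed by $S$ do.

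Concretely, the first step is to show (1): at a closed point $x$ of characteristic $\ell$, the abelian scheme $A$ is necessarily such that $A[\fl^\infty]$ is a product of $1$-dimensional formal $\cO_\fl$-modules of height $2$ (this uses the fact, as in \cite{Rib89}*{Proposition 4.2} and Lemma \ref{ale:rapoport_ramified}, that $A$ is superspecial because $\fl$ is a ramified prime at which the signature forces the Lie algebra to be killed by the maximal ideal over $\fl$). Deforming each such $1$-dimensional factor with its $\cO_\fl$-action is a classical computation: the versal deformation ring is $\dZ_{\ell^\infty}[[X_\tau,Y_\tau]]/(X_\tau Y_\tau-\ell)$ when the factor is non-\'etale in the corresponding pair of embeddings, and is formally smooth of dimension $1$ otherwise. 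The prime-to-$\ell$ part $A[\fl^{\infty,\prime}]$ and the non-formal directions contribute the extra formally smooth variables $Z_1,\dots$. Multiplying these local rings together over the $d$ pairs $\{\tau,\sigma^d\tau\}$ gives exactly the shape in Definition \ref{bde:polynodal_scheme} with $s(x)$ equal to the number of pairs at which $A$ is non-ordinary; this proves (1) and simultaneously identifies $s(x)$ with $|S(x)|-d$, where $S(x)$ is the (ample, by Lemma \ref{ble:strata}) set of $\tau$ with vanishing $\Pi_*$. Part (4) then follows immediately, as does (3) once one already knows (it is Lemma \ref{ble:strata} (2) on the level of functors, and the local computation shows the scheme-theoretic intersection is reduced).

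For (2): smoothness of $X(\Delta)_{\fr_0,\fr_1}^S$ is read off from the same local rings — imposing the vanishing of $\iota_A(\Pi)_*$ on the $\tau\in S$ amounts to setting the corresponding variable $X_\tau$ (or $Y_\tau$) to zero in each nodal factor $X_\tau Y_\tau-\ell$, which yields a formally smooth quotient of dimension $2d-|S|$; properness is inherited from $\cX(\Delta)_{\fr_0,\fr_1}\otimes\dF_{\ell^{2d}}$ (closed subscheme of a proper scheme, by Lemma \ref{ble:strata} (1)). That $X(\Delta)_{\fr_0,\fr_1}^S$ is a union of irreducible components of $X(\Delta)_{\fr_0,\fr_1}^{[|S|-d]}$ follows by comparing dimensions: both are purely of dimension $2d-|S|$ near any point of $X(\Delta)_{\fr_0,\fr_1}^S$ by the local description, and $X(\Delta)_{\fr_0,\fr_1}^S\subset X(\Delta)_{\fr_0,\fr_1}^{[|S|-d]}$, so smoothness of the former forces it to be open in the latter, hence a union of components (the strata being also closed, by (1)). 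I expect the main obstacle to be step one: verifying carefully that at a ramified inert prime the signature condition really does force $A$ to be superspecial with the stated product structure on $A[\fl^\infty]$ — i.e.\ ruling out mixed ordinary/supersingular behaviour within a single $\cO_\fl$-factor — and then getting the Grothendieck--Messing deformation computation to produce precisely one nodal parameter per non-ordinary pair with the correct uniformizer $\ell$ (rather than $\ell$ up to a unit or a higher power), since that is what makes the model genuinely pluri-nodal in the sense of Definition \ref{bde:polynodal_scheme} rather than merely semistable after a ramified base change. The isogeny trick of \cite{Hel12, TX16} is what I would use to linearize this: replace $A$ by an $\fl$-isogenous abelian scheme whose $p$-divisible group splits off the relevant rank-one pieces, compute the deformation theory there where it is transparent, and descend.
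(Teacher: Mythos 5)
The paper's own ``proof'' is only a citation of \cite{Zin82}*{Satz 3.10} (with the remark that one can redo it via Grothendieck--Messing), so you are attempting to supply an argument the paper omits; the overall skeleton you aim for (Serre--Tate plus crystalline deformation theory of $A[\fl^\infty]$ with its $\cO\otimes\dZ_\ell$-action, one local factor per pair $\{\tau,\sigma^d\tau\}$, a nodal factor exactly when both maps $\Pi_*$ on the Lie algebra vanish) is the right shape. But your first step contains genuine errors. The claim that the signature condition forces $\Lie(A)$ to be killed by the maximal ideal over $\fl$, hence that $A$ is superspecial at \emph{every} closed point of characteristic $\ell$, is false: that condition is precisely ``exceptional'' in the sense of Definition \ref{ade:exceptional}, which holds only on the deepest stratum $X(\Delta)_{\fr_0,\fr_1}^{\Phi}$, and Lemma \ref{ale:rapoport_ramified} assumes it; by Lemma \ref{ble:rank} (1), under the trace condition of Definition \ref{bde:hilbert_shimura} the map $\Pi_A$ has rank one on each $\rH^\dr_1(A)_\tau$, and at a general point exactly one of the two maps $\Lie(A)_\tau\to\Lie(A)_{\sigma^d\tau}$, $\Lie(A)_{\sigma^d\tau}\to\Lie(A)_\tau$ vanishes. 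Relatedly, since $\fl\in\Delta$ the whole special fiber is supersingular and $A[\fl^\infty]$ is connected, so there is no \'etale/non-\'etale (``ordinary'') dichotomy anywhere, and $A[\fl^\infty]$ does not split as a product of rank-two $\cO_\fl$-stable factors indexed by the pairs ($F_\fl$ is a field); the decomposition one actually uses is of the Dieudonn\'e crystal into its $\tau$-components, $\tau\in\Phi$. The dichotomy that produces $\dZ_{\ell^\infty}[[X,Y]]/(XY-\ell)$ versus a smooth parameter is ``both $\Pi$-maps vanish'' versus ``only one vanishes'', not ordinariness, and the smooth variables come from the latter pairs, not from $A[\fl^{\infty,\prime}]$ (prime-to-$\ell$ data are rigid by Serre--Tate).

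The proposed repair by the isogeny trick does not close this gap: deformation rings are not invariant under an $\fl$-isogeny unless one deforms the kernel along with the abelian variety, and the isogeny constructions of \cite{Hel12}, \cite{TX16} (and of Propositions \ref{bpr:hilbert_base}, \ref{bpr:hilbert_bundle} in this paper) describe the \emph{strata of the special fiber} as bundles over auxiliary Shimura varieties; they say nothing about the completed local rings of the integral model, so ``compute the deformation theory on the isogenous object and descend'' is unjustified. What is actually needed is the Grothendieck--Messing (or display) computation with the single relation $\Pi^2=\ell$, carried out on the $\tau$-graded crystal, showing that for each $\tau\in\Phi_F$ one gets one formally smooth parameter when exactly one Lie map vanishes and one factor $\dZ_{\ell^\infty}[[X,Y]]/(XY-\ell)$ when both vanish, with the stratum $X(\Delta)_{\fr_0,\fr_1}^S$ cut out by the evident coordinates; this computation (from which (2)--(4) then follow as you indicate) is exactly the content of \cite{Zin82}*{Satz 3.10}, and it is the part your proposal asserts rather than proves.
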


\begin{proof}
This is proved in \cite{Zin82}*{Satz 3.10}. In fact, it is not difficult to deduce it from the Grothendieck--Messing theory.
\end{proof}

\begin{example}\label{bex:cube}
We consider the special case where $d=3$. We identify $\Phi$ with $\dZ/6\dZ=\{0,1,2,3,4,5\}$ such that $\sigma i=i+1$, and $\Phi_F$ with $\dZ/3\dZ=\{0,1,2\}$ such that $\pi^{-1}\{i\}=\{i,i+3\}$. The dual reduction building of $X(\Delta)_{\fr_0,\fr_1}$ is depicted as follows.
\[\xymatrix{
& X(\Delta)_{\fr_0,\fr_1}^{\{0,4,5\}} \ar@{-}[rr] && X(\Delta)_{\fr_0,\fr_1}^{\{0,2,4\}} \\
X(\Delta)_{\fr_0,\fr_1}^{\{0,1,5\}} \ar@{-}[rr]\ar@{-}[ur]  && X(\Delta)_{\fr_0,\fr_1}^{\{0,1,2\}} \ar@{-}[ur]  \\ &&&\\
& X(\Delta)_{\fr_0,\fr_1}^{\{3,4,5\}} \ar@{-}[rr]|\hole\ar@{-}[uuu]|!{[u],[uu]}\hole && X(\Delta)_{\fr_0,\fr_1}^{\{2,3,4\}} \ar@{-}[uuu] \\
X(\Delta)_{\fr_0,\fr_1}^{\{1,3,5\}} \ar@{-}[rr]\ar@{-}[ur]\ar@{-}[uuu]  && X(\Delta)_{\fr_0,\fr_1}^{\{1,2,3\}}. \ar@{-}[ur]\ar@{-}[uuu]
}\]
Here, every cell (that is, a vertex, an edge, a face, or the cube) represents $X(\Delta)_{\fr_0,\fr_1}^S$ for some ample subset $S$ such that $|S|-3$ equals to the dimension of the cell. For two cells representing $X(\Delta)_{\fr_0,\fr_1}^S$ and $X(\Delta)_{\fr_0,\fr_1}^{S'}$ respectively, the minimal cell containing both of them represents $X(\Delta)_{\fr_0,\fr_1}^{S\cup S'}=X(\Delta)_{\fr_0,\fr_1}^S\cap X(\Delta)_{\fr_0,\fr_1}^{S'}$.
\end{example}

\subsection{Description of strata: primitive case}

We keep the setup in \Sec\ref{bss:reduction}. In this section, we study the strata $X(\Delta)_{\fr_0,\fr_1}^S$ when $S$ is a type (Definition \ref{bde:type}) in more details. From now to the end of this appendix, we will assume that $\fr_1$ is absolutely neat.

\begin{definition}\label{bde:type}
Let $S$ be a subset of $\Phi$.
\begin{enumerate}
  \item We say that $S$ is a \emph{type} if it is ample (Definition \ref{bde:ample_subset}) and $|S|=d$.

  \item We say that $S$ is a \emph{sparse type} if it is a type and such that $\tau\in S$ implies $\sigma\tau\not\in S$.

  \item If $S$ is a type, then we define the function $r_S\colon\Phi\to\{-1,0,1\}$ via the formula
      \[r_S(\tau)=\b{1}_S(\sigma\tau)-\b{1}_S(\tau),\]
      where $\b{1}_S$ denotes the characteristic function of $S$. Put
      \[\Phi_S=r_S^{-1}\{-1\}\subset\Phi,\qquad \Delta_S=\pi(\Phi_S)\subset\Phi_F.\]
\end{enumerate}
\end{definition}

The following lemma is elementary.

\begin{lem}\label{ble:type}
Let $S$ be a type. We have
\begin{enumerate}
  \item $\pi_*r_S=0$; in particular, $\pi$ induces a bijection $\Phi_S\simeq\Delta_S$;

  \item $\Phi_S\subset S$ and they are equal if and only if $S$ is a sparse type;

  \item $|\Phi_S|$ is always odd.
\end{enumerate}
\end{lem}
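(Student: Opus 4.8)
The statement is Lemma~\ref{ble:type}, an elementary combinatorial fact about types $S\subset\Phi$. Throughout I will use the identification $\Phi\cong\dZ/6\dZ$ (more generally $\Phi\cong\dZ/2d\dZ$) with $\sigma$ acting as $+1$, and $\pi\colon\Phi\to\Phi_F$ the reduction modulo $d$, so that $\pi^{-1}\{\tau\}=\{\tau,\tau+d\}$. A type is a subset $S$ with $|S|=d$ meeting each fiber of $\pi$; equivalently, for each $\tau\in\Phi_F$ exactly one of $\tau,\tau+d$ lies in $S$, \emph{except} that in fact the condition ``ample plus $|S|=d$'' forces exactly one per fiber (if some fiber had two elements, another fiber would have none, contradicting ampleness). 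I would record this ``exactly one per fiber'' reformulation first, as all three parts follow from it.

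For part (1): compute $\sum_{\tau\in\pi^{-1}\{\bar\tau\}} r_S(\tau)$ for a fixed $\bar\tau\in\Phi_F$. Writing $r_S(\tau)=\b1_S(\sigma\tau)-\b1_S(\tau)$ and summing over the two-element fiber $\{\tau,\tau+d\}$, the $\b1_S(\tau)$-terms contribute $-\bigl(\b1_S(\tau)+\b1_S(\tau+d)\bigr)=-1$ by the ``exactly one per fiber'' property, and the $\b1_S(\sigma\tau)$-terms contribute $+\bigl(\b1_S(\tau{+}1)+\b1_S(\tau{+}1{+}d)\bigr)=+1$ since $\{\tau{+}1,\tau{+}1{+}d\}$ is again a full fiber. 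Hence $(\pi_*r_S)(\bar\tau)=0$. Since $r_S$ takes values in $\{-1,0,1\}$ and its two values on each fiber sum to zero, each fiber either contributes $(0,0)$ or $(1,-1)$ in some order; in particular $\Phi_S=r_S^{-1}\{-1\}$ meets each fiber in at most one point, and it meets exactly those fibers on which $r_S^{-1}\{+1\}$ is also nonempty, so $\pi|_{\Phi_S}\colon\Phi_S\to\Delta_S$ is a bijection. For part (2): if $\tau\in\Phi_S$ then $r_S(\tau)=-1$, i.e. $\b1_S(\sigma\tau)=0$ and $\b1_S(\tau)=1$, so $\tau\in S$; thus $\Phi_S\subset S$. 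The two sets have the same size iff $S\subset\Phi_S$, i.e. iff every $\tau\in S$ has $r_S(\tau)=-1$, i.e. $\sigma\tau\notin S$ for all $\tau\in S$, which is exactly the definition of $S$ being sparse; conversely, if $S$ is sparse this holds by definition. (A tiny cardinality count: both $\Phi_S$ and $S$ being subsets of size $\le d$, with $|S|=d$, equality $|\Phi_S|=|S|$ is equivalent to $|\Phi_S|=d$; I would phrase it via $\Phi_S\subset S$ and compare sizes.)

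For part (3): I need $|\Phi_S|$ odd. Consider the indicator sequence $(\b1_S(0),\b1_S(1),\dots,\b1_S(2d-1))$ arranged cyclically. Then $r_S(\tau)=\b1_S(\tau{+}1)-\b1_S(\tau)$ is $-1$ exactly at the ``descents'' (positions where the sequence drops from $1$ to $0$) and $+1$ exactly at the ``ascents'' (from $0$ to $1$). Going once around the cycle, the number of ascents equals the number of descents; call this common number $k$, so $|\Phi_S|=k$ and $|r_S^{-1}\{+1\}|=k$ as well. Now the key point is to pin down the parity of $k$ using $|S|=d$ and the fact that $S$ hits each $\pi$-fiber once. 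The plan: relate $k$ to the structure of $S$ under the involution $\tau\mapsto\tau+d$. Since exactly one of $\tau,\tau+d$ lies in $S$, the sequence $\b1_S$ satisfies $\b1_S(\tau+d)=1-\b1_S(\tau)$, i.e. it is ``anti-periodic'' of period $d$. An anti-periodic binary sequence on $\dZ/2d\dZ$ has the property that the number of sign changes in one full period equals the number in the complementary half-shifted by the anti-periodicity, and a short argument (counting descents over the first $d$ positions $0,\dots,d-1$ versus the second block, using $\b1_S(\tau+d)=1-\b1_S(\tau)$ and carefully handling the two ``seam'' transitions at positions $d-1$ and $2d-1$) shows that the total number of descents around the full circle is odd. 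I expect \textbf{this parity computation in part (3) to be the main obstacle} — not because it is deep, but because the bookkeeping of the two seams requires care; the cleanest route is probably to observe that $\b1_S + (\text{shift by }d)\b1_S \equiv \mathbf 1$, take the discrete derivative, and note that the number of $1$'s in the derivative-sequence's ``$-1$ set'' must be odd because $d$ is arbitrary here — wait, $d$ is the degree of $F$ and need not be odd, so I cannot use parity of $d$; instead I would argue directly: the number of maximal runs of consecutive $1$'s in the cyclic sequence $\b1_S$ equals $k=|\Phi_S|$, and anti-periodicity forces the run structure to be invariant under a fixed-point-free involution composed with complementation, which forces the count of runs to be odd. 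I will write this last step out carefully, as it is the one genuinely non-formal ingredient.
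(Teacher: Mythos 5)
The paper records this lemma without proof (it is introduced as ``elementary''), so there is no argument of the author's to compare against; judged on its own, your treatment of (1) and (2) is complete and correct: the reformulation of a type as a subset meeting each fiber of $\pi$ exactly once, the fiberwise computation giving $\pi_*r_S=0$ (hence each fiber carries either no values $\pm1$ of $r_S$ or one of each, so $\pi|_{\Phi_S}$ is injective), and the observation that $r_S(\tau)=-1$ forces $\tau\in S$ while $\Phi_S=S$ unwinds precisely to sparseness, are exactly what is needed.

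Part (3), however, is not yet a proof, and you say so yourself. The reduction of $|\Phi_S|$ to the number of descents (equivalently, of maximal $1$-runs) of the cyclic word $\b{1}_S$ on $\dZ/2d\dZ$, and the anti-periodicity $\b{1}_S(\tau+d)=1-\b{1}_S(\tau)$, are correct; but the concluding claim --- that the run structure being ``invariant under a fixed-point-free involution composed with complementation \dots forces the count of runs to be odd'' --- does not, as stated, force anything: the induced bijection between $1$-runs and $0$-runs holds for \emph{every} non-constant cyclic binary word, because runs of the two kinds alternate around the circle (e.g.\ $(1,1,0,0,1,0)$, which is not anti-periodic, has two runs of each kind), so equality of the two counts carries no parity information. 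The missing ingredient is easy to supply from the antipodal structure. For instance: if $\tau\in\{d,\dots,2d-1\}$ is a descent, then $\tau-d\in\{0,\dots,d-1\}$ is an ascent and conversely, so $|\Phi_S|$ equals the number of indices $\tau\in\{0,\dots,d-1\}$ with $\b{1}_S(\tau)\neq\b{1}_S(\tau+1)$; and this number is odd because the endpoints of the word $\b{1}_S(0),\b{1}_S(1),\dots,\b{1}_S(d)$ differ, as $\b{1}_S(d)=1-\b{1}_S(0)$. (Alternatively one can note that the antipodal rotation permutes the cyclic arrangement of the $2k$ alternating runs by a rotation which, being an involution interchanging the two alternating classes, must be by exactly $k$ steps and by an odd number of steps, whence $k$ is odd --- but this is precisely the bookkeeping you left unwritten.)
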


We review some Hodge theory for abelian schemes in characteristic $\ell$. Let $T$ be a scheme of characteristic $\ell$ and $A$ an abelian scheme over $T$. Put $A^{(\ell)}=A\times_{T,\rF_\abs}T$ as an abelian scheme over $T$ via the second factor, where $\rF_\abs\colon T\to T$ is the absolute $\ell$-Frobenius morphism of $T$. We have the corresponding Frobenius morphism $\r{Fr}_A\colon A\to A^{(\ell)}$ and Verschiebung morphism $\r{Ver}_A\colon A^{(\ell)}\to A$, both over $T$. There is a short exact sequence
\begin{align}\label{beq:hodge}
0 \to \omega(A_{/T}) \to \rH^\dr_1(A_{/T}) \to  \Lie(A_{/T}) \to 0
\end{align}
of locally free $\sO_T$-sheaves on $T$ \footnote{The kernel $\omega(A_{/T})$ is canonically isomorphic to $\omega_{A^\vee/T}$. However, we choose the current notation to keep uniformity and also because the dual abelian scheme $A^\vee$ will never be used.}. We put
\[\cF_A\coloneqq(\r{Ver}_A)_*\colon\rH^\dr_1(A^{(\ell)}_{/T})\to\rH^\dr_1(A_{/T}),\qquad
\cV_A\coloneqq(\r{Fr}_A)_*\colon\rH^\dr_1(A_{/T})\to\rH^\dr_1(A^{(\ell)}_{/T}).\]
We have $\Ker\cF_A=\IM\cV_A=\omega(A^{(\ell)}_{/T})$ and $\Ker\cV_A=\IM\cF_A\simeq\Lie(A^{(\ell)}_{/T})$.

Recall that we have chosen the uniformizer $\Pi$ of $\cO\otimes\dZ_\ell$ and the $O_{F_\fl}$-subalgebra $W\subset\cO\otimes\dZ_\ell$. Suppose that $T$ is an $\dF_{\ell^{2d}}$-scheme and that $A$ is equipped with an action $\iota_A\colon\cO\to\End(A_{/T})$. Then the sequence \eqref{beq:hodge} is the direct sum of sequences
\begin{align*}
0 \to \omega(A_{/T})_\tau \to \rH^\dr_1(A_{/T})_\tau \to  \Lie(A_{/T})_\tau \to 0
\end{align*}
over $\tau\in\Phi$, compatible with \eqref{beq:split}, in which $\rH^\dr_1(A_{/T})_\tau$ has rank $2$. By the canonical isomorphisms \[\rH^\dr_1(A^{(\ell)}_{/T})_\tau\simeq(\rF_\abs^*\rH^\dr_1(A_{/T}))_\tau\simeq\rF_\abs^*\rH^\dr_1(A_{/T})_{\sigma^{-1}\tau},\]
we have the induced maps
\[\cF_A\colon\rF_\abs^*\rH^\dr_1(A_{/T})_{\sigma^{-1}\tau}\to\rH^\dr_1(A_{/T})_\tau,\qquad
\cV_A\colon\rH^\dr_1(A_{/T})_\tau\to\rF_\abs^*\rH^\dr_1(A_{/T})_{\sigma^{-1}\tau}.\]
The action of $\Pi$ induces a map $\Pi_A\colon\rH^\dr_1(A_{/T})_\tau\to \rH^\dr_1(A_{/T})_{\sigma^d\tau}$ for each $\tau$.

In what follows, we will suppress the subscript $T$ if no confusion arises.

\begin{remark}\label{bre:hodge}
When $T=\Spec k$ for a perfect field $k$ (containing $\dF_{\ell^{2d}}$), the canonical map $V\to V\otimes_{k,\sigma}k=\rF_\abs^* V$ is a bijection for every $k$-vector space $V$. Therefore, we will regard $\cF_A$ as a $\sigma$-linear map $\cF_A\colon\rH^\dr_1(A)_{\sigma^{-1}\tau}\to\rH^\dr_1(A)_\tau$ and $\cV_A$ as a $\sigma^{-1}$-linear map $\cV_A\colon\rH^\dr_1(A)_\tau\to\rH^\dr_1(A)_{\sigma^{-1}\tau}$. Then we have $\Ker\cF_A=\IM\cV_A=\omega(A)$ and $\Ker\cV_A=\IM\cF_A\simeq\Lie(A)$.
\end{remark}

For every type $S$, we will define three new moduli functors in Definitions \ref{bde:hilbert_base}, \ref{bde:hilbert_shift} and \ref{bde:hilbert_bundle}, respectively.

\begin{definition}\label{bde:hilbert_base}
Let $S$ be type. We define the moduli functor $Z(\Delta)_{\fr_0,\fr_1}^S$ on the category of schemes over $\Spec\dF_{\ell^{2d}}$ as follows. For every $\dF_{\ell^{2d}}$-scheme $T$,  we let $Z(\Delta)_{\fr_0,\fr_1}^S(T)$ be the set of isomorphism classes of quadruples $(B,\iota_B,C_B,\lambda_B)$ where
\begin{itemize}
  \item $B$ is an abelian scheme over $T$;

  \item $\iota_B\colon\cO\to\End(B)$ is an injective homomorphism satisfying:
      \[\tr(\iota_B(w)\res\Lie(B))=\sum_{\tau\in\Phi}(r_S(\tau)+1)\tau(w)\]
      for every $w\in W$;

  \item $C_B$ and $\lambda_B$ are similar data as in Definition \ref{bde:hilbert_shimura}.
\end{itemize}
Similar to $\cX(\Delta)_{\fr_0,\fr_1}$, it receives actions by $\Cl(F)_{\fr_1}$ and $\dT_\fq$ for $\fq$ coprime to $\Delta$ and $\fr_0\fr_1$.
\end{definition}

\begin{remark}
Like $X(\Delta)_{\fr_0,\fr_1}^S$, the functor $Z(\Delta)_{\fr_0,\fr_1}^S$ does not depend on the choices of $\Pi$ and $W$.
\end{remark}

By \cite{Zin82}*{4.8} (with the remark after it), it is known that $Z(\Delta)_{\fr_0,\fr_1}^S$ is represented by a projective scheme over $\Spec\dF_{\ell^{2d}}$. To describe it, we need to introduce some quaternionic Shimura varieties of non-PEL type. Choose an $O_F$-Eichler order $\cR$ of discriminant $\Delta_S\cup\Delta\setminus\{\fl\}$ and level $\fr_0\fr_1$ together with a surjective $O_F$-linear homomorphism $\cR\to O_F/\fr_1$. Let $\fH_\cR$ be the symmetric hermitian domain attached to $\cR\otimes\dQ$. The complex Shimura variety
\[\cR_\dQ^\times\backslash\fH_\cR\times \widehat\cR_\dQ^\times/\Ker[\widehat\cR^\times\to(O_F/\fr_1)^\times]\]
has a canonical smooth projective model $\cX(\Delta_S\cup\Delta\setminus\{\fl\})_{\fr_0,\fr_1}$ over $\Spec\dZ_{\ell^{2d}}$ (via the fixed embedding $\dZ_{\ell^\infty}\hookrightarrow\dC$). It is of relative dimension $d-|\Delta_S|$ and does not depend on the choice of the data $\cR\to O_F/\fr_1$ up to isomorphism. It receives actions by $\Cl(F)_{\fr_1}$ and $\dT_\fq$ for $\fq$ coprime to $\Delta$ and $\fr_0\fr_1$.
Put \[X(\Delta_S\cup\Delta\setminus\{\fl\})_{\fr_0,\fr_1}=\cX(\Delta_S\cup\Delta\setminus\{\fl\})_{\fr_0,\fr_1}\otimes\dF_{\ell^{2d}}.\]

\begin{lem}\label{ble:zink}
There is an isomorphism
\[Z(\Delta)_{\fr_0,\fr_1}^S\otimes_{\dF_{\ell^{2d}}}\dF_{\ell^\infty}\xrightarrow{\sim}
X(\Delta_S\cup\Delta\setminus\{\fl\})_{\fr_0,\fr_1}\otimes_{\dF_{\ell^{2d}}}\dF_{\ell^\infty}\]
that is compatible with the actions of $\Cl(F)_{\fr_1}$ and $\dT_\fq$ for $\fq$ coprime to $\Delta$ and $\fr_0\fr_1$, and such that the
automorphism $1\times\sigma$ on the right-hand side corresponds to the automorphism $\fl^{|\Delta_S|}\times\sigma$ on the left-hand side. Here, we regard $\fl$ as an element in $\Cl(O_F)_{\fr_1}$.

In particular, $Z(\Delta)_{\fr_0,\fr_1}^S$ is of dimension $0$ if and only if $S$ is sparse (and $d$ is odd).
\end{lem}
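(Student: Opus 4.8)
\textbf{Proof plan for Lemma \ref{ble:zink}.}

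The plan is to follow the strategy of Zink \cite{Zin82}*{4.8} together with the ``isogeny trick'' of \cite{Hel12} and \cite{TX16}, upgrading the statement from an abstract isomorphism of schemes to one that keeps track of the extra structures ($\Cl(F)_{\fr_1}$-action, Hecke action, and the Frobenius/twist compatibility). First I would analyze the moduli problem $Z(\Delta)_{\fr_0,\fr_1}^S$ over an $\dF_{\ell^\infty}$-scheme $T$. The trace condition on $\Lie(B)$ forces $\Pi_B\colon\rH^\dr_1(B)_\tau\to\rH^\dr_1(B)_{\sigma^d\tau}$ to be an isomorphism precisely on $\Phi\setminus\Phi_S$ and to have a one-dimensional image on $\Phi_S$ (using Lemma \ref{ble:type}); this is the characteristic-$\ell$ incarnation of ``exceptionality away from $\Delta_S$''. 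The goal of this step is to use the Grothendieck--Messing/Dieudonné description of $\rH^\dr_1(B)$ together with the maps $\cF_B,\cV_B,\Pi_B$ to canonically split off a ``non-PEL quaternionic part'': concretely, the data $(B,\iota_B)$ should be reconstructed, up to prime-to-$\ell$ isogeny, from a ``$B$-part'' carrying an action of an $O_F$-order of discriminant $\Delta_S\cup\Delta\setminus\{\fl\}$ and of the correct Kottwitz signature $d-|\Delta_S|$, i.e.\ a point of $X(\Delta_S\cup\Delta\setminus\{\fl\})_{\fr_0,\fr_1}$. Since $\fr_1$ is absolutely neat, both sides are fine moduli schemes, so it suffices to exhibit mutually inverse natural transformations on $T$-points.

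The construction in both directions is the technical core. Going from $Z(\Delta)_{\fr_0,\fr_1}^S$ to $X(\Delta_S\cup\Delta\setminus\{\fl\})_{\fr_0,\fr_1}$, I would take the given $(B,\iota_B,C_B,\lambda_B)$, use the subalgebra $W\subset\cO\otimes\dZ_\ell$ and the uniformizer $\Pi$ to decompose $B[\ell^\infty]$ into a part on which Verschiebung/Frobenius behaves as in the unramified Hilbert case and a part governed by $\Pi$, and quotient by the appropriate $\cO$-stable finite flat subgroup supported at $\fl$ (this is where $|\Delta_S|$ enters: one is dividing by an $\fl$-isogeny of degree $\ell^{2|\Delta_S|}$ or its transpose, depending on $S$). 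The level structures $C_B$ and $\lambda_B$ at primes away from $\fl$ transport directly. Conversely, starting from an abelian scheme of the non-PEL quaternionic type one tensors up with $\cO$ over the quaternion order (as in Proposition \ref{apr:ramified}, working in families) to recover $(B,\iota_B)$. One then checks these are inverse to each other; the verification that the signature (trace) conditions match is exactly the bookkeeping with $r_S$ and Lemma \ref{ble:type} (1), (3). Compatibility with the $\Cl(F)_{\fr_1}$-action and with the Hecke correspondences $\dT_\fq$ for $\fq$ coprime to $\Delta\fr_0\fr_1$ is formal once the isogeny is defined $\cO$-linearly away from $\fl$, since the quotient construction commutes with $-\otimes_{O_F}\fa$ and with the degeneracy maps defining $\rT_\fq,\rS_\fq$.

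The Frobenius-twist assertion is the part I expect to be the main obstacle, and where the most care is needed. The point is that the identification above is only defined after base change to $\dF_{\ell^\infty}$ because the decomposition of $\rH^\dr_1$ by $\Phi$, and the choice of which ``half'' of the $\fl$-divisible group to quotient by, uses the embedding $\dZ_{\ell^\infty}\hookrightarrow\dC$; applying the absolute $\ell$-Frobenius $1\times\sigma$ permutes the $\Phi$-components via $\tau\mapsto\sigma\tau$ and simultaneously replaces the chosen $\fl$-isogeny by its Frobenius twist. I would track precisely how $\sigma$ acts on the constructed $\fl$-isogeny and show that the twist differs from the original by multiplication by the ideal class $\fl^{|\Delta_S|}$ in $\Cl(F)_{\fr_1}$ — this is the standard phenomenon that a degree-$\ell^{2|\Delta_S|}$ $\fl$-isogeny composed with its Frobenius conjugate is (up to the $\cO$-action) multiplication by $\fl^{|\Delta_S|}$. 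Hence on the left-hand side the automorphism corresponding to $1\times\sigma$ on the right is $\fl^{|\Delta_S|}\times\sigma$. Finally, the dimension statement is immediate: $\dim Z(\Delta)_{\fr_0,\fr_1}^S = \dim X(\Delta_S\cup\Delta\setminus\{\fl\})_{\fr_0,\fr_1} = d - |\Delta_S| = d - |\Phi_S|$ by Lemma \ref{ble:type} (1), and by Lemma \ref{ble:type} (2), (3) this is $0$ exactly when $\Phi_S = S$, i.e.\ $S$ is sparse, forcing $d$ odd since $|S|=d=|\Phi_S|$ is odd.
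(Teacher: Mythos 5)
The paper does not reprove this statement: its ``proof'' is a single citation to \cite{Zin82}*{4.21}, so what you are really proposing is a reconstruction of Zink's argument, and your reconstruction has a structural gap. Your plan hinges on the sentence ``both sides are fine moduli schemes, so it suffices to exhibit mutually inverse natural transformations on $T$-points.'' But the right-hand side $X(\Delta_S\cup\Delta\setminus\{\fl\})_{\fr_0,\fr_1}$ is, as the paper stresses, a quaternionic Shimura variety of \emph{non-PEL} type: it is defined as the canonical (complex) model of $\cR_\dQ^\times\backslash\fH_\cR\times\widehat\cR_\dQ^\times/\Ker[\widehat\cR^\times\to(O_F/\fr_1)^\times]$ together with a chosen smooth projective model over $\dZ_{\ell^{2d}}$, and it has no interpretation as a moduli space of abelian varieties (the quaternion algebra of discriminant $\Delta_S\cup\Delta\setminus\{\fl\}$ is ramified at only part of $\Phi_F$ in general). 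So there is nothing to map \emph{to} on $T$-points, and the quotient-by-an-$\fl$-isogeny construction you describe cannot by itself produce the isomorphism. This is not a cosmetic issue: it is exactly why the statement is only an isomorphism after base change to $\dF_{\ell^\infty}$ with a nontrivial descent twist, and why Zink's proof (and the related arguments in \cite{Hel12}, \cite{TX16}) proceed by comparing both sides with auxiliary data (e.g.\ a PEL/unitary-type Shimura variety obtained from a CM extension, or Deligne's description of canonical models and their special points) rather than by a direct functorial morphism. Your heuristic for the twist --- ``an $\fl$-isogeny composed with its Frobenius conjugate is multiplication by $\fl^{|\Delta_S|}$'' --- would likewise have to be replaced by an actual computation of the Frobenius descent datum through whatever comparison is used; as written it presupposes the moduli-to-moduli map that does not exist.

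The parts of your sketch that do not depend on this (the analysis of $\Pi_B$, $\cF_B$, $\cV_B$ on $\rH^\dr_1(B)$ via the signature condition, the formal compatibility of the $\Cl(F)_{\fr_1}$- and $\dT_\fq$-actions once an $\cO$-linear, prime-to-$\fl$ construction is in place, and the dimension count $d-|\Delta_S|$ with the sparseness criterion from Lemma \ref{ble:type}) are fine. But to close the argument you would either have to quote \cite{Zin82}*{4.21} as the paper does, or carry out the genuinely harder comparison with the non-PEL canonical model, including pinning down the $\fl^{|\Delta_S|}$-twist in the descent datum.
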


\begin{proof}
This is \cite{Zin82}*{4.21}.
\end{proof}

We will also use the following lemma.

\begin{lem}\label{ble:rank}
Suppose that $A$ is an abelian scheme over an $\dF_{\ell^{2d}}$-scheme $T$ equipped with an injective homomorphism $\iota_A\colon\cO\to\End(A)$. We have
\begin{enumerate}
  \item If $(A,\iota_A)$ satisfies the trace condition in Definition \ref{bde:hilbert_shimura}, then $\Pi_A\rH^\dr_1(A)_\tau$ is a subbundle of $\rH^\dr_1(A)_{\sigma^d\tau}$ of rank $1$ for every $\tau\in\Phi$.

  \item If $(A,\iota_A)$ satisfies the trace condition in Definition \ref{bde:hilbert_base}, then $\Pi_A\rH^\dr_1(A)_\tau=0$ if and only if $\tau\in S$.
\end{enumerate}
\end{lem}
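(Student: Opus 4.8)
\textbf{Proof proposal for Lemma \ref{ble:rank}.}

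The plan is to analyze the $W$-action on the Hodge--de Rham filtration degree by degree in $\Phi$, exploiting that $\cO\otimes\dZ_\ell$ is the ramified quaternion order $W\oplus W\Pi$ with $\Pi^2=\ell$ (so $\Pi$ acts as zero on the special fiber in the usual sense, but $\Pi_A$ on $\rH^\dr_1$ is a genuine operator). First I would fix a geometric point (or reduce to that case by the usual descent, since both assertions are about ranks of subbundles, which can be checked fibrewise), and record the basic structure: $\rH^\dr_1(A)$ decomposes as $\bigoplus_{\tau\in\Phi}\rH^\dr_1(A)_\tau$ with each summand of rank $2$, and $\Pi_A$ shifts the grading by $\sigma^d$, i.e.\ $\Pi_A\colon\rH^\dr_1(A)_\tau\to\rH^\dr_1(A)_{\sigma^d\tau}$. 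Since $\Pi^2=\ell$ acts as $0$ on $\rH^\dr_1(A)$ (characteristic $\ell$), we get $\Pi_A^2=0$ on each two-step composite $\rH^\dr_1(A)_\tau\to\rH^\dr_1(A)_{\sigma^d\tau}\to\rH^\dr_1(A)_{\sigma^{2d}\tau}=\rH^\dr_1(A)_\tau$; hence $\IM(\Pi_A\colon\rH^\dr_1(A)_\tau\to\rH^\dr_1(A)_{\sigma^d\tau})\subseteq\Ker(\Pi_A\colon\rH^\dr_1(A)_{\sigma^d\tau}\to\rH^\dr_1(A)_\tau)$, which already forces $\mathrm{rank}\,\Pi_A|_{\rH^\dr_1(A)_\tau}\le 1$ for every $\tau$. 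For (1), the remaining point is that this rank is exactly $1$; I would extract this from the trace condition in Definition \ref{bde:hilbert_shimura}. Concretely, the trace condition pins down $\dim\Lie(A)_\tau$ for each $\tau\in\Phi$ (it is $1$ for all $\tau$, by the computation $\Tr_{O_F/\dZ}\Tr^0_\cO$ distributing over the embeddings), and therefore $\dim\omega(A)_\tau=1$ as well. A standard compatibility argument — $\Pi$ is a uniformizer of the division algebra $\cO\otimes\dZ_\ell$, so on $p$-divisible groups the kernel of $\Pi$ has the ``right'' size — shows that $\Pi_A$ cannot be zero on any graded piece, because if $\Pi_A$ vanished on $\rH^\dr_1(A)_\tau$ then the $\cO\otimes\dZ_\ell$-module structure on the Dieudonné module at $\tau$ would be incompatible with $A$ being an honest abelian scheme (the Grothendieck--Messing / Dieudonné theory forces $\Ker\Pi_A$ to have $\sO_T$-rank exactly $1$ in each degree). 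So $\mathrm{rank}\,\Pi_A|_{\rH^\dr_1(A)_\tau}=1$, and being a rank-$1$ subsheaf of a rank-$2$ bundle that is locally a direct summand (again by fibrewise constancy of the rank plus the two-periodicity relation $\IM\subseteq\Ker$ with both of rank $1$) it is a subbundle; this gives (1).

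For (2), I would compare the two trace conditions. Under Definition \ref{bde:hilbert_base} the trace of $\iota_B(w)$ on $\Lie(B)$ is $\sum_{\tau}(r_S(\tau)+1)\tau(w)$, so $\dim\Lie(B)_\tau=r_S(\tau)+1\in\{0,1,2\}$, hence $\dim\omega(B)_\tau=2-(r_S(\tau)+1)=1-r_S(\tau)$. Now I use the same $\Pi_B^2=0$ relation together with the identification $\Ker\cF_B=\IM\cV_B=\omega(B)$ and $\IM\cF_B\simeq\Lie(B)$ (Remark \ref{bre:hodge}), and the fact that $\Pi$ commutes with $W$ but anti-commutes appropriately with the Frobenius structure. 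The key computation is: $\Pi_B$ interchanges the $\tau$ and $\sigma^d\tau$ graded pieces, and $\Pi_B\rH^\dr_1(B)_\tau=0$ precisely when $\rH^\dr_1(B)_\tau$ is killed, which by the relation $\IM(\Pi_B|_\tau)\subseteq\Ker(\Pi_B|_{\sigma^d\tau})$ and a rank count (the total rank of $\IM$ over a $\sigma^d$-orbit pair must match the total $\dim\Lie$ contribution) happens exactly on the set where $r_S$ takes the value forcing $\Pi_B$ to vanish. Tracking the definition $r_S(\tau)=\b{1}_S(\sigma\tau)-\b{1}_S(\tau)$ and the definition of $S$ via vanishing of $\iota_A(\Pi)_*$ on $\Lie(A)_\tau$ (Definition \ref{bde:hilbert_stratum}) — noting that $\iota_A(\Pi)_*$ on $\Lie(A)$ is the reduction of $\Pi_A$ on $\rH^\dr_1(A)$ — I would show $\Pi_B\rH^\dr_1(B)_\tau=0\iff\tau\in S$. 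I expect the bookkeeping here to come down to a careful case analysis on whether $\tau\in S$, $\sigma\tau\in S$, both, or neither, matching the four values of $(r_S(\tau),r_S(\sigma^d\tau))$ against the possible $(\dim\omega_\tau,\dim\omega_{\sigma^d\tau})$, and this is the main obstacle: making the identification between ``$\Pi_B$ kills the graded piece at $\tau$'' and the combinatorial condition $\tau\in S$ genuinely precise, rather than merely rank-consistent. The cleanest route is probably to invoke Lemma \ref{ble:type} (in particular $\pi_*r_S=0$ and $\Phi_S\subseteq S$) to organize the $\sigma$-orbits, then run the rank count on each orbit; I would also lean on the local structure of $\cO\otimes\dZ_\ell$-Dieudonné modules as in \cite{Zin82} to know that the filtration is ``as generic as the numerical invariants allow,'' which rules out degenerate configurations where the rank count would be satisfied without the equivalence holding pointwise.

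Finally I would assemble: part (1) is the rank count plus the subbundle property under the constancy of fibre dimension; part (2) is the comparison of trace conditions plus the orbit-wise analysis identifying the vanishing locus of $\Pi_B$ on graded pieces with $S$. Throughout, I would be careful that all statements about ``subbundle'' are checked after possibly passing to a flat cover or using that a coherent subsheaf with locally constant fibre rank inside a locally free sheaf is locally a direct summand, which applies here since $Z(\Delta)_{\fr_0,\fr_1}^S$ and $X(\Delta)_{\fr_0,\fr_1}$ are smooth (respectively, the relevant strata are) so the base is reduced and the fibre dimensions are forced to be constant by the trace conditions.
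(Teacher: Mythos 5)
Your proposal does not close the argument; it mostly restates the conclusion of the result the paper actually quotes. The paper disposes of both parts in one line by citing \cite{Zin82}*{2.6}, and that citation is precisely the content your sketch leaves unproved. Concretely: (i) the step ``$\Pi_A^2=0$ already forces $\mathrm{rank}\,\Pi_A|_{\rH^\dr_1(A)_\tau}\le 1$ for every $\tau$'' is false. The relation $\IM(\Pi_A|_\tau)\subseteq\Ker(\Pi_A|_{\sigma^d\tau})$ only gives $\mathrm{rank}_\tau+\mathrm{rank}_{\sigma^d\tau}\le 2$, so the degenerate distribution $(2,0)$ on a pair $\{\tau,\sigma^d\tau\}$ is not excluded; in fact in the situation of part (2) of this very lemma $\Pi_B$ is an isomorphism on $\rH^\dr_1(B)_{\sigma^d\tau}$ for $\tau\in S$ (its image is the full kernel $\rH^\dr_1(B)_\tau$ at a geometric point), so rank $2$ does occur. (ii) The heart of part (1) is exactly to rule out this degenerate type under the trace condition of Definition \ref{bde:hilbert_shimura}, equivalently to show $\rH^\dr_1(A)$ is locally free over $\cO\otimes\sO_T$; your justification (``$\Pi$ is a uniformizer, so Dieudonn\'e theory forces $\Ker\Pi_A$ to have rank exactly $1$ in each degree'') is an assertion of the conclusion, not an argument. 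The Lie ranks alone do not suffice at the level of $\rH^\dr_1$ with its grading: one must use that $\Pi$ commutes with $F$ and $V$ on the Dieudonn\'e module together with $FV=\ell=\Pi^2$. Already for $d=1$ a short computation shows that the degenerate type $\Pi M_\tau=M_{\sigma\tau}$ forces the Lie dimensions to be $(2,0)$ rather than $(1,1)$, contradicting the trace condition; for general $d$ the Frobenius moves between different $\sigma^d$-pairs, so the exclusion requires bookkeeping along the whole $\sigma$-orbit. None of this is in your sketch.

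(iii) For part (2) you yourself flag the equivalence ``$\Pi_B\rH^\dr_1(B)_\tau=0\iff\tau\in S$'' as ``the main obstacle'' and propose to lean on the local structure theory of \cite{Zin82} to rule out ``degenerate configurations'' --- which is again the statement to be proved. So as written the proposal is a plan whose two essential steps are either incorrect (the rank bound) or deferred (the exclusion of degenerate module types and the case analysis in (2)). The honest fixes are either to do what the paper does and simply invoke \cite{Zin82}*{2.6}, or to write out the Dieudonn\'e-module argument sketched above (grading by $\Phi$, commutation of $\Pi$ with $F,V$, $FV=\ell=\Pi^2$, and the trace condition), which is a genuine computation rather than a ``standard compatibility'' remark.
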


\begin{proof}
These are special cases of \cite{Zin82}*{2.6}.
\end{proof}

\begin{definition}\label{bde:hilbert_shift}
Let $S$ be type. We define the moduli functor $X_1(\Delta)_{\fr_0,\fr_1}^S$ on the category of schemes over $\Spec\dF_{\ell^{2d}}$ as follows. For every $\dF_{\ell^{2d}}$-scheme $T$,  we let $X_1(\Delta)_{\fr_0,\fr_1}^S(T)$ be the set of isomorphism classes of data $(A,\iota_A,C_A,\lambda_A;B,\iota_B,C_B,\lambda_B;\phi)$ where
\begin{itemize}
  \item $(A,\iota_A,C_A,\lambda_A)\in X(\Delta)_{\fr_0,\fr_1}^S$;

  \item $(B,\iota_B,C_B,\lambda_B)\in Z(\Delta)_{\fr_0,\fr_1}^S$;

  \item $\phi\colon A\to B$ is an isogeny compatible with other data in the obvious sense and such that
     \begin{itemize}
       \item $\Ker\phi\subset A[\ell]$;
       \item $\Ker[\phi_*\colon\rH^\dr_1(A)\to\rH^\dr_1(B)]=\bigoplus_{\tau\in S}\omega(A)_{\sigma^d\tau}$.
     \end{itemize}
\end{itemize}
Similar to $\cX(\Delta)_{\fr_0,\fr_1}$, the functor receives actions by $\Cl(F)_{\fr_1}$ and $\dT_\fq$ for $\fq$ coprime to $\Delta$ and $\fr_0\fr_1$.
\end{definition}

\begin{proposition}\label{bpr:hilbert_base}
The canonical morphism $p\colon X_1(\Delta)_{\fr_0,\fr_1}^S\to X(\Delta)_{\fr_0,\fr_1}^S$ induced by remembering $(A,\iota_A,C_A,\lambda_A)$ is an isomorphism.
\end{proposition}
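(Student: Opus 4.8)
The strategy is to exhibit an explicit inverse to $p$, built by an "isogeny trick'' of the kind used in \cite{Hel12} and \cite{TX16}. Given $(A,\iota_A,C_A,\lambda_A)\in X(\Delta)_{\fr_0,\fr_1}^S(T)$, I will construct a canonical abelian scheme $B$ over $T$ together with an isogeny $\phi\colon A\to B$ by dividing $A$ by a suitable $\cO$-stable finite flat subgroup scheme $K\subset A[\ell]$, namely the one whose Dieudonn\'e module (equivalently, whose image under $(-)_*$ on $\rH^\dr_1$) is $\bigoplus_{\tau\in S}\omega(A)_{\sigma^d\tau}$. First I would verify that the prescribed $\sO_T$-submodule $\bigoplus_{\tau\in S}\omega(A)_{\sigma^d\tau}\subset\rH^\dr_1(A)$ is stable under $\cF_A$ and $\cV_A$ and under the $\cO$-action, so that by Grothendieck--Messing / contravariant Dieudonn\'e theory (over a base of characteristic $\ell$, after passing to the crystalline site or working with the de Rham realization of the relevant $\r{BT}_1$-group) it cuts out a well-defined $\cO$-stable finite flat subgroup $K\subset A[\ell]$, locally free of the expected rank. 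Here the hypothesis $\tau\in S\Rightarrow\Pi_A\rH^\dr_1(A)_{\sigma^d\tau}=0$ from Definition \ref{bde:hilbert_stratum} and Lemma \ref{ble:rank} is exactly what guarantees $\Pi$-stability of the chosen submodule.

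Then I would set $B=A/K$, with $\iota_B$, $C_B=\phi(C_A)$, $\lambda_B=\phi\circ\lambda_A$ induced in the obvious way (checking that $\phi$ is \'etale on the prime-to-$\ell$ torsion so $C_B,\lambda_B$ remain level structures of the required type). The key computation is to check that $(B,\iota_B,C_B,\lambda_B)$ satisfies the trace condition of Definition \ref{bde:hilbert_base}, i.e.\ that $\tr(\iota_B(w)\mid\Lie(B))=\sum_\tau(r_S(\tau)+1)\tau(w)$. This follows by comparing Hodge filtrations: $\Lie(B)_\tau$ is the quotient $\rH^\dr_1(A)_\tau/(\text{image of }K_*)$, and a rank count in each $\tau$-component — using that $\omega(A)$ has rank $1$ in each component, that $K_*$ is supported in components $\sigma^d\tau$ for $\tau\in S$, and that $\pi_* r_S=0$ (Lemma \ref{ble:type}(1)) — gives that $\Lie(B)_\tau$ has rank $r_S(\tau)+1\in\{0,1,2\}$, which is precisely the asserted trace. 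I would also confirm $\Ker[\phi_*]=\bigoplus_{\tau\in S}\omega(A)_{\sigma^d\tau}$ by construction, so that $(A,\dots;B,\dots;\phi)$ is a genuine point of $X_1(\Delta)_{\fr_0,\fr_1}^S(T)$; call this assignment $q$.

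Finally I would check $q\circ p=\mathrm{id}$ and $p\circ q=\mathrm{id}$. The composite $p\circ q$ is the identity by construction. For $q\circ p$, given a point $(A,\dots;B,\dots;\phi)$ of $X_1(\Delta)_{\fr_0,\fr_1}^S$ I must show that $\phi$ coincides, up to unique isomorphism, with the canonical isogeny $A\to A/K$ produced from $A$ alone: this amounts to showing $\Ker\phi$ is determined by the condition $\Ker[\phi_*\colon\rH^\dr_1(A)\to\rH^\dr_1(B)]=\bigoplus_{\tau\in S}\omega(A)_{\sigma^d\tau}$ together with $\Ker\phi\subset A[\ell]$, which holds because a finite flat subgroup of $A[\ell]$ over a characteristic-$\ell$ base is recovered from its Dieudonn\'e module, and the target $B$ together with $\phi$ is then recovered as the quotient. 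The main obstacle I anticipate is the careful bookkeeping that the submodule $\bigoplus_{\tau\in S}\omega(A)_{\sigma^d\tau}$ really is $\cF_A$- and $\cV_A$-stable (this uses the relations $\Ker\cF_A=\IM\cV_A=\omega(A)$, $\IM\cF_A=\Ker\cV_A$, and the $\Pi$-vanishing on the $S$-components, and must be done compatibly in families, not just over a perfect field as in Remark \ref{bre:hodge}), and that the resulting quotient group is finite locally free of constant rank over an arbitrary base $T$ — once these are in place the trace condition and the bijectivity are formal. I expect this is essentially the content of \cite{Zin82} together with the isogeny-trick refinements of \cite{Hel12} and \cite{TX16}, and I would cite those for the delicate flatness statement while supplying the $\tau$-component rank count in detail.
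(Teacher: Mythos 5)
Your linear-algebra inputs are the right ones — the stability of $\bigoplus_{\tau\in S}\omega(A)_{\sigma^d\tau}$ under $\cF_A$, $\cV_A$ and $\Pi_A$, and the $\tau$-component rank count giving the trace condition of Definition \ref{bde:hilbert_base} are exactly the computations appearing in the paper's proof of surjectivity on points — but your overall route has a genuine gap at its pivotal step. You propose to define an inverse $q$ over an \emph{arbitrary} base $T$ of characteristic $\ell$ by integrating the $\cF$-, $\cV$-, $\cO$-stable subbundle $\bigoplus_{\tau\in S}\omega(A)_{\sigma^d\tau}\subset\rH^\dr_1(A)$ to a finite flat subgroup scheme $K\subset A[\ell]$ and setting $B=A/K$. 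Neither Grothendieck--Messing theory (which governs deformations along nilpotent thickenings, not the classification of subgroups of $A[\ell]$) nor classical Dieudonn\'e theory (which is an equivalence only over a perfect field, cf.\ Remark \ref{bre:hodge}) gives this over a general, possibly non-reduced $T$; producing $K$ in families, checking it is finite locally free, and checking the formation of $A/K$ and of $\Ker[\phi_*]$ commutes with base change is precisely the ``delicate flatness statement'' you defer to \cite{Zin82}, \cite{Hel12}, \cite{TX16} without a precise citation, and it is not a routine consequence of those references. Without it, $q$ is not defined as a morphism of functors, and the identities $q\circ p=\r{id}$, $p\circ q=\r{id}$ have nothing to apply to.

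The paper's proof is organized specifically to avoid this issue: $X_1(\Delta)_{\fr_0,\fr_1}^S$ is a closed subfunctor of the product $X(\Delta)_{\fr_0,\fr_1}^S\times_{\Spec\dF_{\ell^{2d}}}Z(\Delta)_{\fr_0,\fr_1}^S$, hence projective, so $p$ is proper; since the target is smooth (Proposition \ref{bpr:zink}), it then suffices to check (i) bijectivity of $p$ on points valued in algebraically closed fields $k$ — where your quotient construction is legitimate because Dieudonn\'e theory over the perfect field $k$ applies, and where the stability and rank computations you outline (using Lemma \ref{ble:rank} and the $\Pi$-vanishing from Definition \ref{bde:hilbert_stratum}) are carried out — and (ii) injectivity of $p$ on tangent spaces at closed points, done via Grothendieck--Messing over $k[\epsilon]/\epsilon^2$ by showing the lifting of the Hodge filtration of $B$ is uniquely determined by that of $A$. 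If you want to keep your ``construct the inverse'' strategy, you must either restrict the construction of $K$ to geometric points and then supply exactly this properness-plus-tangent-space argument to globalize, or give an honest construction of $K$ in families (e.g.\ as an explicit image/kernel of morphisms of group schemes, with a flatness proof), which is substantially more work than your sketch acknowledges.
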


\begin{proof}
Note that $X_1(\Delta)_{\fr_0,\fr_1}^S$ is a closed subfunctor of $X(\Delta)_{\fr_0,\fr_1}^S\times_{\Spec\dF_{\ell^{2d}}}Z(\Delta)_{\fr_0,\fr_1}^S$. Thus by Proposition \ref{bpr:zink} and Lemma \ref{ble:zink}, $X_1(\Delta)_{\fr_0,\fr_1}^S$ is represented by a projective scheme. In particular, $p$ is proper. Now we write $X_1=X_1(\Delta)_{\fr_0,\fr_1}^S$ and $X=X(\Delta)_{\fr_0,\fr_1}^S$ for simplicity. Since $X$ is smooth by Proposition \ref{bpr:zink}, it suffices to show that
\begin{enumerate}
  \item For every algebraically closed field $k$ containing $\dF_{\ell^{2d}}$, the map $p\colon X_1(k)\to X(k)$ is a bijection. In particular, for every closed point $x$ of $X$, there is a unique closed point $x_1$ of $X_1$ such that $p(x_1)=x$ and that $p$ induces an isomorphism on residue fields.

  \item In (1), the induced map $p_*\colon\sT_{X_1,x_1}\otimes_{\sO_{X_1,x_1}}k(x_1)\to\sT_{X,x}\otimes_{\sO_{X,x}}k(x)$ on tangent spaces is injective for every closed point $x$ of $X$.
\end{enumerate}
For (1), the injectivity is clear. To show the surjectivity, it suffices to show that for every element $(A,\iota_A,C_A,\lambda_A)\in X(k)$,
\begin{enumerate}[label=(\alph*)]
  \item the subspace $K\coloneqq\bigoplus_{\tau\in S}\omega(A)_{\sigma^d\tau}$ is stable under $\cF_A$, $\cV_A$ and $\iota_A(\Pi)_*$;

  \item if $\phi\colon A\to B$ is the unique up to isomorphism isogeny such that $\Ker\phi\subset A[\ell]$ and $\Ker[\phi_*\colon\rH^\dr_1(A)\to\rH^\dr_1(B)]=K$, then $\dim_k\Lie(B)_\tau=r_S(\tau)+1$ for every $\tau\in\Phi$.
\end{enumerate}

We first consider (a). Note that for every $\tau\in\Phi$, the kernel of $\Pi_A\colon\rH^\dr_1(A)_\tau\to\rH^\dr_1(A)_{\sigma^d\tau}$ coincides with the image of $\Pi_A\colon\rH^\dr_1(A)_{\sigma^d\tau}\to\rH^\dr_1(A)_\tau$, which has dimension $1$ by Lemma \ref{ble:rank} (1).

For $\cF_A$: We have $\cF_A K=0$ as $\omega(A)=\Ker\cF_A$ (Remark \ref{bre:hodge}).

For $\Pi_A$: If $\tau\in S$, then the image of $\Pi_A\colon\rH^\dr_1(A)_\tau\to\rH^\dr_1(A)_{\sigma^d\tau}$ coincides with $\omega(A)_{\sigma^d\tau}$. Thus $\Pi_A\omega(A)_{\sigma^d\tau}=0$ and $\Pi_A K=0$.

For $\cV_A$: If $\sigma^{-1}\tau\in S$, then $\cV_A\omega(A)_{\sigma^d\tau}\subset\omega(A)_{\sigma^{d-1}\tau}\subset K$. If $\sigma^{-1}\tau\not\in S$, then we claim that $\cV_A\omega(A)_{\sigma^d\tau}=0$. This is equivalent to showing that in the following commutative diagram
\[\xymatrix{
\rH^\dr_1(A)_\tau \ar[r]^-{\cV_A}\ar[d]^-{\Pi_A} & \rH^\dr_1(A)_{\sigma^{-1}\tau}  \ar[d]^-{\Pi_A}\\
\rH^\dr_1(A)_{\sigma^d\tau} \ar[r]^-{\cV_A} & \rH^\dr_1(A)_{\sigma^{d-1}\tau}
}\]
$\cV_A\Pi_A\rH^\dr_1(A)_\tau=0$. However, we have $\Pi_A\circ\cV_A=0$ since $\cV_A\rH^\dr_1(A)_\tau=\omega(A)_{\sigma^{-1}\tau}$ is the image of $\Pi_A\colon\rH^\dr_1(A)_{\sigma^{d-1}\tau}\to\rH^\dr_1(A)_{\sigma^{-1}\tau}$ as $\sigma^{d-1}\tau\in S$.

Now we consider (b). Let $\rH^\cris_1(A)=\bigoplus_{\tau\in\Phi}\rH^\cris_1(A)_\tau$ be the ``crystal homology'' (same as the covariant Dieudonn\'{e} module) of $A$ with lifts of the two maps $\cF_A$ and $\cV_A$ such that $\cF_A\cV_A=\cV_A\cF_A=\ell$, and similarly for $\rH^\cris_1(B)$. We have the following formulae:
\begin{itemize}
  \item $\dim_k\Lie(B)_\tau=\dim_k\cF_B\rH^\dr_1(B)_\tau=2-\dim_k(\rH^\dr_1(B)_{\sigma\tau}/\cF_B\rH^\dr_1(B)_\tau)$;
  \item $\dim_k(\rH^\dr_1(B)_{\sigma\tau}/\cF_B\rH^\dr_1(B)_\tau)=\dim_k(\rH^\cris_1(B)_{\sigma\tau}/\cF_B\rH^\cris_1(B)_\tau)$;
  \item $\dim_k(\rH^\cris_1(B)_{\sigma\tau}/\phi_*\rH^\cris_1(A)_{\sigma\tau})=\b{1}_S(\sigma^{d+1}\tau)$;
  \item $\dim_k(\phi_*\rH^\cris_1(A)_{\sigma\tau}/\phi_*\cF_A\rH^\cris_1(A)_\tau)=\dim_k(\rH^\cris_1(A)_{\sigma\tau}/\cF_A\rH^\cris_1(A)_\tau)=1$;
  \item $\dim_k(\cF_B\rH^\cris_1(B)_\tau/\phi_*\cF_A\rH^\cris_1(A)_\tau)=\dim_k(\rH^\cris_1(B)_\tau/\phi_*\rH^\cris_1(A)_\tau)=\b{1}_S(\sigma^d\tau)$.
\end{itemize}
Therefore, $\dim_k\Lie(B)_\tau=2-(\b{1}_S(\sigma^{d+1}\tau)+1-\b{1}_S(\sigma^d\tau))=r_S(\tau)+1$. Part (1) is proved.

For (2), we fix a closed point $x$ of $X$ represented by $(A,\iota_A,C_A,\lambda_A)$ with the (perfect) residue field $k$. Let $(A,\iota_A,C_A,\lambda_A;B,\iota_B,C_B,\lambda_B;\phi)$ be the element of $X_1$ representing the unique point above $x$. Put $k^\sharp=k[\epsilon]/\epsilon^2$. By the Grothendieck--Messing theory, to give an element in the tangent space $\sT_{X,x}\otimes_{\sO_{X,x}}k$ at $x$ is equivalent to giving a direct factor $\omega(A)_\tau^\sharp$ of the $k^\sharp$-module $\rH^\dr_1(A)_\tau^\sharp\coloneqq\rH^\dr_1(A)_\tau\otimes_kk^\sharp$ lifting $\omega(A)_\tau$ for every $\tau\in\Phi$ such that $\omega(A)_{\sigma^d\tau}^\sharp=\Pi_A\rH^\dr_1(A)_\tau^\sharp$ for $\tau\in S$. In other words, it is equivalent to specifying a lifting $\omega(A)_\tau^\sharp$ for $\tau\in S$.

Therefore, for the injectivity, it amounts to showing that for every given lifting $\{\omega(A)_\tau^\sharp\res\tau\in S\}$, there is at most one lifting $\{\omega(B)_\tau^\sharp\res\tau\in\Phi\}$ such that
\begin{itemize}
  \item $\Pi_B\omega(B)_\tau^\sharp\subset \omega(B)_{\sigma^d\tau}^\sharp$ for $\tau\in\Phi$,
  \item $\phi_*\omega(A)_\tau^\sharp\subset\omega(B)_\tau^\sharp$ for $\tau\in\Phi$.
\end{itemize}
The only situation that requires discussion is when we have $\dim_k\omega(B)_\tau=1$, that is, when $\b{1}_S(\tau)=\b{1}_S(\sigma\tau)$. There are two cases. First, if $\tau\in S$, then $\phi_*\colon\rH^\dr_1(A)_\tau^\sharp\to\rH^\dr_1(B)_\tau^\sharp$ is an isomorphism, hence $\omega(B)_\tau^\sharp=\phi_*\omega(A)_\tau^\sharp$. Second, if $\tau\not\in S$, then $\omega(B)_{\sigma^d\tau}^\sharp$ is unique, hence $\omega(B)_\tau^\sharp$ is uniquely determined as $\Pi_B\colon\rH^\dr_1(B)_\tau^\sharp\to\rH^\dr_1(B)_{\sigma^d\tau}^\sharp$ is an isomorphism by Lemma \ref{ble:rank} (2). Part (2) hence the proposition are proved.
\end{proof}

\begin{definition}\label{bde:hilbert_bundle}
Let $S$ be type. We define the moduli functor $Z_1(\Delta)_{\fr_0,\fr_1}^S$ on the category of schemes over $\Spec\dF_{\ell^{2d}}$ as follows. For every $\dF_{\ell^{2d}}$-scheme $T$,  we let $Z_1(\Delta)_{\fr_0,\fr_1}^S(T)$ be the set of isomorphism classes of data $(B,\iota_B,C_B,\lambda_B;(L_\tau)_{\tau\in\Phi_S})$ where
\begin{itemize}
  \item $(B,\iota_B,C_B,\lambda_B)\in Z(\Delta)_{\fr_0,\fr_1}^S$;

  \item $L_\tau$ is a subbundle of $\rH^\dr_1(B)_\tau$ of rank $1$ for each $\tau\in\Phi_S$.
\end{itemize}
Similar to $\cX(\Delta)_{\fr_0,\fr_1}$, the functor receives actions by $\Cl(F)_{\fr_1}$ and $\dT_\fq$ for $\fq$ coprime to $\Delta$ and $\fr_0\fr_1$.
\end{definition}

\begin{lem}\label{ble:hilbert_bundle}
Let $(\cB,\iota_\cB,C_\cB,\lambda_\cB)$ be the universal object over $Z(\Delta)_{\fr_0,\fr_1}^S$. Then the morphism $Z_1(\Delta)_{\fr_0,\fr_1}^S\to Z(\Delta)_{\fr_0,\fr_1}^S$ induced by remembering $(B,\iota_B,C_B,\lambda_B)$ induces an isomorphism
\[Z_1(\Delta)_{\fr_0,\fr_1}^S\simeq\prod_{\tau\in\Phi_S}\dP^1(\rH^\dr_1(\cB)_\tau)\]
where the product denotes the fiber product over $Z(\Delta)_{\fr_0,\fr_1}^S$.
\end{lem}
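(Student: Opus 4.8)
The statement asserts that $Z_1(\Delta)_{\fr_0,\fr_1}^S$ is the fiber product over $Z(\Delta)_{\fr_0,\fr_1}^S$ of the projectivizations $\dP^1(\rH^\dr_1(\cB)_\tau)$ for $\tau$ ranging over $\Phi_S$. Since $\rH^\dr_1(\cB)_\tau$ is a locally free $\sO_{Z(\Delta)_{\fr_0,\fr_1}^S}$-sheaf of rank $2$ by the direct sum decomposition recalled in \Sec\ref{bss:reduction} (the $\tau$-eigencomponent of the Hodge homology has rank $2$), the functor of points of $\dP^1(\rH^\dr_1(\cB)_\tau)$ over a $Z(\Delta)_{\fr_0,\fr_1}^S$-scheme $T$ is precisely the set of rank-$1$ subbundles $L_\tau\subset \rH^\dr_1(\cB)_\tau\otimes\sO_T = \rH^\dr_1(B)_\tau$, where $B$ is the pullback of $\cB$. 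Thus the plan is simply to observe that the moduli description of $Z_1(\Delta)_{\fr_0,\fr_1}^S$ in Definition \ref{bde:hilbert_bundle} matches, term by term, the moduli description of the fiber product: a $T$-point of $Z_1(\Delta)_{\fr_0,\fr_1}^S$ is a $T$-point $(B,\iota_B,C_B,\lambda_B)$ of $Z(\Delta)_{\fr_0,\fr_1}^S$ together with, for each $\tau\in\Phi_S$, a rank-$1$ subbundle of $\rH^\dr_1(B)_\tau$, and a $T$-point of $\prod_{\tau\in\Phi_S}\dP^1(\rH^\dr_1(\cB)_\tau)$ is exactly the same data via the universal property of the relative $\dP^1$ and of fiber products.

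\textbf{Key steps in order.} First I would recall that $Z(\Delta)_{\fr_0,\fr_1}^S$ is representable by a projective $\dF_{\ell^{2d}}$-scheme (by \cite{Zin82}*{4.8} and Lemma \ref{ble:zink}), so that the universal object $(\cB,\iota_\cB,C_\cB,\lambda_\cB)$ exists and $\rH^\dr_1(\cB)$ is a genuine locally free sheaf on it, canonically decomposed as $\bigoplus_{\tau\in\Phi}\rH^\dr_1(\cB)_\tau$ with each summand of rank $2$. Second, for each $\tau\in\Phi_S$ I would form the relative projective space $\dP^1(\rH^\dr_1(\cB)_\tau)$, whose functor of points over a $Z(\Delta)_{\fr_0,\fr_1}^S$-scheme $u\colon T\to Z(\Delta)_{\fr_0,\fr_1}^S$ is the set of line subbundles of $u^*\rH^\dr_1(\cB)_\tau$; by compatibility of $\rH^\dr_1$ with base change (flat, since $\cB$ is smooth over the base), $u^*\rH^\dr_1(\cB)_\tau \cong \rH^\dr_1(B)_\tau$ for $B = \cB\times_{Z(\Delta)_{\fr_0,\fr_1}^S}T$. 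Third, I would take the fiber product $\prod_{\tau\in\Phi_S}\dP^1(\rH^\dr_1(\cB)_\tau)$ over $Z(\Delta)_{\fr_0,\fr_1}^S$ and unwind its functor of points: a $T$-point is a map to $Z(\Delta)_{\fr_0,\fr_1}^S$, i.e.\ a tuple $(B,\iota_B,C_B,\lambda_B)$, together with an independent choice of line subbundle $L_\tau\subset\rH^\dr_1(B)_\tau$ for each $\tau\in\Phi_S$ — which is exactly a $T$-point of $Z_1(\Delta)_{\fr_0,\fr_1}^S$ by Definition \ref{bde:hilbert_bundle}. Finally, I would note this bijection of $T$-points is functorial in $T$ and compatible with the morphisms to $Z(\Delta)_{\fr_0,\fr_1}^S$, hence gives the claimed isomorphism over the base; and it is evidently compatible with the $\Cl(F)_{\fr_1}$- and $\dT_\fq$-actions since both act through the $Z(\Delta)_{\fr_0,\fr_1}^S$-factor and functorially on $\rH^\dr_1$.

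\textbf{Main obstacle.} There is no serious obstacle: this is essentially a definitional identification, the one subtle point being the compatibility of de Rham homology with base change, which holds because $\cB$ is smooth (in fact abelian) over $Z(\Delta)_{\fr_0,\fr_1}^S$, so $\rH^\dr_1(\cB)$ is locally free and its formation commutes with arbitrary base change; this is what lets one identify $u^*\rH^\dr_1(\cB)_\tau$ with $\rH^\dr_1(B)_\tau$ and hence match the two moduli problems. The only bookkeeping to be careful about is that the index set is $\Phi_S$ (not $S$ or all of $\Phi$): this is consistent with Lemma \ref{ble:type}(2), $\Phi_S\subset S$, and with the fact that $|\Phi_S|$ is the relevant number of $\dP^1$-factors, matching the expected fiber dimension of $Z_1(\Delta)_{\fr_0,\fr_1}^S\to Z(\Delta)_{\fr_0,\fr_1}^S$. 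I would close by remarking that this lemma, together with Proposition \ref{bpr:hilbert_base}, is the input that exhibits $X(\Delta)_{\fr_0,\fr_1}^S$ as an iterated projective bundle over a non-PEL quaternionic Shimura variety, which is how the structural statements in Lemma \ref{le:strata} and Theorem \ref{bth:hilbert_subbundle} are obtained.
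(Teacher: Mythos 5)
Your proposal is correct and matches the paper's argument, which simply observes that the claim is immediate from Definition \ref{bde:hilbert_bundle}: a $T$-point of $Z_1(\Delta)_{\fr_0,\fr_1}^S$ is by definition a $T$-point of $Z(\Delta)_{\fr_0,\fr_1}^S$ plus a line subbundle of $\rH^\dr_1(B)_\tau$ for each $\tau\in\Phi_S$, which is exactly the functor of points of the fiber product of the $\dP^1(\rH^\dr_1(\cB)_\tau)$. Your elaboration (base-change compatibility of $\rH^\dr_1$, functoriality in $T$) just makes explicit what the paper leaves as "straightforward."
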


\begin{proof}
This is straightforward from Definition \ref{bde:hilbert_bundle}.
\end{proof}

\begin{proposition}\label{bpr:hilbert_bundle}
The morphism $q\colon X_1(\Delta)_{\fr_0,\fr_1}^S\to Z_1(\Delta)_{\fr_0,\fr_1}^S$ induced by the assignment \[(A,\iota_A,C_A,\lambda_A;B,\iota_B,C_B,\lambda_B;\phi)\mapsto (B,\iota_B,C_B,\lambda_B;(\phi_*\omega(A)_\tau)_{\tau\in\Phi_S})\]
is an isomorphism.
\end{proposition}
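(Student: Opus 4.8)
The plan is to show that $q$ is an isomorphism by exhibiting an explicit inverse, or rather by checking that $q$ is a bijection on $k$-points (for all algebraically closed $k \supset \dF_{\ell^{2d}}$) and an isomorphism on tangent spaces, exactly in the style of the proof of Proposition \ref{bpr:hilbert_base}. Before that, one must of course verify that $q$ is well-defined: given a point $(A,\iota_A,C_A,\lambda_A;B,\iota_B,C_B,\lambda_B;\phi)$ of $X_1(\Delta)_{\fr_0,\fr_1}^S$, the subspace $\phi_*\omega(A)_\tau \subset \rH^\dr_1(B)_\tau$ must be a rank-$1$ subbundle for $\tau \in \Phi_S$. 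Since $\tau \in \Phi_S$ means $r_S(\tau) = -1$, i.e. $\tau \in S$ but $\sigma\tau \notin S$, the condition $\Ker[\phi_* \colon \rH^\dr_1(A) \to \rH^\dr_1(B)] = \bigoplus_{\tau' \in S}\omega(A)_{\sigma^d\tau'}$ forces $\phi_*$ to be injective in degree $\tau$ when $\sigma^d\tau \notin S$; but $\Phi_S \subset S$ has odd cardinality and one checks $\sigma^d \tau \notin S$ for $\tau \in \Phi_S$ using $\pi_* r_S = 0$ (Lemma \ref{ble:type}), so $\phi_*\omega(A)_\tau$ has rank $1$ and is a subbundle. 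This also uses Proposition \ref{bpr:hilbert_base} to identify $X_1(\Delta)_{\fr_0,\fr_1}^S$ with the smooth scheme $X(\Delta)_{\fr_0,\fr_1}^S$, so that source and target are both projective (the target by Lemma \ref{ble:zink}, Lemma \ref{ble:hilbert_bundle}, and properness of $\dP^1$-bundles) and $q$ is automatically proper; and the source is smooth, so it suffices to prove $q$ is a proper bijection on points inducing injections on tangent spaces, whence $q$ is a closed immersion which is also surjective, hence an isomorphism.

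For surjectivity on $k$-points: start from $(B,\iota_B,C_B,\lambda_B;(L_\tau)_{\tau\in\Phi_S}) \in Z_1(\Delta)_{\fr_0,\fr_1}^S(k)$ and reconstruct $(A,\iota_A,\dots,\phi)$. The idea is to reverse the isogeny $\phi$: one wants an abelian variety $A$ with an isogeny $\phi \colon A \to B$, $\Ker\phi \subset A[\ell]$, such that the Dieudonné module $\rH^\cris_1(A)$ sits inside $\rH^\cris_1(B)$ with prescribed quotients in each $\tau$-component, and so that the Hodge filtration $\omega(A)_\tau$ is recovered from $L_\tau$ in the degrees $\tau \in \Phi_S$ (and is forced in the remaining degrees by the stability conditions under $\cF_B$, $\cV_B$, $\Pi_B$, exactly as in the proof of Proposition \ref{bpr:hilbert_base}). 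Concretely, in the covariant Dieudonné module $\rH^\cris_1(B)$ one defines a sub-lattice $M \subset \rH^\cris_1(B)$ degree by degree, putting $M_\tau = \rH^\cris_1(B)_\tau$ for suitable $\tau$ and $M_\tau = $ (preimage of a line determined by $L_\tau$) $+ \ell\,\rH^\cris_1(B)_\tau$ for the others, arranged to be stable under $F$, $V$ and $\Pi$ and to have the right elementary divisors; then $A$ is the abelian variety with $\rH^\cris_1(A) = M$, $\phi$ the induced isogeny, and one checks that the trace condition of Definition \ref{bde:hilbert_shimura} holds for $(A,\iota_A)$ by the same bookkeeping of $\dim_k\Lie$ as in (b) of the proof of Proposition \ref{bpr:hilbert_base} (run in reverse). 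Injectivity on $k$-points is easier: $\phi$ and $\phi_*\omega(A)_\tau = L_\tau$ determine $\omega(A)_\tau$ for $\tau \in \Phi_S$, and the stability/uniqueness argument from Proposition \ref{bpr:hilbert_base}(2) shows $\omega(A)_\tau$ in the other degrees is then forced, so $(A,\iota_A,C_A,\lambda_A;\phi)$ is recovered uniquely up to isomorphism.

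For the tangent space computation: fix a $k$-point $\xi_1 = (B,\dots;(L_\tau))$ of $Z_1(\Delta)_{\fr_0,\fr_1}^S$ with preimage $\xi$ in $X_1(\Delta)_{\fr_0,\fr_1}^S$, and set $k^\sharp = k[\epsilon]/\epsilon^2$. By Grothendieck--Messing, a tangent vector at $\xi$ (via the identification with $X(\Delta)_{\fr_0,\fr_1}^S$) is a choice of liftings $\omega(A)_\tau^\sharp \subset \rH^\dr_1(A)^\sharp_\tau$ for $\tau \in S$ subject to the $\Pi_A$-compatibilities — equivalently, as in Proposition \ref{bpr:hilbert_base}(2), a free choice for $\tau \in \Phi_S$ and forced otherwise. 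A tangent vector at $\xi_1$ is a choice of liftings of $L_\tau$ inside $\rH^\dr_1(\cB)^\sharp_\tau$, together with a tangent vector of $Z(\Delta)_{\fr_0,\fr_1}^S$ at $(B,\dots)$; but $Z(\Delta)_{\fr_0,\fr_1}^S$ has dimension $\leq d - |\Delta_S| = d - |\Phi_S|$ (Lemma \ref{ble:zink}), while the $\dP^1$-bundle directions contribute $|\Phi_S|$, matching $\dim X(\Delta)_{\fr_0,\fr_1}^S = 2d - |S| = d$ when $S$ is sparse, and in general one checks the map $q_*$ on tangent spaces is injective by the same argument used for $p_*$: the lift $L_\tau^\sharp = \phi_*\omega(A)_\tau^\sharp$ together with the $\Pi$-compatibilities and $\phi$ determine everything, so $q_*$ is injective, hence bijective by dimension count. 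I expect the main obstacle to be the reconstruction step in the Dieudonné-module language — precisely, verifying that the sub-lattice $M$ one writes down is genuinely stable under $F$, $V$, and $\iota(\Pi)$ simultaneously (the three conditions interact through the relations $FV = VF = \ell$ and $\Pi^2 = \ell$, $\Pi W = W^\sigma \Pi$), and that the resulting $\Lie(A)$ satisfies the Kottwitz-type trace condition; this is the reverse of part (b) in Proposition \ref{bpr:hilbert_base} and requires the same careful component-by-component dimension count but now with the $L_\tau$'s as free input data rather than the Hodge filtration of $A$.
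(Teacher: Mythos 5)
Your overall strategy coincides with the paper's: check that $q$ is well defined (your argument that $\sigma^d\tau\notin S$ for $\tau\in\Phi_S$, so $\phi_*$ is an isomorphism in degree $\tau$, is correct), use properness of $q$ and smoothness/projectivity to reduce to a bijection on geometric points plus injectivity on tangent spaces, and run the tangent computation by Grothendieck--Messing exactly as for $p$ in Proposition \ref{bpr:hilbert_base}. The tangent-space half of your sketch is essentially the paper's argument, and in fact injectivity of $q_*$ alone is what is needed -- no dimension count enters, and your restriction "when $S$ is sparse" is a red herring since $|\Delta_S|=|\Phi_S|$ for every type, so $\dim Z_1(\Delta)_{\fr_0,\fr_1}^S=d$ always. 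One small logical slip: to conclude "surjective closed immersion, hence isomorphism" you need the \emph{target} $Z_1(\Delta)_{\fr_0,\fr_1}^S$ to be smooth (or at least reduced), not the source; this is harmless here because the target is smooth by Lemmas \ref{ble:zink} and \ref{ble:hilbert_bundle}, but it is the target's smoothness that the paper's criterion uses.

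The genuine gap is the surjectivity on $k$-points, which you yourself defer as "the main obstacle": this is precisely the technical heart of the proof, and your sketch of it is too vague to check and in part mismatched. The paper builds the inverse by writing down an explicit $\cF_B,\cV_B,\Pi_B$-stable subspace $M=\bigoplus_\tau M_\tau\subset\rH^\dr_1(B)$ with a \emph{three}-case recipe carrying an index shift: $M_\tau=\rH^\dr_1(B)_\tau$ for $\tau\in S$; $M_\tau=\cF_B\rH^\dr_1(B)_{\sigma^{-1}\tau}$ for $\tau\notin S$, $\sigma^{-1}\tau\notin S$; and $M_\tau=\cV_B^{-1}(L_{\sigma^{-1}\tau})$ for $\tau\notin S$, $\sigma^{-1}\tau\in\Phi_S$ -- so the line $L_{\sigma^{-1}\tau}$ governs the component in degree $\tau$, not in its own degree, and the middle case is neither the full component nor the preimage of a line, which your two-case description ("$M_\tau=\rH^\cris_1(B)_\tau$ for suitable $\tau$, preimage of a line plus $\ell\rH^\cris_1(B)_\tau$ for the others") does not capture. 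Your crystalline-lattice formulation ($\ell\rH^\cris_1(B)\subset M\subset\rH^\cris_1(B)$) is equivalent to the paper's use of the kernel of $\phi'_*$ on de Rham homology, so that is not the issue; the issue is that the actual content -- verifying stability of $M$ under $\cF_B$, $\cV_B$, $\Pi_B$ (the delicate cases being the interaction of $\cV_B$ with the shifted lines), computing $\dim_k\Lie(A)_\tau=1$ via crystalline homology so that the trace condition of Definition \ref{bde:hilbert_shimura} holds, checking $\Ker\phi_*=\bigoplus_{\tau\in S}\omega(A)_{\sigma^d\tau}$ and the vanishing of $\Pi_A$ on $\Lie(A)_\tau$ for $\tau\in S$, and finally verifying that the construction is a two-sided inverse of $q$ (e.g.\ $\phi_*\omega(A)_\tau=\cV_B\phi_*\rH^\dr_1(A)_{\sigma\tau}=L_\tau$ for $\tau\in\Phi_S$) -- is left entirely unexecuted. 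Without these verifications the argument is an outline of the correct proof rather than a proof.
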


\begin{proof}
First note that for $\tau\in\Phi_S$, the map $\phi_*\colon\rH^\dr_1(A)_\tau\to\rH^\dr_1(B)_\tau$ is an isomorphism. Thus $\phi_*\omega(A)_\tau$ is a subbundle of rank $1$. The map $q$ is well-defined.

Note that the target of $q$ is a smooth projective scheme by Lemmas \ref{ble:zink} and \ref{ble:hilbert_bundle}, and $q$ is a proper morphism of schemes. So we can use the same strategy as for the proof of Proposition \ref{bpr:hilbert_base}. Write $X_1=X_1(\Delta)_{\fr_0,\fr_1}^S$ and $Z_1=Z_1(\Delta)_{\fr_0,\fr_1}^S$ for simplicity. The proposition will follow from
\begin{enumerate}
  \item For every algebraically closed field $k$ containing $\dF_{\ell^{2d}}$, the map $q\colon X_1(k)\to Y_1(k)$ is a bijection.

  \item The induced map $q_*\colon\sT_{X_1,x}\otimes_{\sO_{X_1,x}}k(x)\to\sT_{Z_1,z}\otimes_{\sO_{Z_1,z}}k(z)$ on tangent spaces is injective for every closed point $x$ of $X_1$ and $z=q(x)$.
\end{enumerate}

For (1), we will construct a map $q'\colon Z_1(k)\to X_1(k)$ inverse to the map $q\colon X_1(k)\to Z_1(k)$. Let $(B,\iota_B,C_B,\lambda_B;(L_\tau)_{\tau\in\Phi_S})$ be an element in $Z_1(k)$. We define $M=\bigoplus_{\tau\in S}M_\tau$ such that
\begin{itemize}
  \item $M_\tau=\rH^\dr_1(B)_\tau$ if $\tau\in S$;
  \item $M_\tau=\cF_B\rH^\dr_1(B)_{\sigma^{-1}\tau}$ if $\tau\not\in S$ and $\sigma^{-1}\tau\not\in S$ (Remark \ref{bre:hodge});
  \item $\cV_B M_\tau=L_{\sigma^{-1}\tau}$ if $\tau\not\in S$ and $\sigma^{-1}\tau\in S$ (that is, $\sigma^{-1}\tau\in\Phi_S$). Note that $\cV_B\colon\rH^1(B)_\tau\to\rH^1(B)_{\sigma^{-1}\tau}$ has the trivial kernel if $\sigma^{-1}\tau\in\Phi_S$.
\end{itemize}
We show that $M$ is stable under $\cF_B$, $\cV_B$ and $\Pi_B$. The stability under $\Pi_B$ is obvious by Lemma \ref{ble:rank} (2).

For $\cF_B$: If $\tau\in S$ and $\sigma\tau\not\in S$, then $\cF_B M_\tau\subset\cF_A\rH^\dr_1(B)_\tau=0$. If $\tau\not\in S$ and $\sigma\tau\not\in S$, then $\cF_B \rH^\dr_1(B)_\tau=M_{\sigma\tau}$. If $\sigma\tau\in S$, then $\cF_B\rH^\dr_1(B)_\tau\subset\rH^\dr_1(B)_{\sigma\tau}=M_{\sigma\tau}$.

For $\cV_B$: If $\tau\in S$ and $\sigma^{-1}\tau\not\in S$, then $\cV_B M_\tau\subset\cV_B\rH^\dr_1(B)_\tau=0$. If $\tau\not\in S$ and $\sigma^{-1}\tau\not\in S$, then $\cV_B M_\tau=\cV_B\cF_B\rH^\dr_1(B)_{\sigma^{-1}\tau}=0$. If $\sigma^{-1}\tau\in S$, then $\cV_B\rH^\dr_1(B)_\tau\subset\rH^\dr_1(B)_{\sigma^{-1}\tau}=M_{\sigma^{-1}\tau}$.

Let $\phi'\colon B\to A$ be the unique isogeny such that $\Ker\phi'\subset B[\ell]$ and $\Ker[\phi'_*\colon\rH^\dr_1(B)\to\rH^\dr_1(A)]=M$. Then $A$ inherits an action $\iota_A\colon\cO\to\End(A)$. Since $\Ker\phi'\subset B[\ell]$, there is a unique ($\cO$-equivariant) isogeny $\phi\colon A\to B$ such that $\phi\circ\phi'=[\ell]_B$. As the restricted map $\phi\colon A[\fr_0\fr_1]\to B[\fr_0\fr_1]$ is an isomorphism, $(C_B,\lambda_B)$ canonically induce $(C_A,\lambda_A)$ for $A$. Thus we obtain the data $(A,\iota_A,C_A,\lambda_A;B,\iota_B,C_B,\lambda_B;\phi)$. We claim that it is an element of $X_1(k)$, which follows from the three claims below.
\begin{enumerate}[label=(\alph*)]
  \item $\dim_k\Lie(A)_\tau=1$ for every $\tau\in\Phi$.

  \item $\Ker[\phi_*\colon\rH^\dr_1(A)\to\rH^\dr_1(B)]=\bigoplus_{\tau\in S}\omega(A)_{\sigma^d\tau}$.

  \item $\Pi_A\colon\Lie(A)_\tau\to\Lie(A)_{\sigma^d\tau}$ vanishes for $\tau\in S$.
\end{enumerate}

For (a), we use the same computation as in the proof of Proposition \ref{bpr:hilbert_base} and leave the details to the reader.

For (b), by construction, $\phi_*\colon\rH^\dr_1(A)_\tau\to\rH^\dr_1(B)_\tau$ is an isomorphism if $\tau\in S$. If $\tau\not\in S$ and $\sigma\tau\in S$, we have the following commutative diagram
\[\xymatrix{
\rH^\dr(A)_\tau \ar[r]^-{\cF_A}\ar[d]^-{\phi_*} & \rH^\dr_1(A)_{\sigma\tau} \ar[d]^-{\phi_*}_-{\simeq} \\
\rH^\dr(B)_\tau \ar[r]^-{\cF_B} & \rH^\dr_1(B)_{\sigma\tau}
}\]
in which $\cF_B$ has the trivial kernel. We have that $\Ker[\phi_*\colon\rH^\dr_1(A)_\tau\to\rH^\dr_1(B)_\tau]=(\Ker\cF_A)_\tau=\omega(A)_\tau$. If $\tau\not\in S$ and $\sigma\tau\not\in S$, then from the following commutative diagram
\[\xymatrix{
\rH^\dr_1(A)_{\sigma\tau} \ar[r]^-{\cV_A}\ar[d]^-{\phi_*} & \rH^\dr_1(A)_\tau \ar[d]^-{\phi_*} \\
\rH^\dr_1(B)_{\sigma\tau} \ar[r]^-{\cV_B} & \rH^\dr_1(B)_\tau
}\]
and $\phi_*\rH^\dr_1(A)_{\sigma\tau}=M_{\sigma\tau}=\cF_B\rH^\dr_1(B)_\tau$, we know that $\phi_*\cV_A\rH^\dr_1(A)_{\sigma\tau}=0$. In other words, $\Ker[\phi_*\colon\rH^\dr_1(A)\to\rH^\dr_1(B)]=\omega(A)_\tau$. We obtain (b).

For (c), if $\tau\in S$, we have the commutative diagram
\[\xymatrix{
\rH^\dr_1(A)_\tau \ar[r]^-{\Pi_A}\ar[d]^-{\phi_*}_\simeq  & \rH^\dr_1(A)_{\sigma^d\tau} \ar[d]^-{\phi_*}  \\
\rH^\dr_1(B)_\tau \ar[r]^-{\Pi_B}_-0  & \rH^\dr_1(B)_{\sigma^d\tau}
}\]
which implies that $\Pi_A\rH^\dr_1(A)_\tau=\Ker[\phi_*\colon\rH^\dr_1(A)_{\sigma^d\tau}\to\rH^\dr_1(B)_{\sigma^d\tau}]=\omega(A)_{\sigma^d\tau}$ by (b). Thus (c) follows.

In all, we have constructed a map $q'\colon Z_1(k)\to X_1(k)$ sending $(B,\iota_B,C_B,\lambda_B;(L_\tau)_{\tau\in\Phi_S})$ to $(A,\iota_A,C_A,\lambda_A;B,\iota_B,C_B,\lambda_B;\phi)$ as above. Now we show that $q'$ is an inverse of $q$.

To show that $q\circ q'=\r{id}_{Z_1(k)}$, it suffices to show that for $(A,\iota_A,C_A,\lambda_A;B,\iota_B,C_B,\lambda_B;\phi)=q'(B,\iota_B,C_B,\lambda_B;(L_\tau)_{\tau\in\Phi_S})$, we have $\phi_*\omega(A)_\tau=L_\tau$ for $\tau\in\Phi_S$. However, we have $\phi_*\omega(A)_\tau=\phi_*\cV_A\rH^1_\dr(A)_{\sigma\tau}=\cV_B\phi_*\rH^1_\dr(A)_{\sigma\tau}=\cV_B M_\tau=L_\tau$.

To show that $q'\circ q=\r{id}_{X_1(k)}$, we consider an element $(A,\iota_A,C_A,\lambda_A;B,\iota_B,C_B,\lambda_B;\phi)\in X_1(k)$. It suffices to show that $\phi_*\rH^\dr_1(A)_\tau=M_\tau$ for $\tau\not\in S$, where $M_\tau=\cF_B\rH^\dr_1(B)_{\sigma^{-1}\tau}$ if $\sigma^{-1}\tau\not\in S$ and $\cV_B M_\tau=\phi_*\omega(A)_{\sigma^{-1}\tau}$ if $\sigma^{-1}\tau\in S$.

For the first case, it suffices to show that $\cV_B\phi_*\rH^\dr_1(A)_\tau=0$. By the property of $\phi$, we have $\phi_*\omega(A)_{\sigma^{-1}\tau}=0$, that is, $\phi_*\cV_A\rH^\dr_1(A)_\tau=0$ and we are done.

The second case follows from the commutative diagram
\[\xymatrix{
\rH^\dr_1(A)_\tau  \ar[r]^-{\cV_A}\ar[d]^-{\phi_*}  & \rH^\dr_1(A)_{\sigma^{-1}\tau} \ar[d]^-{\phi_*}_-{\simeq} \\
\rH^\dr_1(B)_\tau  \ar[r]^-{\cV_B}  & \rH^\dr_1(B)_{\sigma^{-1}\tau}
}\]
in which $\cV_B$ has the trivial kernel, and the identity $\omega(A)_{\sigma^{-1}\tau}=\cV_A\rH^\dr_1(A)_\tau$. Part (1) has been proved.

For (2), we fix a closed point $z$ of $Z_1$ represented by the data $(B,\iota_B,C_B,\lambda_B;(L_\tau)_{\tau\in\Phi_S})$ with the (perfect) residue field $k$. Let $(A,\iota_A,C_A,\lambda_A;B,\iota_B,C_B,\lambda_B;\phi)$ be the element of $X_1$ representing the unique point above $z$. Put $k^\sharp=k[\epsilon]/\epsilon^2$. To give an element in the tangent space $\sT_{Z_1,z}\otimes_{\sO_{Z_1,z}}k$ at $z$ is equivalent to giving a direct factor $\omega(B)_\tau^\sharp$ of the $k^\sharp$-module $\rH^\dr_1(B)_\tau^\sharp\coloneqq\rH^\dr_1(B)_\tau\otimes_kk^\sharp$ lifting $\omega(B)_\tau$ for every $\tau\in\Phi$ such that $\Pi_B\omega(B)_\tau^\sharp\subset\omega(B)_{\sigma^d\tau}^\sharp$, and a direct factor $L_\tau^\sharp$ of $\rH^\dr_1(B)_\tau^\sharp$ lifting $L_\tau$ for every $\tau\in\Phi_S$.

For the injectivity, it suffices to show that for every given lifting $\{\omega(B)_\tau\res\tau\in\Phi\}\cup\{L_\tau^\sharp\res\tau\in\Phi_S\}$, there is at most one lifting $\{\omega(A)_\tau^\sharp\res\tau\in\Phi\}$ such that
\begin{itemize}
  \item $\Pi_A\rH^1_\dr(A)_\tau^\sharp=\omega(A)_{\sigma^d\tau}^\sharp$ for $\tau\in S$,
  \item $\phi_*\omega(A)_\tau^\sharp=L_\tau^\sharp$ for $\tau\in\Phi_S$,
  \item $\phi_*\omega(A)_\tau^\sharp\subset\omega(B)_\tau^\sharp$ for $\tau\in\Phi$.
\end{itemize}
In fact, if $\tau\not\in S$, then $\omega(A)_\tau^\sharp$ must be $\Pi_A\rH^1_\dr(A)_{\sigma^d\tau}^\sharp$. If $\tau\in\Phi_S$, then $\omega(A)_\tau^\sharp$ is the unique direct summand such that $\phi_*\omega(A)_\tau^\sharp=L_\tau^\sharp$. If $\tau\in S\setminus\Phi_S$, then $\phi_*\colon\rH^1_\dr(A)_\tau^\sharp\to\rH^1_\dr(B)_\tau^\sharp$ is an isomorphism and $\omega(B)_\tau^\sharp$ is of rank $1$. Thus, $\omega(A)_\tau^\sharp$ is uniquely determined. Part (2) hence the proposition are proved.
\end{proof}

The following corollary is a consequence of Propositions \ref{bpr:hilbert_base}, \ref{bpr:hilbert_bundle}, and Lemma \ref{ble:hilbert_bundle}.

\begin{corollary}\label{bco:hilbert_bundle}
Let $S$ be a type. The isomorphisms $p$ and $q$ induce a canonical isomorphism
\[X(\Delta)_{\fr_0,\fr_1}^S\simeq\prod_{\tau\in\Phi_S}\dP^1(\rH^\dr_1(\cB)_\tau).\]
\end{corollary}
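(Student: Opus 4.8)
The plan is to chain together the three isomorphisms already established. By Proposition \ref{bpr:hilbert_base}, the morphism $p\colon X_1(\Delta)_{\fr_0,\fr_1}^S\to X(\Delta)_{\fr_0,\fr_1}^S$ forgetting everything except $(A,\iota_A,C_A,\lambda_A)$ is an isomorphism. By Proposition \ref{bpr:hilbert_bundle}, the morphism $q\colon X_1(\Delta)_{\fr_0,\fr_1}^S\to Z_1(\Delta)_{\fr_0,\fr_1}^S$ sending $(A,\iota_A,C_A,\lambda_A;B,\iota_B,C_B,\lambda_B;\phi)$ to $(B,\iota_B,C_B,\lambda_B;(\phi_*\omega(A)_\tau)_{\tau\in\Phi_S})$ is an isomorphism. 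Finally, by Lemma \ref{ble:hilbert_bundle}, remembering $(B,\iota_B,C_B,\lambda_B)$ identifies $Z_1(\Delta)_{\fr_0,\fr_1}^S$ with $\prod_{\tau\in\Phi_S}\dP^1(\rH^\dr_1(\cB)_\tau)$, the fiber product taken over $Z(\Delta)_{\fr_0,\fr_1}^S$, where $(\cB,\iota_\cB,C_\cB,\lambda_\cB)$ is the universal object. Composing, $q\circ p^{-1}$ followed by the identification of Lemma \ref{ble:hilbert_bundle} gives the desired canonical isomorphism
\[
X(\Delta)_{\fr_0,\fr_1}^S\xrightarrow{\ \sim\ }\prod_{\tau\in\Phi_S}\dP^1(\rH^\dr_1(\cB)_\tau).
\]

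Since each of $p$, $q$, and the morphism of Lemma \ref{ble:hilbert_bundle} is an isomorphism of schemes (or of Deligne--Mumford stacks, but here representable since $\fr_1$ is absolutely neat), their composite is too, and there is essentially nothing further to check: the proof is simply the observation that the composite $\bigl(\text{Lemma }\ref{ble:hilbert_bundle}\bigr)\circ q\circ p^{-1}$ is an isomorphism. One should note that this composite is canonical — it does not depend on the auxiliary choices of $\Pi$ or $W$, because neither $X(\Delta)_{\fr_0,\fr_1}^S$ nor $Z(\Delta)_{\fr_0,\fr_1}^S$ does (as remarked after Definitions \ref{bde:hilbert_stratum} and \ref{bde:hilbert_base}), and the construction of $X_1(\Delta)_{\fr_0,\fr_1}^S$ and $Z_1(\Delta)_{\fr_0,\fr_1}^S$ through the de Rham homology is choice-free as well.

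There is no serious obstacle; the only point meriting a word of care is compatibility of the identifications on both ends with the universal abelian scheme, i.e. that the $B$ appearing on the right-hand side of Proposition \ref{bpr:hilbert_bundle} is indeed (pulled back from) the universal object $\cB$ over $Z(\Delta)_{\fr_0,\fr_1}^S$ used in Lemma \ref{ble:hilbert_bundle}. This is immediate from the moduli description: under $q\circ p^{-1}$, the point of $X(\Delta)_{\fr_0,\fr_1}^S$ given by $(A,\iota_A,C_A,\lambda_A)$ maps to the point of $Z_1(\Delta)_{\fr_0,\fr_1}^S$ whose underlying point of $Z(\Delta)_{\fr_0,\fr_1}^S$ is $(B,\iota_B,C_B,\lambda_B)$, with $B$ produced from $A$ by the isogeny $\phi$ of Definition \ref{bde:hilbert_shift}; pulling back $\cB$ along this map recovers that $B$ functorially, so the $\dP^1$-bundle $\dP^1(\rH^\dr_1(\cB)_\tau)$ pulls back to $\dP^1(\rH^\dr_1(B)_\tau)$ as needed. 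Hence the corollary follows by simply concatenating the cited results.
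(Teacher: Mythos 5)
Your proof is correct and is exactly the paper's argument: the corollary is stated there as an immediate consequence of Propositions \ref{bpr:hilbert_base}, \ref{bpr:hilbert_bundle} and Lemma \ref{ble:hilbert_bundle}, obtained by composing $p^{-1}$, $q$, and the identification of $Z_1(\Delta)_{\fr_0,\fr_1}^S$ with the product of $\dP^1$-bundles. Your extra remarks on canonicity and compatibility with the universal object are fine but not needed beyond what the cited results already give.
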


\subsection{Description of strata: general case}

Now we consider general ample subsets.

\begin{notation}\label{bno:dag_subset}
Let $S$ be a proper ample subset of $\Phi$.
\begin{enumerate}
  \item Let $S^\dag$ be the unique type contained in $S$ such that for $\tau\in S\setminus S^\dag$, we have $\sigma\tau\in S^\dag$.
  \item Let $\Phi_S$ be the subset of $\Phi_{S^\dag}$ consisting of $\tau$ such that $\sigma^d\tau\not\in S$.
\end{enumerate}
\end{notation}

\begin{notation}\label{bno:hilbert_subbundle}
We denote by $\wp_S$ the following composite morphism
\[X(\Delta)_{\fr_0,\fr_1}^S\to X(\Delta)_{\fr_0,\fr_1}^{S^\dag} \xrightarrow{q\circ p^{-1}}
Z_1(\Delta)_{\fr_0,\fr_1}^{S^\dag}\to Z(\Delta)_{\fr_0,\fr_1}^{S^\dag}\]
where the last morphism is the canonical projection in Lemma \ref{ble:hilbert_bundle}.
\end{notation}

\begin{definition}\label{bde:hilbert_subbundle}
Let $S$ be a proper ample subset of $\Phi$. We define a subfunctor $Z_1(\Delta)_{\fr_0,\fr_1}^S$ of $Z_1(\Delta)_{\fr_0,\fr_1}^{S^\dag}$ such that the data $(B,\iota_B,C_B,\lambda_B;(L_\tau)_{\tau\in\Phi_{S^\dag}})$ satisfy
\begin{enumerate}[label=(\roman*)]
  \item $L_\tau=\cF_B\rF_\abs^*\rH^1(B)_{\sigma^{-1}\tau}$ if $\tau\in\Phi_{S^\dag}\setminus\Phi_S$ and $\sigma^{-1}\tau\in S^\dag$;
  \item $L_\tau=\Pi_B\cV_{B,\sigma^d\tau}^{-1}\rF_\abs^*L_{\sigma^{d-1}\tau}$ if $\tau\in\Phi_{S^\dag}\setminus\Phi_S$ and $\sigma^{d-1}\tau\in S^\dag$.
\end{enumerate}
Note that in (ii), we have $\sigma^{d-1}\tau\in\Phi_{S^\dag}$ and $\cV_{B,\sigma^d\tau}\coloneqq\cV_B\colon\rH^\dr_1(B)_{\sigma^d\tau}\to\rF_\abs^*\rH^\dr_1(B)_{\sigma^{d-1}\tau}$ is an isomorphism.
\end{definition}

\begin{theorem}\label{bth:hilbert_subbundle}
Let $S$ be a proper ample subset of $\Phi$. Then
\begin{enumerate}
  \item The composite morphism
     \[Z_1(\Delta)_{\fr_0,\fr_1}^S\to Z_1(\Delta)_{\fr_0,\fr_1}^{S^\dag}\xrightarrow{\sim}\prod_{\tau\in\Phi_{S^\dag}}\dP^1(\rH^\dr_1(\cB)_\tau)
     \to\prod_{\tau\in\Phi_S}\dP^1(\rH^\dr_1(\cB)_\tau)\]
     is an isomorphism, where the middle isomorphism is due to Lemma \ref{ble:hilbert_bundle} and the last morphism is the canonical projection.

  \item Under the isomorphism $q\circ p^{-1}\colon X(\Delta)_{\fr_0,\fr_1}^{S^\dag}\xrightarrow{\sim}Z_1(\Delta)_{\fr_0,\fr_1}^{S^\dag}$, the image of $X(\Delta)_{\fr_0,\fr_1}^S$ is contained in $Z_1(\Delta)_{\fr_0,\fr_1}^S$, and the following diagram
     \[\xymatrix{
     X(\Delta)_{\fr_0,\fr_1}^S  \ar[r]\ar[d] & Z_1(\Delta)_{\fr_0,\fr_1}^S \ar[d]\\
     X(\Delta)_{\fr_0,\fr_1}^{S^\dag} \ar[r]^-{q\circ p^{-1}} & Z_1(\Delta)_{\fr_0,\fr_1}^{S^\dag}
     }\]
     is Cartesian.

  \item The morphism $\wp_S\colon X(\Delta)_{\fr_0,\fr_1}^S\to Z(\Delta)_{\fr_0,\fr_1}^{S^\dag}$ is a $(\dP^1)^{|\Phi_S|}$-bundle. It is compatible with the actions by $\Cl(F)_{\fr_1}$ and $\dT_\fq$ for $\fq$ coprime to $\Delta$ and $\fr_0\fr_1$.
\end{enumerate}
\end{theorem}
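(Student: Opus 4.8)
The plan is to deduce all three parts from the results of the previous two subsections, using the Grothendieck--Messing deformation method exactly as in the proofs of Propositions \ref{bpr:hilbert_base} and \ref{bpr:hilbert_bundle}, together with the "isogeny trick" bookkeeping of Hodge filtrations over $k^\sharp = k[\epsilon]/\epsilon^2$. First I would prove (1). By Lemma \ref{ble:hilbert_bundle}, $Z_1(\Delta)_{\fr_0,\fr_1}^{S^\dag}$ is the fiber product of the projective bundles $\dP^1(\rH^\dr_1(\cB)_\tau)$ over $\tau\in\Phi_{S^\dag}$, so a point of $Z_1^{S^\dag}$ is a point $b$ of $Z^{S^\dag}$ together with a line $L_\tau\subset\rH^\dr_1(\cB_b)_\tau$ for each $\tau\in\Phi_{S^\dag}$. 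The subfunctor $Z_1^S$ is cut out by the conditions (i) and (ii) of Definition \ref{bde:hilbert_subbundle}, which for $\tau\in\Phi_{S^\dag}\setminus\Phi_S$ express $L_\tau$ in closed form in terms of $b$ and of $L_{\sigma^{d-1}\tau}$ (for $\tau$ as in (ii) one has $\sigma^{d-1}\tau\in\Phi_{S^\dag}$). The key observation is that if $\tau\in\Phi_{S^\dag}\setminus\Phi_S$ falls under case (ii), then $\sigma^{d-1}\tau$ again lies in $\Phi_{S^\dag}$, and an elementary check on the combinatorics of $S$, $S^\dag$, $\Phi_S$, $\Phi_{S^\dag}$ (using $r_S$, $\pi_*r_S=0$ from Lemma \ref{ble:type}, and the definition of $S^\dag$ in Notation \ref{bno:dag_subset}) shows that iterating the rules (i)--(ii) terminates and expresses every $L_\tau$ with $\tau\notin\Phi_S$ uniquely and functorially as a regular function of $b$ and of the remaining lines $\{L_\tau : \tau\in\Phi_S\}$. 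Hence the graph-type morphism $Z_1^S\to\prod_{\tau\in\Phi_S}\dP^1(\rH^\dr_1(\cB)_\tau)$ is an isomorphism, with inverse given by these explicit formulas; in particular $Z_1^S$ is a smooth projective $(\dP^1)^{|\Phi_S|}$-bundle over $Z^{S^\dag}$.

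Next, for (2), I would run the Grothendieck--Messing argument from the proof of Proposition \ref{bpr:hilbert_bundle}. One checks first on $k$-points: given $(A,\iota_A,C_A,\lambda_A)\in X(\Delta)_{\fr_0,\fr_1}^S(k)$, let $(B,\iota_B,C_B,\lambda_B;\phi)$ be the isogeny attached to $S^\dag$ as in Proposition \ref{bpr:hilbert_base}, set $L_\tau=\phi_*\omega(A)_\tau$ for $\tau\in\Phi_{S^\dag}$, and verify that the extra vanishings $\iota_A(\Pi)_*\colon\Lie(A)_\tau\to\Lie(A)_{\sigma^d\tau}$ being zero for $\tau\in S\setminus S^\dag$ translate, via the commutative squares relating $\phi_*$, $\cF_B$, $\cV_B$ and $\Pi_B$ (precisely as in the computation $\phi_*\omega(A)_\tau=\cV_B\phi_*\rH^\dr_1(A)_{\sigma\tau}=\cV_B M_\tau$ in the proof of Proposition \ref{bpr:hilbert_bundle}), into exactly the constraints (i) and (ii) defining $Z_1^S$. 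Conversely the inverse map $q'$ of Proposition \ref{bpr:hilbert_bundle}, restricted to points satisfying (i)--(ii), produces an $A$ with the required $\Pi$-vanishings, so the map $X^S(k)\to Z_1^S(k)\times_{Z_1^{S^\dag}(k)} X^{S^\dag}(k)$ is bijective. Then one compares tangent spaces over $k^\sharp$: a deformation of $X^{S^\dag}$ at the point is a choice of liftings $\omega(A)_\tau^\sharp$ for $\tau\in S^\dag$ (equivalently, by $q\circ p^{-1}$, a choice of liftings $\omega(B)_\tau^\sharp$ and $L_\tau^\sharp$), and lying in $X^S$ imposes that $\iota_A(\Pi)_*$ stay zero on the $\epsilon$-level for $\tau\in S\setminus S^\dag$; one checks this is precisely the condition that the $L_\tau^\sharp$ satisfy the $\epsilon$-analogues of (i)--(ii), i.e. that the lift lies in $Z_1^S$. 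Since $X^S$ is smooth over $\dF_{\ell^{2d}}$ by Proposition \ref{bpr:zink} (2), a bijection on $k$-points together with injectivity (indeed bijectivity, comparing dimensions $2d-|S|$ on both sides) on tangent spaces into a smooth target forces the morphism $X^S\to Z_1^S\times_{Z_1^{S^\dag}}X^{S^\dag}$ to be an isomorphism; this is exactly the Cartesian-diagram assertion.

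Finally, (3) is a formal consequence: combining (2) with (1) gives that $\wp_S$ factors as $X^S\xrightarrow{\sim}Z_1^S\to Z^{S^\dag}$, and by (1) the second arrow is a $(\dP^1)^{|\Phi_S|}$-bundle, hence so is $\wp_S$. Compatibility with the $\Cl(F)_{\fr_1}$- and $\dT_\fq$-actions is immediate because every functor and morphism in sight ($X^S$, $X^{S^\dag}$, $Z^{S^\dag}$, $Z_1^{S^\dag}$, $Z_1^S$, and the maps $p$, $q$, the projections) was defined by the same moduli recipe equivariantly in the prime-to-$\Delta\fr_0\fr_1$ Hecke data, exactly as recorded after Definitions \ref{bde:hilbert_base}, \ref{bde:hilbert_shift}, \ref{bde:hilbert_bundle}. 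The main obstacle I anticipate is step (1): proving that the recursion defined by rules (i) and (ii) is well-defined and terminating requires a genuine (if elementary) combinatorial analysis of how $\sigma$ moves points in and out of $S$, $S^\dag$ and $\Phi_S$ — one must rule out "cycles" of indices all falling under case (ii), and confirm that $\sigma^{d-1}\tau\in\Phi_{S^\dag}$ whenever (ii) applies, so that the formula for $L_\tau$ never references an undefined $L_{\bullet}$. Once that combinatorial lemma is in hand, the Dieudonné-theoretic verifications in (2) are routine variants of the arguments already carried out for the primitive case.
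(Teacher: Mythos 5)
Your proposal is correct in substance, but for part (2) it takes a genuinely different route from the paper. The paper does not use Grothendieck--Messing or any tangent-space comparison for this theorem: it proves the equivalence of moduli conditions directly on $T$-points for an \emph{arbitrary} $\dF_{\ell^{2d}}$-scheme $T$, namely that for $(A,\iota_A,C_A,\lambda_A;B,\iota_B,C_B,\lambda_B;\phi)\in X_1(\Delta)_{\fr_0,\fr_1}^{S^\dag}(T)$ the lines $\phi_*\omega(A)_\tau$ satisfy conditions (i)--(ii) of Definition \ref{bde:hilbert_subbundle} if and only if $\Pi_A$ kills $\Lie(A)_{\sigma^d\tau}\to\Lie(A)_\tau$ for the extra indices (using that $\sigma^d$ identifies $\Phi_{S^\dag}\setminus\Phi_S$ with $S\setminus S^\dag$), via exactly the commutative diagrams relating $\phi_*$, $\cF$, $\cV$, $\Pi$ that you invoke. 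Since both $X^S\subset X^{S^\dag}$ and $Z_1^S\subset Z_1^{S^\dag}$ are the subfunctors cut out by these conditions, the Cartesian square follows at once, with no appeal to smoothness of $X^S$, properness, or point-counting. Your route --- check on $k$-points, then compare tangent spaces over $k[\epsilon]/\epsilon^2$, then conclude via smoothness --- can be made rigorous, but it needs extra scaffolding the paper avoids: you must use reducedness of $X^S$ to promote the $k$-point containment to a scheme-theoretic factorization through the closed subscheme $Z_1^S$, and then properness plus smoothness of the target (which itself relies on your part (1)) to upgrade "bijective on points and on tangent spaces" to an isomorphism; the paper's functorial argument buys all of this for free. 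For part (1) you correctly isolate the only real content, namely that the recursion (i)--(ii) is consistent and terminates (no case-(ii) cycles, and $\sigma^{d-1}\tau\in\Phi_{S^\dag}$ whenever (ii) applies --- the latter is already noted in the definition); this is precisely where $S\neq\Phi$ enters, and the paper records it with the single sentence that (1) is an elementary consequence of $S\neq\Phi$, so your flagged "main obstacle" is real but no more than the elementary combinatorial check you describe. Part (3) is handled identically in both arguments.
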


\begin{proof}
Note that in Definition \ref{bde:hilbert_subbundle} (ii), the restricted map $\Pi_B\colon\rH^\dr_1(B)_{\sigma^d\tau}\to\rH^\dr_1(B)_\tau$ is an isomorphism. Part (1) is an elementary consequence of the fact that $S\neq\Phi$.

For (2), it suffices to show that for an element $(A,\iota_A,C_A,\lambda_A;B,\iota_B,C_B,\lambda_B;\phi)\in X_1(\Delta)_{\fr_0,\fr_1}^{S^\dag}(T)$ with some $\dF_{\ell^{2d}}$-scheme $T$, the collection $\{\phi_*\omega(A)_\tau\res\tau\in S^\dag\}$ satisfies the two conditions in Definition \ref{bde:hilbert_subbundle} if and only if $(A,\iota_A,C_A,\lambda_A)$ belongs to $X(\Delta)_{\fr_0,\fr_1}^S(T)$. First we note that $\sigma^d$ induces a bijection from $\Phi_{S^\dag}\setminus\Phi_S$ to $S\setminus S^\dag$. We show that the condition for $\tau\in\Phi_{S^\dag}\setminus\Phi_S$ in Definition \ref{bde:hilbert_subbundle} is satisfied if and only if $\Pi_A\colon\Lie(A)_{\sigma^d\tau}\to\Lie(A)_\tau$ vanishes.

In case (i), we look at the following commutative diagram
\[\xymatrix{
\rF_\abs^*\rH^\dr_1(A)_{\sigma^{-1}\tau}  \ar[r]^-{\cF_A}\ar[d]^-{\rF_\abs^*\phi_*}_-\simeq & \rH^\dr_1(A)_\tau \ar[d]^-{\phi_*}_-\simeq \\
\rF_\abs^*\rH^\dr_1(B)_{\sigma^{-1}\tau}  \ar[r]^-{\cF_B} & \rH^\dr_1(B)_\tau
}\]
where both vertical maps are isomorphisms. As $\sigma^{-1}\tau\in S$, we have $\Pi_A\cF_A\rF_\abs^*\rH^\dr_1(A)_{\sigma^{-1}\tau}=0$. Thus $\phi_*\omega(A)_\tau=\cF_B\rF_\abs^*\rH^1(B)_{\sigma^{-1}\tau}$ if and only if $\omega(A)_\tau=\cF_A\rF_\abs^*\rH^\dr_1(A)_{\sigma^{-1}\tau}$ that is, $\Pi_A\omega(A)_\tau=0$. However, this is equivalent to the vanishing of $\Pi_A\colon\Lie(A)_{\sigma^d\tau}\to\Lie(A)_\tau$.

In case (ii), we look at the following commutative diagram
\[\xymatrix{
\rF_\abs^*\rH^\dr_1(A)_{\sigma^{d-1}\tau}  \ar[d]^-{\rF_\abs^*\phi_*}_-\simeq & \rH^\dr_1(A)_{\sigma^d\tau}
\ar[d]^-{\phi_*}\ar[l]_-{\cV_A}\ar[r]^-{\Pi_A} & \rH^\dr_1(A)_\tau \ar[d]^-{\phi_*}_-\simeq \\
\rF_\abs^*\rH^\dr_1(B)_{\sigma^{d-1}\tau}  & \rH^\dr_1(B)_{\sigma^d\tau} \ar[l]_-{\cV_B}\ar[r]^-{\Pi_B}_-\simeq & \rH^\dr_1(B)_\tau
}\]
where the two vertical maps on sides are isomorphisms. The map $\Pi_A\colon\Lie(A)_{\sigma^d\tau}\to\Lie(A)_\tau$ vanishes if and only if $\Pi_A\rH^\dr_1(A)_{\sigma^d\tau}=\omega(A)_\tau$, that is, $\phi_*\omega(A)_\tau=\Pi_B\phi_*\Lie(A)_{\sigma^d\tau}$. Now it suffices to show that $\cV_B\phi_*\Lie(A)_{\sigma^d\tau}=\rF_\abs^*\phi_*\omega(A)_{\sigma^{d-1}\tau}$. However, this is obvious since $\rF_\abs^*\omega(A)_{\sigma^{d-1}\tau}=\cV_A\rH^\dr_1(A)_{\sigma^d\tau}$. Part (2) is proved.

Part (3) follows from (1) and (2), in which the compatibility is obvious.
\end{proof}

\begin{remark}\label{bre:zink}
If we choose a type $S'\subset S$ other than $S^\dag$, then the composite morphism $X(\Delta)_{\fr_0,\fr_1}^S\to X(\Delta)_{\fr_0,\fr_1}^{S'}\to Z(\Delta)_{\fr_0,\fr_1}^{S'}$ is only a $(\dP^1)^{|\Phi_S|}$-bundle up to a Frobenius factor (\cite{Hel12}*{Definition 4.7} or Definition \ref{bde:frobenius} below) in general (see Proposition \ref{bpr:translation} for example).

In \cite{Zin82}, the author only proved that $X(\Delta)_{\fr_0,\fr_1}^S$ is a $(\dP^1)^{|\Phi_S|}$-bundle over  $Z(\Delta)_{\fr_0,\fr_1}^{S^\dag}$ \emph{up to a Frobenius factor} for certain $S^\dag$ he chose.
\end{remark}

\begin{corollary}\label{bco:translation}
Let $S$ be a type that is not sparse. For a proper ample subset $\tilde{S}$ with $\tilde{S}^\dag=S$, the morphism $\wp_{\tilde{S}}$ is an isomorphism if and only if $\tilde{S}=S\cup\sigma^{-1}S$. In particular, the morphism $\wp_S$ has a canonical section which is $\wp_{S\cup\sigma^{-1}S}^{-1}$.
\end{corollary}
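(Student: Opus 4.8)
The plan is to show that for a proper ample $\tilde{S}$ with $\tilde{S}^\dag = S$, the bundle $\wp_{\tilde S}$ is an isomorphism precisely when $\Phi_{\tilde S}$ is empty, and then to identify the unique such $\tilde S$ as $S\cup\sigma^{-1}S$. By Theorem \ref{bth:hilbert_subbundle} (3), $\wp_{\tilde S}\colon X(\Delta)_{\fr_0,\fr_1}^{\tilde S}\to Z(\Delta)_{\fr_0,\fr_1}^{\tilde S^\dag} = Z(\Delta)_{\fr_0,\fr_1}^S$ is a $(\dP^1)^{|\Phi_{\tilde S}|}$-bundle, so it is an isomorphism iff $|\Phi_{\tilde S}| = 0$ (the base $Z(\Delta)_{\fr_0,\fr_1}^S$ is nonempty, e.g. by Lemma \ref{ble:zink}, so the fibre dimension is genuinely detected). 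Thus the entire content is a combinatorial identity about subsets of $\Phi = \dZ/2d\dZ$: among the proper ample $\tilde S$ containing $S$ with the property that $\tau\in\tilde S\setminus S\Rightarrow\sigma\tau\in S$ (i.e. $\tilde S^\dag = S$), find those for which $\Phi_{\tilde S} = \{\tau\in\Phi_S : \sigma^d\tau\notin\tilde S\}$ is empty.

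First I would unwind the definitions. Recall $\Phi_S = r_S^{-1}\{-1\} = \{\tau : \tau\notin S,\ \sigma\tau\in S\} \subset S$ by Lemma \ref{ble:type} (2), and this set is nonempty exactly because $S$ is not sparse (if $S$ were sparse, $\Phi_S = S$ and the statement would be vacuous — that is the role of the hypothesis). Since $\tilde S^\dag = S$ forces $S\subset\tilde S$ and $\tilde S\setminus S\subset\sigma^{-1}S$, the possibilities for $\tilde S$ are exactly $\tilde S = S\cup T$ with $T\subset\sigma^{-1}S\setminus S$. Now $\Phi_{\tilde S}$ empty means: for every $\tau\in\Phi_S$ (equivalently every $\tau\in\Phi_{\tilde S^\dag} = \Phi_S$, using that $\Phi_{\tilde S}\subset\Phi_{\tilde S^\dag}$ by Notation \ref{bno:dag_subset}), we have $\sigma^d\tau\in\tilde S$. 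For $\tau\in\Phi_S$ we have $\tau\notin S$ but $\sigma^d\tau$ need not lie in $S$; the condition $\sigma^d\tau\in\tilde S = S\cup T$ then pins down which elements $T$ must contain. I would check that $\sigma^d\tau\in\sigma^{-1}S\setminus S$ whenever $\tau\in\Phi_S$ and $\sigma^d\tau\notin S$: indeed $\tau\in\Phi_S$ gives $\sigma\tau\in S$, hence $\sigma^{d+1}\tau\in\sigma^d S$; and one must verify $\sigma^d S = S$ fails in general but $\sigma\cdot(\sigma^d\tau)\in S$ holds — here is where the arithmetic of $\Phi\simeq\dZ/2d\dZ$ and the two-to-one map $\pi$ enters. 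The cleanest route is: ampleness of $S$ and $|S| = d$ mean $S$ contains exactly one of each pair $\{i, i+d\}$; so $\sigma^d S = \Phi\setminus S$, and for $\tau\in\Phi_S$ with $\sigma^d\tau\notin S$ we get $\sigma^d\tau\in\Phi\setminus S$, while $\sigma(\sigma^d\tau) = \sigma^{d+1}\tau = \sigma^d(\sigma\tau)\in\sigma^d S = \Phi\setminus S$... I would recompute carefully: the point is to show $\sigma^d\tau\in\sigma^{-1}S$, i.e. $\sigma^{d+1}\tau\in S$, i.e. (since $\sigma^{2d} = \r{id}$) $\sigma^{d+1}\tau = \sigma^d(\sigma\tau)$ and $\sigma\tau\in S$ forces $\sigma^d(\sigma\tau)\in\Phi\setminus S$ — so actually $\sigma^d\tau\notin\sigma^{-1}S$, contradiction, unless $\sigma^d\tau\in S$ already. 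This would show the subtle claim that $\Phi_{\tilde S}$ is \emph{automatically} controlled, and I would then directly read off that the unique $T$ forcing $\Phi_{\tilde S} = \emptyset$ is $T = \sigma^{-1}S\setminus S$, giving $\tilde S = S\cup\sigma^{-1}S$; and one checks $|S\cup\sigma^{-1}S| < 2d$ so it is a \emph{proper} ample subset, and $\Phi_{S\cup\sigma^{-1}S}$ is visibly empty.

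The last sentence of the corollary — that $\wp_S$ has a canonical section equal to $\wp_{S\cup\sigma^{-1}S}^{-1}$ — is then immediate from Notation \ref{bno:hilbert_subbundle}: the composite $X(\Delta)_{\fr_0,\fr_1}^{S\cup\sigma^{-1}S}\to X(\Delta)_{\fr_0,\fr_1}^S\to Z(\Delta)_{\fr_0,\fr_1}^S$ defining $\wp_{S\cup\sigma^{-1}S}$ factors through $\wp_S$ after the closed immersion $X(\Delta)_{\fr_0,\fr_1}^{S\cup\sigma^{-1}S}\hookrightarrow X(\Delta)_{\fr_0,\fr_1}^S$, and since the first arrow $\wp_{S\cup\sigma^{-1}S}$ is an isomorphism by the above, its inverse composed with the closed immersion is a section of $\wp_S$. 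I expect the main obstacle to be the bookkeeping in the combinatorial step: correctly tracking the interplay between the two conventions $\sigma\tau = \tau+1$ on $\Phi = \dZ/2d\dZ$, the identity $\sigma^d S = \Phi\setminus S$ for a type $S$, and the precise definitions of $S^\dag$, $\Phi_S$, $\Phi_{\tilde S}$ — it is easy to get an off-by-one error or conflate $\Phi_S$ with $\Phi_{\tilde S}$. Once the identity $\tilde S$ is an isomorphism $\iff \tilde S = S\cup\sigma^{-1}S$ is nailed down at the level of subsets, everything else is formal via Theorem \ref{bth:hilbert_subbundle}.
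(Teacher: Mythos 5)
Your overall strategy is the same as the paper's: by Theorem \ref{bth:hilbert_subbundle} (3), $\wp_{\tilde S}$ is a $(\dP^1)^{|\Phi_{\tilde S}|}$-bundle over the nonempty scheme $Z(\Delta)_{\fr_0,\fr_1}^S$, so everything reduces to the combinatorial claim that $S\cup\sigma^{-1}S$ is the unique proper ample $\tilde S$ with $\tilde S^\dag=S$ and $\Phi_{\tilde S}=\emptyset$ --- which is exactly the paper's one-line proof. The problem is that this combinatorial claim is the entire content, and your verification of it breaks down. For a type $S$ one has $r_S(\tau)=\b{1}_S(\sigma\tau)-\b{1}_S(\tau)=-1$ precisely when $\tau\in S$ and $\sigma\tau\notin S$, so $\Phi_S=\{\tau\in S:\sigma\tau\notin S\}$ (this is why Lemma \ref{ble:type} (2) gives $\Phi_S\subset S$), not $\{\tau\notin S:\sigma\tau\in S\}$ as you wrote. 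Running your computation with your description of $\Phi_S$, you land exactly where you say you do: ``$\sigma^d\tau\notin\sigma^{-1}S$ \dots unless $\sigma^d\tau\in S$ already'', i.e.\ $\sigma^d\tau\in S\subset\tilde S$ automatically. If that were the right bookkeeping, then $\Phi_{\tilde S}=\emptyset$ for \emph{every} $\tilde S$ with $\tilde S^\dag=S$ (including $\tilde S=S$), so every $\wp_{\tilde S}$, in particular $\wp_S$ itself, would be an isomorphism --- false, since $\wp_S$ is a $(\dP^1)^{|\Phi_S|}$-bundle with $|\Phi_S|$ odd (Lemma \ref{ble:type} (3)). You notice the tension but do not resolve it, and then simply assert the answer; so the ``only if'' direction is not actually proved.

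The correct verification is short. Write $\tilde S=S\cup T$ with $T\subset\sigma^{-1}S\setminus S$ (this is what $\tilde S^\dag=S$ means). Take $\tau\in\Phi_S$, so $\tau\in S$ and $\sigma\tau\notin S$. Since a type contains exactly one element of each fibre $\{\tau',\sigma^d\tau'\}$ of $\pi$, we get $\sigma^d\tau\notin S$ and $\sigma^{d+1}\tau=\sigma^d(\sigma\tau)\in S$; hence $\sigma^d\tau\in\sigma^{-1}S\setminus S$, and in fact $\tau\mapsto\sigma^d\tau$ is a bijection from $\Phi_S$ onto $\sigma^{-1}S\setminus S$ (the inverse is again $\sigma^d$). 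Consequently $\Phi_{\tilde S}=\emptyset$, i.e.\ $\sigma^d\tau\in S\cup T$ for all $\tau\in\Phi_S$, forces $T\supset\sigma^d\Phi_S=\sigma^{-1}S\setminus S$, hence $\tilde S=S\cup\sigma^{-1}S$; conversely this $\tilde S$ visibly has $\Phi_{\tilde S}=\emptyset$, and it is \emph{proper} exactly because $S$ is not sparse (that, rather than nonemptiness of $\Phi_S$, is the role of the hypothesis --- $\Phi_S\neq\emptyset$ holds for every type by Lemma \ref{ble:type} (3)). With this step repaired, your bundle-dimension criterion and your derivation of the section of $\wp_S$ from $\wp_{S\cup\sigma^{-1}S}^{-1}$ via Notation \ref{bno:hilbert_subbundle} are fine.
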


\begin{proof}
Note that $S\cup\sigma^{-1}S$ is the unique proper ample set such that $(S\cup\sigma^{-1}S)^\dag=S$ and $\Phi_{S\cup\sigma^{-1}S}=\emptyset$. Then the corollary follows from Theorem \ref{bth:hilbert_subbundle}.
\end{proof}

\begin{proposition}\label{bpr:hilbert_sparse}
Suppose that $d=\deg F$ is odd. Let $S^+$ (resp.\ $S^-$) be the unique sparse type that contains (resp.\ does not contain) the distinguished element in $\Phi$ coming the orientation of $\cO$ at $\fl$. Let $k$ be an algebraically closed field containing $\dF_{\ell^{2d}}$.
\begin{enumerate}
  \item There are canonical $\Gal(k/\dF_{\ell^{2d}})$-invariant isomorphisms
      \[Z(\Delta)_{\fr_0,\fr_1}^{S^\pm}(k)/\Cl(F)_{\fr_1}\xrightarrow{\sim}\cS(\Phi_F\cup\Delta\setminus\{\fl\})_{\fr_0\fr_1}.\]
      Moreover, the maps $Z(\Delta)_{\fr_0,\fr_1}^{S^\pm}(k)/\Cl(F)_{\fr_1}\to Z(\Delta)_{\fr_0,\fr_1}^{S^\mp}(k)/\Cl(F)_{\fr_1}$ induced by $\sigma_{S^\pm}$ (Remark \ref{bre:frobenius}) coincide with the identity map on the right-hand side under these isomorphisms.

  \item There is a canonical $\Gal(k/\dF_{\ell^{2d}})$-invariant isomorphism
      \[X(\Delta)_{\fr_0,\fr_1}^\Phi(k)/\Cl(F)_{\fr_1}\xrightarrow{\sim}\cS(\Phi_F\cup\Delta\setminus\{\fl\})_{\fr_0\fr_1\fl}.\]
      Moreover, the map induced by $\sigma^\Phi$ (Remark \ref{bre:frobenius}) on the left-hand side corresponds to the map of changing orientation at $\fl$ on the right-hand side.

  \item the isomorphisms in (1) and (2) are compatible with the respective actions of the Hecke monoid $\dT_\fq$ for every prime $\fq$ of $F$ coprime to $\Delta$ and $\fr_0\fr_1$.

  \item Under the isomorphisms in (1) and (2), the map
      \[X(\Delta)_{\fr_0,\fr_1}^\Phi(k)/\Cl(F)_{\fr_1}\to Z(\Delta)_{\fr_0,\fr_1}^{S^?}(k)/\Cl(F)_{\fr_1}\]
      induced from the composite morphism
      \[X(\Delta)_{\fr_0,\fr_1}^\Phi\to X(\Delta)_{\fr_0,\fr_1}^{S^?}\xrightarrow{\wp_{S^?}}Z(\Delta)_{\fr_0,\fr_1}^{S^?}\]
      coincides with the degeneracy map $\delta$ (resp.\ $\delta^\fl$) for $?=+$ (resp.\ $?=-$) (\Sec\ref{ass:order}).
\end{enumerate}
\end{proposition}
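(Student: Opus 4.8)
The plan is to realize all four statements as consequences of the parameterization of abelian varieties by oriented Eichler orders from Appendix \ref{ss:a}, applied in the special situation $d=3$ with $\fl\in\Delta$. Concretely, for a sparse type $S^\pm$ the moduli functor $Z(\Delta)_{\fr_0,\fr_1}^{S^\pm}$ parameterizes data $(B,\iota_B,C_B,\lambda_B)$ whose trace condition (Definition \ref{bde:hilbert_base}) forces $(B,\iota_B)$ to be exceptional and $\pm$-pure in the sense of Definitions \ref{ade:exceptional} and \ref{ade:pure_ramified}; here one uses Lemma \ref{ble:type} (3), which guarantees $|\Phi_{S^\pm}|$ is odd, so that the two pure cases are genuinely distinguished by whether the distinguished embedding at $\fl$ lies in $S$. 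First I would spell out this dictionary: a $k$-point of $Z(\Delta)_{\fr_0,\fr_1}^{S^\pm}$ is the same as an element of the set $\cA(\Delta)^{\pm 1}_{\fr_0,\fr_1}$ of Definition \ref{ade:abelian_ramified}, where the sign matches the purity. Then Proposition \ref{apr:ramified} (applicable since $d=3$ is odd) gives a canonical $\Gal(k/\dF_{\ell^2})$-invariant isomorphism $\cA(\Delta)^{\pm 1}_{\fr_0,\fr_1}/\Cl(F)_{\fr_1}\xrightarrow{\sim}\cS(\Phi_F\cup\Delta\setminus\{\fl\})_{\fr_0\fr_1}$, which is statement (1); the invariance under the larger group $\Gal(k/\dF_{\ell^{2d}})=\Gal(k/\dF_{\ell^6})$ follows since $\dF_{\ell^6}\supset\dF_{\ell^2}$. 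For statement (2), a $k$-point of $X(\Delta)_{\fr_0,\fr_1}^\Phi$ is an abelian variety with $\iota_A(\Pi)$ acting as zero on all of $\Lie(A)$, hence exceptional and \emph{mixed} (case (1) of Lemma \ref{ale:rapoport_ramified}), i.e.\ an element of $\cA(\Delta)^0_{\fr_0,\fr_1}$; Proposition \ref{apr:ramified} with $i=0$ then yields the isomorphism with $\cS(\Phi_F\cup\Delta\setminus\{\fl\})_{\fr_0\fr_1\fl}$, where the extra factor $\fl$ in the level is exactly the $\fN'=\fN_0\fN_1\fl$ of that proposition.

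Next I would track the Frobenius actions. The automorphism $\sigma_{S^\pm}$ of Remark \ref{bre:frobenius} (the relative $\ell$-Frobenius twisted against the absolute one) interchanges $S^+$ and $S^-$ because $\sigma$ acts on $\Phi=\dZ/6\dZ$ by translation by $1$ and hence swaps the two cosets of $\{0,2,4\}$ and $\{1,3,5\}$; under the bijection with $\cA(\Delta)^{\pm1}$, it corresponds to the map sending $(A,\iota)$ to its Frobenius twist, which (by the last assertion of Proposition \ref{apr:ramified}, that $\Gal(k/\dF_{\ell^2})$ acts trivially on $\cA(\Delta)^i/\Cl(F)_{\fN_1}$) becomes the identity on $\cS(\Phi_F\cup\Delta\setminus\{\fl\})_{\fr_0\fr_1}$. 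This proves the ``moreover'' clause of (1). Similarly, $\sigma^\Phi$ acts on $X(\Delta)_{\fr_0,\fr_1}^\Phi$, and on the level of the bimodule description in Proposition \ref{apr:unramified}/\ref{apr:ramified} it corresponds to interchanging the two maximal orders $\cE^\pm$ extending $O_F\otimes\cE_{0,\ell}$, which is precisely the operation of changing the orientation $o_\fl$ of $\cO$ at $\fl$ (equivalently, composing $\epsilon_\fl$ with $\sigma$); this gives the ``moreover'' clause of (2). Statement (3), compatibility with the Hecke monoid $\dT_\fq$ for $\fq$ coprime to $\Delta\fr_0\fr_1$, is formal: the $\dT_\fq$-action on all of $\cX(\Delta)_{\fr_0,\fr_1}$, $Z(\Delta)_{\fr_0,\fr_1}^{S^\pm}$, and on $\cS(\cdots)$ is defined by the same correspondences $\delta,\delta^\fq$ and pushforward along $\fq$-isogenies away from $\fq$, and the parameterization isomorphisms of Appendix \ref{ss:a} are built to intertwine them (Definitions \ref{ade:degeneracy}, \ref{bde:hecke}), so one only needs to check the diagrams commute, which is immediate from the moduli descriptions.

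The last step, statement (4), is the one requiring genuine bookkeeping with the de Rham homology. The composite $X(\Delta)_{\fr_0,\fr_1}^\Phi\to X(\Delta)_{\fr_0,\fr_1}^{S^?}\xrightarrow{\wp_{S^?}}Z(\Delta)_{\fr_0,\fr_1}^{S^?}$ sends a point with $\iota_A(\Pi)=0$ on $\Lie(A)$ to the abelian variety $B$ obtained by the isogeny $\phi\colon A\to B$ of Proposition \ref{bpr:hilbert_base} (killing $\bigoplus_{\tau\in S^?}\omega(A)_{\sigma^3\tau}$) followed by forgetting the line data. Under the dictionary of Appendix \ref{ss:a}, $A\in\cA(\Delta)^0$ and $B\in\cA(\Delta)^{\pm1}$, and the isogeny $\phi$ is exactly the isogeny that, on the Eichler-order side, realizes $\End(A,\iota)=\End(B,\iota)\cap(\text{an Eichler order of level }\fl\text{ bigger by one})$; translating through Proposition \ref{apr:ramified} this is the degeneracy map $\delta$ of \eqref{aeq:degeneracy} in the $?=+$ case. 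The $?=-$ case differs by one application of $\sigma^\Phi$ relative to $?=+$ (because $S^-=\sigma^{-1}S^+$ as sparse types, up to the identification), which on the target corresponds to the nontrivial element of $\fD(\fr_0\fr_1\fl,\fr_0\fr_1)$ covering the orientation change at $\fl$ — that is, $\delta^\fl$. I expect the main obstacle to be making this last identification precise: one must verify that the specific line $\bigoplus_{\tau\in S^?}\omega(A)_{\sigma^3\tau}$ killed by $\phi$ corresponds, under the chosen uniformizer $\Pi$ and the subalgebra $W\subset\cO\otimes\dZ_\ell$, to the ``$+$'' rather than the ``$-$'' extension of the Eichler order, and that this choice is consistent with the normalization of $o_\fl$ used to single out $S^+$. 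This is a local computation at $\ell$ with Dieudonné modules, entirely parallel to the arguments in the proofs of Propositions \ref{bpr:hilbert_base} and \ref{bpr:hilbert_bundle} and to Ribet's analysis in \cite{Rib89}*{\Sec 4}; once the signs are pinned down the rest follows formally.
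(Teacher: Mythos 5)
Your proposal is correct and takes essentially the same route as the paper: the paper's proof simply observes that points of $Z(\Delta)_{\fr_0,\fr_1}^{S^\pm}(k)$ (resp.\ $X(\Delta)_{\fr_0,\fr_1}^\Phi(k)$) are exceptional and $\pm$-pure (resp.\ mixed), identifies these sets with $\cA(\Delta)^{\pm1}_{\fr_0,\fr_1}$ (resp.\ $\cA(\Delta)^0_{\fr_0,\fr_1}$), and invokes Proposition \ref{apr:ramified}, checking part (3) directly from Definitions \ref{ade:degeneracy} and \ref{bde:hecke}. Your write-up is in fact more detailed than the paper's (which gives no explicit argument for the ``moreover'' clauses or for part (4) beyond citing Proposition \ref{apr:ramified}); the only cosmetic slip is that you work with $\Phi=\dZ/6\dZ$, i.e.\ $d=3$, whereas the statement is for arbitrary odd $d$.
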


\begin{proof}
Let $(A,\iota_A,C_A,\lambda_A)$ be an element in either $Z(\Delta)_{\fr_0,\fr_1}^{S^\pm}(k)$ or $X(\Delta)_{\fr_0,\fr_1}^\Phi(k)$. We claim that the pair $(A,\iota_A)$ is exceptional (Definition \ref{ade:exceptional}). In the first case, this follows from the fact that one of $\Lie(B)_\tau$ and $\Lie(B)_{\sigma^d\tau}$ vanishes for every $\tau\in\Phi$. In the second case, it is already in the definition.

Then we can identify $Z(\Delta)_{\fr_0,\fr_1}^{S^\pm}(k)$ (resp.\ $X(\Delta)_{\fr_0,\fr_1}^\Phi(k)$) with $\cA(\Delta)^{\pm1}_{\fr_0,\fr_1}$ (resp.\ $\cA(\Delta)^0_{\fr_0,\fr_1}$). The proposition follows from Proposition \ref{apr:ramified}, where part (3) can be checked directly from Definitions \ref{ade:degeneracy}, \ref{bde:hecke}, and the construction.
\end{proof}

Before the end of this section, we discuss some relation between our construction and the Goren--Oort stratification in \cite{TX16}.

We fix a type $S$. It is elementary to see that $\pi$ induces a bijection $\pi\colon\Phi\setminus(S\cup\sigma^{-1}S)\xrightarrow{\sim}\Phi_F\setminus\Delta_S$. For every $\tau\in\Phi_F\setminus\Delta_S$, we denote by $\tilde\tau$ the unique element in $\Phi\setminus(S\cup\sigma^{-1}S)$ such that $\pi\tilde\tau=\tau$. Then $X(\Delta)_{\fr_0,\fr_1}^{S\cup\sigma^{-1}S\cup\{\tilde\tau\}}$ is a divisor of $X(\Delta)_{\fr_0,\fr_1}^{S\cup\sigma^{-1}S}$. We denote by $Z'(\Delta)_{\fr_0,\fr_1}^{S,\tau}$ its image under the isomorphism $\wp_{S\cup\sigma^{-1}S}\colon X(\Delta)_{\fr_0,\fr_1}^{S\cup\sigma^{-1}S}\to Z(\Delta)_{\fr_0,\fr_1}^S$. On the other hand, we have the Goren--Oort divisor $Z(\Delta)_{\fr_0,\fr_1}^{S,\tau}$ of $Z(\Delta)_{\fr_0,\fr_1}^S$ attached to the element $\tau$, defined in \cite{TX16}, which we recall below.

Let $(B,\iota_B,C_B,\lambda_B)$ be an object of $Z(\Delta)_{\fr_0,\fr_1}^S$. For every $\tau'\in\Phi$, we define the essential Verschiebung map (at $\tau')$ $\cV_{B,\r{es},\tau'}\colon \rH^\dr_1(B)_{\tau'}\to\rF_\abs^*\rH^\dr_1(B)_{\sigma^{-1}\tau'}$ to be
\begin{itemize}
  \item the original one $\cV_B\colon\rH^\dr_1(B)_{\tau'}\to\rF_\abs^*\rH^\dr_1(B)_{\sigma^{-1}\tau'}$ if it is not the zero map;
  \item the inverse of $\cF_B\colon \rF_\abs^*\rH^\dr_1(B)_{\sigma^{-1}\tau'}\to\rH^\dr_1(B)_{\tau'}$ if it is an isomorphism.
\end{itemize}
For $n\geq 1$, put
$\cV_{B,\r{es},\tau'}^n\coloneqq \rF_\abs^{(n-1)*}\cV_{B,\r{es},\sigma^{-(n-1)}\tau'}\circ\cdots\circ\cV_{B,\r{es},\tau'}
\colon\rH^\dr_1(B)_{\tau'}\to\rF_\abs^{n*}\rH^\dr_1(B)_{\sigma^{-n}\tau'}$.
Now for $\tau\in\Phi_F\setminus\Delta_S$, let $n_\tau$ be the smallest \emph{positive} integer such that $\sigma^{-n_\tau}\tau\in\Phi_F\setminus\Delta_S$. Then the map $\cV_{B,\r{es},\tilde\tau}^{n_\tau}$ is of rank $1$. According to \cite{TX16}*{Definition 4.6}, the vanishing locus of the restricted map $\cV_{\cB,\r{es},\tilde\tau}^{n_\tau}\res_{\omega(\cB)_{\tilde\tau}}$ for the universal object $(\cB,\iota_\cB,C_\cB,\lambda_\cB)$ is the Goren--Oort stratum, which we denote by $Z(\Delta)_{\fr_0,\fr_1}^{S,\tau}$, attached to the subset $\{\tau\}$. By \cite{TX16}*{Proposition 4.7}, it is smooth of codimension $1$. We have the following comparison result.

\begin{proposition}\label{bpr:divisor}
For every $\tau\in\Phi_F\setminus\Delta_S$, the subschemes $Z'(\Delta)_{\fr_0,\fr_1}^{S,\tau}$ and $Z(\Delta)_{\fr_0,\fr_1}^{S,\tau}$ coincide.
\end{proposition}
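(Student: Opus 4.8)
\textbf{Proof proposal for Proposition \ref{bpr:divisor}.}

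The plan is to prove the equality by comparing, on the universal object over $Z(\Delta)_{\fr_0,\fr_1}^S$, the two local conditions cutting out the divisors: on the one side the vanishing of the essential Verschiebung tower $\cV_{\cB,\r{es},\tilde\tau}^{n_\tau}\res_{\omega(\cB)_{\tilde\tau}}$ defining $Z(\Delta)_{\fr_0,\fr_1}^{S,\tau}$, and on the other side the condition pulled back from $X(\Delta)_{\fr_0,\fr_1}^{S\cup\sigma^{-1}S\cup\{\tilde\tau\}}$ under the isomorphism $\wp_{S\cup\sigma^{-1}S}$, which by Theorem \ref{bth:hilbert_subbundle} (and the description of $X(\Delta)_{\fr_0,\fr_1}^{S\cup\sigma^{-1}S}$ via $X_1$, $Z_1$) amounts to the vanishing of $\Pi_A\colon\Lie(A)_{\sigma^d\tilde\tau}\to\Lie(A)_{\tilde\tau}$ for the associated point of $X(\Delta)_{\fr_0,\fr_1}^{S\cup\sigma^{-1}S}$. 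Since both loci are reduced closed subschemes of a smooth scheme (the Goren--Oort stratum by \cite{TX16}*{Proposition 4.7}, and $Z'(\Delta)_{\fr_0,\fr_1}^{S,\tau}$ being the isomorphic image of a stratum $X(\Delta)_{\fr_0,\fr_1}^{S\cup\sigma^{-1}S\cup\{\tilde\tau\}}$, which is smooth of codimension $1$ by Proposition \ref{bpr:zink}), it suffices to check that they have the same $k$-points for every algebraically closed field $k$ containing $\dF_{\ell^{2d}}$; the scheme structures then agree because a reduced closed subscheme is determined by its points.

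First I would fix an object $(B,\iota_B,C_B,\lambda_B)\in Z(\Delta)_{\fr_0,\fr_1}^S(k)$ and, using the isomorphism $q\circ p^{-1}$ of Proposition \ref{bpr:hilbert_base} together with the section $\wp_{S\cup\sigma^{-1}S}^{-1}$ from Corollary \ref{bco:translation}, produce the canonically associated point $(A,\iota_A,C_A,\lambda_A;B,\iota_B,C_B,\lambda_B;\phi)\in X_1(\Delta)_{\fr_0,\fr_1}^S(k)$. Here $\phi\colon A\to B$ is the isogeny with $\Ker\phi\subset A[\ell]$ and $\Ker[\phi_*\colon\rH^\dr_1(A)\to\rH^\dr_1(B)]=\bigoplus_{\tau'\in S}\omega(A)_{\sigma^d\tau'}$. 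The point $(A,\iota_A,C_A,\lambda_A)$ lies in $X(\Delta)_{\fr_0,\fr_1}^{S\cup\sigma^{-1}S\cup\{\tilde\tau\}}(k)$ if and only if, in addition to the defining vanishings for $S\cup\sigma^{-1}S$, we have $\Pi_A\colon\Lie(A)_{\sigma^d\tilde\tau}\to\Lie(A)_{\tilde\tau}$ equal to zero, i.e. $\Pi_A\rH^\dr_1(A)_{\sigma^d\tilde\tau}=\omega(A)_{\tilde\tau}$ (a rank-one condition inside the rank-two $\rH^\dr_1(A)_{\tilde\tau}$, by Lemma \ref{ble:rank} (1)). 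The main computation is then to translate this condition on $A$ into a condition on $\omega(B)_{\tilde\tau}$ inside $\rH^\dr_1(B)_{\tilde\tau}$, using that $\phi_*\colon\rH^\dr_1(A)_{\tilde\tau}\to\rH^\dr_1(B)_{\tilde\tau}$ is an isomorphism for $\tilde\tau\notin S$, and that the same holds at each intermediate index $\sigma^{-j}\tilde\tau$ occurring before one re-enters $S\cup\sigma^{-1}S$, while at indices inside $S$ the map $\cF$ (resp. $\Pi$) is forced to be the relevant isomorphism or zero. Concretely, chasing the commutative squares relating $\cF_A,\cV_A,\Pi_A$ to $\cF_B,\cV_B,\Pi_B$ along the $\sigma$-orbit of $\tilde\tau$ (exactly the kind of diagram chases appearing in the proofs of Proposition \ref{bpr:hilbert_bundle} and Theorem \ref{bth:hilbert_subbundle}) should identify $\Pi_A\omega(A)_{\tilde\tau}=0$ with the vanishing of $\cV_{B,\r{es},\tilde\tau}^{n_\tau}$ restricted to $\omega(B)_{\tilde\tau}$, because the essential Verschiebung is precisely the composite that skips over the indices in $S$ where the naive $\cV$ degenerates, and the integer $n_\tau$ is exactly the number of steps needed to return to $\Phi_F\setminus\Delta_S$.

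I expect the bookkeeping in this last diagram chase — keeping straight which of $\cV_B$, $\cF_B^{-1}$, $\Pi_B$ is being applied at each index of the orbit segment from $\tilde\tau$ back to $\sigma^{-n_\tau}\tilde\tau$, and Frobenius-twisting correctly via $\rF_\abs^*$ — to be the main obstacle; it is essentially a longer version of the case analysis already carried out in cases (i) and (ii) of Theorem \ref{bth:hilbert_subbundle}'s proof, now iterated over a chain rather than a single step. Once the two rank-one subbundle conditions in $\rH^\dr_1(B)_{\tilde\tau}$ are seen to coincide pointwise, the equality $Z'(\Delta)_{\fr_0,\fr_1}^{S,\tau}=Z(\Delta)_{\fr_0,\fr_1}^{S,\tau}$ of reduced subschemes follows, and the compatibility with Hecke and $\Cl(F)_{\fr_1}$ actions is automatic since all the morphisms involved ($\wp_{S\cup\sigma^{-1}S}$, $p$, $q$, and the Goren--Oort construction) are equivariant.
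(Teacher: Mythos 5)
Your overall strategy coincides with the paper's: reduce to comparing geometric points (both loci being reduced closed subschemes of the smooth $Z(\Delta)_{\fr_0,\fr_1}^S$), pass to the isogeny $\phi\colon A\to B$ supplied by $X_1(\Delta)_{\fr_0,\fr_1}^S$ together with the section $\wp_{S\cup\sigma^{-1}S}^{-1}$, and match the extra $\Pi$-vanishing against the essential Verschiebung condition by chasing the squares relating $\cF_A,\cV_A,\Pi_A$ to $\cF_B,\cV_B,\Pi_B$. But there are two concrete problems. First, the condition cutting $X(\Delta)_{\fr_0,\fr_1}^{S\cup\sigma^{-1}S\cup\{\tilde\tau\}}$ out of $X(\Delta)_{\fr_0,\fr_1}^{S\cup\sigma^{-1}S}$ is the vanishing of $\Pi_A\colon\Lie(A)_{\tilde\tau}\to\Lie(A)_{\sigma^d\tilde\tau}$, equivalently $\Pi_A\rH^\dr_1(A)_{\tilde\tau}=\omega(A)_{\sigma^d\tilde\tau}$; you state instead the vanishing of $\Pi_A\colon\Lie(A)_{\sigma^d\tilde\tau}\to\Lie(A)_{\tilde\tau}$, and since $\tilde\tau\notin S$ forces $\sigma^d\tilde\tau\in S$, that map already vanishes identically on all of $X(\Delta)_{\fr_0,\fr_1}^{S\cup\sigma^{-1}S}$, so the condition as written cuts out nothing. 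Because the correct condition concerns a line in $\rH^\dr_1(A)_{\sigma^d\tilde\tau}$, the paper first invokes \cite{TX16}*{Lemma 4.5} to rewrite the Goren--Oort divisor as the vanishing locus of $\cV_{\cB,\r{es},\sigma^d\tilde\tau}^{n_\tau}\res_{\omega(\cB)_{\sigma^d\tilde\tau}}$; without this reindexing the two rank-one conditions you want to identify do not sit in the same fiber of $\rH^\dr_1(B)$, and a comparison carried out at $\tilde\tau$ does not directly meet the $\Pi_A$-condition at $\sigma^d\tilde\tau$.

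Second, the diagram chase you defer is the actual content of the proof rather than routine bookkeeping. One must show $\Ker\cV_{B,\r{es},\sigma^d\tilde\tau}^{n_\tau}=\phi_*\cF_A\rH^\dr_1(A)_{\sigma^{d-1}\tilde\tau}$, and the argument genuinely splits into the cases $n_\tau$ even and $n_\tau$ odd: along the orbit segment from $\sigma^d\tilde\tau$ down to $\sigma^{d-n_\tau}\tilde\tau$, which of the maps $\phi_*$ are isomorphisms and which have one-dimensional kernel is dictated by $\Ker\phi_*=\bigoplus_{\tau'\in S}\omega(A)_{\sigma^d\tau'}$, and the two parities lead to chains that terminate differently at $\sigma^{d-n_\tau}\tilde\tau$. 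Your proposal records neither this case division nor the verification that the essential Verschiebung (which replaces the degenerate $\cV_B$ by $\cF_B^{-1}$ at the intermediate indices in $\Delta_S$) carries $\omega(B)_{\sigma^d\tilde\tau}$ to the intended line. Until the reversed condition is corrected and this computation is actually performed, what you have is an accurate plan, not a proof.
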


\begin{proof}
We may assume that $S$ is not sparse; otherwise, $\Phi_F\setminus\Delta_S=\emptyset$. By \cite{TX16}*{Lemma 4.5}, $Z(\Delta)_{\fr_0,\fr_1}^{S,\tau}$ is also the vanishing locus of the restricted map $\cV_{B,\r{es},\sigma^d\tilde\tau}^{n_\tau}\res_{\omega(B)_{\sigma^d\tilde\tau}}$.

It suffices to check that the two subschemes have the same set of geometric points. We fix an algebraically closed field $k$ containing $\dF_{\ell^{2d}}$. Let $(B,\iota_B,C_B,\lambda_B)$ be an element of $Z(\Delta)_{\fr_0,\fr_1}^S(k)$. Let $(A,\iota_A,C_A,\lambda_A;B,\iota_B,C_B,\lambda_B;\phi)$ be its inverse image in $X_1(\Delta)_{\fr_0,\fr_1}^{S\cup\sigma^{-1}S}(k)$. There are two cases.

Suppose that $n_\tau$ is even. We have the following commutative diagram
\[\xymatrix{
\rH^\dr_1(A)_{\sigma^{d-n_\tau}\tilde\tau} \ar[d]^-{\phi_*}  & \rH^\dr_1(A)_{\sigma^{d-n_\tau+1}\tilde\tau} \ar[d]^-{\phi_*}\ar[l]_-{\cV_A}\ar[r]^-{\cF_A} & \cdots \ar[r]^-{\cF_A} & \rH^\dr_1(A)_{\sigma^d\tilde\tau} \ar[d]^-{\phi_*}_-\simeq  \\
\rH^\dr_1(B)_{\sigma^{d-n_\tau}\tilde\tau}   & \rH^\dr_1(B)_{\sigma^{d-n_\tau+1}\tilde\tau} \ar[l]_-{\cV_B}\ar[r]^-{\cF_B}_-\simeq
& \cdots \ar[r]^-{\cF_B}_-\simeq & \rH^\dr_1(B)_{\sigma^d\tilde\tau}
}\]
where we recall Remark \ref{bre:frobenius}. As $(A,\iota_A,C_A,\lambda_A)$ belongs to $X(\Delta)_{\fr_0,\fr_1}^{S\cup\sigma^{-1}S}(k)$, we know that $\cF_A\rH^\dr_1(A)_{\sigma^{-1}\tau'}=\cV_A\rH^\dr_1(A)_{\sigma\tau'}$ for $\tau'=\sigma^{d-2}\tilde\tau,\dots,\sigma^{d-n_\tau+2}\tilde\tau$. It implies that $\Ker\cV_{B,\r{es},\sigma^d\tilde\tau}^{n_\tau}=\phi_*\cF_A\rH^\dr_1(A)_{\sigma^{d-1}\tilde\tau}$ as $\phi_*\cV_A\rH^\dr_1(A)_{\sigma^{d-n_\tau+1}\tilde\tau}=0$. On the other hand, we know that $(B,\iota_B,C_B,\lambda_B)$ belongs to $Z'(\Delta)_{\fr_0,\fr_1}^{S,\tau}(k)$ if and only if $\cF_A\rH^\dr_1(A)_{\sigma^{d-1}\tilde\tau}=\omega(A)_{\sigma^d\tilde\tau}$, which is equivalent to $\phi_*\cF_A\rH^\dr_1(A)_{\sigma^{d-1}\tilde\tau}=\omega(B)_{\sigma^d\tilde\tau}$, which is then equivalent to $\cV_{B,\r{es},\sigma^d\tilde\tau}^{n_\tau}\omega(B)_{\sigma^d\tilde\tau}=0$. The last condition is equivalent to $(B,\iota_B,C_B,\lambda_B)\in Z(\Delta)_{\fr_0,\fr_1}^{S,\tau}(k)$.

Suppose that $n_\tau$ is odd. We have the following commutative diagram
\[\xymatrix{
\rH^\dr_1(A)_{\sigma^{d-n_\tau}\tilde\tau} \ar[d]^-{\phi_*}_-\simeq  & \rH^\dr_1(A)_{\sigma^{d-n_\tau+1}\tilde\tau} \ar[d]^-{\phi_*}_-\simeq\ar[l]_-{\cV_A} & \cdots \ar[l]_-{\cV_A}\ar[r]^-{\cF_A} & \rH^\dr_1(A)_{\sigma^d\tilde\tau} \ar[d]^-{\phi_*}_-\simeq  \\
\rH^\dr_1(B)_{\sigma^{d-n_\tau}\tilde\tau}   & \rH^\dr_1(B)_{\sigma^{d-n_\tau+1}\tilde\tau} \ar[l]_-{\cV_B}
& \cdots \ar[l]_-{\cV_A}^-\simeq\ar[r]^-{\cF_B}_-\simeq & \rH^\dr_1(B)_{\sigma^d\tilde\tau}
}\]
in which $\phi_*$ on the left is an isomorphism now. By a similar argument, we also have $\Ker\cV_{B,\r{es},\sigma^d\tilde\tau}^{n_\tau}=\phi_*\cF_A\rH^\dr_1(A)_{\sigma^{d-1}\tilde\tau}$. The rest is the same.

Thus the proposition follows.
\end{proof}

\subsection{Frobenius factor}

In this section, we discuss some purely inseparable morphisms between different strata. We start from the following remark on the arithmetic Frobenius action.

\begin{remark}[Arithmetic Frobenius morphism]\label{bre:frobenius}
As $X(\Delta)_{\fr_0,\fr_1}=(\cX(\Delta)_{\fr_0,\fr_1}\otimes\dF_\ell)\otimes_{\dF_\ell}\dF_{\ell^{2d}}$ by definition, the $\ell$-Frobenius $\sigma\in\Gal(\dF_{\ell^{2d}}/\dF_\ell)$ acts on $X(\Delta)_{\fr_0,\fr_1}$ via an isomorphism, known as the arithmetic Frobenius morphism. It induces an isomorphism
\[\sigma^S\colon X(\Delta)_{\fr_0,\fr_1}^S\to X(\Delta)_{\fr_0,\fr_1}^{\sigma^{-1}S}\]
of schemes over the isomorphism $\sigma\colon\Spec\dF_{\ell^{2d}}\to\Spec\dF_{\ell^{2d}}$, for every ample subset $S$ of $\Phi$.

On the other hand, for every type $S$, we have a canonical isomorphism $Z(\Delta)_{\fr_0,\fr_1}^S\simeq (Z(\Delta)_{\fr_0,\fr_1}^{\sigma^{-1}S})^{(\ell)}$ of $\dF_{\ell^{2d}}$-schemes. It induces an isomorphism
\[\sigma_S\colon Z(\Delta)_{\fr_0,\fr_1}^S\to Z(\Delta)_{\fr_0,\fr_1}^{\sigma^{-1}S}\]
of schemes over the isomorphism $\sigma\colon\Spec\dF_{\ell^{2d}}\to\Spec\dF_{\ell^{2d}}$.
It is easy to see that for every proper ample subset set $S$, we have $\wp_{\sigma^{-1}S}\circ\sigma^S=\sigma_{S^\dag}\circ\wp_S$.
\end{remark}

Let $k$ be a field containing $\dF_\ell$. Let $X$ be a scheme over $\Spec k$. For every integer $n\geq 0$, we denote by $X^{\sigma^n}$ the $k$-scheme $X\to\Spec k\xrightarrow{\sigma^n}\Spec k$. We have a chain of morphisms
\[X^{\sigma^n}\to X^{\sigma^{n-1}}\to\cdots\to X\]
of $k$-schemes, induced from the absolute $\ell$-Frobenius morphisms. We denote by $\rF_\sigma^n$ the composition. If $k/\dF_\ell$ is a finite extension of degree $n$, then $\rF_\sigma^n\colon X\to X$ is simply the relative Frobenius morphism of the $k$-scheme $X$. We recall the following definition from \cite{Hel12}*{Definition 4.7}.

\begin{definition}\label{bde:frobenius}
A morphism $f\colon Y\to X$ of $k$-schemes is a \emph{Frobenius factor} if there exist an integer $n\geq 0$ and a morphism $g\colon X^{\sigma^n}\to Y$ of $k$-schemes such that $f\circ g=\rF_\sigma^n$.
\end{definition}

A Frobenius factor is a finite morphism if $X$ is locally of finite type. The follow lemma gives a criterion for when a Frobenius factor is a Frobenius morphism.

\begin{lem}\label{ble:frobenius}
Suppose that $k$ is perfect. Let $X,Y$ be two smooth schemes over $\Spec k$ of pure dimension $n$. Let $f\colon Y\to X$ be a Frobenius factor of degree $\ell^n$ (as a finite morphism) such that $f_*\colon\sT_{Y,y}\to\sT_{X,f(y)}$ vanishes at some closed point $y$ of $Y$. Then
there is a unique isomorphism $g\colon X^\sigma\xrightarrow{\sim}Y$ of $k$-schemes such that $f\circ g=\rF_\sigma^1$.
\end{lem}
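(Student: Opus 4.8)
The statement is local on $X$, so I would first reduce to a statement about complete local rings. Write $A=\widehat{\sO}_{X,f(y)}$ and $B=\widehat{\sO}_{Y,y}$; since $X$ and $Y$ are smooth of dimension $n$ over the perfect field $k$, both are isomorphic (non-canonically) to a power series ring $k[[t_1,\dots,t_n]]$. The morphism $f$ gives a finite flat local $k$-algebra homomorphism $f^\#\colon A\to B$ of generic degree $\ell^n$, and being a Frobenius factor means there is an integer $m\geq 0$ and a factorization of the $m$-th relative Frobenius $\rF^m_\sigma\colon A\to A^{\sigma^m}$ — i.e.\ of the inclusion $A\hookrightarrow A$ raising coordinates to the $\ell^m$ power (after twisting the $k$-structure by $\sigma^m$) — through $f^\#$. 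The tangent vanishing hypothesis says $f^\#(\fm_A)\subset\fm_B^2$, where $\fm_A,\fm_B$ are the maximal ideals.

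\textbf{Key steps.} First I would package the data: let $g^\#\colon B\to A^{\sigma^m}$ be the factoring map, so $g^\#\circ f^\#=\rF^m_\sigma$ on $A$. Because $\rF^m_\sigma$ has degree $\ell^{mn}$ and $f^\#$ has degree $\ell^n$, the map $g^\#$ has degree $\ell^{(m-1)n}$; in particular $m\geq 1$. Next, the crucial numerical input: I would show $f^\#(\fm_A)\subset\fm_B^{\ell}$ — more precisely, that $f^\#$ identifies $A$ with the subring $k[[t_1^{\ell},\dots,t_n^{\ell}]]\subset k[[t_1,\dots,t_n]]=B$ up to a coordinate change. Indeed, $B$ is free over $A$ of rank $\ell^n$; since $f^\#(\fm_A)\subset\fm_B^2$, the cotangent map $\fm_A/\fm_A^2\to\fm_B/\fm_B^2$ is zero, so $f^\#(\fm_A)\subset\fm_B^2$, and then counting: $B/f^\#(\fm_A)B$ is an Artin local $k$-algebra of length $\ell^n$ that is a quotient of $B/\fm_B^2$-truncations; combined with the Frobenius-factor condition (which forces $t_i^{\ell^m}\in f^\#(A)$ for a suitable regular system of parameters $t_i$ of $B$), a short argument with Hilbert functions of graded quotients pins down $f^\#(A)=k[[t_1^{\ell},\dots,t_n^{\ell}]]$ exactly, and $m=1$. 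This is the step I expect to be the main obstacle: squeezing out that the tangent-vanishing plus the degree-$\ell^n$ plus the Frobenius-factor conditions together force precisely the "first Frobenius" and not a higher power twisted by an automorphism — one must rule out, e.g., $f^\#(A)=k[[t_1^{\ell^2},t_2,\dots,t_n]]\cap(\dots)$-type mixed possibilities by the degree constraint, and handle the coordinate change carefully so that it is canonical.

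\textbf{Conclusion.} Once $f^\#(A)=k[[t_1^{\ell},\dots,t_n^{\ell}]]$, the inclusion is exactly the relative Frobenius $\rF^1_\sigma\colon A^\sigma\to B$ after identifying $A^\sigma$ with $k[[t_1^\ell,\dots,t_n^\ell]]$ via $t_i\mapsto t_i^\ell$ (using perfectness of $k$ to absorb the $\sigma$-twist on scalars), so we get a local isomorphism $g\colon \Spec B\xrightarrow{\sim}\Spec A^\sigma$ with $f\circ g=\rF^1_\sigma$. For uniqueness: two such $g$ differ by a $k$-automorphism $h$ of $B$ over $X$, i.e.\ with $\rF^1_\sigma\circ h=\rF^1_\sigma$; since $\rF^1_\sigma$ is faithfully flat (indeed injective on the power series ring), $h=\r{id}$. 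Finally I would note the construction glues: the local isomorphisms are canonical by uniqueness, hence patch to a global isomorphism $g\colon X^\sigma\to Y$ with $f\circ g=\rF^1_\sigma$, completing the proof. Throughout, the only inputs are standard facts about finite flat morphisms of regular local rings and the behaviour of Frobenius on smooth $k$-schemes, all of which are available.
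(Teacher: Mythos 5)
Your reduction to the complete local rings at the single point $y$ is where the argument breaks: the key claim that the one-point data force $f^\#(A)=k[[t_1^{\ell},\dots,t_n^{\ell}]]$ (equivalently, that $m=1$) is false. Take $n=2$, $B=k[[t_1,t_2]]$ and $A=k[[u,v]]\subset B$ with $u=t_1^{\ell}$, $v=t_2^{\ell}+t_1^{\ell+1}$. Then $A$ is regular, $B$ is finite flat over $A$ of rank $\ell^{2}$ (since $(u,v)B=(t_1^{\ell},t_2^{\ell})B$), $\fm_A\subset\fm_B^{2}$, and $B^{\ell^{2}}\subset A$ because $t_1^{\ell^{2}}=u^{\ell}$ and $t_2^{\ell^{2}}=v^{\ell}-u^{\ell+1}$; so regularity, degree $\ell^{n}$, the Frobenius-factor property (with $m=2$) and the vanishing of the cotangent map all hold. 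Yet $t_2^{\ell}\notin A$: if $t_2^{\ell}=f(u,v)$, applying $\partial/\partial t_1$ gives $f_V(u,v)\,t_1^{\ell}=0$, hence $f_V=0$ because $u,v$ are analytically independent, hence $t_2^{\ell}\in k[[u,v^{\ell}]]\subset k[[t_1^{\ell},t_2^{\ell^{2}}]]$, which is absurd. So $A\neq B^{\ell}$; and since $k$ is perfect, $k[[s_1^{\ell},\dots,s_n^{\ell}]]=B^{\ell}$ for \emph{every} regular system of parameters $s_1,\dots,s_n$ of $B$, so no coordinate change can help. The real obstruction is thus not the mixed subrings like $k[[t_1^{\ell^{2}},t_2]]$ that you propose to exclude by the degree count, but ``triangular'' subrings such as $k[[t_1^{\ell},t_2^{\ell}+t_1^{\ell+1}]]$, which satisfy the degree, Frobenius-factor and tangent-vanishing constraints simultaneously; the example is even algebraic ($u,v$ span a polynomial subring of $k[t_1,t_2]$), so this is not an artifact of completion. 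Hence the step you yourself single out as the main obstacle cannot be carried out from the behaviour of $f$ at the single point $y$.

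There is a second, related gap: your final gluing step presupposes local isomorphisms at every point of $Y$, but your hypotheses and your analysis concern only the one point $y$, so away from $y$ there is nothing to patch. The paper's route is different: it reduces the lemma, via the normality/integral-closure argument in the proof of \cite{Hel12}*{Proposition 4.8}, to the function-field statement that $F^{\ell}\in k(X)$ for every $F\in k(Y)$, i.e.\ $k(X)=k(Y)^{\ell}$ by the degree count, and then argues with a tower of purely inseparable extensions generated by a hypothetical $F_0$ with $F_0^{\ell}\notin k(X)$ together with functions vanishing at $y$ whose differentials span the cotangent space there. Note that where the lemma is applied (the proof of Proposition \ref{bpr:translation2}) the tangent map is verified to vanish at \emph{every} closed point; that stronger input kills $f^{*}\Omega_{X/k}\to\Omega_{Y/k}$, so every function pulled back from $X$ has vanishing differential and therefore lies in $k(Y)^{\ell}$, giving $k(X)=k(Y)^{\ell}$ directly. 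The example above shows that some input of this kind, beyond the data at the single point $y$, is genuinely needed, so any repair of your local-ring approach would have to import it rather than extract it from the Hilbert-function count at $y$.
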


\begin{proof}
We may assume that $X$ is irreducible. Then $Y$ must be irreducible. Let $k(X)$ and $k(Y)$ be the function fields of $X$ and $Y$, respectively. Then $k(Y)$ is a purely inseparable extension of $k(X)$ of degree $\ell^n$. By the proof of \cite{Hel12}*{Proposition 4.8}, it suffices to show that for every rational function $F\in k(Y)$, we have $F^\ell\in k(X)$.

If not, then we can find an element $F_0\in k(Y)$ such that $F_0^\ell\not\in k(X)$. Since $k$ is perfect, $F_0$ must be transcendental over $k$. Let $F_1,\dots,F_n$ be $n$ elements in $k(Y)$ that vanishes at $y$ and such that $\{\rd F_1,\dots,\rd F_n\}$ span the cotangent space at $y$. Without lost of generality, we may also assume that $F_0,F_1,\dots F_{n-1}$ are algebraically independent over $k$. As $f_*\colon\sT_{Y,y}\to\sT_{X,f(y)}$ vanishes, we have $F_i\not\in k(X)$ for every $i\leq i\leq n$. In other words, we have a consecutive extension
\[k(X)\subset k(X)(F_0^\ell) \subset k(X)(F_0)\subset k(X)(F_0,F_1)\subset \cdots\subset k(X)(F_0,F_1,\dots,F_{n-1})\]
in which every extension is purely inseparable of degree $\ell$. As $k(X)(F_0,F_1,\dots,F_{n-1})$ is contained in $k(Y)$, the degree of $k(Y)$ over $k(X)$ is at least $\ell^{n+1}$, which is a contradiction. The lemma follows.
\end{proof}

\begin{proposition}\label{bpr:translation}
Let $S$ be a type that is not sparse. The composite morphism
\begin{align}\label{beq:translation}
X(\Delta)_{\fr_0,\fr_1}^{S\cup\sigma^{-1}S}\to X(\Delta)_{\fr_0,\fr_1}^{\sigma^{-1}S}\xrightarrow{\wp_{\sigma^{-1}S}} Z(\Delta)_{\fr_0,\fr_1}^{\sigma^{-1}S}
\end{align}
is a Frobenius factor. It is finite flat of degree $\ell^{|\Phi_S|}$. Here, the first morphism is the canonical embedding as $S\cup\sigma^{-1}S$ contains the type $\sigma^{-1}S$.
\end{proposition}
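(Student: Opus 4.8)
Here is my plan for proving Proposition~\ref{bpr:translation}.

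\medskip

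The plan is to identify the composite morphism \eqref{beq:translation} explicitly using the moduli-theoretic descriptions already established, and then to verify the two assertions (Frobenius factor; finite flat of degree $\ell^{|\Phi_S|}$) separately. First I would unwind the definitions: the source $X(\Delta)_{\fr_0,\fr_1}^{S\cup\sigma^{-1}S}$ is, by Corollary~\ref{bco:translation} (taking $\tilde S = S\cup\sigma^{-1}S$), isomorphic via $\wp_{S\cup\sigma^{-1}S}$ to $Z(\Delta)_{\fr_0,\fr_1}^S$. So \eqref{beq:translation} becomes, after this identification, a morphism $Z(\Delta)_{\fr_0,\fr_1}^S \to Z(\Delta)_{\fr_0,\fr_1}^{\sigma^{-1}S}$. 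The target is $Z(\Delta)_{\fr_0,\fr_1}^{\sigma^{-1}S}$, and by Remark~\ref{bre:frobenius} we have the arithmetic Frobenius isomorphism $\sigma_S \colon Z(\Delta)_{\fr_0,\fr_1}^S \xrightarrow{\sim} Z(\Delta)_{\fr_0,\fr_1}^{\sigma^{-1}S}$ together with the identity $Z(\Delta)_{\fr_0,\fr_1}^S \simeq (Z(\Delta)_{\fr_0,\fr_1}^{\sigma^{-1}S})^{(\ell)}$. The key point is that the morphism in question, expressed in terms of the universal abelian scheme, is built out of the isogeny $\phi$ and the essential Verschiebung maps; the composition of $\deg F = d$ (odd) such maps around the $\sigma$-orbit of a sparse type amounts to a power of Frobenius on the structure sheaf. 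Concretely, I would show that under the identifications, \eqref{beq:translation} factors as $Z(\Delta)_{\fr_0,\fr_1}^S \to (Z(\Delta)_{\fr_0,\fr_1}^{\sigma^{-1}S})^{(\ell^m)}$ composed with a relative Frobenius, for suitable $m$ dividing into $|\Phi_S|$; this is precisely the shape of a Frobenius factor in the sense of Definition~\ref{bde:frobenius}.

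\medskip

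For the degree and flatness, I would argue as follows. Both $X(\Delta)_{\fr_0,\fr_1}^{S\cup\sigma^{-1}S}$ and $Z(\Delta)_{\fr_0,\fr_1}^{\sigma^{-1}S}$ are smooth proper $\dF_{\ell^{2d}}$-schemes by Proposition~\ref{bpr:zink}(2) and \cite{Zin82}*{4.8}, of dimensions $2d - |S\cup\sigma^{-1}S|$ and $2d - |\sigma^{-1}S| = d$ respectively. Since $S$ is not sparse, $|S\cup\sigma^{-1}S| = |S| + |S \setminus \sigma^{-1}S|$; using Lemma~\ref{ble:type} and Notation~\ref{bno:dag_subset}, one checks $|S \cup \sigma^{-1} S| = d + |\Phi_S|$ is false in general — rather the relevant count is that $\Phi_{\sigma^{-1}S}$ (as a proper ample subset with $(\sigma^{-1}S)^\dag$) has exactly $|\Phi_S|$ elements, so the source has dimension $2d - (d + |\Phi_S|)$... wait, I must be careful here: actually the source $X^{S\cup\sigma^{-1}S}$ has dimension $d - |\Phi_{\sigma^{-1}S\cup\dots}|$; I would recompute this cleanly from $\wp_{S\cup\sigma^{-1}S}$ being an isomorphism onto $Z^S$ (dimension $0$ in the sparse case, but $S$ here is non-sparse). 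In fact since $\wp_{S\cup\sigma^{-1}S}$ is an isomorphism by Corollary~\ref{bco:translation}, the source has dimension $\dim Z(\Delta)_{\fr_0,\fr_1}^S = d - |\Delta_S|$, and the target $Z(\Delta)_{\fr_0,\fr_1}^{\sigma^{-1}S}$ has dimension $d - |\Delta_{\sigma^{-1}S}| = d - |\Delta_S|$ as well (since $\sigma$ permutes things). So \eqref{beq:translation} is a morphism between smooth proper schemes of the same pure dimension; being a Frobenius factor it is finite, hence finite flat by miracle flatness (``finite + equidimensional source and target, target regular''), and its degree is a power of $\ell$. To pin down the exponent as $|\Phi_S|$, I would trace through the essential-Verschiebung description: the generic fibre of the purely inseparable extension of function fields has degree $\ell^{r}$ where $r$ counts exactly the coordinates $L_\tau$, $\tau\in\Phi_S$, that get ``twisted by Frobenius'' when passing from the $\sigma^{-1}S$-chart to the $S$-chart. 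Alternatively, compare with Lemma~\ref{le:strata}(4) of the main text in the cubic case, which records the degree as $\ell$ when $|\Phi_S| = 1$.

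\medskip

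The main obstacle I anticipate is the bookkeeping in the first paragraph: making precise, at the level of the Grothendieck--Messing/Dieudonné description, that the composite \eqref{beq:translation} is genuinely of the form $f \circ g = \rF_\sigma^n$ rather than merely a finite purely inseparable morphism. The subtlety is that $\wp_{\sigma^{-1}S}$ on the $\sigma^{-1}S$-chart involves choosing the subbundles $L_\tau$ for $\tau\in\Phi_{\sigma^{-1}S}$ via conditions (i),(ii) of Definition~\ref{bde:hilbert_subbundle}, which are expressed using $\cV_B$ and $\cF_B$ and their inverses; when one pulls these back along the inclusion of the $S\cup\sigma^{-1}S$-chart, the relations that held (equality of images of $\cF_A$ and $\cV_A$ from the $X^{S\cup\sigma^{-1}S}$ condition) force the relevant data to factor through a Frobenius twist on $Z^S$. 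I would handle this by the same diagram-chasing technique used in the proof of Theorem~\ref{bth:hilbert_subbundle}(2) and Proposition~\ref{bpr:divisor} — drawing the commutative squares relating $\rH^\dr_1(A)_\tau$ and $\rH^\dr_1(B)_\tau$ along the $\sigma$-orbit and using that $\phi_*$ is an isomorphism on the "generic" factors — and then invoking Lemma~\ref{ble:frobenius} locally (on a dense open where the tangent map vanishes, which is automatic since the morphism is purely inseparable of positive relative degree) to upgrade "Frobenius factor" statements where needed. Once the Frobenius-factor structure is in hand, flatness and the degree formula are routine.
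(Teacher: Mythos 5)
Your opening reduction via Corollary \ref{bco:translation} and your intention to diagram-chase with $\cF_B$, $\cV_B$, $\Pi_B$ and $\phi_*$ are in the right spirit, but the central mechanism is missing and the intermediate statement you propose is not correct. You claim that, after identifying the source with $Z(\Delta)_{\fr_0,\fr_1}^S$, the composite factors as a morphism to a Frobenius twist $(Z(\Delta)_{\fr_0,\fr_1}^{\sigma^{-1}S})^{(\ell^m)}$ followed by a relative Frobenius, and that this "is precisely the shape of a Frobenius factor". It is not: Definition \ref{bde:frobenius} asks for a morphism $g$ from a Frobenius twist of the \emph{target} into the \emph{source} with $f\circ g=\rF_\sigma^n$, i.e.\ $f$ must divide a power of Frobenius, not be divisible by one. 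Worse, the factorization you posit cannot exist in general: if $f=\rF_\sigma^m\circ h$ with $m\geq 1$ and $f$ finite, then $\deg f\geq\ell^{m(d-|\Delta_S|)}$, whereas the proposition asserts $\deg f=\ell^{|\Phi_S|}=\ell^{|\Delta_S|}$; already in the cubic case needed in the main text ($d=3$, $|\Phi_S|=1$, $\dim Z(\Delta)_{\fr_0,\fr_1}^{\sigma^{-1}S}=2$) this is numerically impossible. Your fallback appeal to Lemma \ref{ble:frobenius} does not repair this: that lemma presupposes the Frobenius-factor property together with degree $\ell^{\dim}$ (neither available here) and is the tool for Proposition \ref{bpr:translation2}, not for this statement; and quoting Lemma \ref{le:strata} (4) for the degree would be circular, since that lemma is deduced from the present proposition.

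What the argument actually needs, and what your plan does not supply, is an explicit characterization of $X(\Delta)_{\fr_0,\fr_1}^{S\cup\sigma^{-1}S}$ inside $Z_1(\Delta)_{\fr_0,\fr_1}^{\sigma^{-1}S}\simeq X(\Delta)_{\fr_0,\fr_1}^{\sigma^{-1}S}$: a point $(B,\iota_B,C_B,\lambda_B;(L_{\sigma^{-1}\tau})_{\tau\in\Phi_S})$ lies in it if and only if, for each $\tau\in\Phi_S$, the pullback $\rF_\abs^*L_{\sigma^{-1}\tau}$ is prescribed either by data on the base (namely $\cV_B\omega(B)_\tau$, when $\sigma^{d+2}\tau\in S$) or, crucially, by \emph{another coordinate} $L_{\sigma^d\tau}$ (when $\sigma^{d+2}\tau\not\in S$). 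This recursive coupling is why one needs the relation $\tau\prec\sigma^{d+1}\tau$ on $\Phi_S$ — a genuine partial order precisely because $S$ is not sparse, a hypothesis your outline never invokes — and an induction over its depth filtration, which exhibits the composite as an iterated pullback, through Cartesian squares, of products of relative Frobenius morphisms of the $\dP^1$-bundles $\dP^1(\rH^\dr_1(\cB)_{\sigma^{-1}\tau})$. That fiberwise (partial) Frobenius structure yields in one stroke the Frobenius-factor property, finiteness, flatness, and the degree $\ell^{|\Phi_S|}$; the diagram chases in the style of Theorem \ref{bth:hilbert_subbundle} enter only to verify the pointwise characterization, i.e.\ its equivalence with the vanishing of $\Pi_A\colon\Lie(A)_\tau\to\Lie(A)_{\sigma^d\tau}$ for $\tau\in\Phi_S$. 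Without this description and induction, neither the Frobenius-factor claim nor the exponent $|\Phi_S|$ is established.
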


\begin{proof}
We compute, under the isomorphism $X(\Delta)_{\fr_0,\fr_1}^{\sigma^{-1}S}\xrightarrow{q\circ p^{-1}}Z_1(\Delta)_{\fr_0,\fr_1}^{\sigma^{-1}S}$, the subscheme $X(\Delta)_{\fr_0,\fr_1}^{S\cup\sigma^{-1}S}$ in terms of the moduli interpretation of $Z_1(\Delta)_{\fr_0,\fr_1}^{\sigma^{-1}S}$.

We claim that under Definition \ref{bde:hilbert_bundle}, an element $(B,\iota_B,C_B,\lambda_B;(L_{\sigma^{-1}\tau})_{\tau\in\Phi_S})\in Z_1(\Delta)_{\fr_0,\fr_1}^{\sigma^{-1}S}(T)$ belongs to $qp^{-1}X(\Delta)_{\fr_0,\fr_1}^{S\cup\sigma^{-1}S}(T)$ if and only if for $\tau\in\Phi_S$,
\begin{enumerate}
  \item $\rF_\abs^*L_{\sigma^{-1}\tau}=\cV_B\omega(B)_\tau$ if $\sigma^{d+2}\tau\in S$. Note that in this case $\omega(B)_\tau$ has rank $1$, and $\cV_B\rH^\dr_1(B)_\tau=\rF_\abs^*\rH^\dr_1(B)_{\sigma^{-1}\tau}$;

  \item $\rF_\abs^*L_{\sigma^{-1}\tau}=\cV_B\Pi_{B,\tau}^{-1}L_{\sigma^d\tau}$ if $\sigma^{d+2}\tau\not\in S$. Note that in this case $\sigma^{d+1}\tau\in\Phi_S$ (hence $L_{\sigma^d\tau}$ makes sense), and $\cV_B\rH^\dr_1(B)_\tau=\rF_\abs^*\rH^\dr_1(B)_{\sigma^{-1}\tau}$.
\end{enumerate}
Note that in both cases, $\Pi_{B,\tau}\coloneqq\Pi_B\colon\rH^\dr_1(B)_\tau\to\rH^\dr_1(B)_{\sigma^d\tau}$ is an isomorphism as $\tau\not\in\sigma^{-1}S$.

We assume the claim and prove the proposition. For two elements $\tau,\tau'\in\Phi_S$, we write $\tau\prec\tau'$ if $\tau'=\sigma^{d+1}\tau$ or $\tau'=\tau$. It is an elementary exercise to see that if $S$ is not sparse, then $\prec$ defines a partial order on $\Phi_S$. We define the depth of an element $\tau\in\Phi_S$ to be the largest possible integer $i$ such that there exists a strictly increasing sequence $\tau_1\prec\cdots\prec\tau_i=\tau$. Denote by $\Phi_S^i$ (resp.\ $\Phi_S^{>i}$) the subset $\Phi_S$ consisting of elements of depth $i$ (resp.\ $>i$). We identify $Z_1(\Delta)_{\fr_0,\fr_1}^{\sigma^{-1}S}$ with $\prod_{\tau\in\Phi_S}\dP^1(\rH^\dr_1(\cB)_{\sigma^{-1}\tau})$ by Lemma \ref{ble:hilbert_bundle}. Consider the following commutative diagram
\[\xymatrix{
X(\Delta)_{\fr_0,\fr_1}^{S\cup\sigma^{-1}S}  \ar[r]\ar[d] & \prod_{\tau\in\Phi_S}\dP^1(\rH^\dr_1(\cB)_{\sigma^{-1}\tau}) \ar[d] \\
X(\Delta)_{\fr_0,\fr_1}^{S\cup\sigma^{-1}S,>1}  \ar[r]\ar[d] & \prod_{\tau\in\Phi_S^{>1}}\dP^1(\rH^\dr_1(\cB)_{\sigma^{-1}\tau}) \ar[d] \\
X(\Delta)_{\fr_0,\fr_1}^{S\cup\sigma^{-1}S,>2}  \ar[r]\ar[d] & \prod_{\tau\in\Phi_S^{>2}}\dP^1(\rH^\dr_1(\cB)_{\sigma^{-1}\tau}) \ar[d] \\
\vdots \ar[d]   & \vdots \ar[d] \\
Z(\Delta)_{\fr_0,\fr_1}^{\sigma^{-1}S} \ar[r]^-{=} & Z(\Delta)_{\fr_0,\fr_1}^{\sigma^{-1}S}
}\]
in which all right vertical morphisms are natural projections, and $X(\Delta)_{\fr_0,\fr_1}^{S\cup\sigma^{-1}S,>i}$ is the image of $X(\Delta)_{\fr_0,\fr_1}^{S\cup\sigma^{-1}S}$ under the corresponding projection. Note that for sufficiently large $i$, we have $X(\Delta)_{\fr_0,\fr_1}^{S\cup\sigma^{-1}S,>i}=Z(\Delta)_{\fr_0,\fr_1}^{\sigma^{-1}S}$ by the claim.

Now we fix an integer $i\geq 1$. Define a morphism
\[\prod_{\tau\in\Phi_S^{>i}}\dP^1(\rH^\dr_1(\cB)_{\sigma^{-1}\tau})\to\prod_{\tau\in\Phi_S^i}\dP^1(\rF_\abs^*\rH^\dr_1(\cB)_{\sigma^{-1}\tau})\]
by sending $(B,\iota_B,C_B,\lambda_B;(L_{\sigma^{-1}\tau})_{\tau\in\Phi_S^{>i}})$ to $(B,\iota_B,C_B,\lambda_B;(L'_{\sigma^{-1}\tau})_{\tau\in\Phi_S^i})$ such that
\begin{itemize}
  \item $L'_{\sigma^{-1}\tau}=\cV_B\omega(B)_\tau$ if $\tau$ is maximal with respect to the partial order;

  \item $L'_{\sigma^{-1}\tau}=\cV_B\Pi_{B,\tau}^{-1}L_{\sigma^d\tau}$ if $\sigma^{d+1}\tau\in\Phi_S$.
\end{itemize}
Then the claim implies that we have a Cartesian diagram
\[\xymatrix{
X(\Delta)_{\fr_0,\fr_1}^{S\cup\sigma^{-1}S,>i-1} \ar[r]\ar[d] & \prod_{\tau\in\Phi_S^i}\dP^1(\rH^\dr_1(\cB)_{\sigma^{-1}\tau}) \ar[d] \\
X(\Delta)_{\fr_0,\fr_1}^{S\cup\sigma^{-1}S,>i}  \ar[r] &  \prod_{\tau\in\Phi_S^i}\dP^1(\rF_\abs^*\rH^\dr_1(\cB)_{\sigma^{-1}\tau})
}\]
in which the right vertical morphism is the fiber product of the relative Frobenius morphisms
\[\dP^1(\rH^\dr_1(\cB)_{\sigma^{-1}\tau})\to\rF_\abs^*\dP^1(\rH^\dr_1(\cB)_{\sigma^{-1}\tau})
\simeq\dP^1(\rF_\abs^*\rH^\dr_1(\cB)_{\sigma^{-1}\tau}).\]
In particular, the morphism $X(\Delta)_{\fr_0,\fr_1}^{S\cup\sigma^{-1}S,>i-1}\to X(\Delta)_{\fr_0,\fr_1}^{S\cup\sigma^{-1}S,>i}$ is a Frobenius factor, which is finite flat of degree $\ell^{|\Phi_S^i|}$. The proposition follows as $\Phi_S=\coprod_{i>0}\Phi_S^i$.

To prove the claim, we note the following elementary but crucial fact that $S\cup\sigma^{-1}S\setminus \sigma^{-1}S=\Phi_S$. Thus the claim follows if we can show that the map $\Pi_A\colon\Lie(A)_\tau\to\Lie(A)_{\sigma^d\tau}$ vanishes if and only if $L_{\sigma^{-1}\tau}$ satisfies the corresponding condition in (1) or (2).

Suppose that we are in (1). Consider the following commutative diagram
\[\xymatrix{
\rF_\abs^*\rH^\dr_1(A)_{\sigma^{-1}\tau}  \ar[d]^-{\rF_\abs^*\phi_*}_-\simeq  & \rH^\dr_1(A)_\tau \ar[l]_-{\cV_A}\ar[r]^-{\Pi_A}\ar[d]^-{\phi_*}
& \rH^\dr_1(A)_{\sigma^d\tau} \ar[d]^-{\phi_*}_-\simeq \\
\rF_\abs^*\rH^\dr_1(B)_{\sigma^{-1}\tau}  & \rH^\dr_1(B)_\tau \ar[l]_-{\cV_B}\ar[r]^-{\Pi_{B,\tau}}_-\simeq   & \rH^\dr_1(B)_{\sigma^d\tau}
}\]
in which $\cV_B$ has the trivial kernel. By the definitions of $p$ and $q$, we have $L_{\sigma^{-1}\tau}=\phi_*\omega(A)_{\sigma^{-1}\tau}$. Then $\rF_\abs^*L_{\sigma^{-1}\tau}=\rF_\abs^*\phi_*\omega(A)_{\sigma^{-1}\tau}=\phi_*\cV_A\rH^\dr_1(A)_\tau=\cV_B\phi_*\rH^\dr_1(A)_\tau$. Thus, $\rF_\abs^*L_{\sigma^{-1}\tau}=\cV_B\omega(B)_\tau$ if and only if $\Pi_{B,\tau}\phi_*\rH^\dr_1(A)_\tau=\omega(B)_{\sigma^d\tau}$, which is equivalent to $\Pi_A\rH^\dr_1(A)_\tau=\omega(A)_{\sigma^d\tau}$. However the last equality is equivalent to the vanishing of $\Pi_A\colon\Lie(A)_\tau\to\Lie(A)_{\sigma^d\tau}$.

Suppose that we are in (2). Consider the following commutative diagram
\[\xymatrix{
\rF_\abs^*\rH^\dr_1(A)_{\sigma^{-1}\tau}  \ar[d]^-{\phi_*}_-\simeq  & \rH^\dr_1(A)_\tau \ar[l]_-{\cV_A}\ar[r]^-{\Pi_A}\ar[d]^-{\phi_*}
& \rH^\dr_1(A)_{\sigma^d\tau} \ar[d]^-{\phi_*}_-\simeq \\
\rF_\abs^*\rH^\dr_1(B)_{\sigma^{-1}\tau}  & \rH^\dr_1(B)_\tau \ar[l]_-{\cV_B}\ar[r]^-{\Pi_{B,\tau}}_-\simeq   & \rH^\dr_1(B)_{\sigma^d\tau}
}\]
in which $\cV_B$ has the trivial kernel. By a similar argument as in (1), we know that $\rF_\abs^*L_{\sigma^{-1}\tau}=\cV_B\Pi_{B,\tau}^{-1}L_{\sigma^d\tau}$ if and only if $\Pi_B\phi_*\rH^\dr_1(A)_\tau=L_{\sigma^d\tau}$. The last equality is equivalent to $\Pi_A\rH^\dr_1(A)_\tau=\omega(A)_{\sigma^d\tau}$, which is same as the vanishing of $\Pi_A\colon\Lie(A)_\tau\to\Lie(A)_{\sigma^d\tau}$.
\end{proof}

The above proposition implies that for a type $S$ that is not sparse, the following composite morphism
\begin{align}\label{beq:translation2}
\b{f}\colon Z(\Delta)_{\fr_0,\fr_1}^{\sigma S}\xrightarrow{\wp_{\sigma S\cup S}^{-1}}
X(\Delta)_{\fr_0,\fr_1}^{\sigma S\cup S}\xrightarrow{\eqref{beq:translation}} Z(\Delta)_{\fr_0,\fr_1}^S
\end{align}
is a Frobenius factor of degree $\ell^{|\Phi_S|}$. For $n\geq 1$, we have the iterated morphism
\[\b{f}^n\colon Z(\Delta)_{\fr_0,\fr_1}^{\sigma^n S}\to Z(\Delta)_{\fr_0,\fr_1}^S.\]

\begin{proposition}\label{bpr:translation2}
Suppose $d>1$ and $S=\{\tau,\sigma\tau,\dots,\sigma^{d-1}\tau\}$ for some $\tau\in\Phi$. Then there is a unique isomorphism $g\colon (Z(\Delta)_{\fr_0,\fr_1}^S)^\sigma\xrightarrow{\sim}Z(\Delta)_{\fr_0,\fr_1}^{\sigma^{d-1}S}$ of $\dF_{\ell^{2d}}$-schemes such that
\[\b{f}^{d-1}\circ g=\rF_\sigma^1\colon (Z(\Delta)_{\fr_0,\fr_1}^S)^\sigma\to Z(\Delta)_{\fr_0,\fr_1}^S.\]
In particular, the morphism
\[\b{f}^{2d(d-1)}\colon Z(\Delta)_{\fr_0,\fr_1}^S=Z(\Delta)_{\fr_0,\fr_1}^{\sigma^{2d(d-1)}S}\to Z(\Delta)_{\fr_0,\fr_1}^S\]
coincides with the relative Frobenius morphism over $\Spec\dF_{\ell^{2d}}$.
\end{proposition}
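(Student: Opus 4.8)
The plan is to prove the statement about $\b{f}^{d-1}$ first, using Lemma \ref{ble:frobenius} as the decisive tool, and then deduce the iterated statement. The key observation is that when $S=\{\tau,\sigma\tau,\dots,\sigma^{d-1}\tau\}$ consists of a full $\sigma$-orbit segment, the set $\Phi_S$ has a very simple structure: since $S$ is a type that fails to be sparse in a ``maximal'' way (all $d$ elements are consecutive in the $\sigma$-ordering, except possibly across the $\sigma^d$ identification), one computes that $\Phi_S$ has exactly $|\Phi_S|$ elements all lying over a single $\tau$-value in $\Phi_F$, and by Lemma \ref{ble:type} (3), $|\Phi_S|$ is odd — in fact here I expect $|\Phi_S|=1$ when $d$ is of the appropriate parity, or more precisely one should track that $\b{f}\colon Z(\Delta)_{\fr_0,\fr_1}^{\sigma S}\to Z(\Delta)_{\fr_0,\fr_1}^S$ has degree $\ell^{|\Phi_S|}$ and that iterating $d-1$ times the total degree is $\ell^{(d-1)|\Phi_S|}$. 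The relevant count is that the iterated composite $\b{f}^{d-1}\colon Z(\Delta)_{\fr_0,\fr_1}^{\sigma^{d-1}S}\to Z(\Delta)_{\fr_0,\fr_1}^S$ is a Frobenius factor (Definition \ref{bde:frobenius}) between smooth proper $\dF_{\ell^{2d}}$-schemes, both of which have dimension $d-|\Delta_S|=d-1$ by Lemma \ref{ble:zink} (since $\Delta_S=\pi(\Phi_S)$ is a singleton here).

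First I would verify that the degree of $\b{f}^{d-1}$ as a finite morphism equals $\ell^{d-1}$, i.e. that $(d-1)|\Phi_S| = d-1$, hence $|\Phi_S|=1$. This is a direct combinatorial check: with $S$ a consecutive block, for each intermediate type $\sigma^j S$ ($0\le j\le d-2$) the associated $\Phi_{\sigma^j S}$ (Notation \ref{bno:dag_subset} and the partial order $\prec$ from the proof of Proposition \ref{bpr:translation}) is a single element, because the block structure forces the partial order on $\Phi_{\sigma^j S}$ to be trivial with exactly one element. Then I would invoke Lemma \ref{ble:frobenius}: it suffices to exhibit a closed point $y$ of $Z(\Delta)_{\fr_0,\fr_1}^{\sigma^{d-1}S}$ at which the tangent map $(\b{f}^{d-1})_*$ vanishes. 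For this I would chase through the explicit description of $\b{f}$ via \eqref{beq:translation2} and Proposition \ref{bpr:translation}: in the proof of that proposition the morphism $X(\Delta)^{S'\cup\sigma^{-1}S'}\to X(\Delta)^{S'\cup\sigma^{-1}S',>i}$ is shown to be (the fiber product of) relative Frobenius morphisms on $\dP^1$-bundles, whose tangent maps vanish everywhere. Composing through $\wp$-isomorphisms (which are smooth, so contribute isomorphisms on tangent spaces), one gets that the tangent map of a single $\b{f}$ vanishes everywhere, hence so does the tangent map of $\b{f}^{d-1}$. Thus Lemma \ref{ble:frobenius} applies and yields the unique isomorphism $g\colon (Z(\Delta)_{\fr_0,\fr_1}^S)^\sigma\xrightarrow{\sim}Z(\Delta)_{\fr_0,\fr_1}^{\sigma^{d-1}S}$ with $\b{f}^{d-1}\circ g=\rF_\sigma^1$.

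For the ``in particular'' clause: once we know $\b{f}^{d-1}$ is, up to the isomorphism $g$, the absolute $\ell$-Frobenius twist map $\rF_\sigma^1$, I would iterate. Writing $\sigma^{d-1}S=S''$, note $\b{f}^{d-1}\colon Z(\Delta)^{S''}\to Z(\Delta)^S$ is, via $g$, identified with $\rF_\sigma^1$. One then observes that $\sigma^{d-1}$ applied $2d/\gcd(\dots)$ times cycles through all relevant types; concretely, since $\gcd(d-1, 2d)$ divides everything appropriately and $\sigma$ has order $2d$ on $\Phi$, after composing $2d$ copies of the length-$(d-1)$ iterate — that is, after $2d(d-1)$ applications of $\b{f}$ — we return to the type $S$ itself (as $\sigma^{2d(d-1)}S = S$), and the composite of the $\rF_\sigma^1$ maps over $\dF_{\ell^{2d}}$ is exactly $\rF_\sigma^{2d} = \rF_\sigma^1$ iterated $2d$ times, which by definition of $\rF_\sigma^n$ for $[\dF_{\ell^{2d}}:\dF_\ell]=2d$ is the relative Frobenius of the $\dF_{\ell^{2d}}$-scheme $Z(\Delta)_{\fr_0,\fr_1}^S$. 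Care is needed to ensure the successive $g$-isomorphisms compose to the identity and not merely to an automorphism; this follows from the uniqueness clause in Lemma \ref{ble:frobenius} together with the fact that $\rF_\sigma^1$ on $(Z^S)^\sigma$ is canonical.

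\textbf{Main obstacle.} I expect the combinatorial bookkeeping of $|\Phi_{\sigma^j S}|=1$ and, more delicately, the verification that the chain of isomorphisms $g$ over the $2d$ stages composes to the \emph{relative} Frobenius (rather than an $\dF_{\ell^{2d}}$-automorphism twisting it) to be the genuinely subtle point; the tangent-space vanishing, by contrast, is essentially immediate from the structure of $\b{f}$ as a tower of $\dP^1$-relative-Frobenius morphisms established in Proposition \ref{bpr:translation}. The hypothesis $d>1$ is needed precisely so that $S\cup\sigma^{-1}S\ne\Phi$ at each stage, keeping $\wp_{\sigma^j S\cup \sigma^{j-1}S}$ an isomorphism and keeping all the intermediate strata of the expected dimension.
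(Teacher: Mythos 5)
Your overall strategy is the paper's: compute that $\deg\b{f}^{d-1}=\ell^{d-1}$ (indeed $|\Phi_{\sigma^jS}|=1$ for every $j$ when $S$ is a consecutive block; no parity caveat is needed), note both schemes are smooth proper of dimension $d-1$ by Lemma \ref{ble:zink}, and then apply Lemma \ref{ble:frobenius} after checking that $(\b{f}^{d-1})_*$ kills a tangent space. But the way you check the tangent-space hypothesis contains a genuine error: you claim that the differential of a \emph{single} $\b{f}$ vanishes everywhere, "hence" so does that of $\b{f}^{d-1}$. This is false for $d\geq 3$. The Frobenius morphisms appearing in the Cartesian diagrams in the proof of Proposition \ref{bpr:translation} are relative Frobenii of $\dP^1$-bundles \emph{over the base stratum}, so their differentials vanish only in the fibre direction; the base directions survive. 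Concretely, a single $\b{f}$ has degree $\ell$, and if its differential vanished identically then the function-field argument used to prove Lemma \ref{ble:frobenius} would force $\deg\b{f}\geq\ell^{d-1}$, a contradiction once $d\geq 3$. So the vanishing of $(\b{f}^{d-1})_*$ cannot be obtained by composing "zero maps"; it is exactly the nontrivial point.

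What the paper does instead, and what your proposal is missing, is a pointwise deformation-theoretic bookkeeping: at a closed point the tangent space of $Z(\Delta)_{\fr_0,\fr_1}^{\sigma^iS}$ decomposes as $\bigoplus_{j=1}^{d-1}V_{i,j}$, where $V_{i,j}$ parametrizes liftings of $\omega(B_i)_{\sigma^{i+j-1}\tau}$, and one proves by induction on $i$ that $\b{f}^i_*$ restricts to isomorphisms $V_{i,j}\xrightarrow{\sim}V_{0,i+j}$ for $1\leq j\leq d-1-i$ and annihilates $V_{i,j}$ for $j>d-1-i$; the key single-step computation (via the descriptions of $p$, $q$ and the conditions in Definition \ref{bde:hilbert_subbundle}) shows $\b{f}_*$ shifts the grading by one and sends the top piece $V_{1,d-1}$ into $V_{0,1}$ through $L^\sharp\mapsto\rF_\abs^*L^\sharp$, hence to zero. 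Only after $d-1$ iterations does the whole differential die, which is what Lemma \ref{ble:frobenius} needs. Your treatment of the "in particular" clause (iterating the identification of $\b{f}^{d-1}$ with $\rF_\sigma^1$ over $2d$ blocks and using uniqueness to control the $g$'s) is fine, and is essentially why the paper can leave that step implicit; but as it stands your proof of the main clause does not go through, and your closing remark that the tangent-space vanishing is "essentially immediate" identifies as easy precisely the step where all the work lies.
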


\begin{proof}
By Lemma \ref{ble:zink}, $Z(\Delta)_{\fr_0,\fr_1}^S$ is smooth of dimension $d-1$ over $\Spec\dF_{\ell^{2d}}$. By Proposition \ref{bpr:translation}, we know that $\b{f}^{d-1}$ is a Frobenius factor of degree $\ell^{d-1}$. Thus by Lemma \ref{ble:frobenius}, it suffices to show that $\b{f}^{d-1}_*$ induces the zero map on tangent spaces at, actually, every closed point. Put $Z_i=Z(\Delta)_{\fr_0,\fr_1}^{\sigma^iS}$ for $i=0,\dots,d-1$ for simplicity.

We fix a closed point $z_0$ of $Z_0$ represented by $(B_0,\iota_{B_0},C_{B_0},\lambda_{B_0})$ with the (perfect) residue field $k$. Suppose that the unique closed point $z_i$ in $Z_i$ above $z_0$ under $\b{f}^i$ is represented by $(B_i,\iota_{B_i},C_{B_i},\lambda_{B_i})$. Put $k^\sharp=k[\epsilon]/\epsilon^2$. Recall from the proof of Proposition \ref{bpr:hilbert_base} that we have a canonical isomorphism
\[T_i\coloneqq \sT_{Z_i,z_i}\otimes_{\sO_{Z_i,z_i}}k\simeq\bigoplus_{j=1}^{d-1}V_{i,j}\]
where $V_{i,j}$ is the $1$-dimensional $k$-vector space parameterizing direct factors $\omega(B_i)_{\sigma^{i+j-1}\tau}^\sharp$ of the $k^\sharp$-module $\rH^\dr_1(B_i)_{\sigma^{i+j-1}\tau}^\sharp\coloneqq\rH^\dr_1(B_i)_{\sigma^{i+j-1}\tau}\otimes_kk^\sharp$ lifting $\omega(B_i)_{\sigma^{i+j-1}\tau}$.

We prove by induction on $i=0,\dots,d-1$ that $\b{f}^i_*$ induces isomorphisms $V_{i,j}\xrightarrow{\sim}V_{0,i+j}$ for $1\leq j\leq d-1-i$, and annihilates $V_{i,j}$ for $j>d-1-i$.

For the case where $i=0$, it is trivial. Suppose that we know for the case where $i<d-1$, and consider the case of $i+1$. However the case of $i+1$ for $Z_0$ follows from the case where $i=1$ for $Z_i$. Therefore, it suffices to consider the case where $i=1$, that is, the morphism $\b{f}\colon Z_1\to Z_0$. Note that we have isomorphisms
\[\rF_\abs^*\rH^\dr_1(B)_{\sigma^{d-1}\tau}\xleftarrow{\cV_B}\rH^\dr_1(B)_{\sigma^d\tau}\xrightarrow{\Pi_B}\rH^\dr_1(B)_\tau\]
for every object $(B,\iota_B,C_B,\lambda_B)$ of $Z_0$. Therefore, $V_{0,1}$ is canonically isomorphic to the $1$-dimensional $k$-vector space parameterizing direct factors $M_{\sigma^{d-1}\tau}^\sharp$ of $\rF_\abs^*\rH^\dr_1(B_0)_{\sigma^{d-1}\tau}^\sharp$ lifting $M_{\sigma^{d-1}\tau}\coloneqq\cV_{B_0}\Pi_{B_0,\sigma^d\tau}^{-1}\omega(B_0)_\tau$.

By the proof of Proposition \ref{bpr:translation}, we already have canonical isomorphisms $\b{f}_*\colon V_{1,j}\xrightarrow{\sim}V_{0,1+j}$ for $1\leq j\leq d-2$; and the subspace $V_{1,d-1}$ is canonically isomorphic to the $k$-vector space parameterizing direct factors $L_{\sigma^{d-1}\tau}^\sharp$ of $\rH^\dr_1(B_0)_{\sigma^{d-1}\tau}^\sharp$ lifting $M_{\sigma^{d-1}\tau}$ (Remark \ref{bre:hodge}). Moreover, $\b{f}_*V_{1,d-1}\subset V_{0,1}$ and it sends $L_{\sigma^{d-1}\tau}^\sharp$ to $\rF_\abs^*L_{\sigma^{d-1}\tau}^\sharp$. Therefore, $\b{f}_*V_{1,d-1}=0$. The proposition follows.
\end{proof}

\subsection{A semistable model for $d=3$}
\label{bss:semistable}

Now we assume that $d=3$. As in Example \ref{bex:cube}, we identify $\Phi$ with the set $\dZ/6\dZ=\{0,1,2,3,4,5\}$ such that $\sigma i=i+1$, and $\Phi_F$ with the set $\dZ/3\dZ=\{0,1,2\}$ such that $\Phi\to\Phi_F$ is the natural map of modulo $3$. For an ample subset $S=\{i,j,k,\cdots\}$, we directly write $X(\Delta)_{\fr_0,\fr_1}^{ijk\cdots}$ for $X(\Delta)_{\fr_0,\fr_1}^S$. If $S^\dag=\{i,j,k\}$, then we write the morphism $\wp_S$ (Notation \ref{bno:hilbert_subbundle}) as $\wp_{ijk\cdots}\colon X(\Delta)_{\fr_0,\fr_1}^{ijk\cdots}\to Z(\Delta)_{\fr_0,\fr_1}^{ijk}$.

\begin{definition}\label{bde:blowup}
Let
\begin{align}\label{beq:blowup}
\pi\colon\cY(\Delta)_{\fr_0,\fr_1}\to\cX(\Delta)_{\fr_0,\fr_1}\otimes\dZ_{\ell^6}
\end{align}
\footnote{The projection $\pi\colon\Phi\to\Phi_F$ will not be used anymore, including the main text. Thus, using $\pi$ for this morphism will not cause confusion.} be the composition of the blow-up of $\cX(\Delta)_{\fr_0,\fr_1}\otimes\dZ_{\ell^6}$ along the closed subscheme $X(\Delta)_{\fr_0,\fr_1}^{024}$ and the further blow-up along the strict transform of $X(\Delta)_{\fr_0,\fr_1}^{135}$. We call \eqref{beq:blowup} the \emph{canonical semistable resolution} of $\cX(\Delta)_{\fr_0,\fr_1}\otimes\dZ_{\ell^6}$ (see Remark \ref{bre:blowup}).
\end{definition}

It follows from \cite{GS95}*{Example 6.15} that $\cY(\Delta)_{\fr_0,\fr_1}$ is a proper strictly semistable scheme over $\Spec\dZ_{\ell^6}$.

\begin{remark}\label{bre:blowup}
Let $\pi'\colon\cY'(\Delta)_{\fr_0,\fr_1}\to\cX(\Delta)_{\fr_0,\fr_1}\otimes\dZ_{\ell^6}$ be the composition of the blow-up of $\cX(\Delta)_{\fr_0,\fr_1}\otimes\dZ_{\ell^6}$ along the closed subscheme $X(\Delta)_{\fr_0,\fr_1}^{135}$ and the further blow-up along the strict transform of $X(\Delta)_{\fr_0,\fr_1}^{024}$. Then $\cY(\Delta)_{\fr_0,\fr_1}$ and $\cY'(\Delta)_{\fr_0,\fr_1}$ are canonically isomorphic as schemes over $\cX(\Delta)_{\fr_0,\fr_1}$. In particular, $\sigma$ acts on $\cY(\Delta)_{\fr_0,\fr_1}$ as an isomorphism.
\end{remark}

We put $Y(\Delta)_{\fr_0,\fr_1}=\cY(\Delta)_{\fr_0,\fr_1}\otimes_{\dZ_{\ell^6}}\dF_{\ell^6}$. We define some strata of $Y(\Delta)_{\fr_0,\fr_1}$ indexed by ample subsets $S$ of $\{0,1,2,3,4,5\}$ as follows.
\begin{itemize}
  \item If $|S|=3$ (that is, $S$ is a type), then let $Y(\Delta)_{\fr_0,\fr_1}^S$ be the strict transform of $X(\Delta)_{\fr_0,\fr_1}^S$ under $\pi$.

  \item If $|S|=4$, then we can write $S=S_1\cup S_2$ uniquely (up to permutation) where both $S_1$ and $S_2$ are types. Put $Y(\Delta)_{\fr_0,\fr_1}^S=Y(\Delta)_{\fr_0,\fr_1}^{S_1}\cap Y(\Delta)_{\fr_0,\fr_1}^{S_2}$.

  \item If $|S|=5$, then we can write $S=S_1\cup S_2$ uniquely where $S_1$ is a sparse type and $S_2$ is a type. Put $Y(\Delta)_{\fr_0,\fr_1}^S=Y(\Delta)_{\fr_0,\fr_1}^{S_1}\cap Y(\Delta)_{\fr_0,\fr_1}^{S_2}$.

  \item Put $Y(\Delta)_{\fr_0,\fr_1}^{012345}=Y(\Delta)_{\fr_0,\fr_1}^{024}\cap Y(\Delta)_{\fr_0,\fr_1}^{135}$.
\end{itemize}

The dual reduction building of the strictly semistable scheme $\cY(\Delta)_{\fr_0,\fr_1}$ can be described as the following diagram (compare with the one in Example \ref{bex:cube})
\begin{align}\label{beq:reduction}
\xymatrix{
& Y(\Delta)_{\fr_0,\fr_1}^{045} \ar@{-}[rr] && Y(\Delta)_{\fr_0,\fr_1}^{024} \ar@{--}[llldddd]
\ar@{--}[ldddd]\ar@{--}[llddd]\ar@{--}[llld] \\
Y(\Delta)_{\fr_0,\fr_1}^{015} \ar@{-}[rr]\ar@{-}[ur]  && Y(\Delta)_{\fr_0,\fr_1}^{012} \ar@{-}[ur]  \\ &&&\\
& Y(\Delta)_{\fr_0,\fr_1}^{345} \ar@{-}[rr]|\hole\ar@{-}[uuu]|!{[u],[uu]}\hole && Y(\Delta)_{\fr_0,\fr_1}^{234} \ar@{-}[uuu] \\
Y(\Delta)_{\fr_0,\fr_1}^{135} \ar@{--}[rrru]\ar@{--}[rruuu]\ar@{--}[ruuuu]
\ar@{-}[rr]\ar@{-}[ur]\ar@{-}[uuu]  && Y(\Delta)_{\fr_0,\fr_1}^{123}
\ar@{-}[ur]\ar@{-}[uuu] }
\end{align}
in which the dashed lines indicates new (two-dimensional) intersection of irreducible components on the special fiber $Y(\Delta)_{\fr_0,\fr_1}$ caused by blow-ups. The line, either continuous or dashed, connecting two vertices representing $Y(\Delta)_{\fr_0,\fr_1}^S$ and $Y(\Delta)_{\fr_0,\fr_1}^{S'}$ (with $S$ and $S'$ two different types) represents the subscheme $Y(\Delta)_{\fr_0,\fr_1}^{S\cup S'}$.

\begin{proposition}\label{bpr:blowup}
We have
\begin{enumerate}
  \item the restricted morphism $\pi\colon Y(\Delta)_{\fr_0,\fr_1}^{i(i+1)(i+2)}\to X(\Delta)_{\fr_0,\fr_1}^{i(i+1)(i+2)}$
      is the blow-up along $X(\Delta)_{\fr_0,\fr_1}^{i(i+1)(i+2)(i+3)(i+5)}$, for every $i\in\{0,1,2,3,4,5\}$;

  \item the restricted morphism
      $\pi\colon Y(\Delta)_{\fr_0,\fr_1}^{024}\to X(\Delta)_{\fr_0,\fr_1}^{024}$
      is the blow-up along $X(\Delta)_{\fr_0,\fr_1}^{012345}$ followed by the blow-up along the strict transform of $X(\Delta)_{\fr_0,\fr_1}^{01234}\cup X(\Delta)_{\fr_0,\fr_1}^{01245}\cup X(\Delta)_{\fr_0,\fr_1}^{02345}$;

  \item the restricted morphism
      $\pi\colon Y(\Delta)_{\fr_0,\fr_1}^{135}\to X(\Delta)_{\fr_0,\fr_1}^{135}$
      is the blow-up along $X(\Delta)_{\fr_0,\fr_1}^{012345}$ followed by the blow-up along the strict transform of $X(\Delta)_{\fr_0,\fr_1}^{01235}\cup X(\Delta)_{\fr_0,\fr_1}^{01345}\cup X(\Delta)_{\fr_0,\fr_1}^{12345}$;

  \item the restricted morphism
      $\pi\colon Y(\Delta)_{\fr_0,\fr_1}^{i(i+1)(i+2)(i+3)}\to X(\Delta)_{\fr_0,\fr_1}^{i(i+1)(i+2)(i+3)}$
      is an isomorphism, for every $i\in\{0,1,2,3,4,5\}$;

  \item the restricted morphism
      $\pi\colon Y(\Delta)_{\fr_0,\fr_1}^{i(i+1)(i+2)(i+4)}\to X(\Delta)_{\fr_0,\fr_1}^{i(i+1)(i+2)(i+4)}$
      is the blow-up along $X(\Delta)_{\fr_0,\fr_1}^{012345}$, for every $i\in\{0,1,2,3,4,5\}$;

  \item the restricted morphism
      $\pi\colon Y(\Delta)_{\fr_0,\fr_1}^{i(i+1)(i+2)(i+3)(i+4)}\to X(\Delta)_{\fr_0,\fr_1}^{i(i+1)(i+2)(i+3)(i+4)}$
      is a $\dP^1$-bundle, for every $i\in\{0,1,2,3,4,5\}$;

  \item we have $Y(\Delta)_{\fr_0,\fr_1}^{(0)}=\coprod_{|S|=3}Y(\Delta)_{\fr_0,\fr_1}^S$;

  \item we have $Y(\Delta)_{\fr_0,\fr_1}^{(1)}=\coprod_{|S|\geq 4}Y(\Delta)_{\fr_0,\fr_1}^S$.
\end{enumerate}
Here in (7,8), we adopt the notation in \Sec\ref{ss:semistable_schemes} for the strictly semistable scheme $\cY(\Delta)_{\fr_0,\fr_1}$.
\end{proposition}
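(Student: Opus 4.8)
The plan is to deduce all eight assertions from an étale-local analysis of the two blow-ups defining $\pi$, and then to globalize using the descriptions of the strata $X(\Delta)_{\fr_0,\fr_1}^S$ from Theorem \ref{bth:hilbert_subbundle}. The starting point is Zink's local model: by Proposition \ref{bpr:zink}, at each closed point $x$ of the special fibre the completed strict henselization of $\cX(\Delta)_{\fr_0,\fr_1}\otimes\dZ_{\ell^6}$ is isomorphic to
\[
\dZ_{\ell^\infty}[[a_1,b_1,\dots,a_s,b_s,c_1,\dots,c_{3-s}]]/(a_1b_1-\ell,\dots,a_sb_s-\ell),\qquad s=s(x)\in\{0,1,2,3\},
\]
the $2^s$ components $X(\Delta)_{\fr_0,\fr_1}^S$ (those with $|S|=3$) through $x$ corresponding to the branches $\{u_1=\cdots=u_s=0\}$, $u_i\in\{a_i,b_i\}$, and the stratum of largest $|S|$ through $x$ having $|S|=s+3$. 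Matching this with the cube of Example \ref{bex:cube}, and using that the two sparse types $\{0,2,4\}$, $\{1,3,5\}$ are antipodal vertices of that cube, I would choose coordinates so that $X^{024}$ is the branch $\{a_1=\cdots=a_s=0\}$ and $X^{135}$ is the complementary branch $\{b_1=\cdots=b_s=0\}$; in particular these two components meet a neighbourhood of $x$ only when $s=3$, along $X^{012345}$. The $\sigma$-equivariance of $\pi$ (Remark \ref{bre:blowup}) cuts down the distinct local pictures by the symmetry $i\mapsto i+1$.

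I would then compute the two blow-ups in these models. If $s\le 1$ the local ring is regular, so whichever of $X^{024},X^{135}$ passes through $x$ is a Cartier divisor there and $\pi$ is a local isomorphism. If $s=2$ the local ring is, up to a regular factor, the conifold $\dZ_{\ell^\infty}[[a_1,b_1,a_2,b_2]]/(a_1b_1-\ell,a_2b_2-\ell)$: blowing up the Weil divisor $\{a_1=a_2=0\}$ is one of its two small resolutions, after which the strict transform of $\{b_1=b_2=0\}$ is Cartier, so the second blow-up is trivial near $x$; a chart computation gives a strictly semistable model with a $\dP^1$-bundle exceptional locus. If $s=3$ the local ring is $\dZ_{\ell^\infty}[[a_1,b_1,a_2,b_2,a_3,b_3]]/(a_1b_1-\ell,a_2b_2-\ell,a_3b_3-\ell)$, and the blow-up along $\{a_1=a_2=a_3=0\}$ followed by the blow-up along the strict transform of $\{b_1=b_2=b_3=0\}$ is essentially the computation of \cite{GS95}*{Example 6.15}, which produces a strictly semistable model and exhibits the exceptional structure as an iterated blow-up centred on $X^{012345}$ and on the strict transforms of the three strata $X^S$ with $|S|=5$ and $\{0,2,4\}\subset S$ (respectively $\{1,3,5\}\subset S$), namely $X^{01234},X^{01245},X^{02345}$ (resp. $X^{01235},X^{01345},X^{12345}$).

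Gluing these, I would conclude. Since $\cY(\Delta)_{\fr_0,\fr_1}$ is proper and strictly semistable --- as \cite{GS95}*{Example 6.15} supplies for our sequence of blow-ups, and as recorded after Definition \ref{bde:blowup} --- and since the local computation shows no exceptional divisor arises, the special-fibre components are exactly the strict transforms $Y(\Delta)_{\fr_0,\fr_1}^S$, $|S|=3$, which is (7); collecting the codimension-one local strata gives (8). For (1), the divisor $X^{i(i+1)(i+2)}$ meets $X^{024}\cup X^{135}$ in a divisor of itself (Cartier, hence untouched by one of the two blow-ups) and along the curve $X^{i(i+1)(i+2)(i+3)(i+5)}=X^{i(i+1)(i+2)}\cap X^{(i+1)(i+3)(i+5)}$, where the remaining blow-up is a genuine blow-up with that centre. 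The same reasoning combined with the $s\le 2$ local picture yields (4) (the relevant $2$-dimensional stratum lies on no sparse component), (5), and (6), where in (6) one identifies the exceptional $\dP^1$-bundle with the projective bundle over a Shimura surface stratum coming from Theorem \ref{bth:hilbert_subbundle}; and (2), (3) are the $s=3$ computation globalized, using that $X^{024}$ (resp. $X^{135}$) meets the union of the other components precisely along the three $|S|=5$ strata listed above.

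The main obstacle will be the bookkeeping in the $s=2$ and, above all, the $s=3$ local model: tracking all affine charts of the iterated blow-up, verifying strict semistability coherently, and --- the genuinely delicate point --- matching each exceptional component with the correct global stratum $Y(\Delta)_{\fr_0,\fr_1}^S$, equivalently with the correct projective bundle over a Shimura surface or curve stratum of Theorem \ref{bth:hilbert_subbundle}. This is exactly where one leans on \cite{GS95}*{Example 6.15} and on the étale-local matching of branches to types set up at the outset.
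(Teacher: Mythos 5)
Your overall strategy (local analysis in Zink's pluri-nodal models, then globalization) is viable, and your treatment of (1), (4)--(6) via ``the center is a Cartier divisor of the stratum, hence that blow-up does nothing; the other blow-up has the stated curve as center'' is essentially the argument the paper uses. But there are two concrete problems in the write-up. First, the globalization input you invoke for (2) and (3) is false: $X^{024}$ does \emph{not} meet the union of the other components only along the three one-dimensional strata $X^{01234}$, $X^{01245}$, $X^{02345}$ --- it meets the adjacent components $X^{012}$, $X^{234}$, $X^{450}$ along the two-dimensional strata $X^{0124}$, $X^{0234}$, $X^{0245}$. What is true (and what the argument needs) is that the locus where the Weil divisor $X^{024}$ fails to be Cartier in $\cX(\Delta)_{\fr_0,\fr_1}\otimes\dZ_{\ell^6}$, equivalently the $s\geq 2$ locus inside $X^{024}$, is the union of those three curves. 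Second, your coordinate normalization ``$X^{024}=\{a_1=\cdots=a_s=0\}$, $X^{135}=\{b_1=\cdots=b_s=0\}$'' only exists at $s=3$ points: at an $s\leq 2$ point exactly one (or neither) sparse component passes through, so in your $s=2$ bullet the branch $\{b_1=b_2=0\}$ is the antipodal \emph{non-sparse} component (e.g.\ $X^{123}$), which is not a blow-up center at all; the second blow-up is trivial there because its center is disjoint from a neighbourhood, not because a strict transform has become Cartier, and the symmetric case in which the sparse component through $x$ is $X^{135}$ (so that it is the \emph{second} blow-up that performs the small resolution) is not addressed. The conclusions survive, but not for the reasons you give.

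More importantly, the step you yourself flag as delicate --- promoting chart computations in the $s=2,3$ models to the \emph{global} identifications of $\pi|_{Y^S}$ as blow-ups along the specific centers named in (1)--(6) --- is left without a mechanism, and this is exactly where the paper's proof does something different. Using Remark \ref{bre:blowup} one may reverse the order of the two blow-ups; then restricting a blow-up to the strict transform of a closed subscheme not contained in the center is the blow-up of that subscheme along its scheme-theoretic intersection with the center. With the order reversed, (2) is immediate: the first blow-up (now along $X^{135}$) restricts to $Y^{024}$ as the blow-up along $X^{024}\cap X^{135}=X^{012345}$, and the second restricts as the blow-up along the strict transforms of the three curves above; (3) is symmetric, and (1) is the same argument in the original order. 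Then (4), (5), (6) are deduced from (1)--(3) together with (8) (the strata $Y^S$, $|S|\geq 4$, are smooth of the expected dimension because $\cY(\Delta)_{\fr_0,\fr_1}$ is strictly semistable, which is imported from \cite{GS95}*{Example 6.15}), with no chart bookkeeping at all. I would either restructure your proof along these lines, or, if you insist on the purely local route, spell out the strict-transform functoriality you are implicitly using and replace the false incidence statement by the correct description of the non-Cartier locus; as it stands, (2)--(3) and the matching step constitute a genuine gap.
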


\begin{proof}
By Remark \ref{bre:blowup}, we can change the order of blow-up in the decomposition of $\pi$. Thus we may assume $i=1$ in (1,4,5,6) without lost of generality.

For (1), we decompose $\pi$ as $\pi_1\circ\pi_2$ where $\pi_1$ is the blow-up along $X(\Delta)_{\fr_0,\fr_1}^{024}$ and $\pi_2$ is the blow-up along the strict transform of $X(\Delta)_{\fr_0,\fr_1}^{135}$. For the first blow-up, the induced morphism $\pi_1\colon X'(\Delta)_{\fr_0,\fr_1}^{012}\to X(\Delta)_{\fr_0,\fr_1}^{012}$ is an isomorphism where $X'(\Delta)_{\fr_0,\fr_1}^{012}$ is the strict transform of $X(\Delta)_{\fr_0,\fr_1}^{012}$. For the second blow-up, the induced morphism $\pi_2\colon Y(\Delta)_{\fr_0,\fr_1}^{012}\to X'(\Delta)_{\fr_0,\fr_1}^{012}$ is the blow-up along $X(\Delta)_{\fr_0,\fr_1}^{012}\cap X(\Delta)_{\fr_0,\fr_1}^{135}=X(\Delta)_{\fr_0,\fr_1}^{01235}$, regarded as a subscheme of $X'(\Delta)_{\fr_0,\fr_1}^{012}$. Part (1) is proved.

For (2) (and similarly for (3)), we adopt the same strategy as in (1). We decompose $\pi$ as $\pi_1\circ\pi_2$ where $\pi_1$ is the blow-up along $X(\Delta)_{\fr_0,\fr_1}^{135}$ and $\pi_2$ is the blow-up along the strict transform of $X(\Delta)_{\fr_0,\fr_1}^{024}$. Then after restriction to $Y(\Delta)_{\fr_0,\fr_1}^{024}$, $\pi_1$ is the blow-up along $X(\Delta)_{\fr_0,\fr_1}^{012345}$, and $\pi_2$ is the blow-up along the strict transform of $X(\Delta)_{\fr_0,\fr_1}^{01234}\cup X(\Delta)_{\fr_0,\fr_1}^{01245}\cup X(\Delta)_{\fr_0,\fr_1}^{02345}$.

For (4), by definition, $Y(\Delta)_{\fr_0,\fr_1}^{0123}=Y(\Delta)_{\fr_0,\fr_1}^{012}\cap Y(\Delta)_{\fr_0,\fr_1}^{123}$. By (8), whose proof only uses (1--3), we know that $Y(\Delta)_{\fr_0,\fr_1}^{0123}$ is smooth of pure dimension $2$. Thus (4) follows from (1).

For (5), by definition, $Y(\Delta)_{\fr_0,\fr_1}^{0124}=Y(\Delta)_{\fr_0,\fr_1}^{012}\cap Y(\Delta)_{\fr_0,\fr_1}^{024}$. Then it follows from (2) and (8).

For (6), by definition, $Y(\Delta)_{\fr_0,\fr_1}^{01234}=Y(\Delta)_{\fr_0,\fr_1}^{123}\cap Y(\Delta)_{\fr_0,\fr_1}^{024}$. From (8) and (1), we know that $Y(\Delta)_{\fr_0,\fr_1}^{01234}$ has to be the exceptional divisor over $X(\Delta)_{\fr_0,\fr_1}^{01234}$ in the blow-up $Y(\Delta)_{\fr_0,\fr_1}^{123}\to X(\Delta)_{\fr_0,\fr_1}^{123}$. Thus (6) follows.

For (7), note that $Y(\Delta)_{\fr_0,\fr_1}^S$ are smooth proper schemes over $\Spec\dF_{\ell^6}$ of pure dimension $3$ by (1--3) and Proposition \ref{bpr:zink}. Therefore, it suffices to show that $Y(\Delta)_{\fr_0,\fr_1}=\bigcup_{|S|=3}Y(\Delta)_{\fr_0,\fr_1}^S$. However, by Proposition \ref{bpr:zink} (4), we know that $\bigcup_{|S|=3}\pi Y(\Delta)_{\fr_0,\fr_1}^S=X(\Delta)_{\fr_0,\fr_1}$, which suffices.

For (8), it follows from (7) and the definition of $Y(\Delta)_{\fr_0,\fr_1}^S$ for $|S|\geq 4$.
\end{proof}

\begin{remark}\label{bre:hecke_cube}
Suppose that we have another ideal $\fs_0\subset\fr_0$ of $O_F$ coprime to $\Delta$ and $\fr_1$. For each $\fd\in\fD(\fs_0,\fr_1)$, the composite morphism $\cY(\Delta)_{\fs_0,\fr_1}\xrightarrow{\pi}\cX(\Delta)_{\fs_0,\fr_1}\xrightarrow{\delta^\fd}\cX(\Delta)_{\fr_0,\fr_1}$ lifts uniquely to a morphism (with the same notation) $\delta^\fd\colon \cY(\Delta)_{\fs_0,\fr_1}\to \cY(\Delta)_{\fr_0,\fr_1}$. It is finite \'{e}tale. We write $\delta=\delta^{O_F}$ for simplicity as always. The similar property in Lemma \ref{ble:strata} (4) holds for $Y(\Delta)_{\fr_0,\fr_1}$ as well. Similarly, the action of the ray class group $\Cl(F)_{\fr_1}$ on $\cX(\Delta)_{\fr_0,\fr_1}$ lifts uniquely to an action on $\cY(\Delta)_{\fr_0,\fr_1}$.

For a prime $\fq$ of $F$ coprime to $\Delta$ and $\fr_0\fr_1$, the Hecke monoid $\dT_\fq$ acts on $\cY(\Delta)_{\fr_0,\fr_1}$ via \'{e}tale correspondences (Definition \ref{de:correspondence}) in a fashion similar to $\cX(\Delta)_{\fr_0,\fr_1}$ as follows:
\begin{itemize}
  \item $\rT_\fq$ acts by the correspondence
     \[\cY(\Delta)_{\fr_0,\fr_1}\xleftarrow{\delta}\cY(\Delta)_{\fr_0\fq,\fr_1}\xrightarrow{\delta^\fq}\cY(\Delta)_{\fr_0,\fr_1};\]

  \item $\rS_\fq$ (resp.\ $\rS_\fq^{-1}$) acts via the morphism given by $\fq$ (resp.\ $\fq^{-1}$) in $\Cl(F)_{\fr_1}$.
\end{itemize}
\end{remark}

\begin{remark}
All discussions in this appendix except Proposition \ref{bpr:hilbert_sparse} are valid for general neat level structures, not necessarily the one in Definition \ref{bde:hilbert_shimura} and those of other moduli functors.
\end{remark}

\begin{bibdiv}
\begin{biblist}

\bib{SGA5}{book}{
   label={SGA5},
   author={Grothendieck, A., et al},
   title={Cohomologie $l$-adique et fonctions $L$},
   language={French},
   series={Lecture Notes in Mathematics, Vol. 589},
   note={S\'eminaire de G\'eometrie Alg\'ebrique du Bois-Marie 1965--1966
   (SGA 5);
   Edit\'e par Luc Illusie},
   publisher={Springer-Verlag, Berlin-New York},
   date={1977},
   pages={xii+484},
   isbn={3-540-08248-4},
   review={\MR{0491704 (58 \#10907)}},
}

\bib{AG5}{collection}{
   label={AG5},
   author={Parshin, A. N.},
   author={Shafarevich, I. R.},
   title={Algebraic geometry. V},
   series={Encyclopaedia of Mathematical Sciences},
   volume={47},
   note={Fano varieties;
   A translation of {\it Algebraic geometry. 5} (Russian), Ross. Akad. Nauk,
   Vseross. Inst. Nauchn. i Tekhn. Inform., Moscow;
   Translation edited by A. N. Parshin and },
   publisher={Springer-Verlag, Berlin},
   date={1999},
   pages={iv+247},
   isbn={3-540-61468-0},
   review={\MR{1668575 (2000b:14051a)}},
}

\bib{BD05}{article}{
   author={Bertolini, M.},
   author={Darmon, H.},
   title={Iwasawa's main conjecture for elliptic curves over anticyclotomic $\mathbb Z\sb p$-extensions},
   journal={Ann. of Math. (2)},
   volume={162},
   date={2005},
   number={1},
   pages={1--64},
   issn={0003-486X},
   review={\MR{2178960 (2006g:11218)}},
   doi={10.4007/annals.2005.162.1},
}

\bib{BK90}{article}{
   author={Bloch, Spencer},
   author={Kato, Kazuya},
   title={$L$-functions and Tamagawa numbers of motives},
   conference={
      title={The Grothendieck Festschrift, Vol.\ I},
   },
   book={
      series={Progr. Math.},
      volume={86},
      publisher={Birkh\"auser Boston},
      place={Boston, MA},
   },
   date={1990},
   pages={333--400},
   review={\MR{1086888 (92g:11063)}},
}

\bib{BLR}{article}{
   author={Boston, Nigel},
   author={Lenstra, Hendrik W., Jr.},
   author={Ribet, Kenneth A.},
   title={Quotients of group rings arising from two-dimensional representations},
   language={English, with French summary},
   journal={C. R. Acad. Sci. Paris S\'er. I Math.},
   volume={312},
   date={1991},
   number={4},
   pages={323--328},
   issn={0764-4442},
   review={\MR{1094193 (92c:11057)}},
}

\bib{BCDT01}{article}{
   author={Breuil, Christophe},
   author={Conrad, Brian},
   author={Diamond, Fred},
   author={Taylor, Richard},
   title={On the modularity of elliptic curves over $\mathbf{Q}$: wild 3-adic exercises},
   journal={J. Amer. Math. Soc.},
   volume={14},
   date={2001},
   number={4},
   pages={843--939 (electronic)},
   issn={0894-0347},
   review={\MR{1839918 (2002d:11058)}},
   doi={10.1090/S0894-0347-01-00370-8},
}



\bib{Cer76}{article}{
   author={\v Cerednik, I. V.},
   title={Uniformization of algebraic curves by discrete arithmetic
   subgroups of ${\rm PGL}\sb{2}(k\sb{w})$ with compact quotient spaces},
   language={Russian},
   journal={Mat. Sb. (N.S.)},
   volume={100(142)},
   date={1976},
   number={1},
   pages={59--88, 165},
   review={\MR{0491706}},
}

\bib{Del80}{article}{
   author={Deligne, Pierre},
   title={La conjecture de Weil. II},
   language={French},
   journal={Inst. Hautes \'Etudes Sci. Publ. Math.},
   number={52},
   date={1980},
   pages={137--252},
   issn={0073-8301},
   review={\MR{601520 (83c:14017)}},
}

\bib{Dim05}{article}{
   author={Dimitrov, Mladen},
   title={Galois representations modulo $p$ and cohomology of Hilbert modular varieties},
   language={English, with English and French summaries},
   journal={Ann. Sci. \'Ecole Norm. Sup. (4)},
   volume={38},
   date={2005},
   number={4},
   pages={505--551},
   issn={0012-9593},
   review={\MR{2172950 (2006k:11100)}},
   doi={10.1016/j.ansens.2005.03.005},
}

\bib{Fal88}{article}{
   author={Faltings, Gerd},
   title={Crystalline cohomology and $p$-adic Galois-representations},
   conference={
      title={Algebraic analysis, geometry, and number theory},
      address={Baltimore, MD},
      date={1988},
   },
   book={
      publisher={Johns Hopkins Univ. Press, Baltimore, MD},
   },
   date={1989},
   pages={25--80},
   review={\MR{1463696}},
}

\bib{Fuj02}{article}{
   author={Fujiwara, Kazuhiro},
   title={A proof of the absolute purity conjecture (after Gabber)},
   conference={
      title={Algebraic geometry 2000, Azumino (Hotaka)},
   },
   book={
      series={Adv. Stud. Pure Math.},
      volume={36},
      publisher={Math. Soc. Japan},
      place={Tokyo},
   },
   date={2002},
   pages={153--183},
   review={\MR{1971516 (2004d:14015)}},
}

\bib{Ful}{book}{
   author={Fulton, William},
   title={Intersection theory},
   series={Ergebnisse der Mathematik und ihrer Grenzgebiete. 3. Folge. A
   Series of Modern Surveys in Mathematics [Results in Mathematics and
   Related Areas. 3rd Series. A Series of Modern Surveys in Mathematics]},
   volume={2},
   edition={2},
   publisher={Springer-Verlag, Berlin},
   date={1998},
   pages={xiv+470},
   isbn={3-540-62046-X},
   isbn={0-387-98549-2},
   review={\MR{1644323}},
}

\bib{Gar87}{article}{
   author={Garrett, Paul B.},
   title={Decomposition of Eisenstein series: Rankin triple products},
   journal={Ann. of Math. (2)},
   volume={125},
   date={1987},
   number={2},
   pages={209--235},
   issn={0003-486X},
   review={\MR{881269 (88m:11033)}},
   doi={10.2307/1971310},
}

\bib{GS95}{article}{
   author={Gross, B. H.},
   author={Schoen, C.},
   title={The modified diagonal cycle on the triple product of a pointed curve},
   language={English, with English and French summaries},
   journal={Ann. Inst. Fourier (Grenoble)},
   volume={45},
   date={1995},
   number={3},
   pages={649--679},
   issn={0373-0956},
   review={\MR{1340948 (96e:14008)}},
}

\bib{Hel12}{article}{
   author={Helm, David},
   title={A geometric Jacquet-Langlands correspondence for $U(2)$ Shimura
   varieties},
   journal={Israel J. Math.},
   volume={187},
   date={2012},
   pages={37--80},
   issn={0021-2172},
   review={\MR{2891698}},
   doi={10.1007/s11856-011-0162-x},
}

\bib{Ich08}{article}{
   author={Ichino, Atsushi},
   title={Trilinear forms and the central values of triple product $L$-functions},
   journal={Duke Math. J.},
   volume={145},
   date={2008},
   number={2},
   pages={281--307},
   issn={0012-7094},
   review={\MR{2449948 (2009i:11066)}},
   doi={10.1215/00127094-2008-052},
}

\bib{Jan88}{article}{
   author={Jannsen, Uwe},
   title={Continuous \'etale cohomology},
   journal={Math. Ann.},
   volume={280},
   date={1988},
   number={2},
   pages={207--245},
   issn={0025-5831},
   review={\MR{929536}},
   doi={10.1007/BF01456052},
}

\bib{Kis}{article}{
    author={Kisin, M.},
    title={Lectures on deformations of Galois representations},
    note={\url{http://www.math.harvard.edu/~kisin/notes/notes.pdf},
    preprint},
    date={2009},
}

\bib{LO98}{book}{
   author={Li, Ke-Zheng},
   author={Oort, Frans},
   title={Moduli of supersingular abelian varieties},
   series={Lecture Notes in Mathematics},
   volume={1680},
   publisher={Springer-Verlag, Berlin},
   date={1998},
   pages={iv+116},
   isbn={3-540-63923-3},
   review={\MR{1611305}},
   doi={10.1007/BFb0095931},
}

\bib{Liu}{article}{
   author={Liu, Yifeng},
   title={Hirzebruch-Zagier cycles and twisted triple product Selmer groups},
   journal={Invent. Math.},
   volume={205},
   date={2016},
   number={3},
   pages={693--780},
   issn={0020-9910},
   review={\MR{3539925}},
   doi={10.1007/s00222-016-0645-9},
}

\bib{MR00}{article}{
   author={Mao, Zhengyu},
   author={Rallis, Stephen},
   title={Cubic base change for ${\rm GL}(2)$},
   journal={Canad. J. Math.},
   volume={52},
   date={2000},
   number={1},
   pages={172--196},
   issn={0008-414X},
   review={\MR{1745706 (2001d:11056)}},
   doi={10.4153/CJM-2000-008-9},
}

\bib{Nek00}{article}{
   author={Nekov{\'a}{\v{r}}, Jan},
   title={$p$-adic Abel-Jacobi maps and $p$-adic heights},
   conference={
      title={The arithmetic and geometry of algebraic cycles},
      address={Banff, AB},
      date={1998},
   },
   book={
      series={CRM Proc. Lecture Notes},
      volume={24},
      publisher={Amer. Math. Soc.},
      place={Providence, RI},
   },
   date={2000},
   pages={367--379},
   review={\MR{1738867 (2002e:14011)}},
}

\bib{Nek}{article}{
   author={Nekov{\'a}{\v{r}}, Jan},
    title={Eichler--Shimura relations and semi-simplicity of \'{e}tale cohomology of quaternionic Shimura varieties},
    note={\url{https://webusers.imj-prg.fr/~jan.nekovar/pu/semi.pdf},
    preprint},
    date={2015},
}

\bib{PSR87}{article}{
   author={Piatetski-Shapiro, I.},
   author={Rallis, Stephen},
   title={Rankin triple $L$ functions},
   journal={Compositio Math.},
   volume={64},
   date={1987},
   number={1},
   pages={31--115},
   issn={0010-437X},
   review={\MR{911357 (89k:11037)}},
}

\bib{Pra92}{article}{
   author={Prasad, Dipendra},
   title={Invariant forms for representations of ${\rm GL}\sb 2$ over a local field},
   journal={Amer. J. Math.},
   volume={114},
   date={1992},
   number={6},
   pages={1317--1363},
   issn={0002-9327},
   review={\MR{1198305 (93m:22011)}},
   doi={10.2307/2374764},
}

\bib{RZ82}{article}{
   author={Rapoport, M.},
   author={Zink, Th.},
   title={\"Uber die lokale Zetafunktion von Shimuravariet\"aten. Monodromiefiltration und verschwindende Zyklen in ungleicher Charakteristik},
   language={German},
   journal={Invent. Math.},
   volume={68},
   date={1982},
   number={1},
   pages={21--101},
   issn={0020-9910},
   review={\MR{666636 (84i:14016)}},
   doi={10.1007/BF01394268},
}

\bib{Rib89}{article}{
   author={Ribet, Kenneth A.},
   title={Bimodules and abelian surfaces},
   conference={
      title={Algebraic number theory},
   },
   book={
      series={Adv. Stud. Pure Math.},
      volume={17},
      publisher={Academic Press},
      place={Boston, MA},
   },
   date={1989},
   pages={359--407},
   review={\MR{1097624 (92a:11070)}},
}

\bib{Rib90}{article}{
   author={Ribet, K. A.},
   title={On modular representations of ${\rm Gal}(\overline{\bf Q}/{\bf Q})$ arising from modular forms},
   journal={Invent. Math.},
   volume={100},
   date={1990},
   number={2},
   pages={431--476},
   issn={0020-9910},
   review={\MR{1047143 (91g:11066)}},
   doi={10.1007/BF01231195},
}

\bib{Sai93}{article}{
   author={Saito, Hiroshi},
   title={On Tunnell's formula for characters of ${\rm GL}(2)$},
   journal={Compositio Math.},
   volume={85},
   date={1993},
   number={1},
   pages={99--108},
   issn={0010-437X},
   review={\MR{1199206 (93m:22021)}},
}

\bib{Sai03}{article}{
   author={Saito, Takeshi},
   title={Weight spectral sequences and independence of $l$},
   journal={J. Inst. Math. Jussieu},
   volume={2},
   date={2003},
   number={4},
   pages={583--634},
   issn={1474-7480},
   review={\MR{2006800 (2004i:14022)}},
   doi={10.1017/S1474748003000173},
}

\bib{TW95}{article}{
   author={Taylor, Richard},
   author={Wiles, Andrew},
   title={Ring-theoretic properties of certain Hecke algebras},
   journal={Ann. of Math. (2)},
   volume={141},
   date={1995},
   number={3},
   pages={553--572},
   issn={0003-486X},
   review={\MR{1333036 (96d:11072)}},
   doi={10.2307/2118560},
}

\bib{TX14}{article}{
   author={Tian, Y.},
   author={Xiao, L.},
   title={Tate cycles on some quaternionic Shimura varieties mod $p$},
   note={\href{http://arxiv.org/abs/1410.2321}{arXiv:math/1410.2321}},
   date={2014},
}

\bib{TX16}{article}{
   author={Tian, Yichao},
   author={Xiao, Liang},
   title={On Goren--Oort stratification for quaternionic Shimura varieties},
   journal={Compos. Math.},
   volume={152},
   date={2016},
   number={10},
   pages={2134--2220},
   issn={0010-437X},
   review={\MR{3570003}},
   doi={10.1112/S0010437X16007326},
}

\bib{Tun83}{article}{
   author={Tunnell, Jerrold B.},
   title={Local $\epsilon $-factors and characters of ${\rm GL}(2)$},
   journal={Amer. J. Math.},
   volume={105},
   date={1983},
   number={6},
   pages={1277--1307},
   issn={0002-9327},
   review={\MR{721997 (86a:22018)}},
   doi={10.2307/2374441},
}

\bib{Wed00}{article}{
   author={Wedhorn, Torsten},
   title={Congruence relations on some Shimura varieties},
   journal={J. Reine Angew. Math.},
   volume={524},
   date={2000},
   pages={43--71},
   issn={0075-4102},
   review={\MR{1770603 (2001j:11045)}},
   doi={10.1515/crll.2000.060},
}

\bib{Wil95}{article}{
   author={Wiles, Andrew},
   title={Modular elliptic curves and Fermat's last theorem},
   journal={Ann. of Math. (2)},
   volume={141},
   date={1995},
   number={3},
   pages={443--551},
   issn={0003-486X},
   review={\MR{1333035 (96d:11071)}},
   doi={10.2307/2118559},
}

\bib{Zin82}{article}{
   author={Zink, Thomas},
   title={\"Uber die schlechte Reduktion einiger Shimuramannigfaltigkeiten},
   language={German},
   journal={Compositio Math.},
   volume={45},
   date={1982},
   number={1},
   pages={15--107},
   issn={0010-437X},
   review={\MR{648660 (83k:14040)}},
}

\end{biblist}
\end{bibdiv}

\end{document}